  \definecolor{editcolour}{rgb}{0.7,0.1,0}
  \definecolor{hrefcolour}{rgb}{0,0,0.7}
\tikzset{
  cd/.style={
    ->,
    scale=6,
    >=angle 90,
    font=\scriptsize}
  }
\tikzset{
  graph/.style={
    ->,
    scale=2,
    >=triangle 45,
    font=\scriptsize}
}
\tikzset{
  ob/.style={
    shape=circle,
    draw,
    thick,
    inner sep=2.75
  }
}  
\tikzset{
  -|->/.style={
    decoration={
      markings,
      mark=at position .5 with {\arrow{|}},
      mark=at position 1 with {\arrow{>}}
    },
    postaction={decorate}
  }
}
\tikzset{
  every loop/.style={
    in=60,
    out=120,
    looseness=10
  }
}
\definecolor{rewritecolor}{rgb}{0,.9,1}
\tikzset{
  zxgreen/.style={
    shape=circle,
    draw,
    thick,
    fill=green
  }
}
\tikzset{
  zxred/.style={
    shape=circle,
    draw,
    thick,
    fill=red
  }
}
\tikzset{
  zxyellow/.style={
    shape=rectangle,
    draw,
    thick,
    fill=yellow
  }
}
\tikzset{
  zxblack/.style={
    shape=diamond,
    fill=black,
    inner sep=2.75
  }
}
\tikzset{
  zxwhite/.style={
    shape=circle,
    draw,
    thick
  }
}
\renewcommand{\epsilon}{\varepsilon}
\newcommand{\op}{^{\scriptsize{ \textrm{op} } }}
\newcommand{\iso}{\cong}
\newcommand{\bydef}{\coloneqq}
\newcommand{\hcirc}{\circ_{\textup{h}}}
\newcommand{\vcirc}{\circ_{\textup{v}}}
\renewcommand{\hat}{\widehat}
\newcommand{\A}{\cat{A}}
\newcommand{\C}{\cat{C}}
\newcommand{\D}{\cat{D}}
\newcommand{\M}{\cat{M}}
\renewcommand{\S}{\cat{S}}
\newcommand{\T}{\cat{T}}
\newcommand{\X}{\cat{X}}
\newcommand{\BB}{\bicat{B}}
\newcommand{\CC}{\bicat{C}}
\newcommand{\CCC}{\dblcat{C}}
\newcommand{\RRR}{\dblcat{R}}
\newcommand{\FinSet}{\cat{FinSet}}
\newcommand{\Set}{\cat{Set}}
\newcommand{\Rel}{\cat{Rel}}
\newcommand{\RRel}{\bicat{Rel}}
\newcommand{\FinHilb}{\cat{FinHilb}}
\newcommand{\Ab}{\cat{Ab}}
\newcommand{\Vect}{\cat{Vect}}
\newcommand{\Mod}{\cat{Mod}}
\newcommand{\Pos}{\cat{Pos}}
\newcommand{\FinGraph}{\cat{FinGraph}}
\newcommand{\Graph}{\cat{Graph}}
\newcommand{\RGraph}{\cat{RGraph}}
\newcommand{\Cat}{\cat{Cat}}
\newcommand{\CCat}{\bicat{Cat}}
\newcommand{\DblCat}{\cat{DblCat}}
\newcommand{\Cospan}{\cat{Csp}}
\newcommand{\Gram}{\cat{Gram}}
\newcommand{\StrCsp}{\cat{StrCsp}}
\newcommand{\SSStrCsp}{\dblcat{S} \bicat{trCsp}}
\newcommand{\StrCspGram}{\cat{StrCspGram}}
\newcommand{\BBoldRewrite}{\bicat{BoldRewrite}}
\newcommand{\BBBoldRewrite}{
  \dblcat{B}\bicat{old}\dblcat{R}\bicat{ewrite}}
\newcommand{\FFineRewrite}{\bicat{FineRewrite}}
\newcommand{\FFFineRewrite}{
  \dblcat{F}\bicat{ine}\dblcat{R}\bicat{ewrite}}
\newcommand{\ZX}{\cat{ZX}}
\newcommand{\ZZX}{\bicat{ZX}}
\newcommand{\ZZZX}{\dblcat{ZX}}
\newcommand{\FinGraphGamma}{\FinGraph \downarrow \Gamma}
\newcommand{\AdjTopos}{\cat{AdjTopos}}
\newcommand{\Lang}{\mathrm{Lang}}
\newcommand{\defn}[1]{\textbf{\textup{#1}}}
\newcommand{\cat}[1]{\mathsf{#1}}
\newcommand{\bicat}[1]{\mathbf{#1}}
\newcommand{\dblcat}[1]{\mathbb{#1}}
\newcommand{\from}{\colon}
\newcommand{\rel}{\nrightarrow}
\newcommand{\To}{\Rightarrow}
\newcommand{\xto}[1]{\xrightarrow{#1}}
\newcommand{\monicto}{\rightarrowtail}
\newcommand{\dderiv}[2]{#1 \rightsquigarrow #2}
\newcommand{\deriv}[2]{#1 \rightsquigarrow^\ast #2}
\renewcommand{\gets}{\leftarrow}
\newcommand{\xgets}[1]{\xleftarrow{#1}}
\newcommand{\spn}[3]{#1 \gets #2 \to #3}
\newcommand{\xspn}[5]{#1 \xgets{#2} #3 \xto{#4} #5}
\newcommand{\csp}[3]{#1 \to #2 \gets #3}
\newcommand{\lrto}{\rightleftarrows}
\DeclareMathOperator{\id}{id}
\DeclareMathOperator{\Sub}{Sub}
\DeclareMathOperator{\colim}{colim}
\DeclareMathOperator{\ob}{ob}
\DeclareMathOperator{\arr}{arr}
\DeclareMathOperator{\decat}{decat}
\theoremstyle{plain}
\newtheorem{theorem}{Theorem}
\newtheorem{corollary}[theorem]{Corollary}
\newtheorem{definition}[theorem]{Definition}
\newtheorem{example}[theorem]{Example}
\newtheorem{lemma}[theorem]{Lemma}
\newtheorem{proposition}[theorem]{Proposition}
\newtheorem{remark}[theorem]{Remark}
\renewenvironment{proof}[1][Proof]{\textbf{#1.} }{\ \rule{0.5em}{0.5em}}
\newlength\mylen
\newcommand{\hto}{%
  \to\kern-0.55\mylen\vline height 1.2ex depth
  -0.4pt\kern0.55\mylen}
\newcommand{\adjunction}[5]{%
  \begin{tikzpicture}
    \node (1) at (0,0) {\( #1 \)};
    \node (2) at (#5,0) {\( #2 \)};
    \draw [cd]
      (1) edge[bend left] node[above]{$ #3 $} (2)
      (2) edge[bend left] node[below]{$ #4 $} (1);
    \path (1.center) -- node[]{$ \bot $} (2.center);
  \end{tikzpicture}
}
\begin{document}

\includepdf[pages=-]{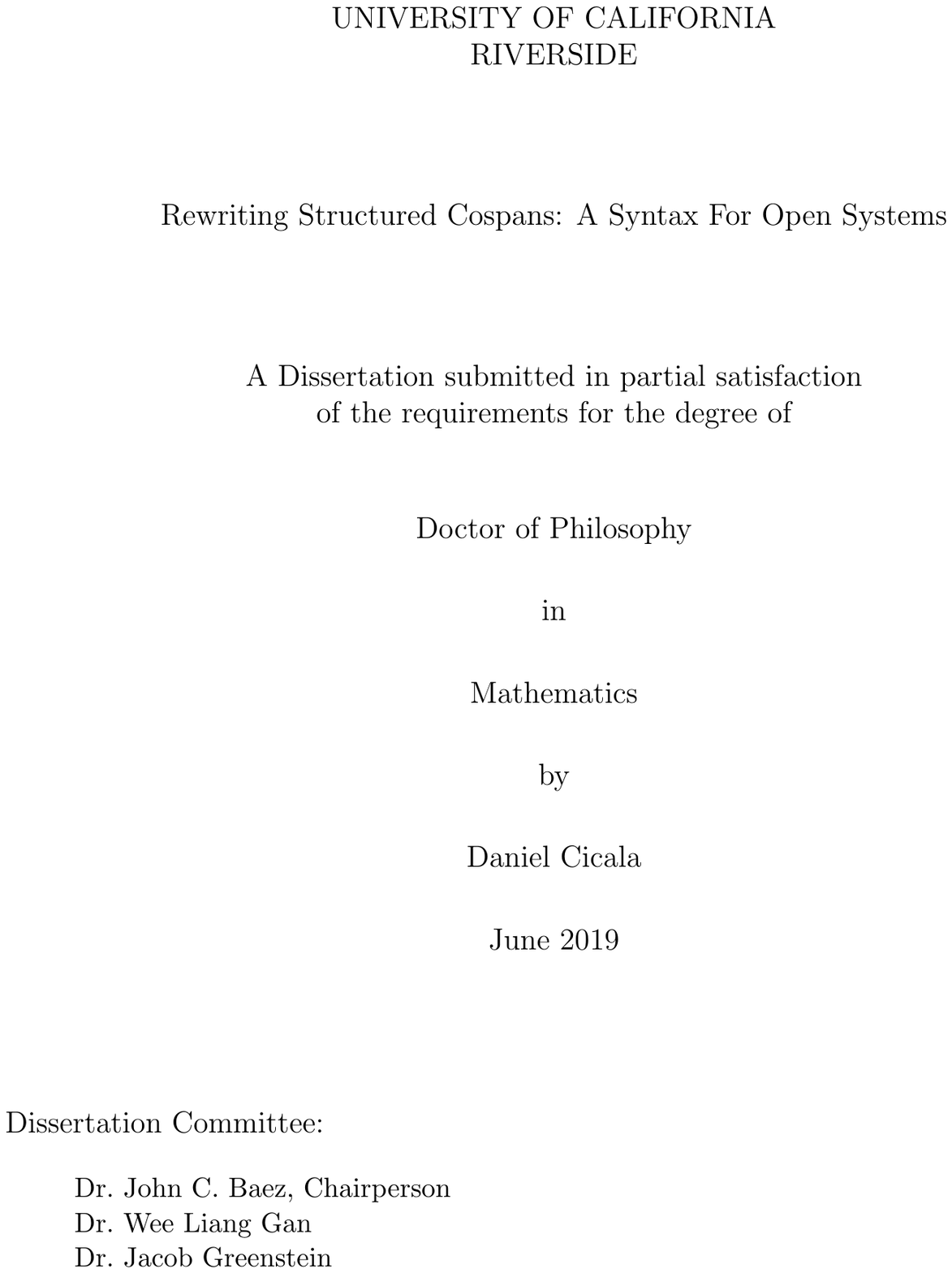}

\chapter{Introduction}
\label{sec:introduction}


Systems exist everywhere and there are many different
languages used to describe them.  The diversity of languages
reflect those who study systems. Physicists, chemists,
biologists, ecologists, economists, sociologists, linguists,
mathematicians, computer scientists all work with systems
and all have their own idiosyncratic methods to describe
them.  This parallels diversity in the natural languages
where location and communication needs are but two factors
contributing to a language's development.  

Just as linguists glean knowledge about humanity from
studying languages, we can glean knowledge about our world
from studying languages of systems. Still, no fully
general mathematical theory of systems exists. Should it?

We say `yes'. To develop a fully general mathematical theory
of systems is a worthy pursuit. Successfully creating a
formal language of systems can bestow many gifts. For
instance, with a better understanding of systems, engineers
get a better toolkit for their designs.  One such engineered
system, the power grid---a keystone to our way of life---is
vulnerable due to increased energy demands inflicted by
climate change
\cite{mukherjee-nateghi_electricity-demand}. A better
understanding of systems eases translation across
disciplines.  By placing, say, systems ecology
\cite{odum-ecology} and the programming language \emph{R}
\cite{mailund_functional-in-r} in the same formalism,
ecological models can be more faithfully translated into
mathematical models. A better understanding of systems
directs us to new paths of inquiry. An abstract
understanding of systems places them into a ``space of
systems'' where they can be compared and contrasted. With
this space, we can craft analogies and narratives. This new
perspective should present questions previously not
apparent. So yes, aspiring to a general mathematical theory
of systems is worthwhile.

Often, one studies \emph{a} system. The social network
described by Facebook is a single system frequently studied.
Another is the logistics of shipping Amazon packages the
world over.  In reality, systems rarely exist in isolation.
The Facebook network is affected by other social media
networks.  Amazon's shipping networks are affected by the
economics of oil prices.  That is, systems interact with
each other to form new systems and this ought to be a
component of an honest general mathematical systems
theory. One way systems interact is to not exert any
influence over each other, which should evoke to a
mathematician the disjoint union operation. But to exert
influence necessitates each system to have points on which
the interaction can occur. For example, a point of
interaction of a building's electrical system is an outlet,
where one can connect a blender forming a composite
electrical-blender system. A point of interaction with a
pulley system is a dangling rope that one can pull, upon
which we obtain the composite pulley-musculoskeletal system.

When connecting systems together, one may veer
into the \emph{principle of
  compositionality}. Compositionality is present
when the whole of a system is equal to the sum of
its parts. This can be exploited to great effect
when analyzing complicated systems by allowing for
its decomposition into simpler pieces. For
instance, the physical system of two pendulums
connected together with a spring (see Figure
\ref{fig:pendulum}) can be fully analyzed by
separately considering the two pendulums and the
spring. In mathematical terms, this amounts to
coupling the corresponding differential equations.

Compositionality lies in contrast to so-called
\emph{emergent systems} where new features burst
into existence upon connection. Life is believed
to have emerged from complex systems of
ribonucleic acid (better known as RNA). No sign of
life is present in a single RNA molecule but somehow life
appears in a system comprising only RNA.

\begin{figure}
  \centering
  \fbox{
    \begin{minipage}{\linewidth}
      \centering
      \begin{tikzpicture}[thick,>=latex,->]
        \begin{scope} 
          \draw[decoration={
            aspect=0.3,
            segment length=1.5mm,
            amplitude=3mm,coil},
          decorate]
          (3.2,-3) -- (6.9,-3);
        \end{scope}
        \begin{scope} 
          \clip(-5,2) rectangle (5,-5);
          \draw[double distance=1.6mm]
          (0,0) -- (3,-3);
          \draw[fill=white]
          (-1.2,1.0) -- (-.5,0)
          arc(180:360:0.5) -- (1.2,1.0) -- cycle;
          \draw[draw=black,fill=white]
          (0, 0) circle circle (.3cm);
          \draw[draw=black,fill=white]
          (3,-3) circle circle (.3cm);
          \draw[pattern=north east lines]
          (-1.4,1.3) rectangle (1.4,1);
        \end{scope}
        \begin{scope}[shift={(8,0)}] 
          \clip(-5,2) rectangle (5,-5);
          \draw[double distance=1.6mm]
          (0,0) -- (-1,-3);
          \draw[fill=white]
          (-1.2,1.0) -- (-.5,0)
          arc(180:360:0.5) -- (1.2,1.0) -- cycle;
          \draw[draw=black,fill=white]
          (0, 0) circle circle (.3cm);
          \draw[draw=black,fill=white]
          (-1,-3) circle circle (.3cm);
          \draw[pattern=north east lines]
          (-1.4,1.3) rectangle (1.4,1);
        \end{scope}
      \end{tikzpicture}
      \caption{A compositional physical system}
      \label{fig:pendulum}
    \end{minipage}
  }
\end{figure}

The two methods of interaction described above, disjoint
union and connecting along points of interaction, have clear
analogies to fundamental mathematical concepts: addition and
composition. From the many areas of mathematics, the one
that stands out in its singular focus on addition and
composition is the theory of monoidal categories. Category
theory takes as fundamental the composition of `arrows' and
endowing a category with a `monoidal structure' allows us to
``add'' the arrows together. Therefore, monoidal categories
are an excellent foundation on which to base a general
mathematical theory of compositional systems.

\section*{What is this thesis about?}
\label{sec:what-this-thesis}

Here, we take first steps in towards building a theory of
compositional systems. What do these first steps look like?
In short, we are setting up a syntax for compositional
systems.

The term `syntax' appears most often in linguistics where it
refers to rules and principles that an arrangement of
words must satisfy to be a well-formed sentence.
It means roughly the same for us except that we are working
with compositional systems, not words and sentences. In this
analogy, compositional systems correspond to both words and
sentences in that, instead of building sentences by
arranging words, we are building larger systems by
connecting smaller systems.  To do so, we need a set of
rules and principals governing how to connect systems
together.

The yin to syntax's yang is \emph{semantics}. This concept,
also from linguistics, refers to the meaning of a sentence.
In our context, semantics refers to the \emph{behavior} of a
system. Resistor circuits are a nice example to highlight
the distinction between syntax and semantics.  First, recall
that resistors wired in series have the same resistance as a
single resistor with the aggregate resistance.  Now, while a
circuit with a $ 25 \Omega $ and $ 35 \Omega $ resistors
wired in series is syntactically different from a circuit
with a single $ 60 \Omega $ resistor, their resistance is
equal meaning they have the same semantics.  While semantics
is important to any theory of systems, we do not directly
consider it in this thesis. However, we do consider it
indirectly.

Granting that syntax and semantics are separate entities, it
is often useful for syntax to \emph{reflect} semantics. We
do not want to say that the two resistor circuits are
\emph{equal}.  That is too strong. But we do want to
establish a formal \emph{relationship} between them.  More
than that, we want a way to propagate this relationship
through a suitable space of circuits so that every circuit
with resistors wired in series relates to the circuit with a
single resistor in their place. Of course, our method of
propagating such a relationship must be abstract enough to
handle more systems than just resistor circuits.

Again we turn to linguistics, this time the study of formal
languages. These are different from natural languages like
English, Italian, or Afrikaans that ebb and flow under so
many social forces.  Formal languages are designed and can
be controlled. They can approximate natural languages. This
makes them useful in studying natural languages.  However,
the ``formal languages'' we are interested in do not contain
words and sentences. The formal languages we are interested
in are systems connected together.

From the study of formal languages comes \emph{rewriting
  theory}. Originally used to generate well-formed
sentences, rewriting has since evolved through being studied
by mathematicians, logicians, and computer scientists for
whom it provides a mechanism to replace terms with distinct
but equivalent terms. As mentioned above, rewriting is
syntactic but meant to reflect semantics.  This means that
rewriting relates syntactical terms if they behave in the
same way. For example, a programming language that can
perform addition would have a `rewrite rule' saying that
`2+2' can be rewritten into `4' because they mean the same
thing.  There would \emph{not} be a rule rewriting `2+2'
into `5' because they never mean the same thing. Moreover,
rewriting theory provides a way to extend this rule to
longer strings containing `2+2', for instance, the string
`(3*(2+2))/(2+2+3)' can be rewritten into
`(3*4)/(4+3)'. Crucially, rewriting also prevents erroneous
applications such as rewriting `2+2(x+y)' into `4(x+y)'.
The first expansion of rewriting theory beyond the realm of
characters and words was into combinatorial graphs where
rewrite rules tell us when one graph can replace another. If
we were modeling the internet as a directed graph with
websites as nodes and a link from one website to another
as edges, then we are likely uninterested in self-loops,
which represent a webpage that links to itself.  So we can
introduce a rule that deletes self-loops. Informally, this
would say that the graph
\begin{center}
  \begin{tikzpicture}
    \node (a) at (0,0) {$ \bullet $};
    \draw [graph]
      (a) edge[loop above] (a);
    \draw [rounded corners] (-1,-1) rectangle (1,2);
  \end{tikzpicture}
\end{center}
can be rewritten into the graph
\begin{center}
  \begin{tikzpicture}
    \node (a) at (0,0) {$ \bullet $};
    \draw [rounded corners] (-1,-1) rectangle (1,1);
  \end{tikzpicture}
\end{center}
This rule can be extended to remove loops from more
complicated graphs like
\begin{center}
  \begin{tikzpicture}
    \node (a) at (0,0) {$ \bullet $};
    \node (b) at (2,0) {$ \bullet $};
    \node (c) at (1,2) {$ \bullet $};
    \draw [graph]
      (a) edge[loop left] (a)
      (b) edge[loop right] (b)
      (c) edge[loop above] (c)
      (a) edge[] (b)
      (b) edge[] (c);
    \draw [rounded corners] (-2,-1) rectangle (4,4);
  \end{tikzpicture}
\end{center}
being rewritten into
\begin{center}
  \begin{tikzpicture}
    \node (a) at (0,0) {$ \bullet $};
    \node (b) at (2,0) {$ \bullet $};
    \node (c) at (1,2) {$ \bullet $};
    \draw [graph]
      (a) edge[] (b)
      (b) edge[] (c);
    \draw [rounded corners] (-1,-1) rectangle (3,3);
  \end{tikzpicture}
\end{center}
To formalize this requires abstract mathematics, namely
category theory. Fortunately, because the category theory
involved in rewriting graphs is so abstract, we can use
it to rewrite syntax developed for compositional
systems.

What does rewriting do for us?  It allows us to
simplify our syntax, whether that syntax is based on
characters or combinatorial graphs or other types of systems. 
The ability to simplify syntax is a powerful tool for any
would-be analyst simply because of how complex syntactical
terms can grow.  The graph model of the internet is massive
with over 1.5 billion nodes, each an individual website.

Our goal in this thesis is to present a syntax for
compositional systems proposed by Baez and Courser
\cite{baez-courser_str-csp} called `structured cospans' and
combine it with a theory of rewriting.

\section*{A road map for the thesis}
\label{sec:road-map}

The larger goal of creating a general mathematical theory
for compositional systems is still aspirational, but we
stride within these several chapters, developing a syntax
and rewriting theory.  To assist the reader in navigating
these chapters, we sketch their contents and give
the highlights. We visualize the dependencies between
the chapters with Figure \ref{fig:chapter-dependency}.

In Chapter \ref{sec:structured-cospans}, we present a syntax
for compositional systems.  Baez and Courser introduced this
syntax under the name `structured cospans'.  A cospan is a
diagram in a category with shape
\begin{center}
  \begin{tikzpicture}
    \node (0) at (0,0) {$ a $};
    \node (1) at (2,0) {$ b $};
    \node (2) at (4,0) {$ c $};
    \draw [cd] 
      (0) edge node[above]{$ f $} (1)
      (2) edge node[above]{$ g $} (1); 
  \end{tikzpicture}
\end{center}
where $ a,b,c $ are objects in the category and $ f,g $ are
arrows in the category. For a structured cospan, we have a
specific interpretation in mind: the object $ b $ is a
system with inputs $ a $ and outputs $ c $.  The arrows $ f
$ and $ g $ maps the inputs and outputs to the system.

To formalize this perspective, our starting data is an
adjunction
\[
  \adjunction{\A}{\X}{L}{R}{2}
\]
between topoi $ \A $ and $ \X $.  We interpret $ \A $ as a
topos whose objects are the \emph{interface types}; that is
the objects that can serve as inputs or outputs to our
systems, and $ \X $ as a topos whose objects are the system
types. Often, $ \A $ is the topos $ \Set $ of sets and
functions. And $ \X $ can be whatever system we are working
with, for example a category whose objects are resistor
circuits.  The functor $ L \from \A \to \X $ translates the
interface types into degenerate system types so that they can
interact via a structured cospan, which is a cospan of the
form
\begin{center}
  \begin{tikzpicture}
    \node (0) at (0,0) {$ La $};
    \node (1) at (2,0) {$ x $};
    \node (2) at (4,0) {$ Lb $};
    \draw [cd] 
      (0) edge node[above]{$ f $} (1)
      (2) edge node[above]{$ g $} (1); 
  \end{tikzpicture}
\end{center}
This structured cospan is a system $ x $ with inputs $ La $
and outputs $ Lb $. A resistor circuit as a structured
cospan would look like
\begin{center}
\begin{tikzpicture}
    \begin{scope}
    \node at (0,0) {$ \bullet a $};
    \node at (0,2) {$ \bullet b $};
    \draw [rounded corners] (-1,-1) rectangle (1,3);
    \node (l) at (1.1,1) {};  
    \end{scope}
    \begin{scope}[shift={(3,0)}]
    \node                        (1)  at (0,0) {$a\bullet$};
    \node                        (2)  at (0,2) {$b\bullet$};
    \node [rectangle,draw=black] (3)  at (1,2) {$10\Omega$};
    \node                        (3') at (2,2) {};
    \node [rectangle,draw=black] (4)  at (1,0) {$5\Omega$};
    \node                        (4') at (2,0) {};
    \node                        (5)  at (2,1) {{$ \bullet $}};
    \node [rectangle,draw=black] (6)  at (3,1) {$15\Omega$};
    \node                        (7)  at (4,1) {$\bullet c$};
    \draw  
    (1.center) -- (4)
    (2.center) -- (3)
    (3) -- (3'.center) -- (5.center)
    (4) -- (4'.center) -- (5.center)
    (5.center) -- (6)
    (6) -- (7.center);
    \draw [rounded corners] (-1,-1) rectangle (5,3);
    \node (ml) at (-1.1,1) {};
    \node (mr) at (5.1,1)  {};
    \end{scope}
    \begin{scope}[shift={(10,0)}]
    \node at (0,1) {$ \bullet c $};
    \draw [rounded corners] (-1,-1) rectangle (1,3);
    \node (r) at (-1.1,1) {};
    \end{scope}
    \draw [cd] 
    (l) edge[] (ml)
    (r) edge[] (mr);
  \end{tikzpicture}
\end{center}
The left-hand graph $ L ( \{ a,b \} ) $ gives the inputs and
the right-hand graph $ L ( \{c\} ) $ gives the outputs.  

We devote Section \ref{sec:StrCsp-as-Arrows} to composing
structured cospans. As is standard in cospan categories,
composition uses pushout.  For example, any resistor circuit
with a single input, say
\begin{center}
\begin{tikzpicture}
    \begin{scope}
    \node at (0,1) {$ \bullet c $};
    \draw [rounded corners] (-1,-1) rectangle (1,3);
    \node (l) at (1.1,1) {};  
    \end{scope}
    \begin{scope}[shift={(3,0)}]
    \node                        (1) at (0,1) {$c\bullet$};
    \node [rectangle,draw=black] (2) at (1,1) {$5\Omega$};
    \node                        (3) at (2,1) {{$ \bullet d $}};
    \draw (1.center) -- (2) -- (3.center);
    \draw [rounded corners] (-1,-1) rectangle (3,3);
    \node (ml) at (-1.1,1) {};
    \node (mr) at (3.1,1)  {};
    \end{scope}
    \begin{scope}[shift={(8,0)}]
    \node at (0,1) {$ \bullet d $};
    \draw [rounded corners] (-1,-1) rectangle (1,3);
    \node (r) at (-1.1,1) {};
    \end{scope}
    \draw [cd] 
      (l) edge[] (ml)
      (r) edge[] (mr);
  \end{tikzpicture}
\end{center}
can be connected to the resistor circuit above that has a
single output as follows
\begin{center}
\resizebox{1.0\linewidth}{!}{
\begin{tikzpicture}
    \begin{scope} 
    \node at (0,0) {$ \bullet a $};
    \node at (0,2) {$ \bullet b  $};
    \draw [rounded corners] (-1,-1) rectangle (1,3);
    \node (l) at (1.1,1) {};  
    \end{scope}
    \begin{scope}[shift={(3,1)}] 
    \node                        (1)  at (0,0) {$a\bullet$};
    \node                        (2)  at (0,2) {$b\bullet$};
    \node [rectangle,draw=black] (3)  at (1,2) {$10\Omega$};
    \node                        (3') at (2,2) {$  $};
    \node [rectangle,draw=black] (4)  at (1,0) {$5\Omega$};
    \node                        (4') at (2,0) {$  $};
    \node                        (5)  at (2,1) {{$\bullet$}};
    \node [rectangle,draw=black] (6)  at (3,1) {$15\Omega$};
    \node                        (7)  at (4,1) {$\bullet c$};
    \draw  
      (1)   -- (4)
      (2)   -- (3)
      (3) -- (3'.center) -- (5.center)
      (4) -- (4'.center) -- (5.center)
      (5.center)   -- (6)
      (6)   -- (7.center);
    \draw [rounded corners] (-1,-1) rectangle (5,3);
    \node (ml) at (-1.1,1) {};
    \node (mr) at (5.1,1)  {};
    \end{scope}
    \begin{scope}[shift={(10,0)}] 
    \node at (0,1) {$ \bullet c $};
    \draw [rounded corners] (-1,-1) rectangle (1,3);
    \node (r) at (-1.1,1) {};
    \node (l') at (1.1,1) {};
    \end{scope}
    \begin{scope}[shift={(13,1)}] 
    \node                        (1) at (0,1) {$c\bullet$};
    \node [rectangle,draw=black] (2) at (1,1) {$5\Omega$};
    \node                        (3) at (2,1) {$ \bullet d $};
    \draw (1.center) -- (2) -- (3.center);
    \draw [rounded corners] (-1,-1) rectangle (3,3);
    \node (ml') at (-1.1,1) {};
    \node (mr') at (3.1,1)  {};
    \end{scope}
    \begin{scope}[shift={(18,0)}] 
    \node at (0,1) {$ \bullet d $};
    \draw [rounded corners] (-1,-1) rectangle (1,3);
    \node (r') at (-1.1,1) {};
    \end{scope}
    \draw [cd] 
      (l') edge[] (ml')
      (r') edge[] (mr')
      (l) edge[] (ml)
      (r) edge[] (mr);
  \end{tikzpicture}
  }  
\end{center}
We then pushout over the common interface
\begin{center}
  \begin{tikzpicture}
    \node at (0,1) {$ \bullet c $};
    \draw [rounded corners] (-1,-1) rectangle (1,3);
  \end{tikzpicture}
\end{center}
to get the single structured cospan
\begin{center}
\begin{tikzpicture}
    \begin{scope} 
    \node at (0,0) {$ \bullet a $};
    \node at (0,2) {$ \bullet b  $};
    \draw [rounded corners] (-1,-1) rectangle (1,3);
    \node (l) at (1.1,1) {};  
    \end{scope}
    \begin{scope}[shift={(3,0)}] 
    \node                        (1) at (0,0) {$a\bullet$};
    \node                        (2) at (0,2) {$b\bullet$};
    \node [rectangle,draw=black] (3) at (1,2) {$10\Omega$};
    \node                        (3') at (2,2) {};
    \node [rectangle,draw=black] (4) at (1,0) {$5\Omega$};
    \node                        (4') at (2,0) {};
    \node                        (5) at (2,1) {{$ \bullet $}};
    \node [rectangle,draw=black] (6) at (3,1) {$15\Omega$};
    \node                        (7) at (4,1) {$\bullet$};
    \node [rectangle,draw=black] (8) at (5,1) {$5\Omega$};
    \node                        (9) at (6,1) {$\bullet d$};
    \draw  
      (1.center) -- (4)
      (2.center) -- (3)
      (3)        -- (3'.center) -- (5.center)
      (4)        -- (4'.center) -- (5.center)
      (5.center) -- (6)
      (6)        -- (7.center)
      (7.center) -- (8)
      (8)        -- (9.center);
    \draw [rounded corners] (-1,-1) rectangle (7,3);
    \node (ml) at (-1.1,1) {};
    \node (mr) at (7.1,1)  {};
    \end{scope}
    \begin{scope}[shift={(12,0)}] 
    \node at (0,1) {$ \bullet d $};
    \draw [rounded corners] (-1,-1) rectangle (1,3);
    \node (r) at (-1.1,1) {};
    \end{scope}
    \draw [cd] 
      (l) edge[] (ml)
      (r) edge[] (mr);
  \end{tikzpicture}
\end{center}
that represents a single circuit with input nodes $ a,b $
and output node $ d $.

Starting with the adjunction
$ L \from \A \lrto \X \from R $, where $ \A $ and $ \X $ are
symmetric monoidal categories with their respective
coproducts, we then package structured cospans into a
compact closed category $ ( _L \Cospan, \otimes, 0_A ) $
whose objects are the interface types, that is objects of
$ \A $, and the arrows of type $ a \to b $ are the
structured cospans $ \csp{La}{x}{Lb} $.

Our stated goal is to introduce a rewriting theory to
structured cospans. To do this, we must ensure that
structured cospans are sufficiently nice to accommodate
rewriting. This entails designing a topos where structured
cospans are the objects. Constructing this topos is the
topic of Section \ref{sec:StrCspAsObject}.  We define a
category $ _L\StrCsp $ whose objects are structured cospans
and whose arrows between the structured cospans
$ \csp{La}{x}{Lb} $ and $ \csp{La'}{x'}{Lb'} $ are commuting
diagrams
\begin{center}
  \begin{tikzpicture}
    \node (La)  at (0,2) {$ La $};
    \node (x)   at (2,2) {$ x $};
    \node (Lb)  at (4,2) {$ Lb $};
    \node (La') at (0,0) {$ La' $};
    \node (x')  at (2,0) {$ x' $};
    \node (Lb') at (4,0) {$ Lb' $};
    \draw [cd] 
    (La)  edge[] node[above]{$  $} (x)
    (Lb)  edge[] node[above]{$  $} (x)
    (La') edge[] node[below]{$  $} (x')
    (Lb') edge[] node[below]{$  $} (x')
    (La)  edge[] node[left]{$ Lf $} (La')
    (x)   edge[] node[left]{$ h $} (x')
    (Lb)  edge[] node[right]{$ g $} (Lb'); 
  \end{tikzpicture}
\end{center}
in $ \X $.  The main result of this section is

\textbf{Theorem \ref{thm:strcsp-istopos}.}  For any
adjunction
\[
  \adjunction{\A}{\X}{L}{R}{2}
\]
between topoi, the category $ _L\StrCsp $ is a topos.

This result is the keystone that stabilizes the combination
of structured cospans and rewriting. Because of this fact,
structured cospans do accommodate a rewriting theory. By
this, we mean that the local Church--Rosser and concurrency
properties hold.  We do not investigate these properties in
this thesis, but Corradini, et.~al.~thoroughly discuss these
properties \cite{corradini-ehrig_algebraic-graph-grammars}.
We also show in Theorem \ref{thm:strcsp-isfunctorial} that
constructing $ _L\StrCsp $ is functorial in $ L $.

Viewing structured cospans through the two categories
$ _L \Cospan $ and $ _L \StrCsp $ in which they appear, we
note that they play two roles. In $ _L \Cospan $, structured
cospans form the arrows. In $ _L\StrCsp $, structured cospans
form the objects.  We combine these two perspectives into a
single framework using double categories in Section
\ref{sec:DblCatOfStrCsp}. The final section of Chapter
\ref{sec:structured-cospans} sets the groundwork for
rewriting structured cospans by defining spans of structured
cospans.

In Chapter \ref{sec:dpo-rewriting}, we discuss the theory
of rewriting with just enough detail to provide the reader
with an appreciation for the subject and enough tools to
read this text. We begin with its linguistic beginnings but
quickly move to the axiomatization of the double pushout
method of rewriting. The axioms of rewriting theory are
captured in their full generality by so-called `adhesive
categories'.  However, this is too general for our needs, so
we restrict to rewriting in a topos, a type of adhesive
category.

By fixing a topos $ \T $, we learn how to apply a rewrite
rule, which manifests as a span
\[
  \spn{\ell}{k}{r}
\]
in $ T $. We interpret this rule to say $ \ell $ can be
rewritten into $ r $. We apply this rule by identifying a
copy of $ \ell $ inside another object $ \ell' $ via an
arrow $ \ell \to \ell' $ of $ \T $ and there are objects
$ k' $ and $ r' $ of $ \T $ fitting into a `double pushout
diagram'
\begin{center}
  \begin{tikzpicture}
    \node (ell) at (0,2) {$ \ell $};
    \node (k) at (2,2) {$ k $};
    \node (r) at (4,2) {$ r $};
    \node (ell') at (0,0) {$ \ell' $};
    \node (k') at (2,0) {$ k' $};
    \node (r') at (4,0) {$ r' $};
    \draw [cd] 
      (k) edge[] (ell)
      (k) edge[] (r)
      (k') edge[] (ell')
      (k') edge[] (r')
      (ell) edge[] (ell')
      (k) edge[] (k')
      (r) edge[] (r');
      \draw (0.3,0.4) -- (0.4,0.4) -- (0.4,0.3);
      \draw (3.7,0.4) -- (3.6,0.4) -- (3.6,0.3);
  \end{tikzpicture}
\end{center}
We then say that $ \ell' $ can be rewritten to $ r' $. The
double pushout diagram encodes that we first identify a copy
of $ \ell $ in $ \ell' $, remove and replace it by $ r $,
and this results in $ r' $.  In this way, an initial set of
rewrite rules propagate throughout $ \T $ by collecting all
possible applications of all the initial rules.

In Chapter \ref{sec:fine-rewriting}, we introduce the first
of two styles of rewriting structured cospans. A `fine
  rewrite rule' of structured cospans is a diagram with shape
\begin{center}
  \begin{tikzpicture}
    \node (La) at (0,4) {$ La $};
    \node (x) at (2,4) {$ x $};
    \node (La') at (4,4) {$ La' $};
    \node (Lb) at (0,2) {$ Lb $};
    \node (y) at (2,2) {$ y $};
    \node (Lb') at (4,2) {$ Lb' $};
    \node (Lc) at (0,0) {$ Lc $};
    \node (z) at (2,0) {$ z $};
    \node (Lc') at (4,0) {$ Lc' $};
    \draw [cd] 
      (La) edge[] (x)
      (La') edge[] (x) 
      (Lb) edge[] (y)
      (Lb') edge[] (y)
      (Lc) edge[] (z)
      (Lc') edge[] (z)
      (Lb) edge[] (La)
      (Lb) edge[] (Lc)
      (Lb) edge[] node[left]{$ \iso $} (La)
      (Lb) edge[] node[left]{$ \iso $} (Lc)
      (y) edge[>->] (x)
      (y) edge[>->] (z)
      (Lb') edge[] node[right]{$ \iso $} (La')
      (Lb') edge[] node[right]{$ \iso $} (Lc'); 
  \end{tikzpicture}
\end{center}
taken up to isomorphism. The marked arrows are monic and an
isomorphism to another fine rewrite of structured cospans
\begin{center}
  \begin{tikzpicture}
    \node (La) at (0,4) {$ La $};
    \node (x) at (2,4) {$ x $};
    \node (La') at (4,4) {$ La' $};
    \node (Lb) at (0,2) {$ Lb $};
    \node (y) at (2,2) {$ y' $};
    \node (Lb') at (4,2) {$ Lb' $};
    \node (Lc) at (0,0) {$ Lc $};
    \node (z) at (2,0) {$ z $};
    \node (Lc') at (4,0) {$ Lc' $};
    \draw [cd] 
      (La) edge[] (x)
      (La') edge[] (x) 
      (Lb) edge[] (y)
      (Lb') edge[] (y)
      (Lc) edge[] (z)
      (Lc') edge[] (z)
      (Lb) edge[] (La)
      (Lb) edge[] (Lc)
      (Lb) edge[] node[left]{$ \iso $} (La)
      (Lb) edge[] node[left]{$ \iso $} (Lc)
      (y) edge[>->] (x)
      (y) edge[>->] (z)
      (Lb') edge[] node[right]{$ \iso $} (La')
      (Lb') edge[] node[right]{$ \iso $} (Lc'); 
  \end{tikzpicture}
\end{center}
is an invertible arrow $ y \to y' $ such that the evident
diagrams commute. Admittedly, we are being rather brusque by
saying `evident', though Definition
\ref{def:morphism-span-str-cospans} spells this out in
detail.  The main result of this section is the construction
of a double category $ _L \FFFineRewrite $ whose objects are
interface types from $ \A $, horizontal arrows are
structured cospans, and squares are fine rewrites of
structured cospans.  This result is listed as Proposition
\ref{thm:fine-rewrite-double-cat}. Proving the interchange
law is quite technical, so we devote all of Section
\ref{sec:interchange-law} to this.  In Section
\ref{sec:sm-structure} we equip the double category
$ _L \FFFineRewrite $ with a symmetric monoidal structure.
In the final section of Chapter \ref{sec:fine-rewriting}, we
appease those readers who prefer bicategories to double
categories. There, we extract from the double category
$ _L \FFFineRewrite $ a compact closed bicategory
$ _L \FFineRewrite $.

In Chapter \ref{sec:bold-rewriting}, we introduce the
counterpart to fine rewriting called `bold rewriting'.  A
bold rewrite rule is the connected component of a diagram
\begin{center}
  \begin{tikzpicture}
    \node (La) at (0,4) {$ La $};
    \node (x) at (2,4) {$ x $};
    \node (La') at (4,4) {$ La' $};
    \node (Lb) at (0,2) {$ Lb $};
    \node (y) at (2,2) {$ y $};
    \node (Lb') at (4,2) {$ Lb' $};
    \node (Lc) at (0,0) {$ Lc $};
    \node (z) at (2,0) {$ z $};
    \node (Lc') at (4,0) {$ Lc' $};
    \draw [cd] 
      (La)  edge[] (x)
      (La') edge[] (x) 
      (Lb)  edge[] (y)
      (Lb') edge[] (y)
      (Lc)  edge[] (z)
      (Lc') edge[] (z)
      (Lb)  edge[] (La)
      (Lb)  edge[] (Lc)
      (Lb)  edge[] node[left]{$ \iso $} (La)
      (Lb)  edge[] node[left]{$ \iso $} (Lc)
      (y)   edge[] (x)
      (y)   edge[] (z)
      (Lb') edge[] node[right]{$ \iso $} (La')
      (Lb') edge[] node[right]{$ \iso $} (Lc'); 
  \end{tikzpicture}
\end{center}
By connected component, we mean the equivalence class
generated by relating the above diagram to
\begin{center}
  \begin{tikzpicture}
    \node (La) at (0,4) {$ La $};
    \node (x) at (2,4) {$ x $};
    \node (La') at (4,4) {$ La' $};
    \node (Lb) at (0,2) {$ Lb $};
    \node (y) at (2,2) {$ y' $};
    \node (Lb') at (4,2) {$ Lb' $};
    \node (Lc) at (0,0) {$ Lc $};
    \node (z) at (2,0) {$ z $};
    \node (Lc') at (4,0) {$ Lc' $};
    \draw [cd] 
      (La) edge[] (x)
      (La') edge[] (x) 
      (Lb) edge[] (y)
      (Lb') edge[] (y)
      (Lc) edge[] (z)
      (Lc') edge[] (z)
      (Lb) edge[] (La)
      (Lb) edge[] (Lc)
      (Lb) edge[] node[left]{$ \iso $} (La)
      (Lb) edge[] node[left]{$ \iso $} (Lc)
      (y) edge[->] (x)
      (y) edge[->] (z)
      (Lb') edge[] node[right]{$ \iso $} (La')
      (Lb') edge[] node[right]{$ \iso $} (Lc'); 
  \end{tikzpicture}
\end{center}
if there is an arrow $ y \to y' $ such that the evident
diagrams commute.  This chapter largely mirrors that on fine
rewriting.  We define a double category
$ _L \BBBoldRewrite $ whose objects are the interface types
from $ \A $, whose horizontal arrows are structured cospans,
and whose squares are bold rewrites.  Again, we extract a
bicategory from the double category. We show that this
bicategory $ _L \BBoldRewrite $ is a bicategory of
relations.

In the final section of Chapter \ref{sec:bold-rewriting}, we
illustrate bold rewriting with the ZX-calculus.  This is a
language consisting of string diagrams used to reason about
a corner of quantum mechanics favored by quantum computer
theorists. Coecke and Duncan, the inventors of the
ZX-calculus, organized it into a dagger compact category
whose arrows are the very diagrams that constitute the
ZX-calculus. Using the machinery laid out in this chapter,
we expand this dagger compact category to a symmetric
monoidal double category that encodes the ZX-calculus.  The
benefit of this is that, instead of merely equating
ZX-calculus diagrams when there exists a rewrite rule between
them, the squares of our double category actually witness
these equations. This should satisfy mathematical
constructivists.  Overall, the double category structure we
build is richer than the category.

We complete this thesis with Chapter
\ref{sec:structural-induction}. Most academic work on
systems focuses on \emph{closed systems}, those with an
empty interface.  Physicists often represent a closed system
with a phase space.  Chemical reactions are worked out as if
the rest of the world does not exist (or is reduced to a
triviality).  Petri nets do not interact with each other.
Markov chains are never combined.  One hope of this research
program is to provide the mathematical resources to change
this, so that \emph{open networks} become the norm. Then the
phase spaces of two different systems could be
connected. Chemical reactions could more easily consider their
environment. Petri nets and Markov chains could be composed
together. This final chapter motivates using open systems to
study closed systems.

Specifically, we construct a mechanism to rewrite closed
systems using structural induction.  That is, we can
decompose a given closed system into open sub-systems each of
which can be rewritten independently of each other. After
simplifying each sub-system via this rewriting procedure,
we reconnect them together into an equivalent version
of the original closed system. In short, we introduce an
inductive process that simplifies closed systems.  This is
characterized by the following theorem.

\textbf{Theorem \ref{thm:inductive-rewriting}.}  Fix a
adjunction $ L \from \A \lrto \X \from R $ with monic
counit. Let $ ( \X , P ) $ be a grammar such that for every
$ \X $-object $ x $ in the apex of a production of $ P $,
the Heyting algebra $ \Sub (x) $ is well-founded. Given
$ g $, $ h \in \X $, then $ \deriv{g}{h} $ in the rewriting
relation for a grammar $ ( \X , P ) $ if and only if there
is a square
\[
  \begin{tikzpicture}
      \node (1t) at (0,4) {$ LR 0 $};
      \node (2t) at (2,4) {$ g $};
      \node (3t) at (4,4) {$ LR 0 $};
      \node (1m) at (0,2) {$ LR 0 $};
      \node (2m) at (2,2) {$ d $};
      \node (3m) at (4,2) {$ LR 0 $};
      \node (1b) at (0,0) {$ LR 0 $};
      \node (2b) at (2,0) {$ h $};
      \node (3b) at (4,0) {$ LR 0 $};
      \draw [cd] (1t) to node [] {\scriptsize{$  $}} (2t);
      \draw [cd] (3t) to node [] {\scriptsize{$  $}} (2t);
      \draw [cd] (1m) to node [] {\scriptsize{$  $}} (2m);
      \draw [cd] (3m) to node [] {\scriptsize{$  $}} (2m);
      \draw [cd] (1b) to node [] {\scriptsize{$  $}} (2b);
      \draw [cd] (3b) to node [] {\scriptsize{$  $}} (2b);
      \draw [cd] (1m) to node [] {\scriptsize{$  $}} (1t);
      \draw [cd] (1m) to node [] {\scriptsize{$  $}} (1b);
      \draw [cd] (2m) to node [] {\scriptsize{$  $}} (2t);
      \draw [cd] (2m) to node [] {\scriptsize{$  $}} (2b);
      \draw [cd] (3m) to node [] {\scriptsize{$  $}} (3t);
      \draw [cd] (3m) to node [] {\scriptsize{$  $}} (3b);
    \end{tikzpicture}
  \]
in the double category $ \Lang ( _{L}\StrCsp , P' ) $.

In less technical terms, this theorem says that, under
suitable hypotheses, one closed system can be rewritten into
another precisely when there is a square between their
corresponding structured cospans. This square is built
inductively from rewrites between open sub-systems.   

This marks the end of the thesis proper. However, we
anticipate that the results contained within may be of
interest to a wide audience including certain network
theorists, systems theorists, computer scientists, and
mathematicians.  Therefore, we organized the thesis so that
background material is mostly confined to the appendices.
This way, it will not distract those familiar with it and it
is readily available to those readers who are not.  Here are
the topics of the appendices.
\begin{description}
\item[Appendix \ref{sec:extern-bicats}] 
  
  Enriched categories and bicategories. This material is
  used in Sections
  \ref{sec:compact-closed-bicategory-spans-of-cospans} and
  \ref{sec:cartesian-bicategory-spans-of-cospans} where
  bicategories are extracted from double categories;
  
\item[Appendix \ref{sec:internal-double-cats}] 

  Internalization and double categories are useful
  throughout as double categories are a main character in
  our story. Also, this section covers internal monoids which are
  used to show that the bicategory of bold rewrites
  $ _L \BBoldRewrite $ is a bicategory of relations in
  Section \ref{sec:cartesian-bicategory-spans-of-cospans};

\item[Appendix \ref{sec:cartesian-bicategories}] 

  Bicategories of relations, which are used in Section
  \ref{sec:cartesian-bicategory-spans-of-cospans};
  
\item[Appendix \ref{sec:duality-in-bicategories}] 
  
  Duality in bicategories, which is used for the
  bicategories in both Sections
  \ref{sec:compact-closed-bicategory-spans-of-cospans} and
  \ref{sec:cartesian-bicategory-spans-of-cospans};
    
\item[Appendix \ref{sec:adhesive-categories}] 

  Adhesive categories, which are the result of axiomatizing
  rewriting theory and, though useful throughout because of
  the central role played by rewriting in this thesis, we
  pack most of the required information into the next
  section of the appendix;
  
\item[Appendix \ref{sec:topoi}] 

  Topoi, which are used throughout.
  
\end{description}

\begin{figure}[h!]
  \centering
  \fbox{
    \begin{minipage}{1.0\linewidth}
      \centering
      \vspace{2em}
      \begin{tikzpicture}
        \node [rectangle,draw=black] (2) at (-3,4) {
          Ch.~1 Structured cospans};
        \node [rectangle,draw=black] (3) at (3,4) {
          Ch.~2 Double pushout rewriting};
        \node [rectangle,draw=black] (4) at (-5,2) {
          Ch.~3 Fine rewriting};
        \node [rectangle,draw=black] (5) at (5,2) {
          Ch.~4 Bold rewriting};
        \node [rectangle,draw=black] (6) at (0,0) {
          Ch.~5 Structural induction for rewriting};
        \draw[graph]
        (4) edge[shorten >=0.1cm] (6.west) 
        (2) edge[shorten >=0.1cm] (4)
        (2) edge[shorten >=0.3cm] (5)
        (3) edge[shorten >=0.3cm] (4)
        (3) edge[shorten >=0.1cm] (5)
        (2) edge[shorten >=0.1cm] (6)
        (3) edge[shorten >=0.1cm] (6);
      \end{tikzpicture}
      \caption{Chapter dependencies}
      \label{fig:chapter-dependency}
    \end{minipage}
  }
\end{figure}

\section*{Global notation and assumptions}
\label{sec:set-notation}

As usual in mathematics, we systematically select notation
to orient the reader.  Here, we lay out the logic behind our
notation.

\begin{description}
\item[Categorical structures]

  Three types of categorical structure are used throughout:
  \begin{itemize}
  \item

    Categories and topoi, which we denote with the font
    $ \A $, $ \X $, $ \C $, $ \T $.  $ \A $ and $ \X $
    are used are topoi used to build structured cospans, $
    \C $ is a generic category, and $ \T $ is a generic
    topos.
    
  \item

    Bicategories, which we denote with bold font $ \CC $.
    The two most important bicategories for us are $
    \FFineRewrite $ and $ \BBoldRewrite $.
    
  \item

    Double categories, which we denote with blackboard bold
    font $ \CCC $. The two most important double categories
    for us are $ \FFFineRewrite $ and $ \BBBoldRewrite $.
    
  \end{itemize}
  
\item[Objects]

  Objects in a category are denoted by lower case letters.
  The most common categories we work with are labeled as $
  \A $ and $ \X $ and we refer to their respective objects
  are $ a,b,c,\dotsc $ and $ \dotsc x,y,z $.
  
\item[Arrows]

  Both categories and graphs frequent these pages. To
  distinguish whether a drawing is of a graph or a diagram
  in a category, look at the arrow tips.  An arrow in a
  category uses
  \[
  \begin{tikzpicture}
    \draw [cd] (0,0) to (0.5,0);
  \end{tikzpicture}
  \]
  while an arrow in a graph uses
  \[
  \begin{tikzpicture}
    \draw [graph] (0,0) to (0.5,0);
  \end{tikzpicture}
  \]
  Also, we reserve tailed arrows 
  \[
  \begin{tikzpicture}
    \draw [cd,>->] (0,0) to (0.5,0);
  \end{tikzpicture}
  \] 
  to mean a monic arrow in a category.  We do not often
  refer to specific arrows, but when we do, we use lower
  case letters $ f,g,h, $ etc. Occasionally, if an arrow is
  of particular importance we distinguish it with a lower
  case Greek letter.
  
\item[2-arrows] 

  We refer to 2-arrows in higher categories using Greek
  letters.  In particular, when using $ \lambda $, $ \rho $,
  $ \alpha $ without explicitly stating what they are, then
  they are monoidal coherence maps for left unity, right unity, and
  associativity.  
  
\item[Rewrite relation]

  Central to the theory of rewriting is the `rewriting
  relation'.  This is built in two steps from a given rewriting
  system.  First, $ \dderiv{a}{b} $ says that $ a $ can be
  rewritten into $ b $ by applying a single rewrite rule.
  The rewriting relation, which we denote by $ \deriv{}{} $,
  is the reflexive and transitive closure of $ \dderiv{}{} $.

\item[Systems and networks] 

  Our work concerns both open and closed systems, the former
  more prominently. Therefore, when using the term system or
  network without a qualifier, we mean `open' by default.
  Only when we explicitly say `closed' do we mean a closed
  system or network. 

\item[Cospans of graphs]

  Many graph morphisms are drawn throughout the following
  pages. Too much detail tends to clutter the drawings, so
  we leverage the geometry of the page to suggest the
  definition of the morphisms. Only in cases where this
  suggestion lacks clarity do we explicitly spell out the
  meanings. In Chapter \ref{sec:dpo-rewriting}, we see the
  drawing
  \begin{center}
    \begin{tikzpicture}
    \draw [rounded corners] (-1,-1) rectangle (1,2);
    \node (a)  at (0,0)     {$ \bullet $};
    \node (a') at (0,1)     {$ \bullet $};
    \node (lr) at (1.1,0.5) {$  $};
    \draw [graph] (a) to (a');
    \draw [rounded corners] (2,-1) rectangle (4,2);
    \node (b)  at (3,0)     {$ \bullet $};
    \node (b') at (3,1)     {$ \bullet $};
    \node (ml) at (1.9,0.5) {$  $};
    \node (mr) at (4.1,0.5) {$  $};
    \draw [rounded corners] (5,-1) rectangle (7,2);
    \node (c)  at (6,0.5)   {$ \bullet $};
    \node (rl) at (4.9,0.5) {$  $};
    \draw [cd]
      (ml) edge (lr)
      (mr) edge (rl); 
  \end{tikzpicture}
  \end{center}
  which consists of three directed graphs each in a box and
  two graph morphisms. Note the differences between the
  arrow heads.  Also, the definitions of these graph
  morphisms are not explicitly spelled out, but they are
  apparent nonetheless because of the location of the graph
  nodes on the page.  
  
\end{description}


\chapter{Structured cospans}
\label{sec:structured-cospans}

Researchers traditionally study \emph{closed systems}, those
that lack the ability to interact with outside agents.  A
research program initiated by John Baez centralizes the
study of \emph{open systems}, those with ability to interact
with outside agents
\cite{
  baez-courser-markov,
  baez-fong-pollard_markov,
  baez-linear-networks,
  baez-pollard-reaction-networks}.

In this thesis, our primary example of an open system is an
\emph{open graphs}.  We use them throughout to illustrate
new concepts and definitions. For this reason, we start with
a set theoretical definition of open graphs and modify our
understanding of them in parallel to building our structured
cospan formalism. We use this approach to provide a concrete
example to ground us through the development of our
theory. Open graphs are not new
\cite{dixon-duncan-kiss_open-graphs,gadducci_ind-graph-transf},
but our structured cospan perspective is new.

\begin{definition}[Open and closed graphs]
\label{def:open-graph-informal}
  An \defn{open graph} $ G \bydef (E,N,r,s,t,I,O) $ is a
  directed reflexive multi-graph $ (E,N,r,s,t) $ equipped
  with two non-empty subsets $ I,O \subseteq N $ of
  nodes. We call elements of $ I $ the inputs of the graph
  and the elements of $ O $ the outputs of the graph. In the
  case that $ I $ and $ O $ are empty, then we call $ G $ a
  \defn{closed graph}.
\end{definition}

This definition deserves several remarks. First, note that a
closed graph is simply a graph in the classical sense.  We
append the qualifier `closed' to highlight the fact that it
has no inputs or outputs. Second, the terms `input' and
`output' \emph{do not} imply causal structure or
directionality.  Finally, the author prefers reflexive
graphs to non-reflexive graphs because \emph{(i)} they are
truncated simplicial sets so have nicer topological
features, \emph{(ii)} unlike graphs, the ``points'' of
reflexive graphs (the nodes) correspond to maps from the
terminal object, and \emph{(iii)} the category of reflexive
graphs $ \RGraph $ is monadic over $ \Set $.

An open graph is illustrated in Figure
\ref{fig:open-graph-informal}. We suppress the reflexive
loops in drawing reflexive graphs. In that figure, the nodes
are $a$, $b$, $c$, $d$, $e$, and $f$. The input nodes are
$ a $, $ b $, and $ d $. The output nodes are $ c $ and
$ d $.

\begin{figure}
  \centering
  \fbox{
  \begin{minipage}{1.0\linewidth}
  \[
    \begin{tikzpicture}
      \node (1) at (0,0) {{$ _a \bullet $}};
      \node (2) at (2,0) {{$ \bullet_b $}};
      \node (3) at (1,1) {{$ _c \bullet $}};
      \node (4) at (0,2) {{$ _d \bullet $}};
      \node (5) at (2,2) {{$ \bullet_e $}};
      \node (6) at (1,3) {{$ \bullet_f $}};
      \draw [graph]
      (1) edge (2)
      (2) edge (3)
      (5) edge (3)
      (6) edge (4)
      (4) edge (5)
      (4) edge (1); 
      \node () at (4,2.5) {$ a,b,d \in I $};
      \node () at (4,0.5) {$ c,d \in O  $};
    \end{tikzpicture}
  \]
  \caption{An open graph}
  \label{fig:open-graph-informal}
  \end{minipage}
}\end{figure}

A non-exhaustive list of other systems of interest to Baez's
research program are Petri nets
\cite{master-open-petri-nets}, Markov processes
\cite{baez-courser-markov}, passive linear circuits
\cite{baez-linear-networks}, reaction networks
\cite{baez-pollard-reaction-networks}, the ZX-calculus
\cite{cicala-zx}. See Figure \ref{fig:various-networks} for
depictions of these various systems. These systems are
traditionally studied as closed systems. To ``open'' them,
they need an interface along which compatible systems can be
connected. This is the purpose of introducing the input and
output nodes.

\begin{figure}
  \centering
  \fbox{
  \begin{minipage}{\linewidth}
  \begin{minipage}{0.45\linewidth}
    \centering
    \includegraphics[scale=0.3]{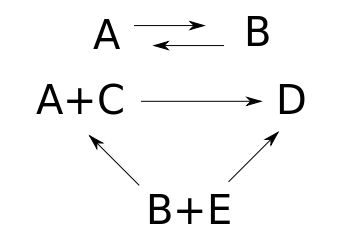}\\
    \textsc{Chemical Reaction Network}
  \end{minipage}
  \begin{minipage}{0.45\linewidth}
    \centering
    \includegraphics[scale=0.1]{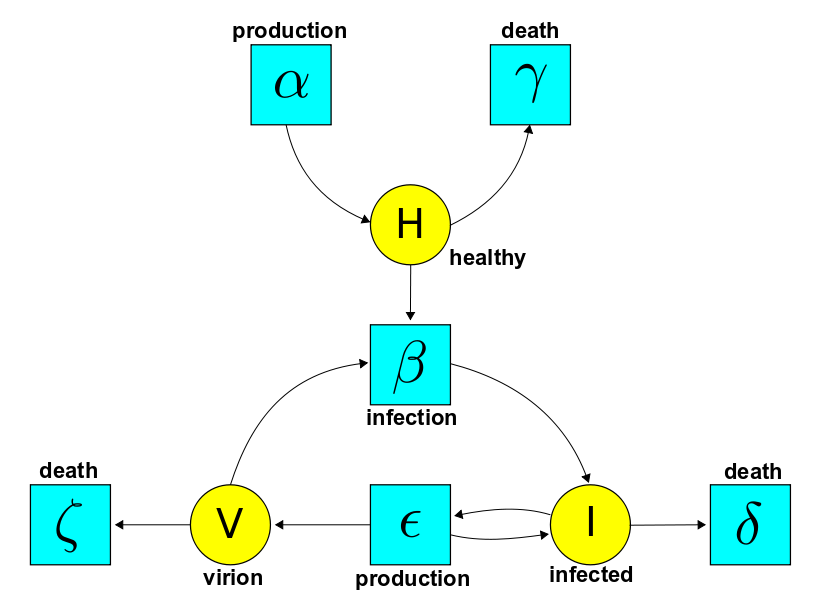}\\
    \textsc{Petri Net}
  \end{minipage}
  \linebreak
  \begin{minipage}{0.45\linewidth}
    \centering
    \includegraphics[scale=0.1]{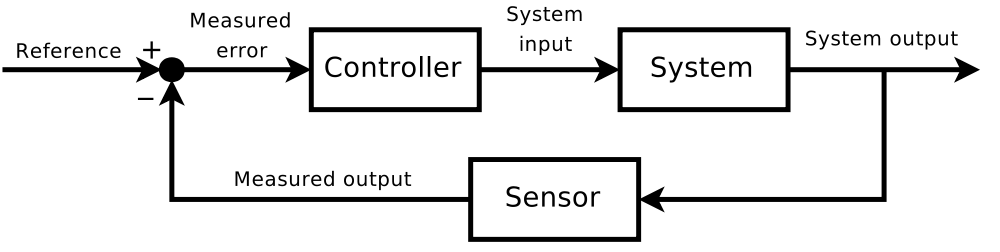}\\
    \textsc{Control Network}
  \end{minipage}
  \begin{minipage}{0.45\linewidth}
    \centering
    \includegraphics[scale=0.1]{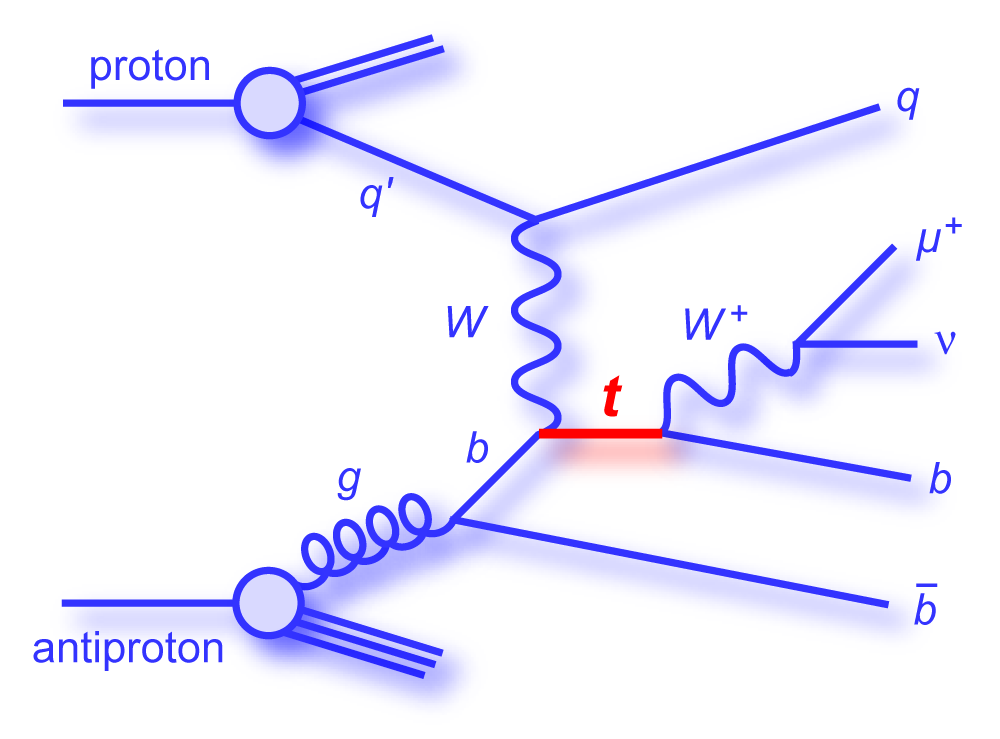}\\
    \textsc{Feynman Diagram}
  \end{minipage}
  \caption{Various systems}
  \label{fig:various-networks}
  \end{minipage}
  }
\end{figure}

\pagebreak

\begin{example}[Connecting open graphs]
  \label{ex:connecting-open-graphs}
  We can connect together two open graphs when the
  inputs of one is equal to the outputs of the
  other. To illustrate this, consider the open
  graphs
  \[
  \begin{tikzpicture}
    \begin{scope}
    \node (1) at (0,0) {{$ _a \bullet $}};
    \node (2) at (0,2) {{$ _b \bullet $}};
    \node (3) at (2,2) {{$ \bullet_d $}};
    \node (4) at (2,0) {{$ \bullet_e $}};
    \node (5) at (1,4) {{$ \bullet_c $}};
    \draw [graph]
    (1) edge[] (2)
    (2) edge[] (3)
    (3) edge[] (1)
    (4) edge[] (3)
    (3) edge[] (5)
    (5) edge[] (2);
    \node () at (4,3) {$ a,c,d \in \mathtt{ inputs} $ };
    \node () at (4,1) {$ d,e \in \mathtt{ outputs} $};
    \end{scope}
    \begin{scope}[shift={(8,0)}]
    \node (1) at (0,3) {{$ _{d} \bullet $}};
    \node (2) at (0,1) {{$ _{e} \bullet $}};
    \node (3) at (2,2) {{$ \bullet_{f} $}};
    \draw [graph] 
    (1) edge[] (2)
    (2) edge[] (3)
    (3) edge[] (1); 
    \node () at (4,3) {$ d,e \in \mathtt{ inputs} $};
    \node () at (4,1) {$ e,f \in \mathtt{ outputs} $};  
    \end{scope}
  \end{tikzpicture}
  \]
  Connect these open graphs by gluing like-nodes
  together. This results in 
  \[
  \begin{tikzpicture}
    \node (1) at (0,0) {{$ _a \bullet $}};
    \node (2) at (0,2) {{$ _b \bullet $}};
    \node (3) at (2,2) {{$ \bullet_{d} $}};
    \node (4) at (2,0) {{$ \bullet_{e} $}};
    \node (5) at (1,4) {{$ \bullet_c $}};
    \node (6) at (4,1) {{$ \bullet_f$}};
    \draw [graph]
      (1) edge[] (2)
      (2) edge[] (3)
      (3) edge[] (1)
      (3) edge[bend left=15] (4)
      (3) edge[] (5)
      (5) edge[] (2)
      (4) edge[bend left=15] (3)
      (4) edge[] (6)
      (6) edge[] (3); 
    \node () at (6,3) {$ a,c,d \in I $};
    \node () at (6,1) {$ e,f \in O $};
  \end{tikzpicture}
  \]
\end{example}

The operation of gluing open graphs together can be defined
set theoretically.  However, we prefer to define this
operation as a composition of morphisms in an appropriate
category. This ensconces the gluing operation as
fundamental. In this chapter, we discuss the formalism of
structured cospans. These offer a language better equipped
to describe open systems than do more traditional set theory
styled definitions.

A \defn{cospan} in a category is a pair of arrows
\[
  x \to y \gets z
\]
with common codomain. A structured cospan is a special sort
of cospan. The rough idea of a structured cospan is that the
common codomain is some system and the domains are the
inputs and outputs of that system. In other words, we
interpret a structured cospan as the diagram
\[
  \mathtt{inputs} \xto{\iota}
  \mathtt{system} \xgets{\omega}
  \mathtt{outputs}
\]
where $ \iota $ chooses the part of the system to serve as
inputs and $ \omega $ chooses the outputs.  Section
\ref{sec:StrCsp-as-Arrows} is devoted to constructing a
category whose arrows are the structured cospans.

The motivation for using composition to describe the
connection of open systems also has a philosophical
component. We study systems through the lens of
compositionality.  A pithy description of compositionality
is ``the opposite of emergent''. That is, the behavior
of a compositional system is fully determined by the
behavior of the sub-systems comprising it. Here
are some examples of compositionality.

\begin{itemize}
\item

  Set functions are compositional.  Given functions
  $ f \from X \to Y $ and $ g \from Y \to Z $, then we know
  everything about the composite function
  $ g \circ f \from X \to Z $.
  
\item

  Given two computer programs, one that approximates a
  smooth solution to a given differential equation and
  another that outputs a visualization of a smooth function,
  then we know that the composite program renders a drawing
  of an approximate smooth solution to a given differential
  equation.
  
\item

  If one manufacturing line inputs various wood pulp and
  outputs paper and another manufacturing line inputs paper
  and outputs notebooks, then the composite manufacturing
  line inputs wood-pulp and outputs notebooks.

\end{itemize}

Already, we have mentioned examples of systems we are
interested in. Each of these examples are useful tools applied by
various scientists or engineers.  Naturally, each formalism
has developed idiosyncrasies, inflating the differences
between them. However, there remain clear qualitative
similarities between the different formalisms that ought to
be exploited to transport results determined with one
formalism to results about another formalism.  As
cross-disciplinary collaboration increases, the importance
of translating between formalisms grows. We propose the
structured cospan serve as a medium of translation.

The analogy to languages runs deeper than mere
translation. Indeed, languages have both syntactic and
semantic content. Systems do too. We intend to clearly
delineate between the two. William Lawvere's `functorial
semantics' \cite{lawvere_func-semantics} serves as
inspiration. This is a categorical approach to universal
algebra where algebraic theories are separated into two
pieces: one category capturing the structure and properties
of a type of algebraic object $ A $ and another category
containing the ``stuff'' underlying an instance of $ A $
(e.g.~the underlying set).  A functor between the categories
selects an instance of an algebraic object of type $ A $. In
our context, we separate open systems, not algebraic object
types, into two categories. One category contains the system
syntax and the other category the system semantics. In this
perspective, categories with structured cospans for arrows
serve as syntax and their compositionality manifests as
a functor into a category of semantics.

In this chapter, we define structured cospans and two
categories in which they appear.  The first categories
$ {}_L \Cospan $ was introduced by Baez and Courser
\cite{baez-courser_str-csp} and encodes open systems are
arrows.  The second category $ {}_L \StrCsp $ houses the
morphisms of structured cospans which are used to define
their rewriting. To ensure that structured cospans support a
good theory of rewriting, we show that $ _L\StrCsp $ is a
topos.  We close this chapter by combining $ _L\Cospan $ and
$ _L\StrCsp $ into a double category. Most of the work in
this chapter appeared previously in \cite{cicala_rewriting}.


\section{Structured cospans as a compositional framework}
\label{sec:StrCsp-as-Arrows}

In this section, we define a structured cospan and fit them
as arrows into a category.  There are several technical
components we need to consider, each serving a purpose. So
instead of providing the definition here, we build up to it
discussing each technicality along the way.

When thinking of a structured cospan, we have in mind
a diagram
\[
  \mathtt{inputs} \to
  \mathtt{system} \gets
  \mathtt{outputs}
\]
sitting in a category. Often, the inputs and outputs of a
system will be sets. For sets to exist in the same category
as the systems---as is needed to have the inputs, outputs,
and system represented in the same diagram---we consider sets as
degenerate systems.  For instance, the open graph
  \[
  \begin{tikzpicture}
    \begin{scope}
    \node (1) at (0,0) {{$ _a \bullet $}};
    \node (2) at (0,2) {{$ _b \bullet $}};
    \node (3) at (2,2) {{$ \bullet_d $}};
    \node (4) at (2,0) {{$ \bullet_e $}};
    \node (5) at (1,4) {{$ \bullet_c $}};
    \draw [graph]
    (1) edge[] (2)
    (2) edge[] (3)
    (3) edge[] (1)
    (4) edge[] (3)
    (3) edge[] (5)
    (5) edge[] (2);
    \node () at (4,3) {$ a,c,d \in \mathtt{ inputs} $ };
    \node () at (4,1) {$ d,e \in \mathtt{ outputs} $};
    \end{scope}
  \end{tikzpicture}
\]
presented using Definition \ref{def:open-graph-informal} is
realized as the structured cospan
\begin{equation}
\label{eq:open-graph-as-strcsp} 
  \begin{tikzpicture}
    \begin{scope}
    \node at (0,0) {$ \bullet $};
    \node at (0,2) {$ \bullet  $};
    \node at (0,4) {$ \bullet $};
    \draw [rounded corners]
      (-0.5,-0.5) rectangle (0.5,4.5);
    \node (l) at (0.6,2) {};  
    \end{scope}
    \begin{scope}[shift={(2,0)}]
    \node (1) at (0,0) {{$ \bullet $}};
    \node (2) at (0,2) {{$ \bullet $}};
    \node (3) at (2,2) {{$ \bullet $}};
    \node (4) at (2,0) {{$ \bullet $}};
    \node (5) at (1,4) {{$ \bullet $}};
    \draw [graph] 
    (1) edge[] (2)
    (2) edge[] (3)
    (3) edge[] (1)
    (4) edge[] (3)
    (3) edge[] (5)
    (5) edge[] (2); 
    \draw [rounded corners]
      (-0.5,-0.5) rectangle (2.5,4.5);
    \node (ml) at (-0.6,2) {};
    \node (mr) at (2.6,2) {};
    \end{scope}
    \begin{scope}[shift={(6,0)}]
    \node at (0,0) {$ \bullet $};
    \node at (0,2) {$ \bullet $};
    \draw [rounded corners]
      (-0.5,-0.5) rectangle (0.5,4.5);
    \node (r) at (-0.6,2) {};
    \end{scope}
    \draw [cd] 
    (l) edge[] (ml)
    (r) edge[] (mr);
  \end{tikzpicture}
\end{equation}
Inside this picture, we have three graphs enclosed in the
boxes.  The left and right-most graphs are really just sets
considered as edgeless graphs or, in our parlance, as
``degenerate systems''.  The arrows between the graphs are
graph morphisms defined as suggested by the layout. These
arrows choose the components of the central graph to serve
as inputs and outputs.

To model open graphs with structured cospans, we
do not want to allow arbitrary graphs in the feet
of the cospan. We only want sets qua
edgeless graphs. To accomplish this, we define a
functor
\begin{equation}
\label{eq:edgeless-graph-functor}
  L \from \Set \to \RGraph
\end{equation}
that turns a set $ a $ into a graph $ La $ with
node set $ a $ and no non-reflexive edges. Now, the open graph
in \eqref{eq:open-graph-as-strcsp} has form
\[
  La \to x \gets Lb
\]
where $ a $ is a three element set, $ b $ is a two
element set, and $ x $ is the graph
\[
  \begin{tikzpicture}
    \node (1) at (0,0) {{$ _a \bullet $}};
    \node (2) at (0,2) {{$ _b \bullet $}};
    \node (3) at (2,2) {{$ \bullet_d $}};
    \node (4) at (2,0) {{$ \bullet_e $}};
    \node (5) at (1,4) {{$ \bullet^c $}};
    \draw [graph]
    (1) edge[] (2) 
    (2) edge[] (3) 
    (3) edge[] (1) 
    (4) edge[] (3) 
    (3) edge[] (5) 
    (5) edge[] (2);
  \end{tikzpicture}
\]

The functor $ L $ in \eqref{eq:edgeless-graph-functor} is
crucial to the definition of a structured cospan. To capture
open systems more general than open graphs, we allow $ L $
to be of type $ \A \to \X $ for categories $ \A $ and
$ \X $. Now, a structured cospan based on a functor
$ L \from \A \to \X $ is a cospan in $ \X $ of the form
$ La \to x \gets Lb $. We do not use this as a definition
because for rewriting we require more from $ L $, $ \A $,
and $ \X $.

One such need is to construct a category where structured
cospans $ La \to x \gets Lb $ are arrows. Hence, given
another structured cospan $ Lb \to y \gets Lc $, we need to
define the composite.  As is typical in cospan categories
\cite{benabou-bicategories}, we compose by pushout.  That
is, the composite of the structured cospans
\[
  La \to x \gets Lb
  \quad \text{ and } \quad
  Lb \to y \gets Lc
\]
is the structured cospan
\[
  La \to x +_{Lb} y \gets Lc
\]
Using this composition, we henceforth require
$ \X $ to have pushouts.

Let us unpack this composition.  We have a pair of
systems $ x $ and $ y $, where the outputs of
$ x $ are chosen by the arrow $ Lb \to x $ and the
inputs of $ y $ are chosen by the arrow
$ Lb \to y $.  Considered together, we have a span
$ x \gets Lb \to y $. The pushout of this span is
\[
  \begin{tikzpicture}
    \node (x)  at (0,0)   {$ x $};
    \node (y)  at (2,2)   {$ y $};
    \node (Lb) at (0,2)   {$ Lb $};
    \node (po) at (2,0)   {$ x +_{Lb} y $};
    \draw [cd]
    (Lb) edge[] (x) 
    (Lb) edge[] (y) 
    (x)  edge[] (po) 
    (y)  edge[] (po); 
    \draw (1.6,0.3) -- (1.6,0.4) -- (1.7,0.4);
  \end{tikzpicture}
\]
A useful intuition of this pushout is that the system
$ x +_{Lb} y $ is obtained by gluing the image of $ Lb $ in
$ x $ to the image of $ Lb $ in $ y $.  The composite system
$ x +_{Lb} y $ has inputs chosen by the composite
$ La \to x \to x +_{Lb} y $ and outputs chosen by the
composite $ Lc \to y \to x +_{Lb} y $. The composite
structured cospan is then
\[
  La \to x +_{Lb} y \gets Lc
\]

From this composition, a functor
$ L \from \A \to \X $ where $ \X $ has pushouts
gives a category whose objects are those of $ \A $
and whose arrows of type $ a \to b $ are
structured cospans $ La \to x \gets Lb $. For our
needs, however, we ask more of $ L $, $ A $, and $ X $.

In Chapter \ref{sec:dpo-rewriting}, we introduce a theory of
rewriting structured cospans.  To do so, we need a
\emph{topos}---discussed in Appendix \ref{sec:topoi}---in
which structured cospans are the objects. We find this topos
in Theorem \ref{thm:strcsp-istopos} and so our theory
requires the assumptions held there. Precisely, we need
$ L $ to be a pullback preserving left adjoint and for both
$ \A $ and $ \X $ to be topoi. Section
\ref{sec:StrCspAsObject} contains further discussion about
how these assumptions figure into our goal of modeling
systems. In the meantime, we fix these assumptions once and
for all.

Fix a adjunction
\[
  \adjunction{\A}{\X}{L}{R}{2}
\]
with $ L $ preserving pullbacks. How does our theory of
systems map onto this adjunction? Interpret the topos $ \X $
as a category whose objects are systems and whose arrows are
the homomorphism of systems. These systems are
\emph{closed}, in that they cannot interact with outside
agents, specifically other systems of the same type. To
provide a compositional structure to these systems, we
introduce a topos $ \A $ that we interpret as a category of
interfaces types and their morphisms. By transporting the
interface types along $ L $, we can include them in the
cospans with systems in $ \X $. The arrows of a structured
cospan equip a system with its interface. Once equipped with
a (non-empty) interface, a system is \emph{open} in that
they can interact with compatible systems. There is no
explicit role for $ R $. It is the properties of $ L $ that
exists in light of $ L $ being an adjunction that we
use. However, we can still interpret $ R $ as returning the
maximal (by inclusion) interface of a system. The existence
of $ R $ is a side-effect that we leverage in Theorem
\ref{thm:strcsp-istopos}.

Using the adjunction $ L \from \A \lrto \X \from R $ we
construct a compositional framework having systems as arrows
in a cospan category.  Composition of arrows uses pushout
which encodes connecting a pair of compatible
systems. Because cospans are too general for our needs, we
restrict our attention to structured cospans.

\begin{definition}[Structured cospan] \label{df:strcsp}
  A \defn{structured cospan} is a cospan of the
  form $ La \to x \gets Lb $.  When we want to
  emphasize $ L $, we use the term
  \defn{$ L $-structured cospans}.
\end{definition}

Structured cospans fit into two different categories that
are central to our theory. The first one, that we meet now,
was proved by Baez and Courser to actually be a category
\cite{baez-courser_str-csp}. To start, we define an
isomorphism of structured cospans from $ La \to x \gets Lb $
to $ La \to x' \gets Lb $ to be an invertible arrow
$ h \from x \to x' $ in $ \X $ that fits into the commuting
diagram
\[
  \begin{tikzpicture}
    \node (La) at (0,0)  {$ La $};
    \node (Lb) at (4,0)  {$ Lb $};
    \node (x)  at (2,2)  {$ x $};
    \node (x') at (2,-2) {$ x' $};
    \draw [cd]
    (La) edge[] node[above,left]{$ f $} (x)
    (Lb) edge[] node[above,right]{$ g $} (x)
    (La) edge[] node[below,left]{$ f' $} (x')
    (Lb) edge[] node[below,right]{$ g' $} (x')
    (x)  edge[] node[right]{$ h $} (x');
  \end{tikzpicture}
\]

\begin{definition} \label{def:LCsp}

  The category $ _{L} \Cospan $ has as objects the objects
  of $ \A $ and arrows $ a \to b $ are structured cospans
  $ La \to x \gets Lb $ up to isomorphism.
  
\end{definition}

Composing $ La \to x \gets Lb $ with
$ Lb \to y \gets Lc $ uses pushout
\[
  \begin{tikzpicture}
    \begin{scope}[]
    \node (1) at (0,0) {\( La \)};
    \node (2) at (2,1) {\( x +_{Lb} y \)};
    \node (3) at (4,0) {\( Lc \)};
    \draw [cd] (1) edge (2);
    \draw [cd] (3) edge (2); 
    \end{scope}
  \end{tikzpicture}
\]
In a sense, pushouts glue objects together making
it a sensible way to model system connection.  The
composition above is like connecting along $ Lb
$. Using structured cospans, we now improve our earlier definition of open graphs.

\begin{example} \label{ex:open-graph-as-arrow}

  There is a geometric morphism (see Definition
  \ref{def:geometric-morphism})
  \[
    \adjunction{\Set}{\RGraph}{L}{R}{4}
  \]
  where $ Rx $ is the node set of graph $ x $ and
  $ La $ is the edgeless graph with node set
  $ a $. An \defn{open graph} is a cospan
  $ La \to x \gets Lb $ for sets $ a $, $ b $, and
  graph $ x $. An illustrated example, with the
  reflexive loops suppressed, is
  \[
    \begin{tikzpicture}
      \begin{scope} 
      \node (1) at (0,1) { \( \bullet \) };
      \node (2) at (0,0) { \( \bullet \) };
      \draw [rounded corners]
        (-0.5,-0.5) rectangle (0.5,1.5);
      \end{scope}
      \begin{scope}[shift={(2,0)}] 
      \node (1) at (0,1) {\( \bullet \)};
      \node (2) at (0,0) {\( \bullet \)};
      \node (3) at (1,0.5) {\( \bullet  \)};
      \node (4) at (2,0.5) {\( \bullet  \)};
      \draw [graph]
        (1) edge (3)
        (2) edge (3)
        (3) edge (4);
      \draw [rounded corners]
        (-0.5,-0.5) rectangle (2.5,1.5);
      \end{scope}
      \begin{scope}[shift={(6,0)}] 
      \node (1) at (0,0.5) {\( \bullet \)};
      \draw [rounded corners] (-0.5,-0.5) rectangle (0.5,1.5);
      \end{scope}
      \begin{scope} 
        \node (1) at (0.5,0.5) {};
        \node (2) at (1.5,0.5) {};
        \node (3) at (4.5,0.5) {};
        \node (4) at (5.5,0.5) {};
        \draw [->] (1) to (2);
        \draw [->] (4) to (3);
      \end{scope}
    \end{tikzpicture}
  \]
  The boxed items are graphs and the arrows
  between boxes are graph morphisms defined as
  suggested by the illustration.  In total, the
  three graphs and two graph morphisms make up a
  single open graph whose inputs and outputs are,
  respectively, the left and right-most graphs.
    
  Open graphs are compositional. For instance, we
  can compose
  \[
    \begin{tikzpicture}
      \begin{scope} 
      \node (1) at (0,1) { \( \bullet \) };
      \node (2) at (0,0) { \( \bullet \) };
      \draw [rounded corners]
        (-0.5,-0.5) rectangle (0.5,1.5);
      \end{scope}
      \begin{scope}[shift={(2,0)}] 
      \node (1) at (0,1) {\( \bullet \)};
      \node (2) at (0,0) {\( \bullet \)};
      \node (3) at (1,0.5) {\( \bullet  \)};
      \node (4) at (2,0.5) {\( {}_{j} \bullet  \)};
      \draw [graph]
        (1) edge (3)
        (2) edge (3)
        (3) edge (4);
      \draw [rounded corners]
        (-0.5,-0.5) rectangle (2.5,1.5);
      \end{scope}
      \begin{scope}[shift={(6,0)}] 
      \node (1) at (0,0.5) {\( \bullet_{j} \)};
      \draw [rounded corners]
        (-0.5,-0.5) rectangle (0.5,1.5);
      \end{scope}
      \begin{scope} 
      \node (1) at (0.5,0.5) {};
      \node (2) at (1.5,0.5) {};
      \node (3) at (4.5,0.5) {};
      \node (4) at (5.5,0.5) {};
      \draw [cd] (1) to (2);
      \draw [cd] (4) to (3);
      \end{scope}
    \end{tikzpicture}
  \]
  with
  \[
    \begin{tikzpicture}
      \begin{scope} 
      \node (1) at (0,0.5) { \( {}_{j} \bullet \) };
      \draw [rounded corners]
        (-0.5,-0.5) rectangle (0.5,1.5);
      \end{scope}
      \begin{scope}[shift={(2,0)}] 
      \node (1) at (0,0.5) {\( {}_{j} \bullet \)};
      \node (2) at (2,0) {\( \bullet \)};
      \node (3) at (2,0.5) {\( \bullet  \)};
      \node (4) at (2,1) {\( \bullet  \)};
      \draw [graph]
        (1) edge (2)
        (1) edge (3)
        (1) edge (4);
      \draw [rounded corners]
        (-0.5,-0.5) rectangle (2.5,1.5);
      \end{scope}
      \begin{scope}[shift={(6,0)}] 
      \node (2) at (0,0) {\( \bullet \)};
      \node (3) at (0,0.5) {\( \bullet  \)};
      \node (4) at (0,1) {\( \bullet  \)};
      \draw [rounded corners]
        (-0.5,-0.5) rectangle (0.5,1.5);
      \end{scope}
      \begin{scope} 
      \node (1) at (0.5,0.5) {};
      \node (2) at (1.5,0.5) {};
      \node (3) at (4.5,0.5) {};
      \node (4) at (5.5,0.5) {};
      \draw [cd]
        (1) edge (2)
        (4) edge (3);
      \end{scope}
    \end{tikzpicture}
  \]
  to get the open graph
  \[
    \begin{tikzpicture}
      \begin{scope} 
      \node (1) at (0,1) { \( \bullet \) };
      \node (2) at (0,0) { \( \bullet \) };
      \draw [rounded corners]
        (-0.5,-0.5) rectangle (0.5,1.5);
      \end{scope}
      \begin{scope}[shift={(2,0)}] 
      \node (1) at (0,1) {\( \bullet \)};
      \node (2) at (0,0) {\( \bullet \)};
      \node (3) at (1,0.5) {\( \bullet  \)};
      \node (4) at (2,0.5) {\( {}^{j} \bullet  \)};
      \node (5) at (3,0) {\( \bullet \)};
      \node (6) at (3,0.5) {\( \bullet  \)};
      \node (7) at (3,1) {\( \bullet  \)};
      \draw [graph]
        (1) edge (3)
        (2) edge (3)
        (3) edge (4)
        (4) edge (5)
        (4) edge (6)
        (4) edge (7);
      \draw [rounded corners]
        (-0.5,-0.5) rectangle (3.5,1.5);
      \end{scope}
      \begin{scope}[shift={(7,0)}] 
      \node (2) at (0,0) {\( \bullet \)};
      \node (3) at (0,0.5) {\( \bullet  \)};
      \node (4) at (0,1) {\( \bullet  \)};
      \draw [rounded corners]
        (-0.5,-0.5) rectangle (0.5,1.5);
      \end{scope}
      \begin{scope} 
      \node (1) at (0.5,0.5) {};
      \node (2) at (1.5,0.5) {};
      \node (3) at (5.5,0.5) {};
      \node (4) at (6.5,0.5) {};
      \draw [cd]
        (1) edge (2)
        (4) edge (3);
      \end{scope}
    \end{tikzpicture}
  \]
  which is obtained by composing structured cospans. Note
  that this is composition in $ _L \Cospan $ for $ L \from
  \Set \to \RGraph $. 
\end{example}

In general, interpret $ La \to x \gets Lb $ as
consisting of a system $ x $ equipped with an
interface comprised of inputs $ La $ and outputs
$ Lb $. The terms `input' and `output' do not
imply any causal structure.  They are merely meant
to provide a way to connect a pair of systems
along a proper subset of their interfaces.
Decomposing the interface into inputs and outputs
distinguish the portion of the interface that is
used in a connection from the portion of the
interface that is not used. The specific
connection formed determines the interface
decomposition and every possibility exists as an
arrow in $ _{L} \Cospan $. This is reflected in the
fact that $ _{L} \Cospan $ is compact closed (see Definition \ref{def:DualPairCat}).

\begin{proposition} \label{thm:cospan-compact-closed}
  $ ( _L \Cospan , \otimes , 0_\A ) $, where
  \begin{align*}
    \label{eq:cospan-compact-closed}
    & \otimes \from _L\Cospan  \times {}_L\Cospan  \to {}_L\Cospan \\
    a & \otimes b \mapsto a + b  \\
    \left( La \xto{f} x \xgets{g} Lb \right) & \otimes
                                               \left( La' \xto{f'} x' \xgets{g'} Lb' \right) 
                                               \mapsto
                                               \left(
                                               L(a+a') \xto{f+f'} x+x'
                                               \xgets{g+g'} L(b+b') \right)
  \end{align*}
  is compact closed.
\end{proposition}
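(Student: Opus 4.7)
The plan is to build the compact closed structure on $(_L\Cospan, \otimes, 0_\A)$ by transporting the known compact closed structure of a cospan category, using the fact that $L$, as a left adjoint, preserves all colimits.

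First I would verify that $\otimes$ is actually a well-defined functor. Since $L$ preserves coproducts, there are canonical isomorphisms $L(a+a') \iso La + La'$, so the proposed apex of the tensor of two cospans makes sense. Well-definedness on isomorphism classes is immediate, and functoriality with respect to composition reduces to showing that the canonical comparison between the coproduct of two pushouts and the pushout of the coproducts is invertible; this follows because coproducts commute with pushouts in any cocomplete category. Next, I would establish the symmetric monoidal structure with unit $0_\A$. The key fact is that $L0_\A \iso 0_\X$ (again since $L$ preserves colimits), which gives the unit structured cospan as $L0_\A \to La \gets La$ with the left leg the unique map from an initial object and the right leg an identity. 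Associators, unitors, and the symmetry are induced by the corresponding isomorphisms of coproducts in $\A$, packaged as structured cospans whose apexes are the relevant coproducts and whose legs are induced by applying $L$ to the coproduct isomorphisms; coherence then reduces to coherence for the cocartesian structure on $\A$.

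For compact closure, each object $a$ is self-dual. I would define the unit $\eta_a \from 0_\A \to a \otimes a$ as the structured cospan
\[
L0_\A \to L(a+a) \xgets{\id} L(a+a)
\]
where the left leg is the unique arrow from the initial object, and the counit $\epsilon_a \from a \otimes a \to 0_\A$ as
\[
L(a+a) \xto{\id} L(a+a) \gets L0_\A.
\]
These are manifestly natural candidates; the triangle (snake) identities then need to be verified. Here the computation is eased by the fact that in each snake composite, one of the legs being composed is an identity, so the relevant pushout
\[
\begin{tikzpicture}
  \node (1) at (0,0) {$L(a+a)$};
  \node (2) at (3,0) {$L(a+a+a)$};
  \node (3) at (0,-2) {$L(a+a+a)$};
  \node (4) at (3,-2) {$P$};
  \draw [cd] (1) edge (2) (1) edge (3) (2) edge (4) (3) edge (4);
\end{tikzpicture}
\]
collapses, via $L$-preserved coproducts, to $L(a+a+a)$, and the composite structured cospan reduces (up to the coherence isomorphisms) to the identity cospan $La \xto{\id} La \xgets{\id} La$.

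The main obstacle is bookkeeping in the snake identities: one must track carefully how the coherence isomorphisms $L(a+a) \iso La + La$ and $L(a+a+a) \iso La + La + La$ interact with the pushouts used in composition, and show that the resulting pushout squares are absolute in the sense that identity legs force the composite to collapse to an identity. Once this collapse is verified, compact closure follows. An alternative, cleaner route is to observe that $(\Cospan(\X), +, 0_\X)$ is compact closed (a standard fact, since every object in a cospan category of a category with finite colimits is self-dual), that $L$ induces a strong symmetric monoidal functor into $\Cospan(\X)$ restricted to the image of $\A$, and that the subcategory $_L\Cospan$ inherits the compact closed structure because the units and counits defined above lie in its image; I would likely present the proof along these lines to avoid grinding through the identities by hand.
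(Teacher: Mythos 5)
Your choice of unit and counit is incorrect, and the snake identity will not hold with those maps. You propose $\eta_a = (L0_\A \to L(a+a) \xgets{\id} L(a+a))$ and $\epsilon_a = (L(a+a) \xto{\id} L(a+a) \gets L0_\A)$, both with apex $L(a+a)$. Tracking the snake composite $(\epsilon_a \otimes \id_a) \circ (\id_a \otimes \eta_a)$: the pushout does indeed collapse, since the two legs being glued are coherence isomorphisms, but the resulting cospan is
\[
La \xto{\iota_1} L(a+a+a) \xgets{\iota_3} La,
\]
where $\iota_1$, $\iota_3$ are the first and third coproduct inclusions. This is \emph{not} isomorphic to the identity cospan $La \xto{\id} La \xgets{\id} La$: an isomorphism of cospans would require an invertible arrow $L(a+a+a) \to La$ commuting with the legs, and no such map exists when $a \neq 0$ (the inclusions $\iota_1, \iota_3$ are split monos, not isomorphisms). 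Your claim that the composite ``reduces (up to the coherence isomorphisms) to the identity cospan'' is exactly where the argument breaks.

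The paper's proof instead takes the evaluation to be $L(a+a) \xto{L\nabla} La \xgets{!} L0_\A$ and the coevaluation to be $L0_\A \xto{!} La \xgets{L\nabla} L(a+a)$, where $\nabla \from a+a \to a$ is the codiagonal. The apex is $La$, not $L(a+a)$. With these maps the pushout arising in the snake composite is over the span $La+La \xgets{\id + L\nabla} L(a+a+a) \xto{L\nabla + \id} La+La$, which (by the pushout square exhibited in Lemma \ref{lem:PushoutDiagram}, applied under $L$) has apex $La$ with both structure maps $L\nabla$; the composite legs then really are identities. Your ``cleaner route'' via the known compact closedness of $\Cospan(\X)$ is sound as a strategy, and the standard cospan-category duality does use the codiagonal in exactly this way, so if you pursued that approach faithfully you would land on the paper's maps. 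As written, however, the explicit formulas you give are wrong and the triangle identities fail for them.
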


\begin{proof}
  It is a matter of course to show that
  $ ( _L \Cospan , \otimes , 0_\A ) $ is a
  symmetric monoidal category. Though, we point
  out that we are being a bit casual with our
  definition of $ \otimes $.  The tensor
  product actually returns the structured cospan
  \[
      L(a+a') \xto{\sigma^{-1}_a}   La+La'
      {}      \xto{f+f'}            x+x'
      {}      \xgets{g+g'}          Lb+Lb'
      {}      \xgets{\sigma^{-1}_b} L(b+b')
  \]
  where $ \sigma $ is the structure map arising
  from the preservation of $ + $ by $ L
  $. The symmetry rests on the fact that both $ (\A,+,0_\A)
  $ and $ (\X,+,0_\X) $ are symmetric monoidal categories.
  
  Regarding compactness, each object is self-dual.
  For an object $ a $, the evaluation map
  $ a \otimes a \to I $ is
  \[
    L(a+a) \xto{L\nabla} La \xgets{!} L0_\A
  \]
  and the coevaluation map is
  \[
    L0 \xto{!} La \xgets{L\nabla} L(a+a)
  \]
  where $ \nabla $ denotes the
  codiagonal. Checking the triangle identities are
  straightforward.
 \end{proof}


\section{Structured cospans as objects}
\label{sec:StrCspAsObject}

Lack and Sobocinski provided a way to rewrite objects in
what are called adhesive categories
\cite{lack-sobo_adhesive-cats}.  To provide a theory of
rewriting structured cospans using adhesive categories, we
need a category in which structured cospans are the
objects. This, of course, requires a notion of structured
cospan morphism. 

\begin{definition} \label{df:morph-of-strcsp}
  A morphism between $ L $-structured cospans
  \[ La \to x \gets Lb \text{ and } Lc \to y \gets Ld \] is a
  triple of arrows $ ( f,g,h ) $ that fit into the commuting
  diagram
  \[
    \begin{tikzpicture}
      \node (1) at (-3,2) {\( La \)};
      \node (2) at (0,2) {\( x \)};
      \node (3) at (3,2) {\( Lb \)};
      \node (4) at (-3,0) {\( Lc \)};
      \node (5) at (0,0) {\( y \)};
      \node (6) at (3,0) {\( Ld \)};
      \draw [cd]
        (1) edge[] (2)
        (3) edge[] (2)
        (4) edge[] (5)
        (6) edge[] (5)
        (1) edge[] node[left]{$ Lf $} (4)
        (2) edge[] node[left]{$g$} (5)
        (3) edge[] node[right]{$Lh$} (6);
    \end{tikzpicture}
  \]
  There is a category $ _{L}\StrCsp $ whose objects are
  structured cospans and arrows are these morphisms.
\end{definition}

We now come to the first of our main results: that
$ _{L}\StrCsp $ is a topos. This result is critical for our
theory because, as each topos is adhesive
\cite{lack-sobo_topoi-adhesive}, it allows the introduction
of rewriting onto structured cospans.

\begin{theorem} \label{thm:strcsp-istopos}
  For any adjunction 
  \[
    \adjunction{\A}{\X}{L}{R}{2}
  \]
  between topoi $ \A $ and $ \X $, the category
  $ _{L}\StrCsp $ is a topos.
\end{theorem}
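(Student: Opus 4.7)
The plan is to identify $_L\StrCsp$ with an Artin gluing of a finite-limit-preserving functor between topoi, then invoke the classical theorem that every such gluing is itself a topos. Concretely, using the adjunction $L \dashv R$, a structured cospan $La \to x \gets Lb$ transposes to a pair of maps $a \to Rx$ and $b \to Rx$ in $\A$, which assemble into a single map $(a, b) \to (Rx, Rx)$ in $\A \times \A$. Writing $\Delta \from \A \to \A \times \A$ for the diagonal functor and $F \bydef \Delta \circ R \from \X \to \A \times \A$, this bijection should extend to an equivalence $_L\StrCsp \simeq (\id_{\A \times \A} \downarrow F)$. First I would verify this equivalence, in particular checking that the commuting squares appearing in morphisms of $_L\StrCsp$ translate, under transposition, to the commutativity required in the comma category --- a routine unwinding of the unit and counit equations.

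With the equivalence in hand, it remains to show that the comma category $(\id_{\A \times \A} \downarrow F)$ is a topos. For this, I would invoke the Artin gluing theorem: whenever $G \from \E \to \T$ is a finite-limit-preserving functor between topoi, the comma category $(\id_\T \downarrow G)$ is again a topos. The hypotheses are easily checked with $G = F$, $\E = \X$, and $\T = \A \times \A$: the product $\A \times \A$ is a topos as a finite product of topoi, $\X$ is a topos by assumption, and $F = \Delta \circ R$ is left exact because $R$ is a right adjoint (and so preserves all limits) while the diagonal $\Delta$ is both left and right adjoint to the product and coproduct on $\A$ (and so preserves finite limits).

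The main obstacle, such as it is, lies in carefully setting up the equivalence $_L\StrCsp \simeq (\id_{\A \times \A} \downarrow F)$ at the level of morphisms; the topos-theoretic content is then packaged entirely in the Artin gluing theorem. One observation worth making: this approach never uses the hypothesis that $L$ preserves pullbacks, although that hypothesis would become necessary if one tried instead to compute finite limits in $_L\StrCsp$ pointwise and then verify the remaining topos structure (exponentials and a subobject classifier) by hand.
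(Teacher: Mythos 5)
Your proof is correct and follows essentially the same route as the paper's: transpose a structured cospan $La \to x \gets Lb$ along the adjunction to a cospan $a \to Rx \gets b$ in $\A$, identify $_{L}\StrCsp$ with the comma category $(\A \times \A \downarrow \Delta R)$, and invoke Artin gluing. The only cosmetic difference is that the paper justifies the gluing hypothesis by noting $\Delta R$ is a right adjoint (since $\Delta$ is right adjoint to coproduct), whereas you verify the weaker, and sufficient, condition that $\Delta R$ preserves finite limits; your closing observation that pullback-preservation of $L$ is not needed here is also correct and matches the theorem as stated.
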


\begin{proof}
  Note that $ _{L}\StrCsp $ is equivalent to the category
  whose objects are cospans of form
  \(
    a \to Rx \gets b
  \)
  and morphisms are triples $ ( f,g,h ) $ fitting into the
  commuting diagram
  \[
    \begin{tikzpicture}
      \node (1) at (-4,2) {\( w \)};
      \node (2) at (0,2)  {\( Ra \)};
      \node (3) at (4,2)  {\( x \)};
      \node (4) at (-4,0) {\( y \)};
      \node (5) at (0,0)  {\( Rb \)};
      \node (6) at (4,0)  {\( z \)};
      \draw [cd] 
      (1) edge[] (2)
      (3) edge[] (2)
      (4) edge[] (5)
      (6) edge[] (5)
      (1) edge[] node[left]{$ f $} (4)
      (2) edge[] node[left]{$ Rg $} (5)
      (3) edge[] node[right]{$ h $} (6); 
    \end{tikzpicture}
  \]
  This, in turn, is equivalent to the comma category
  $ ( \A \times \A \downarrow \Delta R ) $, where
  $ \Delta \from \A \to \A \times \A $ is the diagonal
  functor. But this diagonal functor is right adjoint to the
  coproduct functor. Therefore, $ \Delta R $ is also a right
  adjoint so $ ( \A \times \A \downarrow \Delta R ) $ is an
  instance of Artin gluing \cite{wraith_artin-glue}, hence a
  topos.
\end{proof}

We now show that constructing $ _{L}\StrCsp $ is functorial
in $ L $. The codomain of this functor is comprised of topoi
and adjoint pairs, the left of which preserves pullbacks.
We call this category $ \AdjTopos $. The domain this functor
is the arrow category of $ \AdjTopos $, which we denote by
$ [\bullet \to \bullet , \AdjTopos] $. In this category, the
objects are adjunctions between topoi, the left adjoint
preserving pullbacks, and an arrow from
$ L \from \A \lrto \X \from R $ to
$ L' \from \A' \lrto \X' \from R' $ is a pair of adjoints
$ F \dashv G $ and $ F' \dashv G' $ fitting into a diagram
\begin{center}
  \begin{tikzpicture}
    \node (1) at (-2,2) {\( \A \)};
    \node (2) at (-2,-2) {\( \A' \)};
    \node (3) at (2,2) {\( \X \)};
    \node (4) at (2,-2) {\( \X' \)};
    \draw[cd]
    (1) edge[bend left] node[right]{$ G $} (2)
    (1) edge[bend left] node[above]{$ L $} (3)
    (2) edge[bend left] node[left]{$ F $} (1)
    (2) edge[bend left] node[above]{$ L' $} (4)
    (3) edge[bend left] node[below]{$ R $} (1)
    (3) edge[bend left] node[right]{$ G' $} (4)
    (4) edge[bend left] node[below]{$ R' $} (2)
    (4) edge[bend left] node[left]{$ F' $} (3); 
    \node (5) at (0,-2) {{\( \bot \)}};
    \node (6) at (0,2)  {{\( \bot \)}};
    \node (7) at (-2,0) {{\( \dashv \)}};
    \node (8) at (2,0)  {{\( \dashv \)}};
  \end{tikzpicture}
\end{center}
such that $ LF=F'L' $ and $ GR=R'G' $.

\pagebreak 

\begin{theorem} \label{thm:strcsp-isfunctorial}
  There is a functor
  \[
    _{(-)}\StrCsp \from
    [ \bullet \to \bullet , \AdjTopos ]
    \to
    \AdjTopos
  \]
  defined by  
  \[
    \begin{tikzpicture}
      \begin{scope}
        \node (1) at (2,2) {\( \X \)};
        \node (2) at (2,-2) {\( \X' \)};
        \node (3) at (-2,2) {\( \A \)};
        \node (4) at (-2,-2) {\( \A' \)};
        \draw[cd]
        (1) edge[bend left] node[right]{$ G' $} (2)
        (2) edge[bend left] node[left]{$ F' $} (1) 
        (3) edge[bend left] node[above]{$ L $} (1) 
        (1) edge[bend left] node[below]{$ R $} (3) 
        (2) edge[bend left] node[below]{$ R' $} (4)
        (4) edge[bend left] node[above]{$ L' $} (2) 
        (4) edge[bend left] node[left]{$ F $} (3) 
        (3) edge[bend left] node[right]{$ G $} (4); 
        \node (5) at (0,-2) {{\( \bot \)}};
        \node (6) at (0,2)  {{\( \bot \)}};
        \node (7) at (-2,0) {{\( \dashv \)}};
        \node (8) at (2,0)  {{\( \dashv \)}};
      \end{scope}
      \begin{scope}[shift={(4,0)}]
        \node (1) at (0,0) { $\xmapsto{ _{(-)}\StrCsp }$ };
      \end{scope}
      \begin{scope}[shift={(6,0)}]
      \node (1) at (0,0) {\( _{L}\StrCsp \)};
      \node (2) at (4,0) {\( _{L'}\StrCsp \)};
      \node (3) at (2,0) {{\( \perp \)}};
      \draw [cd]
        (1) edge[bend left] node[above]{$\Theta$} (2)
        (2) edge[bend left] node[below]{$\Theta'$} (1);  
      \end{scope}
    \end{tikzpicture}
  \]
  which is in turn given by
  \[
    \begin{tikzpicture}
      \begin{scope}
      \node (1) at (-2,2) {\( La \)};
      \node (2) at (0,2) {\( x \)};
      \node (3) at (2,2) {\( Lb \)};
      \node (4) at (-2,0) {\( Lc \)};
      \node (5) at (0,0) {\( y \)};
      \node (6) at (2,0) {\( Ld \)};
      \draw [cd] (1) to node [above] {$ m $}  (2);
      \draw [cd] (3) to node [above] {$ n $}  (2);
      \draw [cd] (4) to node [below] {$ o $}  (5);
      \draw [cd] (6) to node [below] {$ p $}  (5);
      \draw [cd] (1) to node [left]  {$ Lf $} (4);
      \draw [cd] (2) to node [left]  {$ g $}  (5);
      \draw [cd] (3) to node [right] {$ Lh $} (6);
      \end{scope}
      \begin{scope}[shift={(3,0)}]
        \node (1) at (0,1) { $ \xmapsto{ \Theta } $ };
      \end{scope}
      \begin{scope}[shift={(6.5,0)}]
      \node (1) at (-2,2) {\( L'G'a \)};
      \node (2) at (0,2)  {\( Gx \)};
      \node (3) at (2,2)  {\( L'G'b \)};
      \node (4) at (-2,0) {\( L'G'c \)};
      \node (5) at (0,0)  {\( Gy \)};
      \node (6) at (2,0)  {\( L'G'd \)};
      \draw [cd] (1) to node [above] {$ Gm $}    (2);
      \draw [cd] (3) to node [above] {$ Gn $}    (2);
      \draw [cd] (4) to node [below] {$ Go $}    (5);
      \draw [cd] (6) to node [below] {$ Gp $}    (5);
      \draw [cd] (1) to node [left]  {$ L'G'f $} (4);
      \draw [cd] (2) to node [left]  {$ Gg $}    (5);
      \draw [cd] (3) to node [right] {$ L'G'h $} (6);  
      \end{scope}
    \end{tikzpicture}
  \]
  and
  \[
    \begin{tikzpicture}
      \begin{scope}
      \node (1) at (-2,2) {\( L'a' \)};
      \node (2) at (0,2)  {\( x' \)};
      \node (3) at (2,2)  {\( L'b' \)};
      \node (4) at (-2,0) {\( L'c' \)};
      \node (5) at (0,0)  {\( y' \)};
      \node (6) at (2,0)  {\( L'd' \)};
      \draw [cd] (1) edge node [above] {$ m' $} (2);
      \draw [cd] (3) edge node [above] {$ n' $} (2);
      \draw [cd] (4) edge node [above] {$ o' $} (5);
      \draw [cd] (6) edge node [above] {$ p' $} (5);
      \draw [cd] (1) edge node [left]  {$ L'f' $} (4);
      \draw [cd] (2) edge node [left]  {$ g' $} (5);
      \draw [cd] (3) edge node [left]  {$ L'h' $} (6);
      \end{scope}
      \begin{scope}[shift={(3,0)}]
      \node (1) at (0,1) { $ \xmapsto{ \Theta' } $ };
      \end{scope}
      \begin{scope}[shift={(6.5,0)}]
      \node (1) at (-2,2) {\( LF'a' \)};
      \node (2) at (0,2) {\( Fx' \)};
      \node (3) at (2,2) {\( LF'b' \)};
      \node (4) at (-2,0) {\( LF'c' \)};
      \node (5) at (0,0) {\( Fy' \)};
      \node (6) at (2,0) {\( LF'd' \)};
      \draw [cd] (1) edge node[above]{$ Fm $}   (2);
      \draw [cd] (3) edge node[above]{$ Fn' $}  (2);
      \draw [cd] (4) edge node[above]{$ Fo' $}  (5);
      \draw [cd] (6) edge node[above]{$ Fp' $}  (5);
      \draw [cd] (1) edge node[left]{$ LF'f' $} (4);
      \draw [cd] (2) edge node[left]{$ Fg' $}   (5);
      \draw [cd] (3) edge node[left]{$ LF'h' $} (6);  
      \end{scope}
    \end{tikzpicture}
  \]  
\end{theorem}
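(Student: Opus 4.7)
The plan is to verify the claim in four interlocking steps: well-definedness of $\Theta$ and $\Theta'$, the adjunction between them, preservation of pullbacks, and functoriality of the whole assignment $L \mapsto {}_L\StrCsp$.

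First I would check that $\Theta$ is a well-defined functor $_L\StrCsp \to {}_{L'}\StrCsp$. On an object $La \to x \gets Lb$, the prescription applies one functor to the apex $x$ and another to the feet $a,b$; to recast the result as an $L'$-structured cospan one needs the comparison $L'G \cong G'L$ (reading the decorations as intended). This comparison is precisely the mate of the equality $LF = F'L'$ under the four adjunctions in play, and the companion equality $GR = R'G'$ ensures it is invertible. On morphisms of structured cospans, $\Theta$ simply applies the component functors to the defining commuting diagram, and the result still commutes because functors preserve commutative diagrams. The functor $\Theta'$ is set up symmetrically using the opposite adjoints.

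Next I would build the adjunction from the componentwise adjunctions $F \dashv G$ and $F' \dashv G'$. For each structured cospan, the unit has feet-legs drawn from the unit of $F \dashv G$ and apex-leg drawn from the unit of $F' \dashv G'$, and the counit is defined dually; naturality and the triangle identities transfer level-by-level from the component adjunctions, since every square that needs to commute decomposes into one square living in $\A$ (or $\A'$) and one square living in $\X$ (or $\X'$). Pullback preservation then follows either from the pointwise formula for pullbacks in ${}_L\StrCsp$ furnished by the comma-category description in the proof of Theorem \ref{thm:strcsp-istopos} together with pullback preservation by the relevant right adjoints, or more cheaply from the observation that one of $\Theta$, $\Theta'$ is a right adjoint once the adjunction is in place, and hence automatically preserves all limits.

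Finally, functoriality of $_{(-)}\StrCsp$ itself is routine: identity arrows in $[\bullet\to\bullet,\AdjTopos]$ correspond to identity adjunctions, which induce identity $\Theta$ and $\Theta'$; composition is respected because the whole construction is a pointwise application of the component functors to the diagram defining a structured cospan. The main obstacle will be the first step, keeping track of four adjunctions together with two commuting squares of functors and extracting from the mates calculus a comparison $L'G \cong G'L$ strong enough to let $\Theta$ land in ${}_{L'}\StrCsp$ without introducing extra coherence data. Once that comparison is established, every subsequent verification reduces to a routine diagram chase in $\A$ and $\X$ separately.
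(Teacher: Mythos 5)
Your overall skeleton — well-definedness, componentwise construction of the adjunction from $F \dashv G$ and $F' \dashv G'$, then pullback preservation, then functoriality of $_{(-)}\StrCsp$ — matches the paper's approach, which exhibits the hom-set bijection explicitly from the units and counits of the two given adjunctions and then handles pullbacks pointwise. However, your treatment of pullback preservation has a genuine error. The definition of $\AdjTopos$ requires the \emph{left} adjoint of the pair to preserve pullbacks; that is a substantive hypothesis to check, not something that comes for free. Your ``cheaper'' option — that one of $\Theta$, $\Theta'$ is a right adjoint, hence automatically preserves all limits — verifies the wrong thing: the right adjoint's limit preservation is free but irrelevant to landing in $\AdjTopos$, whereas it is the \emph{left} adjoint whose pullback preservation must be established. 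Your first option misattributes the needed property to ``the relevant right adjoints'': in fact the left adjoint of the pair is the one built pointwise from $L$, $F$, $F'$, and those are all \emph{left} adjoints of their respective adjunctions; the assumption that those left adjoints preserve pullbacks — built into $\AdjTopos$ and the arrow category — is exactly what the paper invokes. You have the roles of the two adjoints reversed at precisely the step where the distinction matters.

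A smaller point: the invertibility of the comparison $L'G \to G'L$, which you claim follows from $GR = R'G'$, is neither clearly true nor actually needed. A mere natural transformation in one direction suffices to recast the image of an $L$-structured cospan as an $L'$-structured cospan, so imposing invertibility is a spurious extra condition rather than a missing one; it doesn't damage the argument but shouldn't be asserted without proof.
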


\begin{proof}
  In light of Theorem \ref{thm:strcsp-istopos}, it suffices
  to show that $ \Theta \dashv \Theta' $ gives an
  adjunction and $ \Theta $ preserves pushouts.

  Denote the structured cospans
  \[
    La \xto{m} x \xgets{n} Lb
  \]
  in $ _{ L }\StrCsp $ by $ \ell $ and  
  \[
    L'a' \xto{m'} x' \xgets{n'} L'b'
  \]
  in $ _{ L' }\StrCsp $ by $ \ell' $. Denote the unit and
  counit for $F \dashv G$ by $ \eta $, $ \varepsilon $ and
  for $ F' \dashv G' $ by $ \eta' $, $ \varepsilon' $.  The
  assignments
  \begin{align*}
    \left(
      ( f,g,h ) \from \ell \to \Theta' \ell'
      \right)
    & \mapsto
    \left(
      ( \varepsilon' \circ F'f , \varepsilon \circ Fg , \varepsilon'
      \circ F'h )
      \from \Theta \ell \to \ell'
      \right) \\
      \left(
      ( f',g',h' ) \from \Theta \ell \to \ell'
      \right)
    & \mapsto
      \left(
      ( G'f' \circ \eta', Gg' \circ \eta , G'h' \circ \eta' )
      \from \ell \to \Theta' \ell'
      \right) 
  \end{align*}
  give a bijection
  $ \hom ( \Theta \ell , \ell' ) \simeq \hom ( \ell ,
  \Theta' \ell' ) $. The naturality of $ \ell $ and
  $ \ell' $ rest on natural maps $ \eta $, $ \varepsilon $,
  $ \eta' $, and $ \varepsilon' $. The left adjoint
  $ \Theta' $ preserves finite pullbacks because they are taken
  pointwise and $ L $, $ F $, and $ F' $ all preserve finite
  limits.
\end{proof}

The arrows $ _{L}\StrCsp \to _{L'}\StrCsp $ that we are
interested in act on the systems and their interfaces.

\begin{definition} \label{def:str-csp-functor}
  Fix a pair of structured cospan categories $ _{L}
  \StrCsp $ and $ _{L'} \StrCsp $ using the adjunctions
  \[
    \raisebox{-0.5\height}{\adjunction{\A}{\X}{L}{R}{2}}
    \quad \text{ and } \quad
    \raisebox{-0.5\height}{\adjunction{\A'}{\X'}{L'}{R'}{2}}
  \]
  with $ L $ and $ L' $ preserving pullbacks.  A
  \defn{structured cospan functor} of type
  \[ _{L}\StrCsp \to _{L'}\StrCsp \] is a pair of
  finitely continuous and cocontinuous functors
  $ F \from \X \to \X ' $ and
  $ G \from \A \to \A' $ such that the diagrams
  \[
    \begin{tikzpicture}
      \node (A) at (0,2) {$ \A $};
      \node (X) at (2,2) {$ \X $};
      \node (A') at (0,0) {$ \A' $};
      \node (X') at (2,0) {$ \X ' $};
      \draw [cd] (A)  to node[above]{$ L $}   (X);
      \draw [cd] (A') to node[below]{$ L' $}  (X');
      \draw [cd] (A)  to node[left]{$ G $}    (A');
      \draw [cd] (X)  to node[right]{$ F $}   (X');
    \end{tikzpicture}
    \quad \quad
    \begin{tikzpicture}
      \node (A) at (0,2) {$ \A $};
      \node (X) at (2,2) {$ \X $};
      \node (A') at (0,0) {$ \A' $};
      \node (X') at (2,0) {$ \X ' $};
      \draw [cd] (X) to node[above]{$ R $}   (A);
      \draw [cd] (X') to node[below]{$ R' $} (A');
      \draw [cd] (A) to node[left]{$ G $}    (A');
      \draw [cd] (X) to node[right]{$ F $}  (X');
    \end{tikzpicture}
  \]
  commute.
\end{definition}

Structured cospan categories and their morphisms form a
category which we leave unnamed.


\section{A double category of structured cospans}
\label{sec:DblCatOfStrCsp}

We use (pseudo) double categories (see Definition
\ref{def:dbl-cat}) to combine into a single instrument the
competing perspectives of structured cospans as objects and
as arrows.

\begin{definition}[Structured cospan double category]
  There is a double category
  $ _{L}\SSStrCsp $ given by the following data:
  \begin{itemize}
  \item the objects are the $ \A $-objects
  \item the vertical arrows $ a \to b $ are the $ \A $-arrows, 
  \item the horizontal arrows $ a \hto b$ are the cospans
    $ La \to x \gets Lb $, and
  \item the squares are the commuting diagrams
    \[
    \begin{tikzpicture}
    \node (1) at (0,2) {\( La \)};
    \node (2) at (2,2) {\( x \)};
    \node (3) at (4,2) {\( Lb \)};
    \node (4) at (0,0) {\( Lc \)};
    \node (5) at (2,0) {\( y \)};
    \node (6) at (4,0) {\( Ld \)};
    \draw [cd] (1) to node [] {\scriptsize{\(   \)}} (2);
    \draw [cd] (3) to node [] {\scriptsize{\(  \)}} (2);
    \draw [cd] (4) to node [] {\scriptsize{\(  \)}} (5);
    \draw [cd] (6) to node [] {\scriptsize{\(  \)}} (5);
    \draw [cd] (1) to node [left] {\scriptsize{\( Lf \)}} (4);
    \draw [cd] (2) to node [left] {\scriptsize{\( g \)}} (5);
    \draw [cd] (3) to node [right] {\scriptsize{\( Lh \)}} (6);
    \end{tikzpicture}
  \]
  \end{itemize}
\end{definition}

Baez and Courser proved that this truly is a double category
\cite[Cor.~3.9]{cicala_spans-cospans}. Moreover, when $ \A $
and $ \X $ are cocartesian, their coproducts can be used to
define a symmetric monoidal structure on $ _{L} \SSStrCsp
$. The meaning of this structure is that the disjoint union
of two systems can be considered a single system. The
following example illustrates the squares and tensor
product.

\begin{example}
  Consider the double category $ _L \SSStrCsp $ where $ L $
  is left adjoint to the underlying node functor
  $ R \from \RGraph \to \Set $. A square in this double
  category is a diagram in $ \RGraph $ such as
  \begin{center}
    \begin{tikzpicture}
      \begin{scope}[shift={(0,5)}] 
        \node (a) at (0,2) {$ \bullet $};
        \node (b) at (0,0) {$ \bullet $};
        \draw [rounded corners] (-1,-1) rectangle (1,3);
        \node (01r) at (1.1,1) {$  $};
        \node (01b) at (0,-1.1) {$  $};
      \end{scope}
      \begin{scope}[shift={(3,5)}] 
        \node (a) at (0,2) {$ \bullet $};
        \node (b) at (0,0) {$ \bullet $};
        \node (c) at (2,1) {$ \bullet $};
        \draw [graph] 
          (a) edge[] (c)
          (b) edge[] (c);
        \draw [rounded corners] (-1,-1) rectangle (3,3);
        \node (11l) at (-1.1,1) {$  $};
        \node (11r) at (3.1,1) {$  $};
        \node (11b) at (1,-1.1) {$  $};
      \end{scope}
      \begin{scope}[shift={(8,5)}] 
        \node (a) at (0,1) {$ \bullet $};
        \draw [rounded corners] (-1,-1) rectangle (1,3);
        \node (21l) at (-1.1,1) {$  $};
        \node (21b) at (0,-1.1) {$  $};
      \end{scope}
      \begin{scope}[shift={(0,0)}] 
        \node (b) at (0,1) {$ \bullet $};
        \draw [rounded corners] (-1,-1) rectangle (1,3);
        \node (00r) at (1.1,1) {$  $};
        \node (00t) at (0,3.1) {$  $};
      \end{scope}
      \begin{scope}[shift={(3,0)}] 
        \node (b) at (0,1) {$ \bullet $};
        \node (c) at (2,1) {$ \bullet $};
        \draw [graph] 
          (b) edge[] (c);
        \draw [rounded corners] (-1,-1) rectangle (3,3);
        \node (10l) at (-1.1,1) {$  $};
        \node (10r) at (3.1,1) {$  $};
        \node (10t) at (1,3.1) {$  $};
      \end{scope}
      \begin{scope}[shift={(8,0)}] 
        \node (a) at (0,1) {$ \bullet $};
        \draw [rounded corners] (-1,-1) rectangle (1,3);
        \node (20l) at (-1.1,1) {$  $};
        \node (20t) at (0,3.1) {$  $};
      \end{scope}
      \draw [cd]
        (01r) edge[] (11l)
        (21l) edge[] (11r)
        (00r) edge[] (10l)
        (20l) edge[] (10r)
        (01b) edge[] (00t)
        (11b) edge[] (10t)
        (21b) edge[] (20t); 
    \end{tikzpicture}
  \end{center}
  The tensor is the disjoint union of open graphs.  For
  example, tensoring
  \[
    \begin{tikzpicture}
      \begin{scope}
        \node (11) at (0,0) {$ \bullet $};
        \node (12) at (0,2) {$ \bullet $};
        \node (1r) at (1.1,1) {$ $};
        \draw [rounded corners] (-1,-1) rectangle (1,3);
      \end{scope}
      \begin{scope}[shift={(4,0)}]        
        \node (11) at (-1,0) {$ \bullet $};
        \node (21) at (1,0) {$ \bullet $};
        \node (12) at (-1,2) {$ \bullet $};
        \draw[graph]
          (11) edge (12)
          (11) edge (21)
          (12) edge (21);
        \node (2l) at (-2.1,1) {};
        \node (2r) at (2.1,1)  {};
        \draw [rounded corners] (-2,-1) rectangle (2,3);
      \end{scope}
      \begin{scope}[shift={(8,0)}]
        \node () at (0,1) {$ 0 $};
        \node (3l) at (-1.1,1) {$  $};
        \draw [rounded corners] (-1,-1) rectangle (1,3);
      \end{scope}
      \draw [cd]
        (1r) edge (2l)
        (3l) edge (2r);
    \end{tikzpicture}
  \]
  together with
  \[
    \begin{tikzpicture}
      \begin{scope}
        \node (11) at (0,1) {$ \bullet $};
        \node (1r) at (1.1,1) {$  $};
        \draw [rounded corners] (-1,-1) rectangle (1,3);
      \end{scope}
      \begin{scope}[shift={(4,0)}]        
        \node (11) at (-1,1) {$ \bullet $};
        \node (12) at (1,1) {$ \bullet $};
        \draw[graph] (11) edge (12);
        \node (2l) at (-2.1,1) {};
        \node (2r) at (2.1,1)  {};
        \draw [rounded corners] (-2,-1) rectangle (2,3);
      \end{scope}
      \begin{scope}[shift={(8,0)}]
        \node () at (0,1) {$ \bullet $};
        \node (3l) at (-1.1,1) {$  $};
        \draw [rounded corners] (-1,-1) rectangle (1,3);
      \end{scope}
      \draw[cd]
        (1r) edge (2l)
        (3l) edge (2r);  
    \end{tikzpicture}
  \]
  gives the open graph
  \begin{center}
    \begin{tikzpicture}
      \begin{scope}
        \node (11) at (0,0) {$ \bullet $};
        \node (12) at (0,1) {$ \bullet $};
        \node (13) at (0,2) {$ \bullet $};
        \node (1r) at (1.1,1) {$  $};
        \draw [rounded corners]
          (-1,-1) rectangle (1,3);
      \end{scope}
      \begin{scope}[shift={(4,0)}]
        \node (11) at (-1,0) {$ \bullet $};
        \node (21) at (1,0) {$ \bullet $};
        \node (12) at (-1,1) {$ \bullet $};
        \node (13) at (-1,2) {$ \bullet $};
        \node (23) at (1,2) {$ \bullet $};
        \draw[graph]
          (11) edge (12)
          (11) edge (21)
          (12) edge (21)
          (13) edge (23);
        \node (2l) at (-2.1,1) {};
        \node (2r) at (2.1,1)  {};  
        \draw [rounded corners] (-2,-1) rectangle (2,3);
      \end{scope}
      \begin{scope}[shift={(8,0)}]
        \node () at (0,2) {$ \bullet $};
        \node (3l) at (-1.1,1) {$  $};
        \draw [rounded corners] (-1,-1) rectangle (1,3);
      \end{scope}
      \draw[cd]
        (1r) edge (2l)
        (3l) edge (2r);  
    \end{tikzpicture}
  \end{center}
\end{example}

This double category is explored further by Baez and Courser
\cite{baez-courser_str-csp}.  For us, it is a nice structure
in which to simultaneously present the compositional role
and the object role of structured cospans.


\section{Spans of structured cospans}
\label{sec:spans-structured-cospans}

For this final section of the chapter, we define spans of
structured cospans. These are the objects that serve as
rewrite rules. We bring the two flavors of rewriting, fine
and bold, to structured cospans in Chapter
\ref{sec:fine-rewriting} and Chapter
\ref{sec:bold-rewriting}. This section segues to
those two chapters.

We continue to work with a adjunction
\[
  \adjunction{\A}{\X}{L}{R}{2}
\]
with $ L $ preserving pullbacks.

\begin{definition}
  \label{def:span-str-cospan}
  A \defn{span of structured cospans} is a commuting diagram
  \[
    \begin{tikzpicture}
      \begin{scope}
      \node (La) at (0,4) {$ La $};
      \node (x) at (2,4) {$ x $};
      \node (La') at (4,4) {$ La' $};
      \node (Lb) at (0,2) {$ Lb $};
      \node (y) at (2,2) {$ y $};
      \node (Lb') at (4,2) {$ Lb' $};
      \node (Lc) at (0,0) {$ Lc $};
      \node (z) at (2,0) {$ z $};
      \node (Lc') at (4,0) {$ Lc' $};
      \draw[cd]
      (La) edge node[]{$  $} (x)
      (La') edge node[]{$  $} (x)
      (Lb) edge node[]{$  $} (y)
      (Lb') edge node[]{$  $} (y)
      (Lc) edge node[]{$  $} (z)
      (Lc') edge node[]{$  $} (z)
      (Lb) edge node[]{$  $} (La)
      (Lb) edge node[]{$  $} (Lc)
      (y) edge node[]{$  $} (x)
      (y) edge node[]{$  $} (z)
      (Lb') edge node[]{$  $} (La')
      (Lb') edge node[]{$  $} (Lc');
    \end{scope}
    \end{tikzpicture}
  \]
 \end{definition}

 Spans of cospans (not structured cospans) were considered
 by Kissinger in his thesis \cite{kissinger_pictures} and
 also by Grandis and Par\'{e} in
 \cite{grandis-pare_intercats}. They did not fit them into a
 categorical structure as we do in latter chapters.  For us,
 they will be squares in a double category for which we
 need to introduce horizontal composition $ \hcirc $ and
 vertical composition $ \vcirc $. The compositions use
 pushouts and pullbacks, which are only defined up to
 isomorphism. It follows that we will need to consider
 \emph{classes} of spans of cospans, the specifics of which
 we put off until introducing the fine rewriting and bold
 rewriting of structured cospans.  For now, we define a
 morphism of spans of structured cospans.
 
 \begin{definition}
   \label{def:morphism-span-str-cospans}
   A \defn{morphism of spans of structured cospans} from
  \[
    \begin{tikzpicture}
      \begin{scope}
      \node (a) at (0,4) {$ La $};
      \node (b) at (2,4) {$ x $};
      \node (c) at (4,4) {$ Lb $};
      \node (x) at (0,2) {$ Lc $};
      \node (y) at (2,2) {$ y $};
      \node (z) at (4,2) {$ Ld $};
      \node (d) at (0,0) {$ Le $};
      \node (e) at (2,0) {$ z$};
      \node (f) at (4,0) {$ Lf $};
      \draw[cd]
      (x) edge[] (a)
      (y) edge[] (b)
      (z) edge[] (c)
      (x) edge[] (d)
      (y) edge[] (e)
      (z) edge[] (f)
      (a) edge[] (b)
      (x) edge[] (y)
      (d) edge[] (e) 
      (c) edge[] (b) 
      (z) edge[] (y)
      (f) edge[] (e); 
    \end{scope}
    \node () at (5,2) {to};
    \begin{scope}[shift={(6,0)}]
      \node (a) at (0,4) {$ La $};
      \node (b) at (2,4) {$ x $};
      \node (c) at (4,4) {$ Lb $};
      \node (x) at (0,2) {$ Lc $};
      \node (y) at (2,2) {$ y' $};
      \node (z) at (4,2) {$ Ld $};
      \node (d) at (0,0) {$ Le $};
      \node (e) at (2,0) {$ z$};
      \node (f) at (4,0) {$ Lf $};
      \draw[cd]
      (x) edge[] (a)
      (y) edge[] (b)
      (z) edge[] (c)
      (x) edge[] (d)
      (y) edge[] (e)
      (z) edge[] (f)
      (a) edge[] (b)
      (x) edge[] (y)
      (d) edge[] (e) 
      (c) edge[] (b)
      (z) edge[] (y)
      (f) edge[] (e); 
    \end{scope}
  \end{tikzpicture}
  \]
  is an arrow $ \theta \from y \to y' $ that fits into a
  commuting diagram
\[
  \begin{tikzpicture}
    \node (La)  at (0,4)   {$ La $};
    \node (Lb)  at (1,2)   {$ Lb $};
    \node (Lc)  at (2,0)   {$ Lc $};
    \node (x)   at (3,4)   {$ x $};
    \node (y)   at (4,3.5) {$ y $};
    \node (y')  at (4,0.5) {$ y' $};
    \node (z)   at (5,0)   {$ z $};
    \node (La') at (6,4)   {$ La' $};
    \node (Lb') at (7,2)   {$ Lb' $};
    \node (Lc') at (8,0)   {$ Lc' $};
    \draw[cd]
    (Lb) edge node[]{$  $} (La)
    (Lb) edge node[]{$  $} (Lc)
    (Lb') edge node[]{$  $} (La')
    (Lb') edge node[]{$  $} (Lc')
    (y) edge node[]{$  $} (x)
    (y') edge node[]{$  $} (x)
    (y') edge node[]{$  $} (z)
    (La) edge node[]{$  $} (x)
    (La') edge node[]{$  $} (x)
    (Lb) edge node[]{$  $} (y')
    (Lb') edge node[]{$  $} (y');
    \draw[line width=0.5em,draw=white]
    (Lb) edge node[]{$  $} (y)
    (Lb') edge node[]{$  $} (y)
    (Lc) edge node[]{$  $} (z)
    (Lc') edge node[]{$  $} (z)
    (y) edge node[]{$  $} (z)
    (y) edge node[]{$  $} (y');
    \draw[cd]
    (Lb) edge node[]{$  $} (y)
    (Lb') edge node[]{$  $} (y)
    (Lc) edge node[]{$  $} (z)
    (Lc') edge node[]{$  $} (z)
    (y) edge node[]{$  $} (z)
    (y) edge node[left] {$ \theta $} (y');
  \end{tikzpicture}
  \]  
  If $ \theta $ is invertible, then the morphism
  is an isomorphism.
\end{definition}

We now have our syntactical device in hand.  As previously
stated, our goal is to incorporate rewriting.  To do so, we
spend the next chapter covering rewriting in a general
setting before moving on to focus solely on rewriting
structured cospans.


\chapter{Double pushout rewriting}
\label{sec:dpo-rewriting}

Our primary aim is to develop a theory of rewriting for open
systems. This goal fits into a larger program of studying
the ``linguistics'' of open systems. By this we mean
designating syntax and semantics. Rewriting lives on the
syntactical side of this divide.

To develop an intuition for rewriting, we provide a sliver
of its broader story. We chase from its beginnings in
linguistics to double pushout graph rewriting to the modern
day axioms of adhesive categories (see Appendix
\ref{sec:adhesive-categories}). The most important example
of an adhesive category for us is a topos. This fact
highlights the importance of structured cospans forming a
topos (Theorem \ref{thm:strcsp-istopos}) and it cements
our ability to rewrite open systems.


\section{A brief history of rewriting}
\label{sec:brief-history}

We prefer to sketch the theory of rewriting rather than
delve into details. For us, it is enough to build an intuition for
rewriting prior to introducing it to open systems via
structured cospans.

The theory of rewriting arose from Chomsky's work in formal
languages \cite{chomsky-linguistics}. He used rewriting as a
device to generate well-formed sentences.  While a
well-formed sentence must be grammatically sound, it need
not mean anything. Chomsky's \cite{chomsky-linguistics}
classic example of a grammatically sound but meaningless
sentence is
\begin{quote}
  \centering
  `Colorless green ideas sleep furiously.'
\end{quote}
That this sentence is syntactically good but semantically
bad helps to highlight the difference between syntax and
semantics.  How does one use rewriting, in Chomsky's sense,
to build that sentence?

We begin with a collection of rewrite rules:
\begin{enumerate}
\item a sentence is a noun phrase followed by a verb phrase;
\item a verb phrase consists of a verb and the option to
  follow with an adverb;
\item a noun phrase can be a noun with, optionally, a
  preceding determiner such as an article, demonstrative,
  quantifier, etc;
\item a noun phrase can be a noun with, optionally, a
  preceding adjective phrase or, optionally, a
  prepositional phrase.
\end{enumerate}
These rules are denoted as follows:
\begin{align*}
  \textrm{S}     & \to \textrm{NP VP}           \\
  \textrm{VP}    & \to \textrm{VP (Adv.)}       \\
  \textrm{NP}    & \to \textrm{(Det.) NP}          \\
  \textrm{NP}    & \to \textrm{(AP) NP (PP)}
\end{align*}

To derive a sentence, first apply a rule to S, then apply a
rule to that first step's output, and so on. Eventually, no
further rules are applicable at which point we are left with
a grammatically sound sentence. The derivation of the above
sentence is
\[
  \begin{tikzpicture}
    \node () at (0,0) {Colorless green ideas sleep
      furiously};
    \node (A1) at (-2.25,1) {AP};
    \node (A2) at (-1,1) {AP};
    \node (N1) at (0,1) {NP};
    \node (V1) at (1,1) {VP};
    \node (Adv) at (2.25,1) {Adv.};
    \node (NP1) at (-0.5,2) {NP};
    \node (VP) at (1.6,2) {VP};
    \node (NP2) at (-1.3,3) {NP};
    \node (S) at (0,4) {S};
    \draw [-] (S) to (NP2);
    \draw [-] (S) to (VP);
    \draw [-] (NP2) to (A1);
    \draw [-] (NP2) to (NP1);
    \draw [-] (NP1) to (A2);
    \draw [-] (NP1) to (N1);
    \draw [-] (VP) to (V1);
    \draw [-] (VP) to (Adv);
    \draw [-] (A1) to (-2.25,0.5);
    \draw [-] (A2) to (-1,0.5);
    \draw [-] (N1) to (0,0.5);
    \draw [-] (V1) to (1,0.5);
    \draw [-] (Adv) to (2.25,0.5);
  \end{tikzpicture}
\]

The success of rewriting in linguistics led to its use in
logic and mathematics. One evolution of rewriting into
mathematics is an \emph{Abstract Rewriting System}, a set
$ A $ together with a binary relation $ A \rel A $. An
element of this relation $ (a,b) $ means that you can
`reduce' $ a $ to $ b $.  Often, one studies the transitive
and reflexive closure of $ A \rel A $ which we denote by
adorning the arrow with an asterisk $ A \rel^\ast A $. This
so-called \defn{rewriting relation} $ \rel^\ast $ accounts
for reflexive and multi-step reductions.

\begin{example}
  The word problem can be expressed in terms of abstract
  rewriting. Let $ M $ be the set underlying a monoid, let
  $ FM $ be the free monoid on $ M $, and let $ \rel $ be a
  binary relation on $ FM $ given by
  $ x_1 \cdots x_n \rel x $, with $ x_i \in M $, whenever
  $ x_1 \cdots x_n = x $ in the monoid $ M $.  The word problem asks,
  ``if given words $ w $, $ w' $ in $ FM $, does
  $ w \rel^\ast w' $ and $ w' \rel^\ast w $''?
\end{example}

As just seen, we can determine whether syntactical
expressions, such as words in a free monoid, are equivalent
using rewriting.  It is in this sense, not in generating
sentences, that we are interested in rewriting.

We are particularly interested `structured cospans', a
syntactical device Baez and Courser introduced
\cite{baez-courser_str-csp} as a written language for open
systems. In order to develop a theory of rewriting for
structured cospans, we need more sophisticated machinery
than abstract rewriting systems.

A first step in that direction is graph rewriting, invented
by Ehrig, et.~al.~\cite{ehrig_graph-grammars}, where graphs
are used in place of words and sentences. Rules are used to
choose a subgraph and replace it with another
equivalent\footnote{
We mean `equivalent' in a semantic sense, thus varying with context.}
graph.  Ehrig, et.~al.~encode rewrite rules in spans of
graphs and apply a rule using pushouts. That is, a rule is a
span of graphs
\[
  \spn{\ell}{k}{r}
\]
We interpret this rule to say \emph{any instance of a
  sub-graph isomorphic to $ \ell $ can be replaced by the
  graph $ r $}.

Given such a rule and a graph $ g $, how do we identify a
copy of $ \ell $ inside of $ g $ and then replace it with
$ r $? The answer lies in the following definition.

\begin{definition}[Double pushout]
  A \defn{double pushout diagram} is a a pair of pushouts
\[
  \begin{tikzpicture}
    \node (1) at (0,2) {$ \ell $};
    \node (2) at (2,2) {$ d $};
    \node (3) at (4,2) {$ r $};
    \node (4) at (0,0) {$ g $};
    \node (5) at (2,0) {$ k $};
    \node (6) at (4,0) {$ h $};
    \draw[cd]
    (2) edge node[]{$  $} (1)
    (2) edge node[]{$  $} (3)
    (5) edge node[]{$  $} (4)
    (5) edge node[]{$  $} (6)
    (1) edge node[]{$  $} (4)
    (2) edge node[]{$  $} (5)
    (3) edge node[]{$  $} (6);
    \draw (0.3,0.4) -- (0.4,0.4) -- (0.4,0.3);
    \draw (3.7,0.4) -- (3.6,0.4) -- (3.6,0.3);
  \end{tikzpicture}
  \]
  that share an arrow as depicted. 
\end{definition}

While double pushout diagrams make sense in any category,
graph rewriting restricts to the category $ \Graph $ of
directed graphs and their morphisms. So in the diagram
above, each letter represents a graph and the arrows are
graph morphisms.  The rule being applied is
$ \spn{\ell}{k}{r} $ and the output of applying this rule to
$ g $ is the graph $ h $. The graph $ k $ is what holds
fixed as $ r $ replaces $ \ell $ and $ d $ is what holds
fixed as $ h $ replaces $ g $.  A concrete example of this
is given below in Equation \eqref{eq:example-dpo-diagram}.

Observers noticed that the mechanisms did not require
anything specific about graphs to work.  Pushouts and spans
are basic constructions in category theory, so it is
reasonable to consider extending double pushout rewriting to
a broader class of categories than just $ \Graph $.  There
were a number of attempts to axiomatize the important
properties of graph rewriting, the most prominent example
being `high level replacement systems'
\cite{ehrig_graph-to-hlr}, which we discuss in Section
\ref{sec:adhesive-categories}.  The drawback of HLRS's was
the sheer number of axioms. Lack and Sobocinski eventually
found a much shorter list of axioms. They called categories
that satisfy their axioms `adhesive categories'
\cite{lack-sobo_adhesive-cats} (see Appendix
\ref{sec:adhesive-categories}). Adhesive categories are
currently the most general setting in which rewriting theory
holds. However, we don't need the full generality of
adhesive categories and instead focus on topoi, each of
which is adhesive.

As mentioned earlier, there are different ways to interpret
what a \emph{rewriting} is. For instance, `a
rewriting is making a choice' or `a rewriting is a
simplification'.  The interpretation for our needs is `a
rewriting is to replace by a behaviorally indistinguishable
system'. The linguistic analogy is `synonym'.

Though the focus of this thesis is on the syntax
of open systems, the semantics of systems cannot
fully be ignored.  By the syntax of a system, we
mean the rules followed by its diagrammatic
representations. By the semantics of a system, we
mean the behavior of a system.  For example,
consider resistor systems. It is a syntactic
issue that the circuit diagram
\[
  \begin{tikzpicture}
    \draw (0,0) rectangle (1,0.5);
    \draw (2,0) rectangle (3,0.5);
    \draw (-1,0.25) to (0,0.25);
    \draw (1,0.25) to (2,0.25);
    \draw (3,0.25) to (4,0.25);
    \node at (0.5,0.25) {\scriptsize{$ 25 \Omega $}};
    \node at (2.5,0.25) {\scriptsize{$ 35 \Omega $}};
  \end{tikzpicture}
\]
makes sense but
\[
  \begin{tikzpicture}
    \draw (0,0) rectangle (1,0.5);
    \draw (0.2,-1) rectangle (0.8,-2);
    \draw (-1,0.25) to (0,0.25);
    \draw (1,0.25) to (2,0.25);
    \draw (0.5,0) to (0.5,-1);
    \draw (0.5,-2) to (0.5,-3);
    \node at (0.5,0.25) {\scriptsize{$ 25 \Omega $}};
    \node at (0.5,-1.5) {\scriptsize{$ 35 \Omega $}};    
  \end{tikzpicture}
\]
does not. A semantic consideration is that the resistor
circuit
\[
  \begin{tikzpicture}
    \draw (0,0) rectangle (1,0.5);
    \draw (2,0) rectangle (3,0.5);
    \draw (-1,0.25) to (0,0.25);
    \draw (1,0.25) to (2,0.25);
    \draw (3,0.25) to (4,0.25);
    \node at (0.5,0.25) {\scriptsize{$ 25 \Omega $}};
    \node at (2.5,0.25) {\scriptsize{$ 35 \Omega $}};
  \end{tikzpicture}
\]
behaves in the same exact way as the circuit
\[
  \begin{tikzpicture}
    \draw (0,0) rectangle (1,0.5);
    \draw (-1,0.25) to (0,0.25);
    \draw (1,0.25) to (2,0.25);
    \node at (0.5,0.25) {\scriptsize{$ 60 \Omega $}};
  \end{tikzpicture}
\]
This follows from Ohm's law.  Syntactically, these
are two different circuits. Building a
rewriting theory into our structured cospan
formalism provides our system syntax a mechanism
to recognize semantically (i.e.~behaviorly)
indistinguishable systems. 


\section{Rewriting in topoi}
\label{sec:rewriting-topoi}

Fix a topos $ \T $.  Rewriting starts with the notion of a
\defn{rewrite rule}, or simply \defn{rule}. In its
most general form, a rule is a span
\[
  \ell \gets k \to r
\]
in $ \T $. The arrows are left unnamed unless we need to
refer to them. For us, rules come in two flavors.  A
\defn{fine rule} is one in which both of the span arrows are
monic. A \defn{bold rule} is one without restriction on the
arrows.

\begin{remark}
  \label{rmk:rewrite-rules-monic}
  Both fine and bold approaches are considered in the
  rewriting literature, but often by the name `linear' and
  `non-linear', respectively. Fine rewriting is more common.
  Habel, Muller, and Plump compared these alternatives
  in the context of graph rewriting
  \cite{hmp_dpo-revisited}. The distinction between the two
  cases does not appear in this chapter, and everything we say
  carries through in either case. We do take care to ensure
  that constructions are well-defined in the monic case.
\end{remark}

The conceit of a rule is that $ r $ replaces $ \ell $ while
$ k $ identifies a subsystem of $ \ell $ that remains
fixed. For example, suppose we were modeling some system
using graphs where self-loops were meaningless. In the
introduction, we considered modeling the internet with a
graph with websites as nodes and links as edges. If we did
not care about websites with a link to itself, we would
introduce a rule that replaces a node with a loop with a
node

\begin{equation}
  \label{eq:squash-loop-rule}
  \begin{tikzpicture}
    \draw [rounded corners]
      (-1,-1) rectangle (1,2);
    \node (a)  at (0,0.5)   {$ \bullet $};
    \node (lr) at (1.1,0.5) {};  
    \draw [graph,loop above] (a) to (a);
    \draw [rounded corners]
      (2,-1) rectangle (4,2);
    \node (b)  at (3,0.5)   {$ \bullet $};
    \node (ml) at (1.9,0.5) {};
    \node (mr) at (4.1,0.5) {};
    \draw [rounded corners]
      (5,-1) rectangle (7,2);
    \node (c)  at (6,0.5) {$ \bullet $};
    \node (rl) at (4.9,0.5) {};
    \draw [cd]
      (ml) edge node[]{$  $} (lr)
      (mr) edge node[]{$  $} (rl); 
  \end{tikzpicture}
\end{equation}

For another example, suppose we had another system modeled
on graphs where an edge between two nodes is equivalent to
having a single node. This is captured with the rule
\begin{equation}
  \label{eq:edge-identity-rule}
   \begin{tikzpicture}
    \draw [rounded corners] (-1,-1) rectangle (1,2);
    \node (a)  at (0,0)     {$ \bullet $};
    \node (a') at (0,1)     {$ \bullet $};
    \node (lr) at (1.1,0.5) {$  $};
    \draw [graph] (a) to (a');
    \draw [rounded corners] (2,-1) rectangle (4,2);
    \node (b)  at (3,0)     {$ \bullet $};
    \node (b') at (3,1)     {$ \bullet $};
    \node (ml) at (1.9,0.5) {$  $};
    \node (mr) at (4.1,0.5) {$  $};
    \draw [rounded corners] (5,-1) rectangle (7,2);
    \node (c)  at (6,0.5)   {$ \bullet $};
    \node (rl) at (4.9,0.5) {$  $};
    \draw [cd]
      (ml) edge (lr)
      (mr) edge (rl); 
  \end{tikzpicture}
\end{equation}
This rule appears in the ZX-calculus example from Section
\ref{sec:zx-calculus}. Observe that the first example is a
fine rewrite and the second is a bold rewrite.

To \emph{apply} a rule $ \ell \gets k \to r $ to an object
$ g $, we require an arrow $ m \from \ell \to g $ such that
there exists a \defn{pushout complement}, an object $ d $
fitting into a pushout diagram
\[
  \begin{tikzpicture}
    \node (l) at (0,2) {$ \ell $};
    \node (k) at (2,2) {$ k $};
    \node (g) at (0,0) {$ g $};
    \node (d) at (2,0) {$ d $};
    \draw [cd]
       (k) edge node[]{$  $}      (l)
       (k) edge node[]{$  $}      (d)
       (l) edge node[left]{$ m $} (g)
       (d) edge node[]{$  $}      (g); 
    \draw (0.3,0.4) -- (0.4,0.4) -- (0.4,0.3);
  \end{tikzpicture}
\]
A pushout complement need not exist, but when it does and
the map $ k \to \ell $ is monic, then it is unique up to
isomorphism \cite[Lem.~15]{lack-sobo_adhesive-cats}.

For each application of a rule, we derive a new rule.

\begin{definition}[Derived rule]
  \label{def:derived-rule}
A \defn{derived rule} is any span $ g \gets d \to h $
fitting into the bottom row of the double pushout diagram
\[
  \begin{tikzpicture}
    \node (1) at (0,2) {$ \ell $};
    \node (2) at (2,2) {$ k $};
    \node (3) at (4,2) {$ r $};
    \node (4) at (0,0) {$ g $};
    \node (5) at (2,0) {$ d $};
    \node (6) at (4,0) {$ h $};
    \draw [cd]
    (2) edge (1)
    (2) edge (3)
    (5) edge (4)
    (5) edge (6)
    (1) edge (4)
    (2) edge (5)
    (3) edge (6); 
    \draw (0.3,0.4) -- (0.4,0.4) -- (0.4,0.3);
    \draw (3.7,0.4) -- (3.6,0.4) -- (3.6,0.3);
  \end{tikzpicture}
\]
\end{definition}

When the arrows of the rule $ \ell \gets k \to r $
are both monic, the arrows of the span
$ g \gets d \to h $ are also monic because
pushouts preserve monics in topoi
\cite[Lem.~12]{lack-sobo_adhesive-cats}. The
intuition of this diagram is that $ \ell \to g $
identifies a copy of $ \ell $ in $ g $ and we replace
that copy with $ r $, resulting in a new object
$ h $.

To illustrate, let us return to a system modeled with
graphs and where self-loops are meaningless.  Then we can
apply Rule \eqref{eq:squash-loop-rule} to any node with a
loop. This application is captured with the double pushout
diagram
\begin{equation} \label{eq:example-dpo-diagram}
 \begin{tikzpicture}
    \begin{scope} 
      \node (c) at (0,0.5) {$ \bullet $};
      \draw [graph,loop above]
        (c) edge (c);
      \draw [rounded corners]
        (-1,-1) rectangle (1,2);
      \node (01r) at (1.1,0)  {};
      \node (01b) at (0,-1.1) {};  
    \end{scope}
    \begin{scope}[shift={(4,0)}]
      \node (c) at (0,0.5) {$ \bullet $};
      \draw [rounded corners]
        (-1,-1) rectangle (1,2);
      \node (11l) at (-1.1,0) {};
      \node (11r) at (1.1,0)  {};
      \node (11b) at (0,-1.1) {};
    \end{scope}
    \begin{scope}[shift={(8,0)}]
      \node (c) at (0,0.5) {$ \bullet $};
      \draw [rounded corners]
        (-1,-1) rectangle (1,2);
      \node (21l) at (-1.1,0) {};
      \node (21b) at (0,-1.1) {};
    \end{scope}
    \begin{scope}[shift={(0,-5)}]
      \node (a) at (-0.5,-0.5) {$ \bullet $};
      \node (b) at (0.5,-0.5)  {$ \bullet $};
      \node (c) at (0,0.5)     {$ \bullet $};
      \draw [graph]
      (a) edge              (c)
      (b) edge              (c)
      (c) edge [loop above] (c);
      \draw [rounded corners]
        (-1,-1) rectangle (1,2);
      \node (00t) at (0,2.1)   {};
      \node (00r) at (1.1,0.5) {};  
    \end{scope}
    \begin{scope}[shift={(4,-5)}]
      \node (a) at (-0.5,-0.5) {$ \bullet $};
      \node (b) at (0.5,-0.5) {$ \bullet $};
      \node (c) at (0,1) {$ \bullet $};
      \draw [graph]
        (a) edge (c)
        (b) edge (c);
      \draw [rounded corners]
        (-1,-1) rectangle (1,2);
      \node (10t) at (0,2.1)    {};
      \node (10r) at (1.1,0.5)  {};
      \node (10l) at (-1.1,0.5) {};
    \end{scope}
    \begin{scope}[shift={(8,-5)}]
      \node (a) at (-0.5,-0.5) {$ \bullet $};
      \node (b) at (0.5,-0.5)  {$ \bullet $};
      \node (c) at (0,1)    {$ \bullet $};
      \draw [graph]
      (a) edge              (c)
      (b) edge              (c);
      \draw [rounded corners]
        (-1,-1) rectangle (1,2);
      \node (20t) at (0,2.1)    {};
      \node (20l) at (-1.1,0.5) {};
      \end{scope}
      \draw [cd]
      (01b) edge (00t)
      (11b) edge (10t)
      (21b) edge (20t)
      (11l) edge (01r)
      (11r) edge (21l)
      (10l) edge (00r)
      (10r) edge (20l);
    \end{tikzpicture}
\end{equation}
We identified a self-loop in the bottom left graph then
applied the rule to remove it. The result
is the bottom right graph.  The reader can check that the
two squares are pushouts.

Usually when modeling a system, there is a set of rewrite
rules that accompany it.  For example, in resistor circuits there
are parallel, series, and star rules. Just like in natural
languages, we call a collection of rules a grammar.

\begin{definition}[Grammar]
  \label{def:grammar}
  A topos $ \T $ together with a finite set $ P $ of rules
  $ \{ \spn{\ell_j}{k_j}{r_j} \} $ in $ \T $ is a
  \defn{grammar}. When the all rules in a grammar have monic
  arrows, we say the grammar is \defn{fine}.  Else, the
  grammar is \defn{bold}.  An arrow of (fine, bold) grammars
  $ ( \S , P ) \to ( \T , Q ) $ is a pullback and pushout
  preserving functor $ F \from \S \to \T $ such that for
  each rule $ \xspn{\ell}{f}{k}{g}{r} $ in $ P $, the rule
  $ \xspn{F\ell}{Ff}{Fk}{Fg}{Fr} $ is in $ Q $. Together
  these form a category $ \Gram $.
\end{definition}

A grammar is a seed. Like a seed, the grammar gives birth to
something entirely new and more complex called the
language. It is this language that we are interested more so
than the grammar.  We can certainly learn about the language
from the grammar, but what we actually study is the `rewrite
relation' which informs us about how different components of
the language relate. Every grammar $ ( \T , P ) $ gives rise
to a relation $ \dderiv{}{} $ on the objects of $ \T $
defined by $ \dderiv{g}{h} $ whenever there exists a rule
$ \spn{g}{d}{h} $ derived from a production in $ P $. For
instance, the above double pushout diagram would relate
\[
  \begin{tikzpicture}
    \begin{scope}[shift={(0,0)}]
      \node (a) at (-0.5,-0.5) {$ \bullet_a $};
      \node (b) at (0.5,-0.5)  {$ \bullet_b $};
      \node (c) at (0,0.5)     {$ \bullet_c $};
      \draw [graph]
        (a) edge              (c)
        (b) edge              (c)
        (c) edge [loop above] (c); 
      \draw [rounded corners]
        (-1,-1) rectangle (1,2);
    \end{scope}
    \node at (2.5,0.5) {to};
    \begin{scope}[shift={(5,0)}]
      \node (a) at (-0.5,-0.5) {$ \bullet_a $};
      \node (b) at (0.5,-0.5)  {$ \bullet_b $};
      \node (c) at (0,0.5)       {$ \bullet_c $};
      \draw [graph]
        (a) edge              (c) 
        (b) edge              (c);
      \draw [rounded corners]
        (-1,-1) rectangle (1,2);
    \end{scope}
  \end{tikzpicture}
\]
But $ \dderiv{}{} $ is too small to capture the full behavior of
the language.  For one, it is not true in general that
$ \dderiv{g}{g} $ holds. Also, $ \dderiv{}{} $ does not
capture multi-step rewrites. That is, there may be derived
rules witnessing $ \dderiv{g}{g'} $ and $ \dderiv{g'}{g''} $
but not a derived rule witnessing $ \dderiv{g}{g''} $. We
want to relate a pair of objects if one can be rewritten
into another with a finite sequence of derived
rules. Therefore, we actually want the following.

\begin{definition}[Rewrite relation]
  \label{def:rewrite-relation}
  To each grammar $ ( \T, P ) $, assign a relation on the
  objects of $ \T $ defined by setting $ \dderiv{g}{h} $
  whenever there is a rewrite rule $ \spn{\ell}{k}{r} $ in
  $ P $ and an object $ d $ of $ \T $ that fit into a double
  pushout diagram
  \begin{center}
      \begin{tikzpicture}
    \node (1) at (0,2) {$ \ell $};
    \node (2) at (2,2) {$ k $};
    \node (3) at (4,2) {$ r $};
    \node (4) at (0,0) {$ g $};
    \node (5) at (2,0) {$ d $};
    \node (6) at (4,0) {$ h $};
    \draw [cd]
    (2) edge (1)
    (2) edge (3)
    (5) edge (4)
    (5) edge (6)
    (1) edge (4)
    (2) edge (5)
    (3) edge (6); 
    \draw (0.3,0.4) -- (0.4,0.4) -- (0.4,0.3);
    \draw (3.7,0.4) -- (3.6,0.4) -- (3.6,0.3);
  \end{tikzpicture}
  \end{center}
  The \defn{rewrite relation} $ \deriv{}{} $ is the
  transitive and reflexive closure of $ \dderiv{}{} $.
\end{definition}

Every grammar determines a unique rewrite relation in a
functorial way. We devote Section \ref{sec:Rewriting-StrCsp}
to proving this fact, though, we restrict ourselves working
with grammars of structured cospan categories.


\chapter{Fine rewriting and structured cospans}
\label{sec:fine-rewriting}

In this chapter, we introduce a theory of fine rewriting to
structured cospans.  Rewriting is \emph{fine}
when the rewrite rules are spans with monic legs.  Our
primary goal is to define a double category whose squares
are fine rewrites of structured cospans.  The rough idea is
that this double category, denoted $ _L\FFFineRewrite $, has
interface types for objects, structured cospans for
horizontal arrows, isomorphisms of interface objects for
vertical arrows, and fine rewrite rules of structured
cospans for squares. We prove in Proposition
\ref{thm:fine-rewrite-double-cat} that $ _L\FFFineRewrite $
actually is a double category. The first step to proving
this is to ensure the fine rewrite rules are suitable squares for
our double category, we define them as follows.

\begin{definition}[Fine rewrite]
  A \defn{fine rewrite of structured cospans} is an
  isomorphism class of spans of structured cospans of the
  form
  \[
    \begin{tikzpicture}
    \node (a) at (0,4) {$ La $};
    \node (b) at (2,4) {$ x $};
    \node (c) at (4,4) {$ Lb $};
    \node (x) at (0,2) {$ Lc $};
    \node (y) at (2,2) {$ y $};
    \node (z) at (4,2) {$ Ld $};
    \node (d) at (0,0) {$ Le $};
    \node (e) at (2,0) {$ z$};
    \node (f) at (4,0) {$ Lf $};
    \draw [>->] (y) to (b);
    \draw [>->] (e) to (y);
    \draw[cd]
    (a) edge[] (b)
    (x) edge[] (y)
    (d) edge[] (e)
    (c) edge[] (b)
    (z) edge[] (y)
    (f) edge[] (e)
    (x) edge[] node[left]{$ \iso $}  (a)
    (z) edge[] node[right]{$ \iso $} (c)
    (x) edge[] node[left]{$ \iso $}  (d)
    (z) edge[] node[right]{$ \iso $} (f) ;
  \end{tikzpicture}
  \]
  The marked arrows are monic.
\end{definition}

In a double category, the squares have two composition
operations.  Horizontal composition uses pushout as is
typical with cospan categories. The vertical composition
uses pullback as is typical in span categories.  But because
there are no higher order arrows traversing the squares in a
double category, and because pushouts and pullbacks are only
defined up to isomorphism, we take isomorphism classes of
structured cospan rewrite rules.  With the squares of
$ _L\FFFineRewrite $ defined, we can introduce the two
composition operations.

\begin{definition} \label{def:hor-vert-composition}

  The \defn{horizontal composition} of fine rewrite rules is
  given by
  \[
  \begin{tikzpicture}
    \begin{scope}
    \node (a) at (0,4) {$ La $};
    \node (b) at (0,2) {$ Lb $};
    \node (c) at (0,0) {$ Lc $};
    \node (x) at (2,4) {$ v $};
    \node (y) at (2,2) {$ w $};
    \node (z) at (2,0) {$ x $};
    \node (d) at (4,4) {$ La' $};
    \node (e) at (4,2) {$ Lb' $};
    \node (f) at (4,0) {$ Lc' $};
    \draw [cd]  (a) to (x);
    \draw [cd]  (b) to (y);
    \draw [cd]  (c) to (z);
    \draw [cd]  (d) to (x);
    \draw [cd]  (e) to (y);
    \draw [cd]  (f) to (z);
    \draw [cd]  (b) to (a);
    \draw [>->] (y) to (x);
    \draw [cd]  (e) to (d);
    \draw [cd]  (b) to (c);
    \draw [>->] (y) to (z);
    \draw [cd]  (e) to (f);
    \end{scope}
    \begin{scope}[shift={(6,0)}]
    \node (a) at (0,4) {$ La' $};
    \node (b) at (0,2) {$ Lb' $};
    \node (c) at (0,0) {$ Lc' $};
    \node (x) at (2,4) {$ v' $};
    \node (y) at (2,2) {$ w' $};
    \node (z) at (2,0) {$ x' $};
    \node (d) at (4,4) {$ La'' $};
    \node (e) at (4,2) {$ Lb'' $};
    \node (f) at (4,0) {$ Lc'' $};
    \draw [cd] (a) to (x);
    \draw [cd] (b) to (y);
    \draw [cd] (c) to (z);
    \draw [cd] (d) to (x);
    \draw [cd] (e) to (y);
    \draw [cd] (f) to (z);
    \draw [cd] (b) to (a);
    \draw [>->] (y) to (x);
    \draw [cd] (e) to (d);
    \draw [cd] (b) to (c);
    \draw [>->] (y) to (z);
    \draw [cd] (e) to (f);
    \end{scope}
    \node () at (5,2) {$ \hcirc $};
    \node () at (11,2) {$ \bydef  $};
  \end{tikzpicture}
\]
\[
  \begin{tikzpicture}
    \node (a) at (0,4) {$ La $};
    \node (b) at (0,2) {$ Lb $};
    \node (c) at (0,0) {$ Lc $};
    \node (x) at (2,4) {$ v +_{La'} v' $};
    \node (y) at (2,2) {$ w +_{Lb'} w' $};
    \node (z) at (2,0) {$ x +_{Lc'} x' $};
    \node (d) at (4,4) {$ La'' $};
    \node (e) at (4,2) {$ Lb'' $};
    \node (f) at (4,0) {$ Lc'' $};
    \draw [cd] (a) to (x);
    \draw [cd] (b) to (y);
    \draw [cd] (c) to (z);
    \draw [cd] (d) to (x);
    \draw [cd] (e) to (y);
    \draw [cd] (f) to (z);
    \draw [cd] (b) to (a);
    \draw [cd] (y) to (x);
    \draw [cd] (e) to (d);
    \draw [cd] (b) to (c);
    \draw [cd] (y) to (z);
    \draw [cd] (e) to (f);
  \end{tikzpicture}
  \]
  The \defn{vertical composition} of fine rewrite rules is
  \[
  \begin{tikzpicture}
    \begin{scope}
    \node (a) at (0,4) {$ La $};
    \node (b) at (0,2) {$ Lb $};
    \node (c) at (0,0) {$ Lc $};
    \node (x) at (2,4) {$ v $};
    \node (y) at (2,2) {$ w $};
    \node (z) at (2,0) {$ x $};
    \node (d) at (4,4) {$ La' $};
    \node (e) at (4,2) {$ Lb' $};
    \node (f) at (4,0) {$ Lc' $};
    \draw [cd] (a) to node [] {\scriptsize{$  $}} (x);
    \draw [cd] (b) to node [] {\scriptsize{$  $}} (y);
    \draw [cd] (c) to node [] {\scriptsize{$  $}} (z);
    \draw [cd] (d) to node [] {\scriptsize{$  $}} (x);
    \draw [cd] (e) to node [] {\scriptsize{$  $}} (y);
    \draw [cd] (f) to node [] {\scriptsize{$  $}} (z);
    \draw [cd] (b) to node [] {\scriptsize{$  $}} (a);
    \draw [>->] (y) to node [] {\scriptsize{$  $}} (x);
    \draw [cd] (e) to node [] {\scriptsize{$  $}} (d);
    \draw [cd] (b) to node [] {\scriptsize{$  $}} (c);
    \draw [>->] (y) to node [] {\scriptsize{$  $}} (z);
    \draw [cd] (e) to node [] {\scriptsize{$  $}} (f);
    \end{scope}
    \begin{scope}[shift={(6,0)}]
    \node (a) at (0,4) {$ Lc $};
    \node (b) at (0,2) {$ Ld $};
    \node (c) at (0,0) {$ Le $};
    \node (x) at (2,4) {$ x $};
    \node (y) at (2,2) {$ y $};
    \node (z) at (2,0) {$ z $};
    \node (d) at (4,4) {$ Lc' $};
    \node (e) at (4,2) {$ Ld' $};
    \node (f) at (4,0) {$ Le' $};
    \draw [cd] (a) to node [] {\scriptsize{$  $}} (x);
    \draw [cd] (b) to node [] {\scriptsize{$  $}} (y);
    \draw [cd] (c) to node [] {\scriptsize{$  $}} (z);
    \draw [cd] (d) to node [] {\scriptsize{$  $}} (x);
    \draw [cd] (e) to node [] {\scriptsize{$  $}} (y);
    \draw [cd] (f) to node [] {\scriptsize{$  $}} (z);
    \draw [cd] (b) to node [] {\scriptsize{$  $}} (a);
    \draw [>->] (y) to node [] {\scriptsize{$  $}} (x);
    \draw [cd] (e) to node [] {\scriptsize{$  $}} (d);
    \draw [cd] (b) to node [] {\scriptsize{$  $}} (c);
    \draw [>->] (y) to node [] {\scriptsize{$  $}} (z);
    \draw [cd] (e) to node [] {\scriptsize{$  $}} (f);
  \end{scope}
  \node () at (5,2) {$ \vcirc $};
  \node () at (11,2) {$ \bydef  $};
\end{tikzpicture}
\]
\[
  \begin{tikzpicture}   
    \begin{scope}[shift={(0,0)}]
    \node (a) at (0,4) {$ La $};
    \node (b) at (0,2) {$ L(b \times_c d) $};
    \node (c) at (0,0) {$ Le $};
    \node (x) at (2,4) {$ v $};
    \node (y) at (2,2) {$ w \times_x y $};
    \node (z) at (2,0) {$ z $};
    \node (d) at (4,4) {$ La' $};
    \node (e) at (4,2) {$ L(b' \times_{c'} d') $};
    \node (f) at (4,0) {$ Le' $};
    \draw [cd] (a) to node [] {\scriptsize{$  $}} (x);
    \draw [cd] (b) to node [] {\scriptsize{$  $}} (y);
    \draw [cd] (c) to node [] {\scriptsize{$  $}} (z);
    \draw [cd] (d) to node [] {\scriptsize{$  $}} (x);
    \draw [cd] (e) to node [] {\scriptsize{$  $}} (y);
    \draw [cd] (f) to node [] {\scriptsize{$  $}} (z);
    \draw [cd] (b) to node [] {\scriptsize{$  $}} (a);
    \draw [>->] (y) to node [] {\scriptsize{$  $}} (x);
    \draw [cd] (e) to node [] {\scriptsize{$  $}} (d);
    \draw [cd] (b) to node [] {\scriptsize{$  $}} (c);
    \draw [>->] (y) to node [] {\scriptsize{$  $}} (z);
    \draw [cd] (e) to node [] {\scriptsize{$  $}} (f);
    \end{scope}
  \end{tikzpicture}
  \]
\end{definition}

We defined $ \hcirc $ and $ \vcirc $ using representatives
of isomorphism classes, however this operation is
well-defined. It is less clear, however, that these
operations preserve the monic arrows in the fine rewrites of
structured cospans.  In Proposition
\ref{thm:comp-preserve-monic}, we show that horizontal and
vertical composition do preserve these monic arrows. To
prove this, we require the following lemma.

\begin{lemma}
  \label{thm:quotient-map-monic-pushout}
  The diagram
  \begin{equation} \label{eq:qmmp-1}
    \begin{tikzpicture}
      \node (x) at  (0,2) {$ x $};
      \node (y) at  (2,2) {$ y $};
      \node (z) at  (4,2) {$ z $};
      \node (x') at (0,0) {$ x' $};
      \node (y') at (2,0) {$ y' $};
      \node (z') at (4,0) {$ z' $};
      \draw [cd]  (y) edge (x);
      \draw [cd]  (y) edge (z);
      \draw [cd]  (y') edge (x');
      \draw [cd]  (y') edge (z');
      \draw [>->] (x) edge (x');
      \draw [cd]  (y) edge node[left]{$ \iso $} (y');
      \draw [>->] (z) edge (z');
    \end{tikzpicture}
  \end{equation}
  induces a pushout
  \begin{equation} \label{eq:qmmp-2}
    \begin{tikzpicture}
      \node (x+z)      at (0,2) {$ x + z $};
      \node (x+_yz)    at (4,2) {$ x +_y z $};
      \node (x'+z')    at (0,0) {$ x' + z'  $};
      \node (x'+_y'z') at (4,0) {$ x' +_{y'} z $};
      \draw [cd] (x+z)   to node[above]{$ \rho $} (x+_yz);
      \draw [cd] (x'+z') to node[below]{$ \rho' $} (x'+_y'z');
      \draw [cd] (x+z)   to node[left]{$ \gamma $} (x'+z');
      \draw [cd] (x+_yz) to node[right]{$ \gamma' $} (x'+_y'z');
    \end{tikzpicture}
  \end{equation}
  such that the canonical arrows $ \gamma $ and
  $ \gamma' $ are monic.
\end{lemma}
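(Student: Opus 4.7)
The plan is to establish two claims: that the square \eqref{eq:qmmp-2} is a pushout, and that the canonical arrows $\gamma$ and $\gamma'$ are monic. I treat these in turn.

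For the pushout claim, I exploit the fact that the middle vertical arrow $\eta \from y \to y'$ in \eqref{eq:qmmp-1} is an isomorphism. Express $x +_y z$ as the coequalizer of the pair $y \rightrightarrows x + z$ induced by the two legs of the span, and similarly $x' +_{y'} z'$ as the coequalizer of $y' \rightrightarrows x' + z'$. Then \eqref{eq:qmmp-1} provides a morphism between these two reflexive-pair diagrams. The cleanest route is to verify the universal property of \eqref{eq:qmmp-2} directly: given any cocone $\alpha \from x +_y z \to w$ and $\beta \from x' + z' \to w$ with $\alpha \rho = \beta \gamma$, decompose $\beta$ into its components $\beta_{x'}, \beta_{z'}$ via the universal property of the coproduct; then use the commutativity of \eqref{eq:qmmp-1} together with the invertibility of $\eta$ to show that $\beta_{x'}$ and $\beta_{z'}$ equalize the pair $y' \rightrightarrows x' + z'$, producing the unique mediating morphism $x' +_{y'} z' \to w$. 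Compatibility with $\rho'$ and $\gamma'$ is then a straightforward check on coproduct components. An alternative is pushout pasting: form the pushout of $\gamma$ along $\rho$ and use that pasting with a coequalizer gives a coequalizer, then apply $\eta$ to rewrite the resulting coequalizer pair as $y' \rightrightarrows x' + z'$.

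For the monic claims I rely on standard facts about topoi. Because $\X$ is a topos its coproducts are disjoint and stable under pullback, and hence the coproduct of the two monomorphisms $x \monicto x'$ and $z \monicto z'$, which is precisely $\gamma$, is a monomorphism. Once we have the pushout from the previous paragraph, we apply the fact that topoi are adhesive and that in an adhesive category the pushout of a monomorphism along an arbitrary morphism is again a monomorphism; this yields that $\gamma'$ is monic.

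The main obstacle is the first part: one must carefully juggle coproducts and pushouts while inserting $\eta$ at exactly the right step so that the cocone data transfer cleanly from the pair based at $y$ to the pair based at $y'$. The remaining claims follow immediately from the adhesivity of $\X$ (see Appendix \ref{sec:adhesive-categories}) and the good behavior of coproducts in a topos (see Appendix \ref{sec:topoi}).
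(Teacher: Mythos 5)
Your proof is correct and follows essentially the same high-level structure as the paper's, but the derivation of the monic claims differs in a way worth noting. For the pushout claim, both you and the paper verify the universal property directly: given a competing cocone, peel off the coproduct components of the map out of $x' + z'$ and use the invertibility of $y \to y'$ (together with the commutativity of \eqref{eq:qmmp-1} and the identity $\gamma\,\iota_x = \iota_{x'}(x \rightarrowtail x')$) to show those components coequalize the pair $y' \rightrightarrows x' + z'$, inducing the mediating map. That is exactly what the paper does, and your sketch captures it.

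Where you genuinely diverge is in showing $\gamma$ is monic. You invoke the extensivity of a topos to conclude directly that the coproduct of two monomorphisms is a monomorphism. This is true and clean, but the paper instead \emph{proves} this on the fly: it factors $\gamma$ as $x + z \to x' + z \to x' + z'$, observes that each of the two squares
\[
\begin{tikzpicture}
  \node (x) at (0,2) {$x$};
  \node (xz) at (2,2) {$x+z$};
  \node (x') at (0,0) {$x'$};
  \node (x'z) at (2,0) {$x'+z$};
  \draw[cd] (x) edge[>->] (xz) (x') edge[>->] (x'z) (x) edge[>->] (x') (xz) edge (x'z);
\end{tikzpicture}
\]
is a pushout, and then applies Lemma~\ref{lem:adhesive-properties} (monos stable under pushout) to each factor. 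The two arguments use the same underlying topos properties, so neither is stronger; the paper's is more self-contained, yours is shorter if you are willing to quote the extensivity fact. You also explicitly address why $\gamma'$ is monic (pushout of a mono along $\rho$ is a mono, again by adhesivity), whereas the paper leaves this step to the reader even though it appears in the lemma statement. Your version is arguably more complete on that point.

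One small caution on your exposition: in verifying the cocone condition for the span $x' \gets y' \to z'$, it is not merely the \emph{existence} of the iso $\eta \from y \to y'$ you use, but specifically its epicness, so that equality after precomposing with $\eta$ implies the equality you actually need. This is implicit in your phrasing but should be made explicit in a fully written-out version, just as the paper's proof tacitly relies on it when asserting that $\psi' \iota_{x'}$ and $\psi' \iota_{z'}$ form a cocone under the bottom span.
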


\begin{proof}
  The universal property of coproducts implies
  that $ \gamma $ factors through $ x' + z $ as in
  the diagram
  \[
    \begin{tikzpicture}
      \node (x) at (0,4) {$ x $};
      \node (x') at (0,2) {$ x' $};
      \node (x+z) at (2,4) {$ x+z $};
      \node (x'+z) at (2,2) {$ x'+z $};
      \node (x'+z') at (2,0) {$ x'+z' $};
      \node (z) at (4,2) {$ z $};
      \node (z') at (4,0) {$ z' $};
      \draw [>->] (x) to node[above]{$ \iota_x $}  (x+z);
      \draw [>->] (x') to node[below]{$ \iota_{x'} $} (x'+z);
      \draw [>->] (z) to node[above]{$ \iota_z  $} (x'+z);
      \draw [>->] (z') to node[below]{$ \iota_{z'} $} (x'+z');
      \draw [>->] (x) to (x');
      \draw [cd] (x+z) to (x'+z);
      \draw [cd] (x'+z) to (x'+z');
      \draw [>->] (z) to (z');
    \end{tikzpicture}
  \]
  It is straightforward to check that both squares
  are pushouts. By Lemma
  \ref{lem:adhesive-properties}, it follows that
  $ \gamma $ is monic.

  Diagram \ref{eq:qmmp-2} commutes because of the
  universal property of coproducts.  To see that
  it is a pushout, arrange a cocone
  \begin{equation}
    \label{eq:qmmp-3}
    \begin{tikzpicture}
      \node (x+z) at (0,4) {$ x+z $};
      \node (x'+z') at (0,2) {$ x'+z' $};
      \node (x+_yz) at (4,4) {$ x +_y z $};
      \node (x'+_y'z') at (4,2) {$ x' +_{y'} z' $};
      \node (c) at (6,0) {$ c $};
      \draw[cd]
      (x+z) edge node[above]{$\rho$} (x+_yz)
      (x'+z') edge node[above]{$\rho'$} (x'+_y'z')
      (x+z) edge node[left]{$\gamma$} (x'+z')
      (x+_yz) edge node[right]{$\gamma'$} (x'+_y'z')
      (x'+z') edge[bend right] node[below]{$\psi'$} (c)
      (x+_yz) edge[bend left] node[right]{$\psi$} (c);
    \end{tikzpicture}
  \end{equation}
  Denote by $ \iota_x $ any map that includes $ x $.  Then
  $ \psi' \iota_{x'} $, $ \psi' \iota_{z'} $, and $ c $ form
  a cocone under the span $ x' \gets y' \to z' $ from the
  bottom face of Diagram \ref{eq:qmmp-1}. This induces the
  canonical map $ \psi'' \from x'+_{y'} z' \to c $. It
  follows that
  $ \psi' \iota_{x'} = \psi'' \rho' \iota_{x'} $ and
  $ \psi' \iota_{z'} = \psi'' \rho' \iota_{z'} $. Therefore
  $ \psi' = \psi'' \rho' $ by the universal property of
  coproducts.

  Furthermore, $ \psi \rho \iota_z $,
  $ \psi \rho \iota_z $, and $ c $ form a cocone
  under the span $ x \gets y \to z $ on the top
  face of Diagram \ref{eq:qmmp-1}. then
  $ \psi \rho \iota_x = \psi' \gamma \iota_x =
  \psi'' \rho' \gamma \iota_x = \psi'' \psi' \rho
  \iota_x $ and
  $ \psi \rho \iota_z = \psi' \gamma \iota_z =
  \psi'' \rho' \gamma \iota_z = \psi'' \gamma'
  \rho \iota_z $ meaning that both $ \psi $ and
  $ \psi'' \psi' $ satisfy the canonical map
  $ x+_yz \to d $. Hence $ \psi = \psi'' \psi' $.

  The universality of $ \psi'' $ with respect to
  Diagram \ref{eq:qmmp-3} follows from the
  universality of $ \gamma'' $ with respect to
  $ x'+_{y'}z' $.
\end{proof}

\begin{lemma} \label{thm:comp-preserve-monic}
  Horizontal and vertical composition of fine
  rewrites are fine rewrites.
\end{lemma}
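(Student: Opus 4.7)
The plan is to verify the two non-trivial conditions defining a fine rewrite: that the two arrows from the newly constructed middle apex to the newly constructed top and bottom apexes remain monic, and that the interface arrows connecting the middle row to the top and bottom rows remain isomorphisms. I would treat the two compositions separately.

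For vertical composition, label the apexes of the top rewrite by $v$, $w$, $x$ and of the bottom by $x$, $y$, $z$, so that $w\to v$, $w\to x$, $y\to x$, $y\to z$ are all monic. The new middle apex is $w\times_x y$ and the new middle interface is $L(b\times_c d)$. I would factor the induced arrow $w\times_x y \to v$ as $w\times_x y \to w \to v$: the first factor is the pullback of the monic $y \to x$ along $w \to x$ and is therefore monic, while the second is monic by hypothesis. By the symmetric argument, $w\times_x y \to z$ factors as a pullback of the monic $w\to x$ followed by the monic $y \to z$. The interface isos follow because $L$ preserves the pullback $b \times_c d$ and the pullback of an isomorphism along any arrow is an isomorphism.

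For horizontal composition, label the shared vertical interface by $La'$, $Lb'$, $Lc'$ (with $Lb' \to La'$ and $Lb' \to Lc'$ invertible from the fine rewrite hypothesis), and the apexes of the left and right rewrites by $v, w, x$ and $v', w', x'$ respectively. The left and right interface columns of the composite are inherited, so their iso data poses no issue. To see that the induced middle arrow $w+_{Lb'}w' \to v+_{La'}v'$ is monic, I would identify it as the canonical map between pushouts produced by the commuting square whose top row is the span $w \gets Lb' \to w'$, whose bottom row is $v \gets La' \to v'$, and whose connecting verticals are the monic $w \to v$, the iso $Lb' \to La'$, and the monic $w' \to v'$. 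This is precisely the hypothesis of Lemma \ref{thm:quotient-map-monic-pushout}, whose conclusion yields the desired monicity. The arrow $w+_{Lb'}w' \to x+_{Lc'}x'$ is handled by the same argument, with $x$, $x'$, $Lc'$ in place of $v$, $v'$, $La'$.

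The hard part is the horizontal case: a map between two pushouts need not be monic even when the component arrows are monic or invertible, so this step rests essentially on Lemma \ref{thm:quotient-map-monic-pushout} established just above, which itself draws on adhesive properties of the topos. The vertical case, by contrast, reduces immediately to the elementary facts that pullbacks preserve monics and that monics compose.
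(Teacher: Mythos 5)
Your proof is correct and follows essentially the same approach as the paper: the horizontal case invokes Lemma \ref{thm:quotient-map-monic-pushout} exactly as the paper does, and the vertical case rests on pullbacks preserving monomorphisms. Your vertical argument is actually a bit more explicit than the paper's (which merely asserts the fact); factoring $w\times_x y \to v$ through $w$ and identifying the first leg as the pullback of the monic $y\to x$ is the right way to make the paper's terse claim precise, and your treatment of the interface isomorphisms via preservation of pullbacks by $L$ and stability of isomorphisms under pullback fills in a detail the paper leaves implicit.
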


\begin{proof}
  We can see that the span of cospan obtained by horizontal
  composition of fine rewrites
    \[
  \begin{tikzpicture}
    \begin{scope}
    \node (a) at (0,4) {$ La $};
    \node (b) at (0,2) {$ Lb $};
    \node (c) at (0,0) {$ Lc $};
    \node (x) at (2,4) {$ v $};
    \node (y) at (2,2) {$ w $};
    \node (z) at (2,0) {$ x $};
    \node (d) at (4,4) {$ La' $};
    \node (e) at (4,2) {$ Lb' $};
    \node (f) at (4,0) {$ Lc' $};
    \draw [cd]  (a) to (x);
    \draw [cd]  (b) to (y);
    \draw [cd]  (c) to (z);
    \draw [cd]  (d) to (x);
    \draw [cd]  (e) to (y);
    \draw [cd]  (f) to (z);
    \draw [cd]  (b) edge node[left]{$ \cong $} (a);
    \draw [>->] (y) to (x);
    \draw [cd]  (e) edge node[right]{$ \cong $} (d);
    \draw [cd]  (b) edge node[left]{$ \cong $} (c);
    \draw [>->] (y) to (z);
    \draw [cd]  (e) edge node[right]{$ \cong $} (f);
    \end{scope}
    \begin{scope}[shift={(6,0)}]
    \node (a) at (0,4) {$ La' $};
    \node (b) at (0,2) {$ Lb' $};
    \node (c) at (0,0) {$ Lc' $};
    \node (x) at (2,4) {$ v' $};
    \node (y) at (2,2) {$ w' $};
    \node (z) at (2,0) {$ x' $};
    \node (d) at (4,4) {$ La'' $};
    \node (e) at (4,2) {$ Lb'' $};
    \node (f) at (4,0) {$ Lc'' $};
    \draw [cd] (a) to (x);
    \draw [cd] (b) to (y);
    \draw [cd] (c) to (z);
    \draw [cd] (d) to (x);
    \draw [cd] (e) to (y);
    \draw [cd] (f) to (z);
    \draw [cd] (b) edge node[left]{$ \cong $} (a);
    \draw [>->] (y) to (x);
    \draw [cd] (e) edge node[right]{$ \cong $} (d);
    \draw [cd] (b) edge node[left]{$ \cong $} (c);
    \draw [>->] (y) to (z);
    \draw [cd] (e) edge node[right]{$ \cong $} (f);
    \end{scope}
    \node () at (5,2) {$ \hcirc $};
    \node () at (11,2) {$ \bydef  $};
  \end{tikzpicture}
\]
\[
  \begin{tikzpicture}
    \begin{scope}[shift={(0,0)}]
    \node (a) at (0,4) {$ La $};
    \node (b) at (0,2) {$ Lb $};
    \node (c) at (0,0) {$ Lc $};
    \node (x) at (2,4) {$ v +_{La'} v' $};
    \node (y) at (2,2) {$ w +_{Lb'} w' $};
    \node (z) at (2,0) {$ x +_{Lc'} x' $};
    \node (d) at (4,4) {$ La'' $};
    \node (e) at (4,2) {$ Lb'' $};
    \node (f) at (4,0) {$ Lc'' $};
    \draw [cd] (a) to (x);
    \draw [cd] (b) to (y);
    \draw [cd] (c) to (z);
    \draw [cd] (d) to (x);
    \draw [cd] (e) to (y);
    \draw [cd] (f) to (z);
    \draw [cd] (b) edge node[left]{$ \cong $} (a);
    \draw [cd] (y) to (x);
    \draw [cd] (e) edge node[right]{$ \cong $} (d);
    \draw [cd] (b) edge node[left]{$ \cong $} (c);
    \draw [cd] (y) to (z);
    \draw [cd] (e) edge node[right]{$ \cong $} (f);
    \end{scope}
  \end{tikzpicture}
  \]
  is again a fine rewrite, that is the arrows
  $ w +_{Le} x \to u+_{Lb} v $ and
  $ w +_{Le} x \to y +_{Lh} z $ are monic, by applying Lemma
  \ref{thm:quotient-map-monic-pushout} to the diagrams
  \[
    \begin{tikzpicture}
      \begin{scope}
        \node (x) at (0,2) {$ v $};
        \node (y) at (0,0) {$ w $};
        \node (d) at (2,2) {$ La' $};
        \node (e) at (2,0) {$ Lb' $};
        \node (x') at (4,2) {$ v' $};
        \node (y') at (4,0) {$ w' $};
        \draw [cd] (d) to (x);
        \draw [cd] (d) to (x');
        \draw [cd] (e) to (y);
        \draw [cd] (e) to (y');
        \draw [>->] (y) to (x);
        \draw [cd] (e) to node [right] {$ \iso $} (d);
        \draw [>->] (y') to (x');
      \end{scope}
      \node at (6,1) {and};
      \begin{scope}[shift={(8,0)}]
        \node (y)  at (0,2) {$ w $};
        \node (z)  at (0,0) {$ x $};
        \node (e)  at (2,2) {$ Lb' $};
        \node (f)  at (2,0) {$ Lc' $};
        \node (y') at (4,2) {$ w' $};
        \node (z') at (4,0) {$ x' $};
        \draw [cd] (e) to (y);
        \draw [cd] (e) to (y');
        \draw [cd] (f) to (z);
        \draw [cd] (f) to (z');
        \draw [>->] (y) to (z);
        \draw [cd] (e) to node[right]{$ \iso $} (f);
        \draw [>->] (y') to  (z');
      \end{scope}
    \end{tikzpicture}
  \]
  The result for vertical composition
  \[
  \begin{tikzpicture}
    \begin{scope}
    \node (a) at (0,4) {$ La $};
    \node (b) at (0,2) {$ Lb $};
    \node (c) at (0,0) {$ Lc $};
    \node (x) at (2,4) {$ v $};
    \node (y) at (2,2) {$ w $};
    \node (z) at (2,0) {$ x $};
    \node (d) at (4,4) {$ La' $};
    \node (e) at (4,2) {$ Lb' $};
    \node (f) at (4,0) {$ Lc' $};
    \draw [cd] (a) to (x);
    \draw [cd] (b) to (y);
    \draw [cd] (c) to (z);
    \draw [cd] (d) to (x);
    \draw [cd] (e) to (y);
    \draw [cd] (f) to (z);
    \draw [cd] (b) to node[left]{$ \iso $}  (a);
    \draw [>->] (y) to (x);
    \draw [cd] (e) to node[right]{$ \iso $}(d);
    \draw [cd] (b) to node[left]{$ \iso $} (c);
    \draw [>->] (y) to (z);
    \draw [cd] (e) to node[right]{$ \iso $} (f);
    \end{scope}
    \begin{scope}[shift={(6,0)}]
    \node (a) at (0,4) {$ Lc $};
    \node (b) at (0,2) {$ Ld $};
    \node (c) at (0,0) {$ Le $};
    \node (x) at (2,4) {$ x $};
    \node (y) at (2,2) {$ y $};
    \node (z) at (2,0) {$ z $};
    \node (d) at (4,4) {$ Lc' $};
    \node (e) at (4,2) {$ Ld' $};
    \node (f) at (4,0) {$ Le' $};
    \draw [cd] (a) to (x);
    \draw [cd] (b) to (y);
    \draw [cd] (c) to (z);
    \draw [cd] (d) to (x);
    \draw [cd] (e) to (y);
    \draw [cd] (f) to (z);
    \draw [cd] (b) to node[left]{$ \iso $} (a);
    \draw [>->] (y) to (x);
    \draw [cd] (e) to node[right]{$ \iso $} (d);
    \draw [cd] (b) to node[left]{$ \iso $} (c);
    \draw [>->] (y) to (z);
    \draw [cd] (e) to node[right]{$ \iso $} (f);
    \end{scope}
    \node () at (5,2) {$ \vcirc $};
    \node () at (11,2) {$ \bydef  $};
  \end{tikzpicture}
\]
\[
  \begin{tikzpicture}
    \begin{scope}[shift={(0,0)}]
    \node (a) at (0,4) {$ La $};
    \node (b) at (0,2) {$ L(b \times_c d) $};
    \node (c) at (0,0) {$ Le $};
    \node (x) at (2,4) {$ v $};
    \node (y) at (2,2) {$ w \times_x y $};
    \node (z) at (2,0) {$ z $};
    \node (d) at (4,4) {$ La' $};
    \node (e) at (4,2) {$ L(b' \times_{c'} d') $};
    \node (f) at (4,0) {$ Le' $};
    \draw [cd] (a) to (x);
    \draw [cd] (b) to (y);
    \draw [cd] (c) to (z);
    \draw [cd] (d) to (x);
    \draw [cd] (e) to (y);
    \draw [cd] (f) to (z);
    \draw [cd] (b) to node[left]{$ \iso $} (a);
    \draw [>->] (y) to (x);
    \draw [cd] (e) to node[right]{$ \iso $} (d);
    \draw [cd] (b) to node[left]{$ \iso $} (c);
    \draw [>->] (y) to (z);
    \draw [cd] (e) to node[right]{$ \iso $} (f);
  \end{scope}
  \end{tikzpicture}
  \]  
  holds because pullback preserves monomorphisms.
\end{proof}

With horizontal and vertical composition in hand, we
construct the double category $ _L\FFFineRewrite $.
Actually, we delay discussing the interchange law until
Section \ref{sec:interchange-law} because it is difficult
enough to warrant its own section.

\begin{proposition}
\label{thm:fine-rewrite-double-cat}
  Let
  \[
    \adjunction{\A}{\X}{L}{R}{2}
  \]
  be a adjunction with $ L $ preserving pullbacks.  There is
  a double category $ _L\FFFineRewrite $ whose objects are
  the $ \A $-objects, horizontal arrows of type $ a \to b $
  are structured cospans $ La \to x \gets Lb $, vertical
  arrows are spans in $ \A $ with invertible arrows, and
  squares are fine rewrites of structured cospans
  \[
    \begin{tikzpicture}
    \node (La)   at (0,4) {$ La $};
    \node (x)    at (2,4) {$ x $};
    \node (Lb)   at (4,4) {$ La' $};
    \node (La')  at (0,2) {$ Lb $};
    \node (x')   at (2,2) {$ y $};
    \node (Lb')  at (4,2) {$ Lb' $};
    \node (La'') at (0,0) {$ Lc $};
    \node (x'')  at (2,0) {$ z $};
    \node (Lb'') at (4,0) {$ Lc' $};
    \draw [cd] (La)   to (x);
    \draw [cd] (Lb)   to (x);
    \draw [cd] (La')  to (x');
    \draw [cd] (Lb')  to (x');
    \draw [cd] (La'') to (x'');
    \draw [cd] (Lb'') to (x'');
    \draw [cd] (La')  to
      node [left] {$ \iso $} (La);
    \draw [>->] (x') to (x);
    \draw [cd] (Lb') to
      node [right] {$ \iso $} (Lb);
    \draw [cd] (La') to
      node [left] {$ \iso $} (La'');
    \draw [>->] (x') to (x'');
    \draw [cd] (Lb') to
      node [right] {$ \iso $} (Lb'');
    \end{tikzpicture}
  \]
\end{proposition}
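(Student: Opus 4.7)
The plan is to verify the axioms of a pseudo double category one piece at a time, leveraging work already in hand: the construction of $_L\Cospan$ from Section~\ref{sec:StrCsp-as-Arrows} handles the horizontal direction, while Lemma~\ref{thm:comp-preserve-monic} guarantees that $\hcirc$ and $\vcirc$ preserve fine rewrites. The remaining checks split into four parts: (i) a category of objects and vertical arrows, (ii) a bicategory of objects and horizontal arrows, (iii) the squares forming a category under each of $\hcirc$ and $\vcirc$ with matching boundaries, and (iv) the interchange law, deferred to Section~\ref{sec:interchange-law}.

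For (i), a vertical arrow $a \to c$ is a span $a \xgets{\iso} b \xto{\iso} c$ in $\A$. These compose by pullback, and since isomorphisms are stable under pullback, the composite span again has invertible legs; associativity and unitality hold up to canonical iso from the pullback universal property. For (ii), the objects and horizontal arrows form $_L\Cospan$ from Definition~\ref{def:LCsp}, which Baez and Courser have established as a category, with horizontal composition given by pushout in $\X$.

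For (iii), Lemma~\ref{thm:comp-preserve-monic} shows both compositions preserve the required monicity of the middle vertical arrows. Source--target compatibility is read directly from Definition~\ref{def:hor-vert-composition}: $\hcirc$ glues along a shared middle vertical boundary, $\vcirc$ glues along a shared middle horizontal cospan. The horizontal identity square on $La \to x \gets Lb$ is the span of cospans whose apex repeats this same cospan with identities down each leg; the vertical identity on $a \xgets{\iso} b \xto{\iso} c$ is the span of structured cospans with apex $Lb \xto{\id} Lb \xgets{\id} Lb$. Associativity and unitality of $\hcirc$ follow from iterated pushouts in $\X$ agreeing up to canonical iso (using that $L$ preserves pullbacks so that the feet in $\A$ behave correctly, and that pushouts in $\X$ restrict sensibly along the $L$-images), and $\vcirc$ is similar using pullbacks. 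Because squares are taken as isomorphism classes of spans of structured cospans, all such coherence isomorphisms are absorbed, yielding strictly associative and unital composition in each direction.

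The main obstacle is (iv), the interchange law: given a $2\times 2$ array of squares, composing first horizontally and then vertically must agree with composing first vertically and then horizontally. This amounts to comparing pullbacks of pushouts with pushouts of pullbacks in the topos $\X$, and in general these do not agree. That the equality holds in our restricted setting, where the relevant vertical arrows are monic and the feet arrows are iso, requires delicate diagram chasing that exploits the adhesive structure of $\X$; this verification is taken up in Section~\ref{sec:interchange-law}.
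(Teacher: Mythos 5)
Your overall decomposition is reasonable, but there is a genuine level confusion in the treatment of coherence that the paper's proof avoids. You assert that ``Because squares are taken as isomorphism classes of spans of structured cospans, all such coherence isomorphisms are absorbed, yielding strictly associative and unital composition in each direction.'' This is false for horizontal composition. The objects of the arrow category $\RRR_1$ are individual structured cospans, \emph{not} isomorphism classes of cospans; only the \emph{morphisms} of $\RRR_1$ (the squares) are taken up to iso. Taking 2-cells up to iso does not make composition of 1-cells strict: given three composable cospans, $(x +_{La'} x') +_{La''} x''$ and $x +_{La'} (x' +_{La''} x'')$ are genuinely different objects of $\RRR_1$, isomorphic but not equal. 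The paper's proof accepts this and explicitly builds the associator and unitors as natural globular isomorphisms arising from universal properties; this is exactly what makes $_L\FFFineRewrite$ a \emph{pseudo} double category in the sense of Definition~\ref{def:dbl-cat} rather than a strict one. Your step (ii), citing $_L\Cospan$ (Definition~\ref{def:LCsp}, where cospans \emph{are} identified up to iso) as ``the objects and horizontal arrows,'' compounds this error: $_L\Cospan$ is a 1-category obtained precisely by quotienting, and is not the same data as the object part of $\RRR_1$.

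A second, smaller gap: for $\RRR_0$ to be an honest 1-category, its vertical arrows must compose strictly, so they must be \emph{isomorphism classes} of spans with invertible legs, as the paper makes explicit. You note only that associativity of span composition holds ``up to canonical iso from the pullback universal property,'' which is not good enough for a category. The fix is to quotient, and then the universal property supplies strictness.

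The rest of the proposal (using Lemma~\ref{thm:comp-preserve-monic} to check that compositions preserve the monic legs, reading off source/target compatibility from Definition~\ref{def:hor-vert-composition}, and deferring the interchange law to Section~\ref{sec:interchange-law}) matches the paper's strategy. But the paper frames the whole verification through the internal definition of a pseudo double category---exhibiting the two categories $\RRR_0$, $\RRR_1$ and the structure functors $U$, $S$, $T$, $\odot$---which keeps the distinction between the pseudo 1-dimensional composition and the strict 2-dimensional composition from collapsing as it does in your write-up.
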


\begin{proof}
  This proof requires we check the axioms of a double
  category as laid out in Definition \ref{def:dbl-cat}. For
  simplicity, we denote $_L\FFFineRewrite$ by $\RRR $ in
  this proof.

  The object category $\RRR_0$ is given by objects of $\A$
  and isomorphism classes of spans in $\A$ such that each
  leg is an isomorphism.  The arrow category $\RRR_1$ has as
  objects the structured cospans
  \[
    La \to x \gets La'
  \]
  and as morphisms the fine rewrites of structured cospans.
	
  The functor $U \from \RRR_0 \to \RRR_1$ acts
  on objects by mapping $a$ to the identity cospan on $La$
  and on morphisms by mapping $La \gets Lb \to Lc$, whose
  legs are isomorphisms, to the square
  \[
    \begin{tikzpicture}
      \node (A) at (0,4) {$La$};
      \node (A') at (0,2) {$Lb$};
      \node (A'') at (0,0) {$Lc$};
      \node (B) at (2,4) {$La$};
      \node (B') at (2,2) {$Lb$};
      \node (B'') at (2,0) {$Lc$};
      \node (C) at (4,4) {$La$};
      \node (C') at (4,2) {$Lb$};
      \node (C'') at (4,0) {$Lc$};
      \path[cd,font=\scriptsize,>=angle 90]
      (A) edge node{} (B)
      (A') edge node{} (B')
      (A'') edge node{} (B'')
      (C) edge node{} (B)
      (C') edge node{} (B')
      (C'') edge node{} (B'')
      (A') edge node{} (A)
      (A') edge node{} (A'')
      (B') edge node{} (B)
      (B') edge node{} (B'')
      (C') edge node{} (C)
      (C') edge node{} (C'');
    \end{tikzpicture}
  \]
  The functor $S \from \RRR_1 \to \RRR_0$
  acts on objects by sending $La \to x \gets La'$
  to $a$ and on morphisms by sending a square
  \[
    \begin{tikzpicture}
      \node (A) at (0,4) {$La$};
      \node (A') at (0,2) {$Lb$};
      \node (A'') at (0,0) {$Lc$};
      \node (B) at (2,4) {$x$};
      \node (B') at (2,2) {$y$};
      \node (B'') at (2,0) {$z$};
      \node (C) at (4,4) {$La'$};
      \node (C') at (4,2) {$Lb'$};
      \node (C'') at (4,0) {$Lc'$};
      \draw[cd]
      (A) edge node{} (B)
      (A') edge node{} (B')
      (A'') edge node{} (B'')
      (C) edge node{} (B)
      (C') edge node{} (B')
      (C'') edge node{} (B'')
      (A') edge node{} (A)
      (A') edge node{} (A'')
      (B') edge node{} (B)
      (B') edge node{} (B'')
      (C') edge node{} (C)
      (C') edge node{} (C'');
    \end{tikzpicture}
  \]
  to the span $ La \gets Lb \to Lc $. The 
  functor $T$ is defined similarly sends an object \[
  La \to x \gets La' \] of $ \RRR_1 $ to $ a' $ a square
  \[
    \begin{tikzpicture}
      \node (A) at (0,4) {$La$};
      \node (A') at (0,2) {$Lb$};
      \node (A'') at (0,0) {$Lc$};
      \node (B) at (2,4) {$x$};
      \node (B') at (2,2) {$y$};
      \node (B'') at (2,0) {$z$};
      \node (C) at (4,4) {$La'$};
      \node (C') at (4,2) {$Lb'$};
      \node (C'') at (4,0) {$Lc'$};
      \draw[cd]
      (A) edge node{} (B)
      (A') edge node{} (B')
      (A'') edge node{} (B'')
      (C) edge node{} (B)
      (C') edge node{} (B')
      (C'') edge node{} (B'')
      (A') edge node{} (A)
      (A') edge node{} (A'')
      (B') edge node{} (B)
      (B') edge node{} (B'')
      (C') edge node{} (C)
      (C') edge node{} (C'');
    \end{tikzpicture}
  \]
  to the span $ La' \gets Lb' \to Lc' $.
  
  The horizontal composition functor
  \[
    \odot \from \RRR_1 \times_{\RRR_0} \RRR_1 \to
    \RRR_1
  \]
  acts on objects by composing cospans with pushouts in
  the usual way.  It acts on morphisms by
  \[
    \raisebox{-0.5\height}{
      \begin{tikzpicture}
        \node (A) at (0,4) {$La$};
        \node (A') at (0,2) {$Lb$};
        \node (A'') at (0,0) {$Lc$};
        \node (B) at (2,4) {$v$};
        \node (B') at (2,2) {$w$};
        \node (B'') at (2,0) {$x$};
        \node (C) at (4,4) {$La'$};
        \node (C') at (4,2) {$Lb'$};
        \node (C'') at (4,0) {$Lc'$};
        \node (D) at (6,4) {$v'$};
        \node (D') at (6,2) {$w'$};
        \node (D'') at (6,0) {$x'$};
        \node (E) at (8,4) {$La''$};
        \node (E') at (8,2) {$Lb''$};
        \node (E'') at (8,0) {$Lc''$};
        \draw[cd]
        (A) edge node[above]{} (B)
        (A') edge node[above]{} (B')
        (A'') edge node[above]{} (B'')
        (C) edge node[above]{} (B)
        (C') edge node[above]{} (B')
        (C'') edge node[above]{} (B'')
        (C) edge node[above]{} (D)
        (C') edge node[above]{} (D')
        (C'') edge node[above]{} (D'')
        (E) edge node[above]{} (D)
        (E') edge node[above]{} (D')
        (E'') edge node[above]{} (D'')
        (A') edge node[left]{} (A)
        (A') edge node[left]{} (A'')
        (B') edge node[left]{} (B)
        (B') edge node[left]{} (B'')
        (C') edge node[left]{} (C)
        (C') edge node[left]{} (C'')	
        (D') edge node[left]{} (D)
        (D') edge node[left]{} (D'')
        (E') edge node[left]{} (E)
        (E') edge node[left]{} (E'');
      \end{tikzpicture}
    }
    \quad
    \xmapsto{\odot}
    \quad
    \raisebox{-0.5\height}{
      \begin{tikzpicture}
        \node (A) at (0,4) {$La$};
        \node (A') at (0,2) {$Lb$};
        \node (A'') at (0,0) {$Lc$};
        \node (B) at (2,4) {$v +_{La'} v'$};
        \node (B') at (2,2) {$w +_{Lb'} w'$};
        \node (B'') at (2,0) {$x +_{Lc'} x'$};
        \node (C) at (4,4) {$La''$};
        \node (C') at (4,2) {$Lb''$};
        \node (C'') at (4,0) {$Lc''$};
        \draw[cd]
        (A) edge node[above]{} (B)
        (A') edge node[above]{} (B')
        (A'') edge node[above]{} (B'')
        (C) edge node[above]{} (B)
        (C') edge node[above]{} (B')
        (C'') edge node[above]{} (B'')
        (A') edge node[left]{} (A)
        (A') edge node[left]{} (A'')
        (B') edge node[left]{} (B)
        (B') edge node[left]{} (B'')
        (C') edge node[left]{} (C)
        (C') edge node[left]{} (C'');	
      \end{tikzpicture}
    }
  \]
  Section \ref{sec:interchange-law} is devoted to proving
  that $\odot$ is functorial, that is, it preserves
  composition.  It is straightforward to check that the
  required equations are satisfied.  The associator and
  unitors are given by natural isomorphisms that arise from
  universal properties.
\end{proof}

And now, our double category of fine rewrites is defined. It
remains to prove the interchange law, which we do next.

\section{The interchange law}
\label{sec:interchange-law}

Here we prove the most technical part of the proof that
$ _L\FFFineRewrite $ is a double category: the interchange
law. This law relates the horizontal and vertical
composition defined in the previous section.  

\begin{theorem} \label{thm:interchange-law}

  Given four fine rewrites of structured cospans
  \begin{equation}
    \label{eq:interchange}
    \begin{tikzpicture}
      \begin{scope}[shift={(0,6)}]
        \node () at (-1,2) {$ \alpha \bydef $};
        \node (a) at (0,4) {$ La $};
        \node (b) at (0,2) {$ Ld $};
        \node (c) at (0,0) {$ Lg $};
        \node (u) at (2,4) {$ u $};
        \node (v) at (2,2) {$ w $};
        \node (w) at (2,0) {$ y $};
        \node (d) at (4,4) {$ Lb $};
        \node (e) at (4,2) {$ Le $};
        \node (f) at (4,0) {$ Lh $};
        \draw [cd] (a) to (u);
        \draw [cd] (d) to (u);
        \draw [cd] (b) to (v);
        \draw [cd] (e) to (v);
        \draw [cd] (c) to (w);
        \draw [cd] (f) to (w);
        \draw [cd] (b) to node[left]{$\iso $} (a);
        \draw [cd] (b) to node[left]{$\iso$} (c);
        \draw [>->] (v) to (u);
        \draw [>->] (v) to (w);
        \draw [cd] (e) to node[right]{$ \iso $} (d);
        \draw [cd] (e) to node[right]{$ \iso $} (f);
      \end{scope}
      \begin{scope}
        \node () at (-1,2) {$ \beta \bydef $};
        \node (a) at (0,4) {$ Lg $};
        \node (b) at (0,2) {$ Ld' $};
        \node (c) at (0,0) {$ La' $};
        \node (u) at (2,4) {$ y $};
        \node (v) at (2,2) {$ w' $};
        \node (w) at (2,0) {$ x' $};
        \node (d) at (4,4) {$ Lh $};
        \node (e) at (4,2) {$ Le' $};
        \node (f) at (4,0) {$ Lb' $};
        \draw [cd] (a) to (u);
        \draw [cd] (d) to (u);
        \draw [cd] (b) to (v);
        \draw [cd] (e) to (v);
        \draw [cd] (c) to (w);
        \draw [cd] (f) to (w);
        \draw [cd] (b) to node[left]{$ \iso $} (a);
        \draw [cd] (b) to node[left]{$ \iso $} (c);
        \draw [>->] (v) to (u);
        \draw [>->] (v) to (w);
        \draw [cd] (e) to node[right]{$ \iso $} (d);
        \draw [cd] (e) to node[right]{$ \iso $} (f);
      \end{scope}
      \begin{scope}[shift={(7,6)}]
        \node () at (-1,2) {$ \alpha' \bydef $};
        \node (a) at (0,4) {$ Lb $};
        \node (b) at (0,2) {$ Le $};
        \node (c) at (0,0) {$ Lh $};
        \node (u) at (2,4) {$ v $};
        \node (v) at (2,2) {$ x $};
        \node (w) at (2,0) {$ z $};
        \node (d) at (4,4) {$ Lc $};
        \node (e) at (4,2) {$ Lf $};
        \node (f) at (4,0) {$ Li $};
        \draw [cd] (a) to (u);
        \draw [cd] (d) to (u);
        \draw [cd] (b) to (v);
        \draw [cd] (e) to (v);
        \draw [cd] (c) to (w);
        \draw [cd] (f) to (w);
        \draw [cd] (b) to node[left]{$ \iso $} (a);
        \draw [cd] (b) to node[left]{$ \iso $} (c);
        \draw [>->] (v) to (u);
        \draw [>->] (v) to (w);
        \draw [cd] (e) to node[right]{$ \iso $} (d);
        \draw [cd] (e) to node[right]{$ \iso $} (f);
      \end{scope}
      \begin{scope}[shift={(7,0)}]
        \node () at (-1,2) {$ \beta' \bydef $};
        \node (a) at (0,4) {$ Lh $};
        \node (b) at (0,2) {$ Le' $};
        \node (c) at (0,0) {$ Lb' $};
        \node (u) at (2,4) {$ z $};
        \node (v) at (2,2) {$ x' $};
        \node (w) at (2,0) {$ v' $};
        \node (d) at (4,4) {$ Li $};
        \node (e) at (4,2) {$ Lf' $};
        \node (f) at (4,0) {$ Lc' $};
        \draw [cd] (a) to (u);
        \draw [cd] (d) to (u);
        \draw [cd] (b) to (v);
        \draw [cd] (e) to (v);
        \draw [cd] (c) to (w);
        \draw [cd] (f) to (w);
        \draw [cd] (b) to node[left]{$ \iso $} (a);
        \draw [cd] (b) to node[left]{$ \iso $} (c);
        \draw [>->] (v) to (u);
        \draw [>->] (v) to (w);
        \draw [cd] (e) to node[right]{$ \iso $} (d);
        \draw [cd] (e) to node[right]{$ \iso $} (f);
      \end{scope}
    \end{tikzpicture}
  \end{equation}
  it is true that
  \begin{equation}
    \label{eq:interchange-eq}
    ( \alpha \hcirc \alpha' ) \vcirc
    ( \beta \hcirc \beta' ) =
    ( \alpha \vcirc \beta ) \hcirc
    ( \alpha' \vcirc \beta' ).    
  \end{equation}
\end{theorem}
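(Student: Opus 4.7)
The plan is to reduce Equation \eqref{eq:interchange-eq} to a single canonical isomorphism between the middle apex objects of the two composites. Both composites are fine rewrites of structured cospans, and by inspection their top cospans agree (both equal the horizontal composite $La \to u +_{Lb} v \gets Lc$ of the top cospans of $\alpha$ and $\alpha'$), their bottom cospans agree, and their left and right $\A$-spans agree (these only involve the iso arrows). Likewise the top and bottom apex arrows into the middle apex agree once the middle apices are identified. Therefore the entire content of the interchange law lies in showing that the middle apex object produced by pushout-then-pullback coincides with the one produced by pullback-then-pushout.

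Unpacking Definition \ref{def:hor-vert-composition}, the middle apex of $(\alpha \hcirc \alpha') \vcirc (\beta \hcirc \beta')$ is
\[
  (w +_{Le} x) \times_{y +_{Lh} z} (w' +_{Le'} x'),
\]
while the middle apex of $(\alpha \vcirc \beta) \hcirc (\alpha' \vcirc \beta')$ is
\[
  (w \times_y w') +_{Le \times_{Lh} Le'} (x \times_z x').
\]
Since $L$ preserves pullbacks by hypothesis, $Le \times_{Lh} Le' \iso L(e \times_h e')$, so the right-hand expression lives in $\X$. Thus the task is to produce a canonical isomorphism between these two $\X$-objects that is compatible with the evident arrows to $y +_{Lh} z$ on top and into the top and bottom apex objects.

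The isomorphism will come from the adhesive structure on $\X$ (which is adhesive because it is a topos). The key tool is that in an adhesive category, pushouts along monomorphisms are van Kampen, so they are stable under pullback, and more generally a 2-by-2 diagram whose vertical arrows are monic admits an interchange of pushout and pullback. The monicity of $w \monicto y$, $x \monicto z$, $w' \monicto y$, $x' \monicto z$ built into the definition of a fine rewrite is precisely what feeds this hypothesis. Concretely, I would pull back the pushout square defining $w +_{Le} x$ along the monic $w' +_{Le'} x' \monicto y +_{Lh} z$ to exhibit the left-hand apex as a pushout of pullbacks; using the pullback lemma and the fact that the coproduct injections into a pushout along monos are themselves monic (Lemma \ref{thm:quotient-map-monic-pushout}), each resulting corner reduces to one of $w \times_y w'$, $x \times_z x'$, or $Le \times_{Lh} Le'$, yielding the right-hand apex.

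The principal obstacle is the sheer bookkeeping: keeping the iso-labeled vertical arrows and the pushout/pullback injections straight and verifying that the induced maps agree on the nose (so that the isomorphism classes coincide, not merely the underlying apex objects). To manage this, I would isolate the interchange step as a single standalone lemma---that in an adhesive category, pulling back a pushout of a span with monic legs along a compatible monic yields the pushout of the componentwise pullbacks---and then invoke it once in the ambient diagram. After this the verification of compatibility with the outer cospan feet and the top/bottom apex arrows is a routine application of the universal properties of pushout and pullback.
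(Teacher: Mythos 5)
Your reduction of the interchange law to a single isomorphism between the middle apex objects agrees with the paper, and you have correctly identified the two objects whose isomorphism is needed: $(w +_{Le} x) \times_{y +_{Lh} z} (w' +_{Le'} x')$ and $(w \times_y w') +_{L(e \times_h e')} (x \times_z x')$. However, your route to that isomorphism genuinely differs from the paper's. You propose to pull back the pushout square defining $w +_{Le} x$ along the monic $w' +_{Le'} x' \monicto y +_{Lh} z$, invoke stability of pushouts along monomorphisms under pullback (the Van Kampen condition) in a single step, and then identify the three corners of the resulting square. The paper instead first proves an isomorphism at the level of the unquotiented comparands, $(w \times_y w') + (x \times_z x')$ and $(w + x) \times_{y+z} (w' + x')$, using stability of \emph{coproducts} under pullback together with disjointness of coproducts in a topos (Lemma \ref{thm:theta-is-iso}), and then passes to the quotients $p' \to q'$ by a monic-plus-epic argument (Lemma \ref{lem.theta_Y iso}), with Lemma \ref{lem.vk dual} supplying epicness. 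Your route is conceptually more direct; the paper's isolates the coproduct/product interchange as the hard step and derives the pushout/pullback interchange as a formal consequence, which reduces the number of things that can go wrong at once.

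Two caveats about what you elide. First, the corner identifications in your Van Kampen cube are not purely formal: showing for instance that $w \times_{y +_{Lh} z} (w' +_{Le'} x')$ reduces to $w \times_y w'$ requires the same subobject and disjointness arguments the paper packages into Lemma \ref{lem:pullback-over-subobject} and the observation that the cross terms such as $w \times_{y+z} x'$ are initial. Second, your closing remark that compatibility with the outer feet and the top and bottom apex arrows is a routine application of universal properties undersells that step. The paper devotes a separate lemma at the end of Section \ref{sec:interchange-law} to verifying that $\theta'$ makes the two triangles in diagram \eqref{diag.theta 2-cell iso} commute, and this requires unwinding universal maps coming from both compositions together with a cancellation-by-monomorphism argument; it is not automatic from the construction of the apex isomorphism alone.
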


We devote the remainder of this section proving
Theorem \ref{thm:interchange-law}. The first thing
we do is deconstruct Equation
\eqref{eq:interchange-eq}, starting with the left
hand side.

The horizontal compositions
$ \alpha \hcirc \alpha' $ and
$ \beta \hcirc \beta' $ are,
respectively,
\[
  \begin{tikzpicture}
    \begin{scope}
      \node (a) at (0,4) {$ La $};
      \node (b) at (0,2) {$ Ld $};
      \node (c) at (0,0) {$ Lg $};
      \node (udx) at (2,4) {$ u +_{Lb} v $};
      \node (vey) at (2,2) {$ w +_{Le} x $};
      \node (wfz) at (2,0) {$ y +_{Lh} z $};
      \node (g) at (4,4) {$ Lc $};
      \node (h) at (4,2) {$ Lf' $};
      \node (i) at (4,0) {$ Lc' $};
      \draw [cd] (a) to (udx);
      \draw [cd] (g) to (udx);
      \draw [cd] (b) to (vey);
      \draw [cd] (h) to (vey);
      \draw [cd] (c) to (wfz);
      \draw [cd] (i) to (wfz);
      \draw [cd] (b) to node[left]{$ \iso $} (a);
      \draw [cd] (b) to node[left]{$ \iso $} (c);
      \draw [>->] (vey) to (udx);
      \draw [>->] (vey) to (wfz);
      \draw [cd] (h) to node[right]{$ \iso $} (g);
      \draw [cd] (h) to node[right]{$ \iso $} (i);
    \end{scope}
    \begin{scope}[shift={(6,0)}]
      \node (c) at (0,4) {$ Lg $};
      \node (b') at (0,2) {$ Ld' $};
      \node (a') at (0,0) {$ La' $};
      \node (wfz) at (2,4) {$ y +_{Lh} z $};
      \node (v'e'y') at (2,2) {$ w' +_{Le'} x' $};
      \node (u'd'z') at (2,0) {$ x' +_{Lb'} v' $};
      \node (g') at (4,0) {$ Lc' $};
      \node (h') at (4,2) {$ Lf' $};
      \node (i) at (4,4) {$ Li $};
      \draw [cd] (c) to (wfz);
      \draw [cd] (i) to (wfz);
      \draw [cd] (b') to (v'e'y');
      \draw [cd] (h') to (v'e'y');
      \draw [cd] (a') to (u'd'z');
      \draw [cd] (g') to (u'd'z');
      \draw [cd] (b') to node[left]{$ \iso $} (c);
      \draw [cd] (b') to node[left]{$ \iso $} (a');
      \draw [>->] (v'e'y') to (wfz);
      \draw [>->] (v'e'y') to (u'd'z');
      \draw [cd] (h') to node[right]{$ \iso $} (i);
      \draw [cd] (h') to node[right]{$ \iso $} (g');
    \end{scope}
  \end{tikzpicture}
\]
Lemma \ref{thm:comp-preserve-monic} ensures that
the marked arrows above are monic. The vertical
composition of these is
\[
 \begin{tikzpicture}
   \node at (0,2) {
     $ ( \alpha \hcirc \alpha' )
       \vcirc
       ( \beta \hcirc \beta' )
       =  $};
   \begin{scope}[shift={(3.5,0)}]
      \node (a) at (0,4) {$ La $};
      \node (bcb') at (0,2) {$ Ld \times_{Lg} Ld' $};
      \node (a') at (0,0) {$ La' $};
      \node (udx) at (4.5,4) {$ u +_{Lb} v $};
      \node (middle) at (4.5,2)
        { $ ( w +_{Le} x )
          \times_{( y +_{Lh} z )}
          ( w' +_{Le'} x') $ };
      \node (u'd'z') at (4.5,0) {$ x' +_{Lb'} v' $};
      \node (g) at (9,4) {$ Lc $};
      \node (hih') at (9,2) {$ Lf +_{Li} Lf' $};
      \node (g') at (9,0) {$ Lc' $};
      \draw [cd] (a) to (udx);
      \draw [cd] (g) to (udx);
      \draw [cd] (bcb') to (middle);
      \draw [cd] (hih') to (middle);
      \draw [cd] (a') to (u'd'z');
      \draw [cd] (g') to (u'd'z');
      \draw [cd] (bcb') to node[left]{$ \iso $} (a);
      \draw [cd] (bcb') to node[left]{$ \iso $} (a');
      \draw [>->] (middle) to (udx);
      \draw [>->] (middle) to (u'd'z');
      \draw [cd] (hih') to node[right]{$ \iso $} (g);
      \draw [cd] (hih') to node [right] {$ \iso $} (g');
    \end{scope}
  \end{tikzpicture}
\]
Again, the marked arrows are monic due to Lemma
\ref{thm:comp-preserve-monic}. The outside, vertical arrows
are isomorphisms because pullbacks preserve
isomorphism. 

To compute the right hand side of Equation
\eqref{eq:interchange-eq}, we start with the
vertical composites $ \alpha \vcirc
\beta $ and $ \alpha' \vcirc \beta' $,
which are the respective diagrams
\[
  \begin{tikzpicture}
    \begin{scope}
      \node (a) at (0,4) {$ La $};
      \node (b) at (0,2) {$ L(d \times_{g} d') $};
      \node (c) at (0,0) {$ La' $};
      \node (udx) at (3,4) {$ u $};
      \node (vey) at (3,2) {$ w \times_{y} w' $};
      \node (wfz) at (3,0) {$ x' $};
      \node (g) at (6,4) {$ Lb $};
      \node (h) at (6,2) {$ L(e \times_{h} e') $};
      \node (i) at (6,0) {$ Lb' $};
      \draw [cd] (a) to (udx);
      \draw [cd] (g) to (udx);
      \draw [cd] (b) to (vey);
      \draw [cd] (h) to (vey);
      \draw [cd] (c) to (wfz);
      \draw [cd] (i) to (wfz);
      \draw [cd] (b) to node[left]{$ \iso $} (a);
      \draw [cd] (b) to node[left]{$ \iso $} (c);
      \draw [>->] (vey) to (udx);
      \draw [>->] (vey) to (wfz);
      \draw [cd] (h) to node[right]{$ \iso $} (g);
      \draw [cd] (h) to node[right]{$ \iso $} (i);
    \end{scope}
    \begin{scope}[shift={(0,-6)}]
      \node (c) at (0,4) {$ Lb $};
      \node (b') at (0,2) {$ L(e \times_{h} e') $};
      \node (a') at (0,0) {$ Lb' $};
      \node (wfz) at (3,4) {$ v $};
      \node (v'e'y') at (3,2) {$ x \times_{z} x' $};
      \node (u'd'z') at (3,0) {$ v' $};
      \node (i) at (6,4) {$ Lc $};
      \node (h') at (6,2) {$ L(f \times_{i} f') $};
      \node (g') at (6,0) {$ Lc' $};
      \draw [cd] (c) to (wfz);
      \draw [cd] (i) to (wfz);
      \draw [cd] (b') to (v'e'y');
      \draw [cd] (h') to (v'e'y');
      \draw [cd] (a') to (u'd'z');
      \draw [cd] (g') to (u'd'z');
      \draw [cd] (b') to node[left]{$ \iso $} (c);
      \draw [cd] (b') to node[left]{$ \iso $} (a');
      \draw [>->] (v'e'y') to (wfz);
      \draw [>->] (v'e'y') to (u'd'z');
      \draw [cd] (h') to node[right]{$ \iso $} (i);
      \draw [cd] (h') to node[right]{$ \iso $} (g');
    \end{scope}
  \end{tikzpicture}
\]
Lemma \ref{thm:comp-preserve-monic} ensures the
marked arrows are monic.  The horizontal
composition of these is 
\[
 \begin{tikzpicture}
   \node at (0,2) {
     $ ( \alpha \vcirc \beta )
       \hcirc
       ( \alpha' \vcirc \beta' )
       =  $};
   \begin{scope}[shift={(3,0)}]
      \node (a) at (0,4) {$ La $};
      \node (bcb') at (0,2) {$ Ld \times_{Lg} Ld' $};
      \node (a') at (0,0) {$ La' $};
      \node (udx) at (4.5,4) {$ u +_{Lb} v $};
      \node (middle) at (4.5,2)
        { $ ( w \times_{y} w' )
          +_{L(e \times_{h} e')}
          ( x \times_{z} x') $ };
      \node (u'd'z') at (4.5,0) {$ x' +_{Lb'} v' $};
      \node (g) at (9,4) {$ Lc $};
      \node (hih') at (9,2) {$ Lf +_{Li} Lf' $};
      \node (g') at (9,0) {$ Lc' $};
      \draw [cd] (a) to (udx);
      \draw [cd] (g) to (udx);
      \draw [cd] (bcb') to (middle);
      \draw [cd] (hih') to (middle);
      \draw [cd] (a') to (u'd'z');
      \draw [cd] (g') to (u'd'z');
      \draw [cd] (bcb') to node[left]{$ \iso $} (a);
      \draw [cd] (bcb') to node[left]{$ \iso $} (a');
      \draw [>->] (middle) to (udx);
      \draw [>->] (middle) to (u'd'z');
      \draw [cd] (hih') to node[right]{$ \iso $} (g);
      \draw [cd] (hih') to node[right]{$ \iso $} (g');
    \end{scope}
  \end{tikzpicture}
\]
It follows that the proof of Theorem
\ref{thm:interchange-law} comes down to finding an
isomorphism
\[
  (w \times_{y} w') +_{L(e \times_h e')} (x \times_z x')
  \to
  (w +_{Le} x) \times_{(y +_{Lh} z)} (w' +_{Le'} x')
\]  

To simplify our diagrams, we introduce new
notation. We write
\[
  \begin{array}{ll}
    p  \coloneqq
      ( w \times_y w' ) + ( x \times_z x' ), &
    p'  \coloneqq
      ( w \times_y w' ) +_{L(e \times_h e')} ( x' \times_z x' ), \\
    q  \coloneqq
      ( w + x ) \times_{ y + z } ( w' + x' ),  &
    q'  \coloneqq
      ( w +_{Lg} x ) \times_{ y +_{Lh} z } ( w' +_{Li} x' ). \\
  \end{array}
\]
In this notation, the isomorphism we
seek is
\begin{equation}
  \label{eq:interchange-isomorphism}
  \theta' \from p' \to q'
\end{equation}
Also, because $ Lb $, $ Le $, $ Lh $, $ Le' $, $ Lb' $, and
therefore $ L(e \times_{h} e') $ are all isomorphic, we
simply write $ L\ast $ to mean any of these.  Each are
interchangeable in the diagrams below, and adjusting this
notation will not cause any false reasoning. While we do lose
the ability to discern between these objects, context should
help the reader determine this.  Despite losing this ability, we gain
a breezier exposition and a more readable proof.

Apply Lemma \ref{thm:quotient-map-monic-pushout} to
the diagram
\[
  \begin{tikzpicture}
    \node (B) at (-4,2) {$ w \times_y w'$};
    \node (A) at (0,2) {$ L\ast $};
    \node (C) at (4,2) {$ x \times_z x'$};
    
    \node (B') at (-4,0) {$ y $};
    \node (A') at (0,0) {$ L\ast $};
    \node (C') at (4,0) {$ z $};
    \draw [cd] (A) edge (B);
    \draw [cd] (A) edge node[right] {\scriptsize{$=$}} (A');
    \draw [cd] (A) edge (C);
    \draw [cd] (A') edge (B');
    \draw [cd] (A') edge (C');
    \draw [>->] (B) edge (B');
    \draw [>->] (C) edge (C');
  \end{tikzpicture}
\]
to get the pushout
\[
  \begin{tikzpicture}
    \node (BC) at (0,2) {$p$};
    \node (BAC) at (4,2) {$p'$};
    \node (BC') at (0,0) {$ y + z $};
    \node (BAC') at (4,0) {$ y +_{L\ast} z $};
    \draw [cd] (BC) edge (BAC);
    \draw [>->] (BC) edge node[right]{$\psi$} (BC');
    \draw [>->] (BAC) edge node[right]{$\psi'$} (BAC');
    \draw [cd] (BC') edge (BAC');
  \end{tikzpicture}
\]
Similarly, we get pushouts
\[
  \begin{tikzpicture}
    \begin{scope}
    \node (BC) at (0,2) {$p$};
    \node (BAC) at (3,2) {$p'$};
    \node (BC') at (0,0) {$w+x$};
    \node (BAC') at (3,0) {$w+_{L\ast}x$};
    \draw [cd] (BC) edge (BAC);
    \draw [>->] (BC) edge node[right]{$\sigma$} (BC');
    \draw [>->] (BAC) edge node[right]{$\sigma'$} (BAC');
    \draw [cd] (BC') edge (BAC');
  \end{scope}
  \node () at (4,1) {and};
  \begin{scope}[shift={(5,0)}]
    \node (BC) at (0,2) {$ p $};
    \node (BAC) at (3,2) {$ p' $};
    \node (BC') at (0,0) {$ w' + x' $};
    \node (BAC') at (3,0) {$ w'+_{L\ast} x'$};
    \draw [cd] (BC) edge (BAC);
    \draw [>->] (BC) edge node[right]{$ \phi $} (BC');
    \draw [>->] (BAC) edge node[right]{$ \phi' $} (BAC');
    \draw [cd] (BC') edge (BAC');
  \end{scope}
  \end{tikzpicture}
\]
Now, $p$ forms a cone over the cospan
$ w+x \to y+z \gets w'+x' $ via the maps $ \psi $,
$ \sigma $, and $ \phi $.  And so, we get a
canonical map $ \theta \from p \to q $.

\begin{lemma}
  \label{lem:pullback-over-subobject}
  The commuting diagram
  \[
    \begin{tikzpicture}
      \node (Lg) at (0,0) {$ Lg $};
      \node (Lh) at (2,2) {$ L\ast $};
      \node (t) at (2,0) {$ t $};
      \node (Li) at (4,0) {$ Li $};
      \draw [cd] (Lg) to (Lh);
      \draw [cd] (Lg) to (t);
      \draw [cd] (Li) to (Lh);
      \draw [cd] (Li) to (t);
      \draw [>->] (Lh) to (t);
    \end{tikzpicture}
  \]
  induces a canonical isomorphism between $ Lg
  \times_{L\ast} Li $ and $ Lg \times_{t} L\ast $.
\end{lemma}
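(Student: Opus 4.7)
The plan is to deduce the isomorphism from the pullback pasting lemma, with the monicity of $L\ast \rightarrowtail t$ serving as the sole nontrivial input.

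First I would establish the auxiliary fact that the square with $Li$ along the top (as the identity), $Li \to L\ast$ on the left, $L\ast \rightarrowtail t$ on the bottom, and $Li \to t$ on the right is a pullback. This is standard: pulling back a monomorphism along an arrow that factors through it returns the domain of the factoring arrow, because any cone over the cospan $Li \to t \gets L\ast$ lifts uniquely through $L\ast$ by monicity. Only the hypothesis that $L\ast \rightarrowtail t$ is monic and that $Li \to t$ factors through $L\ast$ (supplied by the commuting diagram) is used here.

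Next I would paste this auxiliary square to the right of the defining pullback square of $Lg \times_{L\ast} Li$, producing a rectangle whose top row is $Lg \times_{L\ast} Li \to Li \xrightarrow{=} Li$ and whose bottom row is $Lg \to L\ast \rightarrowtail t$. Both inner squares are pullbacks, so the pasting lemma for pullbacks yields that the outer rectangle is a pullback as well. This exhibits $Lg \times_{L\ast} Li$, equipped with its two projections, as the pullback of the pair $Lg \to t$ and $Li \to t$, producing the canonical comparison isomorphism from the universal property.

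The only step with any real content is verifying the auxiliary square, and this follows immediately from the monicity of $L\ast \rightarrowtail t$; everything else is formal diagram chasing. In particular, the topos hypothesis on $\X$ and the pullback preservation of $L$ play no role beyond ensuring the requisite pullbacks exist -- the argument would go through in any category with enough pullbacks in which monomorphisms behave in the expected way.
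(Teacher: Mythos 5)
Your argument is correct and turns on the same key observation as the paper's proof: the monicity of $L\ast \rightarrowtail t$ ensures that a cone over the cospan $Lg \to t \gets Li$ is the same thing as a cone over $Lg \to L\ast \gets Li$. The packaging differs somewhat. You first isolate the auxiliary pullback square on $\id_{Li}$, $Li \to L\ast$, $L\ast \rightarrowtail t$, $Li \to t$ and then invoke the pasting lemma for pullbacks, whereas the paper constructs the two comparison maps directly from the universal properties of $Lg \times_{L\ast} Li$ and $Lg \times_t Li$ (the direction that uses the mono is exhibiting $Lg \times_t Li$ as a cone over $Lg \to L\ast \gets Li$) and then appeals to uniqueness to conclude they are mutually inverse. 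Your route is somewhat more modular, reducing the only non-formal step to the standard lemma that pulling a monomorphism back along an arrow factoring through it recovers the domain; the paper's route avoids an explicit appeal to the pasting lemma. One small observation: the lemma statement as printed reads $Lg \times_t L\ast$, but the paper's own proof and the application in Lemma \ref{thm:theta-is-iso} both concern $Lg \times_t Li$, which is what your argument produces; the statement appears to contain a typo that your proposal implicitly corrects.
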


\begin{proof}
  Via the projection maps, $ Lg \times_{L\ast} Li $ forms a
  cone over the cospan $ Lg \to t \gets Li $
  and, also, $ Lg \times_{t} Li $ forms a cone over the
  cospan $ Lg \to L\ast \gets Li $, though the
  latter requires the monic
  $ L\ast \rightarrowtail t $ to do so. Universality
  implies that the induced maps are mutual
  inverses and they are the only such pair.
\end{proof}

\begin{lemma}
  \label{thm:theta-is-iso}
  The map $\theta \from p \to q $ is an
  isomorphism.
\end{lemma}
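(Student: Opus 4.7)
The plan is to exploit the extensivity of the topos $\X$: finite coproducts in $\X$ are disjoint and stable under pullback. Under these hypotheses, for any cospan of the form $A+B \to Y+Z \gets A'+B'$ in which $A, A'$ factor through $Y$ and $B, B'$ factor through $Z$, the pullback $(A+B)\times_{Y+Z}(A'+B')$ is canonically isomorphic to $(A\times_Y A')+(B\times_Z B')$. This is exactly the shape of $q$ versus $p$ once one notes that the maps $w\to y+z$ and $w'\to y+z$ factor through the summand $y$, while $x\to y+z$ and $x'\to y+z$ factor through the summand $z$.

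First, I would apply stability of pullback under coproduct to decompose
\[
q=(w+x)\times_{y+z}(w'+x') \iso (w\times_{y+z} w') + (w\times_{y+z} x') + (x\times_{y+z} w') + (x\times_{y+z} x').
\]
Next, for the diagonal terms, I would use Lemma \ref{lem:pullback-over-subobject} together with the fact that the coproduct injections $y\monicto y+z$ and $z\monicto y+z$ are monic to rewrite $w\times_{y+z}w' \iso w\times_y w'$ and $x\times_{y+z}x' \iso x\times_z x'$. For the off-diagonal terms, I would invoke disjointness of coproducts in a topos: the images of $w$ and $x'$ in $y+z$ lie in the disjoint summands $y$ and $z$, so their intersection (the pullback) is the initial object $0$, and similarly for $x\times_{y+z}w'$. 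Summing, $q \iso p$ canonically.

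The last and most delicate step is to check that the resulting isomorphism $p\to q$ really is $\theta$ and not just some abstract comparison map. Since $\theta$ is defined as the unique arrow into the pullback $q$ induced by the cone $(\sigma,\phi,\psi)$ on $p$ over $w+x \to y+z \gets w'+x'$, it suffices to verify that the isomorphism constructed above composes with the two projections $q\to w+x$ and $q\to w'+x'$ to give $\sigma$ and $\phi$ respectively. This reduces to a direct diagram chase on each of the four summands of $p$ after the extensivity decomposition, where each matching comes essentially by definition of the relevant coproduct inclusions and pullback projections.

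The main obstacle is bookkeeping: the abbreviation $L\ast$ collapses several isomorphic objects, and one must pick consistent representatives to identify the induced cones on $p$ with the pullback cones defining $q$. Once extensivity has been applied, no further category-theoretic content is required; the remaining work is entirely a comparison of universal properties.
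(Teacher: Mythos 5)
Your proposal is correct and follows essentially the same route as the paper: decompose $q$ by extensivity (stability of coproducts under pullback), kill the two cross-terms using disjointness of coproducts in a topos, identify the diagonal terms with the pullbacks in $p$ via Lemma~\ref{lem:pullback-over-subobject}, and finally verify that the resulting comparison map coincides with $\theta$ by universal properties. The paper's proof is exactly this argument, with the same four-term decomposition and the same appeal to disjointness and the subobject lemma.
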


\begin{proof}
  Because colimits are stable under pullback
  \cite[Thm.~4.7.2]{maclane-moerdijk_sheaves},
  we get an isomorphism
  \[
    \gamma \from
    ( w \times_{ y + z } w' ) +
    ( w \times_{ y + z } x' ) +
    ( x \times_{ y + z } w' ) +
    ( x \times_{ y + z } x' )
    \to q.
  \]
  But $ w \times_{ y + z } x' $ and
  $ w' \times_{ y + z } x $ are initial. To see
  this, recall that in a topos, all maps to the
  initial object are isomorphisms.  Now, consider
  the diagram
  \[
    \begin{tikzpicture}
      \node (STpb) at (0,4) {$w \times_{y+z} x'$};
      \node (T'') at (4,4) {$z'$};
      \node (0) at (2,2) {$0$};
      \node (T) at (4,2) {$z$};
      \node (S') at (0,0) {$w$};
      \node (S) at (2,0) {$y$};
      \node (ST) at (4,0) {$y+z$};
      \draw [cd] (STpb) edge (S');
      \draw [cd] (STpb) edge (T'');
      \draw [cd] (STpb) edge[dashed] (0);
      \draw [cd] (0) edge (S);
      \draw [cd] (0) edge (T);
      \draw [cd] (S) edge (ST);
      \draw [cd] (T) edge (ST);
      \draw [cd] (S') edge (S);
      \draw [cd] (T'') edge (T);
    \end{tikzpicture}
  \]
  whose lower right square is a pullback because
  coproducts are disjoint in topoi.  Similarly,
  $ x \times_{ y + z } w' $ is initial.  Hence
  we get a canonical isomorphism
  \begin{equation} \label{eq:B second iso}
    \gamma'
    \from ( w \times_{ y + z } w' ) +
    ( x \times_{ y + z } x' ) \to q
  \end{equation}
  that factors through $\gamma$. But Lemma
  \ref{lem:pullback-over-subobject} gives unique
  isomorphisms
  \[
    w \times_{ y } w' \cong
    w \times_{ y + z } w'
    \text{ and }
    x \times_{ z } x' \cong x
    \times_{ y + z } x'.
  \]
  This produces a canonical isomorphism
  \[
    \gamma'' \from p \to
    ( w \times_{ y + z } w' ) +
    ( x \times_{ y + z } x' ).
  \]
  One can show that 
  $ \theta  = \gamma' \circ \gamma'' $ using
  universal properties.
\end{proof}

Having shown that $ \theta \from p \to q $ is an
isomorphism, we can write $ p $ in place of \[ (w+x)
\times_{(y+z)} ( w'+x' )\] in the following diagram 
\begin{equation} \label{diag.the big cube}
  \begin{tikzpicture}
    \node (A) at (2,3) {$p$};
    \node (Ay) at (0,0) {$p'$};
    \node (By) at (2,0) {$q'$};
    \node (ST) at (8,3) {$y+z$};
    \node (ST') at (6,3.75) {$w+x$};
    \node (ST'') at (4,2.25) {$w'+x'$};
    \node (SYT) at (8,0) {$y+_{L\ast}z$};
    \node (SYT') at (6,0.75) {$w+_{Le} x$};
    \node (SYT'') at (4,-0.75) {$w'+_{Le'} x'$};
    \draw [->>] (A) edge (Ay);
    \draw [font=\scriptsize,>->] (A) edge node[below] {$\phi$} (ST'');
    \draw [font=\scriptsize,>->] (A) edge node[above] {$\sigma$} (ST');
    \draw [font=\scriptsize,cd] (Ay) edge[bend left=20,pos=0.65] node[above] {$\sigma'$} (SYT');
    \draw [font=\scriptsize,cd] (Ay) edge[dashed] node[above] {$\theta'$} (By);
    \draw [font=\scriptsize,cd] (Ay) edge[bend right=20] node[below] {$\phi'$} (SYT'');
    \draw [font=\scriptsize,>->] (By) edge node[above] {$\omega$} (SYT');
    \draw [>->] (By) edge (SYT'');
    \draw [->>] (ST') edge (SYT');
    \draw [>->] (ST') edge (ST);
    \draw [>->] (ST'') edge (ST);
    \draw [->>] (ST) edge (SYT);
    \draw [>->] (SYT') edge (SYT);
    \draw [>->] (SYT'') edge (SYT);
    \draw [cd] (A) edge[white,line width=4pt] (By);
    \draw [font=\scriptsize,cd] (A) edge[dashed] node[right] {$\rho$} (By);
    \draw [cd] (ST'') edge[white,line width=6pt] (ST);
    \draw [>->] (ST'') edge (ST);
    \draw [cd] (ST'') edge[white,line width=6pt] (SYT'');
    \draw [->>] (ST'') edge (SYT'');
  \end{tikzpicture}
\end{equation}
where $\theta'$ from Equation
\eqref{eq:interchange-isomorphism} finally appears. It and
$\rho$ are the canonical maps arising from the pullback on
the bottom.  Observe that $\rho$ factors through $\theta'$
in the above diagram.  This follows from the universal
property of pullbacks. 

\begin{lemma}
  \label{lem.theta_Y iso} The map
  $ \theta' \from p' \to q' $ is an
  isomorphism.
\end{lemma}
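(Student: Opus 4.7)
The idea is to leverage the isomorphism $\theta \from p \to q$ from Lemma \ref{thm:theta-is-iso}, together with the universal property of the pushout defining $p'$, to force $\theta'$ to be an isomorphism.

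First, observe from Diagram \ref{diag.the big cube} that the left face $p \to p'$ is, by construction, a quotient map: $p'$ is the pushout of $w \times_y w' \gets L(e \times_h e') \to x \times_z x'$, so $p \to p'$ is an epimorphism obtained by identifying the two images of $L(e \times_h e')$ inside $p = (w \times_y w') + (x \times_z x')$. Dually, I will show that $q \to q'$ is the \emph{same} quotient, transported via $\theta$, so that the induced map $\theta'$ on the common quotient is forced to be an isomorphism.

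The crucial step is to exhibit $q'$ itself as a pushout of $w \times_y w' \gets L\ast \to x \times_z x'$. For this I invoke stability of colimits under pullback in the topos $\X$ (as in the proof of Lemma \ref{thm:theta-is-iso}) to distribute the pullback in $q' = (w +_{Le} x) \times_{y +_{Lh} z} (w' +_{Le'} x')$ first over the pushout $w +_{Le} x$, then over the pushout $w' +_{Le'} x'$. Expanding gives four diagonal terms $w \times_{y +_{Lh} z} w'$, $w \times_{y +_{Lh} z} x'$, $x \times_{y +_{Lh} z} w'$, $x \times_{y +_{Lh} z} x'$, glued along pullbacks involving $L\ast$. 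By Lemma \ref{lem:pullback-over-subobject} applied to the monic $L\ast \rightarrowtail y +_{Lh} z$, each pullback over $y +_{Lh} z$ agrees with the corresponding pullback over $y + z$; then the disjointness-of-coproducts argument used in Lemma \ref{thm:theta-is-iso} shows the two ``cross terms'' reduce to the initial object, leaving $w \times_y w'$ and $x \times_z x'$ glued exactly along $L\ast$. Hence $q'$ is canonically isomorphic to the pushout that defines $p'$.

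Finally, to conclude that the canonical iso $p' \to q'$ obtained this way equals $\theta'$, I chase the universal properties in Diagram \ref{diag.the big cube}: the outer maps $\sigma'$ and $\phi'$ factor $p \to p'$ through $w +_{Le} x$ and $w' +_{Le'} x'$ respectively, so by the universal property of the pullback $q'$ the induced arrow $p' \to q'$ is unique, and it agrees with $\theta'$ by construction. Since $\theta$ is already an iso, $\theta^{-1}$ likewise descends to the quotients to give the inverse, so $\theta'$ is an isomorphism and the interchange law follows. The main obstacle is the bookkeeping in the iterated distribution: one must apply stability of colimits under pullback twice and verify that the disjointness argument from Lemma \ref{thm:theta-is-iso}, which was phrased in terms of $y+z$, transports to the setting of $y +_{Lh} z$, which is precisely the role played by Lemma \ref{lem:pullback-over-subobject}.
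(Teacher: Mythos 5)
You take a genuinely different route from the paper, and there is a gap that sinks it. The paper shows $\theta'$ is monic (since $\sigma' = \omega\theta'$ with $\sigma'$ monic) and then epic: the two right faces of \eqref{diag.the big cube} are pushouts by Lemma \ref{thm:quotient-map-monic-pushout}, the top and bottom faces are pullbacks of monics, so by the dual Van Kampen Lemma \ref{lem.vk dual} the two left faces are pushouts, hence (as pushouts along monos) pullbacks; then $\rho$ is a pullback of the regular epi $w+x \twoheadrightarrow w+_{Le}x$, so $\rho$ is regular epi and $\theta'$ is epic. You instead try to exhibit $q'$ directly as the pushout $(w\times_y w')+_{L\ast}(x\times_z x')$ that defines $p'$, by distributing the pullback in $q'$ over the two pushouts.

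The fatal step is the claim that ``each pullback over $y+_{L\ast}z$ agrees with the corresponding pullback over $y+z$,'' justified by invoking Lemma \ref{lem:pullback-over-subobject} for ``the monic $L\ast \rightarrowtail y+_{L\ast}z$.'' That lemma needs a mono into the base. The comparison arrow $y+z \to y+_{L\ast}z$ that would have to be monic for the transfer is in fact a regular epi, and $L\ast \to y+_{L\ast}z$ is not monic either, because the cospan legs $L\ast \to y$ and $L\ast \to z$ are not assumed monic anywhere in the fine rewriting setup (only the apex maps $w\to u$, $w\to y$, etc.\ are). Since $y+_{L\ast}z$ glues the images of $y$ and $z$ along $L\ast$, the disjointness argument from Lemma \ref{thm:theta-is-iso} does not transport: a cross term such as $w\times_{y+_{L\ast}z}x'$ need not be initial. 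For a concrete failure, take $L=\id$ on $\Set$, $L\ast$ a two-element set, and $y=z=w=x'$ a point with $w\to y$, $x'\to z$ identities; then $y+_{L\ast}z$ is a point and the cross term is a point, whereas $w\times_{y+z}x'$ is empty. There is also a secondary bookkeeping issue: distributing a pullback over two pushouts yields a colimit over a $3\times 3$ grid, not a four-term coproduct, so the five ``edge'' and ``center'' terms you silently drop would also need to be handled. The paper's monic-plus-epic argument deliberately avoids all of this.
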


\begin{proof}
  Because we are working in a topos, it suffices
  to show that $\theta'$ is both monic and epic.
  It is monic because $\sigma'$ is monic.

  To see that $\theta'$ is epic, it suffices to
  show that $\rho$ is epic.  The front and rear
  right faces of \eqref{diag.the big cube} are
  pushouts by Lemma
  \ref{thm:quotient-map-monic-pushout}.  Then
  because the top and bottom squares of
  \eqref{diag.the big cube} are pullbacks
  consisting of only monomorphisms, Lemma
  \ref{lem.vk dual} implies that the front and
  rear left faces are pushouts.  However, as
  pushouts over monomorphisms, Lemma \ref{lem:adhesive-properties} tells us they are pullbacks.  But
  in a topos, regular epimorphisms are stable under
  pullback, and so $\rho$ is epic.
\end{proof}

It remains to show that $\theta'$ serves as an isomorphism
between fine rewrites. This amounts to showing that
\begin{equation} \label{diag.theta 2-cell iso}
  \begin{tikzpicture}
    \node (La) at (-1.5,2) {$ La $};
    \node (X) at (-4,0) {$L(d \times_g d')$};
    \node (La') at (-6.5,-2) {$ La $};
    \node (LyR) at (2.5,2) {$u +_{L\ast} v$};
    \node (Lc) at (6.5,2) {$ Lc $};
    \node (Y) at (4,0) {$L(f \times_i f')$};
    \node (Lc') at (1.5,-2) {$ Lc' $};
    \node (LyR') at (-2.5,-2) {$x'+_{L\ast} v'$};
    \node (Ay) at (0,1) {$p'$};
    \node (By) at (0,-1) {$q'$};
    \draw [cd] (X) edge[] node[above]{$ $} (La);
    \draw [cd] (X) edge[] node[above]{$ $} (La');
    \draw [cd] (X) edge node[above] {$g$} (Ay);
    \draw [cd] (X) edge node[above] {$h$} (By);
    \draw [cd] (La) edge[] node[above]{$j$} (LyR);
    \draw [cd] (La') edge[] node[below]{$ $} (LyR');
    \draw [cd] (Y) edge (Lc);
    \draw [cd] (Y) edge (Lc');
    \draw [cd] (Y) edge[] (By);
    \draw [cd] (Lc) edge[] (LyR);
    \draw [cd] (Lc') edge[] (LyR');
    \draw [cd] (Ay) edge node[above] {$k$} (LyR);
    \draw [cd] (Ay) edge node[right] {$\theta'$} (By);
    \draw [font=\scriptsize,cd] (By) edge[pos=0.4] node[below] {$fp$} (LyR);
    \draw [cd] (By) edge[] (LyR');
    \draw [cd] (Y) edge[white,line width=3.5pt] (Ay);
    \draw [cd] (Y) edge[] (Ay);
    \draw [cd] (Ay) edge[white,line width=3.5pt] (LyR');
    \draw [cd] (Ay) edge[] (LyR');
  \end{tikzpicture}
\end{equation}
commutes. Here $g$ and $k$ are induced from
applying vertical composition before horizontal,
$h$ from applying horizontal composition before
vertical, $j$ is from composing in either order,
$f$ is from horizontal composition as given in
Definition \ref{def:hor-vert-composition} and
$ \omega $ is from \eqref{diag.the big cube}.  The
top and bottom face commute by construction.

\begin{lemma}
  The inner triangles of diagram \eqref{diag.theta
    2-cell iso} commute. That is, we have
  $ k = f \rho \theta' $ and $ h = \theta' g $.
\end{lemma}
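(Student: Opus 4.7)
The plan is to verify each triangle separately, in each case exploiting the universal property of either the pushout defining $p'$ or the pullback defining $q'$ to reduce the claim to equalities among canonical maps that hold by construction.

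For the equation $h = \theta' g$, both sides are arrows $L(d \times_g d') \to q'$. Since $q' = (w +_{Le} x) \times_{(y +_{Lh} z)} (w' +_{Le'} x')$, by the universal property of this pullback it suffices to show that the compositions with the two projections agree. The map $h$ is, by definition, the canonical arrow induced by first horizontally composing $\alpha$ with $\alpha'$ (yielding the component $L(d\times_g d') \to w +_{Le} x$) and separately $\beta$ with $\beta'$ (yielding $L(d \times_g d') \to w' +_{Le'} x'$), and then taking the pullback-induced map. The composition $\theta' g$ routes the same data first through $p' = (w \times_y w') +_{L(e\times_h e')} (x \times_z x')$ via vertical composition, then through $\theta'$. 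On each projection, commutativity reduces to the already-established commutativity of the faces of the cube \eqref{diag.the big cube} together with the defining factorizations of $g$ and $h$ through the universal cones and cocones of their respective composites.

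For the equation involving $k$, both sides are arrows $p' \to u +_{L\ast} v$. Here we invoke the universal property of the pushout $p'$: it is the pushout of the span $w \times_y w' \gets L(e \times_h e') \to x \times_z x'$, so two maps out of $p'$ coincide if and only if they agree after precomposition with each of the two pushout inclusions. On the first summand $w \times_y w'$, the composite $f \rho \theta'$ unwinds, via the factorization $\rho = \theta' \circ (p \twoheadrightarrow p')$ read off the big cube, to the canonical map $w \times_y w' \to u +_{L\ast} v$ obtained by projecting and then including along the horizontal pushout. The map $k$, produced from applying vertical composition before horizontal, restricts to the same canonical map on that summand. The argument on $x \times_z x'$ is symmetric.

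The main obstacle will be purely bookkeeping: the maps $g$, $h$, $k$, $f$, $\rho$, $\theta'$ are each obtained as the unique arrows produced by different universal properties, and one must carefully trace each of them through Lemmas \ref{thm:quotient-map-monic-pushout}, \ref{lem:pullback-over-subobject}, and \ref{thm:theta-is-iso}, keeping track of the identifications $L\ast$ hides. Once the universal properties of $p'$ and $q'$ are brought to bear, no further computation is required, because every remaining identity is already forced by the commutativity of the cube \eqref{diag.the big cube} and by the definitions of horizontal and vertical composition from Definition \ref{def:hor-vert-composition}.
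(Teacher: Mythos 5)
Your strategy --- verifying the two equalities by composing with the pushout inclusions into $p'$ and the pullback projections out of $q'$ --- is sound, and for $h = \theta'g$ it is a genuine alternative to the paper's argument. But the pushout half contains a concrete misstep. The factorization $\rho = \theta' \circ (p \twoheadrightarrow p')$ from diagram \eqref{diag.the big cube} is a factorization of the comparison map $\rho \from p \to q'$; it has no bearing on $k = f\omega\theta'$, which is the composite $p' \xrightarrow{\theta'} q' \xrightarrow{\omega} w+_{Le}x \xrightarrow{f} u+_{L\ast}v$ (the ``$\rho$'' in the lemma statement is a slip for the projection $\omega$, as the first line of the paper's proof shows). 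Substituting $\theta'\circ(p \twoheadrightarrow p')$ into $f\rho\theta'$ produces a string that does not even compose. What one actually needs when precomposing with the inclusion $\iota\from w\times_y w' \to p'$ is the cube identity $\omega\theta' = \sigma'$, after which $\sigma'\iota$ unwinds via Lemma \ref{thm:quotient-map-monic-pushout} to the projection $w\times_y w' \to w$ followed by $w \rightarrowtail w+_{Le}x$.

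The paper's argument is also more economical than what you sketch. It shows $k = f\sigma'$ from a single diagram whose bottom face is the pushout defining $f$, then substitutes $\sigma' = \omega\theta'$. For $h = \theta'g$ it never opens up $q'$ at all: the already-commuting faces give $f\omega h = j = kg$, so $k = f\omega\theta'$ yields $f\omega h = f\omega\theta'g$, and one cancels the monic $f\omega$. Your pullback-projection route would also succeed, but it requires explicitly verifying $\omega h = \sigma'g$ and $\omega'h = \phi'g$ by unwinding Definition \ref{def:hor-vert-composition}; deferring that to ``bookkeeping'' leaves the check unperformed. As written, the proposal both misapplies a factorization and leaves the remaining verifications to the reader, so it does not yet constitute a proof.
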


\begin{proof}
  To see that $ k = f \omega \theta' $, consider the
  diagram
  \[
    \begin{tikzpicture}
      \node (Yt) at (0,2) {$L\ast$};
      \node (Yb) at (0,0) {$L\ast$};
      \node (Spb) at (2.5,1.25) {$w \times_{y} w'$};
      \node (S') at (2.5,-0.75) {$w$};
      \node (Tpb) at (5,2.75) {$x \times_zx'$};
      \node (T') at (5,0.75) {$x$};
      \node (Ay) at (7.5,2) {$p'$};
      \node (SyT) at (7.5,0) {$y+_{L\ast}z$};
      \node (LyR) at (10,0) {$u+_{L\ast}v$};
      \draw [cd] (Yt) edge[] (Tpb);
      \draw [cd] (Yt) edge[] (Spb);
      \draw [font=\scriptsize,cd] (Yt) edge node[right] {$\iso$} (Yb);
      \draw [cd] (Yb) edge[] (T');
      \draw [cd] (Yb) edge[] (S');
      \draw [cd] (Tpb) edge[] (Ay);
      \draw [>->] (Tpb) edge[] (T');
      \draw [font=\scriptsize,cd] (Ay) edge[dashed] node[above] {$k$} (LyR);
      \draw [cd] (S') edge[] (SyT);
      \draw [font=\scriptsize,cd] (S') edge[bend right=20] node[above] {$\iota_us$} (LyR);
      \draw [font=\scriptsize,cd] (T') edge[bend left=20] node[above,pos=.2] {$\iota_vt$} (LyR);
      \draw [cd] (T') edge[] (SyT);
      \draw [font=\scriptsize,cd] (SyT) edge[dashed] node[above] {$f$} (LyR);
      \draw [cd] (Spb) edge[white,line width=4pt] (Ay);
      \draw [cd] (Spb) edge[] (Ay);
      \draw [>->] (Spb) edge[white,line width=4pt] (S');
      \draw [>->] (Spb) edge[] (S');
      \draw [cd] (Ay) edge[white,line width=4pt] (SyT);
      \draw [font=\scriptsize,cd] (Ay) edge[dashed] node[pos=0.75,left] {$\sigma'$} (SyT);
    \end{tikzpicture}
  \]
  The bottom face is exactly the pushout diagram
  from which $f$ was obtained.  Universality
  implies that $ k = f \sigma' $ and, as seen in
  \eqref{diag.the big cube}, $ \sigma' = \rho \theta' $.

  That $ h = \theta' g $ follows from
  \[
    f \rho h = j = k g = f \rho \theta' g
  \]
  and the fact that $ f \rho $ is monic.
\end{proof}

Of course, we have only shown that two of the four
inner triangles commute, but we can replicate our
arguments to show the remaining two commute as
well.  This lemma was the last step in proving
Theorem \ref{thm:interchange-law}, the interchange
law. 

\section{A symmetric monoidal structure}
\label{sec:sm-structure}

The double category $ _L\FFFineRewrite $ can be equipped
with a symmetric monoidal structure lifted from the
cocartesian structure on $ \A $ and $ \X $. Proving this
amounts to checking the axioms of Definition
\ref{def:mndl-dbl-cats}.

\begin{lemma}
  \label{lem:SpanCospanSM}
  $ _L\FFFineRewrite $ is a symmetric monoidal double category.  
\end{lemma}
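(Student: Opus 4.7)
The plan is to lift the cocartesian structure of $\A$ and $\X$ pointwise to every layer of $_L\FFFineRewrite$ and then reduce all coherence data to universal properties of coproducts. On objects we set $a \otimes b \bydef a + b$ with unit $0_\A$. On horizontal arrows we use the tensor from Proposition \ref{thm:cospan-compact-closed}, which makes sense because $L$, being a left adjoint, preserves coproducts, giving the comparison $L(a+a') \iso La + La'$. On vertical arrows (spans with invertible legs) and on squares (fine rewrites) we tensor componentwise by coproduct in $\A$ and $\X$ respectively. Because each injection into a coproduct in a topos is monic and coproducts of monics are monic, the tensor of a fine rewrite with a fine rewrite is again a fine rewrite; because all vertical legs in question are isomorphisms, the tensor of invertible legs remains invertible.

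The first thing I would verify is that $\otimes$ defines a double functor $_L\FFFineRewrite \times {}_L\FFFineRewrite \to {}_L\FFFineRewrite$. Functoriality in the vertical direction is immediate since pullbacks commute with coproducts in a topos, so the canonical comparison $(w \times_y w') + (u \times_v u') \to (w+u) \times_{y+v} (w'+u')$ is an isomorphism (using disjointness of coproducts, exactly as in the proof of Lemma \ref{thm:theta-is-iso}). Functoriality in the horizontal direction uses that pushouts commute with coproducts as colimits, together with $L(a+a') \iso La+La'$, giving a canonical isomorphism $(x +_{Lb} y) + (x' +_{Lb'} y') \iso (x+x') +_{L(b+b')} (y+y')$. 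These two facts together with the interchange law of Theorem \ref{thm:interchange-law} make $\otimes$ compatible with the composition squares $\hcirc$ and $\vcirc$.

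Next I would build the globular coherence cells $\alpha$, $\lambda$, $\rho$ and the symmetry $\sigma$ as squares in $_L\FFFineRewrite$ whose components are the associator, unitors, and symmetry of the cocartesian monoidal structures on $\A$ and $\X$. Since these are isomorphisms of the underlying coproduct structure, they are vertical isomorphisms between structured cospans of the form $L(a+(b+c)) \to x+(y+z) \gets L(d+(e+f))$ and $L((a+b)+c) \to (x+y)+z \gets L((d+e)+f)$; naturality with respect to squares is automatic from the universal property of coproduct. The pentagon, triangle, and hexagon axioms hold because they already hold in $\A$ and $\X$ pointwise, and all required equalities reduce to uniqueness of maps out of a coproduct. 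Finally, the unit $0_\A$ is already the coproduct unit, and its identity horizontal arrow $L0_\A \to L0_\A \gets L0_\A$ with the identity fine rewrite furnishes the required unit object.

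The main obstacle, as in Section \ref{sec:interchange-law}, is verifying that the comparison between $(w\times_y w') + (u\times_v u')$ and $(w+u)\times_{y+v}(w'+u')$ is an isomorphism so that the tensor functor strictly respects vertical composition up to canonical globular iso; this is precisely the style of argument carried out in Lemmas \ref{thm:quotient-map-monic-pushout} and \ref{thm:theta-is-iso}, invoking disjointness of coproducts and stability of colimits under pullback in a topos. Once that compatibility is in hand, the remaining checks are routine manipulations with universal properties, and symmetry follows because the braiding of coproduct is a natural isomorphism.
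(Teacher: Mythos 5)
Your proof is correct and takes essentially the same approach as the paper: lift the cocartesian structure of $\A$ and $\X$ componentwise to $\RRR_0$ and $\RRR_1$, supply the globular cell $\mathfrak{x}$ by the interchange of coproducts with pushouts, and reduce all coherence to universal properties of colimits. The paper packages the key isomorphism a bit more slickly by observing that both apexes are colimits of the same non-connected zigzag, while you argue via pushout/coproduct commutation together with the extensivity argument of Lemma \ref{thm:theta-is-iso}, but these are the same structural fact.
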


\begin{proof}
  We denote $ _L\FFFineRewrite $ by $ \RRR $ for
  convenience.  Let us first show that the category of
  objects $\RRR_0$ and the category of arrows $\RRR_1$ are
  symmetric monoidal categories.

  We obtain the monoidal structure $ (\otimes_0, 0_{\A}) $ on $\RRR_{0}$ by lifting
  the cocartesian structure on $\A$ to the objects and by
  defining
  \[
    (a \xgets{f} b \xto{g} c) \otimes_0 (a' \xgets{f'} b' \xto{g'} c')
    \bydef
    (a+a' \xgets{f+g} b+b' \xto{f'+g'} c+c')
  \]
  on morphisms.  Universal properties provide the associator
  and unitors as well as the coherence axioms.  This
  monoidal structure is clearly symmetric.
	
  Next, we have the category $\RRR_1$ whose objects are the
  structured cospans and morphisms are their fine rewrites.
  We obtain a symmetric monoidal structure
  \[ ( \otimes_1 , L0_{\A} \to L0_{\A} \gets L0_{\A} ) \] on
  the objects via
  \[
    (La \to x \gets La') \otimes_1 (Lb \to y \gets Lb')
    \bydef
    (L(a+b) \to x+y \gets L(a'+b'))
  \]
  and on the morphisms by
  \[
    \raisebox{-0.5\height}{
      \begin{tikzpicture}
        \node (A) at (0,4) {$La$};
        \node (A') at (0,2) {$Lb$};
        \node (A'') at (0,0) {$Lc$};
        \node (B) at (2,4) {$v$};
        \node (B') at (2,2) {$w$};
        \node (B'') at (2,0) {$x$};
        \node (C) at (4,4) {$La'$};
        \node (C') at (4,2) {$Lb'$};
        \node (C'') at (4,0) {$Lc'$};
        \draw[cd]
        (A) edge node[above]{} (B)
        (A') edge node[above]{} (B')
        (A'') edge node[above]{} (B'')
        (C) edge node[above]{} (B)
        (C') edge node[above]{} (B')
        (C'') edge node[above]{} (B'')
        (A') edge node[left]{} (A)
        (A') edge node[left]{} (A'')
        (B') edge[>->] node[left]{} (B)
        (B') edge[>->] node[left]{} (B'')
        (C') edge node[left]{} (C)
        (C') edge node[left]{} (C'');	
      \end{tikzpicture}
    }
    \quad \otimes_1 \quad
    \raisebox{-0.5\height}{
      \begin{tikzpicture}
        \node (A) at (0,4) {$La''$};
        \node (A') at (0,2) {$Lb''$};
        \node (A'') at (0,0) {$Lc''$};
        \node (B) at (2,4) {$v'$};
        \node (B') at (2,2) {$w'$};
        \node (B'') at (2,0) {$x'$};
        \node (C) at (4,4) {$La'''$};
        \node (C') at (4,2) {$Lb'''$};
        \node (C'') at (4,0) {$Lc'''$};
        \draw[cd]
        (A) edge node[above]{} (B)
        (A') edge node[above]{} (B')
        (A'') edge node[above]{} (B'')
        (C) edge node[above]{} (B)
        (C') edge node[above]{} (B')
        (C'') edge node[above]{} (B'')
        (A') edge node[left]{} (A)
        (A') edge node[left]{} (A'')
        (B') edge[>->] node[left]{} (B)
        (B') edge[>->] node[left]{} (B'')
        (C') edge node[left]{} (C)
        (C') edge node[left]{} (C'');	
      \end{tikzpicture}
    }
    \quad \bydef \quad
    \]
    \[
    \raisebox{-0.5\height}{
      \begin{tikzpicture}
        \node (A) at (0,4) {$L(a+a'')$};
        \node (A') at (0,2) {$L(b+b'')$};
        \node (A'') at (0,0) {$L(c+c'')$};
        \node (B) at (3,4) {$v+v'$};
        \node (B') at (3,2) {$w+w'$};
        \node (B'') at (3,0) {$x+x'$};
        \node (C) at (6,4) {$L(a'+a''')$};
        \node (C') at (6,2) {$L(b'+b''')$};
        \node (C'') at (6,0) {$\L(c'+c''')$};
        \draw[cd]
        (A) edge node[above]{} (B)
        (A') edge node[above]{} (B')
        (A'') edge node[above]{} (B'')
        (C) edge node[above]{} (B)
        (C') edge node[above]{} (B')
        (C'') edge node[above]{} (B'')
        (A') edge node[left]{} (A)
        (A') edge node[left]{} (A'')
        (B') edge[>->] node[left]{} (B)
        (B') edge[>->] node[left]{} (B'')
        (C') edge node[left]{} (C)
        (C') edge node[left]{} (C'');	
      \end{tikzpicture}
    }
  \]
  Again, universal properties provide the associator,
  unitors, and coherence axioms.  Hence both $\RRR_0$
  and $\RRR_1$ are symmetric monoidal categories.

  It remains to find globular isomorphisms $\mathfrak{x}$
  and $\mathfrak{u}$ and their coherence. To find
  $\mathfrak{x}$, fix horizontal 1-morphisms
  \begin{align*}
    La & \to x \gets La',   &   La' & \to x' \gets La'', \\
    Lb & \to y \gets Lb',   &   Lb' & \to y' \gets Lb''.
  \end{align*}
  The globular isomorphism $\mathfrak{x}$ is an invertible 2-morphism with domain
  \[
    L(a+b) \to (x+y) +_{L(a'+b')} (x'+y') \gets L(a''+b'')
  \]
  and codomain
  \[
    L(a+b) \to (x+_{La'} y) + (x' +_{Lb'} y') \gets L(a''+b'')
  \]
  This comes down to finding an isomorphism in $\X$
  between the apexes of the above cospans.  Such an
  isomorphism exists, and is unique, because both apexes are
  colimits of the non-connected diagram
  \[
    \begin{tikzpicture}
      \node (a) at (0,0) {$La$};
      \node (b) at (1,.5) {$x$};
      \node (c) at (2,0) {$La'$};
      \node (d) at (3,.5) {$x'$};
      \node (e) at (4,0) {$La''$};
      \node (v) at (5,0) {$Lb$};
      \node (w) at (6,.5) {$y$};
      \node (x) at (7,0) {$Lb'$};
      \node (y) at (8,.5) {$y'$};
      \node (z) at (9,0) {$Lb''$};
      \path[cd,font=\scriptsize,>=angle 90]
      (a) edge node[above]{$$} (b)
      (c) edge node[above]{$$} (b)
      (c) edge node[above]{$$} (d)
      (e) edge node[above]{$$} (d)
      (v) edge node[above]{$$} (w)
      (x) edge node[above]{$$} (w)
      (x) edge node[above]{$$} (y)
      (z) edge node[above]{$$} (y);
    \end{tikzpicture}
  \]
  Moreover, the resulting globular isomorphism is a fine
  rewrite of structured cospans because the universal maps are
  isomorphisms. The globular isomorphism $\mathfrak{u}$ is
  similar.
	
  Finally, we check that the coherence axioms, namely
  (a)-(k) of Definition \ref{def:mndl-dbl-cats},
  hold.  These are straightforward, though tedious, to
  verify.  For instance, if we have
  \[
    \begin{tikzpicture}
      \node (a) at (0,0) {$La$};
      \node (b) at (1,.5) {$x$};
      \node (c) at (2,0) {$La'$};
      \node (M1) at (-1,.25) {$M_1 =$};
      \node (M2) at (3,.25) {$M_2 =$};
      \node (c2) at (4,0) {$La'$};
      \node (d) at (5,.5) {$x'$};
      \node (e) at (6,0) {$La''$};
      \node (M3) at (7,.25) {$M_3 =$};
      \node (e2) at (8,0) {$La''$};
      \node (f) at (9,.5) {$x''$};
      \node (g) at (10,0) {$La'''$};
      \node (t) at (0,-2) {$Lb$};
      \node (u) at (1,-1.5) {$y$};
      \node (v) at (2,-2) {$Lb'$};
      \node (N1) at (-1,-1.75) {$N_1 =$};
      \node (N2) at (3,-1.75) {$N_2 =$};
      \node (v2) at (4,-2) {$Lb'$};
      \node (w) at (5,-1.5) {$y'$};
      \node (x) at (6,-2) {$Lb''$};
      \node (N3) at (7,-1.75) {$N_3 =$};
      \node (x2) at (8,-2) {$Lb''$};
      \node (y) at (9,-1.5) {$y''$};
      \node (z) at (10,-2) {$Lb'''$};
      \path[cd,font=\scriptsize,>=angle 90]
      (a) edge node[above]{$$} (b)
      (c) edge node[above]{$$} (b)
      (c2) edge node[above]{$$} (d)
      (e) edge node[above]{$$} (d)
      (v2) edge node[above]{$$} (w)
      (e2) edge node[above]{$$} (f)
      (g) edge node[above]{$$} (f)
      (t) edge node[above]{$$} (u)
      (v) edge node[above]{$$} (u)
      (x) edge node[above]{$$} (w)
      (x2) edge node[above]{$$} (y)
      (z) edge node[above]{$$} (y);
    \end{tikzpicture}
  \]
  then following Diagram \eqref{diag:MonDblCat} around the
  top right gives the sequence of cospans
  \[
    \begin{tikzpicture}
      \node (a) at (-5,0) {$L(a+b)$};
      \node (b) at (0,0.5) {
        $((x+y) +_{ L( a' + b') } (x'+y'))
        +_{ L (a'' + b'') } (x''+y'') $ };
      \node (c) at (5,0) {$L(a''+b'')$};
      \node (M1) at (-2,1.5) {
        $((M_1 \otimes N_1) \odot
        (M_2 \otimes N_2)) \odot
        (M_3 \otimes N_3) = $ };
      \path[cd,font=\scriptsize]
      (a) edge[in=180,out=45] node[above]{$$} (b)
      (c) edge[in=0,out=135] node[above]{$$} (b);
    \end{tikzpicture}
  \]
  \[
    \begin{tikzpicture}
      \node (a) at (-5,0) {$L(a+b)$};
      \node (b) at (0,0.5) {
        $ ( (x +_{La'} x') +
        ( y +_{Lb'} y')) +_{ L(a''+b'') }
        (x''+y'') $ };
      \node (c) at (5,0) {$ L(a''+b'') $};
      \node (M1) at (-2,1.5) {
        $((M_1 \odot M_2) \otimes
        (N_1 \odot N_2)) \odot
        (M_3 \otimes N_3) = $ };
      \path[cd,font=\scriptsize]
      (a) edge[in=180,out=45] node[above]{$$} (b)
      (c) edge[in=0,out=135] node[above]{$$} (b);
    \end{tikzpicture}
  \]
  \[
    \begin{tikzpicture}
      \node (a) at (-5,0) { $ L(a+b) $ };
      \node (b) at (0,0.5) {
        $ ( ( x +_{La'} x') +_{La''} x'' ) +
        ( ( y +_{Lb'} y' ) +_{Lb''} y'' ) $ };
      \node (c) at (5,0) {$ L(a''+b'') $};
      \node (M1) at (-2,1.5) {
        $((M_1 \odot M_2) \odot M_3) \otimes
        ((N_1 \odot N_2) \odot N_3) = $};
      \path[cd,font=\scriptsize]
      (a) edge[in=180,out=45] node[above]{$$} (b)
      (c) edge[in=0,out=135] node[above]{$$} (b);
    \end{tikzpicture}
  \]
  \[
    \begin{tikzpicture}
      \node (a) at (-5,0) {$L(a+b)$};
      \node (b) at (0,0.5) {
        $ ( x +_{La'} ( x' +_{La''} x'' )) +
        ( y +_{Lb'} ( y' +_{Lb''} y'' ) ) $ };
      \node (c) at (5,0) {$ L(a''+b'') $};
      \node (M1) at (-2,1.5) {
        $( M_1 \odot (M_2 \odot M_3)) \otimes
        (N_1 \odot (N_2 \odot N_3)) = $ };
      \path[cd,font=\scriptsize]
      (a) edge[in=180,out=45] node[above]{$$} (b)
      (c) edge[in=0,out=135] node[above]{$$} (b);
    \end{tikzpicture}
  \]
  Following the diagram \eqref{diag:MonDblCat} around the
  bottom left gives another sequence of cospans
  \[
    \begin{tikzpicture}
      \node (a) at (-5,0) {$ L(a + b) $};
      \node (b) at (0,0.5) {
        $ ( (x + y) +_{L (a'+b')} (x'+y'))
        +_{L(a''+b'')} (x'' + y'' ) $ };
      \node (c) at (5,0) {$ L(a'' + b'') $};
      \node (M1) at (-2,1.5) {
        $ ((M_1 \otimes N_1) \odot
        (M_2 \otimes N_2)) \odot
        (M_3 \otimes N_3) = $ };
      \path[cd,font=\scriptsize]
      (a) edge[in=180,out=45] node[above]{$$} (b)
      (c) edge[in=0,out=135] node[above]{$$} (b);
    \end{tikzpicture}
  \]
  \[
    \begin{tikzpicture}
      \node (a) at (-5,0) {$L(a+b)$};
      \node (b) at (0,0.5) {
        $ (x + y) +_{L(a'+b')}
        ( (x' + y') +_{ L(a''+b'') } (x'' + y'') )$};
      \node (c) at (5,0) {$L(a'''+b''')$};
      \node (M1) at (-2,1.5) {
        $(M_1 \otimes N_1) \odot
        ((M_2 \otimes N_2) \odot
        (M_3 \otimes N_3)) = $ };
      \path[cd,font=\scriptsize]
      (a) edge[in=180,out=45] node[above]{$$} (b)
      (c) edge[in=0,out=135] node[above]{$$} (b);
    \end{tikzpicture}
  \]
  \[
    \begin{tikzpicture}
      \node (a) at (-5,0) {$ L(a+b) $};
      \node (b) at (0,0.5) {
        $ (x+y) +_{L(a'+b')}
        ( (x' +_{La''} x'') + ( y' +_{Lb''} y'') ) $};
      \node (c) at (5,0) {$ L(a'''+b''') $};
      \node (M1) at (-2,1.5) {
        $(M_1 \otimes N_1) \odot
        ((M_2 \odot M_3) \otimes
        (N_2 \odot N_3)) = $};
      \path[cd,font=\scriptsize]
      (a) edge[in=180,out=45] node[above]{$$} (b)
      (c) edge[in=0,out=135] node[above]{$$} (b);
    \end{tikzpicture}
  \]
  \[
    \begin{tikzpicture}
      \node (a) at (-5,0) {$ L(a+b) $};
      \node (b) at (0,.5) {
        $ ( x +_{La'} ( x' +_{La''} x'' )) +
        ( y +_{Lb'} ( y' +_{Lb''} y'' ) )$};
      \node (c) at (5,0) {$ L(a'''+b''') $};
      \node (M1) at (-2,1.5) {
        $(M_1 \odot (M_2 \odot M_3))
        \otimes (N_1 \odot (N_2 \odot N_3)) = $};
      \path[cd,font=\scriptsize]
      (a) edge[in=180,out=45] node[above]{$$} (b)
      (c) edge[in=0,out=135] node[above]{$$} (b);
    \end{tikzpicture}
  \]
  Putting these together gives the following commutative diagram.
  \[
    \begin{tikzpicture}
      \node (a) at (-6,0) {$ L(a+b) $};
      \node (b) at (0,0) {
        $ ( (x+y) +_{L(a'+b')} (x'+y')) +_{L(a''+b'')} (x''+y'')$};
      \node (c) at (6,0) {$L(a'''+b''')$};
      \node (a2) at (-6,1.5) {$L(a+b)$};
      \node (b2) at (0,1.5) {
        $ (( x +_{La'} x')+ ( y+_{Lb'} y')) +_{L(a''+b'')} (x''+y'')$};
      \node (c2) at (6,1.5) {$L(a'''+b''')$};
      \node (a3) at (-6,3) {$L(a+b)$};
      \node (b3) at (0,3) {
        $ (( x +_{La'} x') +_{La''} x'')+ (( y +_{Lb'} y') +_{Lb''} y'')$};
      \node (c3) at (6,3) {$L(a'''+b''')$};
      \node (a4) at (-6,4.5) {$L(a+b)$};
      \node (b4) at (0,4.5) {
        $ ( x +_{La'} (x'+_{La''} x'')) + ( y +_{Lb'} ( y'+_{Lb''} y''))$};
      \node (c4) at (6,4.5) {$L(a'''+b''')$};
      \node (a5) at (-6,-1.5) {$L(a+b)$};
      \node (b5) at (0,-1.5) {
        $ (x+y) +_{L(a'+b')} ( (x'+y') +_{L(a''+b'')} (x''+y''))$};
      \node (c5) at (6,-1.5) {$L(a'''+b''')$};
      \node (a6) at (-6,-3) {$L(a+b)$};
      \node (b6) at (0,-3) {
        $(x+y)+_{L(a'+b')} ( (x' +_{la''} x'') + ( y' +_{La''} y''))$};
      \node (c6) at (6,-3) {$L(a'''+b''')$};
      \node (a7) at (-6,-4.5) {$L(a+b)$};
      \node (b7) at (0,-4.5) {
        $( x +_{La'} (x' +_{La''} x'')) + ( y +_{Lb'} (y'+_{Lb''} y''))$};
      \node (c7) at (6,-4.5) {$L(a'''+b''')$};
      \path[cd,font=\scriptsize,>=angle 90]
      (a) edge node[above]{$$} (b)
      (c) edge node[above]{$$} (b)
      (a2) edge node[above]{$$} (b2)
      (c2) edge node[above]{$$} (b2)
      (a) edge node[above]{$$} (a2)
      (b) edge node[above]{$$} (b2)
      (c) edge node[above]{$$} (c2)
      (a3) edge node[above]{$$} (b3)
      (c3) edge node[above]{$$} (b3)
      (a2) edge node[above]{$$} (a3)
      (b2) edge node[above]{$$} (b3)
      (c2) edge node[above]{$$} (c3)
      (a4) edge node[above]{$$} (b4)
      (c4) edge node[above]{$$} (b4)
      (a3) edge node[above]{$$} (a4)
      (b3) edge node[above]{$$} (b4)
      (c3) edge node[above]{$$} (c4)
      (a5) edge node[above]{$$} (b5)
      (c5) edge node[above]{$$} (b5)
      (a) edge node[above]{$$} (a5)
      (b) edge node[above]{$$} (b5)
      (c) edge node[above]{$$} (c5)
      (a6) edge node[above]{$$} (b6)
      (c6) edge node[above]{$$} (b6)
      (a5) edge node[above]{$$} (a6)
      (b5) edge node[above]{$$} (b6)
      (c5) edge node[above]{$$} (c6)
      (a7) edge node[above]{$$} (b7)
      (c7) edge node[above]{$$} (b7)
      (a6) edge node[above]{$$} (a7)
      (b6) edge node[above]{$$} (b7)
      (c6) edge node[above]{$$} (c7);
    \end{tikzpicture}
  \]
  The vertical 1-morphisms on the left and right are the the
  respective identity spans on $L(a+b)$ and $L(a'''+b''')$.  The
  vertical 1-morphisms in the center are isomorphism classes
  of monic spans where each leg is given by a universal map
  between two colimits of the same diagram.  The horizontal
  1-morphisms are given by universal maps into coproducts
  and pushouts.  The top cospan is the same as the bottom
  cospan, making a bracelet-like figure in which all faces
  commute.  The other diagrams witnessing coherence are
  given in a similar fashion.
\end{proof}


\section{A compact closed bicategory of spans of cospans}
\label{sec:compact-closed-bicategory-spans-of-cospans}

Double categories have many nice features yet are not as
established in the world of higher categories as
bicategories. For those who more comfortable with
bicategories, we write this section to discuss a bicategory
of fine rewrites of structured cospans. Intuitively, it is
straightforward to pass from the double category
$ _L\FFFineRewrite $ to a bicategory of fine rewrites. By
only accepting the squares of $ _L\FFFineRewrite $ that fix
the inputs and outputs, that is disallow permutations, then
the only vertical arrows left are identities.  But a double
category with only identity vertical arrows is virtually a
bicategory. Care is needed, though, because to actually
remove a bicategory of fine rewrites from
$ _L\FFFineRewrite $ requires more rigor than simply picking
out only the vertical arrows that are the identity.

More than a bicategory, we can actually extract a compact closed
bicategory from the symmetric monoidal double category
$ _L\FFFineRewrite $. To obtain a symmetric monoidal
bicategory from $ _L\FFFineRewrite $, we use machinery
developed by Shulman \cite{shulman_contructing}. To show
that this bicategory is also compact closed, we use work by
Stay \cite{stay_cc-bicats}.

First, let us extract the `horizontal bicategory' of
$ _L\FFFineRewrite $, so named because we remove the
vertical arrows.

\begin{definition}
  Define $ _L\FFineRewrite $ to be the bicategory whose
  objects are the objects of $ \A $, 1-arrows are structured
  cospans, and 2-arrows are fine rewrite rules of form
  \begin{center}
    \begin{tikzpicture}
      \node (La) at (0,4) {$ La $};
      \node (x) at (2,4) {$ x $};
      \node (Lb) at (4,4) {$ Lb $};
      \node (La') at (0,2) {$ La $};
      \node (y) at (2,2) {$ y $};
      \node (Lb') at (4,2) {$ Lb $};
      \node (La'') at (0,0) {$ La $};
      \node (z) at (2,0) {$ z $};
      \node (Lb'') at (4,0) {$ Lb $};
      \draw [cd]
      (La) edge[] (x)
      (Lb) edge[] (x)
      (La') edge[] (y)
      (Lb') edge[] (y)
      (La'') edge[] (z)
      (Lb'') edge[] (z)
      (La') edge[] node[left]{$ \id $} (La)
      (La) edge[] node[left]{$ \id $} (La'')
      (y) edge[>->] (x)
      (y) edge[>->] (z)
      (Lb') edge[] node[right]{$ \id $} (Lb)
      (Lb') edge[] node[right]{$ \id $} (Lb''); 
    \end{tikzpicture}
  \end{center}
\end{definition}

That this is a double category follows from Shulman's
construction mentioned in Definition
\ref{def:horiz-bicat}. Had we used the same notation as that
definition, we would let $ _L\FFineRewrite \bydef
\mathcal{H} ( _L\FFFineRewrite ) $.

Shulman's construction continues to be useful, as we use it
to show that the double category $ _L\FFineRewrite $ is
symmetric monoidal.  The first step towards this is showing
that $ _L\FFFineRewrite $ is isofibrant (see Definition
\ref{def:Fibrant}).

\begin{lemma}
  \label{lem:SpanCospanIsofibrant}
  The symmetric monoidal double category $ _L\FFFineRewrite $
  is isofibrant.
\end{lemma}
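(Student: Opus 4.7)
The plan is to unpack the notion of isofibrancy from Shulman \cite{shulman_contructing} (and Definition \ref{def:Fibrant}): a double category is isofibrant precisely when every vertical isomorphism admits a companion and a conjoint. In $_{L}\FFFineRewrite$ the vertical arrows are isomorphism classes of spans in $\A$ whose legs are both invertible, so each is equivalent to a single isomorphism $\phi \from a \to c$ in $\A$ (with the span $a \xgets{\id} a \xto{\phi} c$ as a canonical representative). Thus it suffices, for each such $\phi$, to build a companion and a conjoint together with their filling squares and to verify the companion/conjoint identities.

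For the companion, I take the structured cospan $\hat{\phi} \bydef \bigl(La \xto{\id_{La}} La \xgets{L\phi^{-1}} Lc\bigr)$, using that $L$ is a functor so $L\phi$ is invertible. The two required filling squares are fine rewrites whose witnessing spans of structured cospans both have apex equal to the identity cospan $La \xto{\id} La \xgets{\id} La$, with the outer legs given on the left by identities and on the right by $L\phi^{\pm 1}$ composed with identities, as appropriate for the source and target. Since identities are monic and all outer vertical maps in the witnessing spans are isomorphisms, these diagrams do satisfy the conditions of a fine rewrite. The conjoint is constructed symmetrically as $\bigl(Lc \xto{L\phi^{-1}} La \xgets{\id_{La}} La\bigr)$, with filling squares produced in the same way by swapping the roles of source and target.

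Next, the companion identities and conjoint identities must be verified, namely that the two filling squares horizontally compose (via pushout) to the identity square on $\phi$, and vertically compose (via pullback) to the identity square on $\hat{\phi}$. Both reduce to the observation that pushing out or pulling back along an identity arrow returns the original object up to canonical isomorphism; since squares in $_{L}\FFFineRewrite$ are isomorphism classes of spans of structured cospans, these canonical isomorphisms suffice to make the axioms hold on the nose.

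The main obstacle I anticipate is bookkeeping rather than mathematical depth: one has to keep track of the three rows and three columns making up each fine rewrite witnessing a filling square, and confirm that the composites produced by $\hcirc$ and $\vcirc$ from Definition \ref{def:hor-vert-composition} are literally isomorphic (as spans of structured cospans) to the unit squares $U(\phi)$ and $U(\hat{\phi})$ given by Proposition \ref{thm:fine-rewrite-double-cat}. Apart from this, the argument is powered entirely by universal properties of pullbacks and pushouts in the topos $\X$ together with functoriality of $L$, and requires no new constructions beyond those already in place.
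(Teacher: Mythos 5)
Your proposal is correct and follows essentially the same strategy as the paper: construct the companion of a vertical 1-morphism by applying $L$ to (inverses of) its legs to get a structured cospan, exhibit the two filling squares, and obtain the conjoint by symmetry. The only cosmetic difference is that you first normalize the vertical span to have identity left leg (writing it as a single isomorphism $\phi$), whereas the paper works directly with the general span $a \xgets{\theta} b \xto{\psi} c$ and sets $\hat f = (La \xto{L\theta^{-1}} Lb \xgets{L\psi^{-1}} Lc)$; these produce isomorphic companions. One small caveat: your claim that both filling squares have apex row exactly $La \xto{\id} La \xgets{\id} La$ is not quite what the paper exhibits (in the normalized form the two middle rows are $La \to Lc \gets Lc$ and $La \to La \gets La$, respectively), though this does not affect the validity of the argument since everything is taken up to isomorphism and the needed monicity conditions are obvious.
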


\begin{proof}
  The companion of a vertical 1-morphism
  \[
    f = (a \xgets{\theta} b \xto{\psi} c)
  \]
  is given by
  \[
    \widehat{f} =
    ( La \xto{ L\theta^{-1} } Lb \xgets{ L\psi^{-1} } Lc )
  \]
  The required 2-arrows are given by
  \[
    \raisebox{-0.5\height}{
      \begin{tikzpicture}
        \node (A) at (0,4) {$La$};
        \node (A') at (0,2) {$Lb$};
        \node (A'') at (0,0) {$Lc$};
        \node (B) at (2,4) {$Lb$};
        \node (B') at (2,2) {$Lc$};
        \node (B'') at (2,0) {$Lc$};
        \node (C) at (4,4) {$Lc$};
        \node (C') at (4,2) {$Lc$};
        \node (C'') at (4,0) {$Lc$};
        \draw[cd]
        (A) edge node[above]{} (B)
        (A') edge node[above]{} (B')
        (A'') edge node[above]{} (B'')
        (C) edge node[above]{} (B)
        (C') edge node[above]{} (B')
        (C'') edge node[above]{} (B'')
        (A') edge node[left]{} (A)
        (A') edge node[left]{} (A'')
        (B') edge node[left]{} (B)
        (B') edge node[left]{} (B'')
        (C') edge node[left]{} (C)
        (C') edge node[left]{} (C'');
      \end{tikzpicture}
    }
    \quad \text{ and } \quad
    \raisebox{-0.5\height}{
      \begin{tikzpicture}
        \node (A) at (0,4) {$La$};
        \node (A') at (0,2) {$La$};
        \node (A'') at (0,0) {$La$};
        \node (B) at (2,4) {$La$};
        \node (B') at (2,2) {$La$};
        \node (B'') at (2,0) {$Lb$};
        \node (C) at (4,4) {$La$};
        \node (C') at (4,2) {$Lb$};
        \node (C'') at (4,0) {$Lc$};
        \path[cd,font=\scriptsize,>=angle 90]
        (A) edge node[above]{} (B)
        (A') edge node[above]{} (B')
        (A'') edge node[above]{} (B'')
        (C) edge node[above]{} (B)
        (C') edge node[above]{} (B')
        (C'') edge node[above]{} (B'')
        (A') edge node[left]{} (A)
        (A') edge node[left]{} (A'')
        (B') edge node[left]{} (B)
        (B') edge node[left]{} (B'')
        (C') edge node[left]{} (C)
        (C') edge node[left]{} (C'');
      \end{tikzpicture}
    }
  \]
  The conjoint of $f$ is given by $\check{f} = \widehat{f}^{\text{op}}$.
\end{proof}

Because the symmetric monoidal double category
$ _L\FFFineRewrite $ is isofibrant, Theorem \ref{thm:horz-bicat}
extracts a symmetric monoidal bicategory $ _L\FFineRewrite $
comprised of the same objects, structured cospans as arrows,
and isomorphism classes of fine rewrites of structured
cospans with form
\[
  \begin{tikzpicture}
    \node (A) at (0,4) {$La$};
    \node (A') at (0,2) {$La$};
    \node (A'') at (0,0) {$La$};
    \node (B) at (2,4) {$v$};
    \node (B') at (2,2) {$w$};
    \node (B'') at (2,0) {$x$};
    \node (C) at (4,4) {$La'$};
    \node (C') at (4,2) {$La'$};
    \node (C'') at (4,0) {$La'$};
    \path[cd,font=\scriptsize,>=angle 90]
    (A) edge node[above]{} (B)
    (A') edge node[above]{} (B')
    (A'') edge node[above]{} (B'')
    (C) edge node[above]{} (B)
    (C') edge node[above]{} (B')
    (C'') edge node[above]{} (B'')
    (A') edge node[left]{\scriptsize{$ \id  $}} (A)
    (A') edge node[left]{\scriptsize{$ \id  $}} (A'')
    (B') edge[>->] node[left]{} (B)
    (B') edge[>->] node[left]{} (B'')
    (C') edge node[right]{\scriptsize{$ \id  $}} (C)
    (C') edge node[right]{\scriptsize{$ \id  $}} (C'');	
  \end{tikzpicture}
\]
The difference between these fine rewrites and the squares
of $ _L\FFFineRewrite $ is that the vertical arrows are
identities. This is necessary given that bicategories have
no vertical arrows.  However, the isofibrancy condition
ensures that information carried by the vertical arrows is
encoded the horizontal arrows.  

\begin{theorem}
  \label{thm:SpansCospasAreSMBicat}
  $ _L\FFineRewrite$ is a symmetric monoidal bicategory.
\end{theorem}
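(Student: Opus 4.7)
The plan is to invoke Shulman's machinery directly. By Lemma \ref{lem:SpanCospanSM}, the double category $ _L\FFFineRewrite $ is a symmetric monoidal double category, and by Lemma \ref{lem:SpanCospanIsofibrant}, it is isofibrant. Theorem \ref{thm:horz-bicat} then asserts that the horizontal bicategory of an isofibrant symmetric monoidal double category inherits a symmetric monoidal structure. Since $ _L\FFineRewrite $ was defined (up to the passage to isomorphism classes of fine rewrites fixing boundaries) as exactly the horizontal bicategory $ \mathcal{H}( _L\FFFineRewrite ) $, the result follows immediately.

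Concretely, the monoidal product $\otimes$ on objects and 1-arrows of $ _L\FFineRewrite $ is inherited from $\otimes_0$ and $\otimes_1$ on $ _L\FFFineRewrite $, and the globular isomorphisms $\mathfrak{x}$ and $\mathfrak{u}$ of the double category restrict to 2-arrows in the horizontal bicategory because they are themselves globular. The monoidal coherence data (associator, unitors, pentagonator, and so on) for the bicategory are obtained from the coherence data of the double category together with the companions and conjoints produced in the proof of Lemma \ref{lem:SpanCospanIsofibrant}, which allow one to transport vertical isomorphisms across to horizontal ones. This is precisely the mechanism Shulman identifies: isofibrancy is exactly what is needed to encode everything the vertical direction sees as 2-cells in the horizontal bicategory.

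The only thing to verify beyond citing Theorem \ref{thm:horz-bicat} is that the symmetry of $ _L\FFFineRewrite $ descends. The braiding of $ _L\FFineRewrite $ at objects $a,b$ is obtained from the companion of the symmetry isomorphism $a + b \xrightarrow{\iso} b + a$ in $\A$, and its naturality as well as the hexagon and symmetry axioms follow from the corresponding equations in the double category, again via Shulman's construction.

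The main thing to be careful about is purely bookkeeping: one must check that the construction of $ _L\FFineRewrite $ given above truly agrees with $\mathcal{H}( _L\FFFineRewrite )$, i.e.~that taking only squares whose vertical arrows are identities and then taking isomorphism classes yields the same bicategory as the horizontal bicategory Shulman produces. This is routine because the vertical arrows in $ _L\FFFineRewrite $ are spans of isomorphisms, so every such vertical arrow has a companion, and the equivalence classes match.
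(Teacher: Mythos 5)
Your proof is correct and takes exactly the same route as the paper: cite Lemma \ref{lem:SpanCospanSM} for the symmetric monoidal double category structure, Lemma \ref{lem:SpanCospanIsofibrant} for isofibrancy, and then Theorem \ref{thm:horz-bicat} (Shulman) to extract the symmetric monoidal bicategory. The paper's proof is in fact terser, giving only the isofibrancy citation and the appeal to Theorem \ref{thm:horz-bicat}; your extra paragraphs unpacking the mechanism are accurate but not strictly needed.
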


\begin{proof}
  Lemma \ref{lem:SpanCospanIsofibrant} states that $
  _L\FFFineRewrite $ is isofibrant. The result then follows
  from Theorem \ref{thm:horz-bicat}. 
\end{proof}

It remains to show that this bicategory is compact
closed. This structure of $ _L\FFineRewrite $ is another
benefit of bicategories over double categories.  Currently,
there is no notion of compact closedness for double
categories.  However, it is a nice feature to have in a
category that serves as the syntax for open systems with
inputs and outputs.  Here, we mention again that the terms
`inputs' and `outputs' do not imply a causal structure.
Instead, they partition the interface of an open system into
two parts, the purpose of which manifests when composing a
pair of systems. If we connect an open system, considered as
an structured cospan $ La \to x \gets La' $, to another
system, then $ La $ is parts of the connection and $ La' $
is not or vice versa.  That is, partitioning an interface
into inputs and outputs allows a portion of the interface to
be part of a connection and the remain portion to be left
out of the connection. Compact closedness formalizes the
viewpoint that \emph{how} an interface is partitioned is
arbitrary.  Indeed, every possible partition of the
interface exists as an arrow in $ _L\FFineRewrite $. That
is, given a system $ x $ with interface $ i $, then for any
two subobjects $ a $, $ a' $ of $ i $ such that
$ a+a' \cong i $, there is an an arrow
$ La \to x \gets La' $ in $ _L\FFineRewrite $.

\begin{example}
  Denote by $ x $ the graph
  \[
    \begin{tikzpicture}
      \node (1) at (0,0) {$ \bullet_a $};
      \node (2) at (0,2) {$ \bullet_b $};
      \node (3) at (2,0) {$ \bullet_c $};
      \node (4) at (2,2) {$ \bullet_d$};
      \draw[graph]
        (1) edge (2)
        (1) edge (3)
        (3) edge[loop right] (3)
        (4) edge (2)
        (1) edge (4);
    \end{tikzpicture}
  \]
  with interface $ \{a,c,d\} $.  Then $ x $ appears as
  an arrow in $ _L\FFineRewrite $ where $ L $ is from 
  \[
    \adjunction{\Set}{\RGraph}{L}{R}{4}
  \]
  as all of the following
  \begin{align*}
    \{ a,c,d \} & \to x \gets 0     &
    \{ a,c \}   & \to x \gets \{ d \}       \\
    \{ a,d \}   & \to x \gets \{ c \}       &
    \{ c,d \}   & \to x \gets \{ a \}       \\
    \{ a \}     & \to x \gets \{ c,d \}     &
    \{ c \}     & \to x \gets \{ a,d \}     \\
    \{ d \}     & \to x \gets \{ a,c \}     &
    0   & \to x \gets \{ a,b,c\}                                            
  \end{align*}
\end{example}

The ability to change an input to an output and vice versa
comes from the compact closed structure. We take the
remainder of this section to show that $ _L\FFineRewrite $
is compact closed.  

We start with the following lemma. For this lemma, we
introduce the notation $ \nabla \from a+a \to a $ for the
folding map, which arises from the coproduct diagram
\[
  \begin{tikzpicture}
    \node (x1) at (-3,2) {$ a $};
    \node (xx) at (0,2) {$ a+a $};
    \node (x2) at (3,2) {$ a $};
    \node (x)  at (0,0) {$ a $};
    \draw [cd] (x1) to 
      node [above] {\scriptsize{$ \iota $}} (xx);
    \draw [cd] (x2) 
      to node [above] {\scriptsize{$ \iota $}} (xx);
    \draw [cd] (x1) to 
      node [left] {\scriptsize{$ \id $}} (x);
    \draw [cd] (x2) to 
      node [right] {\scriptsize{$ \id $}} (x);
    \draw [cd,dashed] (xx) to node [left] {\scriptsize{$ \nabla $}} (x);
  \end{tikzpicture}
\]

\begin{lemma}
  \label{lem:PushoutDiagram}
  In a category with coproducts, the diagram
  \[
    \begin{tikzpicture}
      \node (UL) at (0,2) {$a+a+a$};
      \node (LL) at (0,0) {$a+a$};
      \node (UR) at (4,2) {$a+a$};
      \node (LR) at (4,0) {$a$};
      \path[cd,font=\scriptsize,>=angle 90]
      (UL) edge node[above] {$\id + \nabla$} (UR)
      (UL) edge node[left] {$\nabla + \id$} (LL)
      (UR) edge node[right] {$\nabla$} (LR)
      (LL) edge node[above] {$\nabla$} (LR);
    \end{tikzpicture}
  \]
  is a pushout square.
\end{lemma}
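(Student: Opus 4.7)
The plan is to verify the pushout property directly using the universal property of coproducts, exploiting the fact that any map out of a finite coproduct is determined by its restrictions to the summands. First, I would check commutativity: both composites $\nabla \circ (\id + \nabla)$ and $\nabla \circ (\nabla + \id)$ equal the ``triple fold'' $[\id,\id,\id] \from a+a+a \to a$, as can be seen by precomposing with each of the three coproduct injections.

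Next, I would take an arbitrary cocone, consisting of an object $x$ together with maps $f, g \from a+a \to x$ satisfying $f \circ (\id + \nabla) = g \circ (\nabla + \id)$. Writing $f = [f_1, f_2]$ and $g = [g_1, g_2]$ with $f_i, g_i \from a \to x$, I would compute the two composites on each of the three summands of $a+a+a$. A direct calculation gives
\[
  f \circ (\id + \nabla) = [f_1, f_2, f_2]
  \quad \text{and} \quad
  g \circ (\nabla + \id) = [g_1, g_1, g_2],
\]
so equality forces $f_1 = f_2 = g_1 = g_2$; call this common arrow $h \from a \to x$.

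With $h$ in hand, the candidate mediating map $a \to x$ is simply $h$ itself, and one checks that $h \circ \nabla = [h, h] = f = g$, which gives the required factorization through both legs of the cocone. For uniqueness, any arrow $u \from a \to x$ with $u \circ \nabla = f$ satisfies $u = u \circ \nabla \circ \iota_1 = f_1 = h$. I expect no real obstacle here; the only subtlety is bookkeeping the two different isomorphisms associating $a+a+a$ (as $(a+a)+a$ versus $a+(a+a)$), which I would suppress as is standard, since the argument is insensitive to the choice of association. The whole proof is essentially a brief calculation with coproduct injections.
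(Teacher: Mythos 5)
Your proof is correct and takes essentially the same approach as the paper's: reduce everything to coproduct injections. If anything yours is more complete — the paper identifies the candidate mediating map by precomposing the cocone condition only with the middle injection $\iota_m$ and then verifies uniqueness, whereas you also compute with the outer injections to get $f_1=f_2=g_1=g_2$ and explicitly check that the candidate factors both legs of the cocone.
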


\begin{proof}
  Suppose that we have two maps $ f,g \from a+a \to b$
  forming a cocone over the span inside the above diagram.
  Let the arrow $\iota_{\text{m}} \from a \to a+a+a$ include
  $a$ into the middle copy.  Observe that
  $\iota_{\text{l}} \coloneqq (\nabla + a) \circ \iota_{\text{m}}
  $ and $\iota_r \coloneqq (a + \nabla) \circ \iota_{\text{m}} $ are,
  respectively, the left and right inclusions $a \to a+a$.
  Then $f \circ \iota_{\text{l}} = g \circ \iota_{\text{r}}$ is a
  map $a \to b$, which we claim is the unique map making
  \begin{center}
    \begin{tikzpicture}
      \node (UL) at (0,2) {$a+a+a$};
      \node (LL) at (0,0) {$a+a$};
      \node (UR) at (4,2) {$a+a$};
      \node (LR) at (4,0) {$a$};
      \node (b) at (6,-2) {$ b $};
      \draw [cd]
      (LR) edge[dashed] (b) 
      (LL) edge[bend right] (b)
      (UR) edge[bend left] (b) 
      (UL) edge node[above] {$\id + \nabla$} (UR)
      (UL) edge node[left] {$\nabla + \id$} (LL)
      (UR) edge node[right] {$\nabla$} (LR)
      (LL) edge node[above] {$\nabla$} (LR);
    \end{tikzpicture}
  \end{center}
  commute.  Indeed, given $h \from a \to b$ such that
  $f = h \circ \nabla = g$, then
  $g \circ \iota_
  { \text{r} } = f \circ \iota_{ \text{l} } = h \circ
  \nabla \circ \iota_{ \text{l} } = h$.
\end{proof}

In the following theorem, we will make a slight abuse of
notation by writing $ \nabla $ to mean
\[
  L(a+a) \to La + La \xto{\nabla} La.
\]
Here, $ L(a+a) \to La+La $ is the structure map which is
invertible because, as a left adjoint, $ L $ preserves coproducts.

\begin{theorem}
  \label{thm:SpansMapsAreCCBicat}
  The symmetric monoidal bicategory $ _L\FFineRewrite $ is compact closed.
\end{theorem}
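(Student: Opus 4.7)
The plan is to promote the compact closed structure on $_L\Cospan$ from Proposition \ref{thm:cospan-compact-closed} to the bicategorical setting of $_L\FFineRewrite$. Since the 1-cells of $_L\FFineRewrite$ are precisely the structured cospans appearing in $_L\Cospan$, I declare each object $a$ of $\A$ to be self-dual and take the unit and counit 1-cells to be those already used in Proposition \ref{thm:cospan-compact-closed}, namely
\[
  \epsilon_a = \bigl( L(a+a) \xto{L\nabla} La \xgets{!} L0_\A \bigr)
  \quad \text{and} \quad
  \eta_a = \bigl( L0_\A \xto{!} La \xgets{L\nabla} L(a+a) \bigr),
\]
where the structure isomorphism $L(a+a) \iso La+La$ arising from $L$ preserving coproducts has been absorbed into the notation.

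Next I construct the triangle 2-cells $\alpha_a \from (\epsilon_a \otimes \id_a) \hcirc (\id_a \otimes \eta_a) \To \id_a$, together with its mirror image. Composing inside $_L\FFineRewrite$ proceeds via pushouts in $\X$, and by expanding definitions the apex of $(\epsilon_a \otimes \id_a) \hcirc (\id_a \otimes \eta_a)$ is, after reindexing via the coproduct-preservation maps for $L$, exactly the pushout appearing in Lemma \ref{lem:PushoutDiagram}. That lemma identifies this apex with $La$, producing a canonical invertible fine rewrite from the composite cospan to the identity cospan $La \to La \gets La$. This invertible 2-cell is $\alpha_a$; its inverse and the symmetric triangle 2-cell are obtained by the same argument.

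Finally, to complete the data of a compact closed bicategory in the sense of Stay \cite{stay_cc-bicats}, I must equip the duality data with swallowtail coherence 2-cells and verify the coherence axioms. The key point is that the apex of every composite cospan built from units, counits, associators, symmetries, and unitors is a finite colimit in $\X$ assembled out of copies of $La$, $L0_\A$, and pushouts glued along folding maps; hence every coherence 2-cell I need arises from the universal property of a pushout, and every coherence equation between two such 2-cells reduces to uniqueness of the induced map out of a common colimit. The main obstacle is the bookkeeping rather than any conceptual content: carefully enumerating the swallowtail equations from \cite{stay_cc-bicats}, tracking the structure isomorphisms arising from $L$ preserving finite coproducts, and checking that on each component both sides of a swallowtail agree when postcomposed with the coproduct inclusions. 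The ambient symmetric monoidal structure needed for these diagrams is already supplied by Theorem \ref{thm:SpansCospasAreSMBicat}, and compatibility of the chosen duals with the symmetry is immediate from the self-duality of every object together with the symmetry of the codiagonal $\nabla$.
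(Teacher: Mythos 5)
Your construction of the dual-pair data matches the paper's proof exactly: the same objects as self-duals, the same unit and counit built from $L\nabla$ and $!$, and Lemma \ref{lem:PushoutDiagram} deployed in the same way to compute the apex of $(\epsilon_a \otimes \id_a) \hcirc (\id_a \otimes \eta_a)$. One small imprecision: the paper extracts from that lemma the stronger fact that the composite cospan is \emph{literally} the identity cospan on $a$ (because $\nabla + \id = \iota_{\mathrm{l}} \circ \nabla$ and $\id + \nabla = \iota_{\mathrm{r}} \circ \nabla$), so the cusps $\alpha$, $\beta$ can be taken to be identity $2$-morphisms, not merely ``canonical invertible fine rewrites.''

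The genuine divergence, and the gap, is your coherence step. You propose to verify Stay's swallowtail equations directly, arguing that since every apex is a finite colimit, every coherence equation reduces to uniqueness of a universal map. This is not quite sound as stated: the $2$-cells of $_L\FFineRewrite$ are isomorphism classes of spans of structured cospans, not morphisms between apexes, so equating two coherence $2$-cells means matching entire span diagrams, not just applying colimit uniqueness once. An argument along these lines could presumably be made to work, but you have asserted it rather than carried it out, and the bookkeeping you wave at is exactly the content. The paper sidesteps all of this by invoking Theorem \ref{thm:StrictingDualPairs} (Pstr\k{a}gowski's strictification result): any dual pair $(L,R,e,c,\alpha,\beta)$ can be promoted to a coherent dual pair $(L,R,e,c,\alpha,\beta')$ by replacing one cusp, so the swallowtails never need to be checked by hand. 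You should cite that theorem and close the argument there; nothing in your approach is wrong, but it is unnecessary labour with an unfilled hole where the labour ought to be.
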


\begin{proof}
  First we show that each object is its own dual.
  For an object $a$, define the counit
  $\epsilon \from a + a \to 0$ and unit
  $\eta \from 0 \to a+a$ to be the following cospans:
  \[
    \epsilon \bydef (L(a+a) \xto{\nabla} La \gets 0), 
    \quad \quad 
    \eta \bydef (0 \to La \xgets{\nabla} L(a+a)).
  \]  
  Next we define the cusp isomorphisms, $\alpha$
  and $\beta$.  Note that $\alpha$ is a 2-morphism
  whose domain is the composite
  \[
    a \xto{ \iota_{ \text{l} } }
    a+a \xgets{ \id +\nabla }
    a+a+a \xto{ \nabla + \id }
    a+a \xgets{ \iota_{ \text{r} } }
    a
  \]
  and whose codomain is the identity cospan on $a$.  From
  Lemma \ref{lem:PushoutDiagram} we have the equations
  $\nabla+\id = \iota_{\text{l}} \circ \nabla$ and
  $\id + \nabla = \iota_{\text{r}} \circ \nabla$ from which it
  follows that the domain of $\alpha$ is the identity cospan
  on $a$, and the codomain of $\beta$ is also the identity
  cospan on $a$ obtained as the composite
  \[
    a \xto{\iota_{\text{r}}}
    a+a \xgets{\nabla+ \id}
    a+a+a \xto{X+\nabla}
    a+a \xgets{\iota_{\text{l}}}
    a
  \]
  Take $\alpha$ and $\beta$ each to be the
  isomorphism class determined by the identity
  2-morphism on $a$, which in particular is a
  monic span of cospans.  Thus we have a dual pair
  $(a,a,\epsilon,\eta,\alpha,\beta)$.  By Theorem
  \ref{thm:StrictingDualPairs}, there exists a
  cusp isomorphism $\beta'$ such that the tuple
  $(a,a,\epsilon,\eta,\alpha,\beta')$ is a coherent dual
  pair, and thus $  _L \FFineRewrite $ is
  compact closed.
\end{proof}


\chapter{Bold rewriting and structured cospans}
\label{sec:bold-rewriting}

We contrast this section with the previous section on fine
rewriting with an example.  In the fine rewriting of
structured cospans, we ask for rewrite rules with the monic
arrows as in the diagram
\[
  \begin{tikzpicture}
    \node (1) at (0,4) {$ La $};
    \node (2) at (2,4) {$ x $};
    \node (3) at (4,4) {$ La' $};
    \node (4) at (0,2) {$ Lb $};
    \node (5) at (2,2) {$ y $};
    \node (6) at (4,2) {$ Lb' $};
    \node (7) at (0,0) {$ Lc $};
    \node (8) at (2,0) {$ z $};
    \node (9) at (4,0) {$ Lc' $};
    \path[cd,font=\scriptsize]
    (1) edge node[]{$  $} (2)
    (3) edge node[]{$  $} (2)
    (4) edge node[]{$  $} (5)
    (6) edge node[]{$  $} (5)
    (7) edge node[]{$  $} (8)
    (9) edge node[]{$  $} (8)
    (4) edge node[left]{$ \iso $} (1)
    (4) edge node[left]{$ \iso $} (7)
    (6) edge node[right]{$ \iso $} (3)
    (6) edge node[right]{$ \iso $} (9);
    \path[>->,font=\scriptsize]
    (5) edge node[]{$  $} (2)
    (5) edge node[]{$  $} (8);
  \end{tikzpicture}
\]
There are situations, however, where requiring those monic
arrows is untenable. Consider, for instance, the string calculi
so frequently use to reason in monoidal categories. For
this example, we permit ourselves to ignore details and
subtleties so that we do not muddy the point we mean to
illustrate.  For a detailed and complete look at string
calculi, Selinger's survey \cite{selinger} provides an
excellent overview.

Given a monoidal category $ ( \C ,\otimes, I ) $, objects
are represented by certain isotopy classes of strings and
arrows are represented by nodes. This is illustrated in
Figure \ref{fig:string-diagrams}. The diagrams read from
left to right.  Now, to draw a string for an identity arrow,
we do not include the node, giving the diagram
\[
  \begin{tikzpicture}
    \node (1) at (0,0) {$ a $};
    \node (2) at (2,0) {$ a $};
    \draw [-] (1) to (2);
  \end{tikzpicture}
\]
to represent $ \id \from a \to a $. Composing with $ \id $
another arrow should result in nothing changing, as captured
in this equation
\begin{center}
  \begin{tikzpicture}
    \node (1) at (0,0) {$ a $};
    \node[circle,draw=black] (2) at (4,0) {$ f $};
    \node (3) at (6,0) {$ b $};
    \path[-,font=\scriptsize]
    (1) edge[] (2) 
    (2) edge node[]{$  $} (3);
    \node () at (7,0) {$ = $};
    \node (1) at (8,0) {$ a $};
    \node[circle,draw=black] (2) at (10,0) {$ f $};
    \node (3) at (12,0) {$ b $};
    \path[-,font=\scriptsize]
    (1) edge node[]{$ $} (2)
    (2) edge node[]{$ $} (3);
  \end{tikzpicture}
\end{center}
From this, we observe that the length of the string does not
matter.  This accords with defining strings up to
isotopy. In particular, we want to have a string be
equivalent to a point.  In the parlance of this thesis, we
want to be able to rewrite a string, with two distinct
endpoints, into a single point.  Yet, this is not possible
to do with a fine rewrite rule.

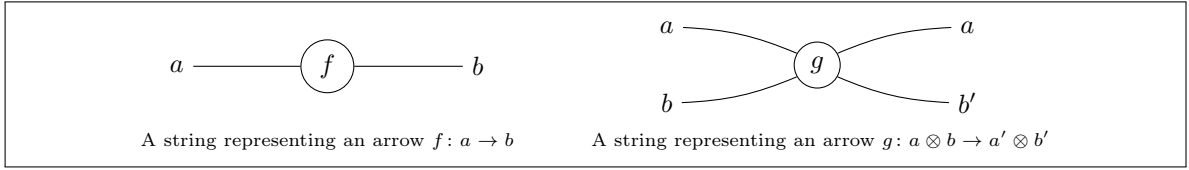
\begin{figure}[h]
  \centering
  \fbox{
    \begin{minipage}{1.0\linewidth}
      \[
        \begin{tikzpicture}
          \node[circle,draw=black] (1) at (0,0) {$ f $};
          \node (2) at (-2,0) {$ a $};
          \node (3) at (2,0) {$ b $};
          \path[-,font=\scriptsize]
          (2) edge node[]{$  $} (1)
          (1) edge node[]{$  $} (3);
          \node () at (0,-1) {\scriptsize{A string representing
              an arrow $ f \from a \to b $}};
        \end{tikzpicture}
        \quad \quad
        \begin{tikzpicture}
          \node[circle,draw=black] (1) at (0,0) {$ g $};
          \node (2) at (-2,0.5) {$ a $};
          \node (3) at (-2,-0.5) {$ b $};
          \node (4) at (2,0.5) {$ a $};
          \node (5) at (2,-0.5) {$ b' $};
          \path[-,font=\scriptsize]
          (2.0) edge [bend left=10] (1.150)
          (3.0) edge [bend right=10] (1.210)
          (1.30) edge [bend left=10] (4.180)
          (1.-30) edge [bend right=10] (5.180);      
          \node () at (0,-1) {\scriptsize{
              A string representing an arrow
              $ g \from a \otimes b \to a' \otimes b' $}};
        \end{tikzpicture}
      \]
    \end{minipage}
  }
  \caption{String diagrams}
  \label{fig:string-diagrams}
\end{figure}

Indeed, suppose we are working with strings in some topos of
spaces and we want to finely rewrite a string into a
point. Such a rewrite rule would be a span
\begin{equation} \label{eq:string-to-point}
  \begin{tikzpicture}
    \begin{scope}
      \node (1) at (0,1) {$ \bullet $};
      \node (2) at (0,0) {$ \bullet $};
      \draw (1.center) -- (2.center);
      \draw [rounded corners]
        (-0.5,-0.5) rectangle (0.5,1.5);
      \node (l) at (0.6,0.5) {};
    \end{scope}
    \begin{scope}[shift={(3,0)}]
      \node () at (0,0.5) {?};
      \draw [rounded corners]
        (-0.5,-0.5) rectangle (0.5,1.5);
      \node (ml) at (-0.6,0.5) {};
      \node (mr) at (0.6,0.5)  {};  
    \end{scope}
    \begin{scope}[shift={(6,0)}]
      \node (1) at (0,0.5) {$ \bullet $};
      \draw [rounded corners]
        (-0.5,-0.5) rectangle (0.5,1.5);
      \node (r) at (-0.6,0.5) {};
    \end{scope}
    \draw[cd]
      (ml) edge[] (l)
      (mr) edge[] (r);
  \end{tikzpicture}
\end{equation}
with `?' replaced by a subobject of both the string on the
left and point on the right.  Thus, `?' must either be empty
or a point. Choosing the empty string does not scale.  A
simple counter example is
\[
  \begin{tikzpicture}
    \begin{scope} 
      \node (a) at (0,0) {$ \bullet $};
      \node (b) at (1,0) {$ \bullet  $};
      \draw (a.center) -- (b.center);
      \draw [rounded corners] (-0.5,-0.5) rectangle (2.5,0.5);
      \node (01b) at (1,-0.6) {$  $};
      \node (01r) at (2.6,0)  {$  $};
    \end{scope}
    \begin{scope}[shift={(4,0)}]
      \node (c) at (1,0) {$ 0 $};
      \draw [rounded corners] (-0.5,-0.5) rectangle (2.5,0.5);
      \node (11l) at (-0.6,0) {$  $};
      \node (11b) at (1,-0.6) {$  $};
      \node (11r) at (2.6,0)  {$  $};
    \end{scope}
    \begin{scope}[shift={(8,0)}]
      \node (c) at (1,0) {$ \bullet $};
      \draw [rounded corners] (-0.5,-0.5) rectangle (2.5,0.5);
      \node (21l) at (-0.6,0) {$  $};
      \node (21b) at (1,-0.6) {$  $};
    \end{scope}
    \begin{scope}[shift={(0,-3)}]
      \node (a) at (0,0)  {$ \bullet $};
      \node (b) at (1,0)  {$ \bullet $};
      \node (c) at (2,1)  {$ \bullet $};
      \node (d) at (2,-1) {$ \bullet $};
      \draw
        (a.center) -- (b.center)
        (b.center) -- (c.center)
        (b.center) -- (d.center);
      \draw [rounded corners]
      (-0.5,-1.5) rectangle (2.5,1.5);
      \node (00l) at (-0.6,0) {$  $};
      \node (00t) at (1,1.6) {$  $};
      \node (00r) at (2.6,0)  {$  $};
    \end{scope}
    \begin{scope}[shift={(4,-3)}]
      \node () at (1,0) {?};
      \draw [rounded corners]
      (-0.5,-1.5) rectangle (2.5,1.5);
      \node (10l) at (-0.6,0) {$  $};
      \node (10t) at (1,1.6) {$  $};
      \node (10r) at (2.6,0)  {$  $};
    \end{scope}
    \begin{scope}[shift={(8,-3)}]
      \node (b) at (1,0)  {$ \bullet $};
      \node (c) at (2,1)  {$ \bullet $};
      \node (d) at (2,-1) {$ \bullet $};
      \draw
        (b.center) -- (c.center)
        (b.center) -- (d.center);
      \draw [rounded corners]
      (-0.5,-1.5) rectangle (2.5,1.5);
      \node (20l) at (-0.6,0) {$  $};
      \node (20t) at (1,1.6) {$  $};
      \node (20r) at (2.6,0)  {$  $};
    \end{scope}
    \draw[cd]
    (11l) edge node[]{$  $} (01r)
    (11r) edge node[]{$  $} (21l)
    (10l) edge node[]{$  $} (00r) 
    (10r) edge node[]{$  $} (20l)
    (01b) edge node[]{$  $} (00t)
    (11b) edge node[]{$  $} (10t)
    (21b) edge node[]{$  $} (20t); 
  \end{tikzpicture}
\]
To see this more clearly, we reframe the question to take
advantage of the fact that pushing out over $ 0 $ is
the same as taking a disjoint union. So we can ask whether 
\begin{center}
  \begin{tikzpicture}
      \node (a) at (0,0)  {$ \bullet $};
      \node (b) at (1,0)  {$ \bullet $};
      \node (c) at (2,1)  {$ \bullet $};
      \node (d) at (2,-1) {$ \bullet $};
      \draw
        (a.center) -- (b.center)
        (b.center) -- (c.center)
        (b.center) -- (d.center);
      \draw [rounded corners]
        (-0.5,-1.5) rectangle (2.5,1.5);
      \node (00l) at (-0.6,0) {$  $};
      \node (00t) at (1,1.6) {$  $};
      \node (00r) at (2.6,0)  {$  $};
  \end{tikzpicture}
\end{center}
is the disjoint union of 
\begin{center}
  \begin{tikzpicture}
    \node (a) at (0,0) {$ \bullet $};
    \node (b) at (1,0) {$ \bullet  $};
    \draw (a.center) -- (b.center);
    \draw [rounded corners] (-0.5,-0.5) rectangle (2.5,0.5);
    \node (01b) at (1,-0.6) {$  $};
    \node (01r) at (2.6,0)  {$  $};
  \end{tikzpicture}
\end{center}
and something else. Of course, it is not.   

But maybe the issue was pushing out over $ 0 $ in
the first place.  What about replacing $ 0 $ with a
point? A simple counter example to illustrate the failure of
this idea is
\[
  \begin{tikzpicture}
    \begin{scope} 
      \node (a) at (0,0) {$ \bullet $};
      \node (b) at (1,0) {$ \bullet  $};
      \draw (a.center) -- (b.center);
      \draw [rounded corners]
        (-1.5,-0.5) rectangle (2.5,0.5);
      \node (01l) at (-1.6,0) {$  $};
      \node (01b) at (0.5,-0.6) {$  $};
      \node (01r) at (2.6,0) {$  $};
    \end{scope}
    \begin{scope}[shift={(5,0)}]
      \node (c) at (0.5,0) {$ \bullet $};
      \draw [rounded corners]
      (-1.5,-0.5) rectangle (2.5,0.5);
      \node (11l) at (-1.6,0) {$  $};
      \node (11b) at (0.5,-0.6) {$  $};
      \node (11r) at (2.6,0) {$  $};
    \end{scope}
    \begin{scope}[shift={(10,0)}]
      \node (c) at (0.5,0) {$ \bullet $};
      \draw [rounded corners]
      (-1.5,-0.5) rectangle (2.5,0.5);
      \node (21l) at (-1.6,0) {$  $};
      \node (21b) at (0.5,-0.6) {$  $};
      \node (21r) at (2.6,0) {$  $};
    \end{scope}
    \begin{scope}[shift={(0,-3)}]
      \node (0) at (-1,1)  {$ \bullet $};
      \node (1) at (-1,-1) {$ \bullet $};
      \node (a) at (0,0)   {$ \bullet $};
      \node (b) at (1,0)   {$ \bullet $};
      \node (c) at (2,1)   {$ \bullet $};
      \node (d) at (2,-1)  {$ \bullet $};
      \draw
        (a.center) -- (b.center)
        (b.center) -- (c.center)
        (b.center) -- (d.center)
        (0.center) -- (a.center)
        (1.center) -- (a.center);
      \draw [rounded corners]
      (-1.5,-1.5) rectangle (2.5,1.5);
      \node (00l) at (-1.6,0) {$  $};
      \node (00t) at (0.5,1.6) {$  $};
      \node (00r) at (2.6,0) {$  $};
    \end{scope}
    \begin{scope}[shift={(5,-3)}]
      \node () at (0.5,0) {?};
      \draw [rounded corners]
      (-1.5,-1.5) rectangle (2.5,1.5);
      \node (10l) at (-1.6,0) {$  $};
      \node (10t) at (0.5,1.6) {$  $};
      \node (10r) at (2.6,0) {$  $};
    \end{scope}
    \begin{scope}[shift={(10,-3)}]
      \node (0) at (-0.5,1)  {$ \bullet $};
      \node (1) at (-0.5,-1) {$ \bullet $};
      \node (b) at (0.5,0)  {$ \bullet $};
      \node (c) at (1.5,1)  {$ \bullet $};
      \node (d) at (1.5,-1) {$ \bullet $};
      \draw (0.center) -- (b.center);
      \draw (1.center) -- (b.center);
      \draw (b.center) -- (c.center);
      \draw (b.center) -- (d.center);
      \draw [rounded corners]
      (-1.5,-1.5) rectangle (2.5,1.5);
      \node (20l) at (-1.6,0)  {$  $};
      \node (20t) at (0.5,1.6) {$  $};
      \node (20r) at (2.6,0)  {$  $};
    \end{scope}
    \draw[cd]
    (11l) edge node[above]{$ \theta $} (01r)
    (11r) edge node[]{$  $} (21l)
    (10l) edge node[]{$  $} (00r)
    (10r) edge node[]{$  $} (20l)
    (01b) edge node[]{$  $} (00t)
    (11b) edge node[]{$  $} (10t)
    (21b) edge node[]{$  $} (20t); 
  \end{tikzpicture}
\]
where we define $ \theta $ to choose the left or the right
point; the failure will occur regardless of the choice.
Again, there is nothing that we can place into the center,
bottom square to give a double pushout diagram. To see why,
we use the fact that if we could fill in `?', we already
know what it must be. The right square must also be a
pushout. This forces us to fill the blank with the graph
\begin{center}
  \begin{tikzpicture}
    \node (0) at (-0.5,1)  {$ \bullet $};
    \node (1) at (-0.5,-1) {$ \bullet $};
    \node (b) at (0.5,0)  {$ \bullet $};
    \node (c) at (1.5,1)  {$ \bullet $};
    \node (d) at (1.5,-1) {$ \bullet $};
    \draw (0.center) -- (b.center);
    \draw (1.center) -- (b.center);
    \draw (b.center) -- (c.center);
    \draw (b.center) -- (d.center);
    \draw [rounded corners]
    (-1.5,-1.5) rectangle (2.5,1.5);
    \node (20l) at (-1.6,0)  {$  $};
    \node (20t) at (0.5,1.6) {$  $};
    \node (20r) at (2.6,0)  {$  $};
  \end{tikzpicture}
\end{center}
But then the left square is not a pushout.

And so, fine rewriting can be insufficient.  In this
chapter, we define bold rewriting of structured cospans
to handle situations like this one found in string
calculi. We see that, though it largely mirrors the fine
rewriting of structured cospans, it has its own character:
the bicategory we extract is a bicategory of relations.  At
the end of the chapter, we illustrate bold rewriting using
the string calculus from quantum computer science known as
the ZX-calculus.

\section{A double category of bold rewrites of structured
  cospans}
\label{sec:bold-rewrites}

In this section, we define a double category
$ _L\BBBoldRewrite $ whose objects are interface types,
whose vertical arrows are spans of interface types with
invertible legs, whose horizontal arrows are structured
cospans, and whose squares are bold rewrites of structured
cospans. The only difference between the definitions of
$ _L\FFFineRewrite $ and $ _L\BBBoldRewrite $ is in the
squares. The objects, horizontal arrows, and vertical arrows
are the same in each case. This winds up having an
interesting effect on the horizontal bicategory of
$ _L\BBBoldRewrite $ which we explore in Section
\ref{sec:cartesian-bicategory-spans-of-cospans}. Before
turning to that, we need to properly define
$ _L\BBBoldRewrite $, Fortunately, most of the work has been
done when constructing $ _L\FFFineRewrite $, so we
begin by defining the squares.

Recall from Definition \ref{def:span-str-cospan} that a
morphism of spans of structured cospans is an arrow
$ \theta $ that fits into a commuting diagram
\[
  \begin{tikzpicture}
    \node (La) at (0,4) {$ La $};
    \node (Lb) at (1,2) {$ Lb $};
    \node (Lc) at (2,0) {$ Lc $};
    \node (x) at (3,4) {$ x $};
    \node (y) at (4,3.5) {$ y $};
    \node (y') at (4,0.5) {$ y' $};
    \node (z) at (5,0) {$ z $};
    \node (La') at (6,4) {$ La' $};
    \node (Lb') at (7,2) {$ Lb' $};
    \node (Lc') at (8,0) {$ Lc' $};
    \draw[cd]
    (Lb) edge node[]{$  $} (La)
    (Lb) edge node[]{$  $} (Lc)
    (Lb') edge node[]{$  $} (La')
    (Lb') edge node[]{$  $} (Lc')
    (y) edge node[]{$  $} (x)
    (y') edge node[]{$  $} (x)
    (y') edge node[]{$  $} (z)
    (La) edge node[]{$  $} (x)
    (La') edge node[]{$  $} (x)
    (Lb) edge node[]{$  $} (y')
    (Lb') edge node[]{$  $} (y');
    \path[line width=0.5em,draw=white]
    (Lb) edge node[]{$  $} (y)
    (Lb') edge node[]{$  $} (y)
    (Lc) edge node[]{$  $} (z)
    (Lc') edge node[]{$  $} (z)
    (y) edge node[]{$  $} (z)
    (y) edge node[]{$  $} (y');
    \path[cd,font=\scriptsize]
    (Lb) edge node[]{$  $} (y)
    (Lb') edge node[]{$  $} (y)
    (Lc) edge node[]{$  $} (z)
    (Lc') edge node[]{$  $} (z)
    (y) edge node[]{$  $} (z)
    (y) edge node[left] {$ \theta $} (y');
  \end{tikzpicture}
\]

Using a morphism of structured cospans, we can define the
connected components of structured cospans.  We first define
a relation $ \sim $ setting
\[
  \begin{tikzpicture}
    \begin{scope}
      \node (La) at (0,4) {$ La $};
      \node (x) at (2,4) {$ x $};
      \node (La') at (4,4) {$ La' $};
      \node (Lb) at (0,2) {$ Lb $};
      \node (y) at (2,2) {$ y $};
      \node (Lb') at (4,2) {$ Lb' $};
      \node (Lc) at (0,0) {$ Lc $};
      \node (z) at (2,0) {$ z $};
      \node (Lc') at (4,0) {$ Lc' $};
      \draw[cd]
      (La) edge node[]{$  $} (x)
      (La') edge node[]{$  $} (x)
      (Lb) edge node[]{$  $} (y)
      (Lb') edge node[]{$  $} (y)
      (Lc) edge node[]{$  $} (z)
      (Lc') edge node[]{$  $} (z)
      (Lb) edge node[]{$  $} (La)
      (Lb) edge node[]{$  $} (Lc)
      (y) edge node[]{$  $} (x)
      (y) edge node[]{$  $} (z)
      (Lb') edge node[]{$  $} (La')
      (Lb') edge node[]{$  $} (Lc');
    \end{scope}
    \node () at (5,2) {$ \sim $};
    \begin{scope}[shift={(6,0)}]
      \node (La) at (0,4) {$ La $};
      \node (x) at (2,4) {$ x $};
      \node (La') at (4,4) {$ La' $};
      \node (Lb) at (0,2) {$ Lb $};
      \node (y) at (2,2) {$ y' $};
      \node (Lb') at (4,2) {$ Lb' $};
      \node (Lc) at (0,0) {$ Lc $};
      \node (z) at (2,0) {$ z $};
      \node (Lc') at (4,0) {$ Lc' $};
      \draw[cd]
      (La) edge node[]{$  $} (x)
      (La') edge node[]{$  $} (x)
      (Lb) edge node[]{$  $} (y)
      (Lb') edge node[]{$  $} (y)
      (Lc) edge node[]{$  $} (z)
      (Lc') edge node[]{$  $} (z)
      (Lb) edge node[]{$  $} (La)
      (Lb) edge node[]{$  $} (Lc)
      (y) edge node[]{$  $} (x)
      (y) edge node[]{$  $} (z)
      (Lb') edge node[]{$  $} (La')
      (Lb') edge node[]{$  $} (Lc');
    \end{scope}    
  \end{tikzpicture}
\]
if there is a morphism from the rewriting on the left side
of $ \sim $ to that on the right.  A \defn{connected component of
  structured cospans} is an equivalence class generated by
$ \sim $. The coarseness of the classes of squares is the
most important distinction between fine and bold rewriting.

\begin{definition}[Bold rewrite]
  \label{def:bold-rewrite-str-cospans}
  A \defn{bold rewrite of structured cospans} is a connected
  component of structured cospans whose representative has
  the form
  \[
    \begin{tikzpicture}
      \begin{scope}
        \node (La)  at (0,4) {$ La $};
        \node (x)   at (2,4) {$ x $};
        \node (La') at (4,4) {$ La' $};
        \node (Lb)  at (0,2) {$ Lb $};
        \node (y)   at (2,2) {$ y $};
        \node (Lb') at (4,2) {$ Lb' $};
        \node (Lc)  at (0,0) {$ Lc $};
        \node (z)   at (2,0) {$ z $};
        \node (Lc') at (4,0) {$ Lc' $};
        \draw[cd]
        (La)  edge node[]{$  $} (x)
        (La') edge node[]{$  $} (x)
        (Lb)  edge node[]{$  $} (y)
        (Lb') edge node[]{$  $} (y)
        (Lc)  edge node[]{$  $} (z)
        (Lc') edge node[]{$  $} (z)
        (Lb)  edge node[left]{$ \iso  $} (La)
        (Lb)  edge node[left]{$ \iso  $} (Lc)
        (y)   edge node[]{$  $} (x)
        (y)   edge node[]{$  $} (z)
        (Lb') edge node[right]{$ \iso $} (La')
        (Lb') edge node[right]{$ \iso $} (Lc');
      \end{scope}
    \end{tikzpicture}
  \]
\end{definition}

The horizontal and vertical compositions for bold rewrites
of structured cospans are defined in the same way as for
fine rewrites. The classes are different, but the operation
on the class representatives work in the same way.

\begin{definition}
  \label{def:composition-bold-rewrites}
  The \defn{horizontal composition} $ \hcirc $ of bold
  rewrites of structured cospans are defined by
  the operation
  \[
    \begin{tikzpicture}
      \begin{scope}
      \node (La)  at (0,4) {$ La $};
      \node (x)   at (2,4) {$ x $};
      \node (La') at (4,4) {$ La' $};
      \node (Lb)  at (0,2) {$ Lb $};
      \node (y)   at (2,2) {$ y $};
      \node (Lb') at (4,2) {$ Lb' $};
      \node (Lc)  at (0,0) {$ Lc $};
      \node (z)   at (2,0) {$ z $};
      \node (Lc') at (4,0) {$ Lc' $};
      \draw[cd]
      (La)  edge node[]{$  $} (x)
      (La') edge node[]{$  $} (x)
      (Lb)  edge node[]{$  $} (y)
      (Lb') edge node[]{$  $} (y)
      (Lc)  edge node[]{$  $} (z)
      (Lc') edge node[]{$  $} (z)
      (Lb)  edge node[]{$  $} (La)
      (Lb)  edge node[]{$  $} (Lc)
      (y)   edge node[]{$  $} (x)
      (y)   edge node[]{$  $} (z)
      (Lb') edge node[]{$  $} (La')
      (Lb') edge node[]{$  $} (Lc');
    \end{scope}
    \node () at (5,2) {$ \hcirc $};
    \begin{scope}[shift={(6,0)}]
      \node (La)  at (0,4) {$ La' $};
      \node (x)   at (2,4) {$ x' $};
      \node (La') at (4,4) {$ La'' $};
      \node (Lb)  at (0,2) {$ Lb' $};
      \node (y)   at (2,2) {$ y' $};
      \node (Lb') at (4,2) {$ Lb'' $};
      \node (Lc)  at (0,0) {$ Lc' $};
      \node (z)   at (2,0) {$ z' $};
      \node (Lc') at (4,0) {$ Lc'' $};
      \draw[cd]
      (La)  edge node[]{$  $} (x)
      (La') edge node[]{$  $} (x)
      (Lb)  edge node[]{$  $} (y)
      (Lb') edge node[]{$  $} (y)
      (Lc)  edge node[]{$  $} (z)
      (Lc') edge node[]{$  $} (z)
      (Lb)  edge node[]{$  $} (La)
      (Lb)  edge node[]{$  $} (Lc)
      (y)   edge node[]{$  $} (x)
      (y)   edge node[]{$  $} (z)
      (Lb') edge node[]{$  $} (La')
      (Lb') edge node[]{$  $} (Lc');
    \end{scope}
    \node () at (11,2) {$ \bydef  $};
    \end{tikzpicture}
  \]
  \[
    \begin{tikzpicture}
    \begin{scope}[shift={(0,0)}]
      \node (La)  at (0,4) {$ La $};
      \node (x)   at (3,4) {$ x +_{La'} x' $};
      \node (La') at (6,4) {$ La'' $};
      \node (Lb)  at (0,2) {$ Lb $};
      \node (y)   at (3,2) {$ y +_{Lb'} y' $};
      \node (Lb') at (6,2) {$ Lb'' $};
      \node (Lc)  at (0,0) {$ Lc $};
      \node (z)   at (3,0) {$ z +_{Lc'} z' $};
      \node (Lc') at (6,0) {$ Lc'' $};
      \draw[cd]
      (La)  edge node[]{$  $} (x)
      (La') edge node[]{$  $} (x)
      (Lb)  edge node[]{$  $} (y)
      (Lb') edge node[]{$  $} (y)
      (Lc)  edge node[]{$  $} (z)
      (Lc') edge node[]{$  $} (z)
      (Lb)  edge node[]{$  $} (La)
      (Lb)  edge node[]{$  $} (Lc)
      (y)   edge node[]{$  $} (x)
      (y)   edge node[]{$  $} (z)
      (Lb') edge node[]{$  $} (La')
      (Lb') edge node[]{$  $} (Lc');
    \end{scope}
    \end{tikzpicture}
  \]
  The \defn{vertical composition} of bold rewrites of structured
  cospans is defined by
  \[
    \begin{tikzpicture}
      \begin{scope}
      \node (La)  at (0,4) {$ La $};
      \node (x)   at (2,4) {$ v $};
      \node (La') at (4,4) {$ La' $};
      \node (Lb)  at (0,2) {$ Lb $};
      \node (y)   at (2,2) {$ w $};
      \node (Lb') at (4,2) {$ Lb' $};
      \node (Lc)  at (0,0) {$ Lc $};
      \node (z)   at (2,0) {$ x $};
      \node (Lc') at (4,0) {$ Lc' $};
      \draw[cd]
      (La)  edge node[]{$  $} (x)
      (La') edge node[]{$  $} (x)
      (Lb)  edge node[]{$  $} (y)
      (Lb') edge node[]{$  $} (y)
      (Lc)  edge node[]{$  $} (z)
      (Lc') edge node[]{$  $} (z)
      (Lb)  edge node[]{$  $} (La)
      (Lb)  edge node[]{$  $} (Lc)
      (y)   edge node[]{$  $} (x)
      (y)   edge node[]{$  $} (z)
      (Lb') edge node[]{$  $} (La')
      (Lb') edge node[]{$  $} (Lc');
    \end{scope}
    \node () at (5,2) {$ \hcirc $};
    \begin{scope}[shift={(6,0)}]
      \node (La)  at (0,4) {$ Lc $};
      \node (x)   at (2,4) {$ x $};
      \node (La') at (4,4) {$ Lc' $};
      \node (Lb)  at (0,2) {$ Ld $};
      \node (y)   at (2,2) {$ y $};
      \node (Lb') at (4,2) {$ Ld' $};
      \node (Lc)  at (0,0) {$ Le $};
      \node (z)   at (2,0) {$ z $};
      \node (Lc') at (4,0) {$ Le' $};
      \draw[cd]
      (La)  edge node[]{$  $} (x)
      (La') edge node[]{$  $} (x)
      (Lb)  edge node[]{$  $} (y)
      (Lb') edge node[]{$  $} (y)
      (Lc)  edge node[]{$  $} (z)
      (Lc') edge node[]{$  $} (z)
      (Lb)  edge node[]{$  $} (La)
      (Lb)  edge node[]{$  $} (Lc)
      (y)   edge node[]{$  $} (x)
      (y)   edge node[]{$  $} (z)
      (Lb') edge node[]{$  $} (La')
      (Lb') edge node[]{$  $} (Lc');
    \end{scope}
    \node () at (11,2) {$ \bydef  $};
    \end{tikzpicture}
  \]
  \[
    \begin{tikzpicture}
    \begin{scope}[shift={(0,0)}]
      \node (La)  at (0,4) {$ La $};
      \node (x)   at (3,4) {$ v $};
      \node (La') at (6,4) {$ La' $};
      \node (Lb)  at (0,2) {$ Lb \times_{Lc} Ld $};
      \node (y)   at (3,2) {$ w \times_{x} y $};
      \node (Lb') at (6,2) {$ Lb' \times_{Lc'} Ld' $};
      \node (Lc)  at (0,0) {$ Le $};
      \node (z)   at (3,0) {$ z $};
      \node (Lc') at (6,0) {$ Le' $};
      \draw[cd]
      (La)  edge node[]{$  $} (x)
      (La') edge node[]{$  $} (x)
      (Lb)  edge node[]{$  $} (y)
      (Lb') edge node[]{$  $} (y)
      (Lc)  edge node[]{$  $} (z)
      (Lc') edge node[]{$  $} (z)
      (Lb)  edge node[]{$  $} (La)
      (Lb)  edge node[]{$  $} (Lc)
      (y)   edge node[]{$  $} (x)
      (y)   edge node[]{$  $} (z)
      (Lb') edge node[]{$  $} (La')
      (Lb') edge node[]{$  $} (Lc');
    \end{scope}
    \end{tikzpicture}
  \]
\end{definition}

Unlike for fine rewrites of structured cospans, the
interchange law is straightforward to prove. The coarser
classes of rewrites of structured cospans vastly simplifies
concocting the isomorphism. 

\begin{lemma}
  \label{thm:bold-interchange}
  Let
  \[
    \begin{tikzpicture}
      \node ()    at (-1,2) {$ \alpha \bydef $};
      \node (La)  at (0,4) {$ La $};
      \node (x)   at (2,4) {$ v $};
      \node (La') at (4,4) {$ La' $};
      \node (Lb)  at (0,2) {$ Lb $};
      \node (y)   at (2,2) {$ w $};
      \node (Lb') at (4,2) {$ Lb' $};
      \node (Lc)  at (0,0) {$ Lc $};
      \node (z)   at (2,0) {$ x $};
      \node (Lc') at (4,0) {$ Lc' $};
      \draw [cd]
      (La)  edge node[]{$  $} (x)
      (La') edge node[]{$  $} (x)
      (Lb)  edge node[]{$  $} (y)
      (Lb') edge node[]{$  $} (y)
      (Lc)  edge node[]{$  $} (z)
      (Lc') edge node[]{$  $} (z)
      (Lb)  edge node[]{$  $} (La)
      (Lb)  edge node[]{$  $} (Lc)
      (y)   edge node[]{$  $} (x)
      (y)   edge node[]{$  $} (z)
      (Lb') edge node[]{$  $} (La')
      (Lb') edge node[]{$  $} (Lc');
    \end{tikzpicture}
    \quad \quad \quad \quad
    \begin{tikzpicture}
      \node ()    at (-1,2) {$ \alpha' \bydef $};
      \node (La)  at (0,4) {$ La' $};
      \node (x)   at (2,4) {$ v' $};
      \node (La') at (4,4) {$ La'' $};
      \node (Lb)  at (0,2) {$ Lb' $};
      \node (y)   at (2,2) {$ w' $};
      \node (Lb') at (4,2) {$ Lb'' $};
      \node (Lc)  at (0,0) {$ Lc' $};
      \node (z)   at (2,0) {$ x' $};
      \node (Lc') at (4,0) {$ Lc'' $};
      \draw[cd]
      (La)  edge node[]{$  $} (x)
      (La') edge node[]{$  $} (x)
      (Lb)  edge node[]{$  $} (y)
      (Lb') edge node[]{$  $} (y)
      (Lc)  edge node[]{$  $} (z)
      (Lc') edge node[]{$  $} (z)
      (Lb)  edge node[]{$  $} (La)
      (Lb)  edge node[]{$  $} (Lc)
      (y)   edge node[]{$  $} (x)
      (y)   edge node[]{$  $} (z)
      (Lb') edge node[]{$  $} (La')
      (Lb') edge node[]{$  $} (Lc');
    \end{tikzpicture}
  \]
  \[
    \begin{tikzpicture}
      \node ()    at (-1,2) {$ \beta \bydef $};
      \node (La)  at (0,4) {$ Lc $};
      \node (x)   at (2,4) {$ x $};
      \node (La') at (4,4) {$ Lc' $};
      \node (Lb)  at (0,2) {$ Ld $};
      \node (y)   at (2,2) {$ y $};
      \node (Lb') at (4,2) {$ Ld' $};
      \node (Lc)  at (0,0) {$ Le $};
      \node (z)   at (2,0) {$ z $};
      \node (Lc') at (4,0) {$ Le' $};
      \draw[cd]
      (La)  edge node[]{$  $} (x)
      (La') edge node[]{$  $} (x)
      (Lb)  edge node[]{$  $} (y)
      (Lb') edge node[]{$  $} (y)
      (Lc)  edge node[]{$  $} (z)
      (Lc') edge node[]{$  $} (z)
      (Lb)  edge node[]{$  $} (La)
      (Lb)  edge node[]{$  $} (Lc)
      (y)   edge node[]{$  $} (x)
      (y)   edge node[]{$  $} (z)
      (Lb') edge node[]{$  $} (La')
      (Lb') edge node[]{$  $} (Lc');
    \end{tikzpicture}
    \quad \quad \quad \quad
    \begin{tikzpicture}
      \node ()    at (-1,2) {$ \beta' \bydef $};
      \node (La)  at (0,4) {$ Lc' $};
      \node (x)   at (2,4) {$ x' $};
      \node (La') at (4,4) {$ Lc'' $};
      \node (Lb)  at (0,2) {$ Ld' $};
      \node (y)   at (2,2) {$ y' $};
      \node (Lb') at (4,2) {$ Ld'' $};
      \node (Lc)  at (0,0) {$ Le' $};
      \node (z)   at (2,0) {$ z' $};
      \node (Lc') at (4,0) {$ Le'' $};
      \draw[cd]
      (La)  edge node[]{$  $} (x)
      (La') edge node[]{$  $} (x)
      (Lb)  edge node[]{$  $} (y)
      (Lb') edge node[]{$  $} (y)
      (Lc)  edge node[]{$  $} (z)
      (Lc') edge node[]{$  $} (z)
      (Lb)  edge node[]{$  $} (La)
      (Lb)  edge node[]{$  $} (Lc)
      (y)   edge node[]{$  $} (x)
      (y)   edge node[]{$  $} (z)
      (Lb') edge node[]{$  $} (La')
      (Lb') edge node[]{$  $} (Lc');
    \end{tikzpicture}
  \]
  be bold rewrites of structured cospans. Then
  \[
    ( \alpha \hcirc \alpha' ) \vcirc ( \beta \hcirc \beta' )
    =
    ( \alpha \vcirc \beta ) \hcirc ( \alpha' \vcirc \beta' ).
  \]
  That is, the interchange law holds.
\end{lemma}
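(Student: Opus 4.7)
The plan is to exhibit a single morphism of spans of structured cospans between the two composites and then appeal to the fact that bold rewrites are connected components under $\sim$, so such a morphism alone witnesses equality. First I would expand both sides explicitly. Applying horizontal composition to each row of $\alpha, \alpha'$ and of $\beta, \beta'$ and then taking the pullback of the resulting spans gives a left-hand-side with middle apex
\[
  P \bydef (w +_{Lb'} w') \times_{(x +_{Lc'} x')} (y +_{Ld'} y').
\]
Applying vertical composition first to $\alpha,\beta$ and to $\alpha',\beta'$ and then taking the pushout of the resulting columns gives a right-hand-side with middle apex
\[
  Q \bydef (w \times_x y) +_{L(b' \times_{c'} d')} (w' \times_{x'} y').
\]
In either order the top cospan is $La \to v +_{La'} v' \gets La''$, the bottom cospan is $Le \to z +_{Le'} z' \gets Le''$, and the middle feet are $L(b \times_c d)$ and $L(b' \times_{c'} d')$, using that $L$ preserves pullbacks.

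Next I would construct a canonical arrow $\theta \from Q \to P$ by a two-step universal-property argument. The projections $w \times_x y \to w$ and $w \times_x y \to y$ composed with the pushout inclusions $w \hookrightarrow w +_{Lb'} w'$ and $y \hookrightarrow y +_{Ld'} y'$ agree on $x +_{Lc'} x'$, so the universal property of the pullback $P$ gives a canonical map $w \times_x y \to P$; a symmetric argument gives $w' \times_{x'} y' \to P$. These two maps agree on $L(b' \times_{c'} d') \iso Lb' \times_{Lc'} Ld'$, since inside each factor of $P$ the images of $Lb'$ coming through $w$ and through $w'$ are identified, and likewise for $Ld'$. The universal property of the pushout $Q$ then yields $\theta$.

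I would then check that $\theta$ is a morphism of spans of structured cospans, i.e.\ that every face of the prism in Definition \ref{def:morphism-span-str-cospans} commutes. Each face either commutes because $\theta$ is built from a universal cone or cocone whose legs are precisely the maps in question, or because the top and bottom structured cospans and the middle feet coincide on the nose. Hence the LHS and RHS bold rewrites are related by $\sim$ and therefore equal as connected components.

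The main difference from the fine case is that here $\theta$ need not be invertible: a plain morphism already identifies the two bold rewrites, so the intricate machinery of Lemma \ref{thm:quotient-map-monic-pushout}, stability of colimits under pullback, and coproduct disjointness used to prove Theorem \ref{thm:interchange-law} is unnecessary. I expect no serious obstacle; the only real hazard is keeping the many pushouts, pullbacks, and their canonical maps organized while verifying the commutativity conditions for $\theta$.
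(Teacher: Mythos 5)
Your argument is correct and follows essentially the same route as the paper: both expand the two composites, observe that a single morphism of spans of structured cospans suffices (since bold rewrites are taken up to the connected-component relation $\sim$), and construct the canonical arrow from the pushout-of-pullbacks apex to the pullback-of-pushouts apex. The paper packages this arrow as the abstract interchange morphism $\colim_\S\lim_\C F \to \lim_\C\colim_\S F$ for $F\from\C\times\S\to\X$, while you assemble the same map by hand from the universal properties of the pullback $P$ and pushout $Q$ — it is the identical morphism in different notation. (One small slip: the right middle foot of the composite square should be $L(b''\times_{c''}d'')$, not $L(b'\times_{c'}d')$; this does not affect the argument.)
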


\begin{proof}
  The left hand side of the equation is the bold rewrite of
  structured cospans
  \[
    \begin{tikzpicture}
      \begin{scope}
      \node (La)  at (0,4) {$ La $};
      \node (x)   at (5,4) {$ v +_{La'} v' $};
      \node (La') at (10,4) {$ La'' $};
      \node (Lb)  at (0,2) {$ Lb \times_{Lc} Ld $};
      \node (y)   at (5,2)
        {$ ( w +_{Lb'} w' )
        \times_{( x +_{Lc'} x' )}
        ( y \times_{Ld'} y') $};
      \node (Lb') at (10,2) {$ Lb \times_{Lc'} Ld' $};
      \node (Lc)  at (0,0) {$ Le $};
      \node (z)   at (5,0) {$ z +_{Le'} z' $};
      \node (Lc') at (10,0) {$ Le'' $};
      \draw[cd]
      (La)  edge node[]{$  $} (x)
      (La') edge node[]{$  $} (x)
      (Lb)  edge node[]{$  $} (y)
      (Lb') edge node[]{$  $} (y)
      (Lc)  edge node[]{$  $} (z)
      (Lc') edge node[]{$  $} (z)
      (Lb)  edge node[]{$  $} (La)
      (Lb)  edge node[]{$  $} (Lc)
      (y)   edge node[]{$  $} (x)
      (y)   edge node[]{$  $} (z)
      (Lb') edge node[]{$  $} (La')
      (Lb') edge node[]{$  $} (Lc');
      \end{scope}
    \end{tikzpicture}
  \]
  while the right hand side is
  \[
    \begin{tikzpicture}
      \begin{scope}
      \node (La)  at (0,4) {$ La $};
      \node (x)   at (5,4) {$ v +_{La'} v' $};
      \node (La') at (10,4) {$ La'' $};
      \node (Lb)  at (0,2) {$ Lb \times_{Lc} Ld $};
      \node (y)   at (5,2)
        {$ ( w \times_{x} y)
          +_{(Lb' \times_{Lc'} Ld')}
          ( w' \times_{x'} y' ) $};
      \node (Lb') at (10,2) {$ Lb \times_{Lc'} Ld' $};
      \node (Lc)  at (0,0) {$ Le $};
      \node (z)   at (5,0) {$ z +_{Le'} z' $};
      \node (Lc') at (10,0) {$ Le'' $};
      \draw[cd]
      (La)  edge node[]{$  $} (x)
      (La') edge node[]{$  $} (x)
      (Lb)  edge node[]{$  $} (y)
      (Lb') edge node[]{$  $} (y)
      (Lc)  edge node[]{$  $} (z)
      (Lc') edge node[]{$  $} (z)
      (Lb)  edge node[]{$  $} (La)
      (Lb)  edge node[]{$  $} (Lc)
      (y)   edge node[]{$  $} (x)
      (y)   edge node[]{$  $} (z)
      (Lb') edge node[]{$  $} (La')
      (Lb') edge node[]{$  $} (Lc');
      \end{scope}
    \end{tikzpicture}
  \]
  To show that these are equal as bold rewrites of
  structured cospans, it suffices to find a \emph{morphism} between
  them. Precisely, we need a morphism
  \[
    ( w\times_{x}y )
    +_{( Lb'\times_{Lc'}Ld')}
    (w'\times_{x'}y' )
    \to
    ( w+_{Lb'}w' )
    \times_{(x+_{Lc'}x')}
    ( y\times_{Ld'}y')
  \]

  We can obtain the two objects as follows. Let $ \C $ be
  the walking cospan category $ \{ \bullet \to \bullet \gets
  \bullet \} $ and let $ \S $ be the walking span category $
  \{ \bullet \gets \bullet \to \bullet \} $.  Then $ \C
  \times \S $ is the walking cospan of spans category
  \[
     \begin{tikzpicture}
       \begin{scope}
         \node (1) at (0,4) {$ \bullet $};
         \node (2) at (2,4) {$ \bullet $};
         \node (3) at (4,4) {$ \bullet $};
         \node (4) at (0,2) {$ \bullet $};
         \node (5) at (2,2) {$ \bullet $};
         \node (6) at (4,2) { $ \bullet $};
         \node (7) at (0,0) {$ \bullet $};
         \node (8) at (2,0) {$ \bullet $};
         \node (9) at (4,0) {$ \bullet $};
         \draw[cd]
          (1) edge node[]{$  $} (4)
          (7) edge node[]{$  $} (4)
          (2) edge node[]{$  $} (5)
          (8) edge node[]{$  $} (5)
          (3) edge node[]{$  $} (6)
          (9) edge node[]{$  $} (6)
          (2) edge node[]{$  $} (1)
          (2) edge node[]{$  $} (3)
          (5) edge node[]{$  $} (4)
          (5) edge node[]{$  $} (6)
          (8) edge node[]{$  $} (7)
          (8) edge node[]{$  $} (9);
        \end{scope}
        \draw [rounded corners]
          (-1,-1) rectangle (5,5);
    \end{tikzpicture}
  \]
  Let $ F \from \C \times \S \to \X $ be the functor that
  returns the diagram
  \[
    \begin{tikzpicture}
      \begin{scope}
        \node (1) at (0,4) {$ w $};
        \node (2) at (2,4) {$ Lb' $};
        \node (3) at (4,4) {$ w' $};
        \node (4) at (0,2) {$ x $};
        \node (5) at (2,2) {$ Lc' $};
        \node (6) at (4,2) { $ x' $};
        \node (7) at (0,0) {$ y $};
        \node (8) at (2,0) {$ Ld' $};
        \node (9') at (4,0) {$ y' $};
        \draw[cd]
        (1) edge node[]{$  $} (4)
        (7) edge node[]{$  $} (4)
        (2) edge node[]{$  $} (5)
        (8) edge node[]{$  $} (5)
        (3) edge node[]{$  $} (6)
        (9) edge node[]{$  $} (6)
        (2) edge node[]{$  $} (1)
        (2) edge node[]{$  $} (3)
        (5) edge node[]{$  $} (4)
        (5) edge node[]{$  $} (6)
        (8) edge node[]{$  $} (7)
        (8) edge node[]{$  $} (9);
      \end{scope}
    \end{tikzpicture}
  \]
  which is the middle of the diagram obtained by gluing $
  \alpha$, $ \beta $, $ \alpha' $, and $ \beta' $ together
  along their coinciding edges.  There is a canonical
  morphism of type
  \[
    \colim\limits_\S \lim_\C F \to
    \lim_\C \colim\limits_\S F
  \]
  where the domain is the image of $ F $ under the composite functor
  \[
    \X^{\C \times \S} \xto{\iso}
    ( \X^\C )^\S \xto{\lim_\C}
    \X^\S \xto{\colim_\S} \X
  \]
  and the domain is the image of $ F $ under the composite
  functor
  \[
    \X^{\C \times \S} \xto{\iso}
    ( \X^\S )^\C \xto{\colim_\S}
    \X^\C \xto{\lim_\C} \X.
  \]
  One can check that this canonical morphism gives the
  morphism of bold rewrites of structured cospans we need.  
\end{proof}


\section{A bicategory of relations for bold
  rewriting of structured cospans}
\label{sec:cartesian-bicategory-spans-of-cospans}

There are two philosophies in rewriting. One is that we care
about how one object is rewritten into another, and so we
keep track of certain data to describe the rewriting.  The
other perspective is that we do not care about \emph{how} an
object is rewritten into another, only that the rewriting is
possible.  Bold rewriting of structured cospans belongs to
the latter philosophy. This is realized explicitly through
the fact that the horizontal bicategory forms a bicategory
of relations, specifically that it is locally posetal.
Appendix \ref{sec:cartesian-bicategories} discusses the
theory of such bicategories.

The first goal of this section is to define the bicategory
in question.  We take the same approach as finding the
horizontal bicategory of fine rewrites of structured cospans
in Section
\ref{sec:compact-closed-bicategory-spans-of-cospans}.  After
extracting the bicategory, we show that it is a bicategory
of relations (see Definition \ref{def:bicat-relations}).

This next theorem is proved with virtually the same argument
as Lemma \ref{lem:SpanCospanSM}.

\begin{theorem}
  $ _L \BBBoldRewrite $ is a symmetric monoidal double
  category.
\end{theorem}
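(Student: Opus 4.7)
The plan is to follow the argument of Lemma \ref{lem:SpanCospanSM} essentially verbatim, exploiting the fact that $_L\BBBoldRewrite$ and $_L\FFFineRewrite$ differ only in the classes of squares: bold rewrites are connected components rather than isomorphism classes, and there is no monicity requirement on the middle arrows. The object category, vertical arrows, and horizontal arrows are the same in both double categories, so any construction that only depends on these pieces carries over unchanged.

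First, I would equip the category $\RRR_0$ of objects (objects of $\A$ together with spans of $\A$-isomorphisms) with the symmetric monoidal structure $(\otimes_0, 0_\A)$ lifted from the cocartesian structure on $\A$; this step is identical to the fine case. Next, I would define a symmetric monoidal structure $(\otimes_1, L0_\A \to L0_\A \gets L0_\A)$ on the arrow category $\RRR_1$ whose objects are structured cospans and whose arrows are bold rewrites. On objects the tensor is defined by coproducts in $\A$ and $\X$ just as in the fine case. On morphisms, I would tensor on representatives by taking componentwise coproducts; one must check that this descends to connected components, but the relation $\sim$ generating the connected components from Definition \ref{def:bold-rewrite-str-cospans} is preserved by coproducts, so the operation is well-defined. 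The associator, unitors, symmetry, and coherence axioms are then supplied by universal properties of coproducts.

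To produce the globular isomorphisms $\mathfrak{x}$ and $\mathfrak{u}$ required by Definition \ref{def:mndl-dbl-cats}, I would use the same strategy as in the fine case: both the domain and codomain of $\mathfrak{x}$ arise as colimits of the same disconnected diagram in $\X$, so there is a unique canonical isomorphism between their apexes. This canonical isomorphism underlies a bold rewrite whose vertical legs are identities, as required of a globular 2-cell. The unit isomorphism $\mathfrak{u}$ is constructed analogously. Crucially, the monicity condition on middle arrows plays no role here, so nothing obstructs the argument in the bold setting.

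The main obstacle is the usual one: verifying the coherence axioms (a)--(k) of Definition \ref{def:mndl-dbl-cats}. However, because horizontal composition in $_L\BBBoldRewrite$ is defined via pushouts and vertical composition via pullbacks exactly as in $_L\FFFineRewrite$, and because every coherence cell is built from universal arrows between (co)limits of identical diagrams, the same bracelet-style commutative diagram used to verify the pentagon in Lemma \ref{lem:SpanCospanSM} applies here. Removing the monicity hypothesis only enlarges the class of admissible squares without altering the universal properties that force the coherence equations, so the proof goes through unchanged.
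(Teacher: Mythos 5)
Your proposal takes the same route as the paper, which simply remarks that the result is proved by the same argument as Lemma~\ref{lem:SpanCospanSM} for $_L\FFFineRewrite$. You correctly identify the only place the two proofs genuinely differ — well-definedness of the tensor on connected components rather than isomorphism classes — and rightly observe that the monicity hypothesis plays no role in any of the coherence arguments.
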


From here, we prove a series of lemmas that, when put
together, prove that the horizontal bicategory
$ _L \BBoldRewrite $ of $ _L \BBBoldRewrite $ is a
bicategory of relations. The first lemma in this string is
proved by replicating the proof of Lemma
  \ref{lem:SpanCospanIsofibrant} and the second follows
  from Theorem \ref{thm:horz-bicat}.

\begin{lemma}
  $ _L \BBBoldRewrite $ is isofibrant.
\end{lemma}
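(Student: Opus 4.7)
The plan is to mirror the proof of Lemma \ref{lem:SpanCospanIsofibrant} essentially verbatim, since the vertical arrows of $_L\BBBoldRewrite$ are the same as those of $_L\FFFineRewrite$ (spans in $\A$ with invertible legs), and only the squares differ. First I would construct companions and conjoints on the nose: for a vertical 1-morphism $f = (a \xgets{\theta} b \xto{\psi} c)$, define the companion to be the structured cospan $\widehat{f} \bydef (La \xto{L\theta^{-1}} Lb \xgets{L\psi^{-1}} Lc)$ and the conjoint to be $\check{f} \bydef \widehat{f}^{\mathrm{op}} = (Lc \xto{L\psi^{-1}} Lb \xgets{L\theta^{-1}} La)$.

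Next, I would exhibit the binding cells. The same concrete squares that served as the required 2-arrows in Lemma \ref{lem:SpanCospanIsofibrant} still serve here: the upper-left and lower-right triangular fillings built from copies of $La$, $Lb$, $Lc$ together with identities and applications of $L\theta$, $L\psi$ and their inverses. These squares already satisfy the bold-rewrite shape of Definition \ref{def:bold-rewrite-str-cospans} (in fact they are fine, hence a fortiori bold), so they automatically descend to the coarser equivalence classes that define squares in $_L\BBBoldRewrite$.

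The main verification is the companion/conjoint identities (cf.~Definition \ref{def:Fibrant}): the vertical and horizontal composites of the two binding squares must give the identity squares on $\widehat{f}$ and on the vertical arrow $f$, respectively. These identities were checked in the fine case by universal properties of pushouts and pullbacks, and the same universal-property arguments apply in the bold setting; indeed, passing to the coarser equivalence relation generated by the morphisms of Definition \ref{def:morphism-span-str-cospans} can only make it \emph{easier} for two squares to coincide, so every equation valid in $_L\FFFineRewrite$ remains valid in $_L\BBBoldRewrite$.

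The only subtle point, and the place I expect any hidden obstacle, is ensuring that composing squares in $_L\BBBoldRewrite$ using representatives (as in Definition \ref{def:composition-bold-rewrites}) yields the same equivalence classes needed for the identities above; this follows because the chosen binding representatives are built from isomorphisms in $\X$, so horizontal and vertical composites can be computed strictly and match the identity classes without appeal to the wider equivalence. Hence $_L\BBBoldRewrite$ is isofibrant.
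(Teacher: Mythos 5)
Your proof takes exactly the paper's approach: the paper dispatches this lemma by remarking that it is ``proved by replicating the proof of Lemma \ref{lem:SpanCospanIsofibrant},'' which is precisely what you do. Your additional observation that the fine binding squares are a fortiori bold, and that passing to the coarser bold equivalence can only preserve equations, is a correct and helpful justification that the paper leaves implicit.
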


\begin{lemma}
  $ _L \BBoldRewrite $ is a symmetric monoidal bicategory.  
\end{lemma}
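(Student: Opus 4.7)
The plan is to invoke Shulman's extraction theorem for symmetric monoidal bicategories from isofibrant symmetric monoidal double categories, namely Theorem \ref{thm:horz-bicat}, exactly as was done to produce $_L\FFineRewrite$ from $_L\FFFineRewrite$ in Theorem \ref{thm:SpansCospasAreSMBicat}. The two hypotheses required are already established by the preceding results in this short string of lemmas: the theorem asserting that $_L\BBBoldRewrite$ is a symmetric monoidal double category, and the lemma asserting its isofibrancy. Given both, the conclusion is formal.

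First I would identify $_L\BBoldRewrite$ with the horizontal bicategory of $_L\BBBoldRewrite$ in the sense of Definition \ref{def:horiz-bicat}: its objects are the $\A$-objects, its 1-cells are the structured cospans $La \to x \gets Lb$, and its 2-cells are the bold rewrites of structured cospans whose left and right columns of interface arrows are identities (equivalently, the globular squares of $_L\BBBoldRewrite$). Because $_L\BBBoldRewrite$ is isofibrant, every vertical arrow admits a companion and a conjoint, so no information carried by non-identity vertical arrows is lost when we restrict to globular squares; instead it is encoded in the horizontal structure. Second, I would cite Theorem \ref{thm:horz-bicat} to transport the symmetric monoidal structure from the double category to its horizontal bicategory, the tensor being inherited pointwise from the coproducts on $\A$ and $\X$, and the symmetry and coherence data descending from the globular isomorphisms $\mathfrak{x}$ and $\mathfrak{u}$ supplied at the double-category level.

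The principal obstacle to this proof has in fact been discharged by the earlier lemmas. Checking that $_L\BBBoldRewrite$ is symmetric monoidal requires the same universal-property arguments as in Lemma \ref{lem:SpanCospanSM}, and the tensor is well-defined on connected components because a morphism of spans of structured cospans between representatives tensors pointwise with another such morphism. Checking isofibrancy requires producing companions and conjoints for invertible vertical spans in $\A$, which is handled verbatim as in Lemma \ref{lem:SpanCospanIsofibrant} since the relevant construction only uses that the legs are isomorphisms. Once these two facts are in hand, the present lemma requires no further substantive work: it is a direct appeal to Shulman's machinery, exactly paralleling the fine case.
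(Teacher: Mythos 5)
Your proposal is correct and takes essentially the same approach as the paper: the paper also dispatches this lemma as an immediate consequence of Theorem \ref{thm:horz-bicat}, given that $_L\BBBoldRewrite$ has just been shown to be an isofibrant symmetric monoidal double category.
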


In the following lemma, we use
$ \nabla \bydef [ \id,\id] \from a + a \to a $ to
denote the codiagonal map and $ ! $ to denote a canonical
arrow from the initial object.

\begin{lemma}
  For each object $ a $ of $ _L \BBoldRewrite $, define
  operations
  \[
    \Delta_a \from a \to a + a
    \quad \text{and} \quad
    \varepsilon_a \from a \to 0
  \]
  to be the structured cospans
  \[
    La \xto{\id} La \xgets{L\nabla_a} L(a+a)
    \quad \text{and} \quad
    La \xto{\id} La \xgets{!} L0
  \]
  respectively. Then $ (a , \Delta_a , \varepsilon_a ) $ is
  a cocommutative comonoid.
\end{lemma}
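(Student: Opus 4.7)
The plan is to verify the three comonoid axioms for $(a, \Delta_a, \varepsilon_a)$ in $_L \BBoldRewrite$: coassociativity, the two counit laws, and cocommutativity. Each of these is an equality of 1-arrows of the bicategory, so each reduces to exhibiting a canonical isomorphism of structured cospans between two horizontal composites built from $\Delta_a$, $\varepsilon_a$, and the appropriate monoidal coherence 2-cells. Throughout, I will exploit the fact that one leg of both $\Delta_a$ and $\varepsilon_a$ is $\id_{La}$, which makes every pushout along that leg collapse to the other object by the standard fact that the pushout of $A \xgets{f} B \xto{\id} B$ is $A$.

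For coassociativity, I first unfold the tensor products using that $L$ preserves coproducts (being a left adjoint), obtaining
\[
  \id_a \otimes \Delta_a = \bigl(L(a+a) \xto{\id} L(a+a) \xgets{\id + L\nabla_a} L(a+(a+a))\bigr),
\]
and similarly for $\Delta_a \otimes \id_a$ with $L\nabla_a + \id$ on the right. Composing each with $\Delta_a$ and collapsing the identity leg yields structured cospans whose right legs are $L(\nabla_a \circ (\id + \nabla_a))$ and $L(\nabla_a \circ (\nabla_a + \id))$ respectively. Both of these equal the ternary fold $a+a+a \to a$, so the two composites are related by $L$ applied to the associator $(a+a)+a \iso a+(a+a)$ in $\A$, which is precisely the associator 2-cell of the symmetric monoidal bicategory $_L\BBoldRewrite$.

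For the counit laws, a symmetric argument computes $(\varepsilon_a \otimes \id_a) \hcirc \Delta_a$ by collapsing the identity leg of $\Delta_a$ in the pushout. The right leg becomes $L\nabla_a \circ (!+\id) \from L(0+a) \to La$, which under the unitor $0 + a \iso a$ is just $\id_a$, giving the identity structured cospan up to the unitor $L(0+a) \iso La$. The law $(\id_a \otimes \varepsilon_a) \hcirc \Delta_a = \id_a$ is obtained in the same way from the right side, and cocommutativity follows from the equation $\nabla_a \circ \sigma_{a,a} = \nabla_a$ in $\A$, which after applying $L$ and collapsing the identity leg of $\Delta_a$ shows $\sigma_{a,a} \hcirc \Delta_a = \Delta_a$.

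There is no real obstacle: once the identity-leg pushout collapse is observed, the computation in each case reduces to a standard identity about the codiagonal ($\nabla \circ (\nabla + \id) = \nabla \circ (\id + \nabla)$, $\nabla \circ \iota_r = \id$, and $\nabla \circ \sigma = \nabla$) transported through $L$, which is a left adjoint and hence preserves these coproduct-based identities. The only genuine bookkeeping is aligning the various structure isomorphisms for $L$ preserving coproducts with the monoidal coherence 2-cells of $_L\BBoldRewrite$, but these are exactly the coherences that were verified in Section \ref{sec:sm-structure} for the fine case and whose construction transports verbatim to the bold case.
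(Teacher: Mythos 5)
Your proof is correct and takes essentially the same route as the paper's: both hinge on the observation that pushing out along the identity leg of $\Delta_a$ or $\varepsilon_a$ collapses the composite, after which the comonoid axioms reduce to the codiagonal identities $\nabla(\nabla+\id)=\nabla(\id+\nabla)$, $\nabla\iota=\id$, and $\nabla\sigma=\nabla$ transported through $L$. The paper carries this out by drawing the explicit composite cospan diagrams, while you state the underlying coproduct identities directly; the content is the same.
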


\begin{proof}
  Proving this amounts to showing that the
  coassociativity, counitality, and
  cocommutativity diagrams commute.
  The coassociativity diagram
  \[
    \begin{tikzpicture}
      \node (01) at (0,2) {$ a $};
      \node (21) at (6,2) {$ a+a $};
      \node (00) at (0,0) {$ a+a $};
      \node (10) at (3,0) {$ a+(a+a) $};
      \node (20) at (6,0) {$ (a+a)+a $};
      \draw[cd]
      (01) edge node[above]{$ \nabla $} (21)
      (01) edge node[left]{$ \nabla $} (00)
      (00) edge node[below]{$ \alpha $} (10)
      (10) edge node[below]{$ \id \otimes \nabla $} (20)
      (21) edge node[right]{$ \nabla \otimes \id $} (20);
    \end{tikzpicture}
  \]
  commutes because the top path, which is the composite
  \[
    \begin{tikzpicture}
      \node (00) at (0,0) {$ La $};
      \node (10) at (4,0) {$ L(a+a) $};
      \node (20) at (8,0) {$ L((a+a)+a) $};
      \node (01) at (2,2) {$ La $};
      \node (11) at (6,2) {$ L((a+a)+a $};
      \node (12) at (4,4) {$ La $};
      \draw[cd]
      (00) edge node[above,left]{$ \id $} (01)
      (10) edge node[above,right]{$ L\nabla $} (01)
      (10) edge node[above,left]{$ \id $} (11)
      (20) edge node[above,right]{$ L(\nabla + \id) $} (11)
      (01) edge node[above,left]{$ \id $} (12)
      (11) edge node[above,right]{$ L\nabla $} (12);
      \draw (3.9,3.5) -- (4,3.4) -- (4.1,3.5);
    \end{tikzpicture}
  \]
  equals the bottom path, which is the composite
  \[
    \begin{tikzpicture}
      \node (00) at (0,0) {$ La $};
      \node (10) at (4,0) {$ L(a+a) $};
      \node (20) at (8,0) {$ L(a+(a+a)) $};
      \node (30) at (12,0) {$ L((a+a)+a $};
      \node (01) at (2,2) {$ La $};
      \node (11) at (6,2) {$ L(a+a) $};
      \node (21) at (10,2) {$ L(a+(a+a)) $};
      \node (02) at (4,4) {$ La $};
      \node (12) at (8,4) {$ L(a+a) $};
      \node (03) at (6,6) {$ La $};
      \draw[cd]
      (00) edge node[above,left]{$ \id $} (01)
      (10) edge node[above,right]{$ L\nabla $} (01)
      (10) edge node[above,left]{$ \id $} (11)
      (20) edge node[above,right]{$ L(\id + \nabla) $} (11)
      (20) edge node[above,left]{$ \id  $} (21)
      (30) edge node[above,right]{$ L(\alpha) $} (21)
      (01) edge node[above,left]{$ \id $} (02)
      (11) edge node[above,right]{$ L\nabla $} (02)
      (11) edge node[above,left]{$ \id $} (12)
      (21) edge node[above,right]{$ L(\id + \nabla) $} (12)
      (02) edge node[above,left]{$ \id $} (03)
      (12) edge node[above,right]{$ L\nabla $} (03);
      \draw (3.9,3.5) -- (4,3.4) -- (4.1,3.5);
      \draw (7.9,3.5) -- (8,3.4) -- (8.1,3.5);
      \draw (5.9,5.5) -- (6,5.4) -- (6.1,5.5);
    \end{tikzpicture}
  \]
  The counitality diagram
  \[
    \begin{tikzpicture}
      \node (11) at (0,0) {$ 0 \otimes a $};
      \node (21) at (3,0) {$ a \otimes a $};
      \node (31) at (6,0) {$ a \otimes 0 $};
      \node (12) at (3,2) {$ a $};
      \draw[cd]
      (12) edge node[left]{$ \lambda $} (11)
      (12) edge node[right]{$ \nabla $} (21)
      (12) edge node[right]{$ \rho $} (31)
      (21) edge node[below]{$ \epsilon \otimes \id $} (11)
      (21) edge node[below]{$ \id \otimes \epsilon $} (31);
    \end{tikzpicture}
  \]
  commutes because the composite
  \[
    \begin{tikzpicture}
      \node (11) at (0,0)   {$ La $};
      \node (21) at (4,0)   {$ L(a+a) $};
      \node (31) at (8,0)   {$ La $};
      \node (12) at (2,2) {$ La $};
      \node (22) at (6,2) {$ L(a+a) $};
      \node (13) at (4,4)   {$ L(0+a) $};
      \draw[cd]
      (11) edge node[above,left]{$ \id $} (12)
      (21) edge node[above,right]{$ L\nabla $} (12)
      (21) edge node[above,left]{$ \id $} (22)
      (31) edge node[above,right]{$ L(!+\id) $} (22)
      (12) edge node[above,left]{$ \id $} (13)
      (22) edge node[above,right]{$ L\nabla $} (13);
      \draw (3.9,3.5) -- (4,3.4) -- (4.1,3.5);
    \end{tikzpicture}
  \]
  is equal to the left unitor and
  \[
    \begin{tikzpicture}
      \node (11) at (0,0)   {$ La $};
      \node (21) at (4,0)   {$ L(a+a) $};
      \node (31) at (8,0)   {$ La $};
      \node (12) at (2,2) {$ La $};
      \node (22) at (6,2) {$ L(a+a) $};
      \node (13) at (4,4)   {$ La $};
      \draw[cd]
      (11) edge node[above,left]{$ \id $} (12)
      (21) edge node[above,right]{$ L\nabla $} (12)
      (21) edge node[above,left]{$ \id $} (22)
      (31) edge node[above,right]{$ L(\id+!) $} (22)
      (12) edge node[above,left]{$ \id $} (13)
      (22) edge node[above,right]{$ L\nabla $} (13);
      \draw (3.9,3.5) -- (4,3.4) -- (4.1,3.5);
    \end{tikzpicture}
  \]
  is the right unitor. Finally, the cocommutative
  diagram
  \[
    \begin{tikzpicture}
      \node (11) at (0,0) {$ a \otimes a $};
      \node (21) at (4,0) {$ a \otimes a $};
      \node (12) at (2,2) {$ a $};
      \draw[cd]
      (12) edge node[above,left]{$ \nabla $} (11)
      (12) edge node[above,right]{$ \nabla $} (21)
      (11) edge node[below]{$ \beta $} (21);
    \end{tikzpicture}
  \]
  commutes because the composite $ \nabla \beta $
  is given by
  \[
    \begin{tikzpicture}
      \node (11) at (0,0) {$ La $};
      \node (21) at (4,0) {$ L(a+a) $};
      \node (31) at (8,0) {$ L(a+a) $};
      \node (12) at (2,2) {$ La $};
      \node (22) at (6,2) {$ L(a+a) $};
      \node (13) at (4,4) {$ La $};
      \draw[cd]
      (11) edge node[above,left]{$ \id $} (12)
      (21) edge node[above,right]{$ L\nabla $} (12)
      (21) edge node[above,left]{$ \beta $} (22)
      (31) edge node[above,right]{$ \id $} (22)
      (12) edge node[above,left]{$ \id $} (13)
      (22) edge node[above,right]{$ L\nabla $}
      (13);
      \draw (3.9,3.5) -- (4,3.4) -- (4.1,3.5);
    \end{tikzpicture}
  \]
  which is exactly the comuliplication.  
\end{proof}

In the following lemma, we follow the convention of writing
$ f \leq g $ to represent a 2-arrow from $ f $ to $ g $ in a
locally posetal bicategory.  This notation is faithful to
the fact that the hom-categories are actually hom-posets.  This
is discussed further in Section
\ref{sec:cartesian-bicategories}.

\begin{lemma}
  Let $ (a,\Delta_a,\varepsilon_a) $ and
  $ ( b,\Delta_b,\varepsilon_b ) $ be cocommutative comonoid
  objects in the double category $ _L \BBoldRewrite $. Every structured cospan $
  La \to x \gets Lb $ in $ _L \BBoldRewrite $ is a lax comonoid
  homomorphism. That is,
  \[
    \Delta_b x \leq (x + x) \Delta_a
    \quad \text{and} \quad
    \varepsilon_b x \leq \varepsilon a
  \]
\end{lemma}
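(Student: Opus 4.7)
The plan is to exhibit, for each of the two inequalities, a single representative square that witnesses a bold rewrite from one side to the other. Since $2$-cells in $_L\BBoldRewrite$ extracted from the horizontal bicategory of $_L\BBBoldRewrite$ have identity vertical arrows, I only need to find a middle structured cospan together with morphisms to both the top and bottom rows that make the diagram of Definition \ref{def:bold-rewrite-str-cospans} commute.

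First I will unpack both sides using Definition \ref{def:composition-bold-rewrites}. Write the given structured cospan as $La \xto{f} x \xgets{g} Lb$. Since $L$ preserves coproducts, identify $L(b+b) \iso Lb + Lb$ throughout. Horizontal composition yields
\[
  \Delta_b \hcirc x \;=\; \bigl(La \xto{f} x \xgets{g \circ L\nabla_b} L(b+b)\bigr),
\]
whereas $(x+x) \hcirc \Delta_a$ has apex $x +_{La} x$, the pushout of two copies of $x$ glued along $f$, with left leg $\iota_1 f$ and right leg $[\iota_1 g,\, \iota_2 g]$. For the counital inequality, $\varepsilon_b \hcirc x = La \xto{f} x \xgets{!} L0$ while $\varepsilon_a = La \xto{\id} La \xgets{!} L0$.

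Next I will construct the two witnessing bold rewrites. For the comultiplicative inequality, take the middle row to be a copy of $(x+x) \hcirc \Delta_a$, let the middle-to-bottom map be $\id_{x +_{La} x}$, and let the middle-to-top map be the fold $\nabla \colon x +_{La} x \to x$ induced by the pair $(\id_x, \id_x)$, which visibly agree on the image of $La$. Checking the four resulting squares reduces to the identities $\nabla \iota_1 f = f$ and $\nabla \circ [\iota_1 g,\, \iota_2 g] = [g,g] = g \circ L\nabla_b$, both immediate from the universal property of the coproduct. For the counital inequality, take the middle row to be a copy of $\varepsilon_a$, let the middle-to-top map be $f \colon La \to x$, and let the middle-to-bottom map be $\id_{La}$. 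The left-hand squares commute trivially, and the right-hand squares commute by uniqueness of maps out of the initial object $L0$ (which is initial in $\X$ because $L$ is a left adjoint).

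I do not expect a serious obstacle. The entire content of the proof is to translate the two compositions into pushout form and then to produce the fold map and the leg $f$; in both cases the required arrows are forced by universal properties, and commutativity of the squares falls out of the defining equations of the codiagonal and of initiality.
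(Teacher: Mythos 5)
Your proof is correct and follows essentially the same path as the paper's. For the comultiplicative inequality, both you and the paper take the middle row of the witnessing bold rewrite to be (a copy of) the target cospan and map it to the apex $x$ by a universal/fold arrow; you phrase the apex as $x +_{La} x$ while the paper writes it as $La +_{L(a+a)}(x+x)$, but these are canonically isomorphic and your identification is valid. For the counital inequality both proofs take the middle apex to be $La$ with legs $\id$ and $f$. The only discrepancy is orientation of the second square: you place $\varepsilon_b x$ on top and $\varepsilon_a$ on the bottom, which matches the convention "$f \leq g$ means a 2-arrow from $f$ to $g$" stated in the paper; the paper's own diagram for this case appears to orient the square the other way. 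Since a bold rewrite and its mirror have the same middle object and the same two legs, this is a matter of bookkeeping rather than substance, and your version is the one consistent with the inequality as written.
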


\begin{proof}
  The first 2-arrow is
  \[
    \begin{tikzpicture}
      \node (11) at (3,0) {$ La +_{L(a+a)} (x+x) $};
      \node (12) at (0,2) {$ La $};
      \node (22) at (3,2) {$ La +_{L(a+a)} (x+x) $};
      \node (32) at (9,2) {$ L(b+b) $};
      \node (13) at (3,4) {$ x $};
      \node (3'2) at (6,2) {$ x+x $};
      \draw[cd]
      (12) edge node[above,left]{$ f $} (13)
      (12) edge node[above]{$ \psi $} (22)
      (12) edge node[left]{$ \psi $} (11)
      (32) edge node[above]{$ \langle g,g \rangle $} (13)
      (32) edge node[above]{$ g+g $} (3'2)
      (3'2) edge node[above]{$ \theta $} (22)
      (3'2) edge node[right]{$ \nabla $} (13)
      (32) edge node[right]{$ \theta (g+g) $} (11)
      (22) edge [dashed] node[left]{$  $} (13)
      (22) edge node[left]{$ \id $} (11);
    \end{tikzpicture}
  \]
  where the dashed line is the universal arrow
  formed in reference to $ f $ and $ \nabla
  $. The source of this 2-arrow is the composite
  \[
    \begin{tikzpicture}
      \node (11) at (0,0) {$ La $};
      \node (21) at (4,0) {$ Lb $};
      \node (31) at (8,0) {$ L(b+b) $};
      \node (12) at (2,2) {$ x $};
      \node (22) at (6,2) {$ Lb $};
      \node (13) at (4,4) {$ x $};
      \draw[cd]
      (11) edge node[above,left]{$ f $} (12)
      (21) edge node[above,right]{$ g $} (12)
      (21) edge node[above,left]{$ \id $} (22)
      (31) edge node[above,right]{$ L\nabla $} (22)
      (12) edge node[above,left]{$ \id $} (13)
      (22) edge node[above,right]{$ g $} (13);
      \draw (3.9,3.5) -- (4,3.4) -- (4.1,3.5);
    \end{tikzpicture}
  \]
  and the target is the composite
  \[
    \begin{tikzpicture}
      \node (11) at (0,0)   {$ La $};
      \node (21) at (4,0)   {$ L(a+a) $};
      \node (31) at (8,0)   {$ L(b+b) $};
      \node (12) at (2,2) {$ La $};
      \node (22) at (6,2) {$ x+x $};
      \node (13) at (4,4)   {$ La +_{L(a+a)} (x+x) $};
      \draw[cd]
      (11) edge node[above,left]{$ \id $} (12)
      (21) edge node[above,right]{$ L\nabla $} (12)
      (21) edge node[above,left]{$ f+f $} (22)
      (31) edge node[above,right]{$ g+g $} (22)
      (12) edge node[above,left]{$ \psi $} (13)
      (22) edge node[above,right]{$ \theta $} (13);
      \draw (3.9,3.5) -- (4,3.4) -- (4.1,3.5);
    \end{tikzpicture}
  \]
    The second is witnessed by the 2-arrow
  \[
       \begin{tikzpicture}
      \node (11) at (2,0) {$ x $};
      \node (12) at (0,2) {$ La $};
      \node (22) at (2,2) {$ La $};
      \node (32) at (4,2) {$ L0 $};
      \node (13) at (2,4) {$ La $};
      \draw[cd]
      (12) edge node[above]{$ \id $} (13)
      (12) edge node[above]{$ \id $} (22)
      (12) edge node[below]{$ f $} (11)
      (32) edge node[above]{$ ! $} (13)
      (32) edge node[above]{$ ! $} (22)
      (32) edge node[below]{$ ! $} (11)
      (22) edge node[left]{$ \id $} (13)
      (22) edge node[left]{$ f $} (11);      
    \end{tikzpicture}
  \]
  where target 2-arrow is the composite
  \[
    \begin{tikzpicture}
      \node (11) at (0,0)   {$ La $};
      \node (21) at (4,0)   {$ Lb $};
      \node (31) at (8,0)   {$ L0 $};
      \node (12) at (2,2) {$ x $};
      \node (22) at (6,2) {$ Lb $};
      \node (13) at (4,4)   {$ x $};
      \draw[cd]
      (11) edge node[above]{$ f $} (12)
      (21) edge node[above]{$ g $} (12)
      (21) edge node[above]{$ \id $} (22)
      (31) edge node[above]{$ ! $} (22)
      (12) edge node[above]{$ \id $} (13)
      (22) edge node[above]{$ g $} (13);
      \draw (3.9,3.5) -- (4,3.4) -- (4.1,3.5);
    \end{tikzpicture}
    \qedhere
  \]  
\end{proof}

\begin{lemma}
  For any object $ a $ in $ _L \BBoldRewrite $, each
  cocommutative comonoid structure map
  \[
    \nabla \bydef
    \left(
      La \xto{\id} La \xgets{L\nabla_a} L(a+a)
      \right)
    \quad \text{ and } \quad
    \epsilon \bydef
    \left(
      La \xto{\id} La \xgets{!} L0
      \right)
  \]
  has a right adjoint (see Definition
  \ref{def:adjoint-in-bicat}), respectively,
  \[
    \nabla^\ast \bydef
    \left(
      L(a+a) \xto{L\nabla_a} La \xgets{\id} La
      \right)
    \quad \text{ and } \quad
    \epsilon^\ast \bydef
    \left(
      L0 \xto{!} La \xgets{\id} La \right). 
  \] 
\end{lemma}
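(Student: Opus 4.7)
The plan is to exhibit the unit and counit $2$-morphisms for each of the two claimed adjunctions and then verify the triangle identities. In each case, the strategy starts with computing the round-trip composites in the bicategory, proceeds to constructing the required bold rewrites as explicit spans of structured cospans, and finishes by checking the triangle equations.

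For $\nabla \dashv \nabla^{\ast}$, I would first compute both composites. The composite $\nabla^{\ast} \hcirc \nabla$ is formed by pushout over $La$ of the span $La \xgets{L\nabla_a} L(a+a) \xto{L\nabla_a} La$. Since $L$ preserves colimits and the two legs are the same codiagonal, the pushout is canonically $La$ with both inclusions identities; hence $\nabla^{\ast} \hcirc \nabla \iso \id_a$ already on the nose as structured cospans, and the unit $\eta$ may be taken to be the identity bold rewrite. The other composite $\nabla \hcirc \nabla^{\ast}$ is the pushout of $La \xgets{\id} La \xto{\id} La$, namely $La$, yielding $L(a+a) \xto{L\nabla} La \xgets{L\nabla} L(a+a)$. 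For the counit $\varepsilon \from \nabla \hcirc \nabla^{\ast} \To \id_{a+a}$ I would take the bold rewrite whose middle apex is $L(a+a)$, whose middle cospan is the identity cospan, whose vertical arrow to the top apex $La$ is $L\nabla$, and whose vertical arrow to the bottom apex $L(a+a)$ is the identity. All compatibility squares commute directly.

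For $\epsilon \dashv \epsilon^{\ast}$, the counit $\epsilon \hcirc \epsilon^{\ast}$ has apex obtained as the pushout of $La \xgets{\id} La \xto{\id} La$, giving $L0 \xto{!} La \xgets{!} L0$, and the counit $\varepsilon \from \epsilon \hcirc \epsilon^{\ast} \To \id_0$ is realized by the bold rewrite with middle apex $L0$, identity middle cospan, vertical arrow $!\from L0 \to La$ to the top apex, and identity to the bottom. The composite $\epsilon^{\ast} \hcirc \epsilon$ is the pushout of $La \xgets{!} L0 \xto{!} La$; because $L$ preserves initial objects and pushouts over the initial object are coproducts, this apex is $La + La$, yielding $La \xto{\iota_1} La + La \xgets{\iota_2} La$. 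For the unit $\eta \from \id_a \To \epsilon^{\ast} \hcirc \epsilon$ I would use the bold rewrite with middle apex $La + La$, middle cospan $La \xto{\iota_1} La+La \xgets{\iota_2} La$, vertical arrow $\nabla \from La+La \to La$ to the top apex, and identity to the bottom; the commutations hold because $\nabla \iota_1 = \nabla \iota_2 = \id_{La}$.

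The remaining step, and the main point requiring care, is the verification of the triangle identities in the bicategory $_L\BBoldRewrite$. Unpacking $(\varepsilon \hcirc \id_f) \vcirc (\id_f \hcirc \eta) = \id_f$ (and its mate) for each of $f \in \{\nabla, \nabla^\ast, \epsilon, \epsilon^\ast\}$ produces spans of structured cospans whose apexes are built by iterated pushouts and pullbacks along the codiagonal $L\nabla$ and the unique maps out of $L0$. The main obstacle will be recognizing that these apparently larger apexes collapse, in their connected component, to the identity bold rewrite; the coarseness of the bold-rewrite equivalence relation is precisely what allows this, and the required morphisms of spans of structured cospans arise canonically from the universal properties of coproducts, pushouts, and the identity $\nabla \circ \iota_i = \id$. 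With these morphisms in hand, each triangle identity reduces to a routine diagram chase.
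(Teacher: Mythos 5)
Your proposal is correct and takes essentially the same approach as the paper: compute the round-trip composites by pushout, exhibit explicit unit and counit bold rewrites (with the codiagonal or unique map supplying the middle-to-top vertical arrow and the identity supplying the middle-to-bottom one), and defer the triangle identities to a routine diagram chase. You even correct what appears to be a typo in the paper's counit for $\epsilon \dashv \epsilon^\ast$, where the middle and bottom apices should be $L0$ (with $!$ as the middle-to-top arrow) rather than $La$ with identities.
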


\begin{proof}
  The unit of the adjunction
  $ \nabla \dashv \nabla^\ast $ is
  \[
    \begin{tikzpicture}
      \node (11) at (4,0) {$ La $};
      \node (12) at (0,2) {$ La $};
      \node (22) at (4,2) {$ La $};
      \node (32) at (8,2) {$ La $};
      \node (13) at (4,4) {$ La $};
      \draw[cd]
      (12) edge node[above,left]{$ \id $} (13)
      (12) edge node[above]{$ \id $} (22)
      (12) edge node[below,left]{$ \id $} (11)
      (32) edge node[above,right]{$ \id $} (13)
      (32) edge node[above]{$ \id $} (22)
      (32) edge node[below,left]{$ \id $} (11)
      (22) edge node[left]{$ \id $} (13)
      (22) edge node[left]{$ \id $} (11);
    \end{tikzpicture}
  \]
  where the target is the composite 1-arrow
  \[
    \begin{tikzpicture}
      \node (11) at (0,0)   {$ La $};
      \node (21) at (4,0)   {$ L(a+a) $};
      \node (31) at (8,0)   {$ La $};
      \node (12) at (2,2) {$ La $};
      \node (22) at (6,2) {$ La $};
      \node (13) at (4,4)   {$ La $};
      \draw[cd]
      (11) edge node[above,left]{$ \id $} (12)
      (21) edge node[above,right]{$ \nabla $} (12)
      (21) edge node[above,left]{$ \nabla $} (22)
      (31) edge node[above,right]{$ \id $} (22)
      (12) edge node[above,left]{$ \id $} (13)
      (22) edge node[above,right]{$ \id $} (13);
      \draw (3.9,3.5) -- (4,3.4) -- (4.1,3.5);
    \end{tikzpicture}
  \]
  The counit of $ \nabla \dashv \nabla^\ast $ is
  the 2-arrow
  \[
    \begin{tikzpicture}
      \node (11) at (4,0) {$ L(a+a) $};
      \node (12) at (0,2) {$ L(a+a) $};
      \node (22) at (4,2) {$ L(a+a) $};
      \node (32) at (8,2) {$ L(a+a) $};
      \node (13) at (4,4) {$ La $};
      \draw[cd]
      (12) edge node[above,left]{$ \nabla $} (13)
      (12) edge node[above]{$ \id $} (22)
      (12) edge node[below,left]{$ \id $} (11)
      (32) edge node[above,right]{$ \nabla $} (13)
      (32) edge node[above]{$ \id $} (22)
      (32) edge node[below,left]{$ \id $} (11)
      (22) edge node[left]{$ \nabla $} (13)
      (22) edge node[left]{$ \id $} (11);
    \end{tikzpicture}
  \]
  where the source is the composite 1-arrow
  \[
    \begin{tikzpicture}
      \node (11) at (0,0) {$ L(a+a) $};
      \node (21) at (4,0) {$ La $};
      \node (31) at (8,0) {$ L(a+a) $};
      \node (12) at (2,2) {$ La $};
      \node (22) at (6,2) {$ La $};
      \node (13) at (4,4) {$ La $};
      \draw[cd]
      (11) edge node[above,left]{$ \nabla $} (12)
      (21) edge node[above,right]{$ \id $} (12)
      (21) edge node[above,left]{$ \id $} (22)
      (31) edge node[above,right]{$ \nabla $} (22)
      (12) edge node[above,left]{$ \id $} (13)
      (22) edge node[above,right]{$ \id $} (13);
      \draw (3.9,3.5) -- (4,3.4) -- (4.1,3.5);
    \end{tikzpicture}
  \]
  Checking the triangle identities is
  straightforward.

  The unit of the adjunction $ \epsilon \dashv
  \epsilon^\ast $ is the 2-arrow
  \[
    \begin{tikzpicture}
      \node (11) at (4,0) {$ L(a+a) $};
      \node (12) at (0,2) {$ La $};
      \node (22) at (4,2) {$ L(a+a) $};
      \node (32) at (8,2) {$ La $};
      \node (13) at (4,4) {$ La $};
      \draw[cd]
      (12) edge node[above,left]{$ \id $} (13)
      (12) edge node[above]{$ \lambda $} (22)
      (12) edge node[left]{$ \lambda $} (11)
      (32) edge node[right]{$ \id $} (13)
      (32) edge node[above]{$ \rho $} (22)
      (32) edge node[right]{$ \rho $} (11)
      (22) edge node[left]{$ \nabla $} (13)
      (22) edge node[left]{$ \id $} (11);
    \end{tikzpicture}
  \]
  where the target is the composite 1-arrow
  \[
    \begin{tikzpicture}
      \node (11) at (0,0) {$ La $};
      \node (21) at (4,0) {$ L0 $};
      \node (31) at (8,0) {$ La $};
      \node (12) at (2,2) {$ La $};
      \node (22) at (6,2) {$ La $};
      \node (13) at (4,4) {$ L(a+a) $};
      \draw[cd]
      (11) edge node[above,left]{$ \id $} (12)
      (21) edge node[above,right]{$ ! $} (12)
      (21) edge node[above,left]{$ ! $} (22)
      (31) edge node[above,right]{$ \id $} (22)
      (12) edge node[above,left]{$ \lambda $} (13)
      (22) edge node[above,right]{$ \rho $} (13);
      \draw (3.9,3.5) -- (4,3.4) -- (4.1,3.5);
    \end{tikzpicture}
  \]
  the counit of $ \epsilon \dashv \epsilon^\ast $
  is the 2-arrow
  \[
    \begin{tikzpicture}
      \node (11) at (4,0) {$ La $};
      \node (12) at (0,2) {$ L0 $};
      \node (22) at (4,2) {$ La $};
      \node (32) at (8,2) {$ L0 $};
      \node (13) at (4,4) {$ La $};
      \draw[cd]
      (12) edge node[above]{$ ! $} (13)
      (12) edge node[above]{$ ! $} (22)
      (12) edge node[below]{$ ! $} (11)
      (32) edge node[above]{$ ! $} (13)
      (32) edge node[above]{$ ! $} (22)
      (32) edge node[below]{$ ! $} (11)
      (22) edge node[left]{$ \id $} (13)
      (22) edge node[left]{$ \id $} (11);
    \end{tikzpicture}
  \]
  where the source is the composite 1-arrow
  \[
    \begin{tikzpicture}
      \node (11) at (0,0) {$ L0 $};
      \node (21) at (4,0) {$ La $};
      \node (31) at (8,0) {$ L0 $};
      \node (12) at (2,2) {$ La $};
      \node (22) at (6,2) {$ La $};
      \node (13) at (4,4) {$ La $};
      \draw[cd]
      (11) edge node[above,left]{$ ! $} (12)
      (21) edge node[above,right]{$ \id $} (12)
      (21) edge node[above,left]{$ \id $} (22)
      (31) edge node[above,right]{$ ! $} (22)
      (12) edge node[above,left]{$ \id $} (13)
      (22) edge node[above,right]{$ \id $} (13);
      \draw (3.9,3.5) -- (4,3.4) -- (4.1,3.5);
    \end{tikzpicture}
  \]
  Again, the triangle equations are straightforward to
  check.
\end{proof}

The following lemma refers to a `Frobenius monoid', a monoid
and comonoid that satisfy some nice properties that we spell
out in Definition \ref{def:frobenius-monoid}. 

\begin{lemma} \label{thm:frobenius}
  For any object $ a $ of $ _L \BBoldRewrite $,
  $ ( a , \nabla^\ast , \epsilon^\ast , \nabla ,
  \epsilon ) $ is a Frobenius monoid. In particular,
  \begin{equation} \label{eq:frobenius}
    \nabla \nabla^\ast =
    \left( \nabla^\ast \otimes \id \right)
    \left( \id \otimes \nabla \right)
  \end{equation}
\end{lemma}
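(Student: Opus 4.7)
The plan is to establish the three pieces of the Frobenius monoid structure in turn: the monoid laws for $(a, \nabla^\ast, \epsilon^\ast)$, the comonoid laws for $(a, \nabla, \epsilon)$ (which we already have), and the Frobenius equation. The comonoid laws are immediate from the previous lemma. For the monoid laws, I would observe that by the preceding lemma $\nabla \dashv \nabla^\ast$ and $\epsilon \dashv \epsilon^\ast$ in $_L\BBoldRewrite$. Then dualizing the coassociativity, counitality, and cocommutativity 2-cells for $(a, \nabla, \epsilon)$ under these adjunctions yields associativity, unitality, and commutativity 2-cells for $(a, \nabla^\ast, \epsilon^\ast)$; since adjoints are unique up to isomorphism and composition of right adjoints is the right adjoint of the composite, this step is formal.

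The substance is the Frobenius equation \eqref{eq:frobenius}. My approach is direct computation of both sides as structured cospans using the pushout composition in $_L\BBoldRewrite$. For the left-hand side, $\nabla \hcirc \nabla^\ast$ is the pushout of $L(a{+}a) \xto{L\nabla} La \xgets{\id} La \xto{\id} La \xgets{L\nabla} L(a{+}a)$ over the shared foot $La$; since both maps into $La$ are identities the pushout is $La$ itself, giving the structured cospan $L(a{+}a) \xto{L\nabla} La \xgets{L\nabla} L(a{+}a)$.

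For the right-hand side, using that $L$ preserves coproducts (as a left adjoint) I identify $L(a{+}(a{+}a)) \cong La + La + La \cong L((a{+}a){+}a)$. The tensor $\id \otimes \nabla$ is then $L(a{+}a) \xto{\id} La + La \xgets{\id + L\nabla} La + La + La$, and $\nabla^\ast \otimes \id$ is $La + La + La \xto{L\nabla + \id} La + La \xgets{\id} L(a{+}a)$. Composing means taking the pushout of the pair of maps $\id + L\nabla,\, L\nabla + \id : La + La + La \rightrightarrows La + La$ into their respective copies. A direct check of the coequalizer-style identifications shows that all inclusions of $La$ collapse in the pushout, so the apex is $La$, and the induced maps from $L(a{+}a)$ at each end are both $L\nabla$. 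This produces the same structured cospan as the left-hand side.

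The main obstacle is the bookkeeping of the pushout on the right-hand side: one must track how the three copies of $La$ in $La + La + La$ get identified under the two cofolding maps and verify that the universal maps from the endpoints are indeed $L\nabla$. Once this is done, both sides are literally the same structured cospan and hence represent the same connected component, so they are equal as 1-cells in $_L\BBoldRewrite$. The companion equation $\nabla \hcirc \nabla^\ast = (\id \otimes \nabla^\ast)(\nabla \otimes \id)$ follows by the symmetric argument (or from cocommutativity combined with the equation just proved), completing the Frobenius monoid structure.
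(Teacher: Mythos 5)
Your approach to the Frobenius equation \eqref{eq:frobenius} is exactly the paper's: compute $\nabla\nabla^\ast$ and $(\nabla^\ast\otimes\id)(\id\otimes\nabla)$ as pushout-composites of structured cospans and observe that both reduce to $L(a+a)\xto{L\nabla}La\xgets{L\nabla}L(a+a)$. The paper's proof consists of precisely those two pushout diagrams and the one-line observation that they agree, so your pushout bookkeeping (three copies of $La$ collapsing under $\id+L\nabla$ and $L\nabla+\id$ to a single $La$) is the whole content. Where you go beyond the paper is in the preliminaries: the paper silently assumes the reader will accept that $(a,\nabla^\ast,\epsilon^\ast)$ is a monoid, whereas you give the explicit mate argument — passing the coassociativity, counitality, and cocommutativity 2-cells through $\nabla\dashv\nabla^\ast$ and $\epsilon\dashv\epsilon^\ast$ to get the monoid laws. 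That argument is sound in the locally posetal setting (adjoints are unique on the nose, right adjoints compose contravariantly, and $-\otimes\id$ preserves adjunctions), so it is a legitimate way to fill a gap the paper leaves, but it is a formal dualization rather than a different strategy for the lemma. Your final remark that the companion equation $(\id\otimes\nabla^\ast)(\nabla\otimes\id)=\nabla\nabla^\ast$ follows by symmetry or from cocommutativity is also correct and again supplies a detail the paper's ``In particular'' phrasing glosses over.
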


\begin{proof}
  The left-hand side of Equation
  \ref{eq:frobenius} is given by the composite
  \[
    \begin{tikzpicture}
      \node (11) at (0,0) {$ L(a+a) $};
      \node (21) at (4,0) {$ La $};
      \node (31) at (8,0) {$ L(a+a) $};
      \node (12) at (2,2) {$ La $};
      \node (22) at (6,2) {$ La $};
      \node (13) at (4,4) {$ La $};
      \draw[cd]
      (11) edge node[above]{$ \nabla $} (12)
      (21) edge node[above]{$ \id $} (12)
      (21) edge node[above]{$ \id $} (22)
      (31) edge node[above]{$ \nabla $} (22)
      (12) edge node[above]{$ \id $} (13)
      (22) edge node[above]{$ \id $} (13);
      \draw (3.9,3.5) -- (4,3.4) -- (4.1,3.5);      
    \end{tikzpicture}
  \]
  The right-hand side is given by
  \[
    \begin{tikzpicture}
      \node (11) at (0,0) {$ L(a+a) $};
      \node (21) at (4,0) {$ L(a+a+a) $};
      \node (31) at (8,0) {$ L(a+a) $};
      \node (12) at (2,2) {$ L(a+a) $};
      \node (22) at (6,2) {$ L(a+a) $};
      \node (13) at (4,4) {$ La $};
      \draw[cd]
      (11) edge node[above]{$ \id $} (12)
      (21) edge node[above,right]{$ L(\id +\nabla ) $} (12)
      (21) edge node[below,right]{$ L(\nabla + \id) $} (22)
      (31) edge node[above]{$ \id $} (22)
      (12) edge node[above]{$ L\nabla $} (13)
      (22) edge node[above,right]{$ L\nabla $} (13);
      \draw (3.9,3.5) -- (4,3.4) -- (4.1,3.5);      
    \end{tikzpicture}
  \]
  These both compose to $ L(a+a) \xto{L\nabla} La
  \xgets{L\nabla} L(a+a) $.
\end{proof}

The following structure theorem follows from this
string of lemmas.

\begin{theorem} \label{thm:bold-rewrite-bicat-rels}
  $ _L \BBoldRewrite $ is a bicategory of relations.
\end{theorem}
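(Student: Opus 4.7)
The plan is to combine the preceding string of lemmas with a verification of local posetality, and then appeal to the characterization of bicategories of relations from Appendix \ref{sec:cartesian-bicategories}. The lemmas already in hand supply: $(a)$ symmetric monoidal bicategory structure on $_L \BBoldRewrite$; $(b)$ a cocommutative comonoid structure $(a, \nabla, \epsilon)$ on every object $a$; $(c)$ every structured cospan is a lax comonoid homomorphism; $(d)$ the comonoid structure maps admit right adjoints $\nabla^{\ast}$ and $\epsilon^{\ast}$; and $(e)$ the Frobenius law. Per Definition \ref{def:bicat-relations}, these give precisely the algebraic axioms of a bicategory of relations, so the remaining content of the theorem is to verify that $_L \BBoldRewrite$ is locally posetal.

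For local posetality, I would fix two parallel horizontal 1-arrows, say the structured cospans $La \to x \gets Lb$ and $La \to y \gets Lb$, and consider two parallel 2-arrows between them. By Definition \ref{def:bold-rewrite-str-cospans}, each 2-arrow is a connected component of bold rewrites, represented by a span of structured cospans with apex a cospan $La \to m_i \gets Lb$ equipped with arrows $m_i \to x$ and $m_i \to y$ compatible with the interface maps, for $i=1,2$. The plan is to form the pullback
\[
  p \bydef m_1 \times_{x \times y} m_2
\]
in $\X$, where the maps $m_i \to x \times y$ are induced by the pair of compatibility arrows. By universal properties, $p$ inherits a cospan structure $La \to p \gets Lb$ (using that the composites $La \to m_1 \to x$ and $La \to m_2 \to x$ both coincide with the interface map $La \to x$, and likewise for $y$ and for $Lb$), together with canonical arrows $p \to m_1$ and $p \to m_2$ that agree when post-composed with the outer maps to $x$ and $y$.

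This construction produces a common span of structured cospans from which there are morphisms to the apexes of both representatives, in the sense of Definition \ref{def:morphism-span-str-cospans}. Both original representatives are therefore $\sim$-related to the common refinement $p$, hence lie in the same connected component. Thus the two 2-arrows are equal, which establishes local posetality. With this in hand, the cartesian bicategory axioms together with the Frobenius law from Lemma \ref{thm:frobenius} yield, via the characterization in Appendix \ref{sec:cartesian-bicategories}, that $_L \BBoldRewrite$ is a bicategory of relations.

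The main technical obstacle I anticipate is the bookkeeping of the universal property arguments showing that the pullback $p$ truly assembles into a valid morphism of spans of structured cospans --- that is, checking that all the evident squares involving the interface objects $La$, $Lb$ and the outer cospans $x$, $y$ genuinely commute in the resulting diagram. Once that diagram chase is in place, the conclusion is immediate.
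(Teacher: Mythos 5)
Your proposal correctly assembles the preceding lemmas exactly as the paper does; the paper's own proof of this theorem consists of nothing more than the remark that it ``follows from this string of lemmas,'' and the five items you enumerate (symmetric monoidal structure, cocommutative comonoid structure, lax comonoid homomorphism property, right adjoints to $\nabla$ and $\epsilon$, and the Frobenius law) match the lemmas proved in Section \ref{sec:cartesian-bicategory-spans-of-cospans}.

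Where you differ is that you explicitly flag and prove local posetality. You are correct that this is an axiom of Definition \ref{def:bicat-relations} via its reliance on the cartesian-bicategory structure, and that the paper asserts it --- it freely uses the $\leq$ notation for 2-arrows in the later lemmas, and the remark preceding Theorem \ref{thm:bold-rewrite-bicat-rels} claims $_L\BBoldRewrite$ ``is locally posetal'' --- but never proves it. Your pullback argument soundly closes that gap. The one-step relation generating connected components is ``there exists a morphism of spans of structured cospans,'' and its closure is an equivalence, so a zigzag suffices; the pullback $p = m_1 \times_{x\times y} m_2$ acquires its interface maps $La \to p$ and $La' \to p$ uniquely by the universal property because both composites $La \to m_i \to x$ reduce to the fixed interface map $La \to x$ (and likewise for $La'$ and the other foot), and the projections $p \to m_1$, $p \to m_2$ then commute with all the remaining data because the structure maps on $p$ are \emph{defined} as those composites. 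The zigzag $m_1 \leftarrow p \to m_2$ therefore places both representatives in the same connected component. Your proof is correct, and on this point it is more complete than the paper's.
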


\section{The ZX-calculus}
\label{sec:zx-calculus}

Perhaps one of the most interesting features of quantum
mechanics is the incompatibility of observables.  Roughly,
an observable is a measurable quantity of some system, for
instance the spin of a photon.  In classical physics,
measureable quantities are comparable, meaning that we can
obtain arbitrarily precise values at the same time.  For
example, given a Porsche speeding down the highway, we can
simultaneously measure its velocity and its mass with
arbitrary precision.  Knowledge about its velocity does not
preclude us from obtain information about its mass. The
situation is quite different in quantum mechanics. Given two
measurable quantities, knowledge of one may prevent us from
obtaining knowledge about the other. This is illustrated by
the famous Heisenberg uncertainty principle which quantifies
the limits of precision to which one can simultaneously
measure the position and momentum of a particle. In general,
the strength of this restriction depends on the
situation. The most extreme case is that knowing one
quantity with total precision implies total uncertainty
about the other quantity. Such a pair of observables are
called \defn{complementary}.

Historically, a quantum physicist would reason about
observables, complementary or otherwise, using Hilbert
spaces. Given the rapid progress of quantum physics in the
twentieth century, this framework seems to have worked quite
well for scientists. Working with Hilbert spaces, however,
is challenging even for skilled researchers. But the
language of quantum physics is now relevant to a wider
audience since the dawn of quantum computing. Given the
challenge of working with Hilbert spaces, perhaps developing
a simpler language is worth pursuing.

Such a high-level language was invented by Coecke and Duncan
\cite{coecke-duncan_quant-full}.  This language, called the
ZX-calculus, was immediately used to generalize both quantum
circuits \cite{nielson-chuage_quantum} and the measurement
calculus \cite{danos-kash-panan_measurement-calc}.  Its
validity was further justified when Duncan and Perdrix
presented a non-trivial method of verifying
measurement-based quantum computations
\cite{duncan-perdrix_rewriting}.  At its core, the
ZX-calculus is an intuitive graphical language in which to
reason about complementary observables.

In this section, we illustrate our framework with the
ZX-calculus.  The backstory of the ZX-calculus dates to
Penrose's tensor networks \cite{penrose} and, more recently,
to the relationship between graphical languages and monoidal
categories \cite{joyal-street,selinger}. Abramsky and Coecke
capitalized on this relationship when inventing a
categorical framework for quantum physics
\cite{abram-coecke}.  Soon after, Coecke and Duncan
introduced a diagrammatic language in which to reason about
complementary quantum observables
\cite{coecke-duncan_quant-initial}.  After a fruitful period
of development \cite{coecke-edwards-spekkens,
  coecke-perdrix_classical-channels,
  duncan-perdix_graph-states, duncan-perdrix_rewriting,
  duncan-evans-panag_classifying,
  pavlovic_nondeterministic}, a full presentation of the
ZX-calculus was published \cite{coecke-duncan_quant-full}.
The completeness of the ZX-calculus for stabilizer quantum
mechanics was later proved by Backens
\cite{backens-completeness}.

The ZX-calculus begins with the five diagrams depicted in
Figure \ref{fig:zx-generators}. On each diagram, the
dangling wires on the left are \defn{inputs} and those on
the right are \defn{outputs}.  By connecting inputs to
outputs, we can form larger diagrams, which we call
\defn{ZX-diagrams}.  These diagrams generate the arrows of a
dagger compact category $\ZX$ whose objects, the
non-negative integers, count the inputs and outputs of a
diagram.  Below, we give a presentation of $\ZX$ along with
a brief discussion on the origins of its generating arrows
(Figure \ref{fig:zx-generators}) and relations (Figure
\ref{fig:zx-equations}).

\begin{figure}
  \fbox{%
  \centering  
  \begin{minipage}{1\textwidth}
    \begin{minipage}{0.2\linewidth}
      \centering
      \includegraphics{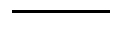} \\
      \vspace{0.5em}
      \textsc{Wire}
    \end{minipage}
    \begin{minipage}{0.2\linewidth}
      \centering
      \includegraphics{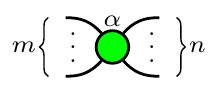} \\
      \vspace{0em}
      \textsc{Green Spider}
    \end{minipage}%
    \begin{minipage}{0.2\linewidth}
    \centering
    \includegraphics{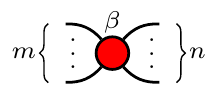} \\
    \vspace{0em}
    \textsc{Red Spider}
    \end{minipage}%
    \begin{minipage}{0.2\linewidth}
    \centering
    \includegraphics{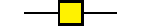} \\
    \vspace{0.5em}
    \textsc{Hadamard}
    \end{minipage}%
    \begin{minipage}{0.2\linewidth}
    \centering
    \includegraphics{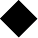} \\
    \vspace{0.5em}
    \textsc{Diamond}
    \end{minipage}%
  \end{minipage}
}
\caption{Generators for the ZX-calculus diagrams}
\label{fig:zx-generators}
\end{figure}

Our goal with this example is to generate, using the
machinery laid out in this chapter, a bicategory of
relations $\ZZX$ to provide a syntax for the ZX-calculus. We
show that $ \ZZX $ extends $ \ZX $ in a way we make precise below.

The five \defn{basic diagrams} in the ZX-calculus
are depicted in Figure \ref{fig:zx-generators} and
are to be read from left to right. They are
\begin{itemize}
\item a \defn{wire} with a single input and
  output,
\item \defn{green spiders} with a non-negative
  integer number of inputs and outputs and paired
  with a phase $\alpha \in [-\pi,\pi)$,
\item \defn{red spiders} with a non-negative
  integer number inputs and outputs and paired
  with a phase $\beta \in [-\pi,\pi)$,
\item the \defn{Hadamard node} with a single input
  and output, and
\item a \defn{diamond node} with no inputs or
  outputs.
\end{itemize}
The wire plays the role of an identity, much like a wire
without resistance in an electrical circuit, or straight
pipe in a plumbing system.  The green and red spiders each
arise from a pair of complementary observables.  In
categorical quantum mechanics \cite{abram-coecke},
observables correspond to certain commutative Frobenius
algebras $A$ living in a dagger symmetric monoidal category
$(\C , \otimes , I)$, the classic example
$ \C \bydef \FinHilb $ being the category of finite
dimensional Hilbert spaces and linear maps.  A pair of
complementary observables gives a pair of Frobenius
algebras whose operations interact via laws like those of a
Hopf algebra
\cite{
  coecke-pavlovic_quantum-measurements,
  coecke-pav-vacary_orth-bases}.
This is particularly nice because Frobenius algebras have
beautiful string diagram representations.  There is an
morphism $\C(I,A) \to \C(A,A)$ of commutative monoids that
gives rise to a group structure on $A$ known as the
\defn{phase group}, which Coecke and Duncan detail
\cite[Def.~7.5]{coecke-duncan_quant-full}.  The phases on
the green and red spider diagrams arise from this group.
The Hadamard node embodies the Hadamard gate.  The diamond
is a scalar obtained when connecting a green and red node
together.  A deeper exploration of these notions goes beyond
the scope of this paper.  For those interested, the original
paper on the topic
\cite{coecke-duncan_quant-full}
is an excellent place learn more.

In the spirit of compositionality, we present a category
$ \ZX $ whose arrows are generated by the five
basic diagrams. We sketched $ \ZX $ at the beginning of this
section, but we now detail the construction.

We start by allowing the basic ZX-diagrams from Figure
\ref{fig:zx-generators} generate the arrows of a free dagger
compact category whose objects are the non-negative
integers. We then subject the arrows (ZX-diagrams) to the
relations given in Figure \ref{fig:zx-equations}, to which
we add equations obtained by exchanging red and green nodes,
daggering, and taking diagrams up to ambient isotopy in
$4$-space.  These listed relations are called \defn{basic}.
Spiders with no phase indicated have a phase of $0$.  

This category, denoted as $ \ZX $, was introduced by Coecke
and Duncan
\cite{coecke-duncan_quant-full}
and further studied by Backens
\cite{backens-completeness}.
To compose in $\ZX$, connect compatible diagrams
along a bijection between inputs and the outputs. For
example
\begin{center}
  \begin{tikzpicture}
    \node            (1) at (0,2) {$  $};
    \node            (2) at (0,0) {$  $};
    \node [zxgreen]  (3) at (2,1) {$  $};
    \node            (4) at (4,1) {$  $};
    \node            ( ) at (5,1) {$ \circ $};
    \node            (5) at (6,1) {$  $};
    \node [zxyellow] (6) at (8,1) {$  $};
    \node            (7) at (10,1) {$  $};
    \node            ( ) at (11,1) {$ = $};
    \draw 
    (1.60) --(3.120)
    (2.-60) -- (3.-120)
    (3) -- (4)
    (5) -- (6)
    (6) -- (7);
  \end{tikzpicture}
\end{center}
\begin{center}
  \begin{tikzpicture}
    \node            (1) at (0,2) {$  $};
    \node            (2) at (0,0) {$  $};
    \node [zxgreen]  (3) at (2,1) {$  $};
    \node            (4) at (4,1) {$  $};
    \node [zxyellow] (5) at (6,1) {$  $};
    \node            (6) at (8,1) {$  $};
    \draw 
    (1.60) -- (3.120)
    (2.-60) -- (3.-120)
    (3) -- (4.center)
    (4.center) -- (5)
    (5) -- (6);
  \end{tikzpicture}
\end{center}
A monoidal structure is given by adding numbers and taking
the disjoint union of ZX-diagrams.  The identity on $n$ is
the disjoint union of $n$ wires:
\begin{center}
  \begin{tikzpicture}
    \node (1) at (0,0) {$  $};
    \node (2) at (0,0.25) {$  $};
    \node ( ) at (1.5,0.6) {$ \vdots $};
    \node (3) at (0,0.75) {$  $};
    \node (1') at (3,0) {$  $};
    \node (2') at (3,0.25) {$  $};
    \node (3') at (3,0.75) {$  $};
    \draw
    (1) -- (1')
    (2) -- (2')
    (3) -- (3');
  \end{tikzpicture}
\end{center}
The symmetry and compactness of the monoidal product provide
a braiding, evaluation, and coevaluation morphisms:
respectively,
\[
  \includegraphics{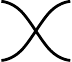}
  \quad \quad \quad \quad 
  \raisebox{-0.25\height}{%
    \includegraphics[scale=0.75]{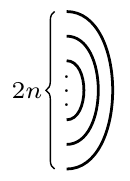}
  }
  \quad \quad \quad \quad 
  \raisebox{-0.25\height}{%
    \includegraphics[scale=0.75]{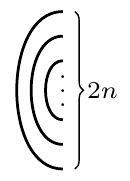}
  }
\]
The evaluation and coevalutation arrows are of type
$2n \to 0$ and $0 \to 2n$ for each object
$n \geq 1$ and the empty diagram for $n=0$.  On
the spider diagrams, the dagger structure swaps
inputs and outputs then multiplies the phase by
$-1$:
\[
  \includegraphics{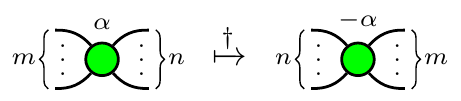}
\]
The dagger acts trivially on the wire, Hadamard,
and diamond elements.

\begin{figure}
  \fbox{
    \begin{minipage}{\textwidth}
      \centering
      \begin{minipage}{0.5\linewidth}
        \centering
        \includegraphics{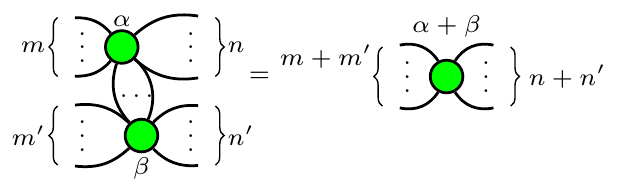} \\
        \textsc{Spider}
      \end{minipage}%
      \begin{minipage}{0.5\linewidth}
        \centering
        \includegraphics{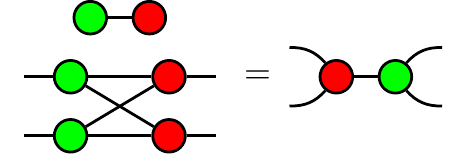} \\
        \textsc{Bialgebra} 
      \end{minipage}
      \vspace{1em} 
      \linebreak
      \begin{minipage}{0.3\linewidth}
        \centering
        \includegraphics{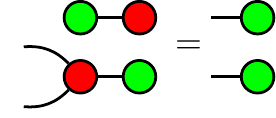} \\
        \textsc{Copy} 
      \end{minipage}%
      \begin{minipage}{0.3\linewidth}
        \centering
        \includegraphics{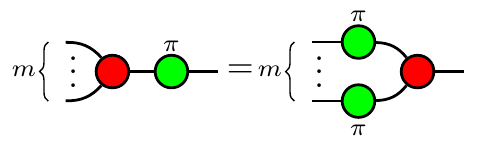} \\
        \textsc{$ \pi $-Copy} 
      \end{minipage}%
      \begin{minipage}{0.3\linewidth}
        \centering
        \includegraphics{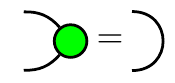} \\
        \textsc{Cup} 
      \end{minipage} 
      \vspace{1em} 
      \linebreak
      \begin{minipage}{0.5\linewidth}
        \centering
        \includegraphics{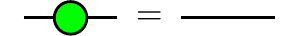} \\
        \textsc{Trivial Spider} 
      \end{minipage}%
      \begin{minipage}{0.5\linewidth}
        \centering
        \includegraphics{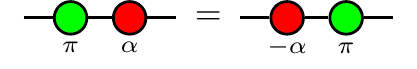} \\
        \textsc{$\pi$-Commutation}
      \end{minipage}%
      \linebreak
      \begin{minipage}{0.3\linewidth}
        \centering
        \includegraphics{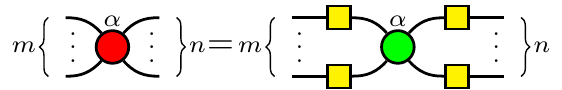} \\
        \textsc{Color Change}
      \end{minipage}%
      \begin{minipage}{0.3\linewidth}
        \centering
        \includegraphics{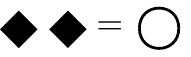} \\
        \textsc{Loop}
      \end{minipage}%
      \begin{minipage}{0.3\linewidth}
        \centering
        \includegraphics{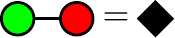} \\
        \textsc{Diamond}
      \end{minipage}%
    \end{minipage}
  }
  \caption{Relations in the category $\ZX$}
  \label{fig:zx-equations}
\end{figure}

A major advantage of using string diagrams, apart
from their intuitive nature, is that computations
are more easily programmed into computers.
Indeed, graphical proof assistants like
Quantomatic
\cite{bar-kiss-vicary_globular,
     dixon-duncun-kiss_quanto-website}
and Globular
\cite{bar-kiss-vicary_globular}
were made for such graphical reasoning.  The logic
of these programs are encapsulated by double
pushout rewrite rules.  However, the algebraic
structure of $\ZX $ and other graphical calculi do
not contain the rewrite rules as explicit
elements. On the other hand, the framework developed in this thesis
explicitly includes the rewrite rules.

To model the ZX-calculus using structured cospans, we need
an appropriate adjunction $ L \from \A \lrto \X \from R $.
Determining the correct pieces to fill in requires some
discussion. Before providing the details, we sketch the
process.  Let $ \A \bydef \FinSet $ be the topos of finite
sets and functions. Let $ \X \bydef \FinGraphGamma $ be the
over-category where we chose a graph $ \Gamma $ to provide
the objects of $ \X \bydef \FinGraph / \Gamma $ with the
same type information as the ZX-diagrams. The functor $ L $
turns a finite set $ a $ into a certain discrete graph over
$ \Gamma $ so that $ La $ can serve as inputs or outputs. To
unpack what this all means, we start with the over-category.

\begin{definition}
  Let $g$ be a graph.  By a \defn{graph over $g$},
  we mean a graph morphism $x \to g$. A morphism
  between graphs over $g$ is a graph morphism
  $x \to y$ such that
  \[
    \begin{tikzpicture}
      \node (1) at (-1,2) {$x$};
      \node (2) at (1,2) {$y$};
      \node (3) at (0,0) {$g$};
      \draw[cd]
        (1) edge (2)
        (1) edge (3)
        (2) edge (3);
    \end{tikzpicture}
  \]
  commutes.
\end{definition} 

One way to think of a graph over $ g $ is as
a $ g $-typed graph. Consider the following simple
example.

\begin{example}
  \label{ex:graph-over-g}
  Let $ g $ be the graph
  \[
    \begin{tikzpicture}
      \node (a) at (0,0) {$ A $};
      \node (b) at (3,0) {$ B  $};
      \draw[graph]
        (a) edge[bend left] node[above]{$F$} (b)
        (b) edge[bend left] node[below]{$G$} (a);
    \end{tikzpicture}
  \]
  Let $ x $ be the graph
  \[
    \begin{tikzpicture}
      \node (a) at (0,0)  {$ a $};
      \node (b) at (4,0)  {$ b $};
      \node (c) at (2,2)  {$ c $};
      \node (d) at (2,-2) {$ d $};
      \draw[graph]
        (a) edge node[above,left]{$ e $} (c)
        (a) edge node[below,left]{$ f $} (d)
        (c) edge node[above,right]{$ g $} (b)
        (d) edge node[below,right]{$ h $} (b);
    \end{tikzpicture}
  \]
  that lies over $ g $ via the map
  \begin{align*}
      a,b & \mapsto A  &    e,f & \mapsto F \\
      c,d & \mapsto B  &    g,h & \mapsto G 
  \end{align*}
  If we think of the nodes and edges of $ g $ as
  types, then these types are transported to $ x $
  along the fibers of this map.  Thus $ x $ is a
  graph with the following type-assignment:
  \begin{center}
    \begin{tabular}{ccccc}
      $ a : A $ && $ b : A $ && $ c : B $ \\
      $ d : B $ && $ e : F $ && $ f : F $ \\
      $ g : G $ && $ h : G $ &&           \\
    \end{tabular}
  \end{center}
  where `$ : $' should be read `is type'.  Any graph over
  $ g $ can have two node types $ A,B $ and two edge types
  $ F,G $.  Edges can only go from an $ A $-type
  node to a $ B $-type node or vice versa.  Edges cannot
  traverse nodes of the same type simply because there are
  no looped edges in $ g $.

  A compact way to draw a graph over $ g $ is to
  label its nodes and edges with their types.
  Thus, the over-graph $ x \to g $ can be drawn as
  \[
    \begin{tikzpicture}
      \node (a) at (0,0)  {$ (a,A) $};
      \node (b) at (4,0)  {$ (b,A) $};
      \node (c) at (2,2)  {$ (c,B) $};
      \node (d) at (2,-2) {$ (d,B) $};
      \draw[graph]
        (a) edge node[above,left]{$ (e,F) $} (c)
        (a) edge node[below,left]{$ (f,F) $} (d)
        (c) edge node[above,right]{$ (g,G) $} (b)
        (d) edge node[below,right]{$ (h,G) $} (b);
    \end{tikzpicture}
  \]
  
  One might recognize the class of graphs over $ g $ as
  something like a bipartite graph.  The difference between
  graphs over $ g $ and bipartite graphs is that bipartite
  graphs are usually defined by graph theorists to satisfy
  \emph{the property} that the nodes can be partitioned into
  two classes and the source and target of each edge must
  belong to different classes.  On the other hand, graphs
  over $ g $ are graphs equipped with extra structure,
  namely the type information. This distinction does not
  appear in the graphs themselves, so we look at their
  morphisms.

  A morphism of graphs over $ g $ must respect the type
  information. So if $ x \to g $ and $ y \to g $ are graphs
  over $ g $, then a morphism between them is a graph
  morphism $ x \to y $ such that the diagram
  \[
    \begin{tikzpicture}
      \node (x) at (0,0) {$ x $};
      \node (y) at (4,0) {$ y $};
      \node (g) at (2,-2) {$ g $};
      \draw [cd]
       (x) edge (y)
       (x) edge (g)
       (y) edge (g);
    \end{tikzpicture}
  \]
  commutes. Suppose that $ x $ is a single node typed $ A $
  and $ y $ is a single node typed $ B $. There is no
  morphism between them because the node in $ x $ must be
  sent to a node of type $ A $.  However, any two bipartite
  graphs with a single node and no edges are isomorphic. The
  moral of this example is by adding the type information, we
  added structure instead of imposing a property.  We denote by
  $ \Graph \downarrow g $ the category of graphs over $ g $
  and their morphisms.
\end{example}

We exploit this method of defining
`typed graphs' to transform typical combinatorial
graphs into ZX-diagrams. The types needed to make
ZX-diagrams from graphs encoded into the graph $ \Gamma $
that we define now.

\begin{definition}
  \label{ex:basic-graph-over-g}
  Let $ \Gamma $ be the graph
  \begin{equation}
    \label{eq:gamma}
    \begin{tikzpicture}
      \node [zxwhite]  (w) at (0,0)    {$ $};
      \node [zxgreen]  (g) at (-1,1)   {};
      \node            ()  at (-1.33,1) {\scriptsize{$\alpha$}};
      \node [zxred]    (r) at (1,1)    {};
      \node            ()  at (1.33,1)  {\scriptsize{$\beta$}};
      \node [zxblack]  (b) at (-1,-1)  {$$};
      \node [zxyellow] (y) at (1,-1)   {$$};
      \path[graph]
      (w) edge [loop below]   (w)
      (w) edge [bend left=10] (g)
      (w) edge [bend left=10] (r)
      (w) edge [bend left=10] (b)
      (w) edge [bend left=10] (y)
      (g) edge [bend left=10] (w)
      (r) edge [bend left=10] (w)
      (b) edge [bend left=10] (w)
      (y) edge [bend left=10] (w);
      \node () at (3,0) {$ \alpha,\beta \in [ -\pi , \pi ) $};
    \end{tikzpicture}.
  \end{equation}

  We have not drawn the entirety of $ \Gamma $. In
  actuality, the green and red nodes run through
  $ [ -\pi , \pi ) $ and each of them have a single
  arrow to and from the white node
\end{definition}

Note that the graphs over $ \Gamma $ are completely
determined by the function's behavior on the nodes.  This is
because there is at most one arrow between any two
nodes. When comparing the $ \Gamma $-types to the types
appearing in the basic ZX-diagrams of Figure
\ref{fig:zx-generators}, there is a clear correlation
except, perhaps, for the white node.  To explain the white
node, first observe that ZX-diagrams have dangling wires on
either end. Dangling edges are not permitted in our
definition of graphs, so the white node anchors them.

To draw graphs over $ \Gamma $, we attach the type
information to the nodes by rendering the nodes as
red, greed, white, black, or yellow.  This manner
of drawing is more economical than drawing a graph
and describing its map to $ \Gamma $.  For
example, consider the graph
\[
  \begin{tikzpicture}
    \node (0) at (0,0) {$ \bullet_a $};
    \node (1) at (2,0) {$ \bullet_b $};
    \node (2) at (4,0) {$ \bullet_c  $};
    \draw[graph]
      (0) edge (1)
      (1) edge (2);
  \end{tikzpicture}
\]
with the map to $ \Gamma $ determined by
\[
  \begin{tikzpicture}
    \node           at (0,1) {$ a,c \mapsto $};
    \node [zxwhite] at (1,1) {$  $};
    \node           at (0,0) {$ b \mapsto $};
    \node [zxgreen] at (1,0) {$ \beta $};
  \end{tikzpicture}
\]
We draw this as
\[
  \begin{tikzpicture}
    \node [zxwhite] (1) at (0,0) {$  $};
    \node [zxgreen] (2) at (2,0) {$ \beta $};
    \node [zxwhite] (3) at (4,0) {$  $};
    \draw[graph]
      (1) edge (2)
      (2) edge (3);
  \end{tikzpicture}
\]

In our adjunction $ L \from \A \lrto \X \from R $, we let
$ \X $ be $ \FinGraphGamma $. This is a topos by the
fundamental theorem of topos theory, which we present in
Theorem \ref{thm:fund-thm-topos}.

The most important objects in $ \FinGraphGamma $ are those
corresponding to the basic ZX-diagrams. These are displaying
in Figure \ref{fig:basic-diagrams-over-gamma}. To choose a
category $ \A $ of interface types, we want to faithfully
represent the fact that ZX-diagrams have a non-negative
integer number of inputs and outputs. Therefore, we let
$ \A $ be the topos $\FinSet $ of finite sets and functions.

\begin{figure}
  \fbox{
    \begin{minipage}{\textwidth}
      \centering
      \begin{minipage}{0.3\linewidth}
        \centering
        \[
          \begin{tikzpicture}
            \node [zxwhite] (a) at (0,0) {};
            \node [zxwhite] (b) at (2,0) {};
            \draw [graph]
              (a) to (b);
          \end{tikzpicture}
        \]
        \textsc{Wire}
      \end{minipage}%
      \begin{minipage}{0.3\linewidth}
        \centering
        \[
          \begin{tikzpicture}
            \node [zxwhite] (a) at (0,0)   {$ $};
            \node           ( ) at (0,0.66) {$ \vdots $};
            \node [zxwhite] (b) at (0,1)   {$  $};
            \node [zxgreen] (c) at (1,0.5) {$ $};
            \node           ( ) at (1,1.25) {$\alpha$};
            \node [zxwhite] (d) at (2,0)   {$ $};
            \node           ( ) at (2,0.66) {$ \vdots $};
            \node [zxwhite] (e) at (2,1)   {$  $};
            \draw[graph]
            (a) edge [bend right=10]  (c)
            (b) edge [bend left=10]   (c)
            (c) edge [bend right=10]  (d)
            (c) edge [bend left=10]   (e);
          \end{tikzpicture}
        \]
        \textsc{Green Spider} 
      \end{minipage}
      \begin{minipage}{0.3\linewidth}
        \centering
        \[
          \begin{tikzpicture}
            \node [zxwhite] (a) at (0,0)   {$ $};
            \node           ( ) at (0,0.66) {$ \vdots $};
            \node [zxwhite] (b) at (0,1)   {$  $};
            \node [zxred] (c) at (1,0.5) {$ $};
            \node           ( ) at (1,1.25) {$\alpha$};
            \node [zxwhite] (d) at (2,0)   {$ $};
            \node           ( ) at (2,0.66) {$ \vdots $};
            \node [zxwhite] (e) at (2,1)   {$  $};
            \draw[graph]
            (a) edge [bend right=10]  (c)
            (b) edge [bend left=10]   (c)
            (c) edge [bend right=10]  (d)
            (c) edge [bend left=10]   (e);
          \end{tikzpicture}
        \]
        \textsc{Red Spider} 
      \end{minipage}
      \vspace{1em} 
      \linebreak
      \begin{minipage}{0.5\linewidth}
        \centering
        \[
          \begin{tikzpicture}
            \node [zxwhite]  (a) at (0,0) {$$};
            \node [zxyellow] (b) at (1,0) {$$};
            \node [zxwhite]  (c) at (2,0) {$$};
            \draw[graph]
              (a) edge (b)
              (b) edge (c);
          \end{tikzpicture}
        \]
        \textsc{Hadamard} 
      \end{minipage}%
      \begin{minipage}{0.5\linewidth}
        \centering
        \[
          \begin{tikzpicture}
            \node [zxblack] () at (0,0) {$$};
          \end{tikzpicture}
        \]
        \textsc{Diamond} 
      \end{minipage}%
    \end{minipage}
  }
  \caption{Basic ZX-diagrams as graphs over $ \Gamma $}
  \label{fig:basic-diagrams-over-gamma}
\end{figure}
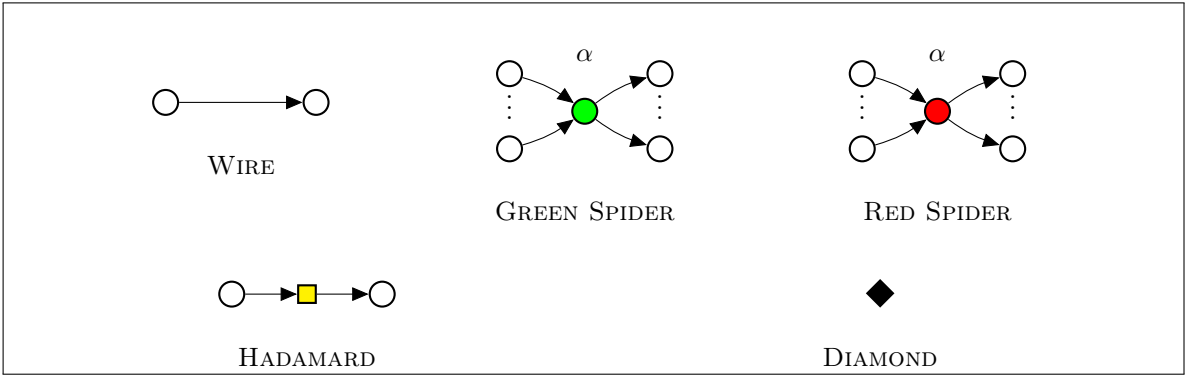

We still need to define $ L $ and $ R $ in the adjunction
\[
  \adjunction{\FinSet}{\FinGraphGamma}{L}{R}{4}
\]
Define
\[
  L \from \FinSet \to \FinGraphGamma
\]
by letting $ La $ be the edgeless graph with node
set $ a $ that is constant over the whites
node in $ \Gamma $. A function $ f \from a \to b $ of finite
sets becomes of morphism $ Lf $ of graphs over $
\Gamma $ that simply reinterprets the action of $
f $ on elements of a set to white nodes in a
graph.  Define
\[
  R \from \FinGraphGamma \to \FinSet
\]
by defining $ R ( x \to \Gamma ) $ as the fiber in
$ x $ of the white node. Given a morphism of
graphs over $ \Gamma $, $ R $ restricts it to the
function on only the white nodes.  

\begin{lemma}
  The functor pair
  \[
    \adjunction{\FinSet}{\FinGraphGamma}{L}{R}{4}
  \]
  forms an adjunction and $ L $ preserves pullbacks.
\end{lemma}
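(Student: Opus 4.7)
The plan is to establish both claims by direct, concrete calculation. For the adjunction $L \dashv R$, the goal is to exhibit a natural bijection $\Hom_{\FinGraphGamma}(La, x) \cong \Hom_{\FinSet}(a, Rx)$. To construct it, observe that a morphism $La \to x$ over $\Gamma$ is a graph morphism whose action on nodes must send each element of $a$---viewed as a white node of $La$---to a node of $x$ lying over the white node of $\Gamma$, that is, to an element of $Rx$. Since $La$ has no non-reflexive edges, no further data is required to specify the morphism. Conversely, every function $a \to Rx$ extends uniquely to a morphism $La \to x$ in $\FinGraphGamma$ by interpreting its values as white nodes of $x$. These two assignments are manifestly mutually inverse, and naturality in $a$ and $x$ is immediate because both constructions are defined by pre- and post-composition of functions.

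For pullback preservation, the key structural observation is that the forgetful functor $\FinGraphGamma \to \FinGraph$ creates all limits, since slice categories inherit limits from their base. Hence it suffices to show that $L$, regarded as a functor $\FinSet \to \FinGraph$, preserves pullbacks, after which the image lands in the fiber over the white node and so defines a pullback in $\FinGraphGamma$ as well. Now given a cospan $a \to c \gets b$ in $\FinSet$, apply $L$ to obtain a cospan of edgeless graphs in $\FinGraph$. Since pullbacks in $\FinGraph$ are computed componentwise on nodes and edges, the pullback $La \times_{Lc} Lb$ has edge set equal to the pullback of empty edge sets (hence empty) and node set equal to the set-theoretic pullback $a \times_c b$. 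This agrees on the nose with $L(a \times_c b)$, and the comparison arrow is the identity.

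The only real subtlety is bookkeeping: one must confirm that limits in the slice $\FinGraph \downarrow \Gamma$ are computed as in $\FinGraph$ (standard), and that the image of $L$ always sits over the white node of $\Gamma$ so that the pullback in $\FinGraph$ automatically inherits the structure map to $\Gamma$ needed to qualify as a pullback in $\FinGraphGamma$. Neither point is deep, and together they complete the verification.
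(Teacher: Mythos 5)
Your proof is correct and takes essentially the same approach as the paper's: both arguments turn on the observation that a morphism $La \to x$ over $\Gamma$ is determined by a node function landing in the white nodes of $x$, and that the pullback of edgeless graphs constant over white is again edgeless and constant over white, hence $L$ applied to the set-theoretic pullback. You package the adjunction as a direct hom-set bijection $\Hom_{\FinGraphGamma}(La,x) \iso \Hom_{\FinSet}(a,Rx)$ rather than exhibiting the unit (which the paper notes is the identity, since $RL = \id$) and counit (the white-node inclusion), and you spell out the supporting facts about slice-category limits and componentwise pullbacks in $\FinGraph$ that the paper leaves implicit, but the underlying reasoning is the same.
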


\begin{proof}
  Observe that the composite $ RL $ is the
  identity functor. So the unit
  $ \eta \from a \to RLa $ is the identity which
  is natural in $ a $. The
  counit $ \epsilon \from LRx \to x $ is the
  inclusion of the white nodes of $ x $ into
  $ x $. Given an arrow $ f \from x \to y $ in $
  \FinGraphGamma $, the diagram
  \[
    \begin{tikzpicture}
      \node (LRx) at (0,2) {$ LRx $};
      \node (x) at (2,2) {$ x $};
      \node (LRy) at (0,0) {$ LRy $};
      \node (y) at (2,0) {$ y $};
      \draw[cd]
        (LRx) edge node[above]{$ \epsilon_x $} (x)
        (LRy) edge node[below]{$ \epsilon_y $} (y)
        (LRx) edge node[left]{$ LRf $}         (LRy)
        (x)   edge node[right]{$ f $}          (y);
    \end{tikzpicture}
  \]
  commutes since $ LRf $ is a restriction of $ f $. To show
  that $ L $ preserves pullbacks, take a cospan
  \[ a \to b \gets c \] in $ \Set $ with pullback
  $ a \times_b c $ and apply $ L $ to get the diagram
  \[
    \begin{tikzpicture}
      \node (La) at (-2,0) {$ La $};
      \node (Lb) at (0,0) {$ Lb $};
      \node (Lc) at (0,2) {$ Lc $};
      \node (Gamma) at (2,-2) {$ \Gamma $};
      \path [cd]
        (La) edge (Lb)
        (Lc) edge (Lb)
        (La.-90) edge[bend right] (Gamma.180)
        (Lb) edge (Gamma)
        (Lc.0) edge[bend left] (Gamma.90); 
    \end{tikzpicture}
  \]
  comprised of edgeless graphs $ La $, $ Lb $, and $ Lc $
  that are constant over the white node in $ \Gamma $.  The
  pullback of this diagram is
  $ La \times_{Lb} Lc \to \Gamma $ which is constant over
  the white node. This is isomorphic to $ L(a \times_b c)
  \to \Gamma $ which is constant over the white node.
\end{proof}

With our adjunction established, we can define structured
cospans of graphs over $ \Gamma $ and therefore also the
symmetric monoidal double category of bold rewrites
$ _L \BBBoldRewrite $ for the functor
$ L \from \FinSet \to \FinGraphGamma $ defined above.  This
double category has as objects the finite sets, as
horizontal 1-arrows the structured cospans of graphs over
$ \Gamma $, as vertical 1-arrows the spans of finite sets
with invertible legs, and as squares all possible
bold rewrites of structured cospans.  Clearly,
$ _L \BBBoldRewrite $ is far bigger than the ZX-calculus
because it contains graphs over $ \Gamma $ with no
corresponding ZX-diagram. This does not mean, however, that
$ _L \BBBoldRewrite $ serves no purpose.  It plays the role
of an ambient space in which we chisel out a sub-double
category that \emph{does} correspond to the ZX-calculus.

To begin the process of constructing this sub-double
category of $ _L \BBBoldRewrite $, we identify structured
cospans to capture the basic ZX-diagrams and identify bold
rewrites of structured cospans for the basic
ZX-relations. We also include some additional structured
cospans to give the desired structure.  Figure
\ref{fig:basic-diagrams-as-str-csps}
depicts the basic ZX-diagrams as structured
cospans.

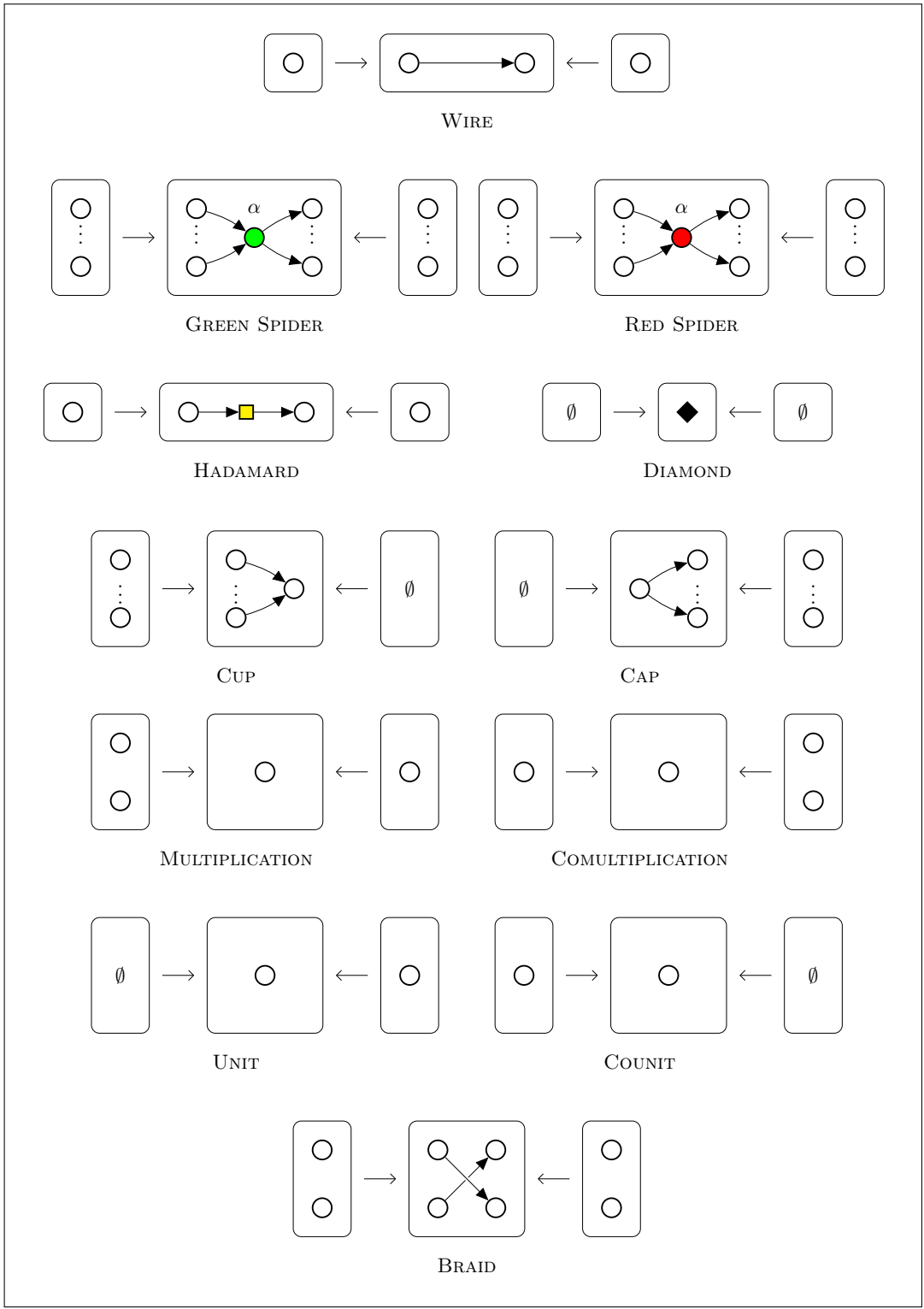
\begin{figure}
  \fbox{
    \scalebox{0.9}{
    \begin{minipage}{\textwidth}
      \centering
      \begin{minipage}{\textwidth}
        \centering
        \[
          \begin{tikzpicture}
            \begin{scope}
              \node [zxwhite] (a) at (0,0) {};
              \draw [rounded corners]
                (-0.5,-0.5) rectangle (0.5,0.5);
              \node (l) at (0.6,0) {};
            \end{scope}
            \begin{scope}[shift={(2,0)}]
              \node [zxwhite] (a) at (0,0) {};
              \node [zxwhite] (b) at (2,0) {};
              \draw [graph]
                (a) to (b);
              \draw [rounded corners]
                (-0.5,-0.5) rectangle (2.5,0.5);
              \node (cr) at (2.6,0) {};
              \node (cl) at (-0.6,0) {};  
              \node () at (1,-1) {\textsc{Wire}};  
            \end{scope}
            \begin{scope}[shift={(6,0)}]
              \node [zxwhite] (a) at (0,0) {};
              \draw [rounded corners]
                (-0.5,-0.5) rectangle (0.5,0.5);
              \node (r) at (-0.6,0) {};
            \end{scope}
            \draw[cd]
              (l) edge (cl)
              (r) edge (cr);
          \end{tikzpicture}
        \]
      \end{minipage}%
      \vspace{1em} 
      \linebreak
      \begin{minipage}{0.40\linewidth}
        \[
          \begin{tikzpicture}
            \begin{scope}
              \node [zxwhite] (a) at (0,0)    {};
              \node           ( ) at (0,0.66) {$ \vdots $};
              \node [zxwhite] (b) at (0,1)    {};
              \draw [rounded corners]
                (-0.5,-0.5) rectangle (0.5,1.5);
              \node (l) at (0.6,0.5) {};
            \end{scope}
            \begin{scope}[shift={(2,0)}]
              \node [zxwhite] (a) at (0,0)   {$ $};
              \node           ( ) at (0,0.66) {$ \vdots $};
              \node [zxwhite] (b) at (0,1)   {$  $};
              \node [zxgreen] (c) at (1,0.5) {$ $};
              \node           ( ) at (1,1) {$\alpha$};
              \node [zxwhite] (d) at (2,0)   {$ $};
              \node           ( ) at (2,0.66) {$ \vdots $};
              \node [zxwhite] (e) at (2,1)   {$  $};
              \draw[graph]
                (a) edge[bend right=10] (c)
                (b) edge[bend left=10]  (c)
                (c) edge[bend right=10] (d)
                (c) edge[bend left=10]  (e);
              \draw [rounded corners]
                (-0.5,-0.5) rectangle (2.5,1.5);  
              \node (cl) at (-0.6,0.5) {};
              \node (cr) at (2.6,0.5)  {};
              \node () at (1,-1) {\textsc{Green Spider}};  
            \end{scope}
            \begin{scope}[shift={(6,0)}]
              \node [zxwhite] (a) at (0,0)    {};
              \node           ( ) at (0,0.66) {$ \vdots $};
              \node [zxwhite] (b) at (0,1)    {};
              \draw [rounded corners]
                (-0.5,-0.5) rectangle (0.5,1.5);
              \node (r) at (-0.6,0.5) {};
            \end{scope}
            \path[cd]
              (l) edge (cl)
              (r) edge (cr);
          \end{tikzpicture}
        \]
      \end{minipage}
      \hspace{3em}
      \begin{minipage}{0.45\linewidth}
        \[
          \begin{tikzpicture}
            \begin{scope}
              \node [zxwhite] (a) at (0,0)    {};
              \node           ( ) at (0,0.66) {$ \vdots $};
              \node [zxwhite] (b) at (0,1)    {};
              \draw [rounded corners]
                (-0.5,-0.5) rectangle (0.5,1.5);
              \node (l) at (0.6,0.5) {};
            \end{scope}
            \begin{scope}[shift={(2,0)}]
              \node [zxwhite] (a) at (0,0)   {$ $};
              \node           ( ) at (0,0.66) {$ \vdots $};
              \node [zxwhite] (b) at (0,1)   {$  $};
              \node [zxred] (c) at (1,0.5) {$ $};
              \node           ( ) at (1,1) {$\alpha$};
              \node [zxwhite] (d) at (2,0)   {$ $};
              \node           ( ) at (2,0.66) {$ \vdots $};
              \node [zxwhite] (e) at (2,1)   {$  $};
              \draw[graph]
                (a) edge[bend right=10] (c)
                (b) edge[bend left=10]  (c)
                (c) edge[bend right=10] (d)
                (c) edge[bend left=10]  (e);
              \draw [rounded corners]
                (-0.5,-0.5) rectangle (2.5,1.5);  
              \node (cl) at (-0.6,0.5) {};
              \node (cr) at (2.6,0.5)  {};
              \node () at (1,-1) {\textsc{Red Spider}};
            \end{scope}
            \begin{scope}[shift={(6,0)}]
              \node [zxwhite] (a) at (0,0)    {};
              \node           ( ) at (0,0.66) {$ \vdots $};
              \node [zxwhite] (b) at (0,1)    {};
              \draw [rounded corners]
                (-0.5,-0.5) rectangle (0.5,1.5);
              \node (r) at (-0.6,0.5) {};
            \end{scope}
            \path[cd]
              (l) edge (cl)
              (r) edge (cr);
          \end{tikzpicture}
        \]
      \end{minipage}
      \vspace{1em} 
      \linebreak
      \begin{minipage}{0.5\linewidth}
        \centering
        \[
          \begin{tikzpicture}
            \begin{scope}
              \node [zxwhite] (a) at (0,0) {};
              \draw [rounded corners]
                (-0.5,-0.5) rectangle (0.5,0.5);
              \node (l) at (0.6,0) {};
            \end{scope}
            \begin{scope}[shift={(2,0)}]
              \node [zxwhite]  (a) at (0,0) {$$};
              \node [zxyellow] (b) at (1,0) {$$};
              \node [zxwhite]  (c) at (2,0) {$$};
              \draw [graph]
                (a) edge (b)
                (b) edge (c);
              \draw [rounded corners]
                (-0.5,-0.5) rectangle (2.5,0.5);
              \node (cl) at (-0.6,0) {};
              \node (cr) at (2.6,0)  {};
              \node () at (1,-1) {\textsc{Hadamard}};
            \end{scope}
            \begin{scope}[shift={(6,0)}]
              \node [zxwhite] (a) at (0,0) {};
              \draw [rounded corners]
                (-0.5,-0.5) rectangle (0.5,0.5);
              \node (r) at (-0.6,0) {};
            \end{scope}
            \path[cd]
              (l) edge (cl)
              (r) edge (cr);
          \end{tikzpicture}
        \]         
      \end{minipage}%
      \begin{minipage}{0.5\linewidth}
        \centering
        \[
          \begin{tikzpicture}
            \begin{scope}[shift={(0,0)}]
              \node () at (0,0) {$ \emptyset $};
              \draw [rounded corners]
                (-0.5,-0.5) rectangle (0.5,0.5);
              \node (l) at (0.6,0) {};
            \end{scope}
            \begin{scope}[shift={(2,0)}]
              \node [zxblack] () at (0,0) {$$};
              \draw [rounded corners]
                (-0.5,-0.5) rectangle (0.5,0.5);
              \node (cl) at (-0.6,0) {};
              \node (cr) at (0.6,0) {};
              \node () at (0,-1) {\textsc{Diamond}};
            \end{scope}
            \begin{scope}[shift={(4,0)}]
              \node (a) at (0,0) {$ \emptyset $};
              \draw [rounded corners]
                (-0.5,-0.5) rectangle (0.5,0.5);
              \node (r) at (-0.6,0) {};
            \end{scope}
            \draw[cd]
              (l) edge (cl)
              (r) edge (cr);
         \end{tikzpicture}            
        \]
      \end{minipage}
      \vspace{1em} 
      \linebreak
      \begin{minipage}{0.45\linewidth}
        \centering
        \[
          \begin{tikzpicture} 
            \begin{scope} 
              \node [zxwhite] (b) at (0,1)    {$  $};
              \node           ( ) at (0,0.5) {$\vdots$};
              \node [zxwhite] (d) at (0,0)    {$  $};
              \node (0r) at (0.6,0.5) {$  $};
              \draw [rounded corners]
                (-0.5,-0.5) rectangle (0.5,1.5);      
            \end{scope}
            \begin{scope}[shift={(2,0)}] 
              \node [zxwhite] (a) at (0,1)   {$  $};
              \node           ( ) at (0,0.5) {$\vdots$};
              \node [zxwhite] (b) at (0,0)   {$  $};
              \node [zxwhite] (c) at (1,0.5) {$  $};
              \draw [graph]
                (a) edge[bend left=10]  (c)
                (b) edge[bend right=10] (c);
              \node (1l) at (-0.6,0.5) {$  $};
              \node (1r) at (1.6,0.5)  {$  $};
              \node () at (0,-1) {\textsc{Cup}};
              \draw [rounded corners]
              (-0.5,-0.5) rectangle (1.5,1.5);    
            \end{scope}
            \begin{scope}[shift={(5,0)}] 
              \node (a) at (0,0.5) {$ \emptyset $};
              \node (2l) at (-0.6,0.5) {$  $};
              \draw [rounded corners]
              (-0.5,-0.5) rectangle (0.5,1.5);      
            \end{scope}
            \draw[cd] 
            (0r) edge (1l)
            (2l) edge (1r);
          \end{tikzpicture}            
        \]
      \end{minipage}
      \begin{minipage}{0.45\linewidth} 
        \centering
        \[
          \begin{tikzpicture}
            \begin{scope} 
              \node (a) at (0,0.5) {$ \emptyset $};
              \node (0r) at (0.6,0.5) {$  $};
              \draw [rounded corners]
                (-0.5,-0.5) rectangle (0.5,1.5);      
            \end{scope}
            \begin{scope}[shift={(2,0)}] 
              \node [zxwhite] (a) at (1,1)   {$  $};
              \node           ( ) at (1,0.5) {$\vdots$};
              \node [zxwhite] (b) at (1,0)   {$  $};
              \node [zxwhite] (c) at (0,0.5) {$  $};
              \draw [graph]
                (c) edge [bend left=10]  (a)
                (c) edge [bend right=10] (b);
              \node (1l) at (-0.6,0.5) {$  $};
              \node (1r) at (1.6,0.5)  {$  $};
              \node () at (0,-1) {\textsc{Cap}};
              \draw [rounded corners]
                (-0.5,-0.5) rectangle (1.5,1.5); 
            \end{scope}
            \begin{scope}[shift={(5,0)}] 
              \node [zxwhite] (b) at (0,1)    {$  $};
              \node           ( ) at (0,0.5) {$\vdots$};
              \node [zxwhite] (d) at (0,0)    {$  $};
              \node (2l) at (-0.6,0.5) {$  $};
              \draw [rounded corners]
                (-0.5,-0.5) rectangle (0.5,1.5);      
            \end{scope}
            \path[cd]
              (0r) edge (1l)
              (2l) edge (1r);
          \end{tikzpicture}            
        \]
      \end{minipage}
      \linebreak
      \vspace{1em}
      \begin{minipage}{0.45\linewidth}
        \centering
        \[
          \begin{tikzpicture} 
            \begin{scope} 
              \node [zxwhite] () at (0,1) {$ $};
              \node [zxwhite] () at (0,0) {$ $};
              \node (0r) at (0.6,0.5) {$  $};
              \draw [rounded corners]
                (-0.5,-0.5) rectangle (0.5,1.5);      
            \end{scope}
            \begin{scope}[shift={(2,0)}] 
              \node [zxwhite] (b) at (0.5,0.5) {$  $};
              \node (1l) at (-0.6,0.5) {$  $};
              \node (1r) at (1.6,0.5)  {$  $};
              \node () at (0,-1) {\textsc{Multiplication}};
              \draw [rounded corners]
              (-0.5,-0.5) rectangle (1.5,1.5);    
            \end{scope}
            \begin{scope}[shift={(5,0)}] 
              \node [zxwhite] (a) at (0,0.5) {};
              \node (2l) at (-0.6,0.5) {};
              \draw [rounded corners]
              (-0.5,-0.5) rectangle (0.5,1.5);      
            \end{scope}
            \draw[cd] 
              (0r) edge (1l)
              (2l) edge (1r);
          \end{tikzpicture}            
        \]
      \end{minipage}
      \begin{minipage}{0.45\linewidth} 
        \centering
        \[
          \begin{tikzpicture}
            \begin{scope} 
              \node [zxwhite] () at (0,0.5) {};
              \node (0r) at (0.6,0.5) {};
              \draw [rounded corners]
                (-0.5,-0.5) rectangle (0.5,1.5);      
            \end{scope}
            \begin{scope}[shift={(2,0)}] 
              \node [zxwhite] (a) at (0.5,0.5) {};
              \node (1l) at (-0.6,0.5) {$  $};
              \node (1r) at (1.6,0.5)  {$  $};
              \node () at (0,-1) {\textsc{Comultiplication}};
              \draw [rounded corners]
                (-0.5,-0.5) rectangle (1.5,1.5); 
            \end{scope}
            \begin{scope}[shift={(5,0)}] 
              \node [zxwhite] () at (0,1)    {$  $};
              \node [zxwhite] () at (0,0)    {$  $};
              \node (2l) at (-0.6,0.5) {$  $};
              \draw [rounded corners]
                (-0.5,-0.5) rectangle (0.5,1.5);      
            \end{scope}
            \path[cd]
              (0r) edge (1l)
              (2l) edge (1r);
          \end{tikzpicture}            
        \]
      \end{minipage}
      \linebreak
      \vspace{1em}
      \begin{minipage}{0.45\linewidth}
        \centering
        \[
          \begin{tikzpicture} 
            \begin{scope} 
              \node () at (0,0.5) {$ \emptyset $};
              \node (0r) at (0.6,0.5) {$  $};
              \draw [rounded corners]
                (-0.5,-0.5) rectangle (0.5,1.5);      
            \end{scope}
            \begin{scope}[shift={(2,0)}] 
              \node [zxwhite] (b) at (0.5,0.5) {$  $};
              \node (1l) at (-0.6,0.5) {$  $};
              \node (1r) at (1.6,0.5)  {$  $};
              \node () at (0,-1) {\textsc{Unit}};
              \draw [rounded corners]
              (-0.5,-0.5) rectangle (1.5,1.5);    
            \end{scope}
            \begin{scope}[shift={(5,0)}] 
              \node [zxwhite] (a) at (0,0.5) {};
              \node (2l) at (-0.6,0.5) {};
              \draw [rounded corners]
              (-0.5,-0.5) rectangle (0.5,1.5);      
            \end{scope}
            \draw[cd] 
              (0r) edge (1l)
              (2l) edge (1r);
          \end{tikzpicture}            
        \]
      \end{minipage}
      \begin{minipage}{0.45\linewidth} 
        \centering
        \[
          \begin{tikzpicture}
            \begin{scope} 
              \node [zxwhite] () at (0,0.5) {};
              \node (0r) at (0.6,0.5) {};
              \draw [rounded corners]
                (-0.5,-0.5) rectangle (0.5,1.5);      
            \end{scope}
            \begin{scope}[shift={(2,0)}] 
              \node [zxwhite] (a) at (0.5,0.5) {};
              \node (1l) at (-0.6,0.5) {$  $};
              \node (1r) at (1.6,0.5)  {$  $};
              \node () at (0,-1) {\textsc{Counit}};
              \draw [rounded corners]
                (-0.5,-0.5) rectangle (1.5,1.5); 
            \end{scope}
            \begin{scope}[shift={(5,0)}] 
              \node () at (0,0.5) {$ \emptyset $};
              \node (2l) at (-0.6,0.5) {$  $};
              \draw [rounded corners]
                (-0.5,-0.5) rectangle (0.5,1.5);      
            \end{scope}
            \path[cd]
              (0r) edge (1l)
              (2l) edge (1r);
          \end{tikzpicture}            
        \]
      \end{minipage}
      \linebreak
      \vspace{1em}
      \begin{minipage}{1.0\linewidth}
        \centering
        \[
          \begin{tikzpicture}
            \begin{scope}[shift={(0,0)}]
              \node [zxwhite] () at (0,0) {};
              \node [zxwhite] () at (0,1) {};
              \draw [rounded corners]
                (-0.5,-0.5) rectangle (0.5,1.5);
              \node (l) at (0.6,0.5) {};
            \end{scope}
            \begin{scope}[shift={(2,0)}]
              \node [zxwhite] (00) at (0,0) {};
              \node [zxwhite] (01) at (0,1) {};
              \node [zxwhite] (10) at (1,0) {};
              \node [zxwhite] (11) at (1,1) {};
              \draw [graph]
                (00) edge (11)
                (01) edge[draw=white,line width=2pt] (10)
                (01) edge (10);
              \draw [rounded corners]
                (-0.5,-0.5) rectangle (1.5,1.5);
              \node (cl) at (-0.6,0.5) {};
              \node (cr) at (1.6,0.5) {};
              \node () at (0.5,-1) {\textsc{Braid}};
            \end{scope}
            \begin{scope}[shift={(5,0)}]
              \node [zxwhite](a) at (0,0) {};
              \node [zxwhite](a) at (0,1) {};
              \draw [rounded corners]
                (-0.5,-0.5) rectangle (0.5,1.5);
              \node (r) at (-0.6,0.5) {};
            \end{scope}
            \draw[cd]
              (l) edge (cl)
              (r) edge (cr);
         \end{tikzpicture}            
        \]
      \end{minipage}
    \end{minipage}
  }}
  \caption{Basic ZX-diagrams as structured cospans}
  \label{fig:basic-diagrams-as-str-csps}
\end{figure}

Translating the relations between ZX-diagrams to
structured cospans is quite straightforward.  We
provide several examples.

\[
  \begin{tikzpicture}
    \node () at (-3,-2.5) {\textsc{Spider Relations}};
    \begin{scope}[shift={(0,0)}]
      \node [zxwhite] at (0,3) {};
      \node        () at (0,2.5) {$ \vdots $};
      \node [zxwhite] at (0,2) {};
      \node [zxwhite] at (0,1) {};
      \node        () at (0,0.5) {$ \vdots $};
      \node [zxwhite] at (0,0) {};
      \node (00b) at (0,-0.6)  {};
      \node (00r) at (0.6,1.5) {};
      \draw [rounded corners]
        (-0.5,-0.5) rectangle (0.5,3.5);
    \end{scope}
    \begin{scope}[shift={(2,0)}] 
      \node [zxwhite] (i3) at (0,3)   {};
      \node           ()   at (0,2.5) {$\vdots$};
      \node [zxwhite] (i2) at (0,2)   {};
      \node [zxwhite] (i1) at (0,1)   {};
      \node           ()   at (0,0.5) {$\vdots$};
      \node [zxwhite] (i0) at (0,0)   {};
      \node [zxgreen] (m1) at (1,2.5) {};
      \node [zxwhite] (w0) at (0.5,1.5) {};
      \node [zxwhite] (w1) at (1.5,1.5) {};
      \node [zxgreen] (m0) at (1,0.5) {};
      \node [zxwhite] (o3) at (2,3)   {};
      \node           ()   at (2,2.5) {$\vdots$};
      \node [zxwhite] (o2) at (2,2)   {};
      \node [zxwhite] (o1) at (2,1)   {};
      \node           ()   at (2,0.5) {$\vdots$};
      \node [zxwhite] (o0) at (2,0)   {};
      \node           ()   at (1,1.5) {$\cdots$};
      \draw [graph]
        (i0) edge[bend right=10] (m0)
        (i1) edge[bend left=10]  (m0)
        (i2) edge[bend right=10] (m1)
        (i3) edge[bend left=10]  (m1)
        (m0) edge[bend right=10] (w1)
        (w1) edge[bend right=10] (m1)
        (m0) edge[bend left=10]  (w0)
        (w0) edge[bend left=10]  (m1)
        (m0) edge[bend right=10] (o0)
        (m0) edge[bend left=10]  (o1)
        (m1) edge[bend right=10] (o2)
        (m1) edge[bend left=10]  (o3);
      \draw [rounded corners]
        (-0.5,-0.5) rectangle (2.5,3.5);
      \node (01l) at (-0.6,1.5) {};
      \node (01r) at (2.6,1.5)  {};
      \node (01b) at (1,-0.6)   {};
    \end{scope}
    \begin{scope}[shift={(6,0)}] 
      \node [zxwhite] at (0,3)   {};
      \node        () at (0,2.5) {$\vdots$};
      \node [zxwhite] at (0,2)   {};
      \node [zxwhite] at (0,1)   {};
      \node        () at (0,0.5) {$\vdots$};
      \node [zxwhite] at (0,0)   {};
      \draw [rounded corners]
        (-0.5,-0.5) rectangle (0.5,3.5);
      \node (02l) at (-0.6,1.5) {};
      \node (02b) at (0,-0.6)   {};
    \end{scope}
    \begin{scope}[shift={(0,-3)}] 
      \node [zxwhite] () at (0,1)   {};
      \node           () at (0,0.5) {$\vdots$};
      \node [zxwhite] () at (0,0)   {};
      \draw [rounded corners]
        (-0.5,-0.5) rectangle (0.5,1.5);
      \node (10t) at (0,1.6)   {};
      \node (10r) at (0.6,0.5) {};
      \node (10b) at (0,-0.6)  {};
    \end{scope}
    \begin{scope}[shift={(2,-3)}] 
      \node [zxwhite] (i1) at (0,1)   {};
      \node           ()   at (0,0.5) {$\vdots$};
      \node [zxwhite] (i0) at (0,0)   {};
      \node [zxwhite] (o1) at (2,1)   {};
      \node           ()   at (2,0.5) {$\vdots$};
      \node [zxwhite] (o0) at (2,0)   {};
      \draw [rounded corners]
        (-0.5,-0.5) rectangle (2.5,1.5);
      \node (11t) at (1,1.6)    {};
      \node (11l) at (-0.6,0.5) {};
      \node (11r) at (2.6,0.5)  {};
      \node (11b) at (1,-0.6)   {};
    \end{scope}
    \begin{scope}[shift={(6,-3)}] 
      \node [zxwhite] () at (0,1)   {};
      \node           () at (0,0.5) {$\vdots$};
      \node [zxwhite] () at (0,0)   {};
      \draw [rounded corners]
        (-0.5,-0.5) rectangle (0.5,1.5);
      \node (12t) at (0,1.6)    {};
      \node (12l) at (-0.6,0.5) {};
      \node (12b) at (0,-0.6)   {};
    \end{scope}
    \begin{scope}[shift={(0,-6)}] 
      \node [zxwhite] () at (0,1)   {};
      \node           () at (0,0.5) {$\vdots$};
      \node [zxwhite] () at (0,0)   {};
      \draw [rounded corners]
        (-0.5,-0.5) rectangle (0.5,1.5);
      \node (20t) at (0,1.6)   {};
      \node (20r) at (0.6,0.5) {};
      \node (20b) at (0,-0.6)  {};
    \end{scope}
    \begin{scope}[shift={(2,-6)}] 
      \node [zxwhite] (i1) at (0,1)   {};
      \node           ()   at (0,0.5) {$\vdots$};
      \node [zxwhite] (i0) at (0,0)   {};
      \node [zxgreen] (m0) at (1,0.5) {};
      \node [zxwhite] (o1) at (2,1)   {};
      \node           ()   at (2,0.5) {$\vdots$};
      \node [zxwhite] (o0) at (2,0)   {};
      \draw[graph]
        (i0) edge[bend right=10] (m0)
        (i1) edge[bend left=10]  (m0)
        (m0) edge[bend right=10] (o0)
        (m0) edge[bend left=10]  (o1);
      \draw [rounded corners]
        (-0.5,-0.5) rectangle (2.5,1.5);
      \node (21t) at (1,1.6)  {};
      \node (21l) at (-0.6,0.5) {};
      \node (21r) at (2.6,0.5)  {};
    \end{scope}
    \begin{scope}[shift={(6,-6)}] 
      \node [zxwhite] at (0,1) {};
      \node [zxwhite] at (0,0) {};
      \draw [rounded corners]
        (-0.5,-0.5) rectangle (0.5,1.5);
      \node (22l) at (-0.6,0.5) {};
      \node (22t) at (0,1.6)  {};
    \end{scope}
    \draw[cd]
      (00r) edge (01l)
      (02l) edge (01r)
      (10r) edge (11l)
      (12l) edge (11r)
      (20r) edge (21l)
      (22l) edge (21r)
      (10t) edge (00b)
      (10b) edge (20t)
      (11t) edge (01b)
      (11b) edge (21t)
      (12t) edge (02b)
      (12b) edge (22t);
  \end{tikzpicture}
\]
\[
  \begin{tikzpicture}
    \node () at (-3,-2.5) {\textsc{Cup Relation}};
    \begin{scope}[shift={(0,0)}] 
      \node [zxwhite] () at (0,0) {};
      \node [zxwhite] () at (0,1) {};
      \draw [rounded corners]
        (-0.5,-0.5) rectangle (0.5,1.5);
      \node (00r) at (0.6,0.5) {};
      \node (00b) at (0,-0.6) {};  
    \end{scope}
    \begin{scope}[shift={(2,0)}] 
      \node [zxwhite] (a) at (0,0) {};
      \node [zxwhite] (b) at (0,1) {};
      \node [zxgreen] (c) at (1,0.5) {};
      \draw [graph]
        (a) edge[bend right=10] (c)
        (b) edge[bend left=10]  (c);
      \draw [rounded corners]
        (-0.5,-0.5) rectangle (1.5,1.5);
      \node (01l) at (-0.6,0.5) {};
      \node (01r) at (1.6,0.5)  {};
      \node (01b) at (0.5,-0.6) {};  
    \end{scope}
    \begin{scope}[shift={(5,0)}] 
      \node () at (0,0.5) {$ 0 $};
      \draw [rounded corners]
        (-0.5,-0.5) rectangle (0.5,1.5);
      \node (02l) at (-0.6,0.5) {};
      \node (02b) at (0,-0.6) {};
    \end{scope}
    \begin{scope}[shift={(0,-3)}] 
      \node [zxwhite] () at (0,0) {};
      \node [zxwhite] () at (0,1) {};
      \draw [rounded corners]
        (-0.5,-0.5) rectangle (0.5,1.5);
      \node (10t) at (0,1.6)   {};
      \node (10r) at (0.6,0.5) {};
      \node (10b) at (0,-0.6)  {};  
    \end{scope}
    \begin{scope}[shift={(2,-3)}] 
      \node [zxwhite] (a) at (0,0) {};
      \node [zxwhite] (b) at (0,1) {};
      \draw [rounded corners]
        (-0.5,-0.5) rectangle (1.5,1.5);
      \node (11l) at (-0.6,0.5) {};
      \node (11r) at (1.6,0.5)  {};
      \node (11t) at (0.5,1.6)  {};
      \node (11b) at (0.5,-0.6) {};  
    \end{scope}
    \begin{scope}[shift={(5,-3)}] 
      \node () at (0,0.5) {$ 0 $};
      \draw [rounded corners]
        (-0.5,-0.5) rectangle (0.5,1.5);
      \node (12l) at (-0.6,0.5) {};
      \node (12t) at (0,1.6)    {};  
      \node (12b) at (0,-0.6)   {};
    \end{scope}
    \begin{scope}[shift={(0,-6)}] 
      \node [zxwhite] () at (0,0) {};
      \node [zxwhite] () at (0,1) {};
      \draw [rounded corners]
        (-0.5,-0.5) rectangle (0.5,1.5);
      \node (20t) at (0,1.6)   {};
      \node (20r) at (0.6,0.5) {};
      \node (20b) at (0,-0.6)  {};  
    \end{scope}
    \begin{scope}[shift={(2,-6)}] 
      \node [zxwhite] (a) at (0,0) {};
      \node [zxwhite] (b) at (0,1) {};
      \node [zxwhite] (c) at (1,0.5) {};
      \draw[graph]
        (a) edge[bend right=10] (c)
        (b) edge[bend left=10]  (c);
      \draw [rounded corners]
        (-0.5,-0.5) rectangle (1.5,1.5);
      \node (21l) at (-0.6,0.5) {};
      \node (21r) at (1.6,0.5)  {};
      \node (21t) at (0.5,1.6)  {};
      \node (21b) at (0.5,-0.6) {};  
    \end{scope}
    \begin{scope}[shift={(5,-6)}] 
      \node () at (0,0.5) {$ 0 $};
      \draw [rounded corners]
        (-0.5,-0.5) rectangle (0.5,1.5);
      \node (22l) at (-0.6,0.5) {};
      \node (22t) at (0,1.6)    {};  
      \node (22b) at (0,-0.6)   {};
    \end{scope}
    \path[cd]
      (00r) edge (01l)
      (02l) edge (01r)
      (10r) edge (11l)
      (12l) edge (11r)
      (20r) edge (21l)
      (22l) edge (21r)
      (00b) edge (10t)
      (20t) edge (10b)
      (01b) edge (11t)
      (21t) edge (11b)
      (02b) edge (12t)
      (22t) edge (12b);
  \end{tikzpicture}
\]

The remaining relations from Figure
\ref{fig:zx-equations} can be translated into
spans of structured cospans in this way.  We include an additional rewrite
\[
  \begin{tikzpicture}
    \node () at (-3,-2) {\textsc{Wire Relation}};
    \begin{scope}[shift={(0,0)}] 
      \node [zxwhite] () at (0,0) {};
      \draw [rounded corners]
        (-0.5,-0.5) rectangle (0.5,0.5);
      \node (00r) at (0.6,0) {};
      \node (00b) at (0,-0.6) {};  
    \end{scope}
    \begin{scope}[shift={(2,0)}] 
      \node [zxwhite] (a) at (0,0) {};
      \node [zxwhite] (b) at (1,0) {};
      \draw [graph] (a) to (b);
      \draw [rounded corners]
        (-0.5,-0.5) rectangle (1.5,0.5);
      \node (01l) at (-0.6,0) {};
      \node (01r) at (1.6,0)  {};
      \node (01b) at (0.5,-0.6) {};  
    \end{scope}
    \begin{scope}[shift={(5,0)}] 
      \node [zxwhite] () at (0,0) {};
      \draw [rounded corners]
        (-0.5,-0.5) rectangle (0.5,0.5);
      \node (02l) at (-0.6,0) {};
      \node (02b) at (0,-0.6) {};
    \end{scope}
    \begin{scope}[shift={(0,-2)}] 
      \node [zxwhite] () at (0,0) {};
      \draw [rounded corners]
        (-0.5,-0.5) rectangle (0.5,0.5);
      \node (10t) at (0,0.6)   {};
      \node (10r) at (0.6,0) {};
      \node (10b) at (0,-0.6)  {};  
    \end{scope}
    \begin{scope}[shift={(2,-2)}] 
      \node [zxwhite] (a) at (0,0) {};
      \node [zxwhite] (b) at (1,0) {};
      \draw [rounded corners]
        (-0.5,-0.5) rectangle (1.5,0.5);
      \node (11l) at (-0.6,0) {};
      \node (11r) at (1.6,0)  {};
      \node (11t) at (0.5,0.6)  {};
      \node (11b) at (0.5,-0.6) {};  
    \end{scope}
    \begin{scope}[shift={(5,-2)}] 
      \node [zxwhite] () at (0,0) {};
      \draw [rounded corners]
        (-0.5,-0.5) rectangle (0.5,0.5);
      \node (12l) at (-0.6,0) {};
      \node (12t) at (0,0.6)  {};  
      \node (12b) at (0,-0.6) {};
    \end{scope}
    \begin{scope}[shift={(0,-4)}] 
      \node [zxwhite] () at (0,0) {};
      \draw [rounded corners]
        (-0.5,-0.5) rectangle (0.5,0.5);
      \node (20t) at (0,0.6)   {};
      \node (20r) at (0.6,0) {};
    \end{scope}
    \begin{scope}[shift={(2,-4)}] 
      \node [zxwhite] (a) at (0.5,0) {};
      \draw [rounded corners]
        (-0.5,-0.5) rectangle (1.5,0.5);
      \node (21l) at (-0.6,0) {};
      \node (21r) at (1.6,0)  {};
      \node (21t) at (0.5,0.6)  {};
    \end{scope}
    \begin{scope}[shift={(5,-4)}] 
      \node [zxwhite] () at (0,0) {};
      \draw [rounded corners]
        (-0.5,-0.5) rectangle (0.5,0.5);
      \node (22l) at (-0.6,0) {};
      \node (22t) at (0,0.6)  {};  
    \end{scope}
    \draw[cd]
      (00r) edge node[]{$  $} (01l)
      (02l) edge node[]{$  $} (01r)
      (10r) edge node[]{$  $} (11l)
      (12l) edge node[]{$  $} (11r)
      (20r) edge node[]{$  $} (21l)
      (22l) edge node[]{$  $} (21r)
      (10t) edge[] (00b)
      (10b) edge[] (20t)
      (11t) edge[] (01b)
      (11b) edge[] (21t)
      (12t) edge[] (02b)
      (12b) edge[] (22t); 
  \end{tikzpicture}
\] 
to account for the fact that the wire structured
cospan in Figure
\ref{fig:basic-diagrams-as-str-csps}
is, a priori, not an identity. This wire relation
ensures that the wire structured cospan is an
identity.

We are now ready to define the double category
$ \ZZZX $.

\begin{definition}
  \label{def:zx-double-category}
  Let
  \[
    \adjunction{\FinSet}{\FinGraphGamma}{L}{R}{4}
  \]
  be the adjunction defined so that $ L $ assigns a set to
  the discrete graph that is constant over the white node on
  that set and where $ R $ returns the set of white nodes of
  a graph over $ \Gamma $.  Define $\ZZZX$ to be the
  isofibrant symmetric monoidal sub-double of
  $_L \BBBoldRewrite$ generated by the basic structured
  cospans and the basic rewrites for ZX-diagrams.
\end{definition}

In this definition, using $ _L \BBBoldRewrite $ as an
ambient double category ensures that generating $ \ZZZX $ is
well-defined. All of the required structure and properties
are in place and $ _L \BBBoldRewrite $ bounds the
generation. Now, because $ \ZZZX $ is an isofibrant
symmetric monoidal category---true by construction---we use
Shulman's work
\cite{shulman_contructing}
to provide the symmetric monoidal bicategory
$ \ZZX $.

\begin{proposition}
  There is a symmetric monoidal bicategory $ \ZZX
  $ whose objects are finite sets, 1-arrows are
  generated by the basic $ L $-structured cospans
  in Figure \ref{fig:basic-diagrams-as-str-csps},
  and 2-arrows are bold rewrites generated by the
  basic rewrites of ZX-diagrams.
\end{proposition}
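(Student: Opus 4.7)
The plan is to realize $\ZZX$ as the horizontal bicategory $\mathcal{H}(\ZZZX)$ of the double category built in Definition \ref{def:zx-double-category}, then invoke the construction already used to produce $_L\FFineRewrite$ and $_L\BBoldRewrite$ from their double categorical counterparts. Since $\ZZZX$ was defined to be an isofibrant symmetric monoidal sub-double category of $_L\BBBoldRewrite$, essentially all the work is done once we check that this construction applies and returns the advertised objects, 1-arrows, and 2-arrows.

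First I would recall, from Theorem \ref{thm:horz-bicat}, that any isofibrant symmetric monoidal double category gives rise to a symmetric monoidal horizontal bicategory whose objects and 1-arrows are inherited from the double category, and whose 2-arrows are the globular squares (squares whose vertical boundaries are identities, up to the equivalence induced by companions/conjoints). Since $\ZZZX$ is isofibrant and symmetric monoidal by construction, this theorem applies and produces a symmetric monoidal bicategory $\ZZX \bydef \mathcal{H}(\ZZZX)$. This immediately gives the required categorical structure.

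Next I would identify the generators. The objects of $\ZZX$ are by construction the objects of $\ZZZX$, which are finite sets. The 1-arrows are structured cospans, and by the definition of the sub-double category these are generated under horizontal composition and the monoidal structure by the basic $L$-structured cospans listed in Figure \ref{fig:basic-diagrams-as-str-csps}. The 2-arrows are globular bold rewrites of structured cospans; by construction, the generating squares of $\ZZZX$ are the basic ZX-rewrites (together with the wire relation), and these are already globular. Hence the 2-arrows of $\ZZX$ are generated under $\hcirc$, $\vcirc$, and $\otimes$ by exactly the basic rewrites, as claimed.

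The only mild technical point, and the main thing worth double-checking, is that in passing from the double category to its horizontal bicategory we do not pick up or lose generators. Picking up new generators is prevented because $\ZZZX$ is defined as the \emph{generated} sub-double category inside the ambient $_L\BBBoldRewrite$; losing generators is prevented by isofibrancy, which ensures that any information carried by non-trivial vertical isomorphisms (the permutations and renamings of interface types) is already encoded in horizontal 1-arrows via companions and conjoints (cf.\ Lemma \ref{lem:SpanCospanIsofibrant} and the remarks following Theorem \ref{thm:horz-bicat}). Thus $\ZZX$ has exactly the objects, 1-cells, and 2-cells stated, and inherits its symmetric monoidal structure from $\ZZZX$, completing the proof.
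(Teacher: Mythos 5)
Your proposal matches the paper's argument: the paper obtains $\ZZX$ by applying Theorem \ref{thm:horz-bicat} (Shulman's result on extracting a symmetric monoidal horizontal bicategory from an isofibrant symmetric monoidal double category) to $\ZZZX$, which Definition \ref{def:zx-double-category} constructs precisely so that isofibrancy and symmetric monoidality hold by fiat inside the ambient $_L\BBBoldRewrite$. Your additional bookkeeping about generators being preserved under passage to the horizontal bicategory is a sound and welcome elaboration of a point the paper leaves implicit, but it does not change the route.
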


The ZX-diagrams appear in $ \ZZZX $ as horizontal 1-arrows
and in $ \ZZX $ as 1-arrows. Composing the ZX-diagrams works
as it does in the original ZX-calculus; pushout formalizes
the gluing of dangling edges. Indeed, composing basic
diagrams provides `compound' diagrams. For example,
composing
\[
  \begin{tikzpicture}
    \begin{scope}
      \node [zxwhite] () at (0,0) {};
      \node [zxwhite] () at (0,1) {};
      \draw [rounded corners]
        (-0.5,-0.5) rectangle (0.5,1.5);
      \node (0r) at (0.6,0.5) {};
    \end{scope}
    \begin{scope}[shift={(2,1)}]
      \node [zxwhite]  (a) at (0,0)    {};
      \node [zxwhite]  (b) at (0,1)    {};
      \node [zxgreen]  (c) at (1,0.5)  {};
      \node            ()  at (1,1)    {$ \alpha $};
      \node [zxwhite]  (d) at (2,0.5)  {};
      \draw[graph]
        (a) edge[bend right=10] (c)
        (b) edge[bend left=10]  (c)
        (c) edge                (d);
      \draw [rounded corners]
        (-0.5,-0.5) rectangle (2.5,1.5);
      \node (1l) at (-0.6,0.5) {};
      \node (1r) at (2.6,0.5)  {};
    \end{scope}
    \begin{scope}[shift={(6,0)}]
      \node [zxwhite] () at (0,0.5) {};
      \draw [rounded corners]
        (-0.5,-0.5) rectangle (0.5,1.5);
      \node (2l) at (-0.6,0.5) {};
      \node (2r) at (0.6,0.5) {};
    \end{scope}
    \begin{scope}[shift={(8,1)}]
      \node [zxwhite] (a) at (0,0.5) {};
      \node [zxgreen] (b) at (1,0.5) {};
      \node           ( ) at (1,1)   {$\beta$};
      \node [zxwhite] (c) at (2,0.5) {};
      \draw [graph]
        (a) edge (b)
        (b) edge (c);
      \draw [rounded corners]
        (-0.5,-0.5) rectangle (2.5,1.5);
      \node (3l) at (-0.6,0.5) {};
      \node (3r) at (2.6,0.5)  {};
    \end{scope}
    \begin{scope}[shift={(12,0)}]
      \node [zxwhite] () at (0,0.5) {};
      \draw [rounded corners]
        (-0.5,-0.5) rectangle (0.5,1.5);
      \node (4l) at (-0.6,0.5) {};
    \end{scope}
    \draw[cd]
      (0r) edge node[]{$  $} (1l)
      (2l) edge node[]{$  $} (1r)
      (2r) edge node[]{$  $} (3l)
      (4l) edge node[]{$  $} (3r);
  \end{tikzpicture}
\]
gives
\[
  \begin{tikzpicture}
    \begin{scope}
      \node [zxwhite] () at (0,0) {};
      \node [zxwhite] () at (0,1) {};
      \draw [rounded corners]
        (-0.5,-0.5) rectangle (0.5,1.5);
      \node (0r) at (0.6,0.5) {};
    \end{scope}
    \begin{scope}[shift={(2,1)}]
      \node [zxwhite]  (a) at (0,0)    {};
      \node [zxwhite]  (b) at (0,1)    {};
      \node [zxgreen]  (c) at (1,0.5)  {};
      \node            ( ) at (1,1)    {$\alpha$};
      \node [zxwhite]  (d) at (2,0.5)  {};
      \node [zxgreen]  (e) at (3,0.5)  {};
      \node            ( ) at (3,1)    {$\beta$};
      \node [zxwhite]  (f) at (4,0.5)  {};
      \draw[graph]
        (a) edge[bend right=10] (c)
        (b) edge[bend left=10]  (c)
        (c) edge                (d)
        (d) edge                (e)
        (e) edge                (f);
      \draw [rounded corners]
        (-0.5,-0.5) rectangle (4.5,1.5);
      \node (1l) at (-0.6,0.5) {};
      \node (1r) at (4.6,0.5)  {};
    \end{scope}
    \begin{scope}[shift={(8,0)}]
      \node [zxwhite] () at (0,0.5) {};
      \draw [rounded corners]
        (-0.5,-0.5) rectangle (0.5,1.5);
      \node (2l) at (-0.6,0.5) {};
    \end{scope}
    \draw[cd]
    (0r) edge (1l)
    (2l) edge (1r);
  \end{tikzpicture}
\]
To this, we can apply the Spider Relation
\[
  \begin{tikzpicture}
    \begin{scope}[shift={(0,0)}] 
      \node [zxwhite] at (0,1) {};
      \node [zxwhite] at (0,0) {};
      \draw [rounded corners]
        (-0.5,-0.5) rectangle (0.5,1.5);
      \node (00b) at (0,-0.6)  {};
      \node (00r) at (0.6,0.5) {};
    \end{scope}
    \begin{scope}[shift={(2,0)}] 
      \node [zxwhite]  (a) at (0,0)    {};
      \node [zxwhite]  (b) at (0,1)    {};
      \node [zxgreen]  (c) at (1,0.5)  {};
      \node            ( ) at (1,1)    {$\alpha$};
      \node [zxwhite]  (d) at (2,0.5)  {};
      \node [zxgreen]  (e) at (3,0.5)  {};
      \node            ( ) at (3,1)    {$\beta$};
      \node [zxwhite]  (f) at (4,0.5)  {};
      \draw[graph]
        (a) edge[bend right=10] (c)
        (b) edge[bend left=10]  (c)
        (c) edge                (d)
        (d) edge                (e)
        (e) edge                (f);
      \draw [rounded corners]
        (-0.5,-0.5) rectangle (4.5,1.5);
      \node (01l) at (-0.6,0.5) {};
      \node (01r) at (4.6,0.5)  {};
      \node (01b) at (2,-0.6)   {};
    \end{scope}
    \begin{scope}[shift={(8,0)}] 
      \node [zxwhite] () at (0,0.5) {};
      \draw [rounded corners]
        (-0.5,-0.5) rectangle (0.5,1.5);
      \node (02l) at (-0.6,0.5) {};
      \node (02b) at (0,-0.6)   {};
    \end{scope}
    \begin{scope}[shift={(0,-3)}] 
      \node [zxwhite] () at (0,1)   {};
      \node [zxwhite] () at (0,0)   {};
      \draw [rounded corners]
        (-0.5,-0.5) rectangle (0.5,1.5);
      \node (10t) at (0,1.6)   {};
      \node (10r) at (0.6,0.5) {};
      \node (10b) at (0,-0.6)  {};
    \end{scope}
    \begin{scope}[shift={(2,-3)}] 
      \node [zxwhite] (i1) at (0,1)   {};
      \node [zxwhite] (i0) at (0,0)   {};
      \node [zxwhite] (o0) at (4,0.5) {};
      \draw [rounded corners]
        (-0.5,-0.5) rectangle (4.5,1.5);
      \node (11t) at (2,1.6)    {};
      \node (11l) at (-0.6,0.5) {};
      \node (11r) at (4.6,0.5)  {};
      \node (11b) at (2,-0.6)   {};
    \end{scope}
    \begin{scope}[shift={(8,-3)}] 
      \node [zxwhite] () at (0,0.5) {};
      \draw [rounded corners]
        (-0.5,-0.5) rectangle (0.5,1.5);
      \node (12t) at (0,1.6)    {};
      \node (12l) at (-0.6,0.5) {};
      \node (12b) at (0,-0.6)   {};
    \end{scope}
    \begin{scope}[shift={(0,-6)}] 
      \node [zxwhite] () at (0,1)   {};
      \node [zxwhite] () at (0,0)   {};
      \draw [rounded corners]
        (-0.5,-0.5) rectangle (0.5,1.5);
      \node (20t) at (0,1.6)   {};
      \node (20r) at (0.6,0.5) {};
    \end{scope}
    \begin{scope}[shift={(2,-6)}] 
      \node [zxwhite]  (a) at (0,0)    {};
      \node [zxwhite]  (b) at (0,1)    {};
      \node [zxgreen]  (c) at (2,0.5)  {};
      \node            ( ) at (2,1)    {$\alpha+\beta$};
      \node [zxwhite]  (d) at (4,0.5)  {};
      \draw[graph]
        (a) edge[bend right=10] (c)
        (b) edge[bend left=10]  (c)
        (c) edge                (d);
      \draw [rounded corners]
        (-0.5,-0.5) rectangle (4.5,1.5);
      \node (21t) at (2,1.6)  {};
      \node (21l) at (-0.6,0.5) {};
      \node (21r) at (4.6,0.5)  {};
    \end{scope}
    \begin{scope}[shift={(8,-6)}] 
      \node [zxwhite] at (0,0.5) {};
      \draw [rounded corners]
        (-0.5,-0.5) rectangle (0.5,1.5);
      \node (22l) at (-0.6,0.5) {};
      \node (22t) at (0,1.6)  {};
    \end{scope}
    \draw[cd]
      (00r) edge (01l)
      (02l) edge (01r)
      (10r) edge (11l)
      (12l) edge (11r)
      (20r) edge (21l)
      (22l) edge (21r)
      (10t) edge (00b)
      (10b) edge (20t)
      (11t) edge (01b)
      (11b) edge (21t)
      (12t) edge (02b)
      (12b) edge (22t);
  \end{tikzpicture}
\]
Because the vertical 1-arrows are identities, this 2-arrow
exists in both $ \ZZZX $ and $ \ZZX $. The spider relation
simplifies the ZX-diagram in the top row to that in the
bottom row.

\begin{theorem}
  The bicategory $ \ZZX $ is a bicategory of relations.
\end{theorem}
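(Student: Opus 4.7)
The plan is to deduce that $\ZZX$ is a bicategory of relations by exhibiting it as suitably structured sub-bicategory of $_L\BBoldRewrite$, then appealing to Theorem \ref{thm:bold-rewrite-bicat-rels}. By Definition \ref{def:zx-double-category}, $\ZZZX$ was built as an isofibrant symmetric monoidal sub-double category of $_L\BBBoldRewrite$, and $\ZZX$ was extracted as its horizontal bicategory via the Shulman machinery used throughout Section \ref{sec:compact-closed-bicategory-spans-of-cospans}. Since $_L\BBBoldRewrite$ is also isofibrant and symmetric monoidal, the horizontal bicategory functor is compatible with the inclusion, so $\ZZX$ sits as a locally full symmetric monoidal sub-bicategory of $_L\BBoldRewrite$.

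The first substantive step is to identify, for each object $a$ of $\ZZX$, the Frobenius pentad
\[
( a,\ \nabla^\ast,\ \epsilon^\ast,\ \nabla,\ \epsilon )
\]
from Lemma \ref{thm:frobenius} as data living in $\ZZX$. Concretely, $\nabla$ and $\epsilon$ are the Multiplication and Counit structured cospans of Figure \ref{fig:basic-diagrams-as-str-csps}, while $\nabla^\ast$ and $\epsilon^\ast$ are their Comultiplication and Unit counterparts; all four appear in the list of basic $L$-structured cospans generating $\ZZX$, and their tensor powers with wires also lie in $\ZZX$. Consequently the adjunctions $\nabla \dashv \nabla^\ast$, $\epsilon \dashv \epsilon^\ast$, together with the Frobenius equation, hold inside $\ZZX$ because the unit and counit 2-arrows constructed in the proofs of those lemmas are built entirely from identity, folding, and bang maps on edgeless over-graphs, which are identities in $\ZZZX$ and hence squares of the sub-double category. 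The lax comonoid homomorphism inequalities then restrict from $_L\BBoldRewrite$ to $\ZZX$ on every generating 1-arrow, and since $\otimes$ and $\odot$ preserve them, they hold on all 1-arrows of $\ZZX$.

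The main obstacle is local posetality: to be a bicategory of relations, each hom-category $\ZZX(a,b)$ must be a poset. In $_L\BBoldRewrite$ this is automatic because 2-arrows are connected components of spans of structured cospans and Theorem \ref{thm:bold-rewrite-bicat-rels} already supplies posetality there. The worry is whether taking the sub-bicategory $\ZZX$ could spuriously create distinct parallel 2-arrows. I plan to resolve this by arguing that the hom-poset in $\ZZX$ between two 1-arrows $M,N$ is exactly the restriction of the hom-poset in $_L\BBoldRewrite(a,b)$: a 2-arrow $M \to N$ in $\ZZX$ is, by the construction of an isofibrant sub-double category, a connected component in $_L\BBBoldRewrite$ whose representative is generated by the listed basic rewrites. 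Since parallel connected components in the ambient bicategory are equal whenever they exist, and $\ZZX$ inherits equality (not extra 2-cells), posetality is preserved.

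Finally, once local posetality and the Frobenius/homomorphism data are in place, the defining conditions of a bicategory of relations (Appendix \ref{sec:cartesian-bicategories}, Definition \ref{def:bicat-relations}) are checked one by one: each is an equation or inequality between composites of the generators $\nabla,\nabla^\ast,\epsilon,\epsilon^\ast$ and braidings, and every such composite is computed in $\ZZX$ by a diagram that also lives in $_L\BBoldRewrite$, where Theorem \ref{thm:bold-rewrite-bicat-rels} has already verified the identity. Hence $\ZZX$ is a bicategory of relations, and moreover the ZX-calculus equations (Spider, Cup, Bialgebra, etc.) constitute additional relations on top of this background structure rather than obstructions to it.
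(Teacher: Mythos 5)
Your proof takes essentially the same approach as the paper: observe that $\ZZX$ contains the Frobenius structure maps (Multiplication, Comultiplication, Unit, Counit from Figure \ref{fig:basic-diagrams-as-str-csps}), then appeal to Theorem \ref{thm:bold-rewrite-bicat-rels} to conclude that the bicategory-of-relations axioms descend from the ambient $_L\BBoldRewrite$. The paper states this in one sentence; you spell it out more explicitly.

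One place where you overstate: the claim that ``$\ZZX$ sits as a locally full symmetric monoidal sub-bicategory of $_L\BBoldRewrite$'' is not justified by the construction. $\ZZZX$ is \emph{generated} by the basic structured cospans and basic rewrites, so its hom-posets are a priori only subsets of those in $_L\BBoldRewrite$ restricted to ZX 1-arrows, not the full restriction. Local posetality is still inherited (a sub-bicategory of a locally posetal bicategory is locally posetal, since you can only lose 2-arrows, not gain them—which is the opposite of the ``worry'' you describe), but for the bicategory-of-relations axioms you actually need the \emph{existence} of specific 2-arrows in $\ZZX$ (the adjunction units/counits, the lax-comonoid inequalities for \emph{every} 1-arrow, the Frobenius equalities), and existence is precisely what generation does not automatically guarantee. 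Your argument that the adjunction and Frobenius witnesses are built from ``identity, folding, and bang maps'' and are essentially identity 2-arrows is the right way to close this for the structure maps; what remains less clear, in both your account and the paper's, is why the lax-comonoid inequality $\Delta_b r \leq (r \otimes r)\Delta_a$ has a witness generated by the basic ZX rewrites for an arbitrary 1-arrow $r$ of $\ZZX$. The paper waves this through and so do you, so the proofs are at the same level of rigor, but flagging that $\ZZX$ may not be locally full would have been more accurate than asserting that it is.
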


\begin{proof}
  Because $ \ZZX $ includes the structure maps to give every
  object a Frobenius monoid structure, every requirement
  descends from ambient category $ _L \BBoldRewrite $ being
  a bicategory of relations (see Theorem
  \ref{thm:bold-rewrite-bicat-rels}).
\end{proof}

This bicategory extends the original category $ \ZX $. To
show this, we will show the `decategorification' of $ \ZZX $
is $ \ZX $. The process of decategorification essentially
turns an $ n $-category into an $ n-1 $-category. For us, we
turn a (weak) 2-category into a 1-category by identifying any
1-arrows connected by a zig-zag of 2-arrows. 

\begin{definition} \label{def:decat-zx}

  Define $\decat (\ZZX)$ to be the category whose objects
  are those of $\ZZX$ and whose arrows the 1-arrows of
  $\ZZX $ modulo the equivalence relation $\sim$ generated
  by $f \sim g$ if and only if there is a 2-arrow
  $f \Rightarrow g$ in $\ZZX$.
  
\end{definition}

\begin{theorem} \label{thm:decatzx-is-dagger-compact}

  The category $\decat ( \ZZX )$ is dagger compact
  via the identity on objects functor described by
  \[
  \begin{tikzpicture}
    \begin{scope}[shift={(0,0)}] 
      \node [zxwhite] ()  at (0,1)    {};
      \node           ()  at (0,0.75) {$\vdots$};
      \node [zxwhite] ()  at (0,0)    {};
      \draw [rounded corners]
      (-0.5,-0.5) rectangle (0.5,1.5);
      \node (0r) at (0.6,0.5) {$  $};
    \end{scope}
    \begin{scope}[shift={(2,1)}] 
      \node [zxwhite] (a) at (0,1) {};
      \node           ()  at (0.25,0.75) {$\vdots$};
      \node [zxwhite] (b) at (0,0) {};
      \node [zxgreen] (c) at (1,0.5) {};
      \node           ( ) at (1,1) {$\alpha$};
      \node [zxwhite] (d) at (2,1) {};
      \node           ()  at (1.75,0.75) {$\vdots$};
      \node [zxwhite] (e) at (2,0) {};
      \draw[graph]
        (a) edge[bend left=10]  (c)
        (b) edge[bend right=10] (c)
        (c) edge[bend left=10]  (d)
        (c) edge[bend right=10] (e);
      \draw [rounded corners]
        (-0.5,-0.5) rectangle (2.5,1.5);
      \node (1l) at (-0.6,0.5) {$  $};  
      \node (1r) at (2.6,0.5)  {$  $};
    \end{scope}
    \begin{scope}[shift={(6,0)}] 
      \node [zxwhite] () at  (0,1)     {};
      \node           ()  at (0,0.75) {$\vdots$};
      \node [zxwhite] () at  (0,0)     {};
      \draw [rounded corners]
        (-0.5,-0.5) rectangle (0.5,1.5);
      \node (2l) at (-0.6,0.5) {$  $};
    \end{scope}
    \path[cd] 
      (0r) edge node[]{$  $} (1l)
      (2l) edge node[]{$  $} (1r);
      \node () at (7,1) {$ \xmapsto{\dagger} $};
    \begin{scope}[shift={(8,0)}] 
      \node [zxwhite] () at (0,1) {};
      \node           () at (0,0.75) {$\vdots$};
      \node [zxwhite] () at (0,0) {};
      \draw [rounded corners]
        (-0.5,-0.5) rectangle (0.5,1.5);
      \node (0'r) at (0.6,0.5) {$  $};
    \end{scope}
    \begin{scope}[shift={(10,1)}] 
      \node [zxwhite] (a) at (0,1) {};
      \node           ()  at (0.25,0.75) {$\vdots$};
      \node [zxwhite] (b) at (0,0) {};
      \node [zxgreen] (c) at (1,0.5) {};
      \node           ( ) at (1,1)       {$-\alpha$};
      \node [zxwhite] (d) at (2,1) {};
      \node           ()  at (1.75,0.75) {$\vdots$};
      \node [zxwhite] (e) at (2,0) {};
      \draw[graph]
        (a) edge[bend left=10]  (c)
        (b) edge[bend right=10] (c)
        (c) edge[bend left=10]  (d)
        (c) edge[bend right=10] (e);
      \draw [rounded corners]
        (-0.5,-0.5) rectangle (2.5,1.5);                     
      \node (1'l) at (-0.6,0.5) {$  $};  
      \node (1'r) at (2.6,0.5)  {$  $};
    \end{scope}
    \begin{scope}[shift={(14,0)}] 
      \node [zxwhite] () at (0,1) {};
      \node           () at (0,0.75) {$\vdots$};
      \node [zxwhite] () at (0,0) {};
      \draw [rounded corners]
        (-0.5,-0.5) rectangle (0.5,1.5);
      \node (2'l) at (-0.6,0.5) {$  $};
    \end{scope}
    \path[cd] 
      (0'r) edge node[]{$  $} (1'l)
      (2'l) edge node[]{$  $} (1'r);
  \end{tikzpicture}
\]
\[
  \begin{tikzpicture}
    \begin{scope}[shift={(0,0)}] 
      \node [zxwhite] () at (0,1)     {};
      \node           ()  at (0,0.75) {$\vdots$};
      \node [zxwhite] () at (0,0)     {};
      \draw [rounded corners]
        (-0.5,-0.5) rectangle (0.5,1.5);                    
      \node (0r) at (0.6,0.5) {$  $};
    \end{scope}
    \begin{scope}[shift={(2,1)}] 
      \node [zxwhite] (a) at (0,1) {};
      \node           ()  at (0.25,0.75) {$\vdots$};
      \node [zxwhite] (b) at (0,0) {};
      \node [zxred] (c) at (1,0.5) {};
      \node           ( ) at (1,1) {$\alpha$};
      \node [zxwhite] (d) at (2,1) {};
      \node           ()  at (1.75,0.75) {$\vdots$};
      \node [zxwhite] (e) at (2,0) {};
      \draw[graph]
        (a) edge[bend left=10]  (c)
        (b) edge[bend right=10] (c)
        (c) edge[bend left=10]  (d)
        (c) edge[bend right=10] (e);
      \draw [rounded corners]
        (-0.5,-0.5) rectangle (2.5,1.5);
      \node (1l) at (-0.6,0.5) {$  $};  
      \node (1r) at (2.6,0.5)  {$  $};
    \end{scope}
    \begin{scope}[shift={(6,0)}] 
      \node [zxwhite] () at (0,1)     {};
      \node           ()  at (0,0.75) {$\vdots$};
      \node [zxwhite] () at (0,0)     {};
      \draw [rounded corners]
        (-0.5,-0.5) rectangle (0.5,1.5);
      \node (2l) at (-0.6,0.5) {$  $};
    \end{scope}
    \path[cd] 
      (0r) edge node[]{$  $} (1l)
      (2l) edge node[]{$  $} (1r);
      \node () at (7,1) {$ \xmapsto{\dagger} $};
    \begin{scope}[shift={(8,0)}] 
      \node [zxwhite] () at (0,1) {};
      \node           () at (0,0.75) {$\vdots$};
      \node [zxwhite] () at (0,0) {};
      \draw [rounded corners]
        (-0.5,-0.5) rectangle (0.5,1.5);
      \node (0'r) at (0.6,0.5) {$  $};
    \end{scope}
    \begin{scope}[shift={(10,1)}] 
      \node [zxwhite] (a) at (0,1) {};
      \node           ()  at (0.25,0.75) {$\vdots$};
      \node [zxwhite] (b) at (0,0) {};
      \node [zxred] (c) at (1,0.5) {};
      \node           ( ) at (1,1)       {$-\alpha$};
      \node [zxwhite] (d) at (2,1) {};
      \node           ()  at (1.75,0.75) {$\vdots$};
      \node [zxwhite] (e) at (2,0) {};
      \draw[graph]
        (a) edge[bend left=10]  (c)
        (b) edge[bend right=10] (c)
        (c) edge[bend left=10]  (d)
        (c) edge[bend right=10] (e);
      \draw [rounded corners]
        (-0.5,-0.5) rectangle (2.5,1.5);
      \node (1'l) at (-0.6,0.5) {$  $};  
      \node (1'r) at (2.6,0.5)  {$  $};
    \end{scope}
    \begin{scope}[shift={(14,0)}] 
      \node [zxwhite] () at (0,1) {};
      \node           () at (0,0.75) {$\vdots$};
      \node [zxwhite] () at (0,0) {};
      \draw [rounded corners]
        (-0.5,-0.5) rectangle (0.5,1.5);
      \node (2'l) at (-0.6,0.5) {$  $};
    \end{scope}
    \draw[cd] 
      (0'r) edge node[]{$  $} (1'l)
      (2'l) edge node[]{$  $} (1'r);
  \end{tikzpicture}
\]
as well as by identity on the wire, Hadamard, and
diamond morphisms.
\end{theorem}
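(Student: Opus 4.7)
The plan is to proceed in three stages: first extend the prescribed assignment to a well-defined contravariant, identity-on-objects endofunctor $\dagger$ on $\decat(\ZZX)$; second exhibit a compact structure with each finite set self-dual; and third check the coherences linking the two structures. I would extend $\dagger$ to an arbitrary 1-arrow of $\ZZX$ by sending a structured cospan $La \xto{f} x \xgets{g} Lb$ to $Lb \xto{g} x \xgets{f} La$, composed with the graph-over-$\Gamma$ automorphism of $x$ that negates each spider phase (this is natural because the fibre over each coloured node of $\Gamma$ is permuted coherently). On the generators of Figure~\ref{fig:basic-diagrams-as-str-csps} this recovers the prescribed action; on composites it is contravariant because pushouts are symmetric in their two inputs, so $(s \hcirc t)^\dagger = t^\dagger \hcirc s^\dagger$ holds on the nose.

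Well-definedness on equivalence classes is the step requiring real care. I would show that the set of generating 2-cells listed in Figure~\ref{fig:zx-equations} (together with the auxiliary Wire relation) is closed under the operation of flipping a span of structured cospans vertically and horizontally and negating phases: the Spider, Bialgebra, Trivial-Spider, Color-Change, Diamond, Loop, and Wire rules are self-dual, while Copy pairs with its reverse, $\pi$-Copy and $\pi$-Commutation pair after negating $\pi$ to $-\pi \equiv \pi$, and Cup pairs with the Cap cospan introduced in Figure~\ref{fig:basic-diagrams-as-str-csps}. Consequently, whenever $f \sim g$ in $\decat(\ZZX)$ via a zig-zag of generating 2-cells, the dagger produces another such zig-zag witnessing $f^\dagger \sim g^\dagger$. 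Involutivity $\dagger \circ \dagger = \id$ is immediate on generators and extends to all of $\decat(\ZZX)$ by functoriality.

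For the compact structure, each object $n \in \decat(\ZZX)$ is self-dual: the unit $\eta_n \from 0 \to n \otimes n$ is the Cap structured cospan and the counit $\epsilon_n \from n \otimes n \to 0$ is the Cup. This data is coherent in $\ZZX$ because Theorem~\ref{thm:bold-rewrite-bicat-rels} applied to the ambient $_L\BBoldRewrite$ endows every object with a Frobenius monoid structure (Lemma~\ref{thm:frobenius}), and Cup and Cap are built from precisely this Frobenius data. The triangle identities hold in $\decat(\ZZX)$ because the required zig-zag simplifications are obtained by iterating the Spider, Cup, and Wire rewrites. Symmetry and monoidal coherence are inherited from the symmetric monoidal structure on $\ZZX$ (Theorem~\ref{thm:SpansCospasAreSMBicat} applied to this setting via Lemma~\ref{lem:SpanCospanSM}). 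Compatibility between $\dagger$ and $\otimes$ holds already in $\ZZX$ since disjoint union of structured cospans commutes with leg-swapping; compatibility between $\dagger$ and the cups/caps reduces, after the identifications above, to a single application of the Cup relation.

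The main obstacle is the closure-under-dagger verification for the generating 2-cells: each of the roughly dozen basic rewrites must be inspected and paired with either itself or another listed rewrite, and the phase-negation bookkeeping must be threaded through. Everything else is either a formal consequence of $\ZZX$ being a symmetric monoidal bicategory of relations or a direct exhibition of a rewriting sequence, so once this closure lemma is secured the dagger-compact axioms fall out uniformly.
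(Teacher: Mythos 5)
Your proposal follows the same essential route as the paper: self-duality of each finite set via the Cup and Cap structured cospans, the snake equation via the Spider/Cup/Trivial-Spider/Wire rewrites, and an identity-on-objects dagger given by reflecting the cospan and negating phases. The main difference is where each side spends its effort. The paper writes out the snake-equation verification as an explicit sequence of decompositions and rewrites but dismisses the dagger-functor verification as ``a matter of checking some easy to verify details,'' whereas you do the opposite: you give only a high-level appeal to the Spider/Cup/Wire rules for the triangle identities (and to the Frobenius structure inherited from $_L\BBoldRewrite$ for coherence) but supply the argument the paper omits — namely, that the generating $2$-cells of Figure~\ref{fig:zx-equations} are closed under the flip-and-negate operation, which is exactly what makes $\dagger$ descend to $\decat(\ZZX)$. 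This closure lemma is the most substantive addition in your write-up and is worth having spelled out. Two small corrections: the citation to Theorem~\ref{thm:SpansCospasAreSMBicat} is to the fine-rewriting statement, but $\ZZX$ sits inside $_L\BBBoldRewrite$, so you want the analogous unlabelled bold-rewriting lemma (``$_L\BBoldRewrite$ is a symmetric monoidal bicategory'') in Section~\ref{sec:cartesian-bicategory-spans-of-cospans}; and ``holds on the nose'' for contravariance of $\dagger$ should be ``holds up to the $2$-isomorphism relating the two pushouts,'' which suffices since $\decat$ identifies $1$-arrows connected by any $2$-arrow.
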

      
\begin{proof}
  Compact closedness follows from the self duality of
  objects via the evaluation 
  \[
    \begin{tikzpicture}
      \begin{scope}
        \node [zxwhite] (a) at  (0,2)   {$  $};
        \node [zxwhite] (b) at  (0,1.5) {$  $};
        \node           ( )  at (0,1)   {$\vdots$};
        \node [zxwhite] (c) at  (0,0.5) {$  $};
        \node [zxwhite] (d) at  (0,0)   {$  $};
        \node (0r) at (0.6,1) {$  $};
        \draw [rounded corners]
          (-0.5,-0.5) rectangle (0.5,2.5);      
      \end{scope}
      \begin{scope}[shift={(2,1)}]
        \node [zxwhite] (a) at (0,2)      {$  $};
        \node [zxwhite] (b) at (0,1.5)    {$  $};
        \node           ( ) at (0,1)      {$\vdots$};
        \node [zxwhite] (c) at (0,0.5)    {$  $};
        \node [zxwhite] (d) at (0,0)      {$  $};
        \node [zxwhite] (e) at (0.5,1)    {$  $};
        \node [zxwhite] (f) at (1,1)      {$  $};
        \draw[graph]
          (a) edge[bend left=10]  (f)
          (d) edge[bend right=10] (f)
          (b) edge[bend left=10]  (e)
          (c) edge[bend right=10] (e);
        \node (1l) at (-0.6,1) {$  $};
        \node (1r) at (1.6,1)  {$  $};
        \draw [rounded corners]
          (-0.5,-0.5) rectangle (1.5,2.5);    
      \end{scope}
      \begin{scope}[shift={(5,0)}]
        \node (a) at (0,1) {$ \emptyset $};
        \node (2l) at (-0.6,1) {$  $};
        \draw [rounded corners]
          (-0.5,-0.5) rectangle (0.5,2.5);      
      \end{scope}
      \path[cd]
        (0r) edge node[]{$  $} (1l)
        (2l) edge node[]{$  $} (1r);      
    \end{tikzpicture}
  \]
  and coevaluation arrows
  \[
    \begin{tikzpicture}
      \begin{scope}
        \node (a) at (0,1) {$ \emptyset $};
        \node (0r) at (0.6,1) {$  $};
        \draw [rounded corners]
          (-0.5,-0.5) rectangle (0.5,2.5);      
      \end{scope}
      \begin{scope}[shift={(2,1)}]
        \node [zxwhite] (a) at (1,2)      {$  $};
        \node [zxwhite] (b) at (1,1.5)    {$  $};
        \node           ( ) at (1,1)      {$\vdots$};
        \node [zxwhite] (c) at (1,0.5)    {$  $};
        \node [zxwhite] (d) at (1,0)      {$  $};
        \node [zxwhite] (e) at (0.5,1)    {$  $};
        \node [zxwhite] (f) at (0,1)      {$  $};
        \draw[graph]
          (f) edge[bend left=10]  (a)
          (f) edge[bend right=10] (d)
          (e) edge[bend left=10]  (b)
          (e) edge[bend right=10] (c);
        \node (1l) at (-0.6,1) {$  $};
        \node (1r) at (1.6,1)  {$  $};
        \draw [rounded corners]
          (-0.5,-0.5) rectangle (1.5,2.5);    
      \end{scope}
      \begin{scope}[shift={(5,0)}]
        \node [zxwhite] (a) at  (0,2)   {$  $};
        \node [zxwhite] (b) at  (0,1.5) {$  $};
        \node           ( )  at (0,1)   {$\vdots$};
        \node [zxwhite] (c) at  (0,0.5) {$  $};
        \node [zxwhite] (d) at  (0,0)   {$  $};
        \node (2l) at (-0.6,1) {$  $};
        \draw [rounded corners]
          (-0.5,-0.5) rectangle (0.5,2.5);      
      \end{scope}
      \draw[cd]
        (0r) edge (1l)
        (2l) edge (1r);      
    \end{tikzpicture}
  \]
  obtained by applying the braiding maps to the disjoint
  union of cups and caps. Moreover, we can derive the snake
  equation as follows. Decompose the arrow
  \[
    \begin{tikzpicture} 
      \begin{scope} 
        \node [zxwhite] () at (0,0) {$  $};
        \node (0r) at (0.6,1) {$  $};
        \draw [rounded corners]
          (-0.5,-0.5) rectangle (0.5,2.5);
      \end{scope}
      \begin{scope}[shift={(2,1)}] 
        \node [zxwhite] (a) at (0,0) {$  $};
        \node [zxwhite] (b) at (0.5,1.5) {$  $};
        \node [zxwhite] (c) at (1,0) {$  $};
        \node [zxwhite] (d) at (1,1) {$  $};
        \node [zxwhite] (e) at (1,2) {$  $};
        \node [zxwhite] (f) at (1.5,0.5) {$  $};
        \node [zxwhite] (g) at (2,2) {$  $};
        \draw[graph]
         (a) edge[bend right=0]   (c)
         (c) edge[bend right=10]  (f)
         (d) edge[bend right=-10] (f)
         (b) edge[bend right=10]  (d)
         (b) edge[bend right=-10] (e)
         (e) edge[bend right=0]   (g);
        \node (1l) at (-0.6,1) {$  $};
        \node (1r) at (2.6,1) {$  $};
        \draw [rounded corners]
          (-0.5,-0.5) rectangle (2.5,2.5);
        \end{scope}
        \begin{scope}[shift={(6,0)}] 
        \node [zxwhite] () at (0,2) {$  $};
        \node (2l) at (-0.6,1) {$  $};
        \draw [rounded corners]
          (-0.5,-0.5) rectangle (0.5,2.5);
        \end{scope}
        \path[cd]
        (0r) edge node[]{$  $} (1l)
        (2l) edge node[]{$  $} (1r);
    \end{tikzpicture}
  \]
  into
  \[
    \begin{tikzpicture}
      \begin{scope} 
        \node [zxwhite] () at (0,0) {$  $};
        \node (0r) at (0.6,1) {$  $};
        \draw [rounded corners]
          (-0.5,-0.5) rectangle (0.5,2.5);
      \end{scope}
      \begin{scope}[shift={(2,1)}] 
        \node [zxwhite] (a) at (0,0) {$  $};
        \node [zxwhite] (b) at (0.5,1.5) {$  $};
        \node [zxwhite] (c) at (1,0) {$  $};
        \node [zxwhite] (d) at (1,1) {$  $};
        \node [zxwhite] (e) at (1,2) {$  $};
        \draw[graph]
          (a) edge                (c)
          (b) edge[bend right=10] (d)
          (b) edge[bend left=10]  (e);
        \node (1l) at (-0.6,1) {$  $};
        \node (1r) at (1.6,1) {$  $};
        \draw [rounded corners]
          (-0.5,-0.5) rectangle (1.5,2.5);
        \end{scope}
        \begin{scope}[shift={(5,0)}] 
          \node [zxwhite] () at (0,2) {$  $};
          \node [zxwhite] () at (0,1) {$  $};
          \node [zxwhite] () at (0,0) {$  $};
          \node (2l) at (-0.6,1) {$  $};
          \node (2r) at (0.6,1)  {$  $};
          \draw [rounded corners]
            (-0.5,-0.5) rectangle (0.5,2.5);
        \end{scope}
        \begin{scope}[shift={(7,1)}] 
          \node [zxwhite] (a) at (1,2) {$  $};
          \node [zxwhite] (b) at (0.5,0.5) {$  $};
          \node [zxwhite] (c) at (0,0) {$  $};
          \node [zxwhite] (d) at (0,1) {$  $};
          \node [zxwhite] (e) at (0,2) {$  $};
          \draw[graph]
            (a) edge                (e)
            (c) edge[bend right=10] (b)
            (d) edge[bend left=10]  (b);
          \node (3l) at (-0.6,1) {$  $};
          \node (3r) at (1.6,1) {$  $};
          \draw [rounded corners]
            (-0.5,-0.5) rectangle (1.5,2.5);
        \end{scope}
        \begin{scope}[shift={(10,0)}] 
          \node [zxwhite] () at (0,2) {$  $};
          \node (4l) at (-0.6,1) {$  $};
          \draw [rounded corners]
            (-0.5,-0.5) rectangle (0.5,2.5);
        \end{scope}
        \draw[cd]
          (0r) edge node[]{$  $} (1l)
          (2l) edge node[]{$  $} (1r)
          (2r) edge node[]{$  $} (3l)
          (4l) edge node[]{$  $} (3r);
    \end{tikzpicture}
  \]
  which by the cup relation, illustrated in Figure
  \ref{fig:zx-equations}, equals
  \[
    \begin{tikzpicture}
      \begin{scope} 
        \node [zxwhite] () at (0,0) {$  $};
        \node (0r) at (0.6,1) {$  $};
        \draw [rounded corners]
          (-0.5,-0.5) rectangle (0.5,2.5);
      \end{scope}
      \begin{scope}[shift={(2,1)}] 
        \node [zxwhite] (a) at (0,0) {$  $};
        \node [zxgreen] (b) at (0.5,1.5) {$  $};
        \node [zxwhite] (c) at (1,0) {$  $};
        \node [zxwhite] (d) at (1,1) {$  $};
        \node [zxwhite] (e) at (1,2) {$  $};
        \draw[graph]
         (a) edge                (c)
         (b) edge[bend right=10] (d)
         (b) edge[bend left=10]  (e);
        \node (1l) at (-0.6,1) {$  $};
        \node (1r) at (1.6,1) {$  $};
        \draw [rounded corners]
          (-0.5,-0.5) rectangle (1.5,2.5);
        \end{scope}
        \begin{scope}[shift={(5,0)}] 
          \node [zxwhite] () at (0,2) {$  $};
          \node [zxwhite] () at (0,1) {$  $};
          \node [zxwhite] () at (0,0) {$  $};
          \node (2l) at (-0.6,1) {$  $};
          \node (2r) at (0.6,1)  {$  $};
          \draw [rounded corners]
            (-0.5,-0.5) rectangle (0.5,2.5);
        \end{scope}
        \begin{scope}[shift={(7,1)}] 
          \node [zxwhite] (a) at (1,2) {$  $};
          \node [zxgreen] (b) at (0.5,0.5) {$  $};
          \node [zxwhite] (c) at (0,0) {$  $};
          \node [zxwhite] (d) at (0,1) {$  $};
          \node [zxwhite] (e) at (0,2) {$  $};
          \draw[graph]
            (a) edge                (e)
            (c) edge[bend right=10] (b)
            (d) edge[bend left=10]  (b);
          \node (3l) at (-0.6,1) {$  $};
          \node (3r) at (1.6,1) {$  $};
          \draw [rounded corners]
          (-0.5,-0.5) rectangle (1.5,2.5);
        \end{scope}
        \begin{scope}[shift={(10,0)}] 
          \node [zxwhite] () at (0,2) {$  $};
          \node (4l) at (-0.6,1) {$  $};
          \draw [rounded corners]
          (-0.5,-0.5) rectangle (0.5,2.5);
        \end{scope}
        \draw[cd]
          (0r) edge (1l)
          (2l) edge (1r)
          (2r) edge (3l)
          (4l) edge (3r);
    \end{tikzpicture}
  \]
  This can be composed to get
    \[
    \begin{tikzpicture} 
      \begin{scope} 
        \node [zxwhite] () at (0,0) {$  $};
        \node (0r) at (0.6,1) {$  $};
        \draw [rounded corners]
          (-0.5,-0.5) rectangle (0.5,2.5);
      \end{scope}
      \begin{scope}[shift={(2,1)}] 
        \node [zxwhite] (a) at (0,0) {$  $};
        \node [zxgreen] (b) at (0.5,1.5) {$  $};
        \node [zxwhite] (c) at (1,0) {$  $};
        \node [zxwhite] (d) at (1,1) {$  $};
        \node [zxwhite] (e) at (1,2) {$  $};
        \node [zxgreen] (f) at (1.5,0.5) {$  $};
        \node [zxwhite] (g) at (2,2) {$  $};
        \draw[graph]
          (a) edge                (c)
          (c) edge[bend right=10] (f)
          (d) edge[bend left=10]  (f)
          (b) edge[bend right=10] (d)
          (b) edge[bend left=10]  (e)
          (e) edge                (g);
        \node (1l) at (-0.6,1) {$  $};
        \node (1r) at (2.6,1) {$  $};
        \draw [rounded corners]
          (-0.5,-0.5) rectangle (2.5,2.5);
        \end{scope}
        \begin{scope}[shift={(6,0)}] 
        \node [zxwhite] () at (0,2) {$  $};
        \node (2l) at (-0.6,1) {$  $};
        \draw [rounded corners]
          (-0.5,-0.5) rectangle (0.5,2.5);
        \end{scope}
        \path[cd]
          (0r) edge (1l)
          (2l) edge (1r);
    \end{tikzpicture}
  \]
  which equals
    \[
    \begin{tikzpicture} 
      \begin{scope} 
        \node [zxwhite] () at (0,0) {$  $};
        \node (0r) at (0.6,0) {$  $};
        \draw [rounded corners]
          (-0.5,-0.5) rectangle (0.5,0.5);
      \end{scope}
      \begin{scope}[shift={(2,1)}] 
        \node [zxwhite] (a) at (0,0) {$  $};
        \node [zxwhite] (b) at (1,0) {$  $};
        \node [zxgreen] (c) at (2,0) {$  $};
        \node [zxwhite] (d) at (3,0) {$  $};
        \node [zxwhite] (e) at (4,0) {$  $};
        \draw [graph]
          (a) edge (b)
          (b) edge (c)
          (c) edge (d)
          (d) edge (e);
        \node (1l) at (-0.6,0) {$  $};
        \node (1r) at (4.6,0) {$  $};
        \draw [rounded corners]
          (-0.5,-0.5) rectangle (4.5,0.5);
        \end{scope}
        \begin{scope}[shift={(8,0)}] 
        \node [zxwhite] () at (0,0) {$  $};
        \node (2l) at (-0.6,0) {$  $};
        \draw [rounded corners]
          (-0.5,-0.5) rectangle (0.5,0.5);
        \end{scope}
        \path[cd]
          (0r) edge node[]{$  $} (1l)
          (2l) edge node[]{$  $} (1r);
    \end{tikzpicture}
  \]
  because of the spider relation. Finally, this
  equals the identity because of the trivial
  spider and wire relations.  Showing that the
  described functor is a dagger functor is a
  matter of checking some easy to verify details.
\end{proof}

We now show that $ \ZZZX $ is an extension of
$ \ZX $ in the sense that the category
$ \decat ( \ZZX ) $ obtained from $ \ZZZX $ is
equivalent to $ \ZX $.

\begin{theorem}
\label{thm:equiv of zx cats}
The identity on objects, dagger compact functor
$E \colon \ZX \to
\operatorname{decat}(\ZZX)$
given by
\begin{align*}
  \raisebox{
    0.75\height}{
    \includegraphics{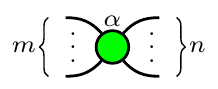}}
  &
  \begin{tikzpicture}
    \node () at (-1,0.5) {$ \mapsto  $};
    \begin{scope}
      \node [zxwhite] (a) at (0,0)    {};
      \node           ( ) at (0,0.66) {$ \vdots $};
      \node [zxwhite] (b) at (0,1)    {};
      \draw [rounded corners]
        (-0.5,-0.5) rectangle (0.5,1.5);
      \node (l) at (0.6,0.5) {};
    \end{scope}
    \begin{scope}[shift={(2,0)}]
      \node [zxwhite] (a) at (0,0)   {$ $};
      \node           ( ) at (0,0.66) {$ \vdots $};
      \node [zxwhite] (b) at (0,1)   {$  $};
      \node [zxgreen] (c) at (1,0.5) {$ $};
      \node           ( ) at (1,1) {$\alpha$};
      \node [zxwhite] (d) at (2,0)   {$ $};
      \node           ( ) at (2,0.66) {$ \vdots $};
      \node [zxwhite] (e) at (2,1)   {$  $};
      \draw[graph]
        (a) edge [bend right=10] (c)
        (b) edge [bend left=10]  (c)
        (c) edge [bend right=10] (d)
        (c) edge [bend left=10]  (e);
      \draw [rounded corners]
        (-0.5,-0.5) rectangle (2.5,1.5);  
      \node (cl) at (-0.6,0.5) {};
      \node (cr) at (2.6,0.5)  {};
    \end{scope}
    \begin{scope}[shift={(6,0)}]
      \node [zxwhite] (a) at (0,0)    {};
      \node           ( ) at (0,0.66) {$ \vdots $};
      \node [zxwhite] (b) at (0,1)    {};
      \draw [rounded corners]
        (-0.5,-0.5) rectangle (0.5,1.5);
      \node (r) at (-0.6,0.5) {};
    \end{scope}
    \draw[cd,>=angle 90]
      (l) edge node[]{$  $} (cl)
      (r) edge node[]{$  $} (cr);
  \end{tikzpicture}
    \\
  \raisebox{
    0.75\height}{
    \includegraphics{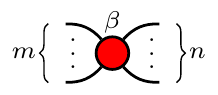}}
  &
    \begin{tikzpicture}
      \node () at (-1,0.5) {$ \mapsto  $};
      \begin{scope}
        \node [zxwhite] (a) at (0,0)    {};
        \node           ( ) at (0,0.66) {$ \vdots $};
        \node [zxwhite] (b) at (0,1)    {};
        \draw [rounded corners]
        (-0.5,-0.5) rectangle (0.5,1.5);
        \node (l) at (0.6,0.5) {};
      \end{scope}
      \begin{scope}[shift={(2,0)}]
        \node [zxwhite] (a) at (0,0)    {$ $};
        \node           ( ) at (0,0.66) {$ \vdots $};
        \node [zxwhite] (b) at (0,1)    {$  $};
        \node [zxred]   (c) at (1,0.5)  {$ $};
        \node           ( ) at (1,1)    {$\alpha$};
        \node [zxwhite] (d) at (2,0)    {$ $};
        \node           ( ) at (2,0.66) {$ \vdots $};
        \node [zxwhite] (e) at (2,1)    {$  $};
        \draw[graph]
        (a) edge [bend right=10] (c)
        (b) edge [bend left=10]  (c)
        (c) edge [bend right=10] (d)
        (c) edge [bend left=10]  (e);
        \draw [rounded corners]
        (-0.5,-0.5) rectangle (2.5,1.5);  
        \node (cl) at (-0.6,0.5) {};
        \node (cr) at (2.6,0.5)  {};
      \end{scope}
      \begin{scope}[shift={(6,0)}]
        \node [zxwhite] (a) at (0,0)    {};
        \node           ( ) at (0,0.66) {$ \vdots $};
        \node [zxwhite] (b) at (0,1)    {};
        \draw [rounded corners]
        (-0.5,-0.5) rectangle (0.5,1.5);
        \node (r) at (-0.6,0.5) {};
      \end{scope}
      \draw[cd]
      (l) edge node[]{$  $} (cl)
      (r) edge node[]{$  $} (cr);
    \end{tikzpicture}
    \\
  \raisebox{
    0.7\height}{
    \includegraphics{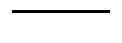}}
  &
  \begin{tikzpicture}
    \node () at (-1,0) {$ \mapsto  $};
    \begin{scope}
      \node [zxwhite] (a) at (0,0) {};
      \draw [rounded corners]
        (-0.5,-0.5) rectangle (0.5,0.5);
      \node (l) at (0.6,0) {};
    \end{scope}
    \begin{scope}[shift={(2,0)}]
      \node [zxwhite] (a) at (0,0) {};
      \node [zxwhite] (b) at (2,0) {};
      \draw [graph] (a) to (b);
      \draw [rounded corners]
        (-0.5,-0.5) rectangle (2.5,0.5);
      \node (cr) at (2.6,0) {};
      \node (cl) at (-0.6,0) {};  
    \end{scope}
    \begin{scope}[shift={(6,0)}]
      \node [zxwhite] (a) at (0,0) {};
      \draw [rounded corners]
        (-0.5,-0.5) rectangle (0.5,0.5);
      \node (r) at (-0.6,0) {};
    \end{scope}
    \draw [cd]
      (l) edge (cl)
      (r) edge (cr);
  \end{tikzpicture}
    \\
  \raisebox{
    0.75\height}{
    \includegraphics{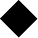}}
  &
  \begin{tikzpicture}
    \node () at (-1,0) {$ \mapsto  $};
    \begin{scope}[shift={(0,0)}]
      \node () at (0,0) {$ 0 $};
      \draw [rounded corners]
        (-0.5,-0.5) rectangle (0.5,0.5);
      \node (l) at (0.6,0) {};
    \end{scope}
    \begin{scope}[shift={(2,0)}]
      \node [zxblack] () at (1,0) {$$};
      \draw [rounded corners]
        (-0.5,-0.5) rectangle (2.5,0.5);
      \node (cl) at (-0.6,0) {};
      \node (cr) at (2.6,0) {};
    \end{scope}
    \begin{scope}[shift={(6,0)}]
      \node (a) at (0,0) {$ 0 $};
      \draw [rounded corners]
        (-0.5,-0.5) rectangle (0.5,0.5);
      \node (r) at (-0.6,0) {};
    \end{scope}
    \draw[cd]
      (l) edge node[]{$  $} (cl)
      (r) edge node[]{$  $} (cr);
  \end{tikzpicture}            
  \\
  \raisebox{
    0.8\height}{
    \includegraphics{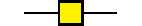}}
  &
  \begin{tikzpicture}
    \node () at (-1,0) {$ \mapsto  $};
    \begin{scope}
      \node [zxwhite] (a) at (0,0) {};
      \draw [rounded corners]
        (-0.5,-0.5) rectangle (0.5,0.5);
      \node (l) at (0.6,0) {};
    \end{scope}
    \begin{scope}[shift={(2,0)}]
      \node [zxwhite]  (a) at (0,0) {$$};
      \node [zxyellow] (b) at (1,0) {$$};
      \node [zxwhite]  (c) at (2,0) {$$};
      \draw [graph]
        (a) edge (b)
        (b) edge (c);
      \draw [rounded corners]
        (-0.5,-0.5) rectangle (2.5,0.5);
      \node (cl) at (-0.6,0) {};
      \node (cr) at (2.6,0)  {};
    \end{scope}
    \begin{scope}[shift={(6,0)}]
      \node [zxwhite] (a) at (0,0) {};
      \draw [rounded corners]
        (-0.5,-0.5) rectangle (0.5,0.5);
      \node (r) at (-0.6,0) {};
    \end{scope}
    \draw[cd]
      (l) edge node[]{$  $} (cl)
      (r) edge node[]{$  $} (cr);
  \end{tikzpicture}
\end{align*}
is an equivalence of categories.
\end{theorem}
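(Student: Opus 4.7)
The plan is to verify that $E$ is a well-defined dagger compact functor and that it is fully faithful; since $E$ is identity on objects, essential surjectivity is automatic and so these conditions suffice for an equivalence of categories.

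First, for well-definedness, the main point is to check that every defining relation of $\ZX$ (the equations in Figure \ref{fig:zx-equations}, together with ambient isotopy in $4$-space, daggering, and the red/green exchange) descends to an equality in $\operatorname{decat}(\ZZX)$. By construction, $\ZZZX$ was defined to contain, for each ZX-relation, a basic rewrite of structured cospans whose top and bottom rows are the translations under $E$ of the two sides of that relation; upon decategorification these become equalities. Preservation of the symmetric monoidal structure is built in since $\ZZX$ inherits its monoidal structure from $_L\BBBoldRewrite$, and preservation of the dagger and compact closed structure is exactly Theorem \ref{thm:decatzx-is-dagger-compact}.

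For fullness, one uses that $\ZZZX$ was defined as the isofibrant symmetric monoidal sub-double category of ${}_L\BBBoldRewrite$ generated by the basic structured cospans of Figure \ref{fig:basic-diagrams-as-str-csps}. Every horizontal $1$-arrow of $\ZZZX$ is therefore a horizontal/monoidal composite of these basic structured cospans, and each basic structured cospan is either the image under $E$ of a ZX-generator (wire, green/red spider, Hadamard, diamond) or is expressible from them via the evaluation, coevaluation, and braiding inherited from the compact symmetric monoidal structure of $\ZX$. Passing to $\ZZX$ (and then to $\operatorname{decat}(\ZZX)$) only identifies more $1$-arrows, so every arrow of $\operatorname{decat}(\ZZX)$ has a representative in the image of $E$.

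For faithfulness, suppose $E(f) \sim E(g)$ in $\operatorname{decat}(\ZZX)$, so there is a finite zigzag of $2$-arrows in $\ZZX$ between $E(f)$ and $E(g)$. Each $2$-arrow is built by horizontal composition, vertical composition, tensoring, and the symmetric monoidal coherence from the basic rewrites of structured cospans. Since the basic rewrites were chosen to be the translations of precisely the ZX equations, each elementary $2$-arrow in the zigzag corresponds to applying a ZX-relation to a subdiagram. Translating the entire zigzag back along $E$ yields a finite chain of ZX-equalities from $f$ to $g$, hence $f = g$ in $\ZX$.

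The main obstacle is verifying that the additional wire relation we added, which forces the structured cospan
$La \to Lx \gets La$ with a single internal edge to coincide with the identity cospan, does not identify any $\ZX$-diagrams that are distinct in $\ZX$. This reduces to showing that in $\ZX$ itself a bare wire is already equal to what its structured cospan translation would denote up to the spider and trivial-spider relations; once this coherence check is discharged, the zigzag-induction for faithfulness runs without incident and the remaining verifications (monoidality, dagger, and compact closedness of $E$) are routine bookkeeping against the constructions given above.
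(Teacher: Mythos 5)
Your proposal follows the same strategy as the paper: essential surjectivity is immediate from identity on objects, fullness follows because the morphism generators of $\operatorname{decat}(\ZZX)$ lie in the image of $E$, and faithfulness proceeds by decomposing any $2$-arrow between $\widetilde{Ef}$ and $\widetilde{Eg}$ into a formal combination of basic rewrites via horizontal and vertical composition and tensoring. Two small remarks. First, the paper makes the faithfulness step precise by an explicit induction on the length of a decomposition $\alpha_1 \square \cdots \square \alpha_n$ of a $2$-arrow (with each $\square$ one of $\hcirc$, $\vcirc$, $+$), using fullness in the inductive step to split off a tensor factor and invoke the inductive hypothesis; your ``translate the zigzag back along $E$'' summarizes this but leaves the mechanism of the translation unspecified, which is exactly where the induction does the work. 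Second, the wire-relation concern you flag as ``the main obstacle'' is not actually a danger: in $\ZX$ the wire diagram \emph{is} the identity $1$-arrow, so forcing the wire structured cospan to equal the identity cospan in $\ZZZX$ only enforces what already holds on the $\ZX$ side; it is needed for $E$ to be well-defined as a functor, not a threat to faithfulness. With those two points tightened, your argument is the paper's argument.
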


\begin{proof}
  Essential surjectivity follows immediately from
  $E$ being identity on objects.  Fullness follows
  from the fact that the morphism generators for
  $\decat (\ZZX)$ are all in the image of $E$.
  
  Faithfulness is more involved. Let $f,g$ be
  $\ZX$-morphisms. Let $\widetilde{Ef}$,
  $\widetilde{Eg}$ be the representatives of $Ef$,
  $Eg$ obtained by directly translating the
  graphical representation of $f,g$ to structured
  cospans of graphs of $\Gamma$. For faithfulness,
  it suffices to show that the existence of a
  2-arrow
  $\widetilde{Ef} \Rightarrow \widetilde{Eg}$ in
  $\ZZX$ implies that $f=g$.
	
  Observe that any 2-arrow $\alpha$ in $\ZZX$ can
  be written, not necessarily uniquely, as
  sequence
  $\alpha_1 \square \dotsm \square \alpha_n$ of
  length $n$ where each $\alpha_i$ is a basic
  $2$-cell and each box is filled in with
  `$\hcirc$', `$\vcirc$', or `$+$'. By `$\hcirc$'
  and `$\vcirc$', we mean horizontal and vertical
  composition. We will induct on sequence length.
  If
  $\alpha \colon \widetilde{Ef} \Rightarrow
  \widetilde{Eg}$ is a basic 2-arrow, then there
  is clearly a corresponding basic relation
  equating $f$ and $g$.  Suppose we have a
  sequence of length $n+1$ such that the left-most
  square is a `$+$'. When we have a 2-arrow
  $\alpha_1 + \alpha_2 \colon Ef \Rightarrow Eg$
  where $\alpha_1$ is a basic 2-arrow and
  $\alpha_2$ can be written with length $n$.  By
  fullness, we can write
  $\alpha_1 + \alpha_2 \colon Ef_1 + EF_2
  \Rightarrow Eg_1 + Eg_2$ where
  $\alpha_i \colon Ef_i \Rightarrow Eg_i$.  This
  gives that $f_i = g_i$ and the result follows.
  A similar argument handles the cases when the
  left-most operation is vertical or horizontal
  composition.
\end{proof}


\chapter{Decomposing systems}
\label{sec:structural-induction}

The idea of decomposing a whole into parts has long been
useful. It exists across so many human disciplines, be it
academic, artistic, or artisanal. A biologist decomposes
life-forms into genuses and species. A literary critic
decomposed a play into acts and scenes. A sommelier
decomposes a wine into color, viscosity, aroma, and
taste. In this chapter, as do the biologist, critic, and
sommelier, we decompose. Though for us, we decompose a
closed system into open sub-systems.

This may seem to conflict with the aim of this thesis, which
is to advance a theory of open systems. However, we still
recognize the value of closed systems. We just believe that
our ideas on open systems are useful for closed systems.

As mathematicians, we must bring rigor to our
decomposition. In this chapter, we do just that. We start
by formalizing closed systems as structured cospans with an
empty interface $ 0 \to x \gets 0 $. Then,
using the fine rewriting paradigm from Chapter
\ref{sec:fine-rewriting}, we place structured cospans into
the double category $ _L \FFFineRewrite $ as horizontal
1-arrows.  To decompose a closed system
\[
  L0 \to x \gets L0
\]
is to write an arrow as a composite of arrows
\[
  L0 \to x_1 \gets La_1 \to x_2 \gets La_2
  \dotsm La_{n-1} \to x_n \gets L0
\]
We use such decompositions to prove our main result which states that two structured cospans
\[
  L0 \to x \gets L0
  \quad \text{and} \quad
  L0 \to x' \gets L0
\]
are equivalent precisely when there is a square between
them.  We interpret this result in three ways.

\begin{enumerate}
\item It shows that the rewriting relation for a closed system
  is functorial and is characterized using squares in a double
  category.
\item A closed system decomposes into open systems, and
  simplifying each open system simplifies the composite
  closed system.  
\item Open systems provide a local perspective on the closed
  perspective via this decomposition.
\end{enumerate}

There are two main thrusts to this proof.  The first
generalizes a classification of formal graph grammars given
by Ehrig, et.~al. \cite{ehrig_graph-grammars}. This is
Theorem \ref{thm:production-same-rewrite-relation-as-discrete}.
Gadducci and Heckel proved this in the case of graphs
\cite{gadducci_ind-graph-transf}, but our result generalizes
this to structured cospans. Our proof mirrors theirs.

\section{Expressiveness of underlying discrete grammars}
\label{sec:gen-result-graph-rewriting}

As mentioned above, we want to decompose closed systems into
open systems.  We did not yet mention which open systems are
available to use. This depends on context.  That is,
whatever type of system one has, there is an appropriate
grammar stipulated by a theory that describes that
system. To illustrate, for an electrical system, a
corresponding grammar would have rules for adding resistors
in series, or adding the reciprocal of resistors in
parallel. Therefore, our starting data is a grammar
$ ( \X , P ) $---a topos $ \X $ and a set of fine rewrite
rules $ P \bydef \{ \ell_j \gets k_j \to r_j \} $---plus a
closed system $ x $ in $ \X $.  Eventually entering the
story is a topos $ \A $ of input types and an adjunction
between $ \A $ and $ \X $.  For now, however, we focus on the
set of rewrite rules $ P $.

We can prove the main result of this section, Theorem
\ref{thm:production-same-rewrite-relation-as-discrete}, by
controlling the form of the rewrite rules.  In particular,
we want the intermediary of the rules, the $ k_j $'s, to be
`discrete'. In what follows, we discuss what we mean by
`discrete' and show that the grammar obtained by
discretizing $ ( \X,P ) $ is just as expressive as
$ ( \X,P ) $, by which we mean that the induced rewriting
relations are equal. This result generalizes a
characterization of discrete \emph{graph grammars} given by
Ehrig, et.~al. \cite[Prop.~3.3]{ehrig_graph-grammars}.

Our concept of `discreteness' is borrowed from the flat
modality on a local topos. However, we avoid the lengthy
detour required to discuss the `flat modality' and a `local
topos'. The background does not add to our story, so we
point curious readers elsewhere
\cite[Ch.~C3.6]{johnstone_elephant}. By avoiding that
detour, we instead require the concept of a comonad, which
we present in Definition \ref{def:(co)monad}.

To start our discussion on discreteness, we define a
`discrete comonad'. The definition is straightforward enough,
but its purpose may seem alien at first. After the
definition, we explain its role in rewriting structured
cospans.

\begin{definition}[Discrete comonad]
  \label{def:discrete-comonad}
  A comonad on a topos is called \defn{discrete} if its
  counit is monic. We use $ \flat $ to denote a discrete comonad.  
\end{definition}

Secretly, we have been working with a discrete comonad all
along. The adjunction
\[
  \adjunction{\Set}{\RGraph}{L}{R}{4}
\]
induces the comonad $ LR $ on $ \RGraph $.  Applying $ LR $ to
a graph $ x $ returns the edgeless graph underlying $
x $, hence the term `discrete'. For example
\begin{center}
  \begin{tikzpicture}
    \begin{scope}
      \node (a) at (0,2) {$ \bullet $};
      \node (b) at (2,1) {$ \bullet $};
      \node (c) at (0,0) {$ \bullet $};
      \draw [graph] 
        (a) edge[] (b)
        (a) edge[] (c)
        (b) edge[] (c);
      \draw [rounded corners] (-1,-1) rectangle (3,3);
    \end{scope}
    \begin{scope}[shift={(6,0)}]
      \node (a) at (0,2) {$ \bullet $};
      \node (b) at (2,1) {$ \bullet $};
      \node (c) at (0,0) {$ \bullet $};
      \draw [rounded corners] (-1,-1) rectangle (3,3);
    \end{scope}
    \draw [|->] (3.5,1) to node[above]{$ LR $} (4.5,1);
  \end{tikzpicture}
\end{center}
The counit $ \epsilon_x \from LRx \to x $ of the comonad
$ LR $ includes the underlying edgeless graph $ LRx $ into
the original graph $ x $. For example
\begin{center}
  \begin{tikzpicture}
    \begin{scope}
      \node (a) at (0,2) {$ \bullet $};
      \node (b) at (2,1) {$ \bullet $};
      \node (c) at (0,0) {$ \bullet $};
      \draw [rounded corners] (-1,-1) rectangle (3,3);
    \end{scope}
    \begin{scope}[shift={(6,0)}]
      \node (a) at (0,2) {$ \bullet $};
      \node (b) at (2,1) {$ \bullet $};
      \node (c) at (0,0) {$ \bullet $};
      \draw [graph] 
        (a) edge[] (b)
        (a) edge[] (c)
        (b) edge[] (c);
      \draw [rounded corners] (-1,-1) rectangle (3,3);
    \end{scope}
    \draw [->]
      (3.5,1) to node[above]{$ \epsilon $} (4.5,1);
  \end{tikzpicture}
\end{center}

Abstractly, this inclusion is why we ask for the counit to be monic. The
property we capture with a discrete comonad comes from the
systems interpretation of the adjunctions
\[
  \adjunction{\A}{\X}{L}{R}{2}
\]
between topoi. That is, $ R $ takes a system $ x $,
identifies the largest sub-system that can serve as an
interface and turns that sub-system into an interface type
$ Rx $. Then $ L $ takes that interface type and turns it
back into a system $ LRx $. This process effectively strips
away every part of a system leaving only those parts that
can connect to the outside world. That means $ LRx $ is a
part of $ x $ or, in the parlance of category theory,
$ LRx $ is a subobject of $ x $.  Hence, we ask for a monic
counit.

How do we plan to use discrete comonads?  We use them to
control the form of our grammars.  In general, a rewrite
rule has form
\[
  \ell \gets k \to r
\]
where there are no restrictions on what $ k $ can be.
However, recall that $ k $ identifies the part of $ \ell $
that is fixed throughout the rewrite. It does not direct how
the rewrite is performed.  Therefore, we can deform it a bit
without changing the outcome of the applying the rewrite.
In particular, we can discretize it by replacing $ k $ with
$ \flat k $.  And because $ \flat $ has a monic counit, we
can insert $ \flat k $ right into the middle of the
fine rewrite rule.

\begin{definition}[Discrete grammar]
  Given a grammar $ ( \X , P ) $, define the set $ P_\flat $
  as consisting of the rules
  \[
    \ell \gets k \gets \flat k \to k \to r
  \]
  for each rule $ \ell \gets k \to r $ in $ P $. We call $
  ( X , P_\flat ) $ the \defn{discrete grammar} underlying $
  ( \X, P ) $.
\end{definition}

Discrete grammars are easier to work with than arbitrary
grammars. So when given an opportunity to work with a
discrete grammar instead of a non-discrete grammar, we
should take it. Theorem
\ref{thm:production-same-rewrite-relation-as-discrete} gives
a sufficient condition that allows us to swap $ ( \X,P ) $
for $ ( \X,P_\flat ) $ without consequence. To prove this,
however, we borrow from lattice theory which requires that
we make a brief turn to fill in some required background.

\begin{definition}[Lattice]
  A lattice is a poset $ ( S, \leq ) $ equipped with all
  finite joins $ \bigvee $ and all finite meets
  $ \bigwedge $. It follows that there is a minimal element
  and maximal element, realized as the empty meet and join
  respectively, which we denote by $ 0 $ and $ 1 $.
\end{definition}

Joins and meets are also known as suprema and infima. We are
using the definition of a lattice common in the category
theory literature. This leaves out objects that some
mathematicians might consider lattices. Below we give one
counter-example and several examples of lattices, the last
one being the most relevant.

\begin{example}[Integer Lattice]
  The integers with the usual ordering $ \leq $ do not form
  a lattice because there is no minimal or maximal element.
\end{example}

\begin{example}[Lattice of power sets]
  For any set $ S $, its powerset $ \mathcal{P}S $ is a
  poset via subset inclusion. The powerset becomes a lattice
  by taking join to be union $ a \vee b \bydef a \cup b $,
  and meet to be intersection $ a \wedge b \bydef a \cap b
  $. In general, union and intersection are defined over
  arbitrary sets, thus realizing arbitrary joins
  $ \bigvee a_\alpha $ and arbitrary meets
  $ \bigwedge a_\alpha $. 
\end{example}

Those few examples provide intuition about lattices, but the
next example is the most important lattice for us. It is the
mechanism by which the power set is generalized into topos
theory. It is called the subobject lattice.

\begin{example}[Subobject lattice]
  Let $ \T $ be a topos and $ t $ be an object. There is a
  lattice $ \Sub (t) $ called the subobject lattice of
  $ t $.  The elements of $ \Sub (t) $ are called subobjects.
  They are isomorphism classes of monomorphisms into $ t
  $. Here, two monomorphisms $ f,g $ into $ t $ are isomorphic if
  there is a commuting diagram
  \begin{center}
    \begin{tikzpicture}
      \node (a) at (-1,2) {$ a $};
      \node (b) at (1,2) {$ b $};
      \node (t) at (0,0) {$ t $};
      \draw [cd] 
      (a) edge[] node[left]{$ f $} (t)
      (b) edge[] node[right]{$ g $} (t)
      (a) edge[] node[above]{$ \iso $} (b); 
    \end{tikzpicture}
  \end{center}
  The order on $ \Sub (t) $ is given by $ f \leq g $ if
  $ f $ factors through $ g $, meaning there is an arrow
  $ h \from a \to b $ such that $ f = gh $. Note that $ h $
  is necessarily monic. The meet operation in $ \Sub (t) $ is given by
  pullback
  \begin{center}
    \begin{tikzpicture}
      \node (ab) at (0,2) {$ a \vee b $};
      \node (a)  at (0,0) {$ a $};
      \node (b)  at (2,2) {$ b $};
      \node (t)  at (2,0) {$ t $};
      \draw [cd] 
      (ab) edge[] (a)
      (ab) edge[] (b)
      (a)  edge[] (t)
      (b)  edge[] (t);
      \draw [->,dashed] (ab) edge (t) ;
      \draw (0.3,1.6) -- (0.4,1.6) -- (0.4,1.7);
    \end{tikzpicture}
  \end{center}
  and join is given by pushout over the meet
  \begin{center}
    \begin{tikzpicture}
      \node (ab) at (0,2) {$ a \vee b $};
      \node (a)  at (0,0) {$ a $};
      \node (b)  at (2,2) {$ b $};
      \node (t)  at (2,0) {$ a \wedge b $};
      \node (tt) at (4,-2) {$ t $};
      \draw [cd] 
      (ab) edge[]           (a)
      (ab) edge[]           (b)
      (a)  edge[]           (t)
      (b)  edge[]           (t)
      (a)  edge[bend right] (tt)
      (b)  edge[bend left]  (tt);
      \draw [->,dashed] (t) edge (tt) ;
      \draw (1.7,0.4) -- (1.6,0.4) -- (1.6,0.3);
    \end{tikzpicture}
  \end{center}
\end{example}

We use subobject lattices to characterize which grammars are
as expressive as their underlying discrete grammars. To do
this, we require subobject lattices with arbitrary meets.  The
powerset lattice mentioned above has this property, but
when do subobject lattices have this property?  Here are
several sufficient conditions, starting with a well-known
result coming from the domain of order theory.

\begin{proposition} \label{thm:lattice-alljoins-allmeets}
  Any lattice that has all joins also has all meets.
\end{proposition}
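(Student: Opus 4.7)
The plan is to construct meets directly from joins by taking the join of the set of lower bounds. Fix a lattice $L$ with all joins (in particular arbitrary, not merely finite, joins, since this is what is needed to derive arbitrary meets in the application to $\Sub(t)$). For any subset $S \subseteq L$, I would introduce the set of lower bounds
\[
  S^\flat \bydef \{ x \in L : x \leq s \text{ for all } s \in S \}.
\]
Since $L$ has all joins, the element $m \bydef \bigvee S^\flat$ exists in $L$, and the claim is that $m = \bigwedge S$.

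The verification splits into two forward steps. First, I would show $m$ is itself a lower bound of $S$: for each fixed $s \in S$, every $x \in S^\flat$ satisfies $x \leq s$, so $s$ is an upper bound of $S^\flat$, and hence $m = \bigvee S^\flat \leq s$ by the universal property of the join. Second, I would show $m$ is the greatest lower bound: if $y \in L$ satisfies $y \leq s$ for all $s \in S$, then by definition $y \in S^\flat$, so $y \leq \bigvee S^\flat = m$. These two properties together say exactly that $m$ is the meet of $S$.

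There is no serious obstacle here; the argument is essentially a one-paragraph standard fact from order theory (the same construction shows that a complete join-semilattice is automatically a complete lattice). The only subtlety worth flagging is the empty case: taking $S = \emptyset$ gives $S^\flat = L$, whose join is the top element $1$, correctly recovering the empty meet. Thus the construction handles all subsets uniformly, completing the proof.
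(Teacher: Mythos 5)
Your proof is correct and uses exactly the same construction as the paper: defining $\bigwedge S$ as the join of the set of lower bounds of $S$. You simply spell out the two verification steps and the empty case, which the paper leaves implicit.
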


\begin{proof}
  Consider a subset $ S $ of a lattice. Define the meet of $
  S $ to be the join of the set of all lower bounds of $ S $.
\end{proof}

\begin{proposition} \label{thm:subob-arbitrary-meets}
  Consider a topos $ \T $ and object $ t $.  The subobject
  lattice $ \Sub (t) $ has arbitrary meets when the over
  category $ T \downarrow t $ has all products.
\end{proposition}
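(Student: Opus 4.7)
The plan is to identify arbitrary meets in $\Sub(t)$ with wide pullbacks over $t$, and to exploit the hypothesis by recognizing these wide pullbacks as products in the slice $\T \downarrow t$. Recall that for objects $a \to t$ and $b \to t$ of $\T \downarrow t$, their product in the slice is computed as the pullback $a \times_t b$ in $\T$; the same formula, suitably generalized, identifies arbitrary products in $\T \downarrow t$ with arbitrary wide pullbacks in $\T$ over $t$.

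Given this, I would proceed as follows. Fix a family $\{m_\alpha \from a_\alpha \monicto t\}_{\alpha \in I}$ of representatives of subobjects of $t$, and view each $m_\alpha$ as an object of $\T \downarrow t$. By hypothesis the product $\prod_\alpha (a_\alpha, m_\alpha)$ exists in $\T \downarrow t$; call it $(p, \mu \from p \to t)$, with structural projections $\pi_\alpha \from p \to a_\alpha$ satisfying $m_\alpha \pi_\alpha = \mu$ for every $\alpha$. The key step is to verify that $\mu$ is monic, so that $[\mu]$ is a genuine subobject of $t$. Suppose $f, g \from x \to p$ satisfy $\mu f = \mu g$. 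Then for each $\alpha$, $m_\alpha \pi_\alpha f = \mu f = \mu g = m_\alpha \pi_\alpha g$, and since $m_\alpha$ is monic we get $\pi_\alpha f = \pi_\alpha g$. The universal property of the product in $\T \downarrow t$ then forces $f = g$.

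Finally, I would check that $[\mu]$ is the meet $\bigwedge_\alpha [m_\alpha]$ in $\Sub(t)$. The identities $m_\alpha \pi_\alpha = \mu$ exhibit $[\mu] \leq [m_\alpha]$ for every $\alpha$. Conversely, if $[n \from s \monicto t]$ is any subobject with $[n] \leq [m_\alpha]$ for all $\alpha$, pick factorizations $h_\alpha \from s \to a_\alpha$ with $m_\alpha h_\alpha = n$; the family $\{h_\alpha\}$ together with $n$ forms a cone over $\{m_\alpha\}$ in $\T \downarrow t$, so by the product property there is a unique $h \from s \to p$ with $\pi_\alpha h = h_\alpha$ and $\mu h = n$. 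This exhibits $[n] \leq [\mu]$, completing the proof.

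The substantive point is really just the monomorphism verification in the second paragraph; everything else is a routine translation between slice-category products and subobject lattices, and I do not expect genuine obstacles.
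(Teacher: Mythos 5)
Your proof is correct, and it takes a more direct and slightly more elementary route than the paper's. The paper invokes the fact that $\T \downarrow t$ is a topos (hence has equalizers), combines that with the hypothesized products to conclude the slice has \emph{all} limits, and then deduces that $\Sub(t)$ has all meets. That argument does more than is needed: arbitrary meets of subobjects of $t$ correspond precisely to wide pullbacks over $t$, and these are exactly products in $\T \downarrow t$, so neither the equalizers nor the topos structure of the slice is ever used. Moreover, the paper leaves tacit the verification that the structure map of such a product is again monic, which is the one genuine thing to check; your argument supplies exactly that verification, using only the stated hypothesis. One small tightening in your monicity step: to conclude $f = g$ from $\pi_\alpha f = \pi_\alpha g$ by uniqueness in the product, first equip $x$ with the map $u \bydef \mu f = \mu g$ to $t$, so that $f$ and $g$ become parallel arrows $(x,u) \to (p,\mu)$ in the slice; then uniqueness applies. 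The empty family is also handled automatically, since the empty product in $\T \downarrow t$ is $(t, \id_t)$, whose structure map is trivially monic.
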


\begin{proof}
  Because $ T \downarrow t $ is a topos, it has
  equalizers. Thus giving it all products ensures the
  existence of all limits, hence meets. 
\end{proof}

\begin{corollary}
  Consider a topos $ \T $ and object $ t $.  The subobject
  lattice $ \Sub (t) $ has arbitrary meets when the over
  category $ T \downarrow t $ has all coproducts.
\end{corollary}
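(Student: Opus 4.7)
The plan is to pass from coproducts in $\T \downarrow t$ to arbitrary joins in $\Sub(t)$, and then invoke Proposition \ref{thm:lattice-alljoins-allmeets} to get arbitrary meets for free. So the heart of the argument is showing that the coproduct structure on $\T \downarrow t$ induces joins on the sub-poset of monomorphisms into $t$.

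Concretely, given a family of subobjects $\{ a_i \monicto t \}_{i \in I}$, I would form their coproduct $\coprod_i a_i \to t$ in $\T \downarrow t$, which by hypothesis exists. Since $\T$ is a topos, this arrow admits an (epi, mono) factorization
\[
  \coprod_i a_i \twoheadrightarrow \bigvee_i a_i \monicto t,
\]
and I would take the resulting subobject $\bigvee_i a_i \monicto t$ as the candidate join. Each coproduct injection $a_i \to \coprod_i a_i$ composed with the epimorphism produces a factorization of $a_i \monicto t$ through $\bigvee_i a_i \monicto t$, so $\bigvee_i a_i$ is indeed an upper bound for the family.

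To verify universality, suppose $b \monicto t$ is another upper bound, so each $a_i \monicto t$ factors through $b$. The universal property of the coproduct yields a single arrow $\coprod_i a_i \to b$ over $t$, and composing with $b \monicto t$ reproduces $\coprod_i a_i \to t$. Uniqueness of the (epi, mono) factorization then forces $\bigvee_i a_i \monicto t$ to factor through $b \monicto t$, establishing that $\bigvee_i a_i$ is the least upper bound. With arbitrary joins in hand, Proposition \ref{thm:lattice-alljoins-allmeets} supplies arbitrary meets, completing the proof.

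The only mildly delicate point is the comparison between the coproduct taken in $\T \downarrow t$ and the coproduct taken in $\T$; but since the forgetful functor $\T \downarrow t \to \T$ creates connected colimits (and even all colimits, via the canonical map), this can be handled by appealing directly to the universal property in $\T \downarrow t$ throughout, without needing to compute either coproduct explicitly. I expect this bookkeeping to be the main, though still routine, obstacle.
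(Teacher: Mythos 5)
Your proof is correct and follows essentially the same strategy as the paper: reduce to arbitrary joins and then invoke Proposition~\ref{thm:lattice-alljoins-allmeets}. The only real difference is that you construct joins explicitly via the (epi, mono) image factorization of $\coprod_i a_i \to t$, whereas the paper's one-line proof implicitly appeals to a dualized form of Proposition~\ref{thm:subob-arbitrary-meets} (coproducts together with coequalizers give all colimits, hence joins); your route is slightly more direct since it never invokes coequalizers, but both are standard in a topos. Your bookkeeping remark about the slice-to-base forgetful functor is also correct and, as you anticipated, harmless.
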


\begin{proof}
  Combine Propositions \ref{thm:lattice-alljoins-allmeets}
  and \ref{thm:subob-arbitrary-meets}.
\end{proof}

\begin{corollary}
  Consider a presheaf category $ \Set^{ \C^{\op} } $ on a
  small category $ \C $. For any presheaf $ x $,
  $ \Sub (x) $ has all meets.
\end{corollary}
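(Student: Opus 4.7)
The plan is to reduce this to the previous corollary by showing that for any presheaf $x$, the slice category $\Set^{\C^{\op}} \downarrow x$ has all (small) coproducts. Once that is established, the previous corollary immediately gives arbitrary meets in $\Sub(x)$.

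First I would recall that presheaf categories are cocomplete: given any small diagram in $\Set^{\C^{\op}}$, its colimit is computed objectwise in $\Set$, and $\Set$ has all small colimits. In particular $\Set^{\C^{\op}}$ has all small coproducts, built pointwise via disjoint union. Next I would lift this to the slice. Given any family $\{f_i \from a_i \to x\}_{i \in I}$ of objects of $\Set^{\C^{\op}} \downarrow x$, form the coproduct $\coprod_{i \in I} a_i$ in $\Set^{\C^{\op}}$ and use the universal property of the coproduct to obtain a canonical arrow $[f_i]_{i \in I} \from \coprod_{i \in I} a_i \to x$. This object $\bigl(\coprod_i a_i,\ [f_i]\bigr)$, together with the coproduct inclusions viewed as arrows in the slice, is straightforwardly checked to satisfy the universal property of a coproduct in $\Set^{\C^{\op}} \downarrow x$; any cocone in the slice is a cocone in $\Set^{\C^{\op}}$ compatible with the maps to $x$, so the universal map from $\coprod_i a_i$ in $\Set^{\C^{\op}}$ automatically respects the projection to $x$.

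Having shown that $\Set^{\C^{\op}} \downarrow x$ has all small coproducts, I would then invoke the preceding corollary directly: since $\Set^{\C^{\op}}$ is a topos (this is Grothendieck's theorem on presheaf topoi, assumed from the appendix on topoi) and the slice $\Set^{\C^{\op}} \downarrow x$ has all coproducts, $\Sub(x)$ has all meets.

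There is essentially no main obstacle here, since both ingredients are standard: cocompleteness of presheaf categories and the fact that slices of cocomplete categories inherit colimits. The only point that requires a sentence of care is verifying that the coproduct in the slice really is given by the coproduct in $\Set^{\C^{\op}}$ equipped with the induced map, but this is immediate from the universal property of the coproduct in $\Set^{\C^{\op}}$.
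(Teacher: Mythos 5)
Your proof is correct, but it follows a genuinely different route from the paper's. The paper proves this corollary by invoking Theorem \ref{thm:presheaf-slice-is-presheaf}: the slice $\Set^{\C^{\op}} \downarrow x$ is equivalent to the presheaf category $\Set^{(\int^x \C)^{\op}}$, hence in particular has all products, and then the conclusion follows from Proposition \ref{thm:subob-arbitrary-meets}. You instead bypass the category-of-elements equivalence entirely: you observe that presheaf categories are cocomplete and that the forgetful functor from a slice creates colimits, so $\Set^{\C^{\op}} \downarrow x$ has all small coproducts, and then you invoke the immediately preceding corollary (the coproduct version). Both arguments are sound. Yours is arguably more elementary, since cocompleteness of presheaf categories and the fact that slices inherit colimits are standard facts requiring no theorem about the structure of the slice; the paper's route is heavier but yields the stronger structural fact that the slice is itself a presheaf topos, which is reused elsewhere (e.g.\ in the ZX-calculus chapter). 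Note, too, that the corollary you invoke is itself deduced in the paper from the product proposition via the all-joins-imply-all-meets lemma, so the two arguments ultimately rest on the same completeness result for the slice, approached from opposite sides.
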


\begin{proof}
  The category $ \Set^{ \C^{\op} } \downarrow x $ of
  presheaves over $ x $ is again a presheaf category by
  Theorem \ref{thm:presheaf-slice-is-presheaf} so has all
  products.
\end{proof}

At last, we combine the discrete comonad, the discrete
grammar, and the complete subobject lattice into a result on
the expressiveness on discrete grammars.

\begin{theorem}
  \label{thm:production-same-rewrite-relation-as-discrete}  
  Let $ \T $ be a topos and $ \flat \from \T \to \T $ be a
  discrete comonad.  Let $ ( \T , P ) $ be a grammar such
  that for every rule $ \ell \gets k \to r $ in $ P $, the
  subobject lattice $ \Sub (k) $ has all meets. Then the
  rewriting relation for $ ( \T , P ) $ equals the
  rewriting relation for the underlying discrete grammar
  $ ( \T,P_\flat ) $.
\end{theorem}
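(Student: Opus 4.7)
The plan is to establish the theorem by showing both directions of containment of the one-step rewriting relations $\dderiv{}{}$, from which equality of their reflexive-transitive closures $\deriv{}{}$ follows immediately.

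For the forward direction (a $P$-rewrite is a $P_\flat$-rewrite in one step), I would start with a DPO diagram witnessing $\dderiv{g}{h}$ via some rule $\ell \gets k \to r$ in $P$, with pushout complement $d$ so that $g \cong \ell +_k d$ and $h \cong r +_k d$. The discrete rule is $\ell \gets \flat k \to r$, whose legs are the composites through the counit $\epsilon \from \flat k \to k$. I would propose as a pushout complement the object $d_\flat \bydef k +_{\flat k} d$, formed from the span $k \xgets{\epsilon} \flat k \to d$. By iterated pushout pasting (using that $\flat k \to k \to \ell$ and $\flat k \to k \to d$ factor through $k$), the left square for the discrete rule satisfies $\ell +_{\flat k} d_\flat \cong \ell +_k d \cong g$, and similarly the right square yields $r +_{\flat k} d_\flat \cong r +_k d \cong h$. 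This produces the required DPO in $(\T, P_\flat)$.

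For the backward direction, given a DPO using a rule $\ell \gets \flat k \to r$ in $P_\flat$ with pushout complement $d_\flat$, I would set $d \bydef k +_{\flat k} d_\flat$ and argue symmetrically that $\ell +_k d \cong g$ and $r +_k d \cong h$ by pushout pasting. Here the monicity of $\epsilon$ from discreteness of $\flat$ ensures the pushout $d$ preserves the subobject structure inherited from $d_\flat$, so the constructed square is a genuine pushout complement and the resulting DPO witnesses $\dderiv{g}{h}$ in the original grammar $(\T, P)$.

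The main obstacle is the second direction: verifying that $d \bydef k +_{\flat k} d_\flat$ really is a pushout complement for $k \to \ell \to g$, rather than merely providing an object sitting in the right diagram. This requires combining the adhesivity of the topos $\T$ with a van Kampen-style argument for the cube of pushouts relating $\flat k, k, d_\flat, d, \ell, g$; it is precisely here that the hypothesis on $\Sub(k)$ having all meets is invoked, ensuring the relevant subobjects of $k$ (the images of $\flat k$ and of the various maps from $k$ into $d$ and $\ell$) have a well-behaved intersection that identifies the pushout complement canonically. The rest of the argument reduces to tracing commuting diagrams and applying Lemma~\ref{lem:adhesive-properties} on the stability of pushouts over monics in topoi.
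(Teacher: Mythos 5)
Your backward direction (from a $P_\flat$-derivation to a $P$-derivation) is essentially the paper's own argument: you define $d \bydef k +_{\flat k} d_\flat$ by pushout and argue via pasting that the resulting squares are pushouts, which the paper carries out with a cube diagram. That part is sound.

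The forward direction, however, contains a genuine error. You propose $d_\flat \bydef k +_{\flat k} d$ as the pushout complement for the discrete rule and claim $\ell +_{\flat k} d_\flat \cong \ell +_k d \cong g$. This is false, and the pasting you invoke does not deliver it. Pasting the pushout square defining $d_\flat$ with the square $\ell +_k d_\flat$ yields only $\ell +_k d_\flat \cong \ell +_{\flat k} d$; neither side is $g$ in general. A concrete counterexample in $\Set$: take $\ell = \{a,b,c,p,q\}$, $k = \{a,b,c\}$, $d = \{a,b,c,x,y\}$, so $g = \ell +_k d = \{a,b,c,p,q,x,y\}$ has $7$ elements; with $\flat k = \{a\}$, your $d_\flat = k +_{\flat k} d$ already has $7$ elements (two copies each of $b$ and $c$), and $\ell +_{\flat k} d_\flat$ has $11$ elements, not $7$.

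The underlying conceptual point is that the direction of the construction is wrong. Since $\flat k \leq k$, the discrete rule keeps \emph{less} fixed, so its pushout complement must be a \emph{subobject} of $d$, not something larger. What you need is an object $w \leq d$ with $w \vee k = d$ and $w \wedge k = \flat k$, i.e.~a pushout complement of $\flat k \monicto k$ along $k \monicto d$. Such a complement need not exist in a topos, and the lattice-completeness hypothesis of the theorem is invoked precisely here (in the forward direction, not the backward one as you suggest), to define $w \bydef \bigl( \bigwedge \{ z : z \vee k = d \} \bigr) \vee \flat k$. In the $\Set$ example above this gives $w = \{a,x,y\}$, which indeed satisfies $\ell +_{\flat k} w \cong g$. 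So the missing idea is the lattice-theoretic construction of the complement; a pushout alone cannot produce it.
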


\begin{proof}
  Suppose that $ ( \T,P ) $ induces $ \dderiv{g}{h} $. That
  means there exists a rule $ \spn{\ell}{k}{r} $ in $ P $
  and a derivation
  \begin{equation} \label{eq:prod-rewrite-1}
  \begin{tikzpicture}
    \node (1t) at (0,2) {$ \ell $};
    \node (2t) at (2,2) {$ k $};
    \node (3t) at (4,2) {$ r $};
    \node (1b) at (0,0) {$ g $};
    \node (2b) at (2,0) {$ d $};
    \node (3b) at (4,0) {$ h $};
    \draw [cd]
      (2t) edge (1t)
      (2t) edge (3t)
      (2b) edge (1b)
      (2b) edge (3b)
      (1t) edge (1b)
      (2t) edge (2b)
      (3t) edge (3b);
      \draw (0.3,0.4) -- (0.4,0.4) -- (0.4,0.3);
      \draw (3.7,0.4) -- (3.6,0.4) -- (3.6,0.3);
    \end{tikzpicture}
  \end{equation}
  we can achieve that same derivation using rules in
  $ P_\flat $. This requires we build a pushout complement
  $ w $ of the diagram
  \begin{center}
    \begin{tikzpicture}
      \node (k) at (0,2) {$ k $};
      \node (bk) at (2,2) {$ \flat k $};
      \node (d) at (0,0) {$ d $};
      \draw [cd]
        (bk) edge[] node[above]{$ \epsilon $} (k)
        (k) edge[] (d); 
    \end{tikzpicture}
  \end{center}
  Define
  \[
    w \coloneqq
    \bigwedge \{ z \colon z \vee k = d \} \vee
    \flat k,
  \]
  This comes with inclusions $ \flat k \to w $ and
  $ w \to d $. This $ w $ exists because $ \Sub (k) $ has
  all meets.  Note that $ w \vee k = d $ and
  $ w \wedge k = \flat k $ which means that
  \begin{center}
    \begin{tikzpicture}
      \node (k) at (0,2) {$ k $};
      \node (bk) at (2,2) {$ \flat k $};
      \node (d) at (0,0) {$ d $};
      \node (w) at (2,0) {$ w $};
      \draw [cd] 
      (bk) edge[] (k)
      (bk) edge[] (w)
      (k) edge[] (d)
      (w) edge[] (d);
      \draw (0.3,0.4) -- (0.4,0.4) -- (0.4,0.3);
    \end{tikzpicture}
  \end{center}
  is a pushout. It follows that there is a derivation
  \begin{equation} \label{eq:prod-rewrite-2}
  \begin{tikzpicture}
    \node (01) at (0,2) {$ \ell $};
    \node (11) at (2,2) {$ k $};
    \node (21) at (4,2) {$ \flat k $};
    \node (31) at (6,2) {$ k $};
    \node (41) at (8,2) {$ r $};
    \node (00) at (0,0) {$ g $};
    \node (10) at (2,0) {$ d $};
    \node (20) at (4,0) {$ w $};
    \node (30) at (6,0) {$ d $};
    \node (40) at (8,0) {$ h $};
    \draw [cd] (11) to node [] {\scriptsize{$  $}} (01);
    \draw [cd] (21) to node [] {\scriptsize{$  $}} (11);
    \draw [cd] (21) to node [] {\scriptsize{$  $}} (31);
    \draw [cd] (31) to node [] {\scriptsize{$  $}} (41);
    \draw [cd] (10) to node [] {\scriptsize{$  $}} (00);
    \draw [cd] (20) to node [] {\scriptsize{$  $}} (10);
    \draw [cd] (20) to node [] {\scriptsize{$  $}} (30);
    \draw [cd] (30) to node [] {\scriptsize{$  $}} (40);
    \draw [cd] (01) to node [] {\scriptsize{$  $}} (00);
    \draw [cd] (11) to node [] {\scriptsize{$  $}} (10);
    \draw [cd] (21) to node [] {\scriptsize{$  $}} (20);
    \draw [cd] (31) to node [] {\scriptsize{$  $}} (30);
    \draw [cd] (41) to node [] {\scriptsize{$  $}} (40);
    \draw (0.3,0.4) -- (0.4,0.4) -- (0.4,0.3);
    \draw (2.3,0.4) -- (2.4,0.4) -- (2.4,0.3);
    \draw (5.7,0.4) -- (5.6,0.4) -- (5.6,0.3);
    \draw (7.7,0.4) -- (7.6,0.4) -- (7.6,0.3);
  \end{tikzpicture}
\end{equation}
with respect to $ P_\flat $ because, the top row is a rule in
$ P_\flat $.  Therefore, $ \dderiv{g}{h} $ via $ P $ in
Diagram \eqref{eq:prod-rewrite-1} implies that
$ \deriv{g}{h} $ via $ P_\flat $ as shown in Diagram
\eqref{eq:prod-rewrite-2}.

For the other direction, suppose $ \dderiv{g}{h} $ via $
P_\flat $, giving a derivation
\begin{equation}
  \label{eq:prod-rewrite-3}
  \begin{tikzpicture}
    \node (1t) at (0,2) {$ \ell $};
    \node (2t) at (2,2) {$ \flat k $};
    \node (3t) at (4,2) {$ r $};
    \node (1b) at (0,0) {$ g $};
    \node (2b) at (2,0) {$ d $};
    \node (3b) at (4,0) {$ h $};
    \draw [cd]
      (2t) edge                          (1t)
      (2t) edge                          (3t)
      (2b) edge node[below]{$ \psi $}    (1b)
      (2b) edge                          (3b)
      (1t) edge node[left]{$ m $}        (1b)
      (2t) edge node[left]{$ \theta $}   (2b)
      (3t) edge node[right]{$ m' $}      (3b);
      \draw (0.3,0.4) -- (0.4,0.4) -- (0.4,0.3);
      \draw (3.7,0.4) -- (3.6,0.4) -- (3.6,0.3);
  \end{tikzpicture}
\end{equation}
By construction of $ P_\flat $, the rule
$ \spn{\ell}{\flat k}{r} $ in $ P_\flat $ was induced from a
rule
\[
  \ell \xgets{\tau} k \to r
\]
in $ P $, meaning that the map $ \flat k \to \ell $ factors
through $ \tau $. Next, define $ d' $ to be the pushout of the diagram
\begin{center}
  \begin{tikzpicture}
    \node (flatk) at (0,2) {$ \flat k $};
    \node (k)     at (2,2) {$ k $};
    \node (d)     at (0,0) {$ d $};
    \node (d')    at (2,0) {$ d' $};
    \draw [cd] 
    (flatk) edge[] node[above]{$ \epsilon $}       (k)
    (flatk) edge[] node[left]{$ \theta $}          (d)
    (d)     edge[] node[below]{$ \hat{\epsilon} $} (d')
    (k)     edge[] node[right]{$ \hat{\theta} $}   (d');
    \draw (1.7,0.4) -- (1.6,0.4) -- (1.6,0.3);
  \end{tikzpicture}
\end{center}
By invoking the universal property of this pushout with the
maps
\[
  \psi \from d \to g
  \quad \text{and} \quad
  m \tau \from k \to \ell \to g,
\]
we get a canonical map $ d' \to g $ that we can fit into a
commuting diagram
\begin{center}
  \begin{tikzpicture}
    \node (l) at (0,2) {$ \ell $};
    \node (g) at (0,0) {$ g $};
    \node (k) at (2,2) {$ k $};
    \node (d') at (2,0) {$ d' $};
    \node (flatk) at (1,3) {$ \flat k $};
    \node (d) at (1,1) {$ d $};
    \draw [cd] 
      (flatk) edge[] (l)
      (flatk) edge[] node[right]{$ \epsilon $} (k)
      (flatk) edge[] node[left,pos=0.7]{$ \theta $} (d)
      (d) edge[] node[below,pos=0.3]{$ \psi $}     (g)
      (d) edge[] node[below,pos=0.3]{$ \hat{\epsilon} $} (d')
      (l) edge[] node[left]{$ m $} (g)
      (k) edge[] node[right]{$ \hat{\theta} $} (d')
      (d') edge[] (g);
    \draw [white,line width=0.5em] (k) -- (l);
    \draw [cd] (k) to node[above,pos=0.3]{$ \tau $} (l);
    \draw (0.2,0.6) -- (0.3,0.7) -- (0.3,0.6);
    \draw (1.8,0.6) -- (1.7,0.7) -- (1.7,0.6);
  \end{tikzpicture}
\end{center}
whose back faces are pushouts. Using a standard diagram
chasing argument, we can show that the front face is also a
pushout.  Similarly, the square
\begin{center}
  \begin{tikzpicture}
    \node (k) at (0,2) {$ k $};
    \node (r) at (2,2) {$ r $};
    \node (d') at (0,0) {$ d' $};
    \node (h) at (2,0) {$ h $};
    \draw [cd] 
    (k) edge[] (r)
    (k) edge[] (d')
    (d') edge[] (h)
    (r) edge[] (h);
    \draw (1.7,0.4) -- (1.6,0.4) -- (1.6,0.3);
  \end{tikzpicture}
\end{center}
is a pushout.  Sticking these two pushouts together
\begin{center}
  \begin{tikzpicture}
    \node (1t) at (0,2) {$ \ell $};
    \node (2t) at (2,2) {$ k $};
    \node (3t) at (4,2) {$ r $};
    \node (1b) at (0,0) {$ g $};
    \node (2b) at (2,0) {$ d' $};
    \node (3b) at (4,0) {$ h $};
    \draw [cd]
      (2t) edge                     (1t)
      (2t) edge                     (3t)
      (2b) edge                     (1b)
      (2b) edge                     (3b)
      (1t) edge node[left]{$ m $}   (1b)
      (2t) edge node[left]{$ f $}   (2b)
      (3t) edge node[right]{$ m' $} (3b);
    \draw (0.3,0.4) -- (0.4,0.4) -- (0.4,0.3);
    \draw (3.7,0.4) -- (3.6,0.4) -- (3.6,0.3);
  \end{tikzpicture}
\end{center}
shows that $ \dderiv{g}{h} $ arises from $ P $.

Because the relation $ \dderiv{}{} $ is the same for
$ P $ and $ P_\flat $, it follows that $ \deriv{}{} $ is
also the same as claimed.
\end{proof}

\section{Rewriting structured cospans}
\label{sec:Rewriting-StrCsp}

Equipped with knowledge about when grammars and their
underlying discrete grammars generate the same rewriting
relation, we continue towards goal of decomposing closed
systems. First, we revisit Section \ref{sec:rewriting-topoi}
to get some facts about grammars. We then obtain the
language associated to a grammar in a functorial
way. Finally, we show how to decompose into open subsystems a given system
equipped with a grammar. 

Recall the category $ \Gram $.  The objects of $ \Gram $ are
pairs $ ( \T , P ) $ where $ \T $ is a topos and $ P $ is a
set of rewrite rules in $ \T $.  The arrows
$ (\T , P) \to ( \T' , P' )$ of $ \Gram $ are
rule-preserving functors $ \T \to \T' $.  Our interest now lies
in the full subcategory of structured cospan grammars
$ \StrCspGram $ whose objects are the grammars of form
$ ( _L \StrCsp , P ) $ where $ P $ consists of fine rewrites
of structured cospans, meaning they have the form
 \begin{center}
   \begin{tikzpicture}
    \begin{scope}
        \node (1) at (0,4) {\( La \)};
        \node (2) at (2,4) {\( x \)};
        \node (3) at (4,4) {\( La' \)};
        \node (4) at (0,2) {\( Lb \)};
        \node (5) at (2,2) {\( y \)};
        \node (6) at (4,2) {\( Lb' \)};
        \node (7) at (0,0) {\( Lc \)};
        \node (8) at (2,0) {\( z \)};
        \node (9) at (4,0) {\( Lc' \)};
        \draw [cd] (1) to (2);
        \draw [cd] (3) to (2);
        \draw [cd] (4) to (5);
        \draw [cd] (6) to (5);
        \draw [cd] (7) to (8);
        \draw [cd] (9) to (8);
        \draw [cd] (4) to node[left]{\( \iso \)} (1);
        \draw [cd] (4) to node[left]{\( \iso \)} (7);
        \draw [cd,>->] (5) to (2);
        \draw [cd,>->] (5) to (8);
        \draw [cd] (6) to node[right]{\( \iso \)} (3);
        \draw [cd] (6) to node[right]{\( \iso \)} (9);
    \end{scope}
  \end{tikzpicture}
\end{center}
and the left adjoint $ L $ has a monic counit.

It is on this category $ \StrCspGram $ that we define a
functor encoding the rewrite relation to each grammar. We
denote this functor
\[
  \Lang \from \StrCspGram \to \DblCat
\]
where $ \Lang $ is short for `language'. This is an
appropriate term as this functor provides \emph{(i)} the
terms formed by connecting together open systems (instead
of, in linguistics, concatenating units of syntax) and
\emph{(ii)} the rules governing how to interchange open
systems (instead of parts of speech). To help visualize
this, we sketch a simple example.

\begin{example}

  Start with the, by now familiar, adjunction
  \[
    \adjunction{\Set}{\RGraph}{L}{R}{4}
  \]
  For this $ L $, $ _L \StrCsp $ is the category of open
  graphs.  Make a grammar from $ _L \StrCsp $ by defining a
  $ P $ to have the single rule
  \begin{center}
    \begin{tikzpicture}
    \begin{scope}[shift={(0,0)}] 
      \node () at (0,0) {$ \bullet $};
      \draw [rounded corners] (-1,-1) rectangle (1,1);
      \node (00r) at (1.1,0) {};
      \node (00b) at (0,-1.1) {};  
    \end{scope}
    \begin{scope}[shift={(3,0)}] 
      \node (a) at (0,0) {$ \bullet $};
      \draw [graph]
        (a) edge[loop above] (a);
      \draw [rounded corners] (-1,-1) rectangle (1,1);
      \node (01l) at (-1.1,0) {};
      \node (01r) at (1.1,0)  {};
      \node (01b) at (0,-1.1) {};  
    \end{scope}
    \begin{scope}[shift={(6,0)}] 
      \node () at (0,0) {$ \bullet $};
      \draw [rounded corners] (-1,-1) rectangle (1,1);
      \node (02l) at (-1.1,0) {};
      \node (02b) at (0,-1.1) {};
    \end{scope}
    \begin{scope}[shift={(0,-3)}] 
      \node () at (0,0) {$ \bullet $};
      \draw [rounded corners] (-1,-1) rectangle (1,1);
      \node (10t) at (0,1.1)   {};
      \node (10r) at (1.1,0) {};
      \node (10b) at (0,-1.1)  {};  
    \end{scope}
    \begin{scope}[shift={(3,-3)}] 
      \node (a) at (0,0) {$ \bullet $};
      \draw [rounded corners] (-1,-1) rectangle (1,1);
      \node (11l) at (-1.1,0) {};
      \node (11r) at (1.1,0)  {};
      \node (11t) at (0,1.1)  {};
      \node (11b) at (0,-1.1) {};  
    \end{scope}
    \begin{scope}[shift={(6,-3)}] 
      \node () at (0,0) {$ \bullet $};
      \draw [rounded corners] (-1,-1) rectangle (1,1);
      \node (12l) at (-1.1,0) {};
      \node (12t) at (0,1.1)    {};  
      \node (12b) at (0,-1.1)   {};
    \end{scope}
    \begin{scope}[shift={(0,-6)}] 
      \node () at (0,0) {$ \bullet $};
      \draw [rounded corners] (-1,-1) rectangle (1,1);
      \node (20t) at (0,1.1)   {};
      \node (20r) at (1.1,0) {};
      \node (20b) at (0,-1.1)  {};  
    \end{scope}
    \begin{scope}[shift={(3,-6)}] 
      \node (a) at (0,0) {$ \bullet $};
      \draw [rounded corners] (-1,-1) rectangle (1,1);
      \node (21l) at (-1.1,0) {};
      \node (21r) at (1.1,0)  {};
      \node (21t) at (0,1.1)  {};
      \node (21b) at (0,-1.1) {};  
    \end{scope}
    \begin{scope}[shift={(6,-6)}] 
      \node () at (0,0) {$ \bullet $};
      \draw [rounded corners]
        (-1,-1) rectangle (1,1);
      \node (22l) at (-1.1,0) {};
      \node (22r) at (1.1,0)  {};
      \node (22t) at (0,1.1)  {};
      \node (22b) at (0,-1.1) {};
    \end{scope}
    \path[cd]
      (00r) edge (01l)
      (02l) edge (01r)
      (10r) edge (11l)
      (12l) edge (11r)
      (20r) edge (21l)
      (22l) edge (21r)
      (00b) edge (10t)
      (20t) edge (10b)
      (01b) edge (11t)
      (21t) edge (11b)
      (02b) edge (12t)
      (22t) edge (12b);
    \end{tikzpicture}
  \end{center}
  The language associated to this grammar consists of all
  open graphs. The rewrite relation says $ \deriv{g}{h} $ if
  we obtain $ h $ be removing loops from $ g $. We
  illustrate this with the following square in the double
  category $ \Lang ( _L \StrCsp , P ) $.
  \begin{center}
    \begin{tikzpicture}
    \begin{scope}[shift={(0,0)}] 
      \node (a) at (0,-1) {$ \bullet $};
      \node (b) at (0,1) {$ \bullet $};
      \draw [rounded corners] (-2,-2) rectangle (2,2);
      \node (00r) at (2.1,0) {};
      \node (00b) at (0,-2.1) {};  
    \end{scope}
    \begin{scope}[shift={(5,0)}] 
      \node (a) at (-1,-1) {$ \bullet $};
      \node (b) at (-1,1) {$ \bullet $};
      \node (c) at (0,0) {$ \bullet $};
      \node (d) at (1,0) {$ \bullet $};
      \draw [graph]
        (b) edge[loop above] (b)
        (c) edge[loop above] (c)
        (a) edge[] (c)
        (b) edge[] (c)
        (c) edge[] (d); 
      \draw [rounded corners] (-2,-2) rectangle (2,2);
      \node (01l) at (-2.1,0) {};
      \node (01r) at (2.1,0)  {};
      \node (01b) at (0,-2.1) {};  
    \end{scope}
    \begin{scope}[shift={(10,0)}] 
      \node () at (0,0) {$ \bullet $};
      \draw [rounded corners] (-2,-2) rectangle (2,2);
      \node (02l) at (-2.1,0) {};
      \node (02b) at (0,-2.1) {};
    \end{scope}
    \begin{scope}[shift={(0,-5)}] 
      \node () at (0,-1) {$ \bullet $};
      \node () at (0,1) {$ \bullet $};
      \draw [rounded corners] (-2,-2) rectangle (2,2);
      \node (10t) at (0,2.1)   {};
      \node (10r) at (2.1,0) {};
      \node (10b) at (0,-2.1)  {};  
    \end{scope}
    \begin{scope}[shift={(5,-5)}] 
      \node (a) at (-1,-1) {$ \bullet $};
      \node (b) at (-1,1) {$ \bullet $};
      \node (d) at (1,0) {$ \bullet $};
      \draw [rounded corners] (-2,-2) rectangle (2,2);
      \node (11l) at (-2.1,0) {};
      \node (11r) at (2.1,0)  {};
      \node (11t) at (0,2.1)  {};
      \node (11b) at (0,-2.1) {};  
    \end{scope}
    \begin{scope}[shift={(10,-5)}] 
      \node () at (0,0) {$ \bullet $};
      \draw [rounded corners] (-2,-2) rectangle (2,2);
      \node (12l) at (-2.1,0) {};
      \node (12t) at (0,2.1)    {};  
      \node (12b) at (0,-2.1)   {};
    \end{scope}
    \begin{scope}[shift={(0,-10)}] 
      \node () at (0,-1) {$ \bullet $};
      \node () at (0,1) {$ \bullet $};
      \draw [rounded corners] (-2,-2) rectangle (2,2);
      \node (20t) at (0,2.1)   {};
      \node (20r) at (2.1,0) {};
      \node (20b) at (0,-2.1)  {};  
    \end{scope}
    \begin{scope}[shift={(5,-10)}] 
      \node (a) at (-1,-1) {$ \bullet $};
      \node (b) at (-1,1) {$ \bullet $};
      \node (c) at (0,0) {$ \bullet $};
      \node (d) at (1,0) {$ \bullet $};
      \draw [graph]
        (a) edge[] (c)
        (b) edge[] (c)
        (c) edge[] (d); 
      \draw [rounded corners] (-2,-2) rectangle (2,2);
      \node (21l) at (-2.1,0) {};
      \node (21r) at (2.1,0)  {};
      \node (21t) at (0,2.1)  {};
      \node (21b) at (0,-2.1) {};  
    \end{scope}
    \begin{scope}[shift={(10,-10)}] 
      \node () at (0,0) {$ \bullet $};
      \draw [rounded corners] (-2,-2) rectangle (2,2);
      \node (22l) at (-2.1,0) {};
      \node (22r) at (2.1,0)  {};
      \node (22t) at (0,2.1)  {};
      \node (22b) at (0,-2.1) {};
    \end{scope}
    \path[cd]
      (00r) edge (01l)
      (02l) edge (01r)
      (10r) edge (11l)
      (12l) edge (11r)
      (20r) edge (21l)
      (22l) edge (21r)
      (00b) edge (10t)
      (20t) edge (10b)
      (01b) edge (11t)
      (21t) edge (11b)
      (02b) edge (12t)
      (22t) edge (12b);
    \end{tikzpicture}
  \end{center}
\end{example}

To actually construct $ \Lang $, we use functors
$ D \from \StrCspGram \to \StrCspGram $ and
$ S \from \StrCspGram \to \DblCat $. Roughly, $ D $ sends a
grammar $ ( _L \StrCsp , P ) $ to all of the rewrite rules
derived from $ P $ and $ S $ generates a double category on
the squares obtained from the rewrite rules of a grammar
$ ( _L \StrCsp , P ) $. In this way, we get the language of
a grammar as a double category where the squares are the
rewrite rules.  The next lemma defines $ D $ and gives some
of its properties.

\begin{lemma}
  There is an idempotent functor
  $ D \from \StrCspGram \to \StrCspGram $ defined as
  follows. On objects define $ D ( _{L}\StrCsp , P ) $ to be
  the grammar $ ( _{L} \StrCsp , P_D) $, where $ P_D $
  consists of all rules $ \spn{g}{h}{d} $ witnessing the
  relation $ \dderiv{g}{h} $ with respect to
  $ ( _{L}\StrCsp , P ) $. On arrows, define
  $ DF \from D( _{L}\StrCsp , P ) \to D( _{L'}\StrCsp , Q )
  $ to be $ F $.  Moreover, the identity on $ \StrCspGram $
  is a subfunctor of $ D $.
\end{lemma}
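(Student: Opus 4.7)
The plan is to verify in order: well-definedness on objects, well-definedness on arrows, functoriality, idempotence, and the subfunctor claim. Throughout I will lean on the fact that topoi are adhesive and that morphisms in $\StrCspGram$ preserve pullbacks and pushouts.

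First, to check $D$ is well-defined on objects, I must verify that $P_D$ is a legitimate set of fine rewrite rules in $_L\StrCsp$. Given a rule $\spn{\ell}{k}{r}$ in $P$ with monic legs and any double pushout diagram witnessing $\dderiv{g}{h}$, I need the derived span $\spn{g}{d}{h}$ to also have monic legs. This follows because pushouts preserve monomorphisms in a topos (Lemma \ref{lem:adhesive-properties}), so $k \monicto \ell$ pushed out along $k \to d$ yields $d \monicto g$, and similarly $d \monicto h$.

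Next, for well-definedness on arrows, suppose $F \from (_L\StrCsp, P) \to (_{L'}\StrCsp, Q)$ is a grammar morphism. Given a rule $\spn{g}{d}{h}$ in $P_D$ coming from a double pushout diagram involving some $\spn{\ell}{k}{r} \in P$, applying $F$ produces a diagram in $_{L'}\StrCsp$ with the same shape. Since $F$ preserves pullbacks and pushouts, it sends double pushout diagrams to double pushout diagrams; since $F\spn{\ell}{k}{r} \in Q$ by hypothesis, the derived rule $F\spn{g}{d}{h}$ lies in $Q_D$. Functoriality of $D$ (preservation of identities and composition) is then immediate because $D$ acts as the identity on arrows.

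For idempotence I must show $(P_D)_D = P_D$. The inclusion $P_D \subseteq (P_D)_D$ is handled at the same time as the subfunctor claim below. For the reverse inclusion, suppose $\spn{g'}{d'}{h'}$ is derived from a rule $\spn{g}{d}{h} \in P_D$, which itself is derived from $\spn{\ell}{k}{r} \in P$. This gives two double pushout diagrams that share a common column, and the standard pasting lemma for pushouts yields a single double pushout diagram exhibiting $\spn{g'}{d'}{h'}$ as derived directly from $\spn{\ell}{k}{r}$, hence in $P_D$. The pasting step is the one piece of actual diagram work required, and it is the main place one must be careful that the shared monics let the outer rectangles remain pushouts.

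Finally, to see that the identity functor is a subfunctor of $D$, I need $P \subseteq P_D$ for every grammar. Given $\spn{\ell}{k}{r} \in P$, the trivial double pushout diagram
\[
  \begin{tikzpicture}
    \node (1) at (0,2) {$ \ell $};
    \node (2) at (2,2) {$ k $};
    \node (3) at (4,2) {$ r $};
    \node (4) at (0,0) {$ \ell $};
    \node (5) at (2,0) {$ k $};
    \node (6) at (4,0) {$ r $};
    \draw [cd]
      (2) edge (1)
      (2) edge (3)
      (5) edge (4)
      (5) edge (6)
      (1) edge node[left]{$ \id $} (4)
      (2) edge node[left]{$ \id $} (5)
      (3) edge node[right]{$ \id $} (6);
    \draw (0.3,0.4) -- (0.4,0.4) -- (0.4,0.3);
    \draw (3.7,0.4) -- (3.6,0.4) -- (3.6,0.3);
  \end{tikzpicture}
\]
exhibits $\spn{\ell}{k}{r}$ as its own derived rule, so it lies in $P_D$. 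This inclusion is natural in the grammar, producing the required subfunctor structure and simultaneously establishing the inclusion $P_D \subseteq (P_D)_D$ used in the idempotence step. The main obstacle is ensuring the pasting argument for idempotence respects the monicity constraints on fine rewrite rules, but the adhesive properties of a topos make this routine.
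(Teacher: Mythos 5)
Your proof is correct and follows essentially the same route as the paper: pushouts preserving monics gives well-definedness, vertical pasting of double pushout diagrams gives idempotence, and the identity double pushout square gives the subfunctor inclusion $P \subseteq P_D$. You fill in a few steps the paper leaves implicit (well-definedness on arrows, functoriality), and there is a small slip of terminology — the two stacked double pushout diagrams share a common \emph{row} (the middle span $\spn{g}{d}{h}$), not a column — but the underlying pasting argument is the right one.
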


\begin{proof}
  That $ D ( _{L}\StrCsp , P ) $ actually gives a grammar
  follows from the fact that pushouts respect monics in a
  topos \cite[Lem.~12]{lack-sobo_adhesive-cats}.
  
  To show that $ \D $ is idempotent, we show that for any
  grammar $ ( _{L}\StrCsp , P ) $, we have
  $ D ( _{L}\StrCsp , P ) = DD ( _{L}\StrCsp , P ) $.  Rules
  in $ DD ( _{L}\StrCsp , P ) $ appear in the bottom row of a
  double pushout diagram whose top row is a rule in
  $ D ( _{L}\StrCsp , P ) $, which in turn is the bottom row
  of a double pushout diagram whose top row is in
  $ ( _{L}\StrCsp , P ) $. Thus, a rule in
  $ DD ( _{L}\StrCsp , P ) $ is the bottom row of a double
  pushout diagram whose top row is in
  $ ( _{L}\StrCsp , P ) $. See Figure \ref{fig:idempotentD}.

  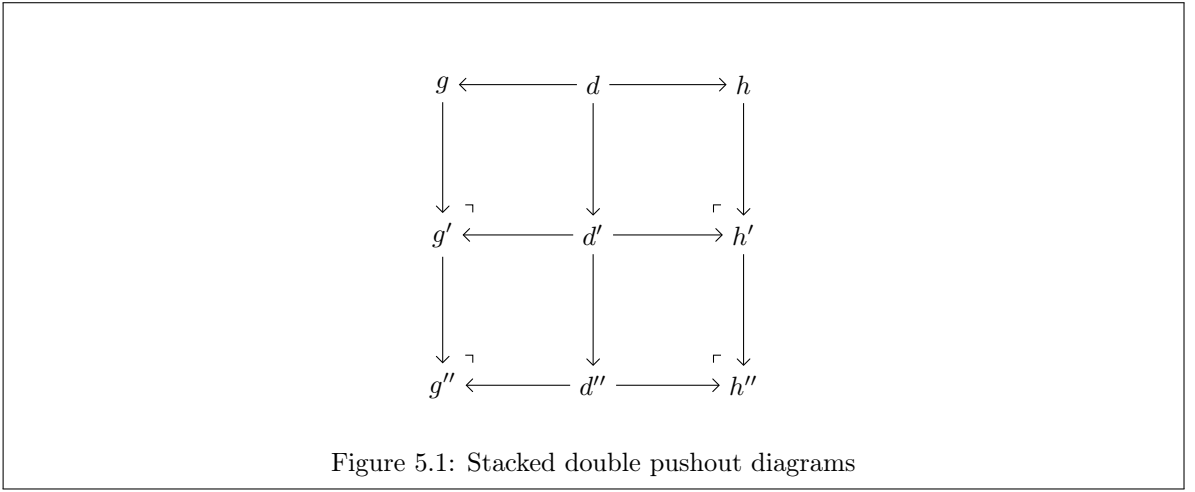
\begin{figure}[h]
    \centering
    \fbox{
    \begin{minipage}{\linewidth}
    \vspace{2em}    
    \[
    \begin{tikzpicture}
      \node (1) at (0,4) {$ g $};
      \node (2) at (2,4) {$ d $};
      \node (3) at (4,4) {$ h $};
      \node (4) at (0,2) {$ g' $};
      \node (5) at (2,2) {$ d' $};
      \node (6) at (4,2) {$ h' $};
      \node (7) at (0,0) {$ g'' $};
      \node (8) at (2,0) {$ d'' $};
      \node (9) at (4,0) {$ h'' $};
      \draw [cd] (2) to node [] {\scriptsize{$  $}} (1);
      \draw [cd] (2) to node [] {\scriptsize{$  $}} (3);
      \draw [cd] (5) to node [] {\scriptsize{$  $}} (4);
      \draw [cd] (5) to node [] {\scriptsize{$  $}} (6);
      \draw [cd] (8) to node [] {\scriptsize{$  $}} (7);
      \draw [cd] (8) to node [] {\scriptsize{$  $}} (9);
      \draw [cd] (1) to node [] {\scriptsize{$  $}} (4);
      \draw [cd] (2) to node [] {\scriptsize{$  $}} (5);
      \draw [cd] (3) to node [] {\scriptsize{$  $}} (6);
      \draw [cd] (4) to node [] {\scriptsize{$  $}} (7);
      \draw [cd] (5) to node [] {\scriptsize{$  $}} (8);
      \draw [cd] (6) to node [] {\scriptsize{$  $}} (9);
      \draw (0.3,0.4) -- (0.4,0.4) -- (0.4,0.3);
      \draw (3.7,0.4) -- (3.6,0.4) -- (3.6,0.3);   
      \draw (0.3,2.4) -- (0.4,2.4) -- (0.4,2.3);
      \draw (3.7,2.4) -- (3.6,2.4) -- (3.6,2.3);   
    \end{tikzpicture}
    \]
    \caption{Stacked double pushout diagrams}
    \label{fig:idempotentD}
  \end{minipage}
}
\end{figure}

    The identity is a subfunctor of $ D $ because
    $ \dderiv{\ell}{r} $ for any production
    $ \spn{\ell}{k}{r} $ in $ ( _{L}\StrCsp , P ) $ via a
    triple of identity arrows. Hence there is a monomorphism
    \[
      ( _L \StrCsp , P ) \to
      D ( _L \StrCsp , P )
    \]
    induced from the identity functor on $ _L\StrCsp $.
\end{proof}

In this lemma, we have created a functor $ D $ that sends a
grammar to a new grammar consisting of
all derived rules.  That $ D $ is idempotent means that all
rules derived from $ P $ can be derived directly; multiple
applications of $ D $ are unnecessary.  That the identity is
a subfunctor of $ D $ means that set of the derived rules
$ P_D $ contains the set of initial rules $ P $.

The next stage in defining $ \Lang $ is to define
$ S \from \StrCspGram \to \DblCat $. On objects, let
$ S ( _L \StrCsp , P ) $ be the sub-double category of
$ _L \SSStrCsp $ generated by the rules in $ P $ considered
as squares.  On arrows, $ S $ sends
\[
  F \from ( _{L}\StrCsp , P ) \to ( _{L'} \StrCsp , P' )
\]
to the double functor defined that extends the mapping
between the generators of $ S ( _{L}\StrCsp , P ) $ and
$ S ( _{L'}\StrCsp , P' ) $.  This preserves composition because
$ F $ preserves pullbacks and pushouts. 

\begin{definition}(Language of a grammar)
  The \defn{language functor} is defined to be
  $ \Lang \coloneqq SD $. 
\end{definition}

To witness the rewriting relation on a closed system as a
square in a double category, we require this next lemma
that formalizes the analogy between rewriting the disjoint
union of systems and tensoring squares.

\begin{lemma} \label{thm:rewrite-rel-is-additive}
  If $ \deriv{x}{y} $ and $ \deriv{x'}{y'} $, then
  $ \deriv{x+x'}{y+y'} $
\end{lemma}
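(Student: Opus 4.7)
The plan is to reduce the claim to its single-step version and then combine the two sequences by a two-stage argument.

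Since $\deriv{}{}$ is the reflexive and transitive closure of $\dderiv{}{}$, the hypotheses unpack as finite sequences
\[
x = x_0 \dderiv{} x_1 \dderiv{} \cdots \dderiv{} x_n = y
\qquad \text{and} \qquad
x' = x'_0 \dderiv{} x'_1 \dderiv{} \cdots \dderiv{} x'_m = y'.
\]
Once I establish the key lemma that $\dderiv{a}{b}$ implies $\dderiv{a+c}{b+c}$ for any object $c$, I can chain together the derivations $x_i + x' \dderiv{} x_{i+1} + x'$ to get $\deriv{x+x'}{y+x'}$, and similarly $\deriv{y+x'}{y+y'}$, then invoke transitivity.

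The heart of the proof is the key lemma. Suppose $\dderiv{a}{b}$ is witnessed by a rule $\spn{\ell}{k}{r}$ in $P$ and a double pushout diagram with attachment map $m \from \ell \to a$ and pushout complement $d$. I will exhibit the composite attachment $\ell \xto{m} a \to a+c$ and claim that $d+c$ serves as its pushout complement, and that pushing out $k \to d+c$ along $k \to r$ produces $b+c$. Each of these two squares can be obtained by pasting the original pushout square on top of the square
\[
\begin{tikzpicture}
\node (d) at (0,2) {$d$};
\node (a) at (3,2) {$a$};
\node (dc) at (0,0) {$d+c$};
\node (ac) at (3,0) {$a+c$};
\draw[cd] (d) to (a);
\draw[cd] (d) to node[left]{$\iota$} (dc);
\draw[cd] (a) to node[right]{$\iota$} (ac);
\draw[cd] (dc) to (ac);
\end{tikzpicture}
\]
(and its analogue with $b$ in place of $a$), which is itself a pushout—this is easily verified directly from the universal property of the coproduct, or alternatively noted as an instance of colimits commuting with colimits. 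Since pushouts compose vertically, both squares of the new diagram are pushouts, and the rule applied is still the original rule from $P$. Hence $\dderiv{a+c}{b+c}$.

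I do not anticipate a serious obstacle here; the entire argument is a bookkeeping exercise in pasting pushouts, and the ambient topos structure of $\X$ provides all the needed stability properties. The only subtlety worth mentioning is to observe that the derived rule $\spn{a+c}{d+c}{b+c}$ is produced from a production already in $P$ (not a new production), which is exactly what is required by Definition \ref{def:rewrite-relation}. With the lemma in hand, the two-stage chaining
\[
x + x' \;\deriv{}\; y + x' \;\deriv{}\; y + y'
\]
followed by transitivity of $\deriv{}{}$ completes the proof.
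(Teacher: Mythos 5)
Your proof is correct and takes essentially the same approach as the paper: both realize $\deriv{x+x'}{y+y'}$ by first tensoring the DPO chain for $\deriv{x}{y}$ with $x'$ and then tensoring the chain for $\deriv{x'}{y'}$ with $y$, concatenating the two. The paper simply writes out the full concatenated chain of DPO diagrams rather than isolating the single-step lemma $\dderiv{a}{b} \Rightarrow \dderiv{a+c}{b+c}$ as you do, but the pushout-pasting content is identical.
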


\begin{proof}
  If the derivation $ \deriv{x}{y} $ comes from a string of
  double pushout diagrams
  \begin{center}
    \begin{tikzpicture}[scale=0.8]
      \node (1t) at (0,2) {$ \ell_1 $};
      \node (2t) at (2,2) {$ k_1 $};
      \node (3t) at (4,2) {$ r_1 $};
      \node (4t) at (6,2) {$ \ell_2 $};
      \node (5t) at (8,2) {$ k_2 $};
      \node (6t) at (10,2) {$ r_2 $};
      \node (7t) at (11,2) {$ \ell_n $};
      \node (8t) at (13,2) {$ k_n $};
      \node (9t) at (15,2) {$ r_n $};
      \node (1b) at (0,0) {$ x $};
      \node (2b) at (2,0) {$ d_1 $};
      \node (3b) at (5,0) {$ w_1 $};
      \node (4b) at (8,0) {$ d_2 $};
      \node (5b) at (10,0) {$ w_2 $};
      \node (6b) at (11,0) {$ w_{n-1} $};
      \node (7b) at (13,0) {$ d_n $};
      \node (8b) at (15,0) {$ y $};
      \draw [cd] (2t) to node [] {\scriptsize{$  $}} (1t);
      \draw [cd] (2t) to node [] {\scriptsize{$  $}} (3t);
      \draw [cd] (5t) to node [] {\scriptsize{$  $}} (4t);
      \draw [cd] (5t) to node [] {\scriptsize{$  $}} (6t);
      \draw [cd] (8t) to node [] {\scriptsize{$  $}} (7t);
      \draw [cd] (8t) to node [] {\scriptsize{$  $}} (9t);
      \draw [cd] (2b) to node [] {\scriptsize{$  $}} (1b);
      \draw [cd] (2b) to node [] {\scriptsize{$  $}} (3b);
      \draw [cd] (4b) to node [] {\scriptsize{$  $}} (3b);
      \draw [cd] (4b) to node [] {\scriptsize{$  $}} (5b);
      \draw [cd] (7b) to node [] {\scriptsize{$  $}} (6b);
      \draw [cd] (7b) to node [] {\scriptsize{$  $}} (8b);
      \draw [cd] (1t) to node [] {\scriptsize{$  $}} (1b);
      \draw [cd] (2t) to node [] {\scriptsize{$  $}} (2b);
      \draw [cd] (3t) to node [] {\scriptsize{$  $}} (3b);
      \draw [cd] (4t) to node [] {\scriptsize{$  $}} (3b);
      \draw [cd] (5t) to node [] {\scriptsize{$  $}} (4b);
      \draw [cd] (6t) to node [] {\scriptsize{$  $}} (5b);
      \draw [cd] (7t) to node [] {\scriptsize{$  $}} (6b);
      \draw [cd] (8t) to node [] {\scriptsize{$  $}} (7b);
      \draw [cd] (9t) to node [] {\scriptsize{$  $}} (8b);
      \node () at (10.5,1) {$ \dotsm $};
      \draw (0.3,0.4) -- (0.4,0.4) -- (0.4,0.3);
      \draw (4.7,0.4) -- (4.6,0.4) -- (4.6,0.3);
      \draw (5.3,0.4) -- (5.4,0.4) -- (5.4,0.3);
      \draw (9.7,0.4) -- (9.6,0.4) -- (9.6,0.3);
      \draw (11.3,0.4) -- (11.4,0.4) -- (11.4,0.3);
      \draw (14.7,0.4) -- (14.6,0.4) -- (14.6,0.3);
    \end{tikzpicture}
  \end{center}
  and the derivation $ \deriv{x'}{y'} $ comes from a string
  of double pushout diagrams
    \[
    \begin{tikzpicture}[scale=0.8]
      \node (1t) at (0,2) {$ \ell'_1 $};
      \node (2t) at (2,2) {$ k'_1 $};
      \node (3t) at (4,2) {$ r'_1 $};
      \node (4t) at (6,2) {$ \ell'_2 $};
      \node (5t) at (8,2) {$ k'_2 $};
      \node (6t) at (10,2) {$ r'_2 $};
      \node (7t) at (11,2) {$ \ell'_m $};
      \node (8t) at (13,2) {$ k'_m $};
      \node (9t) at (15,2) {$ r'_m $};
      \node (1b) at (0,0) {$ x' $};
      \node (2b) at (2,0) {$ d'_1 $};
      \node (3b) at (5,0) {$ w'_1 $};
      \node (4b) at (8,0) {$ d'_2 $};
      \node (5b) at (10,0) {$ w'_2 $};
      \node (6b) at (11,0) {$ w'_{m-1} $};
      \node (7b) at (13,0) {$ d'_m $};
      \node (8b) at (15,0) {$ y' $};
      \draw [cd] (2t) to node [] {\scriptsize{$  $}} (1t);
      \draw [cd] (2t) to node [] {\scriptsize{$  $}} (3t);
      \draw [cd] (5t) to node [] {\scriptsize{$  $}} (4t);
      \draw [cd] (5t) to node [] {\scriptsize{$  $}} (6t);
      \draw [cd] (8t) to node [] {\scriptsize{$  $}} (7t);
      \draw [cd] (8t) to node [] {\scriptsize{$  $}} (9t);
      \draw [cd] (2b) to node [] {\scriptsize{$  $}} (1b);
      \draw [cd] (2b) to node [] {\scriptsize{$  $}} (3b);
      \draw [cd] (4b) to node [] {\scriptsize{$  $}} (3b);
      \draw [cd] (4b) to node [] {\scriptsize{$  $}} (5b);
      \draw [cd] (7b) to node [] {\scriptsize{$  $}} (6b);
      \draw [cd] (7b) to node [] {\scriptsize{$  $}} (8b);
      \draw [cd] (1t) to node [] {\scriptsize{$  $}} (1b);
      \draw [cd] (2t) to node [] {\scriptsize{$  $}} (2b);
      \draw [cd] (3t) to node [] {\scriptsize{$  $}} (3b);
      \draw [cd] (4t) to node [] {\scriptsize{$  $}} (3b);
      \draw [cd] (5t) to node [] {\scriptsize{$  $}} (4b);
      \draw [cd] (6t) to node [] {\scriptsize{$  $}} (5b);
      \draw [cd] (7t) to node [] {\scriptsize{$  $}} (6b);
      \draw [cd] (8t) to node [] {\scriptsize{$  $}} (7b);
      \draw [cd] (9t) to node [] {\scriptsize{$  $}} (8b);
      \node () at (10.5,1) {$ \dotsm $};
      \draw (0.3,0.4) -- (0.4,0.4) -- (0.4,0.3);
      \draw (4.7,0.4) -- (4.6,0.4) -- (4.6,0.3);
      \draw (5.3,0.4) -- (5.4,0.4) -- (5.4,0.3);
      \draw (9.7,0.4) -- (9.6,0.4) -- (9.6,0.3);
      \draw (11.3,0.4) -- (11.4,0.4) -- (11.4,0.3);
      \draw (14.7,0.4) -- (14.6,0.4) -- (14.6,0.3);
    \end{tikzpicture}
  \]
  realize $ \deriv{x+x'}{y+y'} $ by
  \begin{center}
    \begin{tikzpicture}[scale=0.8]
      \node (1t) at (0,2) {$ \ell_1 $};
      \node (2t) at (2,2) {$ k_1 $};
      \node (3t) at (4,2) {$ r1 $};
      \node () at (5,1) {$ \cdots $};
      \node (4t) at (6,2) {$ r_n $};
      \node (5t) at (8,2) {$ \ell'_1 $};
      \node (6t) at (10,2) {$ k'_1 $};
      \node (7t) at (12,2) {$ r'_1 $};
      \node () at (13,1) {$ \cdots $};
      \node (8t) at (14,2) {$ k'_m $};
      \node (9t) at (16,2) {$ r'_m $};
      \node (1b) at (0,0) {$ x+x' $};
      \node (2b) at (2,0) {$ d_1+x' $};
      \node (3b) at (4,0) {$ w_1+x' $};
      \node (5b) at (7,0) {$ y+x' $};
      \node (6b) at (10,0) {$ y+d'_1 $};
      \node (7b) at (12,0) {$ y+w'_1 $};
      \node (8b) at (14,0) {$ y+d'_m $};
      \node (9b) at (16,0) {$ y+y' $};
      \draw [cd]
      (2t) edge[] (1t)
      (2t) edge[] (3t)
      (6t) edge[] (5t)
      (6t) edge[] (7t)
      (8t) edge[] (9t) 
      (2b) edge[] (1b)
      (2b) edge[] (3b)
      (6b) edge[] (5b)
      (6b) edge[] (7b)
      (8b) edge[] (9b) 
      (1t) edge[] (1b)
      (2t) edge[] (2b)
      (3t) edge[] (3b)
      (4t) edge[] (5b)
      (5t) edge[] (5b)
      (6t) edge[] (6b)
      (7t) edge[] (7b)
      (8t) edge[] (8b)
      (9t) edge[] (9b);      
    \end{tikzpicture}
  \end{center}
\end{proof}

As promised, we can now decompose closed systems into open
systems. For this, we need a topos of closed systems $ \X $
equipped with a grammar $ ( \X , P ) $. The closed systems
need interfaces, meaning we need to introduce an adjunction
\[
  \adjunction{\A}{\X}{L}{R}{2}
\]
where $ L $ preserves pullbacks and has a monic counit. At
this point, the material from the previous section
returns. This adjunction gives a discrete comonad $ \flat
\bydef LR $ from which we form the discrete
grammar $ ( \X, P_\flat ) $. Now define the structured cospan
grammar $ ( _L \StrCsp , \hat{P_\flat} ) $ where $
\hat{P_\flat} $ contains the rule
\begin{equation} \label{eq:decomposition-square}
  \begin{tikzpicture}
    \node (1) at (0,4) {$ L 0 $};
    \node (2) at (2,4) {$ \ell $};
    \node (3) at (4,4) {$ LRk $};
    \node (4) at (0,2) {$ L 0 $};
    \node (5) at (2,2) {$ LRk $};
    \node (6) at (4,2) {$ LRk $};
    \node (7) at (0,0) {$ L 0 $};
    \node (8) at (2,0) {$ r $};
    \node (9) at (4,0) {$ LRk $};
    \draw [cd] (1) to (2);
    \draw [cd] (3) to (2);
    \draw [cd] (4) to (5);
    \draw [cd] (6) to (5);
    \draw [cd] (7) to (8);
    \draw [cd] (9) to (8);
    \draw [cd] (4) to (1);
    \draw [cd] (4) to (7);
    \draw [cd] (5) to (2);
    \draw [cd] (5) to (8);
    \draw [cd] (6) to (3);
    \draw [cd] (6) to (9);
  \end{tikzpicture}
\end{equation}
for each rule $ \spn{\ell}{LRk}{r} $ of $ P_{\flat} $. We
use $ ( _L \StrCsp , \hat{P_\flat} ) $ to prove our main
theorem.

Before stating the theorem, we note that this theorem
generalizes work by Gadducci and Heckel
\cite{gadducci_ind-graph-transf} whose domain of
inquiry was graph rewriting. The arc of our
proof follows theirs.

\begin{theorem} \label{thm:inductive-rewriting}

  Fix an adjunction $ (L \dashv R) \from \X \lrto \A $ with monic
  counit. Let $ ( \X , P ) $ be a grammar such that for
  every $ \X $-object $ x $ in the apex of a production of
  $ P $, the lattice $ \Sub (x) $ has all meets. Given
  $ g $, $ h \in \X $, then $ \deriv{g}{h} $ in the
  rewriting relation for a grammar $ ( \X , P ) $ if and
  only if there is a square
  \[
    \begin{tikzpicture}
      \node (1t) at (0,4) {$ LR 0 $};
      \node (2t) at (2,4) {$ g $};
      \node (3t) at (4,4) {$ LR 0 $};
      \node (1m) at (0,2) {$ LR 0 $};
      \node (2m) at (2,2) {$ d $};
      \node (3m) at (4,2) {$ LR 0 $};
      \node (1b) at (0,0) {$ LR 0 $};
      \node (2b) at (2,0) {$ h $};
      \node (3b) at (4,0) {$ LR 0 $};
      \draw [cd] (1t) to node [] {\scriptsize{$  $}} (2t);
      \draw [cd] (3t) to node [] {\scriptsize{$  $}} (2t);
      \draw [cd] (1m) to node [] {\scriptsize{$  $}} (2m);
      \draw [cd] (3m) to node [] {\scriptsize{$  $}} (2m);
      \draw [cd] (1b) to node [] {\scriptsize{$  $}} (2b);
      \draw [cd] (3b) to node [] {\scriptsize{$  $}} (2b);
      \draw [cd] (1m) to node [] {\scriptsize{$  $}} (1t);
      \draw [cd] (1m) to node [] {\scriptsize{$  $}} (1b);
      \draw [cd] (2m) to node [] {\scriptsize{$  $}} (2t);
      \draw [cd] (2m) to node [] {\scriptsize{$  $}} (2b);
      \draw [cd] (3m) to node [] {\scriptsize{$  $}} (3t);
      \draw [cd] (3m) to node [] {\scriptsize{$  $}} (3b);
    \end{tikzpicture}
  \]
  in the double category $ \Lang ( _{L}\StrCsp , \hat{P_\flat} ) $.
\end{theorem}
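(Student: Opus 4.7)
The plan is to reduce the statement to the underlying discrete grammar via Theorem \ref{thm:production-same-rewrite-relation-as-discrete} and then establish a bijection between rewrite sequences in $(\X, P_\flat)$ and vertical composites of generating squares in $\Lang(_L\StrCsp, \hat{P_\flat})$. The hypothesis on the subobject lattices of apexes is exactly what is needed to replace $(\X,P)$ by $(\X,P_\flat)$ without changing $\deriv{}{}$, so it suffices to work entirely with $P_\flat$. The advantage is that every rule in $P_\flat$ has a middle object of the form $LRk$, which is precisely the shape that Equation \eqref{eq:decomposition-square} demands in order to present a rewrite rule of $P_\flat$ as a single generating square of $\Lang(_L\StrCsp, \hat{P_\flat})$.

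For the forward direction, I would unpack a derivation $\deriv{g}{h}$ in $(\X, P_\flat)$ as a chain of single-step rewrites $g = g_0 \dderiv{}{} g_1 \dderiv{}{} \cdots \dderiv{}{} g_n = h$. Each step is a double pushout diagram for some rule $\ell_i \gets LRk_i \to r_i$ along a match $\ell_i \to g_i$. I would promote each such diagram to a square in $\Lang(_L\StrCsp, \hat{P_\flat})$ whose top horizontal arrow is the closed structured cospan $LR0 \to g_i \gets LR0$ and whose bottom is $LR0 \to g_{i+1} \gets LR0$: one obtains this square as the horizontal composite of the generating square from $\hat{P_\flat}$ (applied at the image of the match) with identity squares on the remaining part of $g_i$, the gluing being performed via the pushout complement $d_i$ along the $LRk_i$-interface. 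Vertically composing these $n$ squares yields the desired witness.

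For the reverse direction, I would argue by induction on the generators. Recalling that $\Lang = S \circ D$, every square of $\Lang(_L\StrCsp, \hat{P_\flat})$ is obtained by finitely many horizontal and vertical compositions from derived generating squares of $\hat{P_\flat}$; idempotence of $D$ lets me assume these generators come from $\hat{P_\flat}$ itself. Each generator, by Equation \eqref{eq:decomposition-square}, encodes a single application of a rule from $P_\flat$ in $\X$; vertical composition corresponds immediately to concatenation of rewrite steps; and horizontal composition corresponds, via Lemma \ref{thm:rewrite-rel-is-additive} combined with pushout gluing along a common $LR k$-interface, to a single combined rewrite application in the pushout of the ambient systems. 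Unwinding the composite therefore produces a derivation $\deriv{g}{h}$ in $(\X, P_\flat)$, which by Theorem \ref{thm:production-same-rewrite-relation-as-discrete} lifts to a derivation in $(\X,P)$.

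The main obstacle will be the reverse direction, specifically the interaction between horizontal composition of squares and the double pushout formalism in $\X$. I expect the key technical lemma to be that the horizontal pushout of two generating squares, glued along their shared interface $La$ with $a$ obtained as an interface of some derived rule, still fits into a double pushout diagram for a rule obtainable by derivation from $P_\flat$. This is where the monic counit hypothesis on $L \dashv R$ does essential work, ensuring the relevant $LRk$ objects embed as subobjects and that the pushout complements constructed from them coincide with those used in the flattening procedure of Theorem \ref{thm:production-same-rewrite-relation-as-discrete}. Once this compatibility is in place, both directions reduce to bookkeeping over finite sequences and the proof concludes.
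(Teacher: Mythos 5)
Your proposal takes essentially the same approach as the paper: the forward direction encodes each double pushout step as the horizontal composite of a generating square with an identity square over its $LRk$-interface and then stacks these vertically, while the reverse direction argues by structural induction on generating squares, with vertical composition giving concatenation of rewrite steps and horizontal composition handled through Lemma \ref{thm:rewrite-rel-is-additive}. The paper likewise relies (implicitly, via the hypothesis on $\Sub(x)$) on Theorem \ref{thm:production-same-rewrite-relation-as-discrete} to pass to the discrete grammar $P_\flat$, exactly as you do at the outset.
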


\begin{proof}
  We show sufficiency by inducting on the length of the
  derivation. If $ \deriv{g}{h} $ in a single step, meaning
  that there is a diagram
  \[
  \begin{tikzpicture}
    \node (1t) at (0,2) {$ \ell $};
    \node (2t) at (2,2) {$ LRk $};
    \node (3t) at (4,2) {$ r $};
    \node (1b) at (0,0) {$ g $};
    \node (2b) at (2,0) {$ d $};
    \node (3b) at (4,0) {$ h $};
    \draw [cd] (2t) to (1t);
    \draw [cd] (2t) to (3t);
    \draw [cd] (2b) to (1b);
    \draw [cd] (2b) to (3b);
    \draw [cd] (1t) to (1b);
    \draw [cd] (2t) to (2b);
    \draw [cd] (3t) to (3b);
    \draw (0.3,0.4) -- (0.4,0.4) -- (0.4,0.3);
    \draw (3.7,0.4) -- (3.6,0.4) -- (3.6,0.3);
  \end{tikzpicture}
  \]
  then the desired square is the horizontal composition of
  \[
    \begin{tikzpicture}
    \begin{scope}
      \node (1t) at (0,4) {$ L0 $};
      \node (2t) at (2,4) {$ \ell $};
      \node (3t) at (4,4) {$ LRk $};
      \node (4t) at (6,4) {$ d $};
      \node (5t) at (8,4) {$ L0 $};
      \node (1m) at (0,2) {$ L 0 $};
      \node (2m) at (2,2) {$ LRk $};
      \node (3m) at (4,2) {$ LRk $};
      \node (4m) at (6,2) {$ d $};
      \node (5m) at (8,2) {$ L 0 $};
      \node (1b) at (0,0) {$ L 0 $};
      \node (2b) at (2,0) {$ r $};
      \node (3b) at (4,0) {$ LRk $};
      \node (4b) at (6,0) {$ d $};
      \node (5b) at (8,0) {$ L 0 $};
      \draw [cd] (1t) to node [] {\scriptsize{$  $}} (2t);
      \draw [cd] (3t) to node [] {\scriptsize{$  $}} (2t);
      \draw [cd] (3t) to node [] {\scriptsize{$  $}} (4t);
      \draw [cd] (5t) to node [] {\scriptsize{$  $}} (4t);
      \draw [cd] (1m) to node [] {\scriptsize{$  $}} (2m);
      \draw [cd] (3m) to node [] {\scriptsize{$  $}} (2m);
      \draw [cd] (3m) to node [] {\scriptsize{$  $}} (4m);
      \draw [cd] (5m) to node [] {\scriptsize{$  $}} (4m);
      \draw [cd] (1b) to node [] {\scriptsize{$  $}} (2b);
      \draw [cd] (3b) to node [] {\scriptsize{$  $}} (2b);
      \draw [cd] (3b) to node [] {\scriptsize{$  $}} (4b);
      \draw [cd] (5b) to node [] {\scriptsize{$  $}} (4b);
      \draw [cd] (1m) to node [] {\scriptsize{$  $}} (1t);
      \draw [cd] (1m) to node [] {\scriptsize{$  $}} (1b);
      \draw [cd] (2m) to node [] {\scriptsize{$  $}} (2t);
      \draw [cd] (2m) to node [] {\scriptsize{$  $}} (2b);
      \draw [cd] (3m) to node [] {\scriptsize{$  $}} (3t);
      \draw [cd] (3m) to node [] {\scriptsize{$  $}} (3b);
      \draw [cd] (4m) to node [] {\scriptsize{$  $}} (4t);
      \draw [cd] (4m) to node [] {\scriptsize{$  $}} (4b);
      \draw [cd] (5m) to node [] {\scriptsize{$  $}} (5t);
      \draw [cd] (5m) to node [] {\scriptsize{$  $}} (5b);
    \end{scope}
    \end{tikzpicture}
  \]
  The left square is a generator and the right square is the
  identity on the horizontal arrow $ \csp{LRk}{d}{L0}
  $. The square for a derivation
  $ \dderiv{\deriv{g}{h}}{j} $ is the vertical composition
  of
  \[
    \begin{tikzpicture}
      \node (1t) at (0,8) {$ L 0 $};
      \node (2t) at (2,8) {$ g $};
      \node (3t) at (4,8) {$ L 0 $};
      \node (1m) at (0,6) {$ L 0 $};
      \node (2m) at (2,6) {$ d $};
      \node (3m) at (4,6) {$ L 0 $};
      \node (1b) at (0,4) {$ L 0 $};
      \node (2b) at (2,4) {$ h $};
      \node (3b) at (4,4) {$ L 0 $};
      \node (1bb) at (0,2) {$ L 0 $};
      \node (2bb) at (2,2) {$ e $};
      \node (3bb) at (4,2) {$ L 0 $};
      \node (1bbb) at (0,0) {$ L 0 $};
      \node (2bbb) at (2,0) {$ j $};
      \node (3bbb) at (4,0) {$ L 0 $};
      \draw [cd] (1t) to node [] {\scriptsize{$  $}} (2t);
      \draw [cd] (3t) to node [] {\scriptsize{$  $}} (2t);
      \draw [cd] (1m) to node [] {\scriptsize{$  $}} (2m);
      \draw [cd] (3m) to node [] {\scriptsize{$  $}} (2m);
      \draw [cd] (1b) to node [] {\scriptsize{$  $}} (2b);
      \draw [cd] (3b) to node [] {\scriptsize{$  $}} (2b);
      \draw [cd] (1m) to node [] {\scriptsize{$  $}} (1t);
      \draw [cd] (1m) to node [] {\scriptsize{$  $}} (1b);
      \draw [cd] (2m) to node [] {\scriptsize{$  $}} (2t);
      \draw [cd] (2m) to node [] {\scriptsize{$  $}} (2b);
      \draw [cd] (3m) to node [] {\scriptsize{$  $}} (3t);
      \draw [cd] (3m) to node [] {\scriptsize{$  $}} (3b);
      \draw [cd] (1bb) to node [] {\scriptsize{$  $}} (2bb);
      \draw [cd] (3bb) to node [] {\scriptsize{$  $}} (2bb);
      \draw [cd] (1bbb) to node [] {\scriptsize{$  $}} (2bbb);
      \draw [cd] (3bbb) to node [] {\scriptsize{$  $}} (2bbb);
      \draw [cd] (1bb) to node [] {\scriptsize{$  $}} (1b);
      \draw [cd] (1bb) to node [] {\scriptsize{$  $}} (1bbb);
      \draw [cd] (2bb) to node [] {\scriptsize{$  $}} (2b);
      \draw [cd] (2bb) to node [] {\scriptsize{$  $}} (2bbb);
      \draw [cd] (3bb) to node [] {\scriptsize{$  $}} (3b);
      \draw [cd] (3bb) to node [] {\scriptsize{$  $}} (3bbb);
    \end{tikzpicture}
  \]
  The top square is from $ \deriv{g}{h} $ and the second
  from $ \dderiv{h}{j} $.

  Conversely, proceed by structural induction on the
  generating squares of
  $ \Lang ( _{L}\StrCsp , \hat{P_\flat} ) $.  It suffices to
  show that the rewrite relation is preserved by vertical
  and horizontal composition by generating squares.  Suppose
  we have a square
  \[
    \begin{tikzpicture}
      \node (1t) at (0,4) {$ L 0 $};
      \node (2t) at (2,4) {$ w $};
      \node (3t) at (4,4) {$ L 0 $};
      \node (1m) at (0,2) {$ L 0 $};
      \node (2m) at (2,2) {$ x $};
      \node (3m) at (4,2) {$ L 0 $};
      \node (1b) at (0,0) {$ L 0 $};
      \node (2b) at (2,0) {$ y $};
      \node (3b) at (4,0) {$ L 0 $};
      \draw [cd] (2t) to node [] {\scriptsize{$  $}} (1t);
      \draw [cd] (2t) to node [] {\scriptsize{$  $}} (3t);
      \draw [cd] (2m) to node [] {\scriptsize{$  $}} (1m);
      \draw [cd] (2m) to node [] {\scriptsize{$  $}} (3m);
      \draw [cd] (2b) to node [] {\scriptsize{$  $}} (1b);
      \draw [cd] (2b) to node [] {\scriptsize{$  $}} (3b);
      \draw [cd] (1m) to node [] {\scriptsize{$  $}} (1t);
      \draw [cd] (2m) to node [] {\scriptsize{$  $}} (2t);
      \draw [cd] (3m) to node [] {\scriptsize{$  $}} (3t);
      \draw [cd] (1m) to node [] {\scriptsize{$  $}} (1b);
      \draw [cd] (2m) to node [] {\scriptsize{$  $}} (2b);
      \draw [cd] (3m) to node [] {\scriptsize{$  $}} (3b);
    \end{tikzpicture}
  \]
  corresponding to a derivation $ \deriv{w}{y} $. Composing
  this vertically with a generating square, which must have
  form
  \[
    \begin{tikzpicture}
      \node (1t) at (0,4) {$ L 0 $};
      \node (2t) at (2,4) {$ y $};
      \node (3t) at (4,4) {$ L 0 $};
      \node (1m) at (0,2) {$ L 0 $};
      \node (2m) at (2,2) {$ L 0 $};
      \node (3m) at (4,2) {$ L 0 $};
      \node (1b) at (0,0) {$ L 0 $};
      \node (2b) at (2,0) {$ z $};
      \node (3b) at (4,0) {$ L 0 $};
      \draw [cd] (2t) to node [] {\scriptsize{$  $}} (1t);
      \draw [cd] (2t) to node [] {\scriptsize{$  $}} (3t);
      \draw [cd] (2m) to node [] {\scriptsize{$  $}} (1m);
      \draw [cd] (2m) to node [] {\scriptsize{$  $}} (3m);
      \draw [cd] (2b) to node [] {\scriptsize{$  $}} (1b);
      \draw [cd] (2b) to node [] {\scriptsize{$  $}} (3b);
      \draw [cd] (1m) to node [] {\scriptsize{$  $}} (1t);
      \draw [cd] (2m) to node [] {\scriptsize{$  $}} (2t);
      \draw [cd] (3m) to node [] {\scriptsize{$  $}} (3t);
      \draw [cd] (1m) to node [] {\scriptsize{$  $}} (1b);
      \draw [cd] (2m) to node [] {\scriptsize{$  $}} (2b);
      \draw [cd] (3m) to node [] {\scriptsize{$  $}} (3b);
    \end{tikzpicture}
  \]
  corresponding to a production $ \spn{y}{L0}{z} $ gives
  \[
    \begin{tikzpicture}
      \node (1t) at (0,4) {$ L 0 $};
      \node (2t) at (2,4) {$ w $};
      \node (3t) at (4,4) {$ L 0 $};
      \node (1m) at (0,2) {$ L 0 $};
      \node (2m) at (2,2) {$ L 0 $};
      \node (3m) at (4,2) {$ L 0 $};
      \node (1b) at (0,0) {$ L 0 $};
      \node (2b) at (2,0) {$ z $};
      \node (3b) at (4,0) {$ L 0 $};
      \draw [cd] (2t) to node [] {\scriptsize{$  $}} (1t);
      \draw [cd] (2t) to node [] {\scriptsize{$  $}} (3t);
      \draw [cd] (2m) to node [] {\scriptsize{$  $}} (1m);
      \draw [cd] (2m) to node [] {\scriptsize{$  $}} (3m);
      \draw [cd] (2b) to node [] {\scriptsize{$  $}} (1b);
      \draw [cd] (2b) to node [] {\scriptsize{$  $}} (3b);
      \draw [cd] (1m) to node [] {\scriptsize{$  $}} (1t);
      \draw [cd] (2m) to node [] {\scriptsize{$  $}} (2t);
      \draw [cd] (3m) to node [] {\scriptsize{$  $}} (3t);
      \draw [cd] (1m) to node [] {\scriptsize{$  $}} (1b);
      \draw [cd] (2m) to node [] {\scriptsize{$  $}} (2b);
      \draw [cd] (3m) to node [] {\scriptsize{$  $}} (3b);
    \end{tikzpicture}
  \]
  which corresponds to a derivation
  $ \dderiv{\deriv{w}{y}}{z} $.  Composing horizontally with
  a generating square
  \[
    \begin{tikzpicture}
      \node (1t) at (0,4) {$ L 0 $};
      \node (2t) at (2,4) {$ \ell $};
      \node (3t) at (4,4) {$ L 0 $};
      \node (1m) at (0,2) {$ L 0 $};
      \node (2m) at (2,2) {$ LRk $};
      \node (3m) at (4,2) {$ L 0 $};
      \node (1b) at (0,0) {$ L 0 $};
      \node (2b) at (2,0) {$ r $};
      \node (3b) at (4,0) {$ L 0 $};
      \draw [cd] (2t) to node [] {\scriptsize{$  $}} (1t);
      \draw [cd] (2t) to node [] {\scriptsize{$  $}} (3t);
      \draw [cd] (2m) to node [] {\scriptsize{$  $}} (1m);
      \draw [cd] (2m) to node [] {\scriptsize{$  $}} (3m);
      \draw [cd] (2b) to node [] {\scriptsize{$  $}} (1b);
      \draw [cd] (2b) to node [] {\scriptsize{$  $}} (3b);
      \draw [cd] (1m) to node [] {\scriptsize{$  $}} (1t);
      \draw [cd] (2m) to node [] {\scriptsize{$  $}} (2t);
      \draw [cd] (3m) to node [] {\scriptsize{$  $}} (3t);
      \draw [cd] (1m) to node [] {\scriptsize{$  $}} (1b);
      \draw [cd] (2m) to node [] {\scriptsize{$  $}} (2b);
      \draw [cd] (3m) to node [] {\scriptsize{$  $}} (3b);
    \end{tikzpicture}
  \]
  corresponding with a production $ \spn{\ell}{LRk}{r} $
  results in the square
  \[
    \begin{tikzpicture}
      \node (1t) at (0,4) {$ L 0 $};
      \node (2t) at (2,4) {$ w + \ell $};
      \node (3t) at (4,4) {$ L 0 $};
      \node (1m) at (0,2) {$ L 0 $};
      \node (2m) at (2,2) {$ x + LRk $};
      \node (3m) at (4,2) {$ L 0 $};
      \node (1b) at (0,0) {$ L 0 $};
      \node (2b) at (2,0) {$ y + r $};
      \node (3b) at (4,0) {$ L 0 $};
      \draw [cd] (2t) to node [] {\scriptsize{$  $}} (1t);
      \draw [cd] (2t) to node [] {\scriptsize{$  $}} (3t);
      \draw [cd] (2m) to node [] {\scriptsize{$  $}} (1m);
      \draw [cd] (2m) to node [] {\scriptsize{$  $}} (3m);
      \draw [cd] (2b) to node [] {\scriptsize{$  $}} (1b);
      \draw [cd] (2b) to node [] {\scriptsize{$  $}} (3b);
      \draw [cd] (1m) to node [] {\scriptsize{$  $}} (1t);
      \draw [cd] (2m) to node [] {\scriptsize{$  $}} (2t);
      \draw [cd] (3m) to node [] {\scriptsize{$  $}} (3t);
      \draw [cd] (1m) to node [] {\scriptsize{$  $}} (1b);
      \draw [cd] (2m) to node [] {\scriptsize{$  $}} (2b);
      \draw [cd] (3m) to node [] {\scriptsize{$  $}} (3b);
    \end{tikzpicture}
  \]
  But $ \deriv{w+\ell}{y+r} $ as seen in Lemma
  \ref{thm:rewrite-rel-is-additive}. 
\end{proof}

With this result, we have completely described the rewrite
relation for a grammar $ ( \X , P ) $ with squares in
$ \Lang ( _{L}\StrCsp, \hat{P_\flat} ) $ framed by the
initial object of $ \X $.  These squares are rewrites of a
closed system in the sense that the interface is empty.  We
can instead begin with a closed system $ x $ in $ \X $ as
represented by a horizontal arrow $ \csp{L0}{x}{L0} $ in
$ \Lang ( _{L}\StrCsp , \hat{P_\flat} ) $ and decompose it
into a composite of sub-systems, that is a sequence of
composable horizontal arrows
\begin{center}
  \begin{tikzpicture}
    \node (L0) at (0,0) {$ L0 $};
    \node (x1) at (1,1) {$ x_1 $};
    \node (La1) at (2,0) {$ La_1 $};
    \node (x2) at (3,1) {$ x_2 $};
    \node (La2) at (4,0) {$ La_2 $};
    \node () at (5,0) {$ \dotsm $};
    \node (Lan1) at (6,0) {$ La_{n-1} $};
    \node (xn) at (7,1) {$ x_n $};
    \node (L0') at (8,0) {$ L0 $};
    \draw [cd] 
    (L0) edge[] (x1)
    (La1) edge[] (x1)
    (La1) edge[] (x2)
    (La2) edge[] (x2)
    (Lan1) edge[] (xn)
    (L0') edge[] (xn);
  \end{tikzpicture}
\end{center}

Rewriting can be performed on each of these sub-systems
\[
  \begin{tikzpicture}
    \begin{scope}[shift={(0,6)}]
      \node (1) at (0,4) {$ L0 $};
      \node (2) at (2,4) {$ x_1 $};
      \node (3) at (4,4) {$ La_1 $};
      \node (4) at (0,2) {$ L0 $};
      \node (5) at (2,2) {$ x'_1 $};
      \node (6) at (4,2) {$ La'_1 $};
      \node (7) at (0,0) {$ L0 $};
      \node (8) at (2,0) {$ x''_1 $};
      \node (9) at (4,0) {$ La''_1 $};
      \draw [cd] (1) to (2);
      \draw [cd] (3) to (2);
      \draw [cd] (4) to (5);
      \draw [cd] (6) to (5);
      \draw [cd] (7) to (8);
      \draw [cd] (9) to (8);
      \draw [cd] (4) to node [left]
        {\scriptsize{$ \cong $}} (1);
      \draw [cd] (4) to node [left]
        {\scriptsize{$ \cong $}} (7);
      \draw [>->] (5) to (2);
      \draw [>->] (5) to (8);
      \draw [cd] (6) to node [right]
        {\scriptsize{$ \cong  $}} (3);
      \draw [cd] (6) to node [right]
        {\scriptsize{$ \cong $}} (9);
    \end{scope}
    \begin{scope}[shift={(6,6)}]
      \node (1) at (0,4) {$ La_{n-1} $};
      \node (2) at (2,4) {$ x_n $};
      \node (3) at (4,4) {$ L0 $};
      \node (4) at (0,2) {$ La_{n-1} $};
      \node (5) at (2,2) {$ x'_n $};
      \node (6) at (4,2) {$ L0 $};
      \node (7) at (0,0) {$ La_{n-1} $};
      \node (8) at (2,0) {$ x''_n $};
      \node (9) at (4,0) {$ L0 $};
      \draw [cd] (1) to (2);
      \draw [cd] (3) to (2);
      \draw [cd] (4) to (5);
      \draw [cd] (6) to (5);
      \draw [cd] (7) to (8);
      \draw [cd] (9) to (8);
      \draw [cd] (4) to node [left]
        {\scriptsize{$ \cong $}} (1);
      \draw [cd] (4) to node [left]
        {\scriptsize{$ \cong $}} (7);
      \draw [>->] (5) to (2);
      \draw [>->] (5) to (8);
      \draw [cd] (6) to node [right]
        {\scriptsize{$ \cong  $}} (3);
      \draw [cd] (6) to node [right]
        {\scriptsize{$ \cong $}} (9);
    \end{scope}
    \begin{scope}
      \node (1) at (0,4) {$ L0 $};
      \node (2) at (2,4) {$ y_1 $};
      \node (3) at (4,4) {$ La_1 $};
      \node (4) at (0,2) {$ L0 $};
      \node (5) at (2,2) {$ y'_1 $};
      \node (6) at (4,2) {$ La_1 $};
      \node (7) at (0,0) {$ L0 $};
      \node (8) at (2,0) {$ y''_1 $};
      \node (9) at (4,0) {$ La_1 $};
      \draw [cd] (1) to (2);
      \draw [cd] (3) to (2);
      \draw [cd] (4) to (5);
      \draw [cd] (6) to (5);
      \draw [cd] (7) to (8);
      \draw [cd] (9) to (8);
      \draw [cd] (4) to node [left]
        {\scriptsize{$ \cong $}} (1);
      \draw [cd] (4) to node [left]
        {\scriptsize{$ \cong $}} (7);
      \draw [>->] (5) to (2);
      \draw [>->] (5) to (8);
      \draw [cd] (6) to node [right]
        {\scriptsize{$ \cong  $}} (3);
      \draw [cd] (6) to node [right]
        {\scriptsize{$ \cong $}} (9);
    \end{scope}
    \begin{scope}[shift={(6,0)}]
      \node (1) at (0,4) {$ La_{n-1} $};
      \node (2) at (2,4) {$ y_n $};
      \node (3) at (4,4) {$ L0 $};
      \node (4) at (0,2) {$ La_{n-1} $};
      \node (5) at (2,2) {$ y'_n $};
      \node (6) at (4,2) {$ L0 $};
      \node (7) at (0,0) {$ La_{n-1} $};
      \node (8) at (2,0) {$y''_n $};
      \node (9) at (4,0) {$ L0 $};
      \draw [cd] (1) to (2);
      \draw [cd] (3) to (2);
      \draw [cd] (4) to (5);
      \draw [cd] (6) to (5);
      \draw [cd] (7) to (8);
      \draw [cd] (9) to (8);
      \draw [cd] (4) to node [left]
        {\scriptsize{$ \cong $}} (1);
      \draw [cd] (4) to node [left]
        {\scriptsize{$ \cong $}} (7);
      \draw [>->] (5) to (2);
      \draw [>->] (5) to (8);
      \draw [cd] (6) to node [right]
        {\scriptsize{$ \cong  $}} (3);
      \draw [cd] (6) to node [right]
        {\scriptsize{$ \cong $}} (9);
    \end{scope}
    \node () at (5,2) {$ \dotsm $};
    \node () at (2,5) {$ \vdots $};
    \node () at (5,8) {$ \dotsm $};
    \node () at (8,5) {$ \vdots $};
  \end{tikzpicture}
\]
The composite of these squares is a rewriting of the
original system.


\chapter{Conclusions}
\label{sec:conclusions}

Our work here demarcates a starting line on the path
towards a fully general mathematical theory of systems. We now
have a syntax to reason with.  Built into this syntax is a
mechanism to identify when distinct systems behave
similarly. That is, our syntax reflects semantics.

The semantics side requires attention.  We can conjecture
that the category $ \Rel $ of sets and relations will be the
most appropriate category to serve as our semantic
universe.  The naive idea behind this belief is that
semantics should describe the relationship between inputs
and outputs possible for a particular system.  If not
$ \Rel $, then something structurally similar such as the
category $ \cat{Hilb} $ of Hilbert spaces and linear
maps. This would be appropriate semantics for the
ZX-calculus.

Given a more robust theory of semantics to work with, we can
fill in the larger picture of a general language for
systems.  To do this, Lawvere's `functorial semantics'
\cite{lawvere_func-semantics} is a promising area from which
to pull. Functorial semantics has been successful in
developing universal algebra, and the author believes that
we can leverage Lawvere's thinking in the systems context. To
what extent, however, remains an open question.


\appendix


\chapter{An account of some category theory topics}
\label{sec:double-cats}

Category theory has been in mainstream mathematical
discourse for decades now.  This section does not seek to
add to an already crowded literature on category
theory. Instead, we give just enough background for those
readers coming to this thesis without much knowledge about
category theory. For a more in depth study of category
theory, there are many excellent resources
\cite{
  awody,
  lawvere-schanuel_conceptual-math,
  maclane_cats-working,
  riehl_cats-context}.

As a baseline, we assume basic knowledge of category
theory.  This includes the definitions of categories, functors,
natural transformations, limits, colimits, adjunctions,
monoidal categories, and symmetric monoidal categories.  
But our needs extend beyond these basic concepts, so we
provide the reader with a brief account of some more
advanced topics.

\section{Enrichment and bicategories}
\label{sec:extern-bicats}

The most familiar examples of categories are built from
mathematical widgets and their homomorphisms.  For example,
the category $ \Vect_F $ whose objects are vector spaces
over a fixed field $ F $ and arrows are linear maps.  Yet,
as a category, $ \Vect_F $ does not truly capture everything
we like about vector spaces. We are missing the fact that,
for any two vector spaces $ V $ and $ W $, the space of
linear maps from $ V $ to $ W $ form a vector space by
pointwise addition and scaling.  Yet the hom-set
$ \Vect_F (V,W) $ is merely a collection of linear maps
without additional structure.  The theory of enriched
categories fixes this drawback.

Many familiar categories are actually enriched. For example,
the category $ \Set $ of sets has that, for any two sets
$ x,y $, the collection of arrows $ \Set (x,y) $ is actually
a set.  We say that $ \Set $ is enriched over $ \Set $.
Given the category $ \Mod_R $ whose objects are modules over
an arbitrary ring $ R $ and any two such modules $ x $,
$ y $, the collections of arrows $ \Mod_R (x,y) $ is
actually a $ \mathbb{Z} $-module.  Thus we say that
$ \Mod_R $ is enriched over $ \Mod_{\mathbb{Z}} $.  However,
to be an enriched category, it is not enough for the
collections of arrows to simply have additional structure.
Cohesion is needed.

\begin{definition}[Enriched category]
  Let $ (\M,\otimes,I,\alpha,\lambda,\rho) $ be a
  monoidal category. A category $ \C $ is \defn{enriched}
  over $ \M $ consists of
  \begin{itemize}
  \item a class $ \ob (\C) $ of objects,
  \item an object $ \C (a,b) $ of $ \M $ for each
    pair $ a,b \in \ob (\C) $ that collects the
    arrows of type $ a \to b $
  \item an arrow $ 1_a \from I \to \C (a,a) $ in
    $ \M $ that chooses an identity arrow on $ a $
  \item an arrow
    \[
      \circ_{abc} \from \C (b,c) \otimes \C (a,b)
      \to \C (a,c)
    \]
    for each triple of objects $ a,b,c \in \ob
    (\C) $ that defines the composition
  \end{itemize}
  together with a commuting diagram expressing associativity
  \[
    \begin{tikzpicture}
      \node (1) at (0,4)
        {$ (\C (c,d) \otimes \C (b,c) )
        \otimes \C (a,b) $};
      \node (2) at (6,4)
        {$ \C (b,d) \otimes \C (a,d) $};
      \node (3) at (6,2)
        {$ \C (a,d) $};
      \node (4) at (0,0)
        {$ \C (c,d) \otimes (\C (b,c)
        \otimes \C ( a,b) ) $};  
      \node (5) at (6,0)
        {$ \C (c,d) \otimes \C (a,c) $};
      \draw[cd]
        (1) edge node[above]{$\circ \otimes \id$} (2)
        (2) edge node[right]{$\circ$} (3)
        (1) edge node[left]{$\alpha$} (4)
        (4) edge node[below]{$\id \otimes \circ$} (5)
        (5) edge node[right]{$\circ$} (3);
    \end{tikzpicture}
  \]
  and commuting diagrams expressing left and right unity
  \[
    \begin{tikzpicture}
      \begin{scope}
        \node (1) at (0,2)
          {$ I \otimes \C (a,b) $};
        \node (2) at (2,0)
          {$ \C (a,b) $};
        \node (3) at (4,2)
          {$ \C (b,b) \otimes \C (a,b) $};
        \draw[cd]
        (1) edge node[left]{$ \lambda $} (2)
        (1) edge node[above]{$ 1 \otimes \id $} (3)
        (3) edge node[right]{$ \circ $} (2);
      \end{scope}
      \begin{scope}[shift={(8,0)}]
        \node (1) at (0,2)
          {$ \C (a,b) \otimes I $};
        \node (2) at (2,0)
          {$ \C (a,b) $};
        \node (3) at (4,2)
          {$ \C (a,b) \otimes \C (a,a) $};
        \draw[cd]
        (1) edge node[left]{$ \rho $} (2)
        (1) edge node[above]{$ \id \otimes 1 $} (3)
        (3) edge node[right]{$ \circ $} (2);
      \end{scope}
    \end{tikzpicture}
  \]
  When $ \M $ is actually a 2-category and the
  above diagrams only commute up to natural
  isomorphism, then we say that $ \C $ is
  \defn{weakly enriched} over $ \M $.  
\end{definition}

In this thesis, the we are interested in one example of an
weakly enriched category: a bicategory. In short, a
bicategory is a category weakly enriched in the 2-category $
\Cat $.  Thus a bicategory has a \emph{category} of arrows between
objects, not merely a collection of arrows.

Defining a bicategory to be a category weakly enriched in
$ \Cat $ is elegant but hardly illuminating. Thus, the
definition is worth unpacking but, for clarity's sake, we
only approximate the definition by providing the important
information to know and ignoring technical details.

\begin{definition}[Bicategory]
  A bicategory $ \CC $ consists of
  \begin{itemize}
  \item a collection of objects $ \ob (\CC) $
  \item for each pair of objects $ x,y $, a collection of
    arrows of type $ x \to y $ which compose, that is
    \[
      ( x \xto{f} y \xto{g} z ) \mapsto (x \xto{gf} z)
    \]
  \item for each pair of arrows $ f,g \from x \to y $ of the
    same type, a collection of 2-arrows
    \[
      \begin{tikzpicture}
        \node (x) at (0,0) {$ x $};
        \node (y) at (2,0) {$ y $};
        \draw[cd]
        (x) edge[bend left=60]  node[above]{$ f $} (y)
        (x) edge[bend right=60] node[below]{$ g $} (y);
        \node at (1,0) {$\Downarrow$};
      \end{tikzpicture}
    \]
    together with operations expressing a horizontal composition
    \[
      \begin{tikzpicture}
        \begin{scope}
          \node (x) at (0,0) {$ x $};
          \node (y) at (2,0) {$ y $};
          \node (z) at (4,0) {$ z $};
          \draw [cd]
          (x) edge[bend left=60] node[above]{$ f $} (y)
          (x) edge[bend right=60] node[below]{$ g $} (y)
          (y) edge[bend left=60] node[above]{$ f' $} (z)
          (y) edge[bend right=60] node[below]{$ g' $} (z);
          \node at (1,0) {{$\Downarrow_\alpha$}};
          \node at (3,0) {{$\Downarrow_\beta$}};
        \end{scope}
        \begin{scope}[shift={(6,0)}]
          \node (x) at (0,0) {$ x $};
          \node (z) at (2,0) {$ z $};
          \draw [cd]
          (x) edge[bend left=60] node[above]{$f'f$} (z)
          (x) edge[bend right=60] node[below]{$g'g$} (z);
          \node at (1,0) {{$\Downarrow_{\alpha\hcirc\beta}$}};
        \end{scope}
        \node () at (5,0) {$\xmapsto{\hcirc}$};
      \end{tikzpicture}
    \]
    and vertical composition
    \[
      \begin{tikzpicture}
        \begin{scope}
          \node (x) at (0,0) {$ x $};
          \node (y) at (3,0) {$ y $};
          \draw [cd]
          (x) edge[bend left=60] (y)
          (x) edge[bend right=60] (y)
          (x) edge (y);
          \node at (1.5,0.5)
          {\scriptsize{$\Downarrow_\alpha $}};
          \node at (1.5,-0.5)
          {\scriptsize{$\Downarrow_\beta $}};
        \end{scope}
        \begin{scope}[shift={(5,0)}]
          \node (x) at (0,0) {$ x $};
          \node (y) at (3,0) {$ y $};
          \draw [cd]
          (x) edge[bend left=60] (y)
          (x) edge[bend right=60] (y);
          \node () at (1.5,0) {$\Downarrow_{\beta\vcirc\alpha}$};
        \end{scope}
        \node () at (4,0) {$\xmapsto{\vcirc}$};
      \end{tikzpicture}
    \]
    that satisfy the interchange law
    \[
      (\alpha  \hcirc \beta   ) \vcirc
      (\alpha' \hcirc \beta'  ) =
      (\alpha  \vcirc \alpha' ) \hcirc
      (\beta   \vcirc \beta'  )
    \]
  \end{itemize}
  
  The interchange law states that given an array of 2-arrows
  \[
    \begin{tikzpicture}
      \node (x) at (0,0) {$ x $};
      \node (y) at (3,0) {$ y $};
      \node (z) at (6,0) {$ z $};
      \node () at (1.5,0.5)  {$ \Downarrow_\alpha $};
      \node () at (1.5,-0.5) {$ \Downarrow_{\alpha'} $};
      \node () at (4.5,0.5)  {$ \Downarrow_\beta $};
      \node () at (4.5,-0.5) {$ \Downarrow_{\beta'} $};
      \draw[cd]
      (x) edge[bend left=60]  (y)
      (x) edge                (y)
      (x) edge[bend right=60] (y)
      (y) edge[bend left=60]  (z)
      (y) edge                (z)
      (y) edge[bend right=60] (z);
    \end{tikzpicture}
  \]
  performing the two horizontal compositions
  \[
    \begin{tikzpicture}
      \node (x) at (0,0) {$ x $};
      \node (z) at (4,0) {$ z $};
      \node at (2,0.5)  {$\Downarrow_{\alpha \hcirc \beta}$};
      \node at (2,-0.5) {$\Downarrow_{\alpha' \hcirc \beta}$};
      \path[cd,font=\scriptsize]
      (x) edge[bend left=60] (z)
      (x) edge               (z)
      (x) edge[bend right=60] (z);
    \end{tikzpicture}
  \]
  followed by the vertical composition
  \[
    \begin{tikzpicture}
      \node (x) at (0,0) {$ x $};
      \node (z) at (4,0) {$ z $};
      \node at (2,0)
      { $\Downarrow_{(\alpha \hcirc \beta)
          \vcirc (\alpha' \hcirc \beta')} $};
      \draw[cd]
      (x) edge[bend left=60]  (z)
      (x) edge[bend right=60] (z);
    \end{tikzpicture}
  \]
  gives exactly the same 2-arrow as first performing
  the two vertical compositions
  \[
    \begin{tikzpicture}
      \node (x) at (0,0) {$ x $};
      \node (y) at (2,0) {$ y $};
      \node (z) at (4,0) {$ z $};
      \node at (1,0) {$\Downarrow_{\alpha \vcirc \beta}$};
      \node at (3,0) {$\Downarrow_{\alpha' \vcirc \beta'}$};
      \draw[cd]
      (x) edge[bend left=60]  (y)
      (x) edge[bend right=60] (y)
      (y) edge[bend left=60]  (z)
      (y) edge[bend right=60] (z);
    \end{tikzpicture}
  \]
  followed by the horizontal composition
  \[
    \begin{tikzpicture}
      \node (x) at (0,0) {$ x $};
      \node (z) at (4,0) {$ z $};
      \node () at (2,0) {
        $ \Downarrow_{(\alpha \vcirc \alpha')
          \hcirc (\beta \vcirc \beta'}) $ };
      \draw[cd]
      (x) edge[bend left=60] (z)
      (x) edge[bend right=60] (z);
    \end{tikzpicture}
  \]
\end{definition}

That definition deconstructs a bicategory, laying out all of
the components. Next, we give a definition in the spirit of
enrichment.

\begin{definition}[Bicategory]
  Consider the monoidal 2-category $ (\CCat, \times,
  \cat{1}) $. A bicategory $ \CC $ has
  \begin{itemize}
  \item a collection of objects $ \ob (\CC) $
  \item for each pair of objects $ a,b \in \ob (\CC) $, a
    category $ \CC (a,b) $ of arrows
  \item for each object $ a \in \ob (\CC) $, a functor $ \id_a \from
    \cat{1} \to \CC (a,a) $ that chooses the identity element
  \item for each triple of objects $ a,b,c \in \ob (\CC) $,
    a functor
    \[
      \circ_{a,b,c} \from
      \CC (b,c) \times \CC (a,b) \to \CC (a,c)
    \]
    expressing composition    
  \end{itemize}
  such that, for all $ a,b,c,d \in \ob (\CC) $, the associativity diagram
  \[
    \begin{tikzpicture}
      \node (1) at (0,2)
        {$ (\CC (c,d) \times \CC (b,c)) \times \CC (a,b) $};
      \node (2) at (8,2)
        {$ \CC (c,d) \times (\CC (b,c) \times \CC (a,b) $};
      \node (3) at (0,0) {$ \CC (b,d) \times \CC (a,b) $};
      \node (4) at (8,0) {$ \CC (c,d) \times \CC (a,c) $};
      \node (5) at (4,-2) {$ \CC (a,d) $};
      \node () at (4,0) {$ \Downarrow \iso $};
      \draw[cd]
      (1) edge node[above]{$ \alpha $} (2)
      (1) edge node[left]{$ \circ \times \id $} (3)
      (2) edge node[right]{$ \id \times \circ $} (4)
      (3) edge node[left]{$ \circ $} (5)
      (4) edge node[right]{$ \circ $} (5);
      \end{tikzpicture}
  \]
  and the left and right unitor diagrams
  \[
    \begin{tikzpicture}
    \begin{scope}
      \node (1) at (0,2) {$ \bicat{1} \times \CC (a,b) $};
      \node (2) at (4,2) {$ \CC (b,b) \times \CC (a,b) $};
      \node (3) at (4,0) {$ \CC (a,b) $};
      \node () at (3,1.25)  {$ \Downarrow \iso $};
      \draw[cd]
      (1) edge node[above]{$ i_b \times \id $} (2)
      (1) edge node[below]{$ \lambda $} (3)
      (2) edge node[right]{$ \circ $} (3);
    \end{scope}
    \begin{scope}[shift={(8,0)}]
      \node (1) at (0,2) {$ \CC (a,b) \times \bicat{1} $};
      \node (2) at (4,2) {$ \CC (a,b) \times \CC (a,a) $};
      \node (3) at (4,0) {$ \CC (a,b) $};
      \node ()  at (3,1.25) {$ \Downarrow \iso $};
      \draw[cd]
      (1) edge node[above]{$ i_a \times \id $} (2)
      (1) edge node[below]{$ \rho $}           (3)
      (2) edge node[right]{$ \circ $}          (3);
    \end{scope}
    \end{tikzpicture}
  \]
  commute up to a natural isomorphism.  
\end{definition}

We observe that the objects of the hom-category
$ \CC (a,b) $ are arrows in $ \CC $ and the arrows of
$ \CC (a,b) $ are 2-arrows in $ \CC $. Composition in
$ \CC (a,b) $ is the vertical composition in $ \CC $. The
composition of the arrows and horizontal composition of
2-arrows in $ \CC $ is given by the functor $ \circ $ which,
by light of it preserving composition, gives the interchange
law.

\section{Internalization and double categories}
\label{sec:internal-double-cats}

Most treatments of mathematics base definitions on set
theory. The definitions for a monoid, topological space,
poset, and so on all begin by establishing a set. An
alternative viewpoint is to internalize such gadgets in a
category. 

For example, a monoid is traditionally defined to be a set
$ M $ together with an identity element $ e \in M $ equipped
with a binary operation $ M \times M \to M $ such that for
all $ x,y,z \in M $, we have $ ex=x=xe $ and $ (xy)z=x(yz)
$. However, we can also define a monoid \emph{internal} to a
category. 

\begin{definition}[Internal monoid]
  \label{def:intern-monoid}
  Let $ (\C , \otimes , I ) $ be a
  monoidal category.  A \defn{monoid internal to $ \C $}
  consists of an object $ m \in \ob (\C) $ and two arrows in
  $ \C $
  \begin{itemize}
  \item (multiplication) $ \mu \from m \otimes m \to m $,
  \item (unit) $ \eta \from I \to m $
  \end{itemize}
  such that the associator diagram
  \[
  \begin{tikzpicture}
    \node (1) at (0,2) {$ (m \otimes m) \otimes m $};
    \node (2) at (4,2) {$ m \otimes (m \otimes m) $};
    \node (3) at (8,2) {$ m \otimes m $};
    \node (4) at (0,0) {$ m \otimes m $};
    \node (5) at (8,0) {$ m $};
    \draw [cd] (1) to node [above]
      {$ \alpha $} (2);
    \draw [cd] (2) to node [above]
      {$ \id \otimes \mu $} (3);
    \draw [cd] (3) to node [right]
      {$ \mu $} (5);
    \draw [cd] (1) to node [left]
      {$ \mu \otimes \id$} (4);
    \draw [cd] (4) to node [below] {$ \mu $} (5);
  \end{tikzpicture}
\]
and unitor diagram
\[
  \begin{tikzpicture}
    \node (1) at (-4,2) {$ I \otimes m $};
    \node (2) at (0,2)  {$ m \otimes m $};
    \node (3) at (4,2)  {$ m \otimes I $};
    \node (4) at (0,0)  {$ m $};
    \draw [cd] (1) to node[above]{$ \eta \otimes \id $} (2);
    \draw [cd] (1) to node[below]{$ \lambda $} (4);
    \draw [cd] (2) to node[left]{$ \mu $} (4);
    \draw [cd] (3) to node[above]{$ \id \otimes \eta $} (2);
    \draw [cd] (3) to node[below]{$ \rho $} (4);
  \end{tikzpicture}
\]
commute. 

A \defn{morphism of monoids} is an arrow
$ f \from m \to m' $ in $ \C $ between two monoid objects $
( m,\mu,\eta ) $ and $ ( m',\mu',\eta' ) $ that
preserve multiplication and the unit as expressed by the
following commuting diagrams
\begin{center}
  \begin{tikzpicture}
    \begin{scope}
      \node (mm) at (0,2) {$ m \otimes m $};
      \node (m'm') at (3,2) {$ m' \otimes m' $};
      \node (m) at (0,0) {$ m $};
      \node (m') at (3,0) {$ m' $};
      \draw [cd]
        (mm) edge[] node[above]{$ f \otimes f $} (m'm')
        (mm) edge[] node[left]{$ \mu $} (m)
        (m) edge[] node[below]{$ f $} (m')
        (m'm') edge[] node[right]{$ \mu' $} (m');
      \end{scope}
      \begin{scope}[shift={(6,0)}]
      \node (I) at (0,2) {$ I $};
      \node (m) at (2,2) {$ m $};
      \node (m') at (2,0) {$ m' $};
      \draw [cd]
      (I) edge[] node[above]{$ \eta $} (m)
      (I) edge[] node[below]{$ \eta' $} (m')
      (m) edge[] node[right]{$ f $} (m'); 
    \end{scope}
    \node () at (4,1) {and};
  \end{tikzpicture}
\end{center}
\end{definition}

We can also provide an internal monoid with a commutative
structure.

\begin{definition}[Internal commutative monoid]
  \label{def:intern-comm-monoid}
  Given a symmetric monoidal category $ ( \C,\otimes,I,\tau ) $
  where $ \tau $ is the twist map, a \defn{commutative
    monoid internal to $ \C $} is, first, a monoid internal
  to $ \C $ with the additional property that the diagram
  \begin{center}
    \begin{tikzpicture}
      \node (mm1) at (0,2) {$ m \otimes m $};
      \node (mm2) at (2,2) {$ m \otimes m $};
      \node (m) at (1,0) {$ m $};
      \draw [cd] 
        (mm1) edge[] node[above]{$ \tau $} (mm2)
        (mm1) edge[] node[below]{$ \mu $} (m)
        (mm2) edge[] node[below]{$ \mu $} (m);      
    \end{tikzpicture}
  \end{center}
  commutes
\end{definition}

Algebraic structures often have dual counterparts, and
internal monoids are no exception.

\begin{definition}[Internal comonoid]
  Given a monoidal category $ ( \C , \otimes , I ) $, a
  \defn{comonoid internal to $ \C $} is a monoid internal to
  $ \C\op $. If $ ( \C , \otimes , I , \tau ) $ is a
  symmetric monoidal category, then a \defn{cocommutative
    comonoid internal to $ \C $} is a cocommutative
  comonoid internal to $ \C\op $.   
\end{definition}

In other words, we define comonoids exactly as we did
monoids in Definitions \ref{def:intern-monoid} and
\ref{def:intern-comm-monoid} except we turn the arrows
around.  Many familiar algebraic objects can be exhibited as
monoids internal to select categories.

\begin{example}

  A monoid internal to $ \Set $ is an ordinary monoid. A
  monoid internal to the category $ \Ab $ of abelian groups
  is a ring. A monoid internal to a category $ [\C,\C] $ of
  endofunctors is a monad on $ \C $.
\end{example}

As in algebra, objects can have multiple structures
simultaneously. The most important for us is the Frobenius
monoid.

\begin{definition}[Frobenius monoid]
  \label{def:frobenius-monoid}
  An object $ (m,\mu,\eta,\delta,\epsilon) $ in a monoidal
  category $ ( \C, \otimes , I ) $ is called a
  \defn{Frobenius monoid} if $ ( m,\mu,\eta ) $ is a monoid
  object, $ ( m,\delta,\epsilon ) $ is a comonoid structure
  and the equation
  \[
    (\id \otimes \mu)
      ( \delta \otimes \id )
    = \delta \mu
    = ( \mu \otimes \id )
      ( \id \otimes \delta ).
   \]
    holds.
\end{definition}

Internalization can be extended to constructions beyond
monoids and their variants.  The most important construction
for us is the internalization of a category.

\begin{definition}[Internal category]

  Let $ \D $ be a category. A \defn{category $ \CCC $
    internal to $ \D $} consists of the data
  \begin{itemize}
  \item an object $ \CCC_0 \in \ob (\D) $ of \emph{objects} of $ \CCC $
  \item an object $ \CCC_1 \in \ob (\D) $ of \emph{arrows} of $ \CCC $
  \item source and target arrows $ s,t \from \CCC_1 \to \CCC_0
    $ in $ \D $
  \item an identity arrow $ e \from \CCC_0 \to \CCC_1 $ in $ \D $
  \item a composition arrow $ \circ \from \CCC_1 \times_{\CCC_0}
    \CCC_1 \to \CCC_1 $    
  \end{itemize}
  together with commuting diagrams
  \begin{itemize}
  \item that specify the source and target of the identity
    arrow
    \[
      \begin{tikzpicture}
        \begin{scope}
          \node (1) at (0,2) {$ \CCC_0 $};
          \node (2) at (2,2) {$ \CCC_1 $};
          \node (3) at (2,0) {$ \CCC_0 $};
          \draw [cd] (1) to node[above]{$ e $}  (2);
          \draw [cd] (1) to node[left]{$ \id $} (3);
          \draw [cd] (2) to node[right]{$ s $}  (3);
        \end{scope}
        \begin{scope}[shift={(6,0)}]
          \node (1) at (0,2) {$ \CCC_0 $};
          \node (2) at (2,2) {$ \CCC_1 $};
          \node (3) at (2,0) {$ \CCC_0 $};
          \draw [cd] (1) to node[above]{$ e $}  (2);
          \draw [cd] (1) to node[left]{$ \id $} (3);
          \draw [cd] (2) to node[right]{$ t $}  (3);
        \end{scope}
      \end{tikzpicture}
    \]
  \item that specify the source and target of composite
    arrows
    \[
      \begin{tikzpicture}
        \begin{scope}
          \node (1) at (0,2) {$ \CCC_1 \times_{\CCC_0} \CCC_1 $};
          \node (2) at (3,2) {$ \CCC_1 $};
          \node (3) at (0,0) {$ \CCC_1 $};
          \node (4) at (3,0) {$ \CCC_0 $};
          \draw [cd] (1) to node[above]{$ \circ $} (2);
          \draw [cd] (1) to node[left]{$ p_1 $}    (3);
          \draw [cd] (2) to node[right]{$ s $} (4);
          \draw [cd] (3) to node[below]{$ s $} (4);
        \end{scope}
        \begin{scope}[shift={(6,0)}]
          \node (1) at (0,2) {$ \CCC_1 \times_{\CCC_0} \CCC_1 $};
          \node (2) at (3,2) {$ \CCC_1 $};
          \node (3) at (0,0) {$ \CCC_1 $};
          \node (4) at (3,0) {$ \CCC_0 $};
          \draw [cd] (1) to node[above]{$ \circ $} (2);
          \draw [cd] (1) to node[left]{$ p_2 $} (3);
          \draw [cd] (2) to node[right]{$ t $} (4);
          \draw [cd] (3) to node[below]{$ t $} (4);
        \end{scope}
      \end{tikzpicture}
    \]
  \item that specify associativity
    \[
      \begin{tikzpicture}
        \node (1) at (0,2)
          {$ \CCC_1 \times_{\CCC_0} \CCC_1 \times_{\CCC_0} \CCC_1 $};
        \node (2) at (4,2) {$ \CCC_1 \times_{\CCC_0} \CCC_1 $};
        \node (3) at (0,0) {$ \CCC_1 \times_{\CCC_0} \CCC_1 $};
        \node (4) at (4,0) {$ \CCC_1 $};
        \draw[cd]
          (1) to node[above]{$ \circ \times_{\CCC_0} \id $} (2)
          (1) to node[left]{$ \id \times_{\CCC_0} \circ $} (3)
          (2) to node[right]{$ \circ $} (4)
          (3) to node[below]{$ \circ $} (4);
      \end{tikzpicture}
    \]
  \item that specify unit laws
    \[
      \begin{tikzpicture}
        \node (1) at (-4,2) {$ \CCC_0 \times_{\CCC_0} \CCC_1 $};
        \node (2) at (0,2) {$ \CCC_1 \times_{\CCC_0} \CCC_1 $};
        \node (3) at (4,2) {$ \CCC_1 \times_{\CCC_0} \CCC_0 $};
        \node (4) at (0,0) {$ \CCC_0 $};
        \draw [cd]
          (1) to node[above]{$ e \times_{\CCC_0} \id $} (2)
          (3) to node[above]{$ \id \times_{\CCC_0} e $} (2)
          (1) to node[below]{$ p_2 $} (4)
          (3) to node[below]{$ p_1 $} (4);
      \end{tikzpicture}
    \]
  \end{itemize}
  If we are instead working in an ambient 2-category $ \D $
  and the diagrams only commute up to natural isomorphism,
  we say that $ \CCC $ is \defn{weakly internal} to $ \D
  $. 
\end{definition}

The most important example of an internal category for us is
a (pseudo) double category. A \defn{(pseudo) double
  category} $ \CCC $ is a category weakly internal to
$ \Cat $. This can be unpacked.

Roughly, a double category consists of two categories $
\CCC_0 $ and $ \CCC_1 $ that we consider as follows.
\begin{itemize}
\item The $ \CCC_0 $-objects are called the
  objects of $ \CCC $.
\item The $ \CCC_0 $-arrows are called the
  vertical arrows in $ \CCC $.
\item The $ \CCC_1 $-objects are called the the
  horizontal arrows in $ \CCC $.
\item The $ \CCC_1 $-arrows are called the squares
  of $ \CCC $. 
\end{itemize}

This data is depicted in Figure \ref{fig:square}. When the
vertical arrows are both identities, we call the square
\defn{globular}.

Double categories often arise when a mathematical object
has two different sorts of morphisms. One morphism type
becomes the horizontal arrows, which we denote by $ \hto $,
and the other morphism type becomes the vertical arrows,
which we denote by $ \to $. 

\begin{example}
  \label{ex:sets-rel-dbl-cat}
  There is a double category whose objects are sets,
  vertical arrows $ f \from x \to y $ are functions, horizontal
  arrows $r \from x \hto y $ are relations $ r \subseteq x
  \times y $, and squares
  \[
    \begin{tikzpicture}
      \node (1) at (0,2) {$ x $};
      \node (2) at (2,2) {$ y $};
      \node (3) at (0,0) {$ x' $};
      \node (4) at (2,0) {$ y' $};
      \draw [cd] (1) to node[above]{$ r $} (2);
      \draw [cd] (1) to node[left]{$ f $} (3);
      \draw [cd] (2) to node[right]{$ g $} (4);
      \draw [cd] (3) to node[below]{$ s $} (4);
      \node () at (1,1) {$ \Downarrow $};
    \end{tikzpicture}
  \]
  are inclusions of relations $ gr \subseteq sf  $.
\end{example}

\begin{figure}[h]
  \centering
  \fbox{
  \begin{minipage}{\linewidth}
  \[
  \begin{tikzpicture}
    \node (1) at (0,2) {\( c \)};
    \node (2) at (2,2) {\( d \)};
    \node (3) at (0,0) {\( c' \)};
    \node (4) at (2,0) {\( d' \)};
    \draw [cd,-|->] (1) to node[above]{\( m \)} (2);
    \draw [cd] (1) to node[left]{\( f \)} (3);
    \draw [cd] (2) to node[right]{\( g \)} (4);
    \draw [-|->] (3) to node[below]{\( n \)} (4);
    \node (5) at (6,2) {\( c,c',d,d' \in \ob ( \C_0 ) \)};
    \node (6) at (6,1.33) {\( f,g \in \arr ( \C_0 ) \)};
    \node (7) at (6,.66) {\( m,n \in \ob ( \C_1 ) \)};
    \node (8) at (1,1) {\( \Downarrow  \theta \)};
    \node (9) at (6,0) {\( \theta \in \arr ( \C_1 ) \)};
  \end{tikzpicture}
\]
\caption{A square in a double category}
\label{fig:square}
\end{minipage}
}
\end{figure}

The first definition for a double category we gave---a
category weakly internal to $ \Cat $---is too terse to
provide much meaningful interpretation. So we unpack it.

\begin{definition}[Double category]
  \label{def:dbl-cat}
  A \defn{pseudo double category} $\CCC$, or simply
  \defn{double category}, consists of a category of objects
  $\CCC_{0}$ and a category of arrows $\CCC_{1}$
  together with the following functors
  \begin{align*}
    U     & \from \CCC_{0} \to \CCC_{1}, \\
    S,T   & \from \CCC_{1} \to \CCC_{0},  \\
    \odot & \from \CCC_{1} \times_{\CCC_{0}} \CCC_{1} \to \CCC_{1}
  \end{align*}
  where the pullback $\CCC_{1} \times_{\CCC_{0}} \CCC_{1}$
  is taken over $S$ and $T$.  These functors satisfy the
  equations
  \begin{align}
    SUa = a & = TUa \\
    S(x \odot y) & = Sy \\
    T(x \odot y) & = Tx.
  \end{align}
  This also comes equipped with natural
  isomorphisms
  \begin{align}
    \alpha  & \from (x \odot y) \odot z \to
              x \odot (y \odot z) \\
    \lambda & \from Ua \odot x \to x \\
    \rho    & \from x \odot Ua \to x
  \end{align}
  such that $S(\alpha)$, $S(\lambda)$, $S(\rho)$,
  $T(\alpha)$, $T(\lambda)$, and $T(\rho)$ are
  each identities and that the coherence axioms of
  a monoidal category are satisfied.\footnote{
    Sometimes the term \defn{horizontal
      1-cell} is used for these
    \cite{shulman_contructing}, and for good
    reason. A $(n \times 1)$-category consists of
    categories $\mathbf{D_i}$ for
    $0 \leq i \leq n$ where the objects of
    $\mathbf{D_i}$ are $i$-cells and the morphisms
    of $\mathbf{D_i}$ are vertical
    $i+1$-morphisms. A double category is then
    just a $(1 \times 1)$-category. From this
    perspective, `cells' are always objects with
    morphisms going between them.}

  As for notation, we write vertical and
  horizontal morphisms with the arrows $\to$ and
  $\hto$, respectively, and 2-morphisms we draw as
  in Figure \ref{fig:square}.

  One can define double functors and double transformations,
  but we refrain having no need of them in this thesis.
  Double categories, double functors, and double
  transformations form a 2-category $ \DblCat $.
\end{definition}

Like categories, we can equip double categories with
additional structure. We focus on adding a monoidal
structure. As is typical in category theory, we can provide
definitions at various levels of abstraction.  As such, a
symmetric monoidal double category is a monoid weakly
internal to $ \DblCat $.  This uses the same definition of a
monoid internal to a category $ \D $ as above, though the
diagrams commute up to invertible transformation. It is
worth unpacking this definition.

\begin{definition}[Monoidal double category] \label{def:mndl-dbl-cats}
  A \defn{monoidal double category} $ ( \CCC, \otimes ) $ is a double category
  $\CCC$ equipped with a functor $ \otimes \from \CCC \times
  \CCC \to \CCC $ such that
  \begin{enumerate}
  \item $\CCC_{0}$ and $\CCC_{1}$ are
    both monoidal categories.
  \item If $I$ is the monoidal unit of
    $\CCC_{0}$, then $U_I$ is the monoidal
    unit of $\CCC_{1}$.
  \item The functors $S$ and $T$ are strict
    monoidal and preserve the associativity and
    unit constraints.
  \item There are globular 2-isomorphisms
    \[ 
      \mathfrak{x} \from 
      (x \otimes y) \odot (x' \otimes y') 
      \to 
      (x \odot x') \otimes (y \odot y')
    \]
    and
    \[
      \mathfrak{u} \from 
      U(a \otimes b) 
      \to 
      Ua \otimes Ub
    \]
  
  \item \label{diag:MonDblCat}

    The following diagrams that express the constraint data
    for the double functor $\otimes$ commute
    \[
      \begin{tikzpicture}[scale=0.9]
        \node (A) at (0,3) {\scriptsize{
          $((x \otimes y )\odot (x'\otimes y')) \odot (x'' \otimes y'')$}
          };
        \node (B) at (9,3) {\scriptsize{
          $((x \odot x')\otimes (y \odot y')) \odot (x'' \otimes y'') $}
          };
        \node (A') at (0,1.5) {\scriptsize{
          $(x \otimes y) \odot ((x' \otimes y') \odot (x''\otimes y'')) $}
          };
        \node (B') at (9,1.5) {\scriptsize{
          $((x \odot x')\odot x'') \otimes ((y\odot y')\odot y'')$}
          };
        \node (A'') at (0,0) {\scriptsize{
          $(x\otimes y) \odot ((x'\odot x'') \otimes (y'\odot y''))$}
          };
        \node (B'') at (9,0) {\scriptsize{
          $(x\odot (x'\odot x'')) \otimes (y\odot (y' \odot y''))$}
          };
        \path[cd,font=\scriptsize]
        (A) edge node[left]{$\alpha$} (A')
        (A') edge node[left]{$1 \odot \mathfrak{x}$} (A'')
        (B) edge node[right]{$\mathfrak{x}$} (B')
        (B') edge node[right]{$\alpha \otimes \alpha$} (B'')
        (A) edge node[above]{$\mathfrak{x} \odot 1$} (B)
        (A'') edge node[above]{$\mathfrak{x}$} (B'');
      \end{tikzpicture}
    \]
    \[
      \begin{tikzpicture}[scale=0.9]
        \node (UL) at (0,1.5) {\scriptsize{
          $(x\otimes y) \odot U(a\otimes b)$}
          };
        \node (LL) at (0,0) {\scriptsize{
          $ x \otimes y$}
          };
        \node (UR) at (4.5,1.5) {\scriptsize{
          $(x\otimes y)\odot (Ua\otimes Ub)$}
          };
        \node (LR) at (4.5,0) {\scriptsize{
          $(x\odot Ua) \otimes (y\odot Ub)$}
          };
        \path[cd,font=\scriptsize]
        (UL) edge node[above]{$1 \odot \mathfrak{u}$} (UR) 
        (UL) edge node[left]{$\rho$} (LL)
        (LR) edge node[above]{$\rho \otimes \rho$} (LL)
        (UR) edge node[right]{$\mathfrak{x}$} (LR);
      \end{tikzpicture}
      \begin{tikzpicture}[scale=0.9]
        \node (UL) at (0,1.5) {\scriptsize{$U(a\otimes b)\odot (x\otimes y)$}};
        \node (LL) at (0,0) {\scriptsize{$x\otimes y$}};
        \node (UR) at (4.5,1.5) {\scriptsize{$(Ua \otimes Ub)\odot (x\otimes y)$}};
        \node (LR) at (4.5,0) {\scriptsize{$(Ua \odot x) \otimes (Ub \odot y)$}};
        \path[cd,font=\scriptsize]
        (UL) edge node[above]{$\mathfrak{u} \odot 1$} (UR) 
        (UL) edge node[left]{$\lambda$} (LL)
        (LR) edge node[above]{$\lambda \otimes \lambda$} (LL)
        (UR) edge node[right]{$\mathfrak{x}$} (LR);
      \end{tikzpicture}
    \]
  \item The following diagrams commute expressing
    the associativity isomorphism for $\otimes$ is
    a transformation of double categories.
    \[
      \begin{tikzpicture}[scale=0.9]
        \node (A) at (0,3) {\scriptsize{
          $((x\otimes y) \otimes z) \odot ((x' \otimes y')\otimes z')$}
          };
        \node (B) at (9,3) {\scriptsize{
            $(x\otimes (y\otimes z)) \odot (x'\otimes (y'\otimes z'))$}
        };
        \node (A') at (0,1.5) {\scriptsize{
            $((x\otimes y) \odot (x'\otimes y')) \otimes (z\odot z')$}
        };
        \node (B') at (9,1.5) {\scriptsize{
            $(x\odot x') \otimes ((y\otimes z)\odot (y'\otimes z'))$}
        };
        \node (A'') at (0,0) {\scriptsize{
            $((x\odot x') \otimes(y\odot y')) \otimes (z\odot z')$}
        };
        \node (B'') at (9,0) {\scriptsize{
            $(x\odot x') \otimes ((y\odot y')\otimes (z\odot z'))$}
        };
        \path[cd,font=\scriptsize]
        (A) edge node[left]{$\mathfrak{x}$} (A')
        (A') edge node[left]{$\mathfrak{x} \otimes 1$} (A'')
        (B) edge node[right]{$\mathfrak{x}$} (B')
        (B') edge node[right]{$1 \otimes \mathfrak{x}$} (B'')
        (A) edge node[above]{$a \odot a$} (B)
        (A'') edge node[above]{$a$} (B'');
      \end{tikzpicture}
    \]
    \[
      \begin{tikzpicture}
        \node (A) at (0,3)
          {\scriptsize{ $ U ( (a \otimes b) \otimes c ) $ }};
        \node (B) at (4,3)
          {\scriptsize{ $ U ( a \otimes (b \otimes c) ) $ }};
        \node (A') at (0,1.5) {\scriptsize{$U(a\otimes b) \otimes Uc $}};
        \node (B') at (4,1.5) {\scriptsize{$Ua \otimes U(b\otimes c)$}};
        \node (A'') at (0,0) {\scriptsize{$(Ua \otimes Ub)\otimes Uc$}};
        \node (B'') at (4,0) {\scriptsize{$Ua\otimes (Ub\otimes Uc) $}};
        \path[cd,font=\scriptsize]
        (A) edge node[left]{$\mathfrak{u}$} (A')
        (A') edge node[left]{$\mathfrak{u} \otimes 1$} (A'')
        (B) edge node[right]{$\mathfrak{u}$} (B')
        (B') edge node[right]{$\id \otimes \mathfrak{u}$} (B'')
        (A) edge node[above]{$Ua$} (B)
        (A'') edge node[above]{$a$} (B'');
      \end{tikzpicture}
    \]
  \item The following diagrams commute expressing
    that the unit isomorphisms for $\otimes$ are
    transformations of double categories.
    \[
      \begin{tikzpicture}
        \node (A) at (0,1.5) {\scriptsize{$(x\otimes UI)\odot (y\otimes UI)$}};
        \node (A') at (0,0) {\scriptsize{$x\odot y$}};
        \node (B) at (4,1.5) {\scriptsize{$(x\odot y)\otimes (UI \odot UI) $}};
        \node (B') at (4,0) {\scriptsize{$(x\odot y)\otimes UI $}};
        \path[cd,font=\scriptsize]
        (A) edge node[left]{$r \odot r$} (A')
        (A) edge node[above]{$\mathfrak{x}$} (B)
        (B) edge node[right]{$1 \otimes \rho$} (B')
        (B') edge node[above]{$r$} (A');
      \end{tikzpicture}
      \quad
      \begin{tikzpicture}
        \node (A) at (0,0.75) {\scriptsize{$U(a \otimes I) $}};
        \node (B) at (1.5,1.5) {\scriptsize{$Ua \otimes UI $}};
        \node (B') at (1.5,0) {\scriptsize{$Ua$}};
        \path[cd,font=\scriptsize]
        (A) edge node[above]{$\mathfrak{u}$} (B)
        (A) edge node[below]{$U_{r}$} (B')
        (B) edge node[right]{$r$} (B');
      \end{tikzpicture}
    \]
    \[
      \begin{tikzpicture}
        \node (A) at (0,1.5) {\scriptsize{$(UI \otimes x)\odot (UI\otimes y)$}};
        \node (A') at (0,0) {\scriptsize{$x\odot y$}};
        \node (B) at (4,1.5) {\scriptsize{$(UI \odot UI) \otimes (x\odot y)$}};
        \node (B') at (4,0) {\scriptsize{$UI\otimes (x \odot y) $}};
        \path[cd,font=\scriptsize]
        (A) edge node[left]{$\ell \odot \ell$} (A')
        (A) edge node[above]{$\mathfrak{x}$} (B)
        (B) edge node[right]{$\lambda \otimes 1$} (B')
        (B') edge node[above]{$\ell$} (A');
      \end{tikzpicture}
      \quad
      \begin{tikzpicture}
        \node (A) at (0,0.75) {\scriptsize{$U(I\otimes a)$}};
        \node (B) at (1.5,1.5) {\scriptsize{$UI \otimes Ua$}};
        \node (B') at (1.5,0) {\scriptsize{$Ua$}};
        \path[cd,font=\scriptsize]
        (A) edge node[above]{$\mathfrak{u}$} (B)
        (A) edge node[below]{$U_{\ell}$} (B')
        (B) edge node[right]{$\ell$} (B');
      \end{tikzpicture}
    \]
    \newcounter{mondbl}
    \setcounter{mondbl}{\value{enumi}}
  \end{enumerate}
  A \textbf{braided monoidal double category} is a
  monoidal double category such that:
  \begin{enumerate}
    \setcounter{enumi}{\value{mondbl}}
  \item $\CCC_{0}$ and $\CCC_{1}$ are braided monoidal categories.
  \item The functors $S$ and $T$ are strict braided monoidal functors.
  \item The following diagrams commute expressing that the braiding is a transformation of double categories.
    \[
      \begin{tikzpicture}
        \node (A) at (0,1.5) {\scriptsize{$(x \odot x') \otimes (y \odot y)$}};
        \node (A') at (0,0) {\scriptsize{$(x\otimes y) \odot (x'\otimes y')$}};
        \node (B) at (5,1.5) {\scriptsize{$(y\odot y') \otimes (x \odot x')$}};
        \node (B') at (5,0) {\scriptsize{$(y \otimes x) \odot (y' \otimes x')$}};
        \path[cd,font=\scriptsize]
        (A) edge node[left]{$\mathfrak{x}$} (A')
        (A) edge node[above]{$\mathfrak{s}$} (B)
        (B) edge node[right]{$\mathfrak{x}$} (B')
        (A') edge node[above]{$\mathfrak{s} \odot \mathfrak{s}$} (B');
      \end{tikzpicture}
      \quad
      \begin{tikzpicture}
        \node (A) at (0,1.5) {\scriptsize{$Ua \otimes Ub$}};
        \node (A') at (0,0) {\scriptsize{$Ub \otimes Ua$}};
        \node (B) at (2,1.5) {\scriptsize{$U(a \otimes b) $}};
        \node (B') at (2,0) {\scriptsize{$U(b \otimes a)$}};
        \path[cd,font=\scriptsize]
        (A) edge node[left]{$\mathfrak{s}$} (A')
        (B) edge node[above]{$\mathfrak{u}$} (A)
        (B) edge node[right]{$U_\mathfrak{s}$} (B')
        (B') edge node[above]{$\mathfrak{u}$} (A');
      \end{tikzpicture}
    \]
    \setcounter{mondbl}{\value{enumi}}
  \end{enumerate}
  Finally, a \textbf{symmetric monoidal double category} 
  is a braided monoidal double category $\CCC$ such that
  \begin{enumerate}
    \setcounter{enumi}{\value{mondbl}}
  \item $\CCC_{0}$ and $\CCC_{1}$ are symmetric monoidal.
  \end{enumerate}
\end{definition}
 
In Example \ref{ex:sets-rel-dbl-cat}, we saw a double
category whose vertical arrows are functions and horizontal
arrows are relations. But, functions are examples of
relations. So in a sense, the vertical arrows are redundant
because that information is contained in the horizontal
arrows.  The next definitions formalizes this observation.

\begin{definition}[Companion and conjoint]
  \label{def:CompanionConjoint}
  Let $\CCC$ be a double category and $f \from a \to b$
  a vertical arrow.  A \textbf{companion} of $f$ is a
  horizontal arrow $\widehat{f} \from a \hto b$
  together with squares
  \[
    \begin{tikzpicture}
      \begin{scope}
      \node (A) at (0,2) {$a$};
      \node (B) at (2,2) {$b$};
      \node (A') at (0,0) {$b$};
      \node (B') at (2,0) {$b$};
      \draw[cd]
      (A) edge node[above]{$\widehat{f}$} (B)
      (A) edge node[left]{$f$} (A')
      (B) edge node[right]{$\id$} (B')
      (A') edge node[below]{$Ub$} (B');
      \draw (1,1.925) -- (1,2.075);
      \draw (1,-0.075) -- (1,0.075);
      \node at (1,1) {\scriptsize{$\Downarrow$}};
      \end{scope}
      \node at (4,1) {and};
      \begin{scope}[shift={(6,0)}]
        \node (A) at (0,2) {$a$};
        \node (B) at (2,2) {$a$};
        \node (A') at (0,0) {$a$};
        \node (B') at (2,0) {$b$};
        \draw[cd]
        (A) edge node[above]{$Ua$} (B)
        (A) edge node[left]{$a$} (A')
        (B) edge node[right]{$f$} (B')
        (A') edge node[below]{$\widehat{f}$} (B');
        \draw (1,1.925) -- (1,2.075);
        \draw (1,-0.075) -- (1,0.075);
        \node at (1,1) {\scriptsize{$\Downarrow$}};
      \end{scope}
    \end{tikzpicture}
  \]
  such that the following equations hold:
  \begin{align} \label{eq:CompanionEq}
    &
    \begin{tikzpicture}
    \begin{scope}
      \node (A) at (0,4) {$a$};
      \node (B) at (2,4) {$a$};
      \node (A') at (0,2) {$a$};
      \node (B') at (2,2) {$b$};
      \node (A'') at (0,0) {$b$};
      \node (B'') at (2,0) {$b$};
      \draw[cd]
      (A)   edge node[left]{$\id$} (A')
      (A')  edge node[left]{$f$} (A'')
      (B)   edge node[right]{$f$} (B')
      (B')  edge node[right]{$\id$} (B'')
      (A)   edge node[above]{$Ua$} (B)
      (A')  edge node[above]{$\hat{f}$}  (B')
      (A'') edge node[below]{$Ub$} (B'');
      \draw (1,3.925) -- (1,4.075);
      \draw (1,1.925) -- (1,2.075);
      \draw (1,-.075) -- (1,.075);
      \node () at (1,1) {{$\Downarrow$}};
      \node () at (1,3) {{$\Downarrow$}};
    \end{scope}
    \node () at (3,2) {$ = $};
    \begin{scope}[shift={(4,1)}]
      \node (A) at (0,2) {$a$};
      \node (B) at (2,2) {$a$};
      \node (A') at (0,0) {$b$};
      \node (B') at (2,0) {$b$};
      \draw[cd]
        (A) edge node[left]{$f$} (A')
        (B) edge node[right]{$f$} (B')
        (A) edge node[above]{$Ua$} (B)
        (A') edge node[below]{$Ub$} (B');
      \draw (1,1.925) -- (1,2.075);
      \draw (1,-.075) -- (1,.075);
      \node at (1,1) {$\Downarrow Uf$};
    \end{scope}
    \end{tikzpicture}
    \\
    &
    \begin{tikzpicture}
    \begin{scope}[shift={(0,0)}]
        \node (A) at (0,2) {$a$};
        \node (A') at (0,0) {$a$};
        \node (B) at (2,2) {$a$};
        \node (B') at (2,0) {$b$};
        \node (C) at (4,2) {$b$};
        \node (C') at (4,0) {$b$};
        \draw[cd]
        (A) edge node[left]{$\id$} (A')
        (B) edge node[left]{$f$} (B')
        (C) edge node[right]{$\id$} (C')
        (A) edge node[above]{$Ua$} (B)
        (B) edge node[above]{$\widehat{f}$} (C)
        (A') edge node[below]{$\widehat{f}$} (B')
        (B') edge node[below]{$Ub$} (C');
        \draw (3,1.925) -- (3,2.075);
        \draw (1,1.925) -- (1,2.075);
        \draw (3,-0.075) -- (3,0.075);
        \draw (1,-0.075) -- (1,0.075);
        \node () at (1,1) {{$\Downarrow$}};
        \node () at (3,1) {{$\Downarrow$}};
      \end{scope}
      \node () at (5,1) {$ = $};
      \begin{scope}[shift={(6,0)}]
        \node (A) at (0,2) {$a$};
        \node (B) at (2,2) {$b$};
        \node (A') at (0,0) {$a$};
        \node (B') at (2,0) {$b$};
        \draw[cd]
        (A) edge node[left]{$\id$} (A')
        (B) edge node[right]{$b$} (B')
        (A) edge node[above]{$\widehat{f}$} (B)
        (A') edge node[below]{$\widehat{f}$} (B');
        \draw (0.5,.925) -- (0.5,1.075);
        \draw (0.5,.925) -- (0.5,1.075);
        \draw (0.5,-.075) -- (0.5,.075);
        \node () at (1,1) {$\Downarrow \id_{\hat{f}}$};
        \end{scope}
      \end{tikzpicture}
  \end{align}
  A \textbf{conjoint} of $f$, denoted 
  $\check{f} \from b \hto a$, 
  is a companion of $f$ in the double category 
  $\CCC^{h\cdot\mathrm{op}}$ 
  obtained by reversing the horizontal 1-morphisms, 
  but \emph{not} the vertical 1-morphisms.
\end{definition}

%
\begin{definition}[Fibrant double category]
  \label{def:Fibrant}
  We say that a double category is \defn{fibrant} 
  if every vertical 1-morphism has 
  both a companion and a conjoint. 
  If every \emph{invertible} vertical 1-morphism 
  has both a companion and a conjoint, 
  then we say the double category is \defn{isofibrant}.
\end{definition}

In some sense, a double category is more than a
bicategory. One might believe that there is some way to
extract a bicategory from a double category. In fact you can.

\begin{definition}[Horizontal edge bicategory]
  \label{def:horiz-bicat}
  Given a double category $\CCC$, the \defn{horizontal
    edge bicategory} $H(\CCC)$ of $\CCC$ is the bicategory
  whose objects are those of $\CCC$, arrows are horizontal
  arrows of $\CCC$, and $2$-arrows are the globular
  squares.
\end{definition}

Even though we can turn any double category into a
bicategory by throwing out the vertical arrows, what becomes of
double categories with additional structure? The next
theorem partially answers this puzzle.

\begin{theorem}[{\cite[Theorem 5.1]{shulman_contructing}}]
  \label{thm:horz-bicat}
    Let $\CCC$ be an isofibrant symmetric
    monoidal double category.  Then $H(\CCC)$
    is a symmetric monoidal bicategory.
\end{theorem}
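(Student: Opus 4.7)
The plan is to construct the symmetric monoidal bicategory structure on $H(\CCC)$ in stages, using isofibrancy at each stage to convert vertical 1-morphisms (which do not appear in a bicategory) into horizontal 1-cells that act as equivalences. To start, $H(\CCC)$ is a bicategory by Definition \ref{def:horiz-bicat}: the 1-cells are horizontal arrows, 2-cells are globular squares, horizontal composition is $\odot$ restricted to such squares, and the associator/unitors at the bicategory level come from $\alpha, \lambda, \rho$ of Definition \ref{def:dbl-cat} (which are already globular by assumption). The tensor on objects and horizontal 1-cells is simply inherited from $\otimes$ on $\CCC_0$ and $\CCC_1$.

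The main technical issue is that the monoidal associator and unitors in $\CCC_0$ are \emph{vertical} 1-morphisms---say $\alpha^v_{a,b,c} \from (a\otimes b)\otimes c \to a\otimes(b\otimes c)$---whereas a monoidal bicategory requires \emph{horizontal} equivalences. This is precisely where isofibrancy is used: since $\alpha^v_{a,b,c}$ is invertible, it has a companion $\widehat{\alpha^v_{a,b,c}}$ and conjoint $\check{\alpha^v_{a,b,c}}$ by Definition \ref{def:Fibrant}. I would define the bicategorical associator to be $\widehat{\alpha^v_{a,b,c}}$ and show, using the defining squares of companions and conjoints along with the equations in \eqref{eq:CompanionEq}, that $\widehat{\alpha^v_{a,b,c}} \dashv \check{\alpha^v_{a,b,c}}$ is an adjoint equivalence in $H(\CCC)$. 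The unitors $\lambda, \rho$ and the braiding are built analogously from companions of the corresponding invertible vertical morphisms of $\CCC_0$.

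Next, I would supply the invertible modifications required by the definition of a monoidal bicategory---most notably the ``tensorator'' relating $(f \otimes g) \odot (f' \otimes g')$ to $(f \odot f') \otimes (g \odot g')$, and its unit counterpart. These are given directly by the globular isomorphisms $\mathfrak{x}$ and $\mathfrak{u}$ of Definition \ref{def:mndl-dbl-cats}; their naturality is exactly the functoriality of $\otimes \from \CCC \times \CCC \to \CCC$. For the symmetry, one uses the braiding of $\CCC_0$ and $\CCC_1$ together with axiom (11) of Definition \ref{def:mndl-dbl-cats}. Each coherence axiom of a symmetric monoidal bicategory (pentagon, triangles, hexagons, and the symmetry/syllepsis axioms) is then a pasting equation of globular squares that decomposes into: (i) a coherence diagram in $\CCC_0$ or $\CCC_1$ qua symmetric monoidal category, (ii) naturality of $\mathfrak{x}$ or $\mathfrak{u}$, and (iii) an instance of axioms (5)--(10) of Definition \ref{def:mndl-dbl-cats}.

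The hard part will be the sheer bookkeeping: a symmetric monoidal bicategory has many coherence axioms, each unfolding to a large pasting diagram. Verifying each by hand via the strategy above is routine but lengthy. A more economical approach I would attempt is to first check coherence in the \emph{strict} case (where all $\mathfrak{x}$ and $\mathfrak{u}$ are identities), reducing to the monoidal coherence of $\CCC_0$ and $\CCC_1$, and then lift to the pseudo case by naturality; alternatively, one can try to invoke a general coherence theorem for monoidal bicategories to drastically cut the number of axioms that need direct verification. Since the statement is cited from Shulman's work \cite{shulman_contructing}, I would also cross-reference his construction to confirm that the companion/conjoint-based associator above agrees with his.
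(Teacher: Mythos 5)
The paper does not prove this theorem itself---it cites it as Theorem 5.1 of Shulman's \emph{Constructing symmetric monoidal bicategories}---and your sketch accurately reconstructs Shulman's strategy: pass to the horizontal bicategory, use companions and conjoints of the invertible vertical constraint maps to build the associator, unitors, and braiding as adjoint equivalences, and take the globular isomorphisms $\mathfrak{x}$, $\mathfrak{u}$ as the interchange/unit constraint cells. Since the proposal mirrors the cited argument and correctly identifies isofibrancy as the device that converts vertical data to horizontal equivalences, it is on the right track; the only caveat is that your suggested shortcut of ``first check coherence in the strict case and then lift by naturality'' is not a rigorous step and should not be relied upon---Shulman's verification of the monoidal bicategory axioms is done directly, and the strictification of monoidal bicategories is itself a hard theorem that cannot simply be assumed.
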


The wonderful thing about this theorem is that the axioms
for the symmetric monoidal bicategory definition are
typically much harder to check than the axioms for symmetric
monoidal double category, and so it provides a streamlined
way to construct a symmetric monoidal bicategory.

\section{Bicategories of relations}
\label{sec:cartesian-bicategories}

In the early days of bicategory theory, when concerned
mathematicians were exploring additional structures placed
on bicategories, they discovered that the coherence involved
tended to be convoluted. And so they did what mathematicians
typically do, restrict their considerations to a more
manageable case.

Looking at the definition of a monoidal bicategory, one is
confronted with many diagrams commuting.  By
placing certain restrictions on the type of 2-arrows in your
monoidal bicategory, this coherence is greatly simplified. The
particular case we are interested in comes when the tensor
$ \otimes $ behaves like a product in the sense that there
is a diagonal arrow $ \Delta_x \from x \to x \otimes x $ and a
terminal object $ I $ (the empty product a.k.a.~the unit for
product). A motivating example comes from studying relations.

Relations are pervasive throughout mathematics.
They play an central role in the theory of
rewriting as evidenced through the importance of the rewriting
relation. Classically, a relation is thought of as a
subset of a product of sets $ R \subseteq A \times
B $. This set-theoretic perspective on
relations has a category-theoretic
counterpart. Given any category $ \C $, we can
talk about relations \emph{internal} to $ \C $.
To foster our intuition, we first look at
relations internal to $ \Set $.

\begin{example}
  A relation internal to $ \Set $ from $ x $ to $ y $ is a
  subobject $ r \monicto x \times y $. Set-theoretically
  speaking, $ r $ is a subset of $ x \times y $. This
  matches the classical notion of relation.
\end{example}

However, defining a relation internal to a category $ \C $
as a subobject of a binary product is poor form.  Not all
categories have products. Hence the following definition is
given.

\begin{definition}[Internal relation]
  \label{def:internal-relation}
  A \defn{relation internal to a category $ \C $},
  denoted $ x \rel y $ for $ x,y \in \ob ( \C ) $, is a
  jointly monic span
  \[
    x \xgets{f} r \xto{g} y.
  \]
  That is, for any pair of arrows $ f',g' \from u \to r $
  such that $ ff'=fg' $ and $ gf'=gg' $, then $ f'=g'
  $. When $ \C $ has binary products, this is equivalent to
  the pairing $ \langle f,g \rangle \from r \to x \times y $
  being a monomorphism.
\end{definition}

The categorical minded mathematician might see this and ask
if we can construct category from the objects of $ \C $ and
its internal relations.  If $ \C $ is a topos, then the
answer is yes. This is not the broadest class of categories
for which this construction works, but the class of topoi is
as broad as we can go without writing another section of this
appendix.  Given a topos $ \C $, there is a
category $ \Rel (\C) $ called \defn{the category of
  relations internal to $ \C $}. Its objects are those of
$ \C $ and arrows $ \Rel (\C) (x,y) $ are internal relations
$ x \gets r \to y $.  Composition is given by pullback
\[
  \begin{tikzpicture}
    \node () at (-1.5,1)
      {$ ( x \rel y \rel z ) \xmapsto{\circ} $};
    \node (x) at (0,0) {$ x $};
    \node (y) at (2,0) {$ y $};
    \node (z) at (4,0) {$ z $};
    \node (r) at (1,1) {$ r $};
    \node (s) at (3,1) {$ s $};
    \node (rys) at (2,2) {$ r \times_y s $};
    \draw[cd]
    (r)   edge node[]{$  $} (x)
    (r)   edge node[]{$  $} (y)
    (s)   edge node[]{$  $} (y)
    (s)   edge node[]{$  $} (z)
    (rys) edge node[]{$  $} (r)
    (rys) edge node[]{$  $} (s);
    \draw (1.9,1.7) -- (2,1.6) -- (2.1,1.7);
  \end{tikzpicture} 
\]

In fact, $ \Rel (\C) $ can be promoted to a bicategory $
\RRel (\C) $ by taking as 2-arrows maps of
spans. Specifically, a 2-arrow between internal relations $
x \gets r \to y $ to $ x \gets s \to y $ is an arrow $ f \from r
\to s $ of $ \C $ fitting into the commuting diagram
\[
  \begin{tikzpicture}
    \node (x) at (0,0) {$ x $};
    \node (y) at (4,0) {$ y $};
    \node (s) at (2,1) {$ s $};
    \node (r) at (2,2) {$ r $};
    \draw[cd]
    (r) edge node[]{$  $} (x)
    (r) edge node[]{$  $} (y)
    (s) edge node[]{$  $} (x)
    (s) edge node[]{$  $} (y)
    (r) edge node[]{$  $} (s);
  \end{tikzpicture}
\]
It follows from the jointly monic condition that given any other
arrow $ g \from r \to s $ fitting into the above diagram, it
follows that $ f=g $. The parallel between relations in
$ \Set $ is clear: a morphism of relations is like a subset
inclusion.

\begin{remark}
  There is a name to the property of $ \RRel (\C) $ that
  between parallel arrows, either a single 2-arrow exists or
  none does. It is called being \defn{locally posetal}.  Another
  way of saying this is that $ \RRel (\C) $ is a category
  enriched in $ \Pos $, the category of posets and order
  preserving functions. This means that for any objects
  $ x,y $ of $ \RRel (\C) $, there is a poset
  $ \RRel ( \C ) (x,y) $ whose elements are the relations
  from $ x \rel y $ that are internal to $ \C $ and the
  ordering is defined by setting $ r \leq s $ whenever there
  is an arrow $ r \to s $ in $ \C $ such that the diagram
  \[
  \begin{tikzpicture}
    \node (x) at (0,0) {$ x $};
    \node (y) at (4,0) {$ y $};
    \node (s) at (2,1) {$ s $};
    \node (r) at (2,2) {$ r $};
    \draw[cd]
    (r) edge node[]{$  $} (x)
    (r) edge node[]{$  $} (y)
    (s) edge node[]{$  $} (x)
    (s) edge node[]{$  $} (y)
    (r) edge node[]{$  $} (s);
  \end{tikzpicture}
  \]
  commutes. Because of this, we denote 2-arrows in locally
  posetal bicategories by $ \leq $ instead of $ \To $. We
  explain enriched category theory basics in Appendix
  \ref{sec:extern-bicats}.
\end{remark}

Fix a cartesian category $ ( \T , \times , 1 ) $ with $ \T $
a topos. This cartesian structure provides $ \RRel (\T) $
with some nice structure of its own. First, there is a
tensor product in the form of a pseudo-functor
\[
  \otimes \from \RRel (\T) \times \RRel (\T) \to \RRel (\T)
\]
defined by $ (x,y) \mapsto x \times y $ where $ \times $ is
the product in $ \T $, and pointwise application of $ \times
$ on the jointly monic spans.  We also have natural
isomorphisms
\begin{itemize}
\item $ x \to x \otimes 1 $ given by the internal
relation
\[
\begin{tikzpicture}
  \node (1) at (0,0) {$ x $};
  \node (3) at (4,0) {$ x \times 1 $};
  \node (2) at (2,2) {$ x $};
  \draw[cd]
  (2) edge node[above]{$ \id $} (1)
  (2) edge node[above]{$ \iso $} (3);
\end{tikzpicture}
\]

\item $ x \otimes y \to y \otimes x $ given by the internal
  relation
  \[
\begin{tikzpicture}
  \node (1) at (0,0) {$ x \times y $};
  \node (2) at (2,2) {$ x \times y $};
  \node (3) at (4,0) {$ y \times x $};
  \draw[cd]
  (2) edge node[above]{$ \id $} (1)
  (2) edge node[above]{$ \iso $} (3);
\end{tikzpicture}
\]

\item $ (x \otimes y) \otimes z \to (x \otimes y) \otimes z
  $ given by the internal relation
  \[
    \begin{tikzpicture}
      \node (1) at (0,0) {$ (x \times y) \times z $};
      \node (2) at (2,2) {$ (x \times y) \times z $};
      \node (3) at (4,0) {$ (x \times y) \times z $};
      \draw[cd]
      (2) edge node[above]{$ \id $} (1)
      (2) edge node[above]{$ \iso $} (3);
    \end{tikzpicture}
  \]
\end{itemize}
that satisfy the required coherence conditions.  Because
$ \RRel (\T) $ is locally posetal, the 1-category coherence
laws for unity, symmetry, and associativity suffice.

Because the definition of $ \otimes $ uses the cartesian
structure on $ \T $, there is a cartesian-like quality to
$ \otimes $ in $ \RRel (\T) $.  However, 2-limits are
difficult, so we characterize this quality via
comonoids. Before talking about comonoids in $ \RRel (\T) $,
we look at comonoids in $ \T $.  Observe that by taking $ \T
$ to be cartesian, every object in $ \T $ has a
comonoid structure: the comultiplication
$ \Delta_x \from x \to x \times x $ is given by the diagonal
map and the counit $ \epsilon_x \from x \to 1 $ is the
unique map to the terminal object $ 1 $.  We lift this to
define a comonoid structure on $ \RRel (\T) $ by setting the
comultiplication $ \Delta \from x \to x \otimes x $ as the
internal relation
\[
  \begin{tikzpicture}
    \node (1) at (0,0) {$ x $};
    \node (2) at (2,2) {$ x $};
    \node (3) at (4,0) {$ x \times x $};
    \draw[cd]
    (2) edge node[above]{$ \id $} (1)
    (2) edge node[above,right]{$\langle \id,\id \rangle$} (3);
  \end{tikzpicture}
\]
and the counit to be the internal relation
\[
  \begin{tikzpicture}
    \node (1) at (0,0) {$ x $};
    \node (2) at (2,2) {$ x $};
    \node (3) at (4,0) {$ 1 $};
    \draw[cd]
    (2) edge node[above]{$ \id $} (1)
    (2) edge node[above] {$ ! $} (3);
  \end{tikzpicture}
\]

Every arrow in $ \RRel (\T) $ plays nicely with the
comonoid structure. Suppose we have an arrow $ r \from x \rel y
$, hence a jointly monic span
\[
  \begin{tikzpicture}
    \node (1) at (0,0) {$ x $};
    \node (2) at (2,2) {$ r $};
    \node (3) at (4,0) {$ y $};
    \draw[cd]
    (2) edge (1)
    (2) edge (3);
  \end{tikzpicture}
\]
Then $ r $ is a lax comonoid homomorphism in that there are
2-arrows $ \Delta_y r \leq (r \otimes r) \Delta_x $ and
$ \epsilon_y r \leq t_x $.   The lax
preservation of comultiplication is the 2-arrow
\[
  \begin{tikzpicture}
    \node (1) at (0,0) {$ x $};
    \node (2) at (2,2) {$ r \times_y y $};
    \node (3) at (4,0) {$ y $};
    \node (4) at (2,-2)
      {$ x \times_{x \times x} (r \times r) $};
    \draw[cd]
    (2) edge node[]{$  $} (1)
    (2) edge node[]{$  $} (3)
    (2) edge node[]{$  $} (4)
    (4) edge node[]{$  $} (1)
    (4) edge node[]{$  $} (3);    
  \end{tikzpicture}
\]
where $ r \times_y y \iso r $ and one can determine that $ r
\times r $ is a subobject of $ x \times_{x \times x}(r
\times r) $. The 2-arrow then is the composite
\[
  r \times_y y \xto{\iso} r \xto{\Delta} r \times r \monicto
  x \times_{x \times x}(r \times r).
\]
The lax preservation of unit is the 2-arrow
\[
  \begin{tikzpicture}
    \node (1) at (0,0) {$ x $};
    \node (3) at (4,0) {$ y $};
    \node (2) at (2,2) {$ r \times_y y $};
    \node (4) at (2,-2) {$ x $};
    \draw[cd]
    (2) edge node[]{$  $} (1)
    (2) edge node[]{$  $} (3)
    (2) edge node[]{$  $} (4) 
    (4) edge node[]{$  $} (1)
    (4) edge node[]{$  $} (3);
  \end{tikzpicture}
\]
obtained as the composite
\[
  r \times_y y \xto{\iso} r \to x.
\]

Also, because we are working with spans, we can turn them
around to give a monoid structure $ \Delta^\ast_x \from x
\otimes x \rel x $ and $ \epsilon^\ast_x \from 1 \rel x $
given by the respective spans
\[
  \begin{tikzpicture}
    \begin{scope}
      \node (1) at (0,0) {$ x \times x $};
      \node (2) at (2,2) {$ x $};
      \node(3) at (4,0) {$ x $};
      \draw[cd]
      (2) edge node[above]{$ \Delta $} (1)
      (2) edge node[above]{$ \id $} (3);
    \end{scope}
    \begin{scope}[shift={(6,0)}]
      \node (1) at (0,0) {$ 1 $};
      \node (2) at (2,2) {$ x $};
      \node(3) at (4,0) {$ x $};
      \draw[cd]
      (2) edge node[above]{$ ! $} (1)
      (2) edge node[above]{$ \id $} (3);
    \end{scope}
  \end{tikzpicture}
\]

What Carboni and Walters did was to take this structure as
primitive to define a Cartesian bicategory. Though they went
farther by axiomatizing another important property of
$ \RRel (\T) $. Namely that any object $ x $ of
$ \RRel (\T) $ is a Frobenius monoid (see Definition
\ref{def:frobenius-monoid}) which, recall, requires the
equation
\[
    (\id \otimes \mu)
      ( \delta \otimes \id )
    = \delta \mu
    = ( \mu \otimes \id )
      ( \id \otimes \delta ).
 \]
 to hold.  The left hand side of this equation is given
 by the composite
\[
  \begin{tikzpicture}
    \node (1) at (0,0) {$ x \times x $};
    \node (2) at (3,0) {$ x $};
    \node (3) at (6,0) {$ x \times x $};
    \node (4) at (1.5,2) {$ x $};
    \node (5) at (4.5,2) {$ x $};
    \node (6) at (3,4) {$ x $};
    \draw [cd] 
    (4) edge node[above]{$ \delta $} (1)
    (4) edge node[above]{$ \id $} (2)
    (5) edge node[above]{$ \id $} (2)
    (5) edge node[above]{$ \delta $} (3)
    (6) edge node[above]{$ \id $} (4)
    (6) edge node[above]{$ \id $} (5);
    \draw (2.9,3.6) -- (3,3.5) -- (3.1,3.6);
  \end{tikzpicture}
\]
and the right-hand side of the equation is given by the
composite
\begin{center}
  \begin{tikzpicture}
    \node (1) at (0,0) {$ x \times x $};
    \node (2) at (3,0) {$ x \times x \times x $};
    \node (3) at (6,0) {$ x \times x $};
    \node (4) at (1.5,2) {$ x \times x $};
    \node (5) at (4.5,2) {$ x \times x $};
    \node (6) at (3,4) {$ x $};
    \draw [cd] 
    (4) edge node[above]{$ \id $} (1)
    (4) edge[pos=0.25] node[right]{$ \id \times \delta $} (2)
    (5) edge node[left]{$ \delta \times \id $} (2)
    (5) edge node[above]{$ \id $} (3)
    (6) edge node[above]{$ \delta $} (4)
    (6) edge node[above]{$ \delta $} (5);
    \draw (2.9,3.6) -- (3,3.5) -- (3.1,3.6);
    \end{tikzpicture}
\end{center}
Hence, the equality of the composite spans. In Section
\ref{sec:duality-in-bicategories}, we axiomatize the
structures and properties found in a category of relations
internal to a topos.

Having though about $ \RRel (\T) $, we can now axiomatize some
important structures. The first structure needed is a
tensor product for a bicategory.  In general, the coherence can be
quite complicated but simplifies significantly when restricting our attention to
locally posetal bicategories.

\begin{definition}
  A tensor product $ \otimes \from \BB \times \BB \to \BB $
  on a locally posetal bicategory $ \BB $ is a pseudo-functor
  equipped with an unit object $ I $ and natural
  isomorphisms
  \begin{align*}
    \rho   & \from x \to x \otimes I             &
    \lambda & \from x \to I \otimes x            \\
    \sigma & \from x \otimes y \to y \otimes x   &
    \alpha  & \from (x \otimes y) \otimes z \to
                     x \otimes (y \otimes z)                                                 
  \end{align*}
  that satisfy the classical coherence conditions.  
\end{definition}

We also need to place the concept of adjoint functors into a
general bicategory. The data of an adjoint pair---two
functors and two natural transformations---are merely
1-arrows and 2-arrows in $ \CCat $. However, this structure
can be supported by bicategories other than $ \CCat $.

\begin{definition}[Adjunction] \label{def:adjoint-in-bicat}
  Let $ \BB $ be a bicategory.  We say the 1-arrows
  \[
    \ell \from x \to y
    \quad \text{and} \quad
    r \from y \to x
  \]
  form an \defn{adjunction}, with $ \ell $ the left adjoint
  and $ r $ the right adjoint if there exist 2-arrows
    \[
    \begin{tikzpicture}
      \node (1) at (0,0) {$ y $};
      \node (2) at (2,0) {$ y $};
      \node () at (1,0) {$ \Downarrow \eta $};
      \draw [cd]
        (1) edge[bend left=60]  node[above]{$ \id $} (2)
        (1) edge[bend right=60] node[below]{$ \ell r $} (2);
    \end{tikzpicture}
    \quad \quad \quad \quad
    \begin{tikzpicture}
      \node (1) at (0,0) {$ x $};
      \node (2) at (2,0) {$ x $};
      \node () at (1,0) {$ \Downarrow \epsilon $};
       \draw [cd]
        (1) edge[bend left=60]  node[above]{$ r \ell $} (2)
        (1) edge[bend right=60] node[below]{$ \id $} (2);
    \end{tikzpicture}  
  \]
  respectively named the unit and the counit such that each
  composite
  \[
    \begin{tikzpicture}
      \node (x) at (0,0) {$ x $};
      \node (y) at (4,0) {$ y $};
      \node at (2,1)  {$ \Downarrow \id \otimes \eta $};
      \node at (2,-1) {$ \Downarrow \epsilon \otimes \id $};
      \draw [cd]
      (x) edge[bend left=100] node[above]{$ \ell $} (y) 
      (x) edge[] node[above]{$ \ell r \ell $} (y) 
      (x) edge[bend right=100] node[below]{$ \ell $} (y); 
    \end{tikzpicture}
    \quad \quad \quad \quad
    \begin{tikzpicture}
      \node (y) at (4,0) {$ y $};
      \node (x) at (0,0) {$ x $};
      \node at (2,1)  {$ \Downarrow\eta\otimes\id $};
      \node at (2,-1) {$ \Downarrow\id\otimes\epsilon $};
      \draw [cd] 
      (x) edge[bend left=100] node[above]{$ r $} (y) 
      (x) edge[] node[above]{$ r \ell r $} (y) 
      (x) edge[bend right=100] node[below]{$ r $} (y); 
    \end{tikzpicture}
  \]
  is an identity.
\end{definition}

Closely related to adjoint arrows are the dual concepts of
monad and comonad.  Also like adjunctions, the most common
monads and comonads are internal to the 2-category
$ \CCat $.  Comonads in particular are relevant for us in
Section \ref{sec:gen-result-graph-rewriting}.

\begin{definition}[(Co)monad] \label{def:(co)monad}
  In a bicategory $ \BB $, an arrow $ m \from b \to b $ is
  called a \defn{monad} if there are 2-arrows
  $ \mu \from mm \to m $ and $ \eta \from \id_b \to m $ such
  that
  \begin{center}
    \begin{tikzpicture}
      \begin{scope}
        \node (1) at (0,0) {$ b $};
        \node (2) at (2,0) {$ b $};
        \node (3) at (4,0) {$ b $};
        \node () at (1,0) {$ \Downarrow \id $};
        \node () at (3,0) {$ \Downarrow \mu $};
        \node () at (2,-0.75) {$ \Downarrow \mu $};
        \draw [cd] 
        (1) edge[bend left]  node[above]{$ m $}  (2)
        (2) edge[bend left]  node[above]{$ mm $} (3)
        (1) edge[bend right] node[below]{$ m $} (2)
        (2) edge[bend right] node[below]{$ m $} (3)
        (1) edge[bend right=60] node[below]{$ m $} (3);
      \end{scope}
      \begin{scope}[shift={(6,0)}]
        \node (1) at (0,0) {$ b $};
        \node (2) at (2,0) {$ b $};
        \node (3) at (4,0) {$ b $};
        \node () at  (1,0) {$ \Downarrow \mu $};
        \node () at  (3,0) {$ \Downarrow \id $};
        \node () at  (2,-0.75) {$ \Downarrow \mu $};
        \draw [cd] 
        (1) edge[bend left] node[above]{$ mm $}  (2)
        (2) edge[bend left] node[above]{$ m $} (3)
        (1) edge[bend right] node[below]{$ m $} (2)
        (2) edge[bend right] node[below]{$ m $} (3)
        (1) edge[bend right=60] node[below]{$ m $} (3);
      \end{scope}
      \node () at (5,-0.5) {$ = $};
    \end{tikzpicture}
  \end{center}
  and also
  \begin{center}
    \begin{tikzpicture}
      \begin{scope}
        \node (1) at (0,0) {$ b $};
        \node (2) at (2,0) {$ b $};
        \node (3) at (4,0) {$ b $};
        \node () at (1,0) {$ \Downarrow \eta $};
        \node () at (3,0) {$ \Downarrow \id $};
        \node () at (2,-0.75) {$ \Downarrow \mu $};
        \draw [cd] 
        (1) edge[bend left]  node[above]{$ \id $}  (2)
        (2) edge[bend left]  node[above]{$ m $} (3)
        (1) edge[bend right] node[below]{$ m $} (2)
        (2) edge[bend right] node[below]{$ m $} (3)
        (1) edge[bend right=60] node[below]{$ m $} (3);
      \end{scope}
      \begin{scope}[shift={(6,0)}]
        \node (1) at (0,0) {$ b $};
        \node (2) at (2,0) {$ b $};
        \node (3) at (4,0) {$ b $};
        \node () at (1,0) {$ \Downarrow \id $};
        \node () at (3,0) {$ \Downarrow \eta $};
        \node () at (2,-0.75) {$ \Downarrow \mu $};
        \draw [cd] 
        (1) edge[bend left] node[above]{$ m $}  (2)
        (2) edge[bend left] node[above]{$ \id $} (3)
        (1) edge[bend right] node[below]{$ m $} (2)
        (2) edge[bend right] node[below]{$ m $} (3)
        (1) edge[bend right=60] node[below]{$ m $} (3);
      \end{scope}
      \node () at (5,-0.5) {$ = $};
    \end{tikzpicture}
    \end{center}
    When the 2-arrows are reversed, we get a \defn{comonad}
\end{definition}

There is a close relationship between adjunctions, monads,
and comonads. Instead of exploring this relationship in its
full generality, we restrict our attention to adjunctions,
monads, and comonads in $ \Cat $.

For any adjunction 
\[
  \adjunction{\A}{\X}{L}{R}{2}
\]
with unit $ \eta $ and counit $ \epsilon $, we define a
monad $ RL \from \A \to \A $ with unit
\begin{center}
  \begin{tikzpicture}
    \node (1) at (0,0) {$ \A $};
    \node (2) at (2,0) {$ \A $};
    \draw [cd] 
      (1) edge[bend left] node[above]{$ \id $} (2)
      (1) edge[bend right] node[below]{$ RL $} (2);
    \node () at (1,0) {$ \Downarrow \eta $};
  \end{tikzpicture}
\end{center}
and multiplication $ R \epsilon L \from RLRL \To RL $ given
by the horizontal composite
\begin{center}
  \begin{tikzpicture}
    \node (1) at (0,0) {$ \A $};
    \node (2) at (2,0) {$ \X $};
    \node (3) at (4,0) {$ \X $};
    \node (4) at (6,0) {$ \A $};
    \draw [cd] 
      (1) edge[bend left] node[above]{$ L $} (2)
      (2) edge[bend left] node[above]{$ RL $} (3)
      (3) edge[bend left] node[above]{$ R $} (4);
    \node () at (1,0) {$ \Downarrow \id_L $};
    \node () at (3,0) {$ \Downarrow \epsilon $};
    \node () at (5,0) {$ \Downarrow \id_R $};
    \draw [cd] 
      (1) edge[bend right] node[below]{$ L $} (2) 
      (2) edge[bend right] node[below]{$ \id $} (3) 
      (3) edge[bend right] node[below]{$ R $} (4); 
  \end{tikzpicture}
\end{center}
The adjunction also induces a comonad with counit
\begin{center}
  \begin{tikzpicture}
    \node (1) at (0,0) {$ \X $};
    \node (2) at (2,0) {$ \X $};
    \draw [cd] 
      (1) edge[bend left] node[above]{$ LR $} (2)
      (1) edge[bend right] node[below]{$ \id $} (2);
    \node () at (1,0) {$ \Downarrow \epsilon $};
  \end{tikzpicture}
\end{center}
and comultiplication $ L \eta R \from LR \To LRLR $ given by
the composite
\begin{center}
  \begin{tikzpicture}
    \node (1) at (0,0) {$ \X $};
    \node (2) at (2,0) {$ \A $};
    \node (3) at (4,0) {$ \A $};
    \node (4) at (6,0) {$ \X $};
    \draw [cd] 
      (1) edge[bend left] node[above]{$ R $} (2)
      (2) edge[bend left] node[above]{$ \id $} (3)
      (3) edge[bend left] node[above]{$ L $} (4);
    \node () at (1,0) {$ \Downarrow \id_R $};
    \node () at (3,0) {$ \Downarrow \eta $};
    \node () at (5,0) {$ \Downarrow \id_L $};
    \draw [cd] 
      (1) edge[bend right] node[below]{$ R $} (2) 
      (2) edge[bend right] node[below]{$ LR $} (3) 
      (3) edge[bend right] node[below]{$ L $} (4); 
  \end{tikzpicture}
\end{center}
We use this latter fact in Section
\ref{sec:gen-result-graph-rewriting}.

The opposite direction, from monads to adjunctions, is a
more subtle issue because to each monad is associated a
family of adjunctions.  This is not used in this thesis,
however, so we point the reader to a standard reference
\cite{maclane_cats-working} to learn more.

We now have all of the background needed to define a cartesian bicategory.

\begin{definition}[Cartesian bicategory]
  A cartesian bicategory consists of the following data:
  \begin{itemize}
  \item a locally posetal bicategory $ \BB $
  \item a tensor product $ \otimes \from \BB \times \BB \to
    \BB $
  \item for every object $ x $ of $ \BB $, a cocommutative
    monoid structure $ \Delta_x \from x \to x \otimes x $
    and $ \epsilon_x \from x \to x \otimes I $
  \end{itemize}
  such that
  \begin{itemize}
  \item every arrow $ r \from x \to r $ is a lax
  comonoid homomorphism, that is
  \[
    \Delta_y r \leq (r \otimes r) \Delta_x
    \quad \text{and} \quad
    \epsilon_y r \leq \epsilon_x
  \]
  \item for each object $ x $, comultiplication $ \Delta_x $
    and counit $ \epsilon_x $ have right adjoints $
    \Delta^\ast_x $ and $ \epsilon^\ast_x $ that give a
    commutative monoid structure to $ x $.  
  \end{itemize}
\end{definition}

Such a bicategory is called cartesian because of its
similarities to a cartesian category.

Another nice feature we saw in our favorite cartesian
bicategory $ \RRel (\C) $ is that each object $ x $ is a
Frobenius monoid.  When we append this axiom to those for a
cartesian bicategory, we obtain a more complete
axiomatization of $ \RRel (\C) $. Because of this we call
such a gadget a \defn{bicategory of relations}.

\begin{definition}[Bicategory of relations]
\label{def:bicat-relations}
  A \defn{bicategory of relations} is a cartesian bicategory
  $ \BB $ such that for all objects $ x $, the structure
  maps $ \Delta_x, \epsilon_x, \Delta^\ast_x,
  \epsilon^\ast_x $ satisfy the Frobenius law
  \[
    \Delta_x \Delta^\ast_x =
    ( \id \otimes \Delta_x )( \Delta^\ast_x \otimes \id).
  \]
\end{definition}

It follows from the Frobenius law that in a bicategory of
relations, every object is its own dual. This brings us to
our next section on duality in bicategories.

\section{Duality in bicategories}
\label{sec:duality-in-bicategories}

One's first encounter with the term `dual' is typically in
linear algebra.  Recall that given a $ K $-vector space
$ V $ and its dual $ V^\ast $, there is a linear map
$ V^\ast \otimes_K V \to K $. Also, $ K $ is the identity
with respect to $ \otimes_K $, that is
$ K \otimes_K V \iso V $.  The fact every object in the
monoidal category $ ( \Vect_K, \otimes_K, K) $ of
$ K $-vector spaces and $ K $-linear maps has such a dual
can be generalized to other monoidal categories.  Such
categories are called \defn{compact closed}.  

Briefly returning to the previous section, we left off
saying that in a bicategory of relations every object is its
own dual. And though the coherence is more complicated for
bicategories in general, locally posetal bicategories, such
as bicategories of relations, skirt this issue. Due to the
restriction on 2-arrows, showing that a locally posetal
bicategory is compact closed is exactly the same as showing a
categories is compact closed. Hence our next theorem, that a
bicategory of relations is necessarily compact closed, holds
true and it is the Frobenius law that provides this
structure.

\begin{theorem}
  A bicategory of relations is compact closed.
\end{theorem}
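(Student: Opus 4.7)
The plan is to exhibit each object as its own dual, building the unit and counit of the duality from the (co)monoid structure carried by every object, and then to verify the snake identities directly using the Frobenius law together with the triangle identities for the adjoint pairs $\Delta_x \dashv \Delta^{\ast}_x$ and $\epsilon_x \dashv \epsilon^{\ast}_x$. Concretely, for each object $x$ I will set the coevaluation to be the composite
\[
  \eta_x \;\bydef\; I \xto{\epsilon^{\ast}_x} x \xto{\Delta_x} x \otimes x
\]
and the evaluation to be
\[
  \varepsilon_x \;\bydef\; x \otimes x \xto{\Delta^{\ast}_x} x \xto{\epsilon_x} I.
\]
Since the ambient bicategory is locally posetal, the pentagon and triangulator coherence data of a compact closed bicategory collapses to ordinary 1-categorical coherence plus the snake identities, and each snake identity reduces to a single inequality in either direction.

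First I would verify one snake identity, namely that the composite
\[
  x \xto{\id \otimes \eta_x} x \otimes x \otimes x
    \xto{\varepsilon_x \otimes \id} x
\]
equals $\id_x$. Expanding using the definitions, this composite rewrites to $(\epsilon_x \otimes \id)(\Delta^{\ast}_x \otimes \id)(\id \otimes \Delta_x)(\id \otimes \epsilon^{\ast}_x)$. Here the middle factor $(\Delta^{\ast}_x \otimes \id)(\id \otimes \Delta_x)$ is exactly the right-hand side of the Frobenius equation from Lemma~\ref{thm:frobenius}, hence equal to $\Delta_x \Delta^{\ast}_x$. Then the composite collapses to $(\epsilon_x \Delta_x)(\Delta^{\ast}_x \epsilon^{\ast}_x)$. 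Using the counitality of the cocommutative comonoid we have $\epsilon_x \Delta_x = \id_x$ (up to the unit coherence $\lambda$, $\rho$), and dually $\Delta^{\ast}_x \epsilon^{\ast}_x = \id_x$ because $\Delta^{\ast}_x \dashv \Delta_x$ and $\epsilon^{\ast}_x \dashv \epsilon_x$ transport the counit law. This identifies the snake composite with $\id_x$. The other snake is proved symmetrically using the mirror-image form of the Frobenius law, which follows from cocommutativity plus the given Frobenius equation.

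The main obstacle I anticipate is bookkeeping the distinction between strict equality and the $\leq$ comparisons inherent in a locally posetal bicategory, and making sure each cancellation invoked actually corresponds to a law that holds on the nose rather than only up to a 2-cell in one direction. In particular, the equalities $\epsilon_x \Delta_x = \id$ and $\Delta^{\ast}_x \epsilon^{\ast}_x = \id$ are counit laws of monoid/comonoid structures that in a bicategory typically hold only up to invertible 2-cells, so I will need to use that being locally posetal forces those invertibles to be identities, and then assemble everything. Once this is handled, the construction $(x, x, \varepsilon_x, \eta_x)$ supplies a dual pair for every object, and Theorem~\ref{thm:StrictingDualPairs} (or its direct analogue for the locally posetal setting) upgrades this to a coherent dual pair, giving compact closedness.
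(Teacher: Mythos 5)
The paper's proof of this theorem is a one-line citation to Carboni--Walters, Theorem~2.4; your proposal unfolds that citation into a direct argument, which is a genuinely different (though standard) presentation. The architecture is right: exhibit each object as self-dual with coevaluation $\eta_x = \Delta_x \circ \epsilon^{\ast}_x$ and evaluation $\varepsilon_x = \epsilon_x \circ \Delta^{\ast}_x$, use the Frobenius law to rewrite $(\Delta^{\ast}_x \otimes \id)(\id \otimes \Delta_x)$ as $\Delta_x\Delta^{\ast}_x$, and let the (co)unit laws of the (co)monoid structure kill the remainder; local posetality then collapses all bicategorical coherence to strict equalities, which is the key reason this works at the level of detail you give. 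This is exactly what the citation hides, so unfolding it is a legitimate alternative.

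Three things need tightening before the proof is clean. First, the expression you write as $(\epsilon_x \Delta_x)(\Delta^{\ast}_x \epsilon^{\ast}_x)$ does not typecheck: $\epsilon_x\Delta_x$ has codomain $I$ and $\Delta^{\ast}_x\epsilon^{\ast}_x$ has domain $I$, so their product is an endomorphism of $I$, not of $x$. What you actually have after the Frobenius substitution is the single chain $(\epsilon_x \otimes \id) \circ \Delta_x \circ \Delta^{\ast}_x \circ (\id \otimes \epsilon^{\ast}_x)$, which factors as the comonoid counit law $(\epsilon_x \otimes \id)\Delta_x = \lambda^{-1}$ following the monoid unit law $\Delta^{\ast}_x(\id \otimes \epsilon^{\ast}_x) = \rho$; composed with the implicit unitors at both ends this is $\id_x$. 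Second, the adjunctions point the other way: the definition of a cartesian bicategory gives $\Delta_x$ and $\epsilon_x$ \emph{right} adjoints $\Delta^{\ast}_x$ and $\epsilon^{\ast}_x$, and the monoid unit law you need is not produced by transporting the counit law across the adjunction --- it is part of the definitional stipulation that $\Delta^{\ast}_x$ and $\epsilon^{\ast}_x$ equip $x$ with a commutative monoid structure. Third, the Frobenius law you should cite is the axiom in Definition~\ref{def:bicat-relations}, not Lemma~\ref{thm:frobenius}, which merely verifies the law for the specific bicategory ${}_L\BBoldRewrite$. None of these break the argument; once corrected it is precisely the proof Carboni and Walters give.
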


\begin{proof}
  See Theorem 2.4 in Carboni and Walters \cite{carboni-walters_bicats-relations}.
\end{proof}

For the remainder of this section, we move beyond locally
posetal bicategories to discuss compact closure for generic monoidal
bicategories. 

To define `compact closed bicategories' as conceived by
Stay \cite{stay_cc-bicats}, we discuss a notion of duality
suitable for bicategories.  We write $LR$ for the tensor
product of objects $L$ and $R$ and $fg$ for the tensor
product of morphisms $f$ and $g$. 

\begin{definition}[Dual pair, category]
  \label{def:DualPairCat}
  A \defn{dual pair} in a symmetric monoidal category
  $ ( \C , \otimes, I ) $ is a tuple $(L,R,e,c)$ with
  objects $L$ and $R$, called the \textbf{left} and
  \textbf{right} duals, and morphisms
  \[
    e \from LR \to I 
    \quad \quad 
    c \from I \to RL,
  \]
  called the \textbf{counit} and \textbf{unit},
  respectively, such that the following diagrams commute.
  \[
    \begin{tikzpicture}
      \node (L1) at (0,2) {$L$};
      \node (L2) at (0,0) {$L$};
      \node (LRL) at (2,1) {$LRL$};
      \draw[cd]
      (L1) edge node[left]{$L$} (L2)
      (L1) edge node[above]{$Lc$} (LRL)
      (LRL) edge node[below]{$eL$} (L2);
    \end{tikzpicture}
    \quad \quad
    \begin{tikzpicture}
      \node (R1) at (0,2) {$R$};
      \node (R2) at (0,0) {$R$};
      \node (RLR) at (2,1) {$RLR$};
      \draw[cd]
      (R1) edge node[left]{$R$} (R2)
      (R1) edge node[above]{$cR$} (RLR)
      (RLR) edge node[below]{$Re$} (R2);
    \end{tikzpicture}	
  \]
  A category such such that every object has a dual is
  called \defn{compact closed}.  
\end{definition}

\begin{definition}[Dual pair, bicategory]
  \label{def:DualPairBicat}
  Inside a monoidal bicategory, a \defn{dual pair} is a
  tuple $(L,R,e,c,\alpha,\beta)$ with objects $L$ and $R$,
  morphisms
  \[
    e \from LR \to I 
    \quad \quad 
    c \from I \to RL,
  \]
  and invertible 2-morphisms
  \[
    \begin{tikzpicture}
      \node (L1)   at (0,4) {$L$};
      \node (LI)   at (0,2) {$LI$};
      \node (LRL1) at (0,0) {$L(RL)$};
      \node (LRL2) at (3,0) {$(LR)L$};
      \node (IL)   at (3,2) {$IL$};
      \node (L2)   at (3,4) {$L$};
      \draw[cd]
      (L1)   edge                   (LI)
      (LI)   edge node[left]{$Lc$}  (LRL1)
      (LRL1) edge                   (LRL2)
      (LRL2) edge node[right]{$eL$} (IL)
      (IL)   edge                   (L2)
      (L1)   edge node[above]{$L$}  (L2);
      \node () at (1.5,2) {$\Downarrow \alpha$};
    \end{tikzpicture}
    \quad \quad
    \begin{tikzpicture}
      \node (L1)   at (0,4) {$R$};
      \node (LI)   at (0,2) {$RI$};
      \node (LRL1) at (0,0) {$(RL)R$};
      \node (LRL2) at (3,0) {$R(LR)$};
      \node (IL)   at (3,2) {$RI$};
      \node (L2)   at (3,4) {$R$};
      \draw [cd]
      (L1)   edge                    (LI)
      (LI)   edge node[left] {$cR$}  (LRL1)
      (LRL1) edge                    (LRL2)
      (LRL2) edge node[right] {$Re$} (IL)
      (IL)   edge                    (L2)
      (L1)   edge node[above] {$R$} (L2);
      \node () at (1.5,2) {$\Downarrow \beta$};
    \end{tikzpicture}
  \]
  called \defn{cusp isomorphisms}.  If this data satisfies
  the swallowtail equations in the sense that the diagrams
  in Figure \ref{fig:swallowtail} are identities, then we
  call the dual pair \defn{coherent}.
\end{definition}

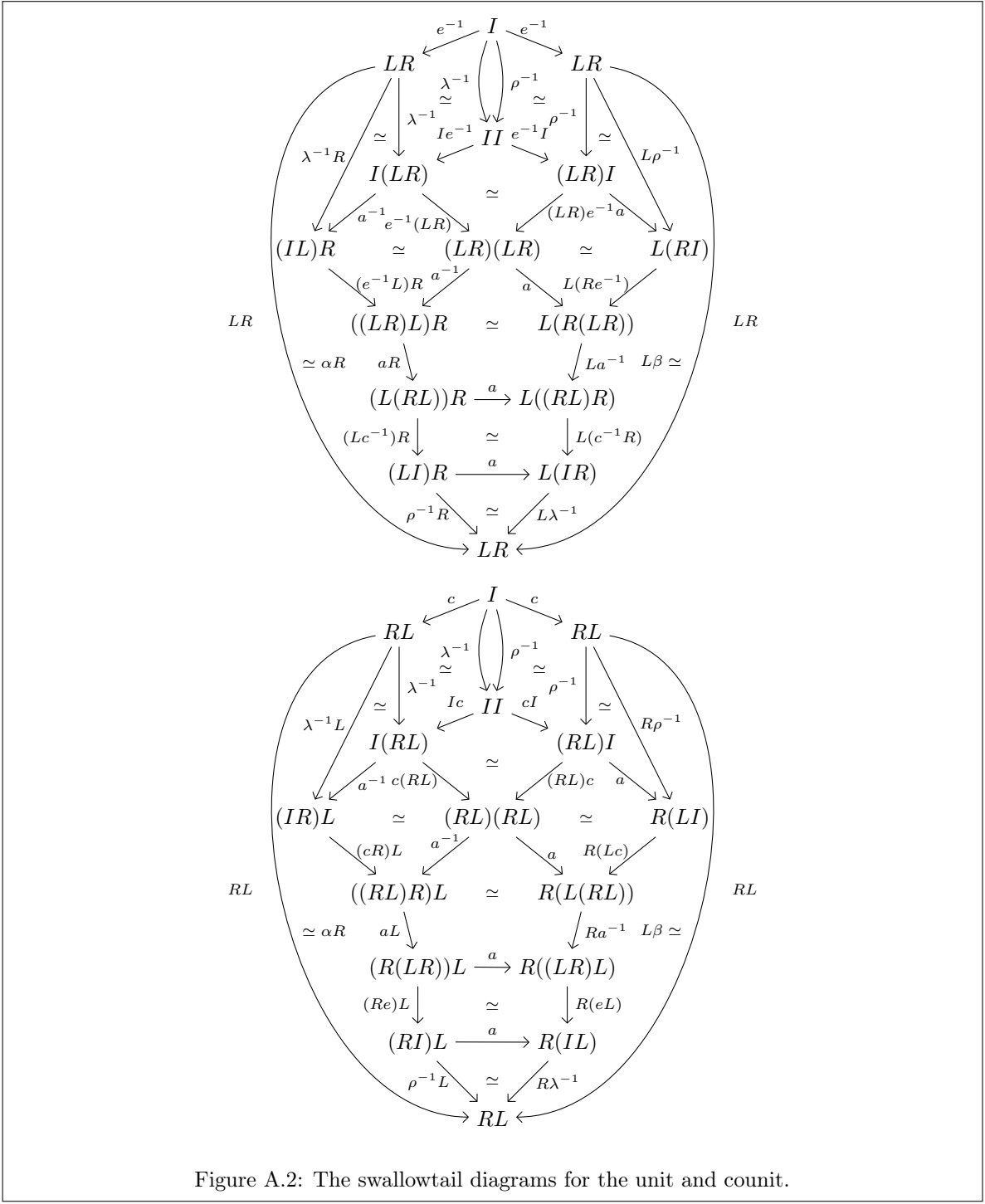
\begin{figure}
  \centering
  \fbox{
  \begin{minipage}{\linewidth}
  %
  %
  \[
    \begin{tikzpicture}[scale=0.60]
      \node (A) at (0,0) {$I$};
      \node (B) at (-2.5,-1) {$L R$};
      \node (C) at (2.5,-1) {$L R$};
      \node (D) at (0,-3) {$I I$};
      \node (E) at (-2.5,-4) {$I (L R)$};
      \node (F) at (2.5,-4) {$(L R) I$};
      \node (G) at (0,-6) {$(L R) (L R)$};
      \node (H) at (-5,-6) {$(I L) R$};
      \node (I) at (5,-6) {$L (R I)$};
      \node (J) at (-2.5,-8) {$((L R) L)  R$};
      \node (K) at (2.5,-8) {$L (R (L R))$};
      \node (L) at (-2,-10) {$(L (R L)) R$};
      \node (M) at (2,-10) {$L  ((R L) R)$};
      \node (N) at (-2,-12) {$(L I) R$};
      \node (O) at (2,-12) {$L (I R)$};
      \node (P) at (0,-14) {$L  R$};
      %
      %
      \node (Q) at (-1.25,-2) {\scriptsize{$\simeq$}};
      \node (R) at (1.25,-2) {\scriptsize{$\simeq$}};
      \node (S) at (0,-4.5) {\scriptsize{$\simeq$}};
      \node (T) at (-3,-3) {\scriptsize{$\simeq $}};
      \node (U) at (3,-3) {\scriptsize{$\simeq$}};
      \node (V) at (-2.5,-6) {\scriptsize{$\simeq$}};
      \node (W) at (2.5,-6) {\scriptsize{$\simeq$}};
      \node (X) at (0,-8) {\scriptsize{$\simeq$}};
      \node (Y) at (0,-11) {\scriptsize{$\simeq$}};
      \node (Z) at (0,-13) {\scriptsize{$\simeq$}};
      \node (A1) at (-4.5,-9) {\scriptsize{$\simeq \alpha R$}};
      \node (A2) at (4.5,-9) {\scriptsize{$L \beta \simeq$}};
      \draw[cd]
      (A) edge node[above]{$e^{-1}$} (B)
      (A) edge node[above]{$e^{-1}$} (C)
      (A) edge[out=-110,in=110] node[left]{$\lambda^{-1}$} (D)
      (A) edge[out=-75,in=75] node[right]{$\rho^{-1}$} (D)
      (B) edge node[right]{$\lambda^{-1}$}(E)
      (C) edge node[left]{$\rho^{-1}$} (F)
      (D) edge node[left=0.2cm,above=0.1cm]{$I e^{-1}$} (E)
      (D) edge node[left=0.2cm,above=0.1cm]{$e^{-1} I$} (F)
      (E) edge node[below,left,pos=0.8]{$e^{-1} (L R)$} (G)
      (F) edge node[below,right]{$(L R) e^{-1}$} (G)
      (B) edge node[above,left]{$\lambda^{-1} R$} (H)
      (C) edge node[above,right]{$L \rho^{-1}$} (I)
      (E) edge node[below,right,pos=0.55]{$a^{-1}$} (H)
      (F) edge node[below,left]{$a$} (I)
      (H) edge node[above,right,pos=0.4]{$(e^{-1} L) R$} (J)
      (G) edge node[above]{$a^{-1}$} (J)
      (I) edge node[above,left,pos=0.4]{$L (R e^{-1})$} (K)
      (G) edge node[below,left]{$a$} (K)
      (J) edge node[left]{$a R$} (L)
      (K) edge node[right]{$L a^{-1}$} (M)
      (L) edge node[above]{$a$} (M)
      (L) edge node[left]{$(L  c^{-1}) R$} (N)
      (M) edge node[right]{$L (c^{-1} R)$} (O)
      (N) edge node[above]{$a$} (O)
      (N) edge node[below,left]{$\rho^{-1} R$} (P)
      (O) edge node[below,right]{$L \lambda^{-1}$} (P)
      (B) edge [out=-170,in=-180] node [below=0.3cm,left=0.3cm] {$L R$} (P)
      (C) edge [in=-360,out=-370,] node [below=0.3cm,right=0.3cm] {$LR$} (P);
    \end{tikzpicture}
  \]
  %
  %
  \[
    \begin{tikzpicture}[scale=0.60]
      \node (A) at (0,0) {$I$};
      \node (B) at (-2.5,-1) {$RL$};
      \node (C) at (2.5,-1) {$RL$};
      \node (D) at (0,-3) {$I I$};
      \node (E) at (-2.5,-4) {$I (RL)$};
      \node (F) at (2.5,-4) {$(RL) I$};
      \node (G) at (0,-6) {$(RL) (RL)$};
      \node (H) at (-5,-6) {$(I R) L$};
      \node (I) at (5,-6) {$R (L I)$};
      \node (J) at (-2.5,-8) {$((RL) R)  L$};
      \node (K) at (2.5,-8) {$R (L (R L))$};
      \node (L) at (-2,-10) {$(R (L R)) L$};
      \node (M) at (2,-10) {$R  ((L R) L)$};
      \node (N) at (-2,-12) {$(R I) L$};
      \node (O) at (2,-12) {$R (I L)$};
      \node (P) at (0,-14) {$R  L$};
      %
      %
      \node (Q) at (-1.25,-2) {\scriptsize{$\simeq$}};
      \node (R) at (1.25,-2) {\scriptsize{$\simeq$}};
      \node (S) at (0,-4.5) {\scriptsize{$\simeq$}};
      \node (T) at (-3,-3) {\scriptsize{$\simeq $}};
      \node (U) at (3,-3) {\scriptsize{$\simeq$}};
      \node (V) at (-2.5,-6) {\scriptsize{$\simeq$}};
      \node (W) at (2.5,-6) {\scriptsize{$\simeq$}};
      \node (X) at (0,-8) {\scriptsize{$\simeq$}};
      \node (Y) at (0,-11) {\scriptsize{$\simeq$}};
      \node (Z) at (0,-13) {\scriptsize{$\simeq$}};
      \node (A1) at (-4.5,-9) {\scriptsize{$\simeq \alpha R$}};
      \node (A2) at (4.5,-9) {\scriptsize{$L \beta \simeq$}};
      \draw[cd]
      (A) edge[cd] node[above]{$c$} (B)
      (A) edge[cd] node[above]{$c$} (C)
      (A) edge[cd,out=-110,in=110] node[left]{$\lambda^{-1}$} (D)
      (A) edge[cd,out=-75,in=75] node[right]{$\rho^{-1}$} (D)
      (B) edge[cd] node[right]{$\lambda^{-1}$}(E)
      (C) edge[cd] node[left]{$\rho^{-1}$} (F)
      (D) edge[cd] node[left=0.2cm,above=0.1cm]{$I c$} (E)
      (D) edge[cd] node[left=0.2cm,above=0.1cm]{$c I$} (F)
      (E) edge[cd] node[below,left,pos=0.5]{$c (R L)$} (G)
      (F) edge[cd] node[below,right]{$(R L) c$} (G)
      (B) edge[cd] node[above,left]{$\lambda^{-1} L$} (H)
      (C) edge[cd] node[above,right]{$R \rho^{-1}$} (I)
      (E) edge[cd] node[below,right,pos=0.55]{$a^{-1}$} (H)
      (F) edge[cd] node[below,left]{$a$} (I)
      (H) edge[cd] node[above,right,pos=0.4]{$(c R) L$} (J)
      (G) edge[cd] node[above]{$a^{-1}$} (J)
      (I) edge[cd] node[above,left,pos=0.4]{$R (L c)$} (K)
      (G) edge[cd] node[above,right]{$a$} (K)
      (J) edge[cd] node[left]{$a L$} (L)
      (K) edge[cd] node[right]{$R a^{-1}$} (M)
      (L) edge[cd] node[above]{$a$} (M)
      (L) edge[cd] node[left]{$(R  e) L$} (N)
      (M) edge[cd] node[right]{$R (e L)$} (O)
      (N) edge[cd] node[above]{$a$} (O)
      (N) edge[cd] node[below,left]{$\rho^{-1} L$} (P)
      (O) edge[cd] node[below,right]{$R \lambda^{-1}$} (P)
      (B) edge[cd,out=-170,in=-180] node [below=0.3cm,left=0.3cm] {$R L$} (P)
      (C) edge[cd,in=-360,out=-370] node [below=0.3cm,right=0.3cm] {$RL$} (P);
    \end{tikzpicture}
  \]
  \caption{The swallowtail diagrams for the
    unit and counit.}
  \label{fig:swallowtail}
  \end{minipage}
  }
\end{figure}

Recall that a symmetric monoidal category is called
\defn{compact closed} if every object is part of a dual
pair.  We can generalize this idea to bicategories by
introducing 2-morphisms and some coherence axioms.  The
following definition is due to Stay \cite{stay_cc-bicats}.

\begin{definition}[Compact closed bicategory]
  \label{def:CompClosdBicat}
  A \defn{compact closed} bicategory is a symmetric
  monoidal bicategory for which every object $R$ is part of
  a coherent dual pair.
\end{definition}

The difference between showing compact
closedness in categories versus bicategories
might seem quite large because of the swallowtail
equations.  Looking at Figure
\ref{fig:swallowtail}, it is no surprise that
these can be incredibly tedious to work with.
Fortunately, Pstr\'{a}gowski
\cite{pstragowski_dual-mon-bicats} proved a
wonderful strictification theorem that effectively
circumvents the need to consider the swallowtail
equations.

\begin{theorem}[{\cite[p.~22]{pstragowski_dual-mon-bicats}}]
  \label{thm:StrictingDualPairs}
  Given a dual pair $(L,R,e,c,\alpha,\beta)$, we can find a
  cusp isomorphism $\beta'$ such that
  $(L,R,e,c,\alpha,\beta')$ is a coherent dual pair.
\end{theorem}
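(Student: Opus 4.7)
The plan is to follow the standard strategy for promoting a non-coherent adjunction-like structure to a coherent one: one swallowtail equation is enforced by hand via an explicit modification of $\beta$, and then the second swallowtail equation is shown to follow automatically. This is the bicategorical analogue of the well-known $2$-categorical fact that if $(\eta,\varepsilon)$ satisfies only one triangle identity for an adjunction, one may replace $\varepsilon$ by $\varepsilon \cdot (\varepsilon\eta\varepsilon)^{-1}$-style correction to force the other; here the same idea is carried out one dimension up.

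First, I would set up the problem diagrammatically. The cusp isomorphism $\alpha$ lives on a ``zig-zag'' pasting of $c$ and $e$ whose source and target are both the identity $1$-cell $L \to L$, and similarly $\beta$ lives on a zig-zag with source and target the identity on $R$. The two swallowtail diagrams of Figure \ref{fig:swallowtail} can, after absorbing all the monoidal coherence $2$-cells (associators, unitors, naturality isos for $\otimes$), be rewritten as assertions that two specific invertible $2$-cells $\Sigma_L \colon L \Rightarrow L$ and $\Sigma_R \colon R \Rightarrow R$ are identities. These $\Sigma_L$ and $\Sigma_R$ are built as explicit pastings of $\alpha$, $\beta$, and coherence data.

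Next, I would define $\beta'$ by whiskering $\beta$ with the correction built from $\Sigma_R$ (or equivalently from $\Sigma_L$ transported across the dual pair via $\alpha$). Concretely: because $\beta$ is a $2$-cell between parallel $1$-cells each of which is (up to coherence) the identity on $R$, one can form $\beta' \bydef \beta \circ (\mathrm{coh} \cdot \Sigma_R^{-1} \cdot \mathrm{coh})$, where the coherence sandwich reinterprets $\Sigma_R$ as acting on the right source of $\beta$. By construction, the first swallowtail for $(L,R,e,c,\alpha,\beta')$ reduces to $\Sigma_R \cdot \Sigma_R^{-1} = \id$, hence holds.

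The hard part will be verifying that the second swallowtail equation also becomes an identity for $\beta'$. The plan here is to use the ``Yang--Baxter for cusp data'' coherence that is implicit in the definition of a symmetric monoidal bicategory: the two swallowtails are not independent but are related by a single pasting equation arising from applying $\alpha$ on both sides of an $L$-$R$-$L$-$R$ zig-zag. In string-diagram terms, the correction used to trivialize $\Sigma_R$ automatically trivializes $\Sigma_L$ because the two swallowtails paste along a common region; this is precisely the content of Pstr\k{a}gowski's argument. I would carry this out by drawing the relevant string diagrams, applying the interchange law and the invertibility of $\alpha$ and $\beta$, and checking that the resulting pasted $2$-cell collapses to the identity. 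Once both swallowtails hold, $(L,R,e,c,\alpha,\beta')$ is a coherent dual pair by Definition \ref{def:DualPairBicat}, completing the proof.
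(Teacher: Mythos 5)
The paper gives no proof of this theorem; it is imported verbatim from Pstr{\k{a}}gowski's preprint, so there is no in-paper argument to compare against. Your high-level strategy --- redefine $\beta$ to a corrected $\beta'$ so that one swallowtail equation holds by construction, then deduce the second swallowtail --- is the right one and agrees with the argument in the cited source. It is the bicategorical analogue of the familiar $2$-categorical observation that a unit--counit pair satisfying one triangle identity can be repaired to a genuine adjunction by an explicit correction of the counit.

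Two cautions, though. First, your invocation of a ``Yang--Baxter for cusp data'' is not the actual mechanism: the two swallowtail conditions are not linked by a braid-type coherence. The correct argument that the second swallowtail follows is a collapsing computation. One whiskers the obstruction $2$-cell $\Sigma_L$ with the zig-zag $1$-cells of the dual pair, inserts identities in the form $\alpha^{-1}\alpha$ and $(\beta')^{-1}\beta'$, and then uses the interchange law together with the \emph{already established} swallowtail for $\beta'$ to cancel the composite down to an identity. Second, as written this is a plan rather than a proof: the sentence ``the two swallowtails paste along a common region'' stands in for roughly a page of pasting-diagram calculation, and without carrying it out you have not verified the key claim that $\Sigma_L$ trivializes. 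The strategy is sound and matches Pstr{\k{a}}gowski's, but the decisive step still needs to be executed rather than asserted.
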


\section{Adhesive categories}
\label{sec:adhesive-categories}

After Ehrig, et.~al. introduced double pushout
graph rewriting \cite{ehrig_graph-grammars}, there
were several attempts at axiomatizing it. The
first successful attempt is called High-Level
Replacement Systems (HLRS)
\cite{ehrig_graph-to-hlr,ehrig_parallel-concurrency}. To
be thorough, we include the axioms of an HLRS.

\begin{definition}[High level replacement system]
  \label{def:hlrs}
  A category $ \C $ is called a \defn{High Level Replacement
    System} if
  \begin{enumerate}
  \item pushouts exist for all spans $ a \gets b
    \to c $ such that one arrow is monic;
  \item pullbacks exist for all cospans $ a \to b
    \gets c $ where both arrows are monic;
  \item pushouts and pullbacks respect
    monomorphisms;
  \item for any diagram
    \[
      \begin{tikzpicture}
        \node (12) at (0,2) {$ a $};
        \node (22) at (2,2) {$ b $};
        \node (32) at (4,2) {$ c $};
        \node (11) at (0,0) {$ d $};
        \node (21) at (2,0) {$ e $};
        \node (31) at (4,0) {$ f $};
        \draw[cd]
        (12) edge       node[]{$  $} (22)
        (22) edge [>->] node[]{$  $} (32)
        (11) edge       node[]{$  $} (21)
        (21) edge [>->] node[]{$  $} (31)
        (12) edge [>->] node[]{$  $} (11)
        (22) edge [>->] node[]{$  $} (21)
        (32) edge [>->] node[]{$  $} (31);
      \end{tikzpicture}
    \]
    such that the marked arrows are monic, the
    outside rectangle is a pushout, and the right
    square is a pullback, then the left square is
    a pushout;
  \item binary coproducts exist;
  \item any pushout of a span with a monic arrow is
    also a pullback.
  \end{enumerate}  
\end{definition}

This collection of axioms was curated to prove
theorems such as the local Church--Rosser and concurrency, the
presence of which provide a rich rewriting theory. Lack and
Soboci\'{n}ski later provided a more compact set of axioms
that also allowed local Church--Rosser and concurrency
theorems \cite{lack-sobo_adhesive-cats}. To earn the shorter
list of axioms, they packed quite a bit of information into
an axiom by using a `Van Kampen square'.  

  A \defn{Van Kampen square} is a pushout
  \[
    \begin{tikzpicture}
      \node (12) at (0,2) {$ a $};
      \node (22) at (2,2) {$ b $};
      \node (11) at (0,0) {$ c $};
      \node (21) at (2,0) {$ d $};
      \draw[cd]
      (12) edge node[]{$  $} (11)
      (12) edge node[]{$  $} (22)
      (11) edge node[]{$  $} (21)
      (22) edge node[]{$  $} (21);
      \draw (0.3,1.6) -- (0.4,1.6) -- (0.4,1.7);
    \end{tikzpicture}
  \]
  that, when placed on the bottom of a cube
  \[
    \begin{tikzpicture}
      \node (c)  at (6,2) {$ c $};
      \node (a)  at (4,3) {$ a $};
      \node (d)  at (2,1) {$ d $};
      \node (b)  at (0,2) {$ b $};
      \node (c') at (6,6) {$ c' $};
      \node (a') at (4,7) {$ a' $};
      \node (d') at (2,5) {$ d' $};
      \node (b') at (0,6) {$ b' $};
      \draw[cd]
      (a)  edge node[]{$  $} (b)
      (a)  edge node[]{$  $} (c)
      (b)  edge node[]{$  $} (d)
      (c)  edge node[]{$  $} (d);
      \path[-,draw=white,line width=0.2cm]
      (a') edge node[]{$  $} (a)
      (b') edge node[]{$  $} (b)
      (c') edge node[]{$  $} (c)
      (d') edge node[]{$  $} (d);
      \path[cd,font=\scriptsize,>=angle 90]
      (a') edge node[]{$  $} (a)
      (b') edge node[]{$  $} (b)
      (c') edge node[]{$  $} (c)
      (d') edge node[]{$  $} (d);
      \path[-,draw=white,line width=0.2cm]
      (a')  edge node[]{$  $} (b')
      (a')  edge node[]{$  $} (c')
      (b')  edge node[]{$  $} (d')
      (c')  edge node[]{$  $} (d');
      \draw[cd]
      (a')  edge node[]{$  $} (b')
      (a')  edge node[]{$  $} (c')
      (b')  edge node[]{$  $} (d')
      (c')  edge node[]{$  $} (d');
    \end{tikzpicture}
  \]
  such that the back faces are pullbacks, then the
  front faces are pullbacks if and only if the top
  face is a pushout.

\begin{definition}[Adhesive category] \label{def:adhesive-category}
  An \defn{adhesive category}
  \begin{enumerate}
  \item has pushouts along monomorphisms;
  \item has pullbacks;
  \item pushouts along monomorphisms are Van Kampen squares.
  \end{enumerate}
\end{definition}

Roughly, the Van Kampen condition places adhesive
categories in the company of distributive
categories and extensive categories in the sense
of a compatibility between certain finite limits
and finite colimits.  In the case of distributive
categories, there is a compatibility between
products and coproducts. For extensive categories,
pullbacks and coproducts play nicely together. The
Van Kampen condition stipulates the compatibility
between pullback and pushout.  

Certainly, the definition of an adhesive category
is more elegant than that of an HLRS.  The price
of elegance is the dense Van Kampen
condition. While adhesive categories are not
exactly HRLS's, they are closely related as one
might expect.

\begin{proposition}[{\cite[Lem.~29]{lack-sobo_adhesive-cats}}]
  An adhesive category with an initial object is an HLRS.
\end{proposition}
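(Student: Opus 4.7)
The plan is to verify each of the six HLRS axioms in turn, for an adhesive category $\C$ with initial object $0$, leveraging the standard consequences of adhesivity collected in Lemma \ref{lem:adhesive-properties} (implicitly) and the Van Kampen condition. Axioms 1 and 2 are essentially immediate: axiom 1 is just the definition of adhesivity (pushouts along monomorphisms), and axiom 2 follows because an adhesive category has \emph{all} pullbacks, in particular those over cospans of monics. Axiom 3 splits into two: pullbacks preserve monomorphisms in any category (a purely formal consequence of the universal property), while pushouts along monics preserve monics in adhesive categories, which is Lack and Sobociński's Lemma 12 and a direct instance of a Van Kampen diagram argument (take the cube with the mono on the back and check the two front faces are pullbacks).

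For axiom 5 (binary coproducts), this is where the hypothesis of an initial object enters. First I would note that in an adhesive category every arrow $0 \to a$ is automatically monic: this follows from the fact that the initial object in an adhesive category is strict (a standard consequence: apply the Van Kampen condition to the pushout expressing $0 \to 0$), hence the canonical $0 \to a$ is monic. Given this, for any two objects $a, b$, the span $a \gets 0 \to b$ consists of monomorphisms, so its pushout exists by axiom 1, and this pushout is precisely the coproduct $a + b$ by the universal property (maps out of the pushout correspond to compatible pairs of maps out of $a$ and $b$, and the compatibility over $0$ is vacuous).

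For axiom 6 (pushouts along monomorphisms are also pullbacks), I would invoke the Van Kampen property directly: given a pushout square of a span with one leg monic, build the cube whose top face is this pushout and whose vertical edges are identities. The back faces are then trivially pullbacks, so the Van Kampen condition forces the front faces---which coincide with the original square up to reindexing---to be pullbacks. This is the adhesive version of the well-known fact that ``pushouts along monos are pullbacks.''

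The main obstacle, and the step I would handle last, is axiom 4---the pushout-pullback decomposition: given the prism diagram with marked monomorphisms, an outer pushout rectangle, and a right pullback square, one must conclude the left square is a pushout. The plan here is to construct a suitable cube by forming the pushout of the top-left span and comparing it to $d$ via the universal property, then using the Van Kampen condition of the outer rectangle (which is a pushout along a monic, hence Van Kampen) to exchange pushout and pullback information between the front and back faces. The hard part is arranging the cube so that both the monicity hypotheses and the given pullback square combine to certify that the comparison map into $d$ is an isomorphism; this is exactly the kind of ``pushout pasting'' lemma that adhesive categories are designed to make routine, but it requires care in tracking which faces are pullbacks versus pushouts before the Van Kampen conclusion kicks in.
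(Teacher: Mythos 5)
The paper does not prove this proposition; it simply cites Lack and Soboci\'{n}ski's Lemma 29, so your proposal is an attempt to reconstruct an argument the text leaves implicit. Your overall plan---verify the six HLRS axioms one at a time, using the Van Kampen property as the workhorse and the initial object only for axiom 5---is the right one and matches what Lack and Soboci\'{n}ski actually do. Axioms 1--3 are handled correctly, and the reduction of axiom 5 to strictness of the initial object (which does indeed follow from adhesivity) is a good move; the resulting span $a \gets 0 \to b$ has monic legs, so the pushout exists and is the coproduct.

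However, the cube you describe for axiom 6 does not do the job. If the top and bottom faces are both the given pushout square and every vertical arrow is an identity, then \emph{all four} side faces are degenerate commutative squares with two identity edges. Such squares are trivially pullbacks, so the Van Kampen condition is vacuously satisfied and you learn nothing---in particular, you cannot conclude anything about whether the pushout square itself is a pullback, because that square never appears as a front face of your cube. The correct cube (this is essentially Lemma \ref{lem:adhesive-properties}) puts the pushout square on the bottom and takes as top face the ``trivial'' pushout of $C \xgets{\id} C \xto{f} B$, with $T_4 = B$; the vertical maps from $C$ and $B$ are identities while the verticals from the other $C$ and from $B$ are the monomorphism $m$ and the cobase-change $n$, respectively. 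Then the back faces are pullbacks precisely because $m$ is monic (the kernel pair of a mono is trivial), the top face is a pushout, and Van Kampen forces the front face---which is exactly the original pushout square---to be a pullback. Your argument for axiom 4 is likewise still a plan rather than a proof: you identify the right strategy (build a cube, compare the candidate pushout to $d$ via a canonical map, show it is iso using VK), but the actual prism-chasing needed to arrange the hypotheses into a single Van Kampen cube is precisely the content of the Lack--Soboci\'{n}ski decomposition lemma, and that step is not supplied.
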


Though fewer in number, the axioms for an adhesive category
are non-trivial. Also, adhesive categories are not so
well-known outside of rewriting theory.  Therefore, instead
of working with adhesive category, we work with a much more
well-known class of category: a topos.  Fortunately, every
elementary topos is adhesive. This result is the subject of
a paper by Lack and Soboci\'{n}ski
\cite{lack-sobo_topoi-adhesive}.

\begin{theorem}
  \label{thm:topos-are-adhesive}
  Every elementary topos is adhesive.
\end{theorem}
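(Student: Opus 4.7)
My plan is to verify the three defining properties of an adhesive category (Definition \ref{def:adhesive-category}) in turn, with almost all the work going into the Van Kampen condition. Existence of pushouts and of pullbacks is free: any elementary topos $\T$ has all finite limits and all finite colimits by definition, so in particular it has pullbacks and has pushouts along monomorphisms. The content of the theorem therefore lives entirely in condition (3).

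For the Van Kampen condition, I would first record two consequences of working in a topos that control how pushouts along monos behave. First, because $\T$ is a topos, coproducts are disjoint and stable under pullback (they are van Kampen colimits of the discrete diagram). Second, every monomorphism in $\T$ is regular, and quotients by equivalence relations are effective. Together these let me present any pushout
\[
  \begin{tikzpicture}
    \node (a) at (0,2) {$a$};
    \node (b) at (2,2) {$b$};
    \node (c) at (0,0) {$c$};
    \node (d) at (2,0) {$d$};
    \draw[cd]
      (a) edge (b)
      (a) edge (c)
      (b) edge (d)
      (c) edge (d);
  \end{tikzpicture}
\]
along the mono $a \monicto b$ as the coequalizer of the kernel pair of the canonical map $b+c \to d$, where that kernel pair is built from $a$ via the disjointness of the coproduct. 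This gives an explicit description of $d$ that is preserved by pullback along any arrow into $d$.

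The key step is then to take a cube whose bottom face is such a pushout, whose back faces are pullbacks, and to show that the front faces are pullbacks if and only if the top face is a pushout. Here I would invoke the fundamental theorem of topos theory (Theorem \ref{thm:fund-thm-topos}): for any object $d' \in \T$, the slice $\T/d'$ is again a topos, and pulling back along $d' \to d$ gives a functor $\T/d \to \T/d'$ which preserves all colimits (it has a right adjoint because $\T$ is locally cartesian closed). Pulling the bottom pushout back into $\T/d'$ therefore produces the top face as a pushout whenever the back faces are pullbacks, and conversely any pushout upstairs descends to a pushout downstairs because the pullback functor is conservative on subobjects and reflects the coequalizer presentation I set up in the previous paragraph. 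Assembling these pieces yields the biconditional that characterizes a Van Kampen square.

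The main obstacle is this last descent step: carefully identifying the top face of the cube with the pullback of the explicit coequalizer presentation of $d$ and verifying that the comparison map is an iso uses disjointness of coproducts, effectiveness of quotients, and local cartesian closedness all at once. Everything else in the proof is essentially bookkeeping, but this verification is the heart of why a topos is adhesive rather than merely having pushouts along monos that happen to be pullbacks.
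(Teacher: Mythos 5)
Note first that the paper does not prove this statement at all: it records the theorem and defers to Lack and Soboci\'{n}ski \cite{lack-sobo_topoi-adhesive}, whose published argument runs through the construction of pushouts along monomorphisms via partial map classifiers. So your proposal is necessarily a different route, and the question is whether it stands on its own. The stability half does: since a topos is locally cartesian closed, pullback along $d' \to d$ has a right adjoint and so preserves pushouts, and combined with the back faces being pullbacks this identifies the top square with the pulled-back bottom square, hence shows it is a pushout whenever the front faces are pullbacks. Two smaller repairs along the way: finite cocompleteness of an elementary topos is a theorem (monadicity of the power-object functor), not part of the definition; and your coequalizer presentation should be built from the equivalence relation on $b+c$ generated by $a$ directly, since describing it as the kernel pair of $b + c \to d$ presupposes the object $d$ you are constructing.

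The genuine gap is the descent half of the Van Kampen condition, which is exactly where the content of the theorem lies and which you assert rather than prove. For that direction the map $d' \to d$ is arbitrary, not a cover, so pullback along it is not conservative in any useful sense, and the phrase ``reflects the coequalizer presentation'' does not supply an argument; note also that $b \to d$ need not be monic, so one is not merely comparing subobjects of $d$. What must actually be shown is that the canonical comparison maps $b' \to b \times_d d'$ and $c' \to c \times_d d'$, from the given top square to the pulled-back bottom square (both pushouts over $d'$, with $a' \cong a \times_d d'$ coming from the back faces and the fact that the bottom pushout is also a pullback), are isomorphisms; a map of spans that induces an isomorphism on pushouts is not an isomorphism in general, so this needs a real argument. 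A correct way to finish in your style is to pull the whole cube back along the jointly epic map $b + c \to d$, which in a topos is an effective epimorphism and of effective descent, and reduce the statement to the van Kampen properties of coproducts (extensivity) and of quotients of equivalence relations (exactness); alternatively one follows Lack–Soboci\'{n}ski's partial-map-classifier construction. As written, the heart of the proof is missing.
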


Because topoi are our categories of choice for the present
work and in light of Theorem \ref{thm:topos-are-adhesive},
we leave our discussion of adhesive categories here. In the
next section, we cover topos theory, but just enough for our needs.
This includes facts that morally belong to adhesive category
theory and also hold true for topoi.

\section{Topoi}
\label{sec:topoi}

When searching the literature on topos theory, one finds
myriad descriptions of what a topos is like.  Suffice to
say, any topos has a geometric aspect and a logical
aspect. With regards to the geometric aspect, a topos is
like a generalized space, where the objects are subspaces
and the arrows describe how the various subspaces relate to
one another.  But to each topos, there is an internal logic
from which we can recover various logics by using the arrows
to and from the subobject classifier which we define now\footnote{
  For a full account of logic via topos theory, see Part D
  of Johnstone's \emph{Sketches of an Elephant}
  \cite{johnstone_elephant}. }.

\begin{definition}[Subobject classifier]

  A \defn{subobject classifier} is a monomorphism
  \[
    \mathtt{true} \from 1 \to \Omega
  \]
  from the terminal object with the property that, for every
  objects $ t \in \T $ and subobject $ s \to t $, there
  exists a unique arrow $ \chi_s $ fitting into the pullback
  diagram
    \begin{center}
      \begin{tikzpicture}
        \node (s) at (0,2) {$ s $};
        \node (t) at (0,0) {$ t $};
        \node (1) at (2,2) {$ 1 $};
        \node (omega) at (2,0) {$ \Omega $};
        \draw[cd]
          (s) edge node[]{$  $} (t)
          (s) edge node[]{$  $} (1)
          (t) edge node[below]{$ \chi_s $} (omega)
          (1) edge node[]{$  $} (omega);
        \draw (0.3,1.6) -- (0.4,1.6) -- (0.4,1.7);
      \end{tikzpicture}
    \end{center}
\end{definition}

In the category $ \Set $, any two element set is a subobject
classifier.  Take the set $ \{0,1\} $.  Then any function
into that set determines a subobject, here just a subset, by
taking the fiber of $ 1 $.  Similarly, any subobject $
s \to t $ determines a map $ \chi_s \from t \to \{0,1\} $ by
sending an element of $ t $ to $ 1 $ if it belongs to $ s $
and sending an element of $ t $ to $ 0 $ if it does not
belong to $ s $.  

\begin{definition}[Topos]
  A \defn{topos} $ \T $ is a category with finite limits, is
  cartesian closed, and has a subobject classifier. 
\end{definition}

The examples we give below cover our needs.

\begin{example}
  \begin{enumerate}
  \item

    The archetypal topos is the category $ \Set $.  The
    subobject classifier is the two-element set
    $ \{ 0,1 \} $ where we interpret $ 0 $ as `false' and $
    1 $ as `true'.
    
  \item

    Presheaf categories $ \Set^{ \C^{\op} } $ are topoi when
    $ \C $ is a small category.  The subobject classifier is
    the functor $ \C^{\op} \to \Set $ that sends any object
    $ c $ in $ \C $ to the set of subfunctors of
    $ \C (-,c) $. This is called a `sieve' of $ c $.

  \item 

    Finite presheaf categories are topoi. These are functor
    categories of the type $ \FinSet^{ \C^{\op} } $ for $ \C $ finite.
    
  \end{enumerate}
  
\end{example}

Of these classes of examples, the presheaf topoi are the most
pertinent. There is one specific presheaf topos that we
particularly like.

  \begin{example}
    
    Our favorite example of a presheaf topos is $
    \RGraph $, the category of reflexive directed
    multi-graphs.  This is the category of
    presheaves on 
    \[
      \begin{tikzpicture}
        \node ( ) at (-1.5,0) {$ \C^{\op} \bydef $};
        \node (0) at (0,0)  {$ e $};
        \node (1) at (2,0)  {$ n $};
        \draw [cd]
          (0) edge[bend left=60] node[above]{$s$} (1)
          (1) edge[] node[above]{$t$} (0)
          (0) edge[bend right=60] node[below]{$ t $} (1); 
        \draw [rounded corners]
         (-0.5,-1.5) rectangle (2.5,1.5);
      \end{tikzpicture}
    \]
    such that all arrows $ n \to n $ are the identity.  A
    presheaf $ g \from \C^{\op} \to \Set $ then consists of
    two sets $ g(e) $ and $ g(n) $ considered as sets of
    edges and nodes.  Then there are two arrows of type
    $ g(e) \to g(n) $ assigning each edge its source and
    target and one arrow of type $ g(n) \to g(e) $ assigning
    a reflexive edge to each node. This is exactly a
    reflexive graph.  A natural transformation $ \theta $
    between presheaves $ g,h \from \C^{\op} \to \Set $ is a pair
    of functions $ \theta_e \from g(e) \to h(e) $ and
    $ \theta_n \from g(n) \to h(n) $ such that the squares
    \[
      \begin{tikzpicture}
        \begin{scope}
          \node (11) at (0,0) {$ h(e) $};
          \node (12) at (0,2) {$ g(e) $};
          \node (21) at (2,0) {$ h(n) $};
          \node (22) at (2,2) {$ g(n) $};
          \path[cd,font=\scriptsize,>=angle 90]
          (12) edge node[left]{$ \theta_e $} (11)
          (22) edge node[right]{$ \theta_n $} (21)
          (12) edge node[above]{$ g(t) $} (22)
          (11) edge node[below]{$ h(t) $} (21); 
        \end{scope}
        \begin{scope}[shift={(4,0)}]
          \node (11) at (0,0) {$ h(e) $};
          \node (12) at (0,2) {$ g(e) $};
          \node (21) at (2,0) {$ h(n) $};
          \node (22) at (2,2) {$ g(n) $};
          \path[cd,font=\scriptsize,>=angle 90]
          (12) edge node[left]{$ \theta_e $} (11)
          (22) edge node[right]{$ \theta_n $} (21)
          (12) edge node[above]{$ g(t) $} (22)
          (11) edge node[below]{$ h(t) $} (21); 
        \end{scope}
        \begin{scope}[shift={(8,0)}]
          \node (11) at (0,0) {$ h(e) $};
          \node (12) at (0,2) {$ g(e) $};
          \node (21) at (2,0) {$ h(n) $};
          \node (22) at (2,2) {$ g(n) $};
          \path[cd,font=\scriptsize,>=angle 90]
          (12) edge node[left]{$ \theta_e $} (11)
          (22) edge node[right]{$ \theta_n $} (21)
          (12) edge node[above]{$ g(t) $} (22)
          (11) edge node[below]{$ h(t) $} (21); 
        \end{scope}
      \end{tikzpicture}
    \]
    commute. These squares assert that the natural
    transformations preserve source, targets, and reflexive
    nodes. Hence, this is precisely the data of
    a reflexive graph morphism.
  \end{example}

Because topoi have both geometric and logical
aspects, there are morphisms of topos for each.  
  
  \begin{definition}[Geometric morphism]
    \label{def:geometric-morphism}
    A \defn{geometric morphism} between topoi
    $ \X \to \A $ is an adjunction
    \[
      \adjunction{\A}{\X}{L}{R}{2}
    \]
    such that $ L $ preserves finite limits. We
    call $ L $ the inverse image functor and $ R $
    the direct image functor.    
  \end{definition}

  Geometric morphisms abstract from continuous maps between
  spaces $ f \from S \to T $.  Denote by $ \mathcal{O}S $
  and $ \mathcal{O}T $ the open sets of $ S $ and $ T $.
  Then $ f $ induces the direct image map
  $ f_\ast \from \mathcal{O}S \to \mathcal{O}T$ that sends a
  set $ A \subseteq S $ to its image
  $ \{ t \in T \vert \exists a \in A.fa = t \} $. But $ f $
  also induces an inverse image map
  $ f^\ast \from \mathcal{O}T \to \mathcal{O}S $ that sends
  a set $ B \subseteq T $ to its preimage
  $ \{ s \in S \vert \exists b \in B . fs=b \} $. Observe
  that $ f_\ast $ preserves finite intersections and
  $ f^\ast $ preserves finite intersection and unions.  This
  mirrors the fact that, in a geometric morphism the right
  adjoint preserves finite limits and the left adjoint
  preserves finite limits and colimits.

  Now that the basic definition of a topos are
  given, we provide just enough theory to
  develop the ideas in this thesis.

  The first result we give is often called the
  fundamental theorem of topos theory \cite[{A.2.3.2}]{johnstone_elephant}. 
  
  \begin{theorem} \label{thm:fund-thm-topos}

    Given a topos $ \T $ and an object $ t $ of
    $ \T $, then the over-category
    $ \T \downarrow t $ is also a topos. 

  \end{theorem}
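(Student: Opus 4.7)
The plan is to verify the three defining properties of a topos for $\T \downarrow t$ in turn: existence of finite limits, existence of a subobject classifier, and cartesian closedness.

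First, I would show that $\T \downarrow t$ inherits finite limits from $\T$. The identity $\id_t$ serves as the terminal object (every object $f \from a \to t$ has $f$ itself as the unique map to $\id_t$). Binary products of $f \from a \to t$ and $g \from b \to t$ are built from the pullback $a \times_t b$ in $\T$, equipped with its canonical map to $t$. Equalizers in $\T \downarrow t$ are created by the forgetful functor to $\T$: the equalizer in $\T$ of two parallel arrows over $t$ automatically sits over $t$. These together give all finite limits.

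Second, I would construct the subobject classifier. The candidate is the projection $\pi_2 \from \Omega_\T \times t \to t$, where $\Omega_\T$ is the subobject classifier of $\T$, together with the arrow $\langle \text{true}_\T \circ {!_t}, \id_t \rangle \from \id_t \to \pi_2$ in $\T \downarrow t$. Given a monic $m \from s \to a$ over $t$ (which is monic in $\T$ since the forgetful functor reflects monics), its characteristic map $\chi_m \from a \to \Omega_\T$ in $\T$ pairs with the structure map $a \to t$ to give the required arrow $\langle \chi_m, f \rangle \from f \to \pi_2$ in $\T \downarrow t$. The universal property follows from that of $\Omega_\T$ together with the uniqueness of maps over $t$.

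The main obstacle, and the hardest step, will be constructing exponentials. Given $f \from a \to t$ and $g \from b \to t$, I would form the exponential $g^f$ in $\T \downarrow t$ as the pullback in $\T$
\[
  \begin{tikzpicture}
    \node (e) at (0,2) {$ e $};
    \node (ba) at (3,2) {$ b^a $};
    \node (t) at (0,0) {$ t $};
    \node (ta) at (3,0) {$ t^a $};
    \draw[cd]
      (e) edge (ba)
      (e) edge (t)
      (ba) edge node[right]{$ g^a $} (ta)
      (t) edge node[below]{$ \widetilde{f \pi_1} $} (ta);
    \draw (0.3,1.6) -- (0.4,1.6) -- (0.4,1.7);
  \end{tikzpicture}
\]
where the bottom map is the transpose of $f \circ \pi_1 \from t \times a \to a \to t$ (the ``constant on $f$'' map) and the right map is post-composition with $g$. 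The object $e$, together with its projection to $t$, is the exponential. The evaluation map is recovered from the evaluation $b^a \times a \to b$ in $\T$, restricted to $e \times_t a$.

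The key verification will be establishing the natural bijection $(\T \downarrow t)(h \times_{\T \downarrow t} f, g) \cong (\T \downarrow t)(h, g^f)$ for each $h \from c \to t$. I would unpack both sides: maps on the left correspond to arrows $c \times_t a \to b$ in $\T$ over $t$, while maps on the right correspond by the pullback's universal property to pairs of a map $c \to b^a$ and $c \to t$ commuting via $g^a$ and $\widetilde{f \pi_1}$. Using the exponential adjunction in $\T$ and the fact that $c \times_t a$ is cut out as an equalizer in $c \times a$, these two data should match. The bookkeeping between $c \times a$ and $c \times_t a$ via the pullback condition is where the argument becomes technical, but it is routine once organized; the payoff is that $\T \downarrow t$ satisfies all three topos axioms.
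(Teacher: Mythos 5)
Your treatment of finite limits is correct: $\id_t$ is terminal, products are the pullbacks $a \times_t b$, and equalizers are created by the forgetful functor. The subobject classifier $\pi_2 \from \Omega_\T \times t \to t$ with universal monic $\langle \mathtt{true} \circ {!}, \id_t\rangle$ is also right. (Minor slip: you need the forgetful functor $\T \downarrow t \to \T$ to \emph{preserve} monomorphisms, not reflect them; it does, because the slice condition on a pair of test arrows is automatic once $Um \circ u = Um \circ v$.)

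The exponential construction, however, is wrong, and the problem is not a matter of bookkeeping. Your pullback $e = t \times_{t^a} b^a$ of $g^a$ along the ``name of $f$'' map $t \to t^a$ has, over each global element $s$ of $t$, the \emph{entire} object of sections $\{u \from a \to b : gu = f\}$; it remembers what $u$ does on all of $a$. The correct exponential $g^f$ in $\T \downarrow t$ should have over $s$ only the fiberwise exponential $b_s^{a_s}$. Concretely, take $\T = \Set$, $t = \{0,1\}$, $a = t$ with $f = \id_t$, and $b = \{y_0, y_1, y_1'\}$ with $g(y_0) = 0$, $g(y_1) = g(y_1') = 1$. The genuine exponential is the $t$-indexed family $(b_0^{a_0}, b_1^{a_1}) = (\{\ast\}, \{\ast, \ast'\})$, with $1 + 2 = 3$ elements total. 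But there are exactly two sections of $g$ over $\id_t$, so your $e \cong t \times 2$ has four elements, two over each fiber. Taking $h = \id_t$ in the adjunction, $|(\T\downarrow t)(h \times_t f, g)| = |(\T \downarrow t)(f,g)| = 2$, whereas $|(\T \downarrow t)(\id_t, e)| = 2 \cdot 2 = 4$, a contradiction. This is exactly where you flagged the ``bookkeeping between $c\times a$ and $c\times_t a$'': the restriction map from $\{\hat u \from c\times a \to b : g\hat u = f\pi_a\}$ to $\{v \from c\times_t a \to b \text{ over } t\}$ is not injective, because $\hat u$ carries independent data on the complement of $c\times_t a$.

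This step is precisely the hard content of the fundamental theorem; no single pullback in $\T$ can supply it. The standard proof (Johnstone A.2.3, Mac Lane--Moerdijk IV.7) first establishes that for every arrow $k \from x \to y$ the pullback $k^* \from \T\downarrow y \to \T \downarrow x$ has a right adjoint $\Pi_k$, and constructing $\Pi_k$ is where the subobject classifier and partial-map classifier genuinely intervene. Exponentials in $\T \downarrow t$ are then given by $g^f = \Pi_f(f^*g)$, where $f$ is regarded as a map $a \to t$ of $\T$.
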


  The operation of `slicing over an object' is stable
  in presheaf topoi. This result uses a construction called
  the category of elements. Given a functor
  $ f \from \C \to \Set $, its category of elements, denoted
  $ \int^f \C $, has for objects pairs $ ( c, x ) $ where
  $ c $ is an object of $ \C $ and $ x $ is an element of
  the set $ fc $.  The arrows $ (c,x) \to ( d,y ) $ are the
  set functions $ fc \to fd $ such that $ x \mapsto y $. The
  category of elements is a first foray into the much larger
  topic called `the Grothendieck construction'. However,
  it is not useful for us to pursue this topic.
  
  \begin{theorem} \label{thm:presheaf-slice-is-presheaf}
    Let $ \C $ be a small category and
    $ F \from \C^{\op} \to \Set $ a presheaf.
    Then the over-category
    $ \Set^{\C^{\op}} \downarrow F $ is equivalent
    to the topos of presheaves on the category of
    elements $ \int^F \C $.
  \end{theorem}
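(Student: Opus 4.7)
The plan is to exhibit an equivalence by constructing explicit functors in both directions and then checking they are mutually quasi-inverse. Given an object of the slice, namely a natural transformation $\alpha \from G \to F$, I will define a presheaf $\Phi(\alpha)$ on $\int^F \C$ by taking fibers: on an object $(c,x)$ with $x \in Fc$, set
\[
  \Phi(\alpha)(c,x) \bydef \alpha_c^{-1}(x) \subseteq Gc,
\]
and on a morphism $f \from (c,x) \to (d,y)$ in $\int^F \C$ (that is, $f \from c \to d$ in $\C$ with $(Ff)(y) = x$), restrict $Gf \from Gd \to Gc$ to the fibers; this is well-defined because $\alpha$ is natural. A morphism $\alpha \to \alpha'$ in the slice is an arrow $G \to G'$ commuting with the maps to $F$, so it restricts fiberwise and gives a natural transformation $\Phi(\alpha) \to \Phi(\alpha')$.

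In the other direction, given a presheaf $H \from (\int^F \C)^{\op} \to \Set$, I will define $\Psi(H) \from \C^{\op} \to \Set$ by
\[
  \Psi(H)(c) \bydef \coprod_{x \in Fc} H(c,x),
\]
with restriction along $f \from c \to d$ sending the summand indexed by $y \in Fd$ into the summand indexed by $(Ff)(y) \in Fc$ via $H(f)$. The evident projection to $F$ makes $\Psi(H)$ into an object of the slice, and a natural transformation $H \to H'$ induces a morphism of slice objects componentwise.

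Next I will verify that $\Phi$ and $\Psi$ are mutually quasi-inverse. For $\alpha \from G \to F$, the canonical map
\[
  \coprod_{x \in Fc} \alpha_c^{-1}(x) \xto{\iso} Gc
\]
is a bijection (every element of $Gc$ lies in a unique fiber), and these bijections are natural in $c$ and commute with the maps to $F$, giving $\Psi \Phi \simeq \id$. For $H$ on $\int^F \C$, the fiber of $\Psi(H)(c) \to Fc$ over $x \in Fc$ is canonically $H(c,x)$, naturally in $(c,x)$, giving $\Phi \Psi \simeq \id$.

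The only subtle step is checking that all these bijections are natural with respect to morphisms in $\int^F \C$; this reduces to unwinding that a morphism $f \from (c,x) \to (d,y)$ in the category of elements is exactly the data needed for the relevant square in $\Set$ (involving $Gf$, $\alpha$, and $Ff$) to commute. Once that bookkeeping is in place, functoriality and the equivalence follow. I expect no significant obstacle beyond this diagram chase, since no topos-theoretic machinery is needed beyond the definitions of the category of elements and of the slice.
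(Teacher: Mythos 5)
Your proof is correct, and it is the standard argument for this classical fact. Note that the paper does not actually prove Theorem~\ref{thm:presheaf-slice-is-presheaf}; it states it as a known result (it goes back at least to the theory of discrete fibrations and appears, e.g., in standard topos-theory texts) and then illustrates it with Example~\ref{ex:graph-over-graph}, so there is no ``paper's own proof'' to compare against.

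Your two functors $\Phi$ (take fibers) and $\Psi$ (take coproducts over fibers) are exactly the right ones, the well-definedness of the fiberwise restriction of $Gf$ follows from the naturality square for $\alpha$, as you observe, and the two composites are canonically naturally isomorphic to the identities for the reasons you give. The one bookkeeping item worth pinning down explicitly when writing this out is the variance convention: the paper's Definition of $\int^F \C$ (given there for a covariant $f$ and stated loosely, with morphisms described as ``set functions $fc \to fd$'') needs to be unwound carefully for a presheaf $F \from \C^{\op} \to \Set$ so that the morphisms of $\int^F \C$ are the ones you use, namely arrows $f \from c \to d$ in $\C$ with $(Ff)(y)=x$. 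With that convention $\Phi(\alpha)$ is indeed a \emph{presheaf} on $\int^F \C$; with the opposite convention one gets a covariant functor instead, and the two versions differ only by taking the opposite category, so the statement remains true either way. Your proof is otherwise complete; the remaining diagram chases are routine, as you say.
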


  This result is used in Section
  \ref{sec:zx-calculus}. We illustrate it here
  with graphs.

  \begin{example}
    \label{ex:graph-over-graph}
    In this example, we illustrate the equivalence
    of Theorem
    \ref{thm:presheaf-slice-is-presheaf} by
    translating an object from
    $ \Set^{\C^{\op}} \downarrow F $ to a presheaf
    in the category $ \Set^{\int^F \C} $ for a
    specific choice of $ F $ and $ \C $.

    Let $ \C^{\op} $ be the walking graph category. That
    is,
    \[
      \begin{tikzpicture}
        \node [ob] (e) at (0,0) {$ e $};
        \node [ob] (n) at (2,0) {$ n $};
        \draw[cd]
        (e) edge[bend left=20] node[above]{$ s $} (n)
        (e) edge[bend right=20] node[below]{$ t $} (n);
        \draw [rounded corners]
          (-1,-1) rectangle (3,1);
      \end{tikzpicture}
    \]
    We call this the walking graph category to
    suggest that the presheaves on $ \C^{\op} $
    are exactly graphs and natural transformations
    between these functors are exactly the graph
    morphisms. Let $ F $ be the graph
    \[
      \begin{tikzpicture}
        \node (a) at (0,0.25) {$ _{b}\bullet $};
        \node (b) at (2,0.25) {$ \bullet_{b'} $};
        \draw [graph] 
        (a) edge[bend left=30] node[above]{$ \beta $} (b)
        (a) edge[bend right=30] node[below]{$ \beta' $} (b) 
        (b) edge[loop above] node[above]{$ \beta'' $} (b); 
        \draw [rounded corners]
          (-1,-1) rectangle (3,2);
      \end{tikzpicture}
    \]
    As a functor, $ F \from \C^{\op} \to \Set
    $ returns the edge set $ Fe \bydef \{ \beta,
    \beta', \beta'' \}$, the node set $ Fn \bydef
    \{b,b'\} $, the source map $ Fs \from Fe \to
    Fn $ defined by
    \[
      Fs(\beta)   \bydef b, \quad
      Fs(\beta')  \bydef b, \quad
      Fs(\beta'') \bydef b'
    \]
    and the target map $ Ft \from Fe \to Fn $
    defined by
    \[
      Ft(\beta)   \bydef b', \quad
      Ft(\beta')  \bydef b', \quad
      Ft(\beta'') \bydef b'.
    \]
    
    The graph morphism $ G \to F $, depicted by
    \[
      \begin{tikzpicture}
        \begin{scope}
          \node (a) at (0,0) {$ _a \bullet $};
          \node (b) at (2,0) {$ \bullet_{a'} $};
          \node (c) at (2,2) {$ \bullet^{a''} $};
          \draw [graph]
            (a) edge node[below]{$\alpha'$} (b)
            (b) edge node[right]{$\alpha''$} (c)
            (a) edge node[above,left]{$\alpha$} (c);
          \draw [rounded corners]
            (-1,-1) rectangle (3,3);
          \node (tb) at (3.1,1) {$  $};
        \end{scope}
        \begin{scope}[shift={(6,0)}]
          \node (a) at (0,1) {$ _{b}\bullet $};
        \node (b) at (2,1) {$ \bullet_{b'} $};
        \draw [graph] 
        (a) edge[bend left=30] node[above]{$ \beta $} (b)
        (a) edge[bend right=30] node[below]{$ \beta' $} (b) 
        (b) edge[loop above] node[above]{$ \beta'' $} (b); 
        \draw [rounded corners]
          (-1,-1) rectangle (3,3);
        \node (bt) at (-1.1,1) {$  $};
        \end{scope}
        \draw [cd] (tb) to node[above]{$\theta$} (bt);
      \end{tikzpicture}
    \]
    where $ \theta $ is given by,
    \[
      \theta (a) \bydef b \quad \quad
      \theta (a'), \theta ( a'') \bydef b' \quad \quad
      \theta (\alpha) \bydef \beta \quad \quad
      \theta (\alpha') \bydef \beta' \quad \quad
      \theta (\alpha'') \bydef \beta''
    \]
    is an object in
    $ \Set^{\C^{\op}} \downarrow F $
    
    According to Theorem
    \ref{thm:presheaf-slice-is-presheaf}, we can
    translate $ G \to F
    $ to a presheaf on the category of elements $
    \int^F \C $, which we depict as
    \[
      \begin{tikzpicture}
        \node [ob] (eb)   at (0,0) {$ (e, \beta) $};
        \node [ob] (eb')  at (0,3) {$ (e, \beta') $};
        \node [ob] (eb'') at (0,6) {$ (e, \beta'') $};
        \node [ob] (nc)   at (8,1.5) {$ (n,b) $};
        \node [ob] (nd)   at (8,4.5) {$ (n,b') $};
        \draw[cd]
          (eb)      to node[below,pos=0.2]{$ (s,\beta) $} (nc)
          (eb')     to node[below,pos=0.2]{$ (s,\beta) $} (nc)
          (eb''.15) to
            node[above,pos=0.2]{$ (s,\beta) $} (nd.165); 
        \draw[color=white,line width=0.3cm]
          (eb) -- (nd);
        \draw[cd]
          (eb)       to node[above,pos=0.2]{$(t,\beta)$} (nd)
          (eb')      to node[above,pos=0.2]{$ (t,\beta') $} (nd)
          (eb''.-15) to
            node[below,pos=0.2]{$ (t,\beta'') $} (nd.195);
        \draw [rounded corners]
          (-1,-1) rectangle (9,7);
      \end{tikzpicture}
    \]
    with the objects corresponding to the circles.
    The presheaf on this category that corresponds to
    $ G \to F $ is given by the $ \int^F \C
    $-shaped diagram in $ \Set $
    \[
      \begin{tikzpicture}
        \node [ob] (eb)   at (0,0) {$ \{\alpha\} $};
        \node [ob] (eb')  at (0,3) {$ \{\alpha'\} $};
        \node [ob] (eb'') at (0,6) {$ \{\alpha''\} $};
        \node [ob] (nc)   at (8,1.5) {$ \{a\} $};
        \node [ob] (nd)   at (8,4.5) {$ \{a',a''\} $};
        \draw[cd]
          (eb)      to node[below]{$ a $} (nc)
          (eb')     to node[above, pos =-.25]{$ a $} (nc)
          (eb''.15) to node[above]{$ a' $} (nd.165); 
        \draw[color=white,line width=0.3cm]
          (eb) edge (nd);
        \draw[cd]
          (eb)       to node[above, pos=0.25]{$ a'  $} (nd)
          (eb')      to node[above]{$ a'' $} (nd)
          (eb''.-15) to node[below]{$ a'' $} (nd.195);
        \draw [rounded corners]
          (-1,-1) rectangle (9,7);
      \end{tikzpicture}
    \]
    where the arrows are labeled to suggest the function
    they represent. The sets in this diagram are given by
    the fibers of $ \theta $. The edge and node functors
    determined by the arrows contain the information about
    where $ Gs $ and $ Gt $ send the elements in the fibers.
  \end{example}
  
  We have now finished the topos theory needed for
  this thesis.  The remaining discussion morally
  belongs to the theory of rewriting and, in
  particular, adhesive category theory. However,
  because all topoi are adhesive and we restrict
  our attention to topoi, we place the discussion
  in here.

  The following two lemmas are used.

  \begin{lemma}[{\cite[Lem.~4.2-3]{lack-sobo_adhesive-cats}}]
    \label{lem:adhesive-properties}
    In a topos, monomorphisms are stable under pushout.
    Also, pushouts along monomorphisms are pullbacks.
  \end{lemma}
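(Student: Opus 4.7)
The plan is to derive both statements from the adhesive structure already available on a topos (Theorem \ref{thm:topos-are-adhesive}); it then suffices to prove them in any adhesive category, where the Van Kampen condition on pushouts along monomorphisms in Definition \ref{def:adhesive-category} is the principal tool.

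For the first claim, I would begin with a pushout square whose top edge is an arbitrary $f \colon A \to B$ and whose left edge is a mono $m \colon A \monicto C$, producing induced maps $n \colon B \to D$ and $g \colon C \to D$. To show $n$ is monic I would compute its kernel pair and argue that it coincides with the diagonal on $B$. The idea is to embed the given pushout as the bottom face of a cube whose back faces realise the kernel pair of $m$ (which is trivial because $m$ is monic) as pullbacks; the Van Kampen property then transports this triviality along the cube, so that the front faces become pullbacks as well. Extracting the relevant front face identifies the kernel pair of $n$ with the identity on $B$, whence $n$ is monic.

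For the second claim, now that $n$ is known to be monic I would form the pullback $P \bydef B \times_D C$ and consider the canonical comparison map $\kappa \colon A \to P$ arising from the commutativity of the pushout square. To show $\kappa$ is invertible, I would assemble a second cube having the original pushout as its bottom face and the pullback $P$ in the upper-left corner of the top face, choosing the vertical arrows of the cube so that the back faces are pullbacks by construction (using the projections from $P$). The Van Kampen property then upgrades the top face to a pushout, and a short diagram chase combining the universal properties of $P$ and of the original pushout pins $\kappa$ down as an isomorphism.

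The principal obstacle in both halves is the same: organising the auxiliary cube so that the Van Kampen hypotheses are satisfied while the conclusion extracts exactly the desired square. Once those cubes are laid out correctly, the remaining work is routine, using nothing beyond disjoint and stable coproducts, the fact that regular epimorphisms in a topos are effective, and the universal properties of the limits and colimits involved.
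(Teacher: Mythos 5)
The paper itself offers no proof of this lemma; it simply cites Lack--Soboci\'nski, whose argument gets \emph{both} statements from a single Van Kampen cube. Your first half is essentially that argument: with the pushout of $B \xgets{f} A \xto{m} C$ (with $m$ monic) on the bottom of the cube, take the top face to be the degenerate pushout with $f \from A \to B$ on both parallel edges and identities elsewhere, and take $\id_A, \id_B, m, n$ as the vertical arrows. One back face is then the kernel-pair square of $m$ (a pullback because $m$ is monic), the other back face is a square with identity verticals (trivially a pullback), and the top face is a pushout, so Van Kampen forces both front faces to be pullbacks; the front face over $n$ exhibits the kernel pair of $n$ as trivial, so $n$ is monic. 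Two small corrections to your description: only \emph{one} back face involves the kernel pair of $m$, and you must also record that the top face is a pushout --- that hypothesis, together with the pullback back faces, is what lets the Van Kampen condition produce pullback front faces.

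The second half contains a genuine gap. First, the cube you describe cannot be assembled: placing $P \bydef B \times_D C$ in the top face over $A$ requires a vertical arrow $P \to A$, but the projections of $P$ land in $B$ and $C$, and the only canonical arrow relating $A$ and $P$ is the comparison $\kappa \from A \to P$, which points the wrong way; so the back faces cannot be made ``pullbacks by construction using the projections from $P$.'' Second, even granted some cube, your Van Kampen inference runs backwards: knowing only that the back faces are pullbacks does not ``upgrade the top face to a pushout.'' The Van Kampen biconditional trades \emph{front faces are pullbacks} against \emph{top face is a pushout}, so to conclude the latter you would first need the front faces to be pullbacks --- which is essentially the claim you are trying to establish. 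The repair is that no second cube is needed at all: in the cube from your first half, the \emph{other} front face (the one over $g \from C \to D$) is precisely the original square with $f$ on top, $m$ and $n$ as verticals, and $g$ on the bottom, so the same application of Van Kampen that gives monicity of $n$ simultaneously shows that the pushout square is a pullback. This one-cube argument is exactly the proof in the lemma of Lack and Soboci\'nski that the paper cites.
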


  \begin{lemma}[{\cite[Lem.~6.3]{lack-sobo_adhesive-cats}}]
    \label{lem.vk dual}
    In a topos, consider a cube
    \[
      \begin{tikzpicture}
        \node (tb) at (2,2.75) {$\bullet$};
        \node (tl) at (0,2)    {$\bullet$};
        \node (tr) at (3,2)    {$\bullet$};
        \node (tf) at (1,1.25) {$\bullet$};
        \node (bb) at (2,0.75) {$\bullet$};
        \node (bl) at (0,0)    {$\bullet$};
        \node (br) at (3,0)    {$\bullet$};
        \node (bf) at (1,-.75) {$\bullet$};
        \draw [>->] (tb) edge (tr);
        \draw [cd]  (tb) edge (bb);
        \draw [>->] (tb) edge (tl);
        \draw [cd]  (tr) edge (br);
        \draw [>->] (tl) edge (tf);
        \draw [cd]  (tl) edge (bl);
        \draw [>->] (bb) edge (br);
        \draw [>->] (bb) edge (bl);
        \draw [>->] (br) edge (bf);
        \draw [>->] (bl) edge (bf);
        \draw [cd]
        (tf) edge[white,line width=4pt] (bf)
        (tf) edge                       (bf)
        (tr) edge[white,line width=4pt] (tf)
        (tr) edge                       (tf);
      \end{tikzpicture}
    \]
    whose top and bottom faces consist of only
    monomorphisms.  If the top face is a pullback and the
    front faces are pushouts, then the bottom face is a
    pullback if and only if the back faces are pushouts.
 \end{lemma}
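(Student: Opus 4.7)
The plan is to apply the Van Kampen axiom guaranteed by adhesivity (Theorem \ref{thm:topos-are-adhesive}) in two different orientations of the cube, using Lemma \ref{lem:adhesive-properties} — which says that pushouts along monomorphisms in a topos are also pullbacks — as the bridge between pushout data and pullback data. In particular, the two front faces are automatically pullbacks, and any back face, once shown to be a pushout, will automatically be a pullback as well, since the top edge of each back face is monic.

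For the direction in which back pushouts force the bottom face to be a pullback, I would orient the cube so that the back-left face is the VK-bottom; this is a pushout along the monic top edge \(tb \to tl\), so the Van Kampen condition applies, and its opposite face, the front-right face, becomes the VK-top. Among the four VK-side faces, I would take the back-right and front-left faces as the VK-back pair: both are pushouts along monomorphisms and are therefore pullbacks. The Van Kampen biconditional then reads that the remaining two side faces, which are exactly the original top and bottom faces, are both pullbacks if and only if the front-right face is a pushout. The top face is a pullback by hypothesis and the front-right face is a pushout by hypothesis, so the bottom face is forced to be a pullback.

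For the converse, the strategy is first to upgrade the assumed bottom pullback to pullback data at the two back faces by a direct diagram chase, and then to apply Van Kampen a second time to promote each back face from a pullback to a pushout. Verifying that the back-left face is a pullback proceeds by checking its universal property on a test object \(T\) mapping to \(tl\) and \(bb\) compatibly over \(bl\): the front-right pullback face provides a map \(T \to tr\) from the evident composites to \(tf\) and \(br\); the pullback property of the top face then yields a map \(T \to tb\); and, finally, the assumed pullback property of the bottom face verifies that the induced composite \(T \to bb\) agrees with the original one. A symmetric argument gives the back-right face. With both back faces now known to be pullbacks, apply Van Kampen with the front-left face as VK-bottom and the top and bottom faces as VK-back pair (both pullbacks); the VK-top is the back-right face, and since the front-right face is always a pullback, the biconditional forces the back-right face to be a pushout. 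Swapping the two front faces delivers the back-left face as a pushout.

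The main obstacle is the middle step of the converse: converting the assumed bottom pullback into back-face pullbacks is not merely an instance of pullback pasting on two adjacent squares, because the natural orientations of the bottom, front-left, and front-right faces do not share edges in a way that admits a single application of the pasting lemma. The four-face universal-property chase outlined above appears to be the cleanest way to bridge this gap before Van Kampen can be brought to bear a second time.
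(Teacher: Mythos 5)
There is no in-paper proof to compare against: the paper imports this lemma verbatim from Lack--Sobociński, so your proposal can only be judged on its own terms. Your overall plan is the right one --- combine the Van Kampen property of pushouts along monomorphisms with Lemma \ref{lem:adhesive-properties}, and for the converse first upgrade the hypotheses to ``both back faces are pullbacks'' by a universal-property chase before invoking Van Kampen again --- and the chase itself is correct: the front-right face, being a pushout along a mono, is a pullback and supplies the map to \(tr\); the top pullback supplies the map to \(tb\); monicity of the bottom edges closes the remaining triangle and gives uniqueness.

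The genuine gap is that neither of your Van Kampen invocations, as specified, is an instance of the axiom. First, orientation: every arrow joining the back-left face to the front-right face (\(tb\to tr\), \(tl\to tf\), \(bb\to br\), \(bl\to bf\)) points \emph{out of} the back-left face, so the back-left face can only sit at the \emph{top} of the rotated cube; the VK-bottom must be a front face (front-right or front-left), which is indeed a pushout along a monic edge of the top face. Your first direction has this exactly reversed. Second, and in both directions, your choice of ``VK-back pair'' is geometrically impossible: the two back faces of a VK cube are the side faces sharing the vertical edge over the span corner of the bottom pushout, hence adjacent, whereas your pairs (\{back-right, front-left\} in the first direction, \{top, bottom\} in the converse) are opposite faces. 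The correct bookkeeping is: with the front-right face as VK-bottom, the back pair is \{top face, back-right face\} and the front pair is \{front-left face, bottom face\} (and symmetrically with front-left as VK-bottom, back pair \{top, back-left\}, front pair \{front-right, bottom\}). With these assignments your argument does go through: for ``back pushouts imply bottom pullback,'' the top face is a pullback by hypothesis and the back face in the back pair is a pushout along a mono, hence a pullback by Lemma \ref{lem:adhesive-properties}, so the assumed pushout on the opposite (VK-top) face forces the front pair, in particular the bottom face, to be pullbacks; for the converse, after your chase, the back pair consists of pullbacks and the front pair \{front-right (or front-left), bottom\} consists of pullbacks by hypothesis and Lemma \ref{lem:adhesive-properties}, so the VK biconditional yields each back face as a pushout. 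As written, however, the proof fails precisely at the Van Kampen steps and needs the corrected orientation and face pairings.
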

 
 Two properties that are desirable for rewriting systems are
 local Church--Rosser and concurrency.  We do not use these
 results in this thesis, so we choose to not discuss them.
 Instead, we point the reader to the existing literature
 \cite{corradini-ehrig_algebraic-graph-grammars,lack-sobo_adhesive-cats}

\end{document}